\documentclass[twoside,leqno,10pt,a4paper]{report}


\usepackage[a4paper]{meta-donnees}
\usepackage[latin1]{inputenc}
\usepackage[T1]{fontenc}
\usepackage[francais,english]{babel}
\usepackage{amsmath}
\usepackage{verbatim}
\usepackage{amsfonts}
\usepackage[all]{xy}
\usepackage{graphicx}
\usepackage{amssymb}
\usepackage{fancyhdr}
\usepackage{MnSymbol}
\usepackage{amsthm}
\usepackage{appendix}
\usepackage{vmargin}
\usepackage[nottoc, notlof, notlot]{tocbibind}

\usepackage{hyperref}

\newcommand{\leftexp}[2]{{\vphantom{#2}}^{#1}{#2}}
\newcommand{\incl}[1][r]{\ar@<-0.2pc>@{^(-}[#1] \ar@<+0.2pc>@{-}[#1]}

\newcommand{\ZZ}{\mathcal{Z}}
\newcommand{\PPP}{\mathcal{P}} 
\newcommand{\XX}{\mathcal{X}}
\newcommand{\RRR}{\mathcal{R}} 
\newcommand{\NNN}{\mathcal{N}}
\newcommand{\FF}{\mathcal{F}}

\newcommand{\II}{\mathcal{I}} 
 
\newcommand{\OO}{\mathcal{O}}
\newcommand{\HH}{\mathcal{H}}

\newcommand{\HHm}{\mathcal{H}_{\mathrm{s}}}
\newcommand{\HHmx}{\mathcal{H}_{\mathrm{s}}^I}
\newcommand{\HHmy}{\mathcal{H}_{\mathrm{s}}^{I\!I}}
\newcommand{\HHmxp}{\mathcal{H}_{\mathrm{s}}^{I,\mathrm{prin}}}
\newcommand{\HHmyp}{\mathcal{H}_{\mathrm{s}}^{I\!I,\mathrm{prin}}}
\newcommand{\HHmgz}{\text{$G^0$-}\mathcal{H}_{\mathrm{s}}}
\newcommand{\HHmg}{\text{$G$-}\mathcal{H}_{\mathrm{s}}}
\newcommand{\HHmgzp}{\text{$G^0$-}\mathcal{H}_{\mathrm{s}}^{\mathrm{prin}}}
\newcommand{\HHmgp}{\text{$G$-}\mathcal{H}_{\mathrm{s}}^{\mathrm{prin}}}

\newcommand{\HHr}{\mathcal{H}^{\mathrm{red}}}

\newcommand{\HHp}{\mathcal{H}^{\mathrm{prin}}}
\newcommand{\HHmp}{\mathcal{H}_{\mathrm{s}}^{\mathrm{prin}}}

\newcommand{\BB}{\mathcal{B}}
\newcommand{\MM}{\mathcal{M}}

\newcommand{\NN}{\mathbb{N}}
\newcommand{\Gm}{\mathbb{G}_m}
\newcommand{\Aff}{{\mathbb{A}}_{k}^{1}}

\newcommand{\PP}{\mathbb{P}}
\newcommand{\ZZZ}{\mathbb{Z}}

\renewcommand{\gg}{\mathfrak{g}} 
 
\newcommand{\hh}{\mathfrak{h}} 
\newcommand{\pp}{\mathfrak{p}} 
\newcommand{\GG}{\mathcal{G}} 

\newcommand{\Irr}{\mathrm{Irr}}

\newcommand{\Sing}{\mathrm{Sing}}
\newcommand{\Pic}{\mathrm{Pic}}
\newcommand{\Gr}{\mathrm{Gr}}
\newcommand{\IG}{\mathrm{IGr}}
\newcommand{\OG}{\mathrm{OGr}}
\newcommand{\Hom}{\mathrm{Hom}}

\newcommand{\Univ}{\mathrm{Univ}}
\newcommand{\tr}{\mathrm{tr}}
\newcommand{\Ker}{\mathrm{Ker}}
\renewcommand{\Im}{\mathrm{Im}}
\newcommand{\Ind}{\mathrm{Ind}}
\newcommand{\Aut}{\mathrm{Aut}}
\newcommand{\Stab}{\mathrm{Stab}} 
\newcommand{\End}{\mathrm{End}}
\newcommand{\Mor}{\mathrm{Mor}}
\newcommand{\Hilb}{\mathrm{Hilb}}
\newcommand{\rg}{\mathrm{rg}}
\newcommand{\Spec}{\mathrm{Spec}}
\newcommand{\multi}{\mathrm{multi}}

\renewcommand{\labelitemi}{$\bullet$}

\theoremstyle{plain}
\newtheorem{theoreme}{Théorème}[section]
\newtheorem{lemme}[theoreme]{Lemme}
\newtheorem{proposition}[theoreme]{Proposition}
\newtheorem{corollaire}[theoreme]{Corollaire}
\newtheorem*{theoreme*}{Théorème A}
\newtheorem*{theoreme**}{Théorème B}
\newtheorem*{theoreme***}{Théorème C}
\newtheorem*{question*}{Question}
\newtheorem*{question1}{Question 1}
\newtheorem*{question2}{Question 2}
\newtheorem*{question3}{Question 3}
\newtheorem*{question4}{Question 4}
\newtheorem*{question5}{Question 5}
\newtheorem*{question6}{Question 6}
\newtheorem*{question7}{Question 7}
\newtheorem*{conjecture*}{Conjecture}
\newtheorem*{conjecture1}{Conjecture 1}
\newtheorem*{conjecture2}{Conjecture 2}
\newtheorem*{proposition*}{Proposition D}
\newtheorem*{proposition**}{Proposition E}
\newtheorem*{proposition***}{Proposition}

\theoremstyle{definition}
\newtheorem{definition}[theoreme]{Définition}

\newtheorem{conjecture}[theoreme]{Conjecture}

\newtheorem{notation}[theoreme]{Notation}
\theoremstyle{remark}
\newtheorem{remarque}[theoreme]{Remarque}

\begin{document}
\renewcommand{\proofname}{\textbf{Preuve:}}

\selectlanguage{francais}





\Sethpageshift{23mm}   
\Setvpageshift{11mm}   

\Specialite{Mathématiques}
\Arrete{7 août 2006}
\Auteur{Ronan TERPEREAU}
\Directeur{Michel BRION}
\Laboratoire{l'Institut Fourier}
\EcoleDoctorale{l'Ecole Doctorale MSTII}         
\Titre{Schémas de Hilbert invariants et théorie classique des invariants}
\Depot{5 novembre 2012}




\Jury{

\UGTDirecteur{M. Michel BRION}{Directeur de recherche CNRS, Université Grenoble I}       
\UGTRapporteur{M. Hanspeter KRAFT}{Professeur, Université de Bâle}     
\UGTExaminateur{M. Manfred LEHN}{Professeur, Université de Mayence}     
\UGTExaminateur{M. Laurent MANIVEL}{Directeur de recherche CNRS, Université Grenoble I}     
\UGTRapporteur{M. Christoph SORGER}{Professeur, Université de Nantes}      
\UGTExaminateur{M. Mikhail ZAIDENBERG}{Professeur, Université Grenoble I}    

}

\MakeUGthesePDG    

\pagestyle{empty}
\newpage
\mbox{}
\newpage

\mbox{}
\vspace{20mm}
\begin{center}  \Huge \textbf{Remerciements} \end{center}
\vspace{10mm}

Tout d'abord, je tiens à exprimer toute ma gratitude à Michel Brion pour m'avoir proposé de travailler sur un sujet aussi riche et intéressant. Michel, pendant ces trois années tu as toujours été présent pour m'aider et me motiver lorsque cela était nécessaire. Tu m'as donné beaucoup de ton temps, et tu m'as souvent fait profiter de ton intuition et de tes profondes connaissances des mathématiques. Pour tout cela, je te remercie mille fois. Merci aussi pour tous les conseils que tu as pu me donner, et pour toutes les personnes intéressées par mon sujet que tu m'as permis de rencontrer. \\

Un grand merci à Hanspeter Kraft et Christoph Sorger pour avoir accepté de rapporter ma thèse. Merci également à Manfred Lehn, Laurent Manivel et Mikhail Zaidenberg pour avoir accepté de figurer dans mon jury. Je suis très honoré de l'intérêt que vous portez à mes travaux.   \\

Je souhaite exprimer ma reconnaissance aux personnes avec qui j'ai pu discuter de sujets connexes à celui de ma thèse: Jonas Budmiger, Tanja Becker,  Stavros Papadakis et Bart Van Steirteghem. Merci aussi à ceux qui m'ont invité à parler de mes travaux: Stéphanie Cupit-Foutou, Ivan Pan, Guido Pezzini, Alvaro Rittatore et les organisateurs de GAeL XIX.\\

Cette thèse a été préparée au sein de l'Institut Fourier. Je tiens à remercier le personnel administratif de l'Institut Fourier qui m'a offert un très bon cadre de travail.
Je remercie aussi tous les thésards et post-docs de l'Institut pour l'ambiance sympathique (et studieuse!) qui y règne. Un grand merci tout particulier à Claudia, Clélia, Christian, Guenaëlle, Junyan, Kévin, Mateusz, Mickaël et Roland. \\

Un merci du fond du coeur à mes amis pour tous les moments passés en votre compagnie pendant ces vingt-six dernières années. Merci pour tous les voyages, les week-ends et les soirées passées en votre compagnie. \\

Enfin, je tiens à remercier très chaleureusement ma famille pour son soutien constant et pour m'avoir toujours encouragé à faire ce qui me plaisait dans la vie: mes parents Evelyne et Elian, ma soeur Cindy et ma grand-mère Josiane. Maman, papa, sans vous je ne serais pas arrivé là où j'en suis et je sais que j'ai beaucoup de chance d'avoir des parents aussi formidables que vous l'êtes. \\

Pour finir, un immense merci à Marie, ma tendre moitié, qui a toujours été à mes côtés pendant cette thèse. Merci pour ta gentillesse et ta bonne humeur, et surtout, merci de toujours croire en moi. \\

\pagestyle{fancy}
\newpage




\pagestyle{fancy}
\fancyhf{}
\lhead[\thepage]{}
\chead[Schémas de Hilbert invariants et théorie classique des invariants]{Ronan Terpereau}
\rhead[]{\thepage}

\tableofcontents

\chapter*{Introduction} 
\addcontentsline{toc}{chapter}{Introduction} 

\section*{Contexte}

La motivation de ce mémoire provient d'une construction classique de résolution des singularités de certaines variétés qui sont des quotients par des groupes finis. Plus précisément, soient $G$ un groupe fini, $W$ une représentation linéaire de dimension finie de $G$ et $\nu:\ W \rightarrow W/G$ le morphisme de passage au quotient. En général, le morphisme $\nu$ n'est pas plat et le quotient $W/G$ est singulier. On peut construire une platification universelle de $\nu$ en considérant le $G$-schéma de Hilbert $\Hilb^G(W)$. Celui-ci paramètre les sous-schémas fermés $G$-stables de $W$ dont l'anneau des coordonnées est isomorphe à la représentation régulière de $G$ comme $G$-module. On a le diagramme commutatif suivant:
$$\xymatrix{
    \XX \ar[r]^(0.3){\pi}  \ar[d]_{p}   & \Hilb^G(W)  \ar[d]^{\gamma}  \\
      W  \ar[r]_(0.4){\nu}              &  W/G 
    }$$
où $\pi$ est la famille universelle (plate par définition), $p$ est la projection naturelle et $\gamma$ est le morphisme \textit{de Hilbert-Chow} qui à un sous-schéma $Z$ de $W$ associe le point $Z/G$ de $W/G$. Le morphisme de Hilbert-Chow est propre, et c'est un isomorphisme au dessus de l'ouvert de platitude $U$ de $\nu$ (celui-ci consiste en les $G$-orbites dans $W$ dont le groupe d'isotropie est trivial). On définit la composante principale de $\Hilb^G(W)$ par 
$$\Hilb^G(W)^{\mathrm{prin}}:=\overline{\gamma^{-1}(U)}.$$ 
La restriction $\gamma:\ \Hilb^G(W)^{\mathrm{prin}} \rightarrow W/G$ est birationnelle et propre. Il est donc naturel de se demander si le morphisme $\gamma$, éventuellement restreint à $\Hilb^G(W)^{\mathrm{prin}}$, est une résolution des singularités de $W/G$. Autrement dit, on souhaite connaître les couples $(G,W)$ tels que $\Hilb^G(W)$ soit une variété lisse. Et lorsque $\Hilb^G(W)$ est singulier, on veut savoir si sa composante principale est lisse. La réponse n'est pas connue en général, mais on a tout de même (entre autres) les résultats suivants:
\begin{itemize}
\item Si $\dim(W) \leq 2$, alors $\Hilb^G(W)$ est une variété lisse. En particulier, si $W={\mathbb{A}}_{k}^{2}$ et si $G \subset SL(W)$, alors Ito et Nakamura ont montré que $\gamma$ est l'unique résolution crépante de $W/G$ (voir \cite{IN1,IN2}). Signalons que cette construction joue un rôle clé dans la correspondance de McKay.
\item Si $\dim(W)=3$ et $G \subset SL(W)$, Bridgeland, King et Reid ont montré par des méthodes homologiques que $\Hilb^G(W)$ est encore une variété lisse, et que $\gamma$ est une résolution crépante de $W/G$ (voir \cite{BKR}).
\item Si $\dim(W)=4$, alors $\Hilb^G(W)$ peut être pathologique. Par exemple, si $G \subset SL_2(k)$ est le groupe tétrahédral binaire et si $W$ est la somme directe de deux copies de la représentation standard, alors Lehn et Sorger ont montré que $\Hilb^G(W)^{\mathrm{prin}}$ est lisse, mais que $\Hilb^G(W)$ est réductible (voir \cite{LS}).\\      
\end{itemize}

Par la suite, on travaille sur un corps $k$ algébriquement clos de caractéristique $0$, et on considère la situation plus générale où $G$ est un groupe réductif, $W$ est une représentation linéaire de dimension finie de $G$ et $\nu:\ W \rightarrow W/\!/G$ est le morphisme de passage au quotient. Ici $W/\!/G$ désigne le quotient catégorique, c'est-à-dire le spectre de l'algèbre des invariants $k[W]^G$; celle-ci est intègre et de type fini (car $G$ est réductif), donc $W/\!/G$ est une variété irréductible. En général, comme c'était déjà le cas lorsque $G$ est fini, le quotient $W/\!/G$ est singulier et le morphisme $\nu$ n'est pas plat. On dispose toujours d'une platification universelle de $\nu$; celle-ci est donnée par le schéma de Hilbert invariant construit par Alexeev et Brion (\cite{AB, Br}). On en rappelle brièvement la définition (voir la section \ref{generalitéesHilbert} pour plus de détails). On note $\Irr(G)$ l'ensemble des classes d'isomorphisme des $G$-modules irréductibles et $h$ une fonction de $\Irr(G)$ dans $\NN$. Une telle fonction $h$ est appelée fonction de Hilbert. Le schéma de Hilbert invariant $\Hilb_h^G(W)$ paramètre les sous-schémas fermés $G$-stables $Z$ de $W$, tels que 
$$k[Z] \cong \bigoplus_{M \in \Irr(G)} M^{\oplus h(M)}$$ 
comme $G$-module. On note $$\HH:=\Hilb_h^G(W)$$ 
pour alléger les notations. Si $h(V_0)=1$, où $V_0$ est la représentation triviale de $G$, alors on a le diagramme suivant:    
$$\xymatrix{
    \XX \ar[r]^{\pi}  \ar[d]_{p}   & \HH  \ar[d]^{\gamma}  \\
      W  \ar[r]_(0.4){\nu}              &  W/\!/G 
    }$$
où $\pi$ est la famille universelle, $p$ est la projection naturelle et $\gamma$ est un morphisme propre qui fait commuter le diagramme. Si de plus on fixe $h=h_W$, la fonction de Hilbert de la fibre générique de $\nu$, alors le morphisme \textit{de Hilbert-Chow} $\gamma:\ \HH \rightarrow W/\!/G$ est un isomorphisme au dessus de l'ouvert de platitude $U$ du morphisme $\nu$. En particulier, $\nu$ est plat si et seulement si $\HH \cong W/\!/G$. Par analogie avec le cas des groupes finis, on définit la composante principale de $\HH$ par 
$$\HHp:=\overline{\gamma^{-1}(U)}.$$ 
La restriction $\gamma:\ \HHp \rightarrow W/\!/G$ est birationnelle et propre, d'où la 

\begin{question*}
Dans quels cas le morphisme $\gamma$, éventuellement restreint à la composante principale, est-il une résolution des singularités de $W/\!/G$?
\end{question*}   

Le schéma de Hilbert invariant a été très étudié ces dernières années (\cite[...]{budmiger,cupit,jansou,jansouRessayre,PaBart}), mais surtout en relation avec des problèmes de classification. Lorsque le groupe $G$ est infini, la question précédente est totalement ouverte!

\section*{Résultats}

Dans ce mémoire, on apporte des éléments de réponse à la question ci-dessus lorsque $G$ est un groupe classique et que $W$ est une représentation classique de $G$. Nous décrivons explicitement $\HH$ lorsque:
\begin{itemize} \renewcommand{\labelitemi}{$\bullet$}
\item $G=SL_n(k)$ et $W:=V^{\oplus n'}$ est la somme directe de $n'$ copies de la représentation standard (théorème \ref{SLLn}),
\item $G=GL_n(k)$, $n \leq 2$, et $W:=V^{\oplus n_1} \oplus V^{* \oplus n_2}$ est la somme directe de $n_1$ copies de la représentation standard et de $n_2$ copies de la représentation duale (théorèmes \ref{Hcasn11111} et \ref{casn2}),
\item $G=O_n(k)$ ou $Sp_{2n}(k)$, $n \leq 2$, et $W$ est la somme directe de $n'$ copies de la représentation standard (théorèmes \ref{casn2O2} et \ref{casn2Sp4}).
\end{itemize}
Dans chacun de ces cas, on donne d'une part des équations pour $\HH$, et on réalise d'autre part $\HH$ comme l'espace total d'un fibré homogène sur une grassmannienne (ou sur un produit de deux grassmanniennes lorsque $G=GL_n(k)$). Notre description implique le

\begin{theoreme*} 
Le schéma de Hilbert invariant $\HH$ est une variété lisse dans les cas suivants:
\begin{itemize} \renewcommand{\labelitemi}{$\bullet$}
\item $G=SL_n(k)$ et $n$ quelconque,
\item $G=GL_n(k)$ et $n \leq 2$,
\item $G=O_n(k)$ et $n \leq 2$,
\item $G=Sp_{2n}(k)$ et $n \leq 2$.
\end{itemize}
\end{theoreme*}   
Il convient d'ajouter à cette liste les cas "triviaux" où le morphisme $\nu$ est plat et le quotient $W/\!/G$ est lisse (voir les corollaires \ref{cas_facile}, \ref{cas_facileOn}, \ref{cas_facileSOn} et \ref{cas_facileSpn}). 
Cependant, nous montrons qu'en général $\gamma:\ \HH \rightarrow W/\!/G$ n'est pas une résolution. 

\begin{theoreme**}  \emph{[théorèmes \ref{casn3}, \ref{casn3O3} et \ref{casn3SO3}]}\\
Le schéma de Hilbert invariant $\HH$ est singulier dans les cas suivants:
\begin{itemize} \renewcommand{\labelitemi}{$\bullet$}
\item $G=GL_3(k)$ et $n_1,n_2 \geq 3$,
\item $G=O_3(k)$ et $n' \geq 3$,
\item $G=SO_3(k)$ et $n'=3$.
\end{itemize}
\end{theoreme**} 

Dans la section \ref{lesdiffsituations}, on définit $G'$ un sous-groupe algébrique réductif de $\Aut^G(W)$. L'existence d'une opération de $G'$ dans $W$, $W/\!/G$ et $\HH$, telle que tous les morphismes que l'on manipule soient $G'$-équivariants, joue un rôle essentiel dans ce mémoire. Par exemple, pour décrire l'ouvert de platitude du morphisme de passage au quotient $\nu:\ W \rightarrow W/\!/G$ pour les différents groupes classiques, il est (presque) suffisant de connaître la dimension de la fibre de $\nu$ en un point de chaque orbite de $G'$. De même, pour déterminer si $\HH$ est lisse ou singulier, il suffit (par un argument de semi-continuité) de déterminer l'espace tangent de $\HH$ en un point de chaque orbite fermée.

Tous les quotients étudiés dans ce mémoire sont munis d'une stratification naturelle 
\begin{equation*} 
(*) \ \, \ \, \ X_0 \subset X_1 \subset \cdots \subset X_N=W/\!/G,
\end{equation*} 
où chaque $X_i$ est l'adhérence d'une orbite de $G'$. Lorsque $\HH$ est lisse (c'est-à-dire dans les cas donnés par le théorème A), on montre que le morphisme de Hilbert-Chow $\gamma$ s'identifie à la composition des éclatements successifs de certaines strates de $W/\!/G$. Donnons un exemple pour rendre les choses plus claires. Soient $G=GL_2(k)$ et $n_1,\,n_2>2$, alors $W/\!/G$ est la variété déterminantielle $X_2:=\{ M \in \MM_{n_2,n_1}(k)\ |\ \rg(M) \leq 2\}$. Pour résoudre les singularités de $X_2$, on éclate $X_0=\{0\}$, puis on éclate la transformée stricte de $X_1:=\{ M \in \MM_{n_2,n_1}(k)\ |\ \rg(M) \leq 1\}$. La variété obtenue est lisse et nous montrons qu'elle est isomorphe à $\HH$. De plus, le morphisme de Hilbert-Chow $\gamma$ s'identifie à la composition de ces deux éclatements.

On s'est également intéressé aux schémas de Hilbert invariants liés aux réductions symplectiques. Plus précisément, soient $G$ un groupe classique et 
$$W:=V^{\oplus d} \oplus V^{* \oplus d}$$ 
la somme directe de $d$ copies de la représentation standard et de $d$ copies de la représentation duale. La variété $W$ est naturellement munie d'une forme symplectique qui est préservée par $G$. On note $\mu:\ W \rightarrow \gg^*$, où $\gg$ est l'algèbre de Lie de $G$, \textit{l'application moment} pour l'opération de $G$ dans $W$, et on définit la réduction symplectique de $W$ par 
$$W/\!/\!/G:=\mu^{-1}(0)/\!/G.$$ 
Dans tous nos exemples le quotient $W/\!/G$ n'est pas symplectique (sauf lorsqu'il est trivial). En revanche, les composantes irréductibles de $W/\!/\!/G$ (munies de leur structure réduite) sont toujours symplectiques. Lorsque $G$ est un groupe fini, de "bonnes" propriétés géométriques pour $W/\!/\!/G$ donnent de "bonnes" propriétés géométriques pour $V^{\oplus d}/\!/G$, et on espère que cela reste vrai lorsque $G$ est un groupe réductif arbitraire. On a par exemple la     

\begin{conjecture*} [Kaledin, Lehn, Sorger]
Si chaque composante irréductible de $W/\!/\!/G$, munie de sa structure réduite, admet une résolution symplectique, alors le quotient $V^{\oplus d}/\!/G$ est lisse.
\end{conjecture*}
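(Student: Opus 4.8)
The plan is to prove the conjecture by localising it at the singular points of $V^{\oplus d}/\!/G$ through a slice argument, and then to establish the resulting local statement for classical groups by an induction on the rank anchored on the explicit descriptions behind Theorems~A and~B. First I would fix $x\in V^{\oplus d}/\!/G$, the image of a closed orbit $Gv$ with (reductive) stabiliser $G_v$, and apply Luna's \'etale slice theorem: \'etale-locally at $x$, $V^{\oplus d}/\!/G$ is isomorphic to $N/\!/G_v$, where $N$ denotes the slice representation of $G_v$. In parallel, the symplectic (Marsden--Weinstein) analogue of Luna's theorem, applied to the cotangent-lifted action on $W=T^*(V^{\oplus d})$, shows that \'etale-locally at the corresponding point the reduction $W/\!/\!/G$ is isomorphic to $(T^*N)/\!/\!/G_v=\mu_N^{-1}(0)/\!/G_v$. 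Since admitting a symplectic resolution is an \'etale-local property of a symplectic singularity, the conjecture becomes equivalent to its special case at the origin: for every such slice representation $N$, if $\mu_N^{-1}(0)/\!/G_v$ admits a symplectic resolution then $N/\!/G_v$ is smooth. For a classical $G$ acting on $V^{\oplus d}\oplus V^{*\oplus d}$ the stabilisers $G_v$ and their slices $N$ are again of classical type (products of smaller $GL$, $O$, $Sp$ factors on sums of standard and dual representations), so this sets up a genuine induction on the rank.

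Second, I would fix the candidate resolution. The $G$-equivariant projection $W\to V^{\oplus d}$ descends to a proper map $p:W/\!/\!/G\to V^{\oplus d}/\!/G$; over the open locus $U\subset V^{\oplus d}$ where $G$ acts with trivial stabiliser one has $\mu^{-1}(0)/\!/G=T^*(U/G)$, so the principal component $Y:=\overline{T^*(U/G)}$ is birational to the cotangent bundle of the smooth locus of $V^{\oplus d}/\!/G$ and carries the contracting $\Gm$-action dilating the $V^{*\oplus d}$-factor, with fixed locus the ``zero section'' $V^{\oplus d}/\!/G\subset Y$. A symplectic resolution $\rho:\widetilde Y\to Y$ is then automatically $\Gm$-equivariant; by Beauville's theorem $Y$ has symplectic, hence Gorenstein canonical, singularities, and by Kaledin's theorem $\rho$ is semismall. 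Concretely I would take the invariant Hilbert scheme of $\mu^{-1}(0)$, whose principal component maps by a Hilbert-Chow morphism to $W/\!/\!/G$, as the candidate resolution, and compute its tangent spaces exactly as the thesis computes those of $\HH$ for $\HHp\xrightarrow{\gamma}W/\!/G$, using $G'$-equivariance and the stratification $(*)$ to reduce every check to a single point of each orbit.

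The heart of the argument is to show that if $N/\!/G_v$ is singular at the origin then $\mu_N^{-1}(0)/\!/G_v$ admits no symplectic resolution. The mechanism I would exploit is the jump in the fibres of $p$: over the smooth locus these are the cotangent affine spaces, whereas over the singular locus the larger stabiliser forces $\mu^{-1}(0)$ to acquire extra components and the fibre dimension to jump, so that a contracting symplectic resolution cannot remain semismall over that stratum, or else exhibits along it a non-normal, non-Gorenstein piece of $\mu^{-1}(0)/\!/G$. I would make this precise on the classical slice models using the explicit equations and homogeneous-bundle descriptions of the invariant Hilbert schemes, anchoring the induction with the base cases $G=GL_{\le 2}$, $O_{\le 2}$, $Sp_{2n}$ with $n\le 2$ of Theorems~A and~B, and propagating upward in rank through the slice reduction of the first step; the singular cases of Theorem~B ($GL_3$, $O_3$, $SO_3$) would, conversely, pinpoint where the reduction first fails to be resolvable.

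The main obstacle is precisely this last implication in full reductive generality. There is no off-the-shelf principle guaranteeing that $\mu^{-1}(0)/\!/G$ fails to be symplectically resolvable whenever $V^{\oplus d}/\!/G$ is singular, and proving it requires first controlling $\mu^{-1}(0)$ itself: its reducedness, its equidimensionality, and whether the moment map is flat (a complete-intersection property that genuinely can fail). Even granting this, excluding symplectic resolutions of quotient singularities is delicate and, away from the explicitly computed cases, appears to need either a general structure theorem for symplectic reductions by reductive groups or a full classification of the slice representations that occur. I therefore expect the argument to close unconditionally only for the classical groups treated here, yielding strong evidence for the conjecture rather than a proof in the stated reductive generality.
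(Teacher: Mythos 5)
The statement you are trying to prove is labelled \emph{Conjecture} in the paper and attributed to Kaledin, Lehn and Sorger: the thesis offers no proof of it. Its only contribution on this point is the observation that every example it computes is consistent with the conjecture, obtained by comparing two explicit answers side by side --- on the one hand $V^{\oplus d}/\!/G$ is a (symmetric, antisymmetric or ordinary) determinantal variety whose smoothness is read off from a rank inequality, and on the other hand $W/\!/\!/G$ is a nilpotent orbit closure (or a double cover of one) whose symplectic resolvability is decided by Fu's classification. So there is no ``paper's own proof'' to measure your proposal against; any complete argument you produce would be new mathematics, and you are right to flag at the end that what you have is a strategy with evidence rather than a proof.

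Beyond the gap you yourself identify (no general mechanism forcing $\mu_N^{-1}(0)/\!/G_v$ to be symplectically non-resolvable when $N/\!/G_v$ is singular), two steps of your reduction are not secure. First, you assert that admitting a symplectic resolution is an \'etale-local property of a symplectic singularity; this is not known in the generality you need, and your localisation via Luna slices silently converts the global hypothesis of the conjecture into a local one at each closed orbit. Second, the symplectic slice theorem does not identify the local model of $W/\!/\!/G$ with $(T^*N)/\!/\!/G_v$ for the Luna slice $N$ of the $G$-action on $V^{\oplus d}$: the relevant object is the symplectic normal space $(T_w(Gw))^{\Omega}/T_w(Gw)$ inside $\mu^{-1}(0)$, and reconciling it with $T^*N$ requires an argument (the paper's computations of slice representations, e.g.\ its Lemme \ref{slice_explicite}, show this works out in the classical cases but do not give it for free). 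Even restricted to the classical groups, your route through the invariant Hilbert scheme and Hilbert--Chow tangent space computations is much heavier than what the verification actually requires, which is only the direct comparison of the two explicit descriptions recalled above.
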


Tous les exemples traités dans ce mémoire vérifient cette conjecture. Par ailleurs, lorsque $W/\!/\!/G$ est réduit et irréductible, on a encore le morphisme de Hilbert-Chow 
$$\gamma:\ \Hilb_{h_s}^G(\mu^{-1}(0)) \rightarrow W/\!/\!/G,$$ 
où $h_s$ est la fonction de Hilbert de la fibre générique du morphisme $\mu^{-1}(0) \rightarrow \mu^{-1}(0)/\!/G$. On note 
$$\HHm:=\Hilb_{h_s}^G(\mu^{-1}(0))$$ 
pour alléger les notations (l'indice \textit{s} est pour \textit{symplectique}). Comme précédement, on se demande dans quels cas le morphisme de Hilbert-Chow $\gamma$, éventuellement restreint à la composante principale, est une résolution des singularités de $W/\!/\!/G$. \\
Lorsque $G=\Gm$, $O_2(k)$ ou $Sp_2(k)$, nous décrivons explicitement $\HHm$ par des équations. Mentionnons que le cas $G=Sp_2(k)$ et $d=3$ a été traité par Becker dans \cite{Tanja2}. Puis, pour certaines valeurs de $n$ et de $d$, nous décrivons la composante principale $\HHmp$ comme l'espace total d'un fibré homogène sur une variété de drapeaux. Notre description implique le

\begin{theoreme***} \emph{[corollaires \ref{symplisssse}, \ref{symplisssseOn} et \ref{resuss}]}\\
La composante principale $\HHmp$ est lisse dans les cas suivants:
\begin{itemize} \renewcommand{\labelitemi}{$\bullet$}
\item $G=GL_n(k)$ et 
    \begin{itemize}
    \item $d$ est pair et $d \leq n+1$,
    \item $n=1$ et $d$ est quelconque,
    \item $n=2$ et $d \neq 3$, 
    \end{itemize}
\item $G=O_n(k)$ et  
    \begin{itemize}
    \item $d \leq \frac{n+1}{2}$,
    \item $n \leq 2$ et $d$ est quelconque, 
    \end{itemize}
\item $G=Sp_{2n}(k)$ et  
    \begin{itemize}
    \item $n=1$ et $d \geq 3$, 
    \item $n=2$ et $d \geq 5$. 
    \end{itemize}
\end{itemize}
\end{theoreme***}

\noindent A cette liste s'ajoutent les cas "triviaux" où $\HHm \cong W/\!/\!/G$. Nous avons également obtenu la

\begin{proposition*}
\emph{[proposition \ref{noIrred}]}\\ 
Si $G=GL_n(k)$ et $d \geq 2n$, le schéma $\HHm$ est toujours réductible.
\end{proposition*}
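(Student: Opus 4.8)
The plan is to prove reducibility by showing that the fibre of the Hilbert--Chow morphism over the origin already has dimension at least $\dim\HHmp$, which is incompatible with irreducibility. Identify $W=V^{\oplus d}\oplus V^{*\oplus d}$ with the space of pairs $(X,Y)\in\MM_{n,d}(k)\times\MM_{d,n}(k)$, on which $g\in GL_n(k)$ acts by $(X,Y)\mapsto(gX,Yg^{-1})$; then the moment map is $\mu(X,Y)=XY$ and $\mu^{-1}(0)=\{XY=0\}$. Its invariant algebra is generated by the entries of $A:=YX$, and since $A^2=Y(XY)X=0$ on $\mu^{-1}(0)$ one gets $W/\!/\!/GL_n(k)\cong\{A\in\MM_{d,d}(k)\mid A^2=0,\ \rg(A)\le n\}$, which for $d\ge 2n$ is the closure of the single $GL_d(k)$-orbit of square-zero matrices of maximal rank $n$, hence irreducible of dimension $D:=2n(d-n)$. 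As $\gamma\colon\HHmp\to W/\!/\!/GL_n(k)$ is a projective birational morphism between irreducible varieties of this common dimension $D$, its fibre over $0$ satisfies $\dim\bigl(\gamma^{-1}(0)\cap\HHmp\bigr)\le D-1$. Consequently it suffices to exhibit an irreducible family $F$ of $GL_n(k)$-stable closed subschemes of $\mu^{-1}(0)$, each with Hilbert function $h_s$, with $F\subseteq\gamma^{-1}(0)$ and $\dim F\ge D$: then $F\cap\HHmp$ has dimension $\le D-1<\dim F$, so $F\not\subseteq\HHmp$ and $\HHm$ is reducible. Every member of such a family is supported on the nullcone $\NNN:=\{XY=0,\ YX=0\}=\{(X,Y)\mid\rg(X)+\rg(Y)\le n\}$, the fibre over $0$ of the quotient morphism.

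The model is $n=1$, where $\NNN=\{X=0\}\cup\{Y=0\}$ is a union of two $d$-dimensional coordinate subspaces. For any pair of lines $L_X,L_Y\subseteq k^d$ the ``cross'' $Z_{L_X,L_Y}:=(L_X\times 0)\cup(0\times L_Y)$ has coordinate ring $k[u,v]/(uv)$, all of whose weight spaces are one-dimensional; thus $k[Z_{L_X,L_Y}]\cong k[\Gm]$, so $Z_{L_X,L_Y}$ is a point of $\HHm$ lying over $0$. These crosses form a family $F\cong\PP^{d-1}\times\PP^{d-1}$ of dimension $2(d-1)=D$, and the dimension comparison above already settles the case $n=1$.

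For general $n$ I would build $Z$ as a union of pieces supported on the several irreducible components of $\NNN$, indexed by $\rg(X)=a,\ \rg(Y)=n-a$ for $0\le a\le n$, each piece being an orbit-closure-type subscheme attached to a configuration of subspaces of $k^d$, and take for $F$ the family parametrised by that configuration (a product of Grassmannians, of dimension I expect to be at least $2n(d-n)=D$). The essential requirement is that $Z$ be closed in $\mu^{-1}(0)$ and satisfy $k[Z]\cong k[GL_n(k)]$ as a $GL_n(k)$-module, i.e.\ that every $S_\lambda(V)$ occur with multiplicity $\dim S_\lambda(V)$.

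The hard part is exactly this Hilbert-function bookkeeping. A naive two-armed cross, with one arm in $\{Y=0\}$ and one in $\{X=0\}$, yields only the dominant weights $\lambda=(\lambda_1\ge\cdots\ge\lambda_n)$ with $\lambda_1\le 0$ (respectively $\lambda_n\ge 0$), and for $n\ge2$ it \emph{misses the mixed weights}, those with $\lambda_1>0>\lambda_n$; these can be produced only by pieces supported on the strata where $X$ and $Y$ are \emph{both} nonzero, for there the coordinate functions involve $X$ (transforming as $V$) and $Y$ (as $V^*$) simultaneously. Checking, via the $GL_n(k)\times GL_d(k)$ duality and the Cauchy formulas, that the arms glue to precisely the regular representation, together with verifying that $Z$ is a genuine flat point of $\HHm$ realising $h_s$ and that the parameter space has dimension $\ge D$, is where the real work lies; once such a family $F$ is constructed, the first paragraph finishes the proof.
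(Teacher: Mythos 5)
Your reduction in the first paragraph is sound (the fibre $\gamma^{-1}(0)\cap\HHmp$ is a proper closed subset of the irreducible $D$-dimensional $\HHmp$, hence of dimension at most $D-1$), and your $n=1$ crosses do recover the second component $C_2\cong\PP(\gg'^{\leq 1})$ that the paper exhibits in its study of the case $n=1$. But for $n\geq 2$ you have a programme rather than a proof: the family $F$ is never constructed, and you yourself locate the difficulty ("the Hilbert-function bookkeeping for the mixed weights is where the real work lies"). That bookkeeping \emph{is} the content of the proposition, so the argument as written has a genuine gap precisely at its essential step.

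The paper closes exactly this gap, and with a single explicit ideal rather than a positive-dimensional family. Setting $E':=\left\langle e_1,\ldots,e_n\right\rangle$ and $W'':=\Hom(E',V)\times\Hom(V,E')$ (so $\dim E'=\dim V=n$), it takes the ideal $I_0\subset k[W'']$ generated by $(sl(E')\otimes V_0)\oplus(M_0\otimes V_0)\oplus(M_0\otimes sl(V))\subset k[W'']_2$, i.e.\ by all positive-degree $G$-invariants, all positive-degree $GL(E')$-invariants, and one off-diagonal copy of $sl(V)$; the quotient is $\bigoplus_{\lambda}S^{\lambda}(E'^*)\otimes S^{\lambda}(V)$, which has Hilbert function $h_s$ precisely because $\dim S^{\lambda}(E'^*)=\dim S^{\lambda}(V)$ — this is the verification you postpone, and it is where the mixed weights $\lambda_1>0>\lambda_n$ are accounted for, since the subscheme lives where both matrix factors are nonzero. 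Moreover, to show this point lies outside $\HHmp$ the paper does not estimate fibre dimensions over $0$: it uses the $G'$-equivariant morphism $\rho_s:\HHm\rightarrow\Gr(n,E)\times\Gr(d-n,E)$, under which $\HHmp$ lands in the closed orbit $O_0=\{(L_1,L_2)\mid L_1\subset L_2\}$ while $I_0$ sits in the fibre over a point of the open orbit $O_n$. If you want to complete your route, the minimal fix is to construct this one ideal and check its Hilbert function; once you have it, the $\rho_s$-separation makes your dimension count unnecessary.
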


Toutes les réductions symplectiques $W/\!/\!/G$ qui apparaissent dans ce mémoire sont des adhérences d'orbites nilpotentes (ou bien des revêtements doubles d'adhérences d'orbites nilpotentes lorsque $G=SO_n(k)$). En particulier, ce sont des variétés symplectiques et elles admettent une stratification naturelle analogue à (*).

Pour démontrer les théorèmes A, B et C, on montre dans chaque cas l'existence d'un \textit{principe de réduction} (section \ref{princreduction}) qui permet de réaliser $\HH$ comme l'espace total d'un fibré $G'$-homogène sur une base projective dont la fibre est isomorphe à un schéma de Hilbert invariant plus simple que $\HH$. Par exemple, pour déterminer $\HH$ lorsque $G=SL_n(k)$ et $n' \geq n$, il suffit de le déterminer pour $n'=n$. On n'a malheureusement pas pu obtenir de tel \textit{principe de réduction} pour $SO_n(k)$. L'ingrédient clé qui fait fonctionner \textit{le principe de réduction} est la

\begin{proposition**} \emph{[proposition \ref{morphismegrass}]}\\ 
Pour tout $M \in \Irr(G)$, il existe un $G'$-sous-module $F_M \subset \Hom^G(M,k[W])$ de dimension finie qui engendre $\Hom^G(M,k[W])$ comme $k[W]^G,G'$-module, et il existe un morphisme de schémas $G'$-équivariant
$${\delta}_{M}:\ \HH \rightarrow \Gr(h_W(M),F_{M}^{*}).$$  
\end{proposition**}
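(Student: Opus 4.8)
The plan is to obtain $F_M$ from the finiteness of the module of covariants, and then to produce $\delta_M$ from the universal family, the crucial point being that on each closed subscheme parametrised by $\HH$ the $k[W]^G$-module structure on the covariants collapses to scalar multiplication. First I would identify $\Hom^G(M,k[W])$ with the module of covariants $(M^*\otimes k[W])^G$. Since $G$ is reductive and $M^*\otimes k[W]$ is a finitely generated $k[W]$-module carrying a compatible $G$-action, the module version of Hilbert's finiteness theorem shows that $(M^*\otimes k[W])^G$ is a finitely generated module over $k[W]^G$. Choosing finitely many generators $\phi_1,\dots,\phi_r$ and letting $F_M$ be the linear span of their $G'$-orbits, the rationality of the $G'$-action guarantees that $F_M$ is finite-dimensional and $G'$-stable; as it contains the $\phi_i$, it generates $\Hom^G(M,k[W])$ as a $k[W]^G,G'$-module.

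Next I would establish surjectivity fibre by fibre. A point of $\HH$ is a $G$-stable ideal $I\subset k[W]$ with $k[Z]:=k[W]/I\cong\bigoplus_{N}N^{\oplus h(N)}$. Because $G$ is reductive, $\Hom^G(M,-)$ is exact on rational representations, so the quotient map induces a $k[W]^G$-linear surjection $\Hom^G(M,k[W])\twoheadrightarrow\Hom^G(M,k[Z])$, the latter space having dimension $h(M)=h_W(M)$. Here $h(V_0)=1$ forces $k[Z]^G=k$, so $k[W]^G$ acts on $\Hom^G(M,k[Z])$ through the evaluation homomorphism $k[W]^G\to k$ at the point $\gamma(Z)\in W/\!/G$, i.e. by scalars. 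Consequently the $k[W]^G$-submodule generated by the image of $F_M$ coincides with that image, and since $F_M$ generates the source, the restriction $F_M\to\Hom^G(M,k[Z])$ is already surjective.

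It then remains to globalise. Over $\HH$, the isotypic piece $\Hom^G(M,\pi_*\OO_{\XX})$ of the pushforward of the universal family is locally free of rank $h_W(M)$, which is built into the flatness of $\pi$ and the constancy of the multiplicities defining $\HH$. The tautological evaluation map $F_M\otimes\OO_{\HH}\to\Hom^G(M,\pi_*\OO_{\XX})$ is, by the previous step, surjective on every fibre; since its target is locally free of rank $h_W(M)$, it is a surjection onto a locally free quotient of that rank. The universal property of the Grassmannian then yields a morphism $\delta_M:\HH\to\Gr(h_W(M),F_M^*)$ sending $Z$ to the subspace $\Hom^G(M,k[Z])^*\hookrightarrow F_M^*$. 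As $G'$ acts compatibly on $F_M$, on $\HH$ and on the universal family, and every map used is $G'$-equivariant, so is $\delta_M$.

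The main obstacle I anticipate is the fibrewise surjectivity: it rests entirely on the identity $k[Z]^G=k$ (valid because $h(V_0)=1$), which turns the a priori complicated $k[W]^G$-action into scalars and thereby lets a fixed finite-dimensional $F_M$ surject onto every fibre. Making this uniform over $\HH$ also requires the local freeness of the relative covariant sheaf and the exact matching of its rank with $h_W(M)$, without which the evaluation map would not descend to a Grassmannian morphism.
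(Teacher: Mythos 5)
Your proposal is correct and follows essentially the same route as the paper: finite generation of the module of covariants over $k[W]^G$ supplies the finite-dimensional $G'$-module $F_M$, the collapse of the $k[W]^G$-action forced by $h_W(V_0)=1$ makes $F_M$ generate the covariants of every fibre, and the universal property of the Grassmannian applied to the locally free quotient $\FF_{(M)}$ of rank $h_W(M)$ yields $\delta_M$. The only organizational difference is that the paper obtains the surjection $\OO_{\HH}\otimes F_M\twoheadrightarrow\FF_{(M)}$ globally, from the closed immersion $\XX\hookrightarrow\HH\times_{W/\!/G}W$ provided by Lemma \ref{diagcom}, whereas you check it on closed fibres and invoke Nakayama; both arguments rest on the same identity $k[Z]^G=k$.
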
 

Une fois que l'on a effectué cette étape de réduction, on fixe $B'$ un sous-groupe de Borel de $G'$ et on cherche les points fixes de $B'$ dans $\HH$ (qui sont en nombre fini dans tous mes exemples). Si $\HH$ admet un unique point fixe $z_0$, alors $\HH$ est connexe et $z_0$ appartient à la composante principale $\HHp$. Sinon, on utilise la théorie des bases de Gröbner pour déterminer les points fixes de $B'$ qui sont dans $\HHp$. On calcule alors la dimension de l'espace tangent à $\HH$ en chacun de ces points fixes, et on la compare avec la dimension de $\HHp$. Si toutes ces dimensions coïncident, alors nécessairement $\HHp$ est une composante connexe lisse de $\HH$. Sinon, plusieurs pathologies sont possibles. Le schéma $\HH$ peut être réductible, non réduit ou encore ses composantes irréductibles munies de leur structure réduite peuvent être singulières. Déterminer dans laquelle de ces situations l'on se trouve est un problème difficile. Je n'ai généralement pas été en mesure de le résoudre.

Il doit être souligné que la proposition E joue un rôle de premier plan dans tout ce mémoire, et qu'elle sera sans doute très utile à l'étude de futurs exemples de schémas de Hilbert invariants. On peut montrer qu'il existe un sous-ensemble fini $\mathcal{E}$ de $\Irr(G)$ tel que le morphisme
$$\gamma \times \prod_{M \in \mathcal{E}} \delta_M:\ \HH \longrightarrow W/\!/G \times \prod_{M \in \mathcal{E}} \Gr(h_W(M),F_M^*)$$
soit une immersion fermée (c'est une conséquence de la construction de $\HH$ comme sous-schéma fermé du schéma de Hilbert multigradué). Ce constat suggère de choisir une représentation irréductible "simple" $M_1$ de $G$ et de regarder si $\gamma \times \delta_{M_1}$ est une immersion fermée de $\HH$. Si tel est le cas, alors il faut ensuite identifier l'image. Sinon, on on choisit une autre représentation irréductible "simple" $M_2$ et on regarde si $\gamma \times \delta_{M_1} \times \delta_{M_2}$ est une immersion fermée de $\HH$. Cette procédure s'arrête en un nombre fini d'étapes et permet de construire une immersion fermée de $\HH$ qui soit explicite et aussi simple que possible. 

Une partie "Questions ouvertes" figure à la fin de ce mémoire. Elle permet de faire le point sur les résultats que l'on a obtenus et sur les problèmes qu'il reste à résoudre pour espérer une meilleure compréhension des schémas de Hilbert invariants. Ce mémoire s'achève par les annexes A et B. Dans l'annexe A, on montre d'une part que les résolutions de $W/\!/G$ données par le morphisme de Hilbert-Chow lorsque $\HH$ est lisse (voir le théorème A) ne sont jamais crépantes; on étudie d'autre part les liens entre la restriction du morphisme de Hilbert-Chow $\gamma:\ \HHmp \rightarrow W/\!/\!/G$ et les résolutions symplectiques de la réduction symplectique $W/\!/\!/G$. L'annexe B est consacrée à l'étude de certaines propriétés géométriques des fibres de la famille universelle $\pi:\ \XX \rightarrow \HH$. \\

\newpage


\chapter{Le schéma de Hilbert invariant} \label{rappels_Hilbert}

\section{Généralités}  \label{generalitéesHilbert}

L'article d'exposition \cite{Br} fournit une introduction détaillée aux schémas de Hilbert invariants. Dans cette section on donne quelques définitions et propriétés fondamentales de ces schémas. 

On suppose que le lecteur a des notions de base de la théorie des schémas (\cite{Ha}), la théorie des groupes algébriques linéaires (\cite{Bo}) et la théorie des représentations des groupes algébriques réductifs (\cite{FH}). On travaille sur un corps $k$ algébriquement clos de caractéristique $0$ et tous les schémas considérés par la suite sont supposés séparés et de type fini sur $k$. Pour nous, une variété sera toujours un schéma réduit et irréductible. Soient $G$ un groupe algébrique réductif et $\Irr(G)$ l'ensemble des classes d'isomorphisme des $G$-modules irréductibles. On note $V_0$ la représentation triviale de $G$. Si $N$ est un $G$-module rationnel, alors on a une décomposition de $N$ en $G$-modules irréductibles:
\begin{equation} \label{decompoGmod}
N \cong \bigoplus_{M \in \Irr(G)} N_{(M)} \otimes M
\end{equation}
où $N_{(M)}:=\Hom^G(M,N)$ est l'espace vectoriel des morphismes $G$-équivariants de $M$ dans $N$. Le $G$-module $N_{(M)}\otimes M$ s'appelle la composante isotypique de $N$ associée à $M$ et $\dim(N_{(M)})$ est la multiplicité de $M$ dans $N$. On note $\NN=\{0,1,\ldots\}$ l'ensemble des entiers naturels. Si pour tout $M \in \Irr(G)$ on a $\dim(N_{(M)})< \infty$, alors on définit
$$\begin{array}{ccccc}
h & : & \Irr(G) & \rightarrow & \NN \\
& & M & \mapsto & \dim(N_{(M)}) \\
\end{array}$$
la fonction de Hilbert du $G$-module $N$. Plus généralement, on appelle fonction de Hilbert une fonction $h:\ \Irr(G) \rightarrow \NN$.

Soient $S$ un schéma, $\ZZ$ un $G$-schéma et $\pi: \ZZ \rightarrow S$ un morphisme affine, de type fini et $G$-invariant. D'après \cite[§2.3]{Br}, le faisceau $\FF:=\pi_{*} \OO_{\ZZ}$ admet la décomposition suivante comme $\OO_S,G$-module: 
\begin{equation} \label{eq1}
\FF \cong \bigoplus_{M \in \Irr(G)} \FF_{(M)} \otimes M
\end{equation}
où l'opération de $G$ dans $\FF$ est induite par l'opération de $G$ dans chaque $M$, et chaque $\FF_{(M)}:=\Hom^{G}(M,\FF)$ est un $\FF^G$-module cohérent. On dit que la famille $\pi$ est à multiplicités finies si $\FF^G$ est un $\OO_S$-module cohérent. Si de plus $\pi$ est plat, alors chaque $\OO_S$-module $\FF_{(M)}$ est localement libre de rang fini (et ce rang est constant sur une base connexe). Toutes les familles considérées par la suite seront supposées plates et à multiplicités finies.

\begin{definition}
Soient $h:\ \Irr(G) \rightarrow \NN$ une fonction de Hilbert et $W$ un $G$-schéma affine. On définit le foncteur de Hilbert ${Hilb}_{h}^{G}(W)$: ${Sch}^{op} \rightarrow Ens$ par: 
$$S \mapsto \left\{ 
\begin{array}{c} 
\xymatrix{
\ZZ \ar@{.>}_{\pi}[dr]  \ar@{^{(}->}[r] &   S \times W \ar@{->}[d]^{p_1}\\
                     &  S } \end{array}
\middle| 
\begin{array}{l} \ZZ  \text{ sous-schéma fermé $G$-stable},  \\
{\pi} \text{ morphisme plat},\\
{\pi}_{*} {\OO}_{\ZZ} \cong \bigoplus_{M \in \Irr{(G)}} {\FF}_{M} {\otimes} M,\\
 {\FF}_{M} \ \text{loc. libre de rang $h(M)$ sur } {\OO}_S.  \end{array} 
\right\} .$$
\end{definition} 

On fixe $W$ un $G$-schéma affine. On appelle famille plate de $G$-sous-schémas fermés de $W$ au dessus de $S$ un élément $(\pi:\ \ZZ \rightarrow S) \in {Hilb}_{h}^{G}(W)(S)$, avec $h$ est une fonction de Hilbert arbitraire. D'après \cite[Theorem 2.11]{Br}, le foncteur ${Hilb}_{h}^{G}(W)$ est représenté par un schéma quasi-projectif ${\Hilb}_{h}^{{G}}(W)$: le schéma de Hilbert invariant associé au $G$-schéma affine $W$ et à la fonction de Hilbert $h$. On note ${\mathrm{Univ}}_{h}^{G}(W) \subset W \times {\Hilb}_{h}^{{G}}(W)$, muni de la seconde projection, la famille universelle correspondante. Par définition, la famille universelle vérifie la propriété suivante: pour tout $(\pi:\ \ZZ \rightarrow S) \in {Hilb}_{h}^{G}(W)(S)$, il existe un unique morphisme $g:\ S \rightarrow {\Hilb}_{h}^{{G}}(W)$ tel que 
$$\ZZ \cong {\mathrm{Univ}}_{h}^{G}(W) {\times}_{ {\Hilb}_{h}^{{G}}(W)} S .$$  

\begin{remarque}
La construction du schéma de Hilbert invariant par Alexeev et Brion repose sur une réduction (via la théorie des représentations) au schéma de Hilbert multigradué construit par Haiman et Sturmfels dans \cite{HS}. 
\end{remarque}

L'algèbre des invariants ${k[W]}^G \subset k[W]$ est de type fini, donc c'est l'algèbre d'un schéma affine $W/\!/G$, muni d'un morphisme de passage au quotient $\nu:\ W \rightarrow W/\!/G$. On dit que $W/\!/G$ est le quotient catégorique de $W$ par $G$. Les propriétés du quotient $W/\!/G$ sont données dans \cite[§3]{SB}, en particulier:
\begin{itemize}
\item si $W$ est irréductible (resp. réduit), alors $W/\!/G$ est irréductible (resp. réduit),
\item si $W$ est normal, alors $W/\!/G$ est normal (\cite[§3.2, Théorème 2]{SB}),
\item si $W$ est un $G$-module rationnel, alors $W/\!/G$ est de Cohen-Macaulay (\cite[§3.4, Théorème 4]{SB}),
\item si $W$ est un $G$-module rationnel de dimension finie et si tout caractère multiplicatif de $G$ est trivial, alors $W/\!/G$ est de Gorenstein (\cite[§4.4, Théorème 4]{SB}).
\end{itemize}
Le lemme \ref{diagcom} et la proposition  \ref{chow} qui suivent sont démontrés dans \cite[Proposition 3.15]{Br}, nous en redonnons les démonstrations avec plus de détails.

\begin{lemme} \label{diagcom}
Soient $G$ un groupe algébrique réductif, $W$ un $G$-schéma affine, $h$ une fonction de Hilbert telle que $h(V_0)=1$, $S$ un schéma arbitraire et $(\pi:\ \ZZ \rightarrow S) \in {Hilb}_{h}^{G}(W)(S)$. Alors il existe un morphisme $f:\ S \rightarrow W/\!/G$ tel que le diagramme suivant commute:  
\begin{equation}  \label{kré}
\xymatrix{
      \ZZ  \ar[r]^{{p}_2} \ar[d]_{\pi} & W \ar@{->>}[d]^{\nu} \\
       S \ar[r]_(0.4)f & W/\!/G}
\end{equation}
\end{lemme}

\begin{proof}
On note $p_2:\ \ZZ \rightarrow W$ le morphisme obtenu en composant l'inclusion $\ZZ \hookrightarrow S \times W$ avec la seconde projection $S \times W \rightarrow W$. 
Montrons que ${\FF}^{G} \cong {\OO}_{S}$ via le morphisme ${\pi}^{\#}: {\OO}_{S} \rightarrow \FF$: 
$${\FF}^{G} \cong  \bigoplus_{M \in \Irr(G)} {\FF}_{(M)} {\otimes} M^G= {\FF}_{V_0} {\otimes} {V_0}^G= {\FF}_{V_0}  .$$ 
Puis, $\pi$ induit le morphisme ${\OO}_{S} \stackrel{\phi}{\rightarrow} {\FF}_{V_0} \cong {\FF}^G$ entre ${\OO}_{S}$-modules localement libres de rang $1$. Or, pour chaque point fermé $s \in S(k)$, la fibre de ${\OO}_{S}$ en $s$ vaut 
$${\OO}_{S}(s):= {\OO}_{S,s}/{\mathcal{M}}_s=k(s)=k$$ 
puisque $k$ n'admet pas d'extension de dimension finie, et la fibre de ${\FF}_{V_0}$ en $s$ vaut: $${\FF}_{V_0}(s)\cong {({\pi}_{*} {\OO}_{\ZZ})}^{G}(s)= {({\pi}_{*} {\OO}_{\ZZ})}^{G} {\otimes} k(s)=  {\left( { \left({\pi}_{*} {\OO}_{\ZZ}\right)} {\otimes} k\left( s \right) \right)}^{G}={k[{\ZZ}_s]}^{G} \cong k  .$$ 
Et sur chaque fibre, le morphisme ${\phi}(s):\ k \rightarrow k$ est un isomorphisme. Donc, d'après le lemme de Nakayama, pour chaque $s \in S(k)$, le morphisme ${\phi}_s:\ {\OO}_{S,s} \rightarrow {\FF}_{{V_0},s}$ est un isomorphisme, et donc $\phi$ est un isomorphisme. Autrement dit, le morphisme $\pi:\ \ZZ \rightarrow S$ induit un isomorphisme $\pi/\!/G:\ \ZZ/\!/G \rightarrow S$. On note $f:\ S \rightarrow W/\!/G$ l'application obtenue en composant $(\pi/\!/G)^{-1}$ avec l'inclusion $\ZZ/\!/G \subset W/\!/G$ et on vérifie alors que $f$ fait commuter le diagramme (\ref{kré}).    
\end{proof}

D'après le lemme \ref{diagcom} et par définition du produit fibré, on a une immersion fermée $\ZZ \hookrightarrow S {\times}_{W/\!/G} W$. On a donc le diagramme commutatif suivant:
\begin{equation} \label{diag1}
        \xymatrix{
    \ZZ \ar@{^{(}->}[r]^(0.3){i} \ar@{.>}[rd]_{\pi}  & S {\times}_{W/\!/G} W \ar[d]^{p_1} \ar[r]^(0.6){p_2} &W \ar@{->>}[d]^(0.4){\nu} \\
                        & S \ar[r]_(0.4){f} & W/\!/G 
    }
\end{equation}

Soit $h$ une fonction de Hilbert telle que $h(V_0)=1$. Si l'on prend $S={\Hilb}_{h}^{G}(W)$ dans le lemme \ref{diagcom}, alors on obtient l'existence d'un morphisme $\gamma:\ {\Hilb}_{h}^{G}(W) \rightarrow W/\!/G$ tel que le diagramme (\ref{diag1}) commute. Ce morphisme $\gamma$ est appelé morphisme de Hilbert-Chow et va jouer un rôle très important par la suite. Ce morphisme est propre (\cite[Proposition 3.12]{Br}) et donc projectif.

On suppose dorénavant que le schéma affine $W$ est réduit (mais non nécessairement irréductible) et que $W/\!/G$ est irréductible. D'après \cite[Theorem 14.4]{Ei}, le morphisme $\nu:\ W \rightarrow W/\!/G$ est plat sur un ouvert non vide de $ W/\!/G$. On note ${(W/\!/G)}_{*}$ le plus grand ouvert de platitude.

\begin{definition} 
La décomposition (\ref{eq1}) implique que toutes les fibres au dessus de ${(W/\!/G)}_{*}$ ont la même fonction de Hilbert. On note $h_W$ cette fonction de Hilbert; celle-ci est égale à la fonction de Hilbert de la fibre générique de $\nu$. Enfin, on note $\HH:={\Hilb}_{h_W}^{{G}} (W)$ et $\XX:={\mathrm{Univ}}_{h_W}^{{G}} (W)$.
\end{definition}

\begin{remarque} \label{fibre_groupe}
Si l'une des fibres de $\nu$ au dessus de ${(W/\!/G)}_{*}$ est isomorphe au groupe $G$, alors $h_W(M)=\dim (M)$ puisque 
$$k[G] \cong \bigoplus_{M \in \Irr(G)} M^{\oplus \dim(M)}$$ comme $G$-module à gauche.
\end{remarque}

\noindent La proposition qui suit fournit un rôle privilégié à $h_W$:

\begin{proposition} \label{chow}
Soient $G$ un groupe algébrique réductif, $W$ un $G$-schéma affine, $\nu: W \twoheadrightarrow W/\!/G$ le morphisme de passage au quotient, ${(W/\!/G)}_{*}$ l'ouvert de platitude de $\nu$ et $h=h_W$ la fonction de Hilbert de la fibre générique de $\nu$. Alors $\gamma$ induit un isomorphisme 
$$ {\gamma}^{-1}({(W/\!/G)}_{*})  \stackrel{\cong}{\longrightarrow}  {(W/\!/G)}_{*} \ .$$
\end{proposition}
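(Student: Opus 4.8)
The plan is to produce an explicit two-sided inverse to $\gamma$ over the flatness locus directly from the universal property of $\HH$. Write $S := (W/\!/G)_*$ and let $\iota:\ S \hookrightarrow W/\!/G$ be the open immersion. Since $\nu$ is flat over $S$, the base change $\ZZ := S \times_{W/\!/G} W$, regarded as a closed $G$-stable subscheme of $S \times W$ with projection $p_1:\ \ZZ \to S$, is a flat family of $G$-stable closed subschemes of $W$; its fibre over $s \in S$ is $\nu^{-1}(s)$, which has Hilbert function $h_W$ by the very definition of $h_W$. Thus $(p_1:\ \ZZ \to S)$ is an element of $Hilb_{h_W}^G(W)(S)$, and by the universal property of $\HH$ it is classified by a unique morphism $\sigma:\ S \to \HH$ with $\ZZ \cong \XX \times_{\HH} S$. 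I claim $\sigma$ is inverse to $\gamma$ over $S$.

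First I would show $\gamma \circ \sigma = \iota$. By the defining property of the fibre product, $\iota \circ p_1 = \nu \circ p_2$ on $\ZZ$, where $p_2:\ \ZZ \to W$ is the $W$-projection. On the other hand, diagram (\ref{diag1}) for the universal family gives $\gamma \circ \pi = \nu \circ p_2^{\XX}$, and pulling this back along $\sigma$ yields $(\gamma \circ \sigma)\circ p_1 = \nu \circ p_2$ as well. Since $p_1$ is faithfully flat (flat as a base change of $\nu$, and surjective since $\nu$ is), it is an epimorphism, so $\gamma \circ \sigma = \iota$. In particular $\sigma$ factors through the open subscheme $\HH_* := \gamma^{-1}(S)$ and satisfies $\gamma|_{\HH_*}\circ \sigma = \mathrm{id}_S$.

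It remains to prove $\sigma \circ \gamma|_{\HH_*} = \mathrm{id}_{\HH_*}$, which is the heart of the argument. Let $\XX_* := \XX \times_{\HH} \HH_*$ be the restricted universal family. Diagram (\ref{diag1}) provides a closed immersion $i:\ \XX_* \hookrightarrow \HH_* \times_{W/\!/G} W$ over $\HH_*$. The target is the base change of $\nu$ along $\gamma|_{\HH_*}:\ \HH_* \to S$, hence flat over $\HH_*$ with all fibres of Hilbert function $h_W$; and $\XX_*$ is flat over $\HH_*$ with the same Hilbert function $h_W$. Pushing the surjection $i^{\#}$ forward to $\HH_*$ and decomposing into isotypic components as in (\ref{eq1}), $i$ induces for each $M \in \Irr(G)$ a surjection of locally free $\OO_{\HH_*}$-modules of equal rank $h_W(M)$; such a surjection is an isomorphism, since its kernel is locally free of rank $0$. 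Hence $i$ is an isomorphism, i.e. $\XX_* \cong \HH_* \times_{W/\!/G} W$ as families over $\HH_*$.

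Finally, the composite $\sigma \circ \gamma|_{\HH_*}:\ \HH_* \to \HH$ classifies the base change of $\ZZ \to S$ along $\gamma|_{\HH_*}$, namely $\HH_* \times_{W/\!/G} W$, which by the previous step is the restricted universal family $\XX_*$; the inclusion $\HH_* \hookrightarrow \HH$ also classifies $\XX_*$. By the uniqueness in the universal property, $\sigma \circ \gamma|_{\HH_*}$ equals this inclusion, that is $\mathrm{id}_{\HH_*}$. Together with $\gamma|_{\HH_*}\circ \sigma = \mathrm{id}_S$, this shows that $\gamma$ restricts to an isomorphism $\gamma^{-1}((W/\!/G)_*) \xrightarrow{\cong} (W/\!/G)_*$. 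The main obstacle is precisely the isomorphism $\XX_* \cong \HH_* \times_{W/\!/G} W$: the closed immersion comes for free from diagram (\ref{diag1}), but upgrading it to an isomorphism is exactly where the choice $h = h_W$ and the restriction to the flatness locus become indispensable, through the equality of the two Hilbert functions.
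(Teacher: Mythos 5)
Your proof is correct and follows essentially the same route as the paper's: the decisive step --- the closed immersion of the (restricted) universal family into $\HH_* \times_{W/\!/G} W$ is an isomorphism because both sides are flat with isotypic components locally free of the same rank $h_W(M)$, and a surjection of locally free sheaves of equal rank is an isomorphism --- is exactly the paper's Nakayama argument. The only difference is packaging: the paper first shrinks $W$ to a $G$-stable open on which $\nu$ is flat and shows that $W/\!/G$ then represents the Hilbert functor, whereas you work directly over the flatness locus and exhibit $\sigma$ as an explicit two-sided inverse of $\gamma$ via the uniqueness clause of the universal property.
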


\begin{proof}
Quitte à remplacer $W$ par un ouvert affine $G$-stable contenu dans ${\nu}^{-1}({(W/\!/G)}_{*})$, on peut supposer que $\nu$ est plat sur $W$. On souhaite montrer que, dans ce cas, $\gamma:\ \HH \rightarrow W/\!/G$ est un isomorphisme. On fixe $S$ un schéma arbitraire, $(\pi:\ \ZZ \rightarrow S) \in {Hilb}_{h_W}^{G}(W)(S)$ et on reprend les notations du diagramme (\ref{diag1}).

Par hypothèse, $\nu$ est plat sur $W$, donc $p_1:\ S \times_{W/\!/G} W \rightarrow S$ est une famille plate dont chaque fibre $F_s$ a pour fonction de Hilbert $h_W$. De même, par définition, $\pi:\ \ZZ \rightarrow S$ est aussi une famille plate dont toutes les fibres ${\ZZ}_s$ ont pour fonction de Hilbert $h_W$.
Ensuite, on a le morphisme surjectif de ${\OO}_{S}$-modules ${{p}_1}_{*} {\OO}_{S {\times}_{W/\!/G} W} \twoheadrightarrow {{\pi}}_{*} {\OO}_{\ZZ}$. Mais les morphismes ${\pi}$ et $p_1$ sont tous deux $G$-invariants, donc il s'agit en fait d'un morphisme de ${\OO}_{S},G$-modules. On en déduit, pour chaque $M \in \Irr(G)$, un morphisme surjectif de ${\OO}_{S},G$-modules: $({{p}_1}_{*} {\OO}_{S {\times}_{W/\!/G} W})_{(M)} \twoheadrightarrow ({{\pi}}_{*} {\OO}_{\ZZ})_{(M)}$. En passant aux fibres, on obtient pour chaque $s \in S(k)$ le morphisme surjectif: 
$$ {k[F_s]}_{(M)} \twoheadrightarrow {k[{\ZZ}_s]}_{(M)} .$$ 
Or, par définition de $\ZZ$ et par hypothèse sur $\nu$, les espaces vectoriels ${k[{\ZZ}_s]}_{(M)}$ et ${k[F_s]}_{(M)}$ ont la même dimension $h_W(M)$ et donc ${k[F_s]}_{(M)} \cong {k[{\ZZ}_s]}_{(M)}$. D'après le lemme de Nakayama $({{p}_1}_{*} {\OO}_{S {\times}_{W/\!/G} W})_{(M)} \cong ({{\pi}}_{*} {\OO}_{\ZZ})_{(M)}$, d'où un isomorphisme $\ZZ \cong {S {\times}_{W/\!/G} W}$. Il s'ensuit que $W/\!/G$ représente le foncteur de Hilbert, $\nu:\ W \rightarrow W/\!/G$ est la famille universelle et $\gamma:\ \HH  \rightarrow W/\!/G$ est un isomorphisme. 
\end{proof}

\begin{definition} \label{Horb}
On définit la composante principale de $\HH$ par 
$$\HHp:=\overline{{\gamma}^{-1}({(W/\!/G)}_{*})}.$$ 
La variété $\HHp$ est une composante irréductible de $\HHr$, le schéma de Hilbert invariant $\HH$ muni de sa structure réduite. La restriction du morphisme de Hilbert-Chow $\gamma:\ \HHp \rightarrow W/\!/G$ est un isomorphisme au dessus de ${(W/\!/G)}_{*}$ et donc un morphisme birationnel et projectif.
\end{definition}


\noindent On termine cette section avec la proposition suivante (\cite[Proposition 3.10]{Br}):

\begin{proposition} \label{groupaction}
Soit $G'$ un groupe algébrique tel que $G' \subset {\Aut}^G(W)$, alors $G'$ opère dans le schéma de Hilbert invariant $\HH$ et dans $\XX \subset W \times \HH$  de telle sorte que tous les morphismes qui apparaissent dans le diagramme (\ref{diag1}) soient $G'$-équivariants.
\end{proposition}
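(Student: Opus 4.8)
The plan is to invoke the universal property of $\HH = \Hilb_{h_W}^{G}(W)$, which represents the functor ${Hilb}_{h_W}^{G}(W)$, and to show that the tautological action of $\Aut^G(W)$ on $W$ induces an action on this functor, hence on the representing scheme. Fixing the action morphism $a_W \colon G' \times W \to W$, the two features I would record are that each $g \in G' \subset \Aut^G(W)$ commutes with the $G$-action, so it carries $G$-stable closed subschemes to $G$-stable closed subschemes, and that the induced isomorphism on fibre coordinate rings is an isomorphism of $G$-modules, hence preserves the Hilbert function $h_W$. As a consequence $G'$ stabilizes $k[W]^G$ and descends to an action $a_{W/\!/G}$ on $W/\!/G$ for which $\nu$ is $G'$-equivariant.

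To produce the action morphism $a \colon G' \times \HH \to \HH$ at the scheme level, and not merely on $k$-points, I would build a single family over the base $S = G' \times \HH$. Starting from the universal family $\XX \subset W \times \HH$, I pull it back along the projection $G' \times \HH \to \HH$ to get $G' \times \XX \subset G' \times W \times \HH$, then transport it through the isomorphism $G' \times W \times \HH \xrightarrow{\sim} W \times (G' \times \HH)$, $(g,w,z) \mapsto (g \cdot w, g, z)$, which is an isomorphism over $G' \times \HH$ and is $G$-equivariant precisely because $g$ commutes with $G$. The image $\ZZ' \subset W \times (G' \times \HH)$ is a $G$-stable closed subscheme, flat over $G' \times \HH$ (being abstractly isomorphic over the base to the pulled-back universal family), with Hilbert function $h_W$ by the two features above; hence $\ZZ' \in {Hilb}_{h_W}^{G}(W)(G' \times \HH)$ and the universal property yields a unique $a \colon G' \times \HH \to \HH$ classifying it.

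Then I would verify the group axioms: associativity and the identity axiom both reduce, through the uniqueness clause of the universal property, to the corresponding identities for $a_W$ on $W$, since the two families over $G' \times G' \times \HH$ (respectively over $\HH$) obtained from the two sides of each axiom classify the same $G$-stable subscheme. The same construction equips $\XX$ with a compatible $G'$-action making $\pi \colon \XX \to \HH$ and $p_2 \colon \XX \to W$ equivariant. Finally, equivariance of $\gamma$ follows from the uniqueness in Lemma \ref{diagcom}: both $\gamma \circ a$ and $a_{W/\!/G} \circ (\mathrm{id}_{G'} \times \gamma)$ make the relevant instance of diagram (\ref{diag1}) commute, so they must coincide.

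The main obstacle I anticipate is book-keeping rather than conceptual: upgrading the evident set-theoretic action on closed points to an honest morphism of schemes, and checking the cocycle condition. The device of constructing the single family $\ZZ'$ over $G' \times \HH$, rather than one fibre at a time, is exactly what makes this rigorous, and the uniqueness half of the representability of ${Hilb}_{h_W}^{G}(W)$ then carries out the group axioms and the equivariance of $\gamma$ with no further computation.
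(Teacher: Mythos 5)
The paper does not prove this proposition at all: it is quoted directly from \cite[Proposition 3.10]{Br}, so there is no in-text argument to compare against. Your proof is correct and is the standard functorial one (and is essentially the argument of the cited reference): twisting the pullback of the universal family by the action morphism to get a single flat family over $G'\times\HH$, invoking representability to obtain $a\colon G'\times\HH\to\HH$, reducing the group axioms to the uniqueness clause, and deducing the equivariance of $\gamma$ from the uniqueness of the morphism $f$ in Lemma \ref{diagcom}. The one point worth making explicit is why the twisted family still has Hilbert function $h_W$ on each fibre and locally free isotypic components of the right ranks: this is because the shuffling isomorphism $(g,w,z)\mapsto(g\cdot w,g,z)$ is $G$-equivariant over the base (here is where $G'\subset\Aut^G(W)$, rather than $\Aut(W)$, is used), so the pushforward of $\ZZ'$ is isomorphic as an $\OO_{G'\times\HH},G$-module to that of the pulled-back universal family. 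You note this, so the argument is complete.
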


\begin{remarque} 
Sous les hypothèses de la proposition \ref{groupaction}, le morphisme ${\pi}^{\#}: {\OO}_{\HH} \rightarrow \FF:={\pi}_{*}{\OO}_{\XX}$ est un morphisme de $G'$-modules, et les ${\FF}_{(M)}$ qui apparaissent dans la décomposition (\ref{eq1}) sont des ${\OO}_{S},G'$-modules. 
\end{remarque}

\section{Points fixes pour l'opération d'un groupe de Borel et espaces tangents} \label{ConnexitéetTangence}

On fixe un sous-groupe algébrique $G' \subset {\Aut}^G(W)$ et un sous-groupe de Borel $B' \subset G'$. Dans cette section, on démontre une série de lemmes qui seront utiles par la suite pour montrer des résultats de connexité et déterminer les espaces tangents en certains points de $\HH$. 

\begin{lemme} \label{fixespoints}
On suppose que $W/\!/G$ admet une unique orbite fermée pour l'opération de $G'$ et que cette orbite est un point $x$. Alors chaque fermé $G'$-stable de $\HH$ contient au moins un point fixe pour l'opération de $B'$. Si de plus $\HH$ admet un unique point fixe de $B'$, alors $\HH$ est connexe. 
\end{lemme}

\begin{proof}
Le morphisme $\gamma$ est projectif et $G'$-équivariant donc la fibre ensembliste $\gamma^{-1}(x)$ est une $G'$-variété projective. Soit $C$ un fermé $G'$-stable de ${\HH}$, alors $\gamma (C)$ est un fermé $G'$-stable de $W/\!/G$. Donc $x \in \gamma(C)$, autrement dit $C \cap \gamma^{-1}(x)$ est non-vide. Donc $C \cap \gamma^{-1}(x)$ contient au moins un point fixe pour l'opération de $B'$, d'après le théorème de point fixe de Borel (\cite[Theorem 10.4]{Bo}). Enfin, chaque composante connexe de $\HH$ admet au moins un point fixe de $B'$, d'où la dernière assertion du lemme.  
\end{proof}  


\begin{lemme} \label{Hlisse4}
On suppose, comme dans le lemme \ref{fixespoints}, que $W/\!/G$ admet une unique orbite fermée $x$ pour l'opération de $G'$. Alors on a l'équivalence:
$$ \HH=\HHp \text{ est une variété lisse}  \Leftrightarrow  \left\{
    \begin{array}{l}
        \forall Z \in \HH^{B'},\ \dim(T_{Z} \HH)=\dim(\HHp), \text{ et }\\
        \HH \text{ est connexe.}
    \end{array}
\right.$$
\end{lemme}

\begin{proof}
Le sens $\Rightarrow$ est clair. Montrons l'autre implication. 
On note $d:=\dim (\HHp)$. L'ensemble $E:=\{Z \in \HHr \ \mid \ \dim(T_Z\HH)>d\}$ est un fermé $G'$-stable de $\HHr$. On suppose que $E$ est non-vide. D'après le lemme \ref{fixespoints}, le fermé $E$ contient un point fixe de $B'$, noté $Z_0$; alors $\dim(T_{Z_0} \HH)>d$ ce qui contredit notre hypothèse de départ. Il s'ensuit que $E$ est vide et donc $\HH$ est une variété lisse. On a supposé de plus que $\HH$ est connexe, donc $\HH$ est irréductible d'où $\HH=\HHp$. 
\end{proof}

Soit $Z \in \HH(k)$ et $I \subset k[W]$ l'idéal de $Z$. On note $R:=k[W]/I$ l'algèbre des fonctions régulières de $Z$. On rappelle le résultat important suivant (\cite[Proposition 3.5]{Br}):
\begin{proposition} \label{isoTangent}
On a un isomorphisme canonique
$$T_Z \HH \cong \Hom_{R}^{G}(I/I^2,R) .$$
\end{proposition}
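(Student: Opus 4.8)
Le plan est d'�tablir cet isomorphisme par la th�orie des d�formations au premier ordre, en s'appuyant sur la propri�t� universelle du sch�ma de Hilbert invariant rappel�e plus haut (\cite[Theorem 2.11]{Br}). Notons $A:=k[\varepsilon]/(\varepsilon^2)$ l'alg�bre des nombres duaux, qui est locale artinienne de corps r�siduel $k$. Par d�finition de l'espace tangent de Zariski, $T_Z\HH$ s'identifie � l'ensemble des morphismes $\Spec(A) \rightarrow \HH$ dont l'image du point ferm� est $Z$, donc par repr�sentabilit� � l'ensemble des familles $(\pi:\ \ZZ' \rightarrow \Spec(A)) \in {Hilb}_{h_W}^G(W)(\Spec A)$ de fibre sp�ciale $Z$. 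Une telle famille revient � la donn�e d'un id�al $G$-stable $\widetilde I \subset A \otimes_k k[W]$ tel que $(A \otimes_k k[W])/\widetilde I$ soit plat sur $A$ et dont la r�duction modulo $\varepsilon$ soit $I$. Comme $A$ est local artinien, la platitude �quivaut ici � la libert� sur $A$; j'observerais de plus que la condition sur les multiplicit�s figurant dans la d�finition du foncteur est alors automatique, car la d�composition (\ref{eq1}) montre que chaque $\FF_{(M)}$ est un $A$-module libre, de rang �gal � $\dim_k k[Z]_{(M)}=h_W(M)$.

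Ensuite, j'�tablirais une bijection $k$-lin�aire entre ces id�aux $\widetilde I$ et l'espace $\Hom_{k[W]}^G(I,R)$. � un tel $\widetilde I$, on associe l'application $\phi$ suivante: pour $f \in I$, la r�duction modulo $\varepsilon$ fournit $g \in k[W]$ avec $f+\varepsilon g \in \widetilde I$, et la platitude entra�ne l'�galit� $\varepsilon k[W] \cap \widetilde I = \varepsilon I$, de sorte que $g$ est bien d�fini modulo $I$; on pose alors $\phi(f):=\overline{g} \in R$. Je v�rifierais que $\phi$ est $k[W]$-lin�aire (ce qui r�sulte de ce que $\widetilde I$ est un id�al) et $G$-�quivariante (ce qui r�sulte de la $G$-stabilit� de $\widetilde I$). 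R�ciproquement, � toute $\phi \in \Hom_{k[W]}^G(I,R)$, on associe
$$\widetilde I_\phi := \{\, f+\varepsilon g \ \mid\ f \in I,\ g \in k[W],\ \overline{g}=\phi(f) \,\},$$
et je v�rifierais que $\widetilde I_\phi$ est un id�al $G$-stable de $A \otimes_k k[W]$, plat sur $A$, dont la r�duction modulo $\varepsilon$ est $I$, puis que ces deux constructions sont r�ciproques et compatibles aux structures de $k$-espace vectoriel. Enfin, tout morphisme $k[W]$-lin�aire $I \rightarrow R$ annule $I^2$, d'o� l'identification $\Hom_{k[W]}^G(I,R)=\Hom_R^G(I/I^2,R)$, qui fournit l'isomorphisme annonc�.

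Le point le plus d�licat sera de v�rifier proprement l'�quivalence entre la platitude de la famille et la forme explicite $\widetilde I_\phi$ de l'id�al: c'est l� qu'intervient de mani�re essentielle l'�galit� $\varepsilon k[W] \cap \widetilde I=\varepsilon I$, qui garantit � la fois la bonne d�finition de $\phi$ et la description exacte des id�aux admissibles. Il faudra �galement mener avec soin les v�rifications $G$-�quivariantes, en tenant compte de la d�composition isotypique (\ref{eq1}): la $G$-stabilit� de $\widetilde I$ doit se traduire pr�cis�ment par la $G$-�quivariance de $\phi$, et le caract�re canonique de l'isomorphisme, c'est-�-dire son ind�pendance vis-�-vis des choix de rel�vements $g$, devra �tre soulign�.
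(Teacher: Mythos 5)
Le texte ne d�montre pas cette proposition : il la rappelle en citant \cite[Proposition 3.5]{Br}. Votre argument est le calcul standard des d�formations au premier ordre (identification de $T_Z\HH$ aux $A$-points de $\HH$ au-dessus de $Z$ pour $A=k[\varepsilon]/(\varepsilon^2)$, puis bijection entre id�aux plats $G$-stables et $\Hom_{k[W]}^G(I,R)=\Hom_R^G(I/I^2,R)$ via l'�galit� $\varepsilon k[W]\cap\widetilde I=\varepsilon I$) ; il est correct et co�ncide pour l'essentiel avec la preuve de la r�f�rence cit�e, y compris l'observation que la condition sur les multiplicit�s est automatique par platitude sur une base locale artinienne.
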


On suppose maintenant que $R \cong k[G]$ comme $G$-module, c'est-à-dire pour chaque $M \in \Irr(G)$, on a $h_W(M)=\dim(M)$. Soit $N$ un $G$-sous-module de $k[W]$ contenu dans $I$ tel que le morphisme naturel de $R,G$-modules $\delta:\ R \otimes N \rightarrow I/I^2$ soit surjectif; et soit $\RRR$ un $G$-sous-module de $R \otimes N$ tel que l'on ait la suite exacte de $R,G$-modules
\begin{equation} \label{complexeCornFlakes}
\begin{array}{cccccc}
R \otimes \RRR  &  \stackrel{\rho}{\longrightarrow} & R \otimes N & \stackrel{\delta}{\longrightarrow} & I/I^2 & \rightarrow 0 \\
  &&f \otimes 1& \mapsto & \overline{f} & 
\end{array}
\end{equation}
où l'on note $\overline{f}$ l'image de $f \in I$ dans $I/I^2$.  

\begin{remarque}
Le module $N$ est appelé module des générateurs de $I/I^2$. Le module $\RRR$ est appelé module des relations entre les générateurs de $I/I^2$.
\end{remarque} 

\begin{lemme} \label{dimRmod}
Si $\delta$ est un isomorphisme, alors 
$$\dim(\Hom_{R}^{G}(I/I^2,R)= \dim (N) .$$ 
\end{lemme}

\begin{proof}
On a l'isomorphisme canonique d'espaces vectoriels: 
\begin{equation*} 
\begin{array}{ccc}
{\Hom}_R^{G}(R \otimes N,R) & \cong & {\Hom}^{G}(N,R) \\
 f & \mapsto & (n  \mapsto f(1 \otimes n))\\
 (r \otimes n  \mapsto r \, g(n))  & \leftmapsto & g
\end{array}
\end{equation*} 
et donc 
\begin{align*}
\dim({\Hom}_R^{G}(R \otimes N,R))&=\dim({\Hom}^{G}(N,R))\\
       &=\dim({\Hom}^{G}(N , k[G] )) \\
       &=\dim({(k[G] \otimes N^*)}^{G}) \\
       &=\dim(\Mor^G(G,N^*))\\
       &=\dim(N)
\end{align*} 
où l'on note $\Mor^G(G,N^*)$ l'espace vectoriel des morphismes de schémas $G$-équivariants de $G$ dans $N^*$.
\end{proof}


On applique le foncteur contravariant et exact à gauche ${\Hom}_R(\,.\,,R)$ à la suite exacte (\ref{complexeCornFlakes}) puis on prend les $G$-invariants. On obtient alors la suite exacte d'espaces vectoriels de dimension finie:
\begin{equation} \label{complexeCornFlakes2}
  \xymatrix{
    0 \ar[r] &{\Hom}_R^{G}(I/I^2,R) \ar[r]^{\delta^*} & {\Hom}_R^{G}(R {\otimes} N,R) \ar[r]^{\rho^*} \ar[d]^{\cong} & {\Hom}_R^{G}(R \otimes \RRR,R) \ar[d]^{\cong} \\
        &   &  \Hom^G(N,R) & \Hom^G(\RRR,R)
  }
\end{equation}

On a donc $T_Z \HH \cong \Im(\delta^*) = \Ker(\rho^*)$. De plus, si l'idéal $I$ est $B'$-stable, alors on peut choisir pour $N$ et $\RRR$ des $B' \times G$-modules tels que tous les morphismes de la suite exacte (\ref{complexeCornFlakes}) soient des morphismes de $R,B' \times G$-modules et que tous les morphismes de la suite exacte (\ref{complexeCornFlakes2}) soient des morphismes de $B'$-modules.

\begin{lemme} \label{InegTang}
Avec les notations précédentes, on a
$$\dim({\Hom}_R^G(I/I^2,R)) = \dim(N)- \rg(\rho^*) .$$
\end{lemme}

\begin{proof}
\begin{align*}
\dim({\Hom}_R^{G}(I/I^2,R)) &= \dim(\Ker(\rho^*)) \\
                            &= \dim(\Hom_R^{G}(R \otimes N,R))- \rg(\rho^*)\\  
                            &= \dim(N)- \rg(\rho^*) \text{ d'après le lemme \ref{dimRmod}.} 
\end{align*} 
\end{proof}

\section{Construction de morphismes équivariants vers des grassmanniennes}



Soit $G'$ un groupe algébrique tel que $G' \subset {\Aut}^G(W)$ comme précédemment. Si $E$ un espace vectoriel et $m$ un entier, on note $\Gr(m,E)$ la grassmannienne des sous-espaces vectoriels de dimension $m$ dans $E$. Le but de cette section est de démontrer la

\begin{proposition} \label{morphismegrass}
Pour tout $M \in \Irr(G)$, il existe un $G'$-sous-module $F_M \subset {k[W]}_{(M)}$ de dimension finie qui engendre ${k[W]}_{(M)}$ comme $k[W]^G,G'$-module, et il existe un morphisme de schémas $G'$-équivariant
$${\delta}_{M}: \HH \rightarrow \Gr(h_W(M),F_{M}^{*}).$$  
\end{proposition}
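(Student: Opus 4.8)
The plan is to construct the finite-dimensional $G'$-module $F_M$ by a finiteness argument, and then to obtain $\delta_M$ from a surjection of sheaves on $\HH$ via the universal property of the Grassmannian. First I would build $F_M$. Because $G$ is reductive, $k[W]_{(M)}=\Hom^G(M,k[W])\cong(k[W]\otimes M^*)^G$ is the module of invariants of the finitely generated $k[W]$-module $k[W]\otimes M^*$, hence is a finitely generated $k[W]^G$-module. I would pick finitely many generators; since the $G'$-action on $k[W]_{(M)}$ induced by $G'\subset\Aut^G(W)$ is rational, these generators lie in a common finite-dimensional $G'$-submodule $F_M\subset k[W]_{(M)}$. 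Being $G'$-stable and containing a generating set, $F_M$ generates $k[W]_{(M)}$ as a $k[W]^G,G'$-module; that is, $k[W]^G\cdot F_M=k[W]_{(M)}$.

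Next I would produce the relevant surjection over $\HH$. The closed immersion $\XX\hookrightarrow\HH\times W$ and the affineness of $W$ give a surjection of $\OO_\HH,G\times G'$-modules $\OO_\HH\otimes_k k[W]\twoheadrightarrow\FF:=\pi_*\OO_\XX$. Applying the exact multiplicity functor $(-)_{(M)}=\Hom^G(M,-)$ (exact since $G$ is reductive) gives a surjection $\OO_\HH\otimes_k k[W]_{(M)}\twoheadrightarrow\FF_{(M)}$ onto the locally free sheaf $\FF_{(M)}$ of rank $h_W(M)$. Restricting along $F_M\hookrightarrow k[W]_{(M)}$ yields an $\OO_\HH,G'$-linear map
\[
    u_M:\ \OO_\HH\otimes_k F_M\longrightarrow\FF_{(M)},
\]
and the key point is that $u_M$ is surjective. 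By coherence of $\FF_{(M)}$ and Nakayama it is enough to check surjectivity on each fibre.

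For the fibrewise step, fix $Z\in\HH(k)$ with coordinate ring $R=k[Z]$; the fibre of $\FF_{(M)}$ at $Z$ is $R_{(M)}$, of dimension $h_W(M)$, and the $G$-equivariant surjection $k[W]\twoheadrightarrow R$ induces a surjection $k[W]_{(M)}\twoheadrightarrow R_{(M)}$. Since the Hilbert--Chow morphism sends $Z$ to a single point of $W/\!/G$, the subscheme $Z$ lies in one fibre of $\nu$, so every invariant is constant on $Z$: the map $k[W]^G\to R^G$ has image $k$, i.e. $R^G=k$ (equivalently $h_W(V_0)=1$). Consequently the image of $k[W]^G\cdot F_M=k[W]_{(M)}$ in $R_{(M)}$ is the $k$-span of the image of $F_M$; as that image is all of $R_{(M)}$, the restriction $F_M\to R_{(M)}$ is surjective. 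Thus $u_M$ is fibrewise surjective, hence surjective.

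Finally, $u_M$ presents $\FF_{(M)}$ as a rank-$h_W(M)$ locally free quotient of the trivial bundle $\OO_\HH\otimes_k F_M$; dualising, $\FF_{(M)}^*$ becomes a rank-$h_W(M)$ subbundle of $\OO_\HH\otimes_k F_M^*$, which is precisely the datum classified by $\Gr(h_W(M),F_M^*)$. The universal property of the Grassmannian then produces the morphism $\delta_M:\ \HH\to\Gr(h_W(M),F_M^*)$, and since $F_M$, the surjection $u_M$ and the $G'$-structure on $\FF_{(M)}$ are all $G'$-equivariant, $\delta_M$ is $G'$-equivariant for the natural action on $\Gr(h_W(M),F_M^*)$ coming from $F_M^*$. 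I expect the main obstacle to be exactly the fibrewise surjectivity of $u_M$: the generation hypothesis lives over the whole base, and the argument hinges on the invariants collapsing to scalars ($R^G=k$) at each closed point, which turns generation over $k[W]^G$ into a plain $k$-spanning statement.
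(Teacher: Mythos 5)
Your proof is correct and follows essentially the same route as the paper: finite generation of $k[W]_{(M)}$ over $k[W]^G$ plus rationality of the $G'$-action gives $F_M$, the surjection $\OO_{\HH}\otimes F_M \twoheadrightarrow \FF_{(M)}$ is established, and the universal property of the Grassmannian yields $\delta_M$. The only (cosmetic) difference is that the paper gets the surjectivity of $u_M$ by composing the two surjections over $k[W/\!/G]$ using right-exactness of the tensor product, whereas you verify it fibrewise via Nakayama and the observation that $R^G=k$ collapses generation over $k[W]^G$ to $k$-spanning; both arguments are valid and amount to the same thing.
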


\begin{proof}
Avec les notations du diagramme (\ref{diag1}), l'inclusion $\XX \hookrightarrow W {\times}_{W/\!/G} \HH$ est $G' \times G$-équivariante et donc induit un morphisme surjectif de ${\OO}_{\HH},G' \times G$-modules ${p_2}_{*} {\OO}_{\HH {\times}_{W/\!/G} W} \rightarrow \FF:={\pi}_{*} {\OO}_{\XX}$. Mais ${p_2}_{*} {\OO}_{\HH {\times}_{W/\!/G} W}={\OO}_{\HH} {\otimes}_{k[W/\!/G]} k[W]$, où l'on rappelle que $k[W/\!/G]={k[W]}^{G}$ par définition de $W/\!/G$. On peut alors considérer la décomposition en ${\OO}_{\HH},G' \times G$-modules 
\begin{equation}
{\OO}_{\HH} {\otimes}_{k[W/\!/G]} k[W]\cong \bigoplus_{M \in \Irr(G)} {\OO}_{\HH} {\otimes}_{k[W/\!/G]} {k[W]}_{(M)} {\otimes} M
\end{equation}
où l'opération de $G'$ dans ${\OO}_{\HH} {\otimes}_{k[W/\!/G]} k[W]$ est induite par l'opération de $G'$ dans $W$ (et $G'$ opère trivialement dans $M$).
On en déduit, pour chaque $M \in \ \Irr(G)$, un  morphisme surjectif de ${\OO}_{\HH},G'$-modules 
\begin{equation} \label{m1}
{\OO}_{\HH} {\otimes}_{k[W/\!/G]} {k[W]}_{(M)} \twoheadrightarrow \FF_{(M)} . 
\end{equation}
Il s'ensuit que l'espace vectoriel ${k[W]}_{(M)}$ engendre ${\FF}_{(M)}={\Hom}^{G}(M,\FF)$ comme ${\OO}_{\HH},G'$-module. Malheureusement, ${k[W]}_{(M)}$ est en général un espace vectoriel de dimension infinie. Cependant,  ${k[W]}_{(M)}$ est un ${k[W]}^{G}$-module de type fini, donc il existe un $G'$-module $F_M$ de dimension finie qui engendre ${k[W]}_{(M)}$ comme ${k[W]}^{G}$-module: 
\begin{equation} \label{m2}
{k[W]}^{G} {\otimes} F_M \twoheadrightarrow {k[W]}_{(M)} .
\end{equation} 
Ensuite, on déduit de (\ref{m1}) et (\ref{m2}) le morphisme surjectif de ${\OO}_{\HH},G'$-modules: 
\begin{equation}  \label{pacfib}
{\OO}_{\HH} {\otimes} F_M \twoheadrightarrow {\FF}_{(M)}
\end{equation}
où l'on rappelle que ${\FF}_{(M)}$ est un ${{\OO}_{\HH}}$-module localement libre de rang $h_W(M)$. Or, d'après \cite[Exercice 6.18]{EH}, un tel morphisme est équivalent à la donnée d'un morphisme de schémas  
$$\delta:\ \HH \rightarrow \Gr(\dim (F_M)-h_W(M),F_{M}).$$ 
On vérifie que ce morphisme $\delta$ est $G'$-équivariant. Enfin, on a l'isomorphisme $G'$-équivariant
$$\Gr(\dim(F_M)-h_W(M),F_M) \cong \Gr(h_W(M),F_M^*)$$ 
ce qui termine la preuve de la proposition.
\end{proof}


\begin{remarque} 
La proposition \ref{morphismegrass} reste vraie plus généralement si l'on considère une fonction de Hilbert $h$ telle que $h(V_0)=1$. 
\end{remarque}

On termine cette section par une description ensembliste du morphisme ${\delta}_M$. On rappelle que, pour tout $G$-module $M$, on a les isomorphismes canoniques suivants: 
\begin{equation} \label{isocanonique}
\begin{array}{rccccc}
k{[W]}_{(M)}:=&{\Hom}^{G}(M,k[W]) &  \cong  & {(M^{*} \otimes k[W])}^{G} & \cong & \Mor^G(W,M^*) . \\
&(m \mapsto \phi(m) \, f) & &  \phi \otimes f  &  & (w \mapsto f(w) \, \phi)
\end{array}
\end{equation}
Via ces isomorphismes, les éléments du $G'$-module $F_M \subset k[W]_{(M)}$ s'identifient à des morphismes $G$-équivariants de $W$ dans $M^{*}$ et l'application $\delta_M$ est donnée ensemblistement par:
\begin{equation}
\delta_M: \HH(k) \rightarrow \Gr(\dim (F_M)-h_W(M),F_M),\ Z \mapsto \Ker(f_Z) 
\end{equation}
où 
$$\begin{array}{ccccc} \label{descirption_explicite}
f_Z & : & F_M & \twoheadrightarrow & \FF_{M,Z}  \\
& & q & \mapsto & q_{|Z} 
\end{array}$$
est l'application linéaire surjective obtenue en passant aux fibres dans (\ref{pacfib}).

\section{Les différentes situations étudiées}

\subsection{Les différentes situations étudiées}  \label{lesdiffsituations}

Soient $E$ et $F$ des espaces vectoriels de dimension finie sur $k$, on note $\Hom(E,F)$ l'espace vectoriel des applications $k$-linéaires de $E$ dans $F$. On a toujours un isomorphisme canonique
\begin{equation} \label{iso_canonique}
 \Hom(E,F) \cong E^* \otimes F .
\end{equation} 
Nous allons nous intéresser aux cinq situations qui suivent.\\

\begin{itemize}

\item \textit{Situation $1$}: soient $V$ et $V'$ des espaces vectoriels de dimensions $n$ et $n'$ respectivement. On note $W:={\Hom}(V',V)$, $G:=SL(V)$ et $G':=GL(V')$. Le groupe $G' \times G$ opère dans $W$ de la façon suivante:
\begin{equation}  \label{actionSLLn}
\forall w \in W,\ \forall (g',g) \in G' \times G,\ (g',g).w:=g \circ w \circ g'^{-1} .
\end{equation}

\item \textit{Situation $2$}: soient $V$, $V_1$ et $V_2$ des espaces vectoriels de dimensions $n$, $n_1$ et $n_2 \in {\NN}^{*}$ respectivement. On note 
$W:={\Hom}(V_1,V) \times {\Hom}(V,V_2)$, $G:=GL(V)$ et $G':=GL(V_1) \times GL(V_2)$. Le groupe $G' \times G$ opère dans $W$ de la façon suivante:
\begin{equation}  \label{aktionGLn}
\forall (u_1,u_2) \in W,\ \forall (g_1,g_2) \in G',\ \forall g \in G,\ (g_1,g_2,g).(u_1,u_2):=(g \circ u_1 \circ g_1^{-1},g_2 \circ u_2 \circ g^{-1}) .
\end{equation} 

\item \textit{Situation $3$}: comme la situation $1$ sauf que l'on considère $G:=O(V)$, le groupe des automorphismes de $V$ qui préservent la forme quadratique $q$ sur $V$ définie par: 
\begin{equation} \label{defQ}
\forall x:=(x_1,\ldots,x_n) \in V,\ q(x_1,\ldots,x_n):= \sum_{i=1}^{n} x_i^2 .
\end{equation} 

\item \textit{Situation $4$}: comme la situation $1$ sauf que l'on considère $G:=SO(V)=O(V) \cap SL(V)$.\\

\item \textit{Situation $5$}: comme la situation $1$ sauf que l'on suppose que $n$ est pair et on considère $G:=Sp(V)$, le groupe des automorphismes de $V$ qui préservent la forme symplectique $\Omega$ sur $V$ définie par:
\begin{equation}  \label{defFormSymp}
\forall \ x:=(x_1,\ldots,x_n),\ y:=(y_1,\ldots,y_n) \in V,\ \Omega(x_1,\ldots,x_n,y_1,\ldots,y_n):= \sum_{i=1}^{n/2} x_{2i-1} y_{2i}-y_{2i-1} x_{2i} .
\end{equation}
\end{itemize}

La situation $1$, qui est de loin la plus simple, sera traitée dans la section \ref{casSln}. Les autres situations sont plus compliquées et seront traitées dans les chapitres \ref{chp2} et \ref{chp3}. Dans chaque situation, $V$ est la représentation standard de $G$ et on note $V^*$ sa duale. Remarquons que, dans les situations $3$ à $5$, le groupe $G$ préserve une forme bilinéaire non-dégénérée, et donc $V \cong V^*$ comme $G$-module. On remarque également que les opérations de $G'$ et $G$ sur $W$ commutent dans les cinq situations, donc d'après la proposition \ref{groupaction}, le groupe $G'$ opère dans $W/\!/G$, dans $\HH$ et dans $\XX$.

Pour tous $p,q \in \NN^*$, on note $\MM_{p,q}(k)$ l'espace vectoriel des matrices de taille $p \times q$ à coefficients dans $k$. Nous serons amené par la suite à faire des calculs explicites sur des idéaux de $k[W]$. On fixe donc une bonne fois pour toute des bases 
\begin{itemize} \renewcommand{\labelitemi}{$\bullet$}
\item $\BB:=\{b_1,\ldots,b_n\}$ de $V$,
\item $\BB':=\{c_1,\ldots,c_{n'}\}$ de $V'$,
\item $\BB_1:=\{e_1,\ldots,e_{n_1}\}$ de $V_1$,
\item $\BB_2:=\{f_1,\ldots,f_{n_2}\}$ de $V_2$,
\end{itemize}
et on note $\BB^*$, $\BB'^*$, $\BB_1^*$, $\BB_2^*$ les bases duales associées. Via le choix de ces bases, on a des isomorphismes
\begin{itemize} \renewcommand{\labelitemi}{$\bullet$}
\item $\Hom(V',V) \cong \MM_{n,n'}(k)$, 
\item $\Hom(V_1,V) \times \Hom(V,V_2) \cong \MM_{n,n_1}(k) \times \MM_{n_2,n}(k)$,
\item $\Hom(V_1,V_2) \cong \MM_{n_2,n_1}(k)$. 
\end{itemize}

Dans la situation 2, on note
\begin{align*}
&B_1:=\Stab_{G'}(\left\langle e_1\right\rangle, \left\langle e_1, e_2\right\rangle, \ldots , \left\langle e_1, \ldots,  e_{n_1-1}\right\rangle) ,\\
&B_2:=\Stab_{G'}(\left\langle f_1 \right\rangle, \left\langle f_1, f_2 \right\rangle, \ldots , \left\langle f_1, \ldots,  f_{n_2-1} \right\rangle).
\end{align*} 
Autrement dit, $B_1$ (resp. $B_2$) est le sous-groupe de Borel de $GL_{n_1}(k)$ (resp. $GL_{n_2}(k)$) formé des matrices triangulaires supérieures. Pour $i=1,2$, on note $U_i$ le radical unipotent de $B_i$ et $T_i$ le tore maximal des matrices diagonales de $B_i$. Alors $B':=B_1 \times B_2$ est un sous-groupe de Borel de $G'$, son radical unipotent est $U':=U_1 \times U_2$ et $T':=T_1 \times T_2$ est un tore maximal de $B'$. Enfin, on note $B$ le sous-groupe de Borel formé des matrices triangulaires inférieures de $G\cong GL_n(k)$, $U$ le radical unipotent de $B$ et $T$ le tore maximal des matrices diagonales de $B$.

Dans les situations 1, 3, 4 et 5, on note 
$$ B':=\Stab_{G'}(\left\langle c_1\right\rangle, \left\langle c_1, c_2\right\rangle, \ldots , \left\langle c_1, \ldots,  c_{n'-1}\right\rangle).$$
Autrement dit, $B'$ est le sous-groupe de Borel de $G'$ formé des matrices triangulaires supérieures. On note $U'$ le radical unipotent de $B'$ et $T'$ le tore maximal des matrices diagonales de $B'$. Enfin, on fixe $B$ un sous-groupe de Borel de $G$, on note $U$ le radical unipotent de $B$ et $T$ un tore maximal dans $B$.

\subsection{Rappels concernant la théorie des représentations des groupes classiques}  \label{rappelsthedesreps}

Notre référence pour la théorie des représentations des groupes classiques est \cite{FH}. Dans cette section, on fixe des notations et on rappelle quelques faits qui nous seront utiles par la suite. Soient 
\begin{itemize} \renewcommand{\labelitemi}{$\bullet$}
\item $E$ un espace vectoriel de dimension finie $d$, 
\item $G$ un sous-groupe fermé, connexe et réductif de $GL(E)$,
\item $B$ un sous-groupe de Borel de $G$,
\item $T$ un tore maximal de $B$.
\end{itemize}
Le groupe des caractères $\Lambda:=\XX(T)$ ne dépend pas (à isomorphisme près) des choix de $B$ et $T$ et est appelé le réseau des poids de $G$. On note $\Phi=\Phi(G,T)$ les racines de l'algèbre de Lie $\gg$ de $G$, c'est-à-dire les poids de $T$ dans $\gg$ pour l'action adjointe. Le choix de $B$ définit le sous-ensemble $\Phi_+ \subset \Phi$ des racines positives. On note $\Pi$ le sous-monoïde de $\Lambda$ engendré par les éléments de $\Phi_+$ et on définit un ordre partiel sur $\Lambda$ de la façon suivante:
$$\forall \lambda, \mu \in \Lambda,\  \mu \leq \lambda \Leftrightarrow \lambda-\mu \in \Pi .$$
Si $M$ est un $G$-module irréductible, alors $M$ admet un unique plus haut poids $\lambda$ pour l'ordre partiel $\leq$ et ce poids détermine entièrement $M$. On note alors $M=E_{\lambda}$. On appelle poids dominants, noté $\Lambda_+$, l'ensemble des poids qui apparaissent comme plus hauts poids de $G$-modules irréductibles. On a donc une correspondance bijective
$$\lambda \in \Lambda_+ \leftrightarrow E_{\lambda} \in \Irr(G).$$
La formule des dimensions de Weyl permet d'exprimer explicitement la dimension de $E_{\lambda}$ en fonction de $\lambda$ (\cite[Corollary 24.6]{FH}).

\begin{remarque}
En pratique, on utilisera la notation $S^{\lambda}(E)$ (resp. $\Gamma_{\lambda}(E)$) pour désigner $E_{\lambda}$ lorsque $G=GL(E)$ (resp. $G=SO(E)$ ou $G=Sp(E)$). \end{remarque}

\subsubsection{Cas du groupe linéaire}   \label{threpGLn}

Soit $G:=GL(E)$ le groupe linéaire, alors $\Lambda$ est un $\ZZZ$-module libre de rang $d$ et on note $\{ \epsilon_1, \ldots, \epsilon_{d}\}$ une base de $\Lambda$. Pour un choix approprié de $B$, on a 
$$\Lambda_+=\{ r_1 \epsilon_1 + \ldots + r_{d} \epsilon_{d} \in \Lambda \ |\ r_1 \geq \ldots \geq r_d \}.$$
Si $r_d \geq 0$ (resp. $r_1 \leq 0$), alors la représentation $S^{\lambda}(E)$ est polynomiale (resp. duale d'une représentation polynomiale). En particulier, lorsque $r_d \geq 0$, alors $E \mapsto S^{\lambda}(E)$ est un foncteur appelé foncteur de Schur. Si $r_1.r_d<0$, alors il existe $1 \leq t \leq d$ tel que $r_t \geq 0 \geq  r_{t+1}$ et un unique $d$-uplet $(k_1, \ldots,k_d) \in {\NN}^d$ tel que 
\begin{equation}
\lambda=k_1 {\epsilon}_1+k_2 {\epsilon}_2+ \ldots+k_t {\epsilon}_t-k_{t+1} {\epsilon}_{t+1}- \ldots-k_d {\epsilon}_d .
\end{equation}

\subsubsection{Cas du groupe spécial orthogonal}  \label{threpspeortho}

Soit $G:=SO(E)$ le groupe spécial orthogonal, alors $\Lambda$ est un $\ZZZ$-module libre de rang $d'$, où $d':=E(\frac{d}{2})$ est la partie entière inférieure de $\frac{d}{2}$. On note $\{ \epsilon_1, \ldots, \epsilon_{d'}\}$ une base de $\Lambda$. Pour un choix approprié de $B$, on a 
$$\Lambda_+=\left\{ r_1 \epsilon_1 + \ldots + r_{d'} \epsilon_{d'} \in \Lambda \   \middle|   \begin{array}{ll}
        r_1 \geq \ldots \geq r_{d'-1} \geq \pm r_{d'} & \text{ si } d=2d'\\
        r_1 \geq \ldots \geq  r_{d'} \geq 0 & \text{ si } d=2d'+1
    \end{array} \right\}.$$

\subsubsection{Cas du groupe orthogonal}  \label{threportho}

Soit $G:=O(E)$ le groupe orthogonal, alors on a la suite exacte naturelle de groupes:
\begin{equation} \label{rqsuiteexacteOn}
\xymatrix{
    0 \ar[r]  & SO(E) \ar[r] & O(E) \ar^{\det}[r] & \ZZZ_2 \ar[r]  & 0
}
\end{equation}
où $\ZZZ_2$ désigne le groupe d'ordre $2$. 
Cette suite est scindée, donc 
\begin{equation} \label{OnSonScinde}
O(E) \cong SO(E) \ltimes \ZZZ_2
\end{equation} 
où l'on identifie $\ZZZ_2$ à $\left\{ \begin{bmatrix} \pm 1 & 0  \\
0 & Id \end{bmatrix} \right\}$.
Et ce produit est direct lorsque $d$ est impair. On distingue alors deux cas. 
\begin{enumerate}
\item Si $d$ est impair, alors les représentations irréductibles de $G$ sont paramétrées par les couples $(\lambda, s) \in \Lambda_+ \times \{ \pm 1 \}$, où $\Lambda_+$ est l'ensemble des poids dominants de $SO(E)$. On note $M_0$ (resp. $\epsilon$) la représentation triviale (resp. signe) de $\ZZZ_2$. La représentation irréductible de $O(E) \cong SO(E) \times \ZZZ_2$ associée à $(\lambda,s)$ est:
\begin{itemize} \renewcommand{\labelitemi}{$\bullet$}
\item $\Gamma_{\lambda}(E)^{+}:=\Gamma_{\lambda}(E) \otimes M_0$ si $s=1$,
\item $\Gamma_{\lambda}(E)^{-}:=\Gamma_{\lambda}(E) \otimes \epsilon$ si $s=-1$.
\end{itemize}
\item Si $d$ est pair, alors une représentation irréductible de $O(E)$ est soit une représentation irréductible de $SO(E)$ stabilisée par $\ZZZ_2$ (par exemple la représentation standard si $d \geq 4$), soit la somme directe de deux représentations irréductibles de $SO(E)$ échangées par $\ZZZ_2$. Plus précisément, si $\lambda=r_1 \epsilon_1 + \ldots + r_{d'} \epsilon_{d'} \in \Lambda_+$, alors on peut considérer la représentation de $O(E)$ induite: 
\begin{itemize} 
\item si $r_{d'} \neq 0$, alors
$$\Ind_{SO(E)}^{O(E)}(\Gamma_{\lambda}(E)) \cong \Gamma_{\lambda}(E) \oplus \Gamma_{\lambda'}(E)$$
comme $SO(E)$-module, avec $\lambda'=r_1 \epsilon_1 + \ldots + r_{d'-1} \epsilon_{d'-1}-r_{d'} \epsilon_{d'}$. Le $O(E)$-module $\Ind_{SO(E)}^{O(E)}(\Gamma_{\lambda}(E))$ est irréductible et $\ZZZ_2$ opère en échangeant $\Gamma_{\lambda}(E)$ et $\Gamma_{\lambda'}(E)$.   
\item si $r_{d'} = 0$, alors
$$\Ind_{SO(E)}^{O(E)}(\Gamma_{\lambda}(E)) \cong \Gamma_{\lambda}(E) \oplus \Gamma_{\lambda}(E)$$
comme $SO(E)$-module. Le $O(E)$-module $\Ind_{SO(E)}^{O(E)}(\Gamma_{\lambda}(E))$ se décompose en deux modules irréductibles $\Gamma_{\lambda}(E)^{+}$ et $\Gamma_{\lambda}(E)^{-}$ sur lesquels $\ZZZ_2$ opère trivialement et par le signe respectivement. 
\end{itemize}
Et toutes les représentations irréductibles de $O(E)$ sont de cette forme.
\end{enumerate}

\subsubsection{Cas du groupe symplectique}  \label{threpsymppp}

On suppose que $d=2d'$ pour un certain $d' \geq 1$ et soit $G:=Sp(E)$ le groupe symplectique. Alors $\Lambda$ est un $\ZZZ$-module libre de rang $d'$ et on note $\{ \epsilon_1, \ldots, \epsilon_{d'}\}$ une base de $\Lambda$. Pour un choix approprié de $B$, on a 
$$\Lambda_+=\{ r_1 \epsilon_1 + \ldots + r_{d'} \epsilon_{d'} \in \Lambda \ |\ r_1 \geq \ldots  \geq r_{d'} \geq 0\}.$$

\section{Le principe de réduction et le cas du groupe spécial linéaire}

\subsection{Le principe de réduction} \label{princreduction}

On utilise la proposition \ref{morphismegrass} pour construire, dans les situations 1,2,3 et 5, un morphisme $G'$-équivariant $\rho$ de $\HH$ vers un espace homogène $G'/P$ où $P$ est un sous-groupe parabolique de $G'$. Puis, on montre que la fibre schématique de $\rho$ en $eP$ s'identifie à un schéma de Hilbert invariant $\HH'$ plus "simple" que $\HH$ (proposition \ref{reduction1}). On obtient ensuite un résultat analogue pour la famille universelle $\pi:\ \XX \rightarrow \HH$ (proposition \ref{reduction2}).

\subsubsection{Construction d'un morphisme \texorpdfstring{$\HH \rightarrow G'/P$}{} dans la situation 2}  \label{redGrassGLn}
Dans le lemme qui suit, on utilise la théorie classique des invariants pour déterminer un $G'$-sous-module de dimension finie de $\Hom^G(V^*,k[W])$ (resp. $\Hom^G(V,k[W])$) qui engendre ce $k[W]^G$-module. La référence que nous utiliserons systématiquement pour les résultats de la théorie classique des invariant est \cite{Pro}.

\begin{lemme} \label{exi1}
\begin{enumerate}
\item Le $k[W]^G$-module ${k[W]}_{(V)}$ est engendré par ${\Hom}^{G}(V,W^{*})$.
\item Le $k[W]^G$-module ${k[W]}_{(V^{*})}$ est engendré par ${\Hom}^{G}(V^*,W^{*})$.
\end{enumerate}
\end{lemme}

\begin{proof}
Les preuves étant analogues pour la représentation standard et sa duale, on se contente de traiter uniquement le cas de la représentation standard, c'est-à-dire de montrer que le morphisme naturel 
\begin{equation} \label{mmorpp}
{k[W]}^G \otimes {\Hom}^{G}(V,W^{*}) \rightarrow  {k[W]}_{(V)}
\end{equation}
de ${k[W]}^{G},G'$-modules est surjectif. On identifie $k[W]$, l'algèbre des fonctions régulières sur $W$, avec $S(W^*)$, l'algèbre symétrique de $W$. Alors  
\begin{align*}
  {k[W]}_{(V)} &\cong {(S(W^{*}) \otimes V^*)}^{G}\\
               &\cong \bigoplus_{p \geq 0, q \geq 0} {(S^p({{V}^{*}}^{n_1}) \otimes S^q({V}^{n_2}) \otimes V^*)}^{G}\\
               &\cong \bigoplus_{\substack{p_1,\ldots,p_{n_1} \geq 0,\\ q_1, \ldots,q_{n_2} \geq 0}}  {(S^{p_1}({V}^{*}) \otimes \cdots \otimes S^{p_{n_1}}({V}^{*})  \otimes S^{q_1}({V}) \otimes \cdots \otimes S^{q_{n_2}}(V) \otimes V^*)}^{G}.
\end{align*} 
On fixe $(p_1,\ldots p_{n_1},q_1,\ldots,q_{n_2}) \in {\NN}^{n_1+n_2}$. Alors 
\begin{align*}
&{(S^{p_1}({V}^{*}) \otimes \cdots \otimes S^{p_{n_1}}({V}^{*})  \otimes S^{q_1}({V}) \otimes  \cdots  \otimes S^{q_{n_2}}({V}) \otimes V^*)}^{G} \\
&\cong {\left(k[ V \oplus  \cdots \oplus V \oplus {V}^{*} \oplus  \cdots  \oplus {V}^{*} \oplus V]_{(p_1, \ldots ,p_{n_1},q_1, \ldots ,q_{n_2},1)}\right)}^{G} 
\end{align*}
est l'espace des invariants multihomogènes de multidegré $(p_1, \ldots ,p_{n_1},q_1, \ldots ,q_{n_2},1)$.
On applique l'opérateur de polarisation $\PPP$ défini dans \cite[§3.21]{Pro}: 
\begin{align*}
&{\left(k[ V \oplus  \cdots \oplus V \oplus {V}^{*} \oplus  \cdots  \oplus {V}^{*} \oplus V]_{(p_1, \ldots ,p_{n_1},q_1, \ldots ,q_{n_2},1)}\right)}^{G} \\
&\stackrel{\PPP}{\rightarrow} {\left(k[ {V}^{p_1} \oplus  \cdots \oplus {V}^{p_{n_1}} \oplus {{V}^{*}}^{q_1} \oplus  \cdots  \oplus {{V}^{*}}^{q_{n_2}} \oplus V]_{\multi}\right)}^{G}.
\end{align*} 
D'après le premier théorème fondamental pour $GL(V)$ (voir \cite[§9.1.4]{Pro}), on a nécessairement $p+1=q$, avec $p:=p_1+ \ldots +p_{n_1}$ et $q:=q_1+ \ldots +q_{n_2}$ et 
$${(k[ {V}^{p_1} \oplus  \cdots \oplus {V}^{p_{n_1}} \oplus {{V}^{*}}^{q_1} \oplus  \cdots  \oplus {{V}^{*}}^{q_{n_2}} \oplus V]_{\multi})}^{G}$$
est engendré comme espace vectoriel par 
$$\left \{  f_{\sigma}:=\prod_{i=1}^{q} (i\ | \ \sigma(i)) ,\ \sigma \in {\Sigma}_{q} \right \}$$ 
où l'on note $\Sigma_q$ le groupe des permutations de $1,\ldots,q$ et pour chaque couple $(i,j)$, $i=1, \ldots ,q$, $j=1, \ldots ,q$, la forme bilinéaire $(i\ | \ j)$ sur $V^p \oplus V^{*q}$ est définie par 
\begin{equation}  \label{formebili}
\forall v_1,\ldots,v_q\in V,\ \forall {\phi}_1,\ldots,{\phi}_q \in {V}^{*},\ (i\ | \ j):(v_1,\ldots,v_q,{\phi}_1,\ldots,{\phi}_q) \mapsto {\phi}_j(v_i).
\end{equation}
Ensuite, d'après \cite[§3.2.2, Theorem]{Pro}, le $k[W]^G$-module 
$${(S^{p_1}({V}^{*}) \otimes  \cdots  \otimes S^{p_{n_1}}({V}^{*})  \otimes S^{q_1}({V}) \otimes  \cdots  \otimes S^{q_{n_2}}({V}) \otimes V)}^{G}$$ 
est engendré comme espace vectoriel par 
$$\left \{  \RRR f_{\sigma},\ \sigma \in \Sigma_{q} \right \}  $$ 
où l'on note $\RRR$ l'opérateur de restitution défini dans \cite[§3.2.2]{Pro}. \\
On fixe $\sigma \in \Sigma_{q}$, alors 
\begin{align*}
f_{\sigma}=\prod_{i=1}^{q} (i\ | \ \sigma(i)) &= \left( \prod_{i , \sigma(i) \neq q} (i\ | \ \sigma(i)) \right) \times ({\sigma}^{-1}(q)\ | \ q) \\
                                             &= f'_{\sigma} \times ({\sigma}^{-1}(q)\ | \ q).
\end{align*} 
Donc, pour tous $v_1, \ldots ,v_{n_1}, v \in V$ et pour tous ${\phi}_1, \ldots , {\phi}_{n_2} \in V^{*}$, on a
$$\RRR f_{\sigma}(v_1, \ldots ,v_{n_1}, {\phi}_1, \ldots , {\phi}_{n_2},v)= (\RRR f'_{\sigma}(v_1, \ldots ,v_{n_1},{\phi}_1, \ldots ,{\phi}_{n_2})) \times {\phi}_{i_0}(v)$$
pour un certain $1 \leq i_0 \leq n_2$. Or $\RRR f'_{\sigma}(v_1, \ldots ,v_{n_1},{\phi}_1, \ldots ,{\phi}_{n_2})  \in {k[W]}^{G}$ et $(v \mapsto (\phi_{i_0} \mapsto {\phi}_{i_0}(v)))  \in  \Hom^G(V,W^*)$ d'où le résultat. 
\end{proof} 

On note $F_1:={\Hom}^{G}(V,W^{*})$ et $F_2:={\Hom}^{G}(V^{*},W^{*})$. On a 
$$W^{*} \cong (V_1 \otimes V^{*}) \oplus (V \otimes V_{2}^{*})$$ 
comme $G' \times G$-module et donc $F_1 \cong V_2^*$ et $F_2 \cong V_1$ comme $G'$-modules.\\
Ecrivons explicitement ces deux isomorphismes dans les bases que l'on a fixées, cela nous sera utile lors de la démonstration du lemme \ref{fibrehil}. 
On a:
\begin{equation}
\hspace{1cm}  V_{2}^{*} \cong F_1={\Hom}^{G}(V,W^{*}),\ f_{i}^{*} \rightarrow \left( v \rightarrow \left( 
\begin{bmatrix} 0 
\end{bmatrix}, 
\begin{bmatrix} 0 & \cdots &0 &v &0 &\cdots &0
\end{bmatrix} \right) \right)
\end{equation}
où $v \in V$ et $\left( 
\begin{bmatrix} 0 
\end{bmatrix}, 
\begin{bmatrix} 0 & \cdots &0 &v &0 &\cdots &0
\end{bmatrix} \right)  \in W^{*} \cong \Hom(V,V_1) \times \Hom(V_2,V)$. Le vecteur colonne $v$ occupe la $i$-ème colonne et les autres colonnes sont toutes nulles.

\noindent De manière analogue, on a:
\begin{equation}
V_{1} \cong F_2={\Hom}^{G}(V^{*},W^{*}),\ e_j \rightarrow \left(\phi \rightarrow \left( 
\begin{bmatrix}
0\\
 \vdots \\
0\\
\phi\\
0\\
 \vdots \\
0
\end{bmatrix}, 
\begin{bmatrix} 0 
\end{bmatrix} \right) \right)
\end{equation} 
où $\phi \in V^{*}$ et $\left( 
\begin{bmatrix}
0\\
 \vdots \\
0\\
\phi\\
0\\
 \vdots \\
0
\end{bmatrix}, 
\begin{bmatrix} 0 
\end{bmatrix} \right) \in W^*$. Le vecteur ligne $\phi$ occupe la $j$-ème ligne et les autres lignes sont toutes nulles.

\noindent Puis, via l'isomorphisme (\ref{isocanonique}), le $G'$-module $F_1$ s'identifie au sous-espace vectoriel de ${\Mor}^{G}(W,V^{*})$ engendré par les $n_2$ projections linéaires $p_1, \ldots ,p_{n_2}$ de $W \cong (V_{1}^{*} \otimes V) \oplus (V^{*} \otimes V_2)$ sur $V^{*}$ et le $G'$-module $F_2$ s'identifie au sous-espace vectoriel de ${\Mor}^{G}(W,V)$ engendré par les $n_1$ projections linéaires $q_1, \ldots ,q_{n_1}$ de $W$ sur $V$. Avec ces notations, le lemme \ref{exi1} admet la reformulation suivante: tout morphisme $G$-équivariant de $W$ dans $V^*$ (resp. de $W$ dans $V$) peut s'écrire comme combinaison linéaire de la forme $\sum_i f_i p_i$ (resp. $\sum_i f_i q_i$), où $f_i \in k[W]^G$. Par la suite, on fera parfois l'abus d'écrire
\begin{align*}
&F_1=\left\langle p_i,i=1, \ldots ,n_2 \right \rangle  \subset \Mor^G(W,V^{*}),  \\ 
&F_2=\left\langle  q_j,j=1, \ldots ,n_1 \right \rangle  \subset \Mor^G(W,V). 
\end{align*}

On note $m_1:=h_W(V^*)$, $m_2:=h_W(V)$ et on fait l'hypothèse suivante: $1 \leq m_i \leq n_i$, pour $i=1,2$. Cette hypothèse sera toujours vérifiée dans les exemples que nous allons traiter. Le but est simplement d'éviter de construire des morphismes triviaux. La proposition \ref{morphismegrass} nous donne des morphismes $G'$-équivariants:   
$${\rho}_1:\ \HH \rightarrow \Gr(m_1,V_{1}^{*}),\ Z \mapsto \Ker(q \rightarrow q_{|Z})\ \text{ où }\ q \in F_2,$$
et
$${\rho}_2:\ \HH \rightarrow \Gr(m_2,V_2),\ Z \mapsto \Ker(q \rightarrow q_{|Z})\ \text{ où }\ q \in F_1.$$ 
Les grassmanniennes $\Gr(m_1,V_1^*)$ et $\Gr(m_2,V_2)$ sont des espaces homogènes pour les opérations naturelles de $GL(V_1)$ et $GL(V_2)$ respectivement, et donc $\Gr(m_1,V_1^*) \times \Gr(m_2,V_2)$ est un espace homogène pour l'opération de $G'=GL(V_1) \times GL(V_2)$. On note $E_1$ le sous-espace vectoriel de $V_{1}^{*}$ engendré par les $m_1$ premiers vecteurs de la base ${\BB}_{1}^{*}$ et on note $E_{2}$ le sous-espace vectoriel de $V_2$ engendré par les $m_2$ premiers vecteurs de la base $\BB_2$. On a un isomorphisme $G'$-équivariant  
\begin{equation} \label{isograss}
\Gr(m_1,V_1^*) \times \Gr(m_2,V_2) \cong G'/ P
\end{equation}
où $P$ est le stabilisateur de $(E_1,E_2) \in \Gr(m_1,V_1^*) \times \Gr(m_2,V_2)$ dans $G'$, et donc un sous-groupe parabolique. On compose le morphisme $\rho_1 \times \rho_2$ avec l'isomorphisme (\ref{isograss}), on obtient un morphisme $G'$-équivariant:
$$\rho:\ \HH \rightarrow G'/P .$$

\subsubsection{Construction d'un morphisme \texorpdfstring{$\HH \rightarrow G'/P$}{} dans les situations 1, 3 et 5} \label{redGrass2}
  
\noindent On commence par énoncer le

\begin{lemme} \label{exi1SLn}
Le $k[W]^G$-module $k[W]_{(V^*)}$ est engendré par $\Hom^G(V^*,W^*)$.
\end{lemme}

\begin{proof}
La preuve est analogue à celle du lemme \ref{exi1}. Les ingrédients clés sont d'une part les opérateurs de polarisation et de restitution (\cite[§3.2]{Pro}) et d'autre part le premier théorème fondamental pour les groupes classiques (\cite[§11.2.1]{Pro}).
\end{proof}

\begin{remarque}  \label{rkOninutile}
Le lemme \ref{exi1SLn} est faux pour $G=SO(V)$ si $n' \geq \dim(V) \geq 2$. En effet, on vérifie que le $k[W]^G$-module $k[W]_{(V^*)}$ est engendré par $\Hom^G(V^*,W^* \oplus k[W]_{n-1})$. On écarte par la suite le cas $n' < \dim(V)$ car alors nous verrons que $W/\!/SO(V) \cong W/\!/O(V)$ et on obtient des résultats identiques dans les situations 3 et 4. 
\end{remarque}

On a $ \Hom^G(V^*,W^*) \cong V'$ comme $G'$-module. On note $m:=h_W(V^*)$ et on suppose que $1 \leq m \leq n'$. Cette hypothèse sera toujours vérifiée dans les exemples que nous traiterons. La proposition \ref{morphismegrass} nous donne un morphisme $G'$-équivariant: 
$$\HH \rightarrow \Gr(m,V'^*).$$
On note $E$ le sous-espace vectoriel de $V'^*$ engendré par les $m$ premiers vecteurs de la base $\BB'^*$ et $P$ le stabilisateur de $E$ dans $G'$.
Alors $\Gr(m,V'^*) \cong G'/P$, d'où un morphisme $G'$-équivariant:
$$\rho:\ \HH \rightarrow G'/P .$$

\subsubsection{Réduction}  \label{zectionred}

Soient $P$ un sous-groupe parabolique de $G'$ et $F$ un $P$-schéma. On considère $G'$ comme une $P$-variété pour l'opération par multiplication à droite de $P$:
$$\forall g' \in G',\ \forall p \in P,\ p.g':=g'  p^{-1}.$$ 
On rappelle que si $P$ est un sous-groupe parabolique de $G'$, alors le morphisme de passage au quotient $G' \rightarrow G'/P$ est localement trivial pour la topologie de Zariski. D'après \cite[§I.5.16]{Jan}, le quotient du $P$-schéma $G' \times F$ par l'opération du groupe $P$ est naturellement muni d'une structure de $G'$-schéma. Il s'ensuit que, si $X$ est un $G'$-schéma muni d'un morphisme $G'$-équivariant vers $G'/P$, alors on a un isomorphisme $G'$-équivariant
\begin{equation} \label{iissoo}
X \cong G' \times^P F
\end{equation}
où $F$ est la fibre schématique en $eP$. Et de nombreuses propriétés géométriques et topologiques de $X$ peuvent se lire sur $F$: lissité, connexité, irréductibilité,...

On se place maintenant dans l'une des situations 1,2,3 ou 5. Le morphisme $\rho:\ \HH \rightarrow G'/P$ est $G'$-équivariant et donc, d'après ce qui précède, on a un isomorphisme $G'$-équivariant
$$\HH \cong G' \times^P F$$
où $F$ est la fibre schématique de $\rho$ en $eP$. Pour déterminer $\HH$ comme $G'$-schéma, on est donc ramené à déterminer $F$ comme $P$-schéma.

\begin{notation}
Si $E$ est un sous-espace vectoriel de $V'$, on note $E^{\perp}$ l'orthogonal de $E$ dans $V'^*$. On note:
\begin{itemize} \renewcommand{\labelitemi}{$\bullet$}
\item $W':= \left\{
    \begin{array}{ll}
        \Hom(V_1/E_1^{\perp},V) \times \Hom(V,E_2) & \text{ dans la situation 2,}\\
        \Hom(V'/E^{\perp},V) & \text{ dans les situations 1, 3 et 5}
    \end{array}
\right.$ 
\item $\HH':={\Hilb}_{h_W}^{{G}} (W'),$ 
\item $\XX':={\Univ}_{h_W}^{{G}} (W')$ et $\pi':\ \XX' \rightarrow \HH'$ la famille universelle.
\end{itemize}
\end{notation}

\begin{lemme} \label{fibrehil}
La fibre $F$ du morphisme ${\rho}$ est isomorphe au schéma de Hilbert invariant $\HH'$ et l'opération de $P$ dans $F$ coïncide, via cet isomorphisme, avec l'opération de $P$ dans $\HH'$ induite par l'opération de $P$ dans $W'$.
\end{lemme}

\begin{proof}
On donne ici la démonstration dans la situation $2$, la démonstration dans les autres situations est analogue.\\ 
Par définition de $F$, pour tout schéma $S$, on a: 
 $$ \Mor(S,F) = \left\{ 
\begin{array}{c} 
\xymatrix{
\ZZ \ar@{.>}_{\pi}[dr]  \ar@{^{(}->}[r] &   S \times W \ar@{->}[d]^{p_1}\\
                     &  S } \end{array}
\middle|
\begin{array}{l} {\ZZ}  \text{ sous-schéma fermé $G$-invariant},  \\
{\pi} \text{ morphisme plat},\\
{\pi}_{*} {\OO}_{{\ZZ}} = \bigoplus_{M \in \Irr{G}} {\FF}_{M} {\otimes} M,\\
 {\FF}_{M} \ \text{loc. libre de rang $h_W(M)$ sur } {\OO}_S, \\
\forall s \in S(k),\ \rho({{\ZZ}}_s)=E_1 \times E_2. \end{array} 
\right\} . $$
Or 
\begin{align*}
\rho({{\ZZ}}_s)=E_1 \times E_2 &\Leftrightarrow  {{p}_{n+1}}_{|{{\ZZ}}_s}= \cdots ={{p}_{n_1}}_{|{{\ZZ}}_s}={{q}_{n+1}}_{|{{\ZZ}}_s}= \cdots ={{q}_{n_2}}_{|{{\ZZ}}_s}=0\\
                          &\Leftrightarrow  {{\ZZ}}_s \subset \Hom(V'/E_1^{\perp},V) \times \Hom(V,E_2)=W' .
\end{align*} 
Donc \begin{align*} \Mor(S,F) &= \left\{ 
\begin{array}{c} 
\xymatrix{
\ZZ \ar@{.>}_{\pi}[dr]  \ar@{^{(}->}[r] &   S \times W' \ar@{->}[d]^{p_1}\\
                     &  S } \end{array}
\middle|
\begin{array}{l} {\ZZ}  \text{ sous-schéma fermé $G$-invariant},  \\
{\pi} \text{ morphisme plat},\\
{\pi}_{*} {\OO}_{{\ZZ}} = \bigoplus_{M \in \Irr{G}} {\FF}_{M} {\otimes} M,\\
 {\FF}_{M} \ \text{loc. libre de rang $h_{W}(M)$ sur } {\OO}_S.  \end{array} 
\right\} \\
&={Hilb}_{h_{W}}^{{G}} (W')(S) \\
&\cong \Mor(S,{\Hilb}_{h_{W}}^{{G}} (W'))  
\end{align*} où le dernier isomorphisme est une conséquence directe de la définition du schéma de Hilbert invariant comme foncteur représentable. Il s'ensuit que $F \cong \HH'$ comme $P$-schéma.  
\end{proof}

D'où la 

\begin{proposition} \label{reduction1}
On a un isomorphisme $G'$-équivariant 
$$\begin{array}{ccccc}
\psi_1 & : & G' {\times}^{P} \HH' & \cong & \HH  \\
& & (g',z)P & \mapsto & g'.z 
\end{array}$$
\end{proposition}


\begin{remarque}
La fonction de Hilbert $h_W$ qui apparaît dans la définition de $\HH'$ et de $\XX'$ n'est pas égale à la fonction de Hilbert $h_{W'}$ de la fibre générique du morphisme de passage au quotient $W' \rightarrow W'/\!/G$ en général et on n'a donc pas de "vraie" réduction dans ce cas. Cependant, dans la plupart des exemples que nous traiterons, on aura bien $h_{W'}=h_W$. 
\end{remarque}

Ensuite, soit $\pi:\ \XX \rightarrow \HH$ la famille universelle. On note $\delta$ la composée
$$\XX \stackrel{\pi}{\longrightarrow} \HH \stackrel{\rho}{\longrightarrow} G'/P.$$
Le morphisme $\delta$ est $G'$-équivariant et munit $\XX$ d'une structure de fibré $G'$-homogène. On note $F'$ la fibre schématique de $\delta$ en $eP$, alors d'après (\ref{iissoo}) on a un isomorphisme $G'$-équivariant 
$$\XX \cong G' {\times}^{P} F'.$$ 
Donc, pour déterminer $\XX$ comme $G'$-schéma, on est ramené à déterminer $F'$ comme $P$-schéma.

\begin{lemme} \label{fibrehil2}
On a un isomorphisme $F' \cong \XX'$ et le morphisme $\pi_{|F'}:\ F' \rightarrow \HH'$ s'identifie à la famille universelle $\pi':\ \XX' \rightarrow \HH'$. De plus, l'opération de $P$ dans $F'$ coïncide, via cet isomorphisme, avec l'opération de $P$ dans $\XX'$ induite par l'opération de $P$ dans $W'$.
\end{lemme} 

\begin{proof}
On identifie $\HH'$ à un sous-schéma fermé de $\HH$ grâce au lemme \ref{fibrehil}. Par définition du schéma de Hilbert invariant, on a le diagramme cartésien
  $$
  \xymatrix{
    \XX'  \ar@{^{(}->}[r] \ar[d]_{\pi'}  & \XX \ar[d]^{\pi} \\
    \HH' \ar@{^{(}->}[r] & \HH
  }$$ 
qui induit le diagramme cartésien
  $$
    \xymatrix{
   G' \times^P \XX' \ar@{^{(}->}[r] \ar[d]_{Id \times \pi'}  & G' \times^P \XX \ar[d]^{Id \times \pi} \ar[r]^{\cong}   & G'/P \times \XX \ar[d]^{Id \times \pi} \\
    G' \times^P \HH' \ar@{^{(}->}[r] & G' \times^P  \HH \ar[r]_(0.4){\cong} & G'/P \times \HH
  } $$
On en déduit que le diagramme suivant est cartésien:
  $$
    \xymatrix{
   G' \times^P \XX' \ar[r]^(0.6){\psi_2} \ar[d]_{Id \times \pi'}  &  \XX \ar[d]^{ \pi} \\
    G' \times^P \HH' \ar[r]_(0.6){\psi_1} &   \HH
  } $$
où l'on note $\psi_2$ le morphisme obtenu en composant le morphisme $G' \times^P \XX' \rightarrow G'/P \times \XX$ avec la seconde projection $G'/P \times \XX \rightarrow \XX$. D'après la proposition \ref{reduction1}, le morphisme $\psi_1$ est un isomorphisme, donc $\psi_2$ également. On a donc
$$ \xymatrix{ 
    G' \times^P \XX' \ar[rr]^{\cong} \ar@{->>}[rd] && G' \times^P F' \ar@{->>}[ld] \\ & G'/P }$$
et donc $\XX' \cong F'$ comme $P$-schéma. 
\end{proof}

D'où la 

\begin{proposition} \label{reduction2}
On a un isomorphisme $G'$-équivariant 
$$\begin{array}{ccccc}
\psi_2 & : & G' {\times}^{P} \XX' & \cong & \XX \\
& & (g',x)P & \mapsto & g'.x 
\end{array}$$
\end{proposition}


\begin{corollaire} \label{reductionHdiag}
On a le diagramme commutatif de $G'$-schémas suivant:
\begin{equation} \label{diag_com}
 \xymatrix{
     G' {\times}^{P} \XX' \ar[d]_{\psi_2} \ar[rr]^{G' \times^P \pi'} && G' {\times}^{P} \HH' \ar[d]_{\psi_1} \ar[rr]^{G' \times^P \gamma'} && G' {\times}^{P} W'/\!/G \ar[d]^{\phi} \\
     \XX \ar@{.>}[rd]_{\delta} \ar[rr]_{\pi} && \HH \ar[rr]_{\gamma} \ar@{->>}[ld]^{\rho}   && W/\!/G \\
       &G'/P
}
  \end{equation}
où 
\begin{itemize} \renewcommand{\labelitemi}{$\bullet$}
\item $G' \times^P \pi'$ et $G' \times^P \gamma'$ sont les morphismes induits par $\pi'$ et $\gamma'$ respectivement,
\item $\psi_1$ et $\psi_2$ sont les isomorphismes des propositions \ref{reduction1} et \ref{reduction2} respectivement, 
\item $\phi$ est le morphisme induit par l'inclusion de $W'/\!/G$ dans $W/\!/G$ comme sous-variété $P$-stable,
\item $\gamma':\ \HH' \rightarrow W'/\!/G$ est le morphisme de Hilbert-Chow.
\end{itemize}
\end{corollaire}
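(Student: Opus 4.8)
The plan is to assemble the corollary from its three constituent pieces — the bottom triangle, the left square, and the right square — each of which follows from an earlier result once the associated-bundle formalism is invoked. The one tool I would use throughout is the adjunction $\Mor^{G'}(G' \times^P Y, Z) \cong \Mor^P(Y, Z)$, valid for any $P$-scheme $Y$ and any $G'$-scheme $Z$, which sends a $G'$-equivariant morphism to its restriction to the fibre over $eP$. This lets me check equalities of $G'$-equivariant morphisms out of $G' \times^P(-)$ purely on that fibre, and thereby sidesteps any worry about $\HH$ — hence $G' \times^P \HH'$ — failing to be reduced.

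The bottom triangle commutes by the very definition of $\delta$, which was introduced as the composite $\XX \xrightarrow{\pi} \HH \xrightarrow{\rho} G'/P$, so there is nothing to prove there. For the left square I would simply invoke the proof of Lemme \ref{fibrehil2}: it already exhibits the square with vertices $G' \times^P \XX'$, $\XX$, $G' \times^P \HH'$, $\HH$ and arrows $\psi_2$, $\pi$, $G' \times^P \pi'$, $\psi_1$ as a \emph{cartesian} square (the map called $Id \times \pi'$ there is exactly $G' \times^P \pi'$). A cartesian square in particular commutes, so this square does too.

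The substance is the right square, i.e. the identity $\gamma \circ \psi_1 = \phi \circ (G' \times^P \gamma')$ between morphisms $G' \times^P \HH' \to W/\!/G$. Both sides are $G'$-equivariant: $\psi_1$ is a $G'$-isomorphism (Proposition \ref{reduction1}) and $\gamma$ is $G'$-equivariant (Proposition \ref{groupaction}), while $G' \times^P \gamma'$ is equivariant by construction and $\phi$ is the equivariant extension of the $P$-stable inclusion $\iota\colon W'/\!/G \hookrightarrow W/\!/G$. By the adjunction above it therefore suffices to compare their restrictions to the fibre $\HH'$ over $eP$, i.e. to prove $\gamma|_{\HH'} = \iota \circ \gamma'$, where $\HH' \hookrightarrow \HH$ is the closed immersion of Lemme \ref{fibrehil}. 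To establish this last identity scheme-theoretically I would return to the construction of the Hilbert--Chow morphism in Lemme \ref{diagcom}: by Lemme \ref{fibrehil} the restriction to $\HH'$ of the universal family of $\HH$ is the universal family of $\HH'$, now regarded as a flat family of $G$-subschemes of $W$ via $W' \hookrightarrow W$. Feeding this family into Lemme \ref{diagcom} produces $\gamma|_{\HH'}$ as the composite of $(\pi'/\!/G)^{-1}$ with the inclusion $\XX'/\!/G \subset W/\!/G$; but $p_2(\XX')$ lands in $W'$, so $\XX'/\!/G \subset W'/\!/G$ and the same composite is precisely $\iota \circ \gamma'$.

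I expect the only real obstacle to be keeping this right square honest at the scheme level rather than merely on $k$-points, since $\HH$ need not be reduced. The adjunction disposes of the equivariance half of this concern, reducing everything to the fibre over $eP$; the remaining point — that $\gamma|_{\HH'}$ factors through the closed subscheme $W'/\!/G$ and coincides there with $\gamma'$ — must be read off from the explicit factorization in the proof of Lemme \ref{diagcom} (together with the surjectivity of $k[W]^G \to k[W']^G$, which makes $\iota$ a closed immersion), rather than checked pointwise. Once this is in hand, pasting the two commuting squares along $\psi_1$ and adjoining the definitional triangle yields the full commutative diagram.
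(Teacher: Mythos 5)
Your proposal is correct and follows essentially the same route as the paper: reduce everything to the commutativity of the right square, restrict to the fibre over $eP$ where the identity $\gamma|_{\HH'} = \iota \circ \gamma'$ follows from the construction of the Hilbert--Chow morphism (Lemme \ref{diagcom} applied to the restricted universal family), and then induce back up to the associated bundles $G' \times^P(-)$. Your explicit use of the adjunction $\Mor^{G'}(G' \times^P Y, Z) \cong \Mor^P(Y,Z)$ is just a cleaner packaging of the diagram of induced morphisms that the paper writes out, and your unpacking of the key fibre-level identity is in fact slightly more detailed than the paper's appeal to "la définition du morphisme de Hilbert-Chow".
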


\begin{proof}
D'après ce qui précède, la seule chose à vérifier est que le carré de droite est commutatif. On considère le diagramme
\begin{equation}  \label{diagCOM}
  \xymatrix{
    \HH'  \ar[r] \ar[d]_{\gamma'}  & \HH \ar[d]^{\gamma} \\
    W'/\!/G \ar[r]_{\phi} & W/\!/G
  }  
\end{equation}
où la flèche du haut est l'immersion fermée donnée par le lemme \ref{fibrehil}. Par définition du morphisme de Hilbert-Chow (voir la section \ref{generalitéesHilbert}), le diagramme (\ref{diagCOM}) est commutatif. Puis le diagramme 
\begin{equation} 
 \xymatrix{
     G' {\times}^{P} \HH' \ar[d]_{G' \times^P \gamma'} \ar[r] & G' {\times}^{P} \HH \ar[d]^{G' \times^P \gamma} \ar[r]^{\cong} & G'/P \times \HH \ar[d]^{Id \times \gamma} \\
     G' {\times}^{P} W'/\!/G  \ar[r] & G' {\times}^{P} W/\!/G  \ar[r]_(0.4){\cong} & G'/P \times W/\!/G  }
  \end{equation}   
est aussi commutatif et le résultat s'ensuit.
\end{proof} 

Par la suite, lorsque nous mentionnerons le principe de réduction, nous ferons implicitement référence aux propositions \ref{reduction1} et \ref{reduction2}. Dans la section \ref{casSln}, nous illustrons le principe de réduction en traitant le cas de la situation $1$. Nous verrons que dans cette situation, le principe de réduction permet de "trivialiser" l'étude du schéma de Hilbert invariant.

\subsection{Cas du groupe spécial linéaire}  \label{casSln}

On se place dans la situation $1$: on a $G:=SL(V)$, $G':=GL(V')$ et $W:=\Hom(V',V)$. L'opération de $G' \times G$ dans $W$ est donnée par (\ref{actionSLLn}). On note $C(\Gr(n,V'^*)) \subset \Lambda^n (V'^*)$ le cône affine au dessus de la grassmannienne $\Gr(n,V'^*)$ identifiée à une sous-variété fermée de $\PP(\Lambda^n (V'^*))$ via le plongement de Plücker (\cite[§4.1, Example 1]{Sh}). Dans toute cette section, les points de $\Gr(n,V'^*)$ sont ainsi vus comme des points de $\PP(\Lambda^n (V'^*))$. On note $Bl_0(C(\Gr(n,V'^*)))$ la variété obtenue en éclatant $C(\Gr(n,V'^*))$ en $0$. C'est aussi l'espace total du fibré en droite $\OO_{\Gr(n,V'^*)}(-1)$ sur $\Gr(n,V'^*)$, c'est-à-dire 
$$Bl_0(C(\Gr(n,V'^*)))=\{(x,L) \in C(\Gr(n,V'^*)) \times \Gr(n,V'^*)\ |\ x \in L\}.$$ 
Nous allons démontrer le 

\begin{theoreme} \label{SLLn}
Si $n'=n>1$, alors $\HH \cong \Aff$ et $\gamma$ est un isomorphisme.\\
Si $n'>n>1$, alors $\HH \cong Bl_0(C(\Gr(n,V'^*)))$ et $\gamma$ est l'éclatement de $C(\Gr(n,V'^*))$ en $0$.\\ 
En particulier $\HH$ est toujours une variété lisse et donc $\gamma$ est une résolution de $W/\!/G$ lorsque ce quotient est singulier. 
\end{theoreme}

\begin{remarque}
Si $n'<n$, alors nous verrons que $W/\!/G=\{0\}$, donc le morphisme de passage au quotient $\nu:\ W \rightarrow W/\!/G$ est plat et donc $\HH \cong \{0\}$ d'après le corollaire \ref{cas_faciles_SLn}. De même, si $n=1$, alors nous verrons que $\nu$ est l'identité, donc $\nu$ est plat et $\HH \cong V'^*$. 
\end{remarque}

Le cas $n'=n>1$ est le corollaire \ref{cas_faciles_SLn}. Le cas $n'>n>1$ est la proposition \ref{iso_final2SLn}. La famille universelle $\pi:\ \XX \rightarrow \HH$ lorsque $1<n<n'$ est étudiée à la fin de cette section. 

\subsubsection{Etude du morphisme de passage au quotient}

\begin{notation}  \label{lignescolonnes}
On rappelle que l'on identifie $W \cong \MM_{n,n'}(k)$ via les bases $\BB$ et $\BB'$ fixées dans la section \ref{lesdiffsituations}. On notera parfois $w \in W$ sous forme de vecteurs colonnes ou de vecteurs lignes de la façon suivante:
\begin{itemize} \renewcommand{\labelitemi}{$\bullet$}
\item $w=\begin{bmatrix} C_1 & \cdots & C_{n'} \end{bmatrix}$ où les $C_i$ sont les vecteurs colonnes de la matrice $w$ et s'identifient naturellement à des éléments de $V$,
\item $w=\begin{bmatrix} L_1 \\ \vdots \\ L_n \end{bmatrix}$ où les $L_j$ sont les vecteurs lignes de la matrice $w$ et s'identifient naturellement à des éléments de $V'^*$.
\end{itemize}
\end{notation}

D'après le premier théorème fondamental pour $SL(V)$ (voir \cite[§11.1.2]{Pro}) l'algèbre des invariants $k[W]^G$ est engendrée par les $[i_1, \ldots, i_n]$, où pour tout $w \in W$ et pour tout $n$-uplet $1 \leq i_1 < \ldots < i_n \leq n'$, on définit $[i_1, \ldots, i_n]$ par:
\begin{equation} \label{SLinvariants} 
[i_1, \ldots, i_n](w):=\det \left( \begin{bmatrix} C_{i_1} & \cdots & C_{i_n} \end{bmatrix} \right) .
\end{equation}
On a le morphisme naturel $G' \times G$-équivariant
$$\Hom(V',V) \rightarrow  \Hom(\Lambda^n(V'),\Lambda^n(V)),\ w \mapsto \Lambda^n(w),$$
et $\Lambda^n(V) \cong V_0$, d'où un isomorphisme $\Hom(\Lambda^n(V'),\Lambda^n(V)) \cong \Lambda^n(V'^*)$. Le morphisme de passage au quotient $\nu:\ W \rightarrow W/\!/G$ est obtenu en composant ces deux morphismes:
$$\begin{array}{lrcl}
 \nu:  & \Hom(V',V)  & \rightarrow  &  \Lambda^n (V'^*) \\
        & w  & \mapsto     &  L_1 \wedge \ldots \wedge L_n. 
\end{array}$$ 
On distingue trois cas de figure:
\begin{itemize}
\item si $n'<n$, alors $W/\!/G=\Lambda^n (V'^*)=\{0\}$ et le morphisme $\nu$ est trivial, 
\item si $n'=n$, alors $W/\!/G=\Lambda^n (V'^*) \cong \Aff$ et $\nu(w)=\det(w)$,
\item si $n' >n$, alors $W/\!/G=C(\Gr(n,V'^*))$.   
\end{itemize}
En particulier, lorsque $n'>n$, on a $\nu(W)=\Lambda^n (V'^*)$ si et seulement si $n=1$ ou $n=n'-1$.

\begin{lemme} \label{qu_lisseSLn}
La variété quotient $W/\!/G$ est lisse sauf lorsque $1<n<n'-1$, auquel cas $W/\!/G$ admet une unique singularité en $0$.
\end{lemme}

\begin{proof} 
Si $n=1$ ou $n'-1 \leq n$, alors $W/\!/G$ est un espace affine, donc une variété lisse. Si $1<n<n'-1$, alors $W/\!/G = C(\Gr(n,V'^*))$ est un cône affine dans $\Lambda^n (V'^*)$ mais n'est pas un espace affine. Et $\Gr(n,V'^*)$ est une variété lisse, donc le cône $C(\Gr(n,V'^*))$ est singulier uniquement en $0$. 
\end{proof}  

La variété $W/\!/G$ est normale (\cite[§3.2, Théorème 2]{SB}) et de Gorenstein (\cite[§4.4, Théorème 4]{SB}) car le groupe des caractères de $G$ est trivial. Lorsque $n' \geq n$, la variété $W/\!/G$ est la réunion de deux orbites pour l'opération de $G'$: l'orbite fermée $\{0\}$ et l'orbite ouverte $U:=W/\!/G-\{0\}$.

\begin{lemme} \label{fibgeneSLn}
Lorsque $n' \geq n$, la fibre de $\nu$ en un point de $U$ est isomorphe à $G$.
\end{lemme}

\begin{proof}
On identifie $V \cong k^n$ et $V' \cong k^{n'}$ via les bases $\BB$ et $\BB'$ fixées précédement. On définit $A \in W \cong \MM_{n,n'}(k)$ par 
$$A_{i,j}=\left\{
    \begin{array}{ll}
        1 &\text{ si } i=j \leq n, \\
        0 &\text{ sinon,} 
    \end{array}
\right.$$
et on pose $B:= \nu(A)=c_1^{*} \wedge \ldots \wedge c_n^*$. Alors $B \in U$ et nous allons montrer que $\nu^{-1}(B)\cong SL_n(k)$. \\  
On a 
$${\nu}^{-1}(B)=\{ w \in W \ |\  L_1 \wedge \ldots \wedge L_n= c_1^{*} \wedge \ldots \wedge c_n^* \}  . $$ 
On décompose chaque $L_i \in V'^*$ dans la base $\BB'^*$: 
$$\forall 1 \leq i \leq n,\ L_i= \sum_{j=1}^{n'} a_{i,j} c_j^*  . $$ 
Alors 
$$L_1 \wedge \ldots \wedge L_n= \sum_{1\leq j_1<\ldots<j_{n} \leq n'} \det \left( \begin{bmatrix} C_{j_1} & \cdots & C_{j_n} \end{bmatrix} \right)\ c_{j_1}^* \wedge \ldots \wedge c_{j_n}^*$$ 
et donc nécessairement 
$$\det\left( \begin{bmatrix} C_{j_1} & \cdots & C_{j_n} \end{bmatrix} \right)=\left\{
    \begin{array}{ll}
        0 & \text{ lorsque } (j_1, \ldots ,j_{n}) \neq (1, \ldots ,n), \\
        1 &\text{ sinon.} 
    \end{array}
\right.$$
Il s'ensuit que les $C_j$ sont nuls, pour $j=n+1, \ldots ,n'$, et que la matrice carrée de taille $n$ définie par $\begin{bmatrix} C_1 & \cdots & C_n \end{bmatrix}$ appartient à $SL_n(k)$. Réciproquement, une matrice $w \in W$ de la forme $\begin{bmatrix} C_1 & \cdots & C_n & 0 & \cdots & 0 \end{bmatrix}$ vérifie $\nu(w)=B$. On en déduit que la fibre de $\nu$ en $B$ est isomorphe à $G$.
\end{proof}

En fait le lemme \ref{fibgeneSLn} est une conséquence d'un résultat de Luna (\cite[§2.1, Theorem 6]{SB}) qui dit que, si $n' \geq n$, alors $\nu$ est un $G$-fibré principal au dessus de l'ouvert $U$.

\begin{proposition} \label{ouvert_platitudeSLn}
\begin{itemize}
\item Si $n'\leq n$ ou $n=1$, alors $\nu$ est plat sur $W/\!/G$ tout entier.
\item Si $n'> n>1$, alors $U$ est l'ouvert de platitude de $\nu$ dans $W/\!/G$.
\end{itemize}
\end{proposition}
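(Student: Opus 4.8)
The plan is to treat the two bullets separately, in each case reducing flatness to a statement about fibre dimensions. For the first bullet I distinguish the elementary subcases. If $n' < n$, then $W/\!/G = \{0\}$ is a single point, so $k[W]^G = k$ and every $k[W]^G$-module is flat; hence $\nu$ is flat. If $n = 1$, then $G = SL(V)$ is trivial, so $\nu$ is the identity and therefore flat. There remains the case $n' = n > 1$, where $W/\!/G \cong \Aff$ and $\nu(w) = \det(w)$. Here I would argue that $k[W]$ is an integral domain, since $W \cong \MM_{n,n}(k)$ is an affine space, and that the comorphism $k[\Aff] = k[t] \hookrightarrow k[W]$, $t \mapsto \det$, is injective because $\det$ is a nonconstant polynomial. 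As $k[t]$ is a principal ideal domain, a $k[t]$-module is flat if and only if it is torsion-free; and $k[W]$, being a domain containing $k[t]$, is torsion-free over $k[t]$. Thus $\nu$ is flat over all of $W/\!/G$.

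For the second bullet, with $n' > n > 1$, I must prove both that $\nu$ is flat over $U$ and that it is not flat at $0$. Flatness over $U$ follows immediately from the fact, noted after Lemma \ref{fibgeneSLn}, that $\nu$ restricts to a principal $G$-bundle over $U$ (a result of Luna), which is in particular faithfully flat. Alternatively, I could invoke miracle flatness: the open set $\nu^{-1}(U)$ of full-rank matrices is smooth, hence Cohen--Macaulay; the base $U$ is regular, because by Lemma \ref{qu_lisseSLn} the only possible singular point of $W/\!/G$ is $0$; and by Lemma \ref{fibgeneSLn} the fibres over $U$ are all isomorphic to $G$, hence equidimensional of dimension $n^2 - 1 = \dim(W) - \dim(W/\!/G)$.

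The crux is to show that $\nu$ is not flat at $0$, and I expect this dimension comparison to be the only real point. The fibre $\nu^{-1}(0)$ is exactly $\{w \in \MM_{n,n'}(k) \mid \rg(w) \leq n-1\}$, since the rows $L_1, \ldots, L_n$ of $w$ satisfy $L_1 \wedge \cdots \wedge L_n = 0$ if and only if they are linearly dependent. This is the irreducible determinantal variety of $n \times n'$ matrices of rank at most $n-1$, whose dimension is $(n-1)(n'+1)$. Comparing it with the generic fibre dimension $n^2 - 1$ gives
$$(n-1)(n'+1) - (n^2 - 1) = (n-1)(n'-n) > 0,$$
since $n > 1$ and $n' > n$. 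If $\nu$ were flat in a neighbourhood of $0$, every fibre near $0$ would have dimension $\dim(W) - \dim(W/\!/G) = n^2 - 1$, contradicting the strict inequality above. Hence $\nu$ is not flat at $0$, and combined with the flatness over $U$ this shows that $U$ is exactly the flatness locus of $\nu$.
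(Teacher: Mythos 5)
Your proof is correct and follows essentially the same strategy as the paper: flatness over $U$ for free (the paper uses generic flatness plus $G'$-homogeneity where you invoke Luna's principal-bundle result or miracle flatness), and non-flatness at $0$ when $n'>n>1$ by comparing $\dim \nu^{-1}(0)=(n-1)(n'+1)$ with the generic fibre dimension $n^2-1$. The only real divergence is the subcase $n'=n>1$, where you replace the paper's appeal to miracle flatness (\cite[Exercice 10.9]{Ha}) with the elementary observation that $k[W]$ is a torsion-free, hence flat, module over the principal ideal domain $k[\det]$ — a perfectly valid and arguably more self-contained argument.
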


\begin{proof}
Si $n'<n$, alors le morphisme $\nu$ est trivial donc plat sur $W/\!/G=\{0\}$.\\
Si $n'=n$, alors $W/\!/G \cong \Aff$ et $\nu$ est le déterminant. Dans ce cas, d'après \cite[Exercice 10.9]{Ha}, le morphisme $\nu$ est plat sur $W/\!/G$ tout entier.\\
On suppose enfin $n' > n$. On sait que $\nu$ est plat sur un ouvert non-vide de $W/\!/G$, donc nécessairement $\nu$ est plat sur $U$ par $G'$-homogénéité. Ensuite, on vérifie que $\nu^{-1}(0)=\{w \in W\ |\ \rg(w) \leq n-1 \}$ est de dimension $(n'+1)(n-1)$. D'après le lemme \ref{fibgeneSLn}, la dimension de la fibre de $\nu$ en un point de $U$ vaut $n^2-1$. Les fibres d'un morphisme plat ont nécessairement toutes la même dimension, donc si $n>1$, l'ouvert $U$ est l'ouvert de platitude de $\nu$. En revanche, si $n=1$, alors $W/\!/G$ est lisse et toutes les fibres de $\nu$ ont la même dimension donc d'après \cite[Exercice 10.9]{Ha}, le morphisme $\nu$ est plat sur $W/\!/G$. 
\end{proof}


\begin{corollaire} \label{fctHilbSLn}
Si $n' \geq n$, la fonction de Hilbert de la fibre générique de $\nu$ est donnée par:
$$\forall M \in \Irr(G),\ h_W(M)=\dim(M)  .$$
\end{corollaire}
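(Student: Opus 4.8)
Le plan est de combiner directement trois résultats déjà établis dans cette section. Je commencerais par rappeler que, par définition, $h_W$ est la fonction de Hilbert commune à toutes les fibres de $\nu$ au-dessus de l'ouvert de platitude ${(W/\!/G)}_{*}$, et qu'elle coïncide avec la fonction de Hilbert de la fibre générique de $\nu$. Il suffit donc de calculer la fonction de Hilbert d'une unique fibre bien choisie, pourvu que celle-ci soit située dans l'ouvert de platitude.

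J'assurerais ensuite que l'orbite ouverte $U$ est contenue dans l'ouvert de platitude. Cela résulte de la proposition \ref{ouvert_platitudeSLn}: lorsque $n'>n>1$, l'ouvert de platitude est exactement $U$; et lorsque $n'=n$ ou $n=1$, c'est-à-dire les seuls autres cas compatibles avec l'hypothèse $n' \geq n$, le morphisme $\nu$ est plat sur $W/\!/G$ tout entier, donc $U$ est a fortiori contenu dans l'ouvert de platitude. On dispose ainsi, dans tous les cas de l'énoncé, d'un point de $U$ appartenant à ${(W/\!/G)}_{*}$.

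Je conclurais alors en invoquant le lemme \ref{fibgeneSLn}, qui affirme que, sous l'hypothèse $n' \geq n$, la fibre de $\nu$ en un point de $U$ est isomorphe au groupe $G$. Puisque ce point est dans l'ouvert de platitude, la fonction de Hilbert de cette fibre est précisément $h_W$. Or, d'après la remarque \ref{fibre_groupe}, une fibre isomorphe à $G$ a pour anneau des coordonnées $k[G] \cong \bigoplus_{M \in \Irr(G)} M^{\oplus \dim(M)}$ comme $G$-module, d'où $h_W(M)=\dim(M)$ pour tout $M \in \Irr(G)$.

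Il n'y a pas véritablement d'obstacle ici: l'énoncé est une conséquence immédiate des lemmes \ref{fibgeneSLn} et \ref{fibre_groupe}, une fois l'inclusion $U \subset {(W/\!/G)}_{*}$ garantie par la proposition \ref{ouvert_platitudeSLn}. Le seul point méritant une légère attention est de traiter uniformément les trois configurations compatibles avec $n' \geq n$ (à savoir $n'=n$, $n=1$, et $n'>n>1$) afin de s'assurer que la fibre dont on calcule la fonction de Hilbert est bien située dans l'ouvert de platitude.
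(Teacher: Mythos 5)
Votre démonstration est correcte et suit exactement le chemin que le texte laisse implicite (le corollaire est énoncé sans preuve dans le mémoire): inclusion de $U$ dans l'ouvert de platitude via la proposition \ref{ouvert_platitudeSLn}, identification de la fibre en un point de $U$ avec $G$ par le lemme \ref{fibgeneSLn}, puis conclusion par la remarque \ref{fibre_groupe}. La discussion des trois configurations compatibles avec $n' \geq n$ est un soin bienvenu mais ne change rien au fond de l'argument.
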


\noindent On déduit des propositions \ref{chow} et \ref{ouvert_platitudeSLn} le

\begin{corollaire} \label{cas_faciles_SLn}
Le morphisme de Hilbert-Chow $\gamma$ est un isomorphisme dans les cas suivants:
\begin{itemize} \renewcommand{\labelitemi}{$\bullet$}
\item $n'<n$ et alors $\HH$ est un point réduit correspondant au schéma $W$ tout entier,
\item $n'=n$, alors $\HH \cong \Aff$ et $\det:\ W \rightarrow \Aff$ est la famille universelle,
\item $n'>n=1$, alors $\HH \cong V'^*$ et $Id:\ V'^* \rightarrow V'^*$ est la famille universelle.
\end{itemize}
\end{corollaire}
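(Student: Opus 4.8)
The plan is to reduce everything to Propositions \ref{chow} and \ref{ouvert_platitudeSLn}. Recall that Proposition \ref{ouvert_platitudeSLn} describes the flatness locus of $\nu$, while Proposition \ref{chow} says that $\gamma$ is an isomorphism over that locus; moreover its proof establishes the stronger fact that if $\nu$ is flat on all of $W$, then $W/\!/G$ represents the Hilbert functor and $\nu$ \emph{itself} is the universal family. So the whole strategy is: check that in each of the three listed cases $\nu$ is flat on the whole of $W/\!/G$, conclude that $\gamma$ is a global isomorphism, and then read off $\HH$ and the universal family from the explicit description of $W/\!/G$ and $\nu$ already obtained earlier in this subsection.

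First I would observe that the three cases $n'<n$, $n'=n$ and $n'>n=1$ all satisfy the hypothesis ``$n'\leq n$ or $n=1$'' of Proposition \ref{ouvert_platitudeSLn}. Hence in each case $\nu$ is flat on all of $W/\!/G$, i.e. $(W/\!/G)_*=W/\!/G$. Applying Proposition \ref{chow} with $h=h_W$ then gives an isomorphism $\gamma^{-1}((W/\!/G)_*)\xrightarrow{\cong}(W/\!/G)_*$, which here is exactly $\gamma\colon \HH\xrightarrow{\cong} W/\!/G$.

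It remains to identify the target and the universal family in each case, using the computation of $\nu$ carried out above. When $n'=n$ one has $W/\!/G=\Lambda^n(V'^*)\cong\Aff$ with $\nu=\det$, so $\HH\cong\Aff$ and $\det$ is the universal family. When $n'>n=1$ the group $G=SL(V)$ is trivial (since $\dim V=1$) and $\nu$ is the identity of $W\cong V'^*$, whence $\HH\cong V'^*$ with universal family $Id$. When $n'<n$ one has $W/\!/G=\{0\}$, a reduced point, and since the proof of Proposition \ref{chow} identifies $\nu$ with the universal family, the unique point of $\HH$ corresponds to the fibre $\nu^{-1}(0)=W$, i.e. to the whole scheme $W$.

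The argument is essentially a bookkeeping deduction, so there is no serious obstacle. The only point requiring a little care is the well-definedness of $h_W$ in the case $n'<n$: here the generic fibre of $\nu$ is $W$ itself, so I must check that $k[W]$ has finite multiplicities as a $G$-module. This follows because the first fundamental theorem forces $k[W]^G=k$ (there is no $n\times n$ minor when there are fewer than $n$ columns), and each isotypic component $k[W]_{(M)}$, being a finitely generated $k[W]^G$-module, is then finite-dimensional. Beyond that, the subtlety worth flagging is simply that one uses Proposition \ref{chow} in its full strength---not merely ``$\gamma$ is an isomorphism'' but ``$\nu$ is the universal family''---which is precisely what pins down the universal object in each of the three cases.
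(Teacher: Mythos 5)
Your proof is correct and follows exactly the route the paper intends: the corollary is stated there as an immediate consequence of Propositions \ref{chow} et \ref{ouvertplatitudeSLn} (flatness of $\nu$ in all three cases, then the identification of $\nu$ with the universal family from the proof of Proposition \ref{chow}), which is precisely your argument. The extra remark on the finiteness of the multiplicities of $k[W]$ when $n'<n$ is a sensible, if implicit, bookkeeping check.
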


\subsubsection{Détermination de \texorpdfstring{$\HH$}{H} lorsque \texorpdfstring{$1<n<n'$}{1<n<n'}}

Soient 
\begin{itemize} \renewcommand{\labelitemi}{$\bullet$}
\item $E:=\left\langle c_1^*,  \ldots , c_n^* \right\rangle \in \Gr(n,V'^*)$ et $P:=\Stab_{G'}(E)$,
\item $L_0$ l'image de $E$ par le plongement de Plücker $\Gr(n,V'^*)\hookrightarrow \PP(\Lambda^n V'^*)$,
\item $W':=\Hom(V'/E^{\perp},V)$ et $\nu':\ W' \rightarrow W'/\!/G$ le morphisme de passage au quotient,
\item $\HH':={\Hilb}_{h_{W'}}^{{G}} (W') \cong \Aff$ d'après le corollaire \ref{cas_faciles_SLn}, 
\item $\XX':={\Univ}_{h_{W'}}^{{G}} (W')$ et $\pi':\ \XX' \rightarrow \HH'$ la famille universelle.
\end{itemize}
D'après la proposition \ref{reduction1}, on a un isomorphisme $G'$-équivariant 
$$\HH \cong  G' \times^P \HH' $$
où $P$ opère dans $\HH'$ par multiplication par l'inverse du déterminant. En particulier $\HH$ est une variété lisse. On a vu que $W/\!/G=\Gr(n,V'^*)$ et donc $\PP(W/\!/G)=\Gr(n,V'^*) \cong G'/P$. On rappelle que l'on a défini un morphisme $G'$-équivariant $\rho:\ \HH \rightarrow \Gr(n,V'^*)$ dans la section \ref{princreduction}.  

\begin{lemme} \label{morphisme_dans_YSLn}
Le morphisme $\gamma \times \rho$ envoie $\HH$ dans $Bl_0(W/\!/G)$.
\end{lemme}

\begin{proof}
D'après la proposition \ref{ouvert_platitudeSLn}, le morphisme $\nu$ est plat sur $U$. Donc, d'après la proposition \ref{chow}, le morphisme $\gamma$ se restreint en un isomorphisme de $\gamma^{-1}(U)$ sur $U$. On note $Z_0$ l'unique point de $\HH$ tel que $\gamma(Z_0)=c_1^* \wedge  \ldots  \wedge c_n^*$. On vérifie que 
$$Q:=\Stab_{G'}(c_1^* \wedge  \ldots  \wedge c_n^*)=\left\{
\begin{bmatrix} A & 0 \\
                 B     & C 
\end{bmatrix},\ A \in SL_n(k),\ B \in \MM_{n'-n,n}(k) \text{ et } C \in GL_{n'-n}(k) \right\}  .$$ 
Le morphisme $\gamma$ est $G'$-équivariant donc $Z_0$ est stable par $Q$. Ensuite, $\rho$ est aussi $G'$-équivariant, donc $\rho(Z_0)$ est une droite $Q$-stable de $W/\!/G$. Mais $W/\!/G$ admet une unique droite $Q$-stable qui est la droite $\left\langle  c_1^* \wedge  \ldots  \wedge c_n^* \right\rangle$ et donc $\gamma(Z_0) \in \rho (Z_0)$. Il s'ensuit que $(\gamma \times \rho) (Z_0) \in {Bl}_0(W/\!/G)$. Puis, comme $\gamma \times \rho:\ \HH \rightarrow W/\!/G \times \PP(W/\!/G)$ est $G'$-équivariant et que ${Bl}_0(W/\!/G)$ est $G'$-stable, pour chaque $Z \in {\gamma}^{-1}(U)$, on a $(\gamma \times \rho)(Z) \in {Bl}_{0}(W/\!/G)$. Enfin ${Bl}_{0}(W/\!/G)$ est un fermé de $W/\!/G \times \PP (W/\!/G)$, donc ${(\gamma \times \rho)}^{-1}({Bl}_0(W/\!/G))$ est un fermé de $\HH$ contenant ${\gamma}^{-1}(U)$, d'où le morphisme de variétés $\gamma \times \rho:\ \HH \rightarrow {Bl}_0(W/\!/G)$. 
\end{proof}


La seconde projection $p_2:\ {Bl}_0(W/\!/G) \rightarrow G'/P$ est un morphisme $G'$-équivariant et munit donc ${Bl}_0(W/\!/G)$ d'une structure de fibré en droites $G'$-homogène au dessus de $G'/P$. On note $D_0$ la fibre schématique de $p_2$ en $eP$ et on vérifie que l'opération de $P$ dans $D_0 \cong \Aff$ coïncide avec l'opération de $P$ dans $\HH'$. D'après (\ref{iissoo}), on a un isomorphisme $G'$-équivariant 
$${Bl}_0(W/\!/G) \cong G' \times^{P} D_0.$$

\begin{proposition} \label{iso_final2SLn}
Le morphisme $\gamma \times \rho:\ \HH \rightarrow {Bl}_0(W/\!/G)$ est un isomorphisme.
\end{proposition}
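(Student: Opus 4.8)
The plan is to exploit the reduction principle, which exhibits both the source $\HH$ and the target $Bl_0(W/\!/G)$ as $G'$-homogeneous fiber bundles over $G'/P = \Gr(n,V'^*)$, and then to reduce the statement to an isomorphism between the fibers over $eP$. By lemme \ref{morphisme_dans_YSLn} we already have a $G'$-équivariant morphism $\gamma \times \rho:\ \HH \rightarrow Bl_0(W/\!/G)$, so everything is equivariant from the start.

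First I would check the compatibility with the two bundle projections. Composing $\gamma \times \rho$ with the second projection $p_2:\ Bl_0(W/\!/G) \rightarrow G'/P$ gives $p_2 \circ (\gamma \times \rho) = \rho$, and by proposition \ref{reduction1} this $\rho$ is precisely the projection of $\HH \cong G' \times^P \HH'$ onto $G'/P$; on the other side $p_2$ realizes $Bl_0(W/\!/G) \cong G' \times^P D_0$ with $D_0 \cong \Aff$ the fiber over $eP$. Thus $\gamma \times \rho$ is a $G'$-équivariant morphism of homogeneous bundles over $G'/P$ covering the identity of the base, so it restricts to a $P$-équivariant morphism of fibers $\HH' \rightarrow D_0$, and $\gamma \times \rho$ is an isomorphism if and only if this fiber map is. (The $P$-actions on $\HH'$ and on $D_0$ already coincide by construction, which is why the fiber map makes sense.)

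Next I would identify this fiber map. By lemme \ref{fibrehil} the fiber $\rho^{-1}(eP)$ is $\HH'$, and the restriction of $\gamma \times \rho$ to it sends $Z$ to $(\gamma(Z), L_0) \in D_0$; that is, it is essentially $\gamma_{|\HH'}$ landing in $D_0 \cong L_0$. By the commutative diagram of corollaire \ref{reductionHdiag}, $\gamma_{|\HH'} = \phi \circ \gamma'$, where $\gamma':\ \HH' \rightarrow W'/\!/G$ is the Hilbert--Chow morphism of $W' = \Hom(V'/E^{\perp},V)$ and $\phi$ is induced by the inclusion $W'/\!/G \hookrightarrow W/\!/G$. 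Since $\dim(V'/E^{\perp}) = n = \dim V$, this is the case $n'=n$, so corollaire \ref{cas_faciles_SLn} gives that $\gamma'$ is an isomorphism $\HH' \cong W'/\!/G \cong \Aff$.

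It then remains only to see that $\phi$ maps $W'/\!/G$ isomorphically onto $D_0 = L_0$, and this is the concrete point where a short computation is unavoidable: identifying $W'$ with the matrices $\begin{bmatrix} C_1 & \cdots & C_n & 0 & \cdots & 0\end{bmatrix}$ supported on the first $n$ columns, the formula $\nu(w) = L_1 \wedge \cdots \wedge L_n$ gives $\nu(w) = \det\left(\begin{bmatrix} C_1 & \cdots & C_n\end{bmatrix}\right) \, c_1^* \wedge \cdots \wedge c_n^*$, so $\nu(W') = L_0$ and $\phi$ carries the determinant coordinate on $W'/\!/G$ linearly and bijectively onto the line $L_0$. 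Hence $\HH' \rightarrow D_0$ is an isomorphism, and by homogeneity so is $\gamma \times \rho$. I expect the main obstacle to be the bookkeeping that matches the two bundle structures over $G'/P$ and verifies on the distinguished fiber that $\gamma_{|\HH'}$ both lands in and surjects onto $D_0$; once $\phi(W'/\!/G) = L_0$ is checked, the reduction principle upgrades the fiberwise isomorphism to a global one with no extra work.
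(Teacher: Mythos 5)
Votre démonstration est correcte et suit essentiellement la même stratégie que celle du mémoire : réduire, via les structures de fibrés $G'$-homogènes sur $G'/P$, à un morphisme $P$-équivariant entre les fibres en $eP$, puis identifier ce morphisme au morphisme de Hilbert--Chow $\gamma':\ \HH' \rightarrow W'/\!/G$, qui est un isomorphisme d'après le corollaire \ref{cas_faciles_SLn}. Votre vérification explicite que $\nu(W')=L_0$ rend simplement plus détaillé un point que le mémoire règle par le corollaire \ref{reductionHdiag}.
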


\begin{proof}
On a le diagramme commutatif suivant: 
\begin{equation} \label{KommSLn}
 \xymatrix{ 
   \HH \ar[rr]^{\gamma \times \rho}   && {Bl}_0(W/\!/G)   \\
  {{G'} {\times}^{P} \HH'}  \ar@{.>}[rr]^{\theta}  \ar@{->>}[rd] \ar[u]^{\cong} && {{G'} {\times}^{P} D_0} \ar@{->>}[ld] \ar[u]_{\cong} \\ & G'/{P} }  
  \end{equation} 
où l'on note $\theta$ le morphisme de variétés induit par $\gamma \times \rho$ tel que le carré commute. 
Par construction, le morphisme $\theta$ est $G'$-équivariant. Soit $\theta_e:\ \HH' \rightarrow D_0$ le morphisme $P$-équivariant obtenu en restreignant $\theta$ à la fibre en $eP \in G'/P$. D'après le corollaire \ref{reductionHdiag}, le morphisme $\theta_e$ s'identifie au morphisme de Hilbert-Chow $\gamma':\ \HH' \rightarrow W'/\!/G$; ce dernier est un isomorphisme d'après le corollaire \ref{cas_faciles_SLn}. Il s'ensuit que $\theta$, et donc $\gamma \times \rho$, est un isomorphisme. 
\end{proof}

\subsubsection{La famille universelle lorsque \texorpdfstring{$1<n<n'$}{1<n<n'}} \label{n12SLn}

On rappelle que l'on note $\pi:\ \XX \rightarrow \HH$ la famille universelle. Soit $T$ (resp. $\underline{V'}$, $\underline{V}$) le fibré tautologique (resp. les fibrés triviaux de fibre $V'$,$V$) au dessus de la grassmanienne $\Gr(n,V'^*)$. Avec les notations précédentes, on a
\begin{align*}
\XX &\cong G' \times^P \XX' \text{ d'après la proposition \ref{reduction2},}\\
    & \cong G' \times^P W' \text{ d'après le corollaire \ref{cas_faciles_SLn}.}
\end{align*}
Autrement dit, le morphisme $\rho \circ \pi:\ \XX \rightarrow G'/P$ permet d'identifier $\XX$ à l'espace total du fibré vectoriel $$\Hom(\underline{V'}/T^{\perp},\underline{V}) \cong T \otimes \underline{V}.$$ 
Notre but est d'obtenir une description alternative de $\XX$ comme sous-variété de $W \times \PP(W/\!/G)$. On rappelle que $\PP(W/\!/G)=\Gr(n,V'^*)$ et donc, un point de $\PP(W/\!/G)$ peut être considéré comme une droite de $W/\!/G$ ou bien comme un sous-espace de dimension $n$ de $V'^*$ suivant le contexte.

\begin{proposition}  \label{familleUnivSLn}
On a un isomorphisme $G'$-équivariant 
$$\XX \cong  \{(w,L) \in W \times \PP(W/\!/G) \ |\ L^{\perp} \subset \Ker(w) \}.$$  
Et la famille universelle s'identifie à $(w,L) \in \XX \mapsto (\nu(w),L) \in Bl_0(W/\!/G)$.
\end{proposition}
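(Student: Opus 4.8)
The plan is to exhibit the closed subvariety
$X := \{(w,L) \in W \times \PP(W/\!/G) \mid L^\perp \subset \Ker(w)\}$
as a second incarnation of the $G'$-homogeneous bundle $G' \times^P W'$, and then to match it with $\XX$ via the reduction principle, exactly in the spirit of the proof of Proposition \ref{iso_final2SLn}. First I would record the linear-algebra dictionary underlying the definition of $X$. Writing $w$ by its rows $L_1,\dots,L_n \in V'^*$ as in Notation \ref{lignescolonnes}, one has $\Ker(w) = \langle L_1,\dots,L_n\rangle^\perp$, so that for $L \in \Gr(n,V'^*)$ the condition $L^\perp \subset \Ker(w)$ is equivalent to $\langle L_1,\dots,L_n\rangle \subset L$ (using $(L^\perp)^\perp = L$). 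In particular, on such a pair all rows of $w$ lie in $L$, whence $\nu(w) = L_1 \wedge \dots \wedge L_n \in \Lambda^n L$. This shows at once that $X$ is a $G'$-stable closed subvariety of $W \times \PP(W/\!/G)$ and that $(\nu(w),L) \in Bl_0(W/\!/G)$ for every $(w,L) \in X$.

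Next I would study the second projection $p_2 \colon X \to \PP(W/\!/G) = \Gr(n,V'^*) \cong G'/P$, which is $G'$-equivariant. Its fibre over the base point $E$ (i.e. $eP$) is $\{w \in W \mid E^\perp \subset \Ker(w)\} = \Hom(V'/E^\perp,V) = W'$, and the induced $P$-action on this fibre is precisely the natural $P$-action on $W'$, since $P$ stabilizes $E$ and hence $E^\perp$. By the reduction lemma (\ref{iissoo}) this yields a $G'$-equivariant isomorphism $X \cong G' \times^P W'$. On the other hand, Proposition \ref{reduction2} together with Corollary \ref{cas_faciles_SLn} (which identifies $\XX' = W'$, the universal family over $\HH' \cong \Aff$ being $\det$) gives a $G'$-equivariant isomorphism $\XX \cong G' \times^P W'$ with the very same $P$-action on the fibre. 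I would make the comparison explicit through the $G'$-equivariant morphism $\Phi := (p,\, \rho\circ\pi) \colon \XX \to W \times \PP(W/\!/G)$, where $p \colon \XX \to W$ is the projection. Restricting $\Phi$ to the fibre of $\rho\circ\pi$ over $eP$, which is $\XX' = W'$ by Lemma \ref{fibrehil2}, one obtains $w' \mapsto (w',E)$, that is, exactly the identification $W' = p_2^{-1}(E)$; hence $\Phi$ maps $\XX$ into $X$ and restricts to an isomorphism on the fibre over $eP$. Being a $G'$-equivariant map between two homogeneous bundles over $G'/P$ that is an isomorphism on one fibre, $\Phi \colon \XX \to X$ is an isomorphism.

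Finally, the assertion on the universal family follows from the commutativity of diagram (\ref{diag1}): for $x \in \XX$ with $w = p(x)$ and $Z = \pi(x)$ one has $\gamma(Z) = \nu(w)$, while $\rho(Z) = L$ is the $\PP(W/\!/G)$-component of $\Phi(x)$. Composing $\pi$ with the isomorphism $\gamma\times\rho \colon \HH \xrightarrow{\sim} Bl_0(W/\!/G)$ of Proposition \ref{iso_final2SLn} therefore sends $(w,L) \in \XX \cong X$ to $(\nu(w),L) \in Bl_0(W/\!/G)$, as claimed. The one point requiring genuine care is the bundle comparison of the second paragraph: equivariance of $p_2$ and transitivity of $G'$ on the base are immediate, but the fibre identification $p_2^{-1}(E) = W'$ and the coincidence of the two $P$-actions must be checked so that the reduction principle applies verbatim. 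Everything else reduces to bookkeeping with the row description of $w$.
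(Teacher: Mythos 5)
Votre preuve est correcte et suit pour l'essentiel la même démarche que celle du mémoire : on identifie $\XX$ et la variété $\{(w,L) \mid L^\perp \subset \Ker(w)\}$ au même fibré $G'$-homogène $G' \times^P W'$ via la seconde projection et le principe de réduction, puis on vérifie l'énoncé sur la famille universelle à l'aide de la commutativité des diagrammes. Votre rédaction est même un peu plus soigneuse sur le point de comparaison (construction explicite du morphisme $\Phi$ et argument d'isomorphisme fibre à fibre), là où le mémoire se contente de constater que les deux espaces totaux coïncident.
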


\begin{proof}
On note $\ZZ:=\{(w,L) \in W \times \PP(W/\!/G) \ |\ L^{\perp} \subset \Ker(w) \}$. La seconde projection est un morphisme $G'$-équivariant vers l'espace homogène $\PP(W/\!/G) \cong G'/P$, donc d'après (\ref{iissoo}), on a un isomorphisme $G'$-équivariant $\ZZ \cong G' \times^P \Hom(V'/L_0^{\perp},V)$. Autrement dit, $\ZZ$ s'identifie à l'espace total du fibré vectoriel $\Hom(\underline{V'}/T^{\perp},\underline{V})$ au dessus de $\Gr(n,V'^*)$, et donc $\ZZ \cong \XX$ comme $G'$-variété.\\
Ensuite, d'après le corollaire \ref{reductionHdiag}, la famille universelle s'identifie à la composition des morphismes suivants: 
$$ \xymatrix{ \ZZ \ar@/_2pc/[rrrrrr] \ar[rr]^(0.4){\cong} && G' \times^P W' \ar[rr]^{G' \times^P \nu'} && G' \times^P W'/\!/G \ar[rr]^{\cong} && Bl_0(W/\!/G)  }$$\\ \\
Et on vérifie que le morphisme obtenu est bien $(w,L) \in \ZZ \mapsto (\nu(w),L) \in Bl_0(W/\!/G)$.
\end{proof}

\begin{remarque}
Soit $Z \subset W$ un sous-schéma fermé $G$-stable tel que $k[Z] \cong k[G]$ comme $G$-module. Alors $Z$ s'identifie à un point $(x,L)$ de $\HH$ et est défini comme sous-schéma de $W$ par
$$Z=\left \{  w \in W \ \middle|\  L^{\perp} \subset \Ker(w) \text{ et } \nu(w)=x \right\} .$$  
En particulier:\\
$\bullet$ Si $x \neq 0$, alors $Z$ est isomorphe à $SL_n(k)$.\\
$\bullet$ Si $x=0$, alors $Z$ est isomorphe à la variété déterminantielle $\{ M \in \MM_n(k) \ |\ \det(M)=0\}$. 
\end{remarque}

\subsection{Détermination de la composante principale dans un cas simple}

\noindent Dans la section \ref{casSln}, la détermination de $\HH$ s'est faite en trois étapes:
\begin{enumerate}
\item étudier le morphisme de passage au quotient $W \rightarrow W/\!/G$,
\item montrer que $\HH=\HHp$ avec le principe de réduction,
\item montrer que $\gamma \times \rho$ est un isomorphisme entre $\HHp$ et $Bl_0(W/\!/G)$.
\end{enumerate} 
Nous allons voir que l'étape 3 est en fait un cas particulier d'un énoncé plus général (proposition \ref{enoncepgene}).

Soient, comme dans la section \ref{generalitéesHilbert}, $G$ un groupe algébrique réductif, $W$ un $G$-schéma affine réduit et $G' \subset \Aut^G(W)$ un sous-groupe algébrique. On suppose que $G'$ contient un sous-groupe central isomorphe à $\Gm$ tel que
\begin{itemize}
\item l'opération associée de $\Gm$ dans $W/\!/G$ a un point fixe $0$ qui est l'unique orbite fermée,
\item le quotient $X:=(W/\!/G \backslash \{0\})/\Gm$ est une unique orbite de $G'$. 
\end{itemize}
En particulier $W/\!/G$ est la réunion de deux $G'$-orbites: $\{0\}$ et $U:=W/\!/G \backslash \{0\}$. 
On suppose de plus l'existence d'un morphisme $G'$-équivariant $\rho:\ \HH \rightarrow X$. Cette hypothèse est, a priori, très forte, mais elle sera vérifiée dans plusieurs exemples que nous traiterons par la suite. Le morphisme de passage au quotient $\nu:\ W \rightarrow W/\!/G$ est plat sur un ouvert non-vide, donc sur $U$ par $G'$-homogénéité. Si $\nu$ est plat partout, alors $\HH \cong W/\!/G$ et $\gamma$ est un isomorphisme. Sinon, on a la

\begin{proposition} \label{enoncepgene}
Sous les hypothèses précédentes, le morphisme $\gamma \times \rho$ envoie isomorphiquement $\HHp$ dans $Bl_0(W/\!/G):=\{(f,L) \in W/\!/G \times X\ |\ f \in L\}$.
\end{proposition}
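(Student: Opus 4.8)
The plan is to reproduce, in this abstract setting, the three-step strategy carried out for the special linear group in Section~\ref{casSln}: the study of $\nu$ is already packaged into the hypotheses, $\HHp=\overline{\gamma^{-1}(U)}$ is irreducible and $G'$-stable, and it remains to analyse $\gamma\times\rho$ in two stages — first that it maps $\HHp$ into the incidence variety $Bl_0(W/\!/G)$ (the analogue of Lemma~\ref{morphisme_dans_YSLn}), then that the induced morphism is an isomorphism (the analogue of Proposition~\ref{iso_final2SLn}). Throughout, $G'$-equivariance is used to reduce assertions about $\HHp$ and $Bl_0(W/\!/G)$ to assertions about a single fibre over the base $X$, via the isomorphism~(\ref{iissoo}). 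Write $q\colon U\to X$ for the quotient morphism.

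\emph{Step 1: the image lies in $Bl_0(W/\!/G)$.} Since $\gamma$ and $\rho$ are $G'$-equivariant, so is $\gamma\times\rho$, and $Bl_0(W/\!/G)$ is a closed $G'$-stable subvariety of $W/\!/G\times X$; hence $(\gamma\times\rho)^{-1}(Bl_0(W/\!/G))$ is closed and $G'$-stable in $\HH$, and it suffices to show it contains $\gamma^{-1}(U)$. By Proposition~\ref{chow}, $\gamma$ restricts to a $G'$-equivariant isomorphism $\gamma^{-1}(U)\xrightarrow{\sim}U$, and $U$ is a single $G'$-orbit, so by equivariance I only need to treat one point. Fix $x_0\in U$, set $Z_0=\gamma^{-1}(x_0)$, $Q:=\Stab_{G'}(x_0)$, and $\ell_0:=q(x_0)\in X$. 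Then $Z_0$ is $Q$-fixed, so $\rho(Z_0)$ is a $Q$-fixed point of $X$. Writing $X\cong G'/P$ with $\ell_0=eP$, the relations $\Gm\cdot x_0\subseteq P\cdot x_0\subseteq q^{-1}(\ell_0)=\Gm\cdot x_0$ give $P\cdot x_0=\Gm\cdot x_0$ and hence $P=Q\cdot\Gm$; as the central $\Gm$ acts trivially on $X=U/\Gm$, the point $\rho(Z_0)$ is in fact fixed by all of $P=Q\cdot\Gm$. Because $X$ is projective (this is precisely what makes the first projection $Bl_0(W/\!/G)\to W/\!/G$ proper), $P$ is parabolic and self-normalizing, so $\ell_0=eP$ is the unique $P$-fixed point of $G'/P$; therefore $\rho(Z_0)=\ell_0=q(\gamma(Z_0))$, i.e. $(\gamma(Z_0),\rho(Z_0))\in Bl_0(W/\!/G)$.

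\emph{Step 2: isomorphism.} Put $\beta:=\gamma\times\rho\colon\HHp\to Bl_0(W/\!/G)$. It is $G'$-equivariant and proper (since $\gamma$ is proper and the first projection $Bl_0(W/\!/G)\to W/\!/G$ is separated), and birational, because over $U$ both $\gamma$ and the first projection are isomorphisms while Step~1 shows $\rho=q\circ\gamma$ there. Applying~(\ref{iissoo}) to the equivariant surjections $\rho\colon\HHp\to X$ and $p_2\colon Bl_0(W/\!/G)\to X$, I obtain $\HHp\cong G'\times^{P}F_0$ and $Bl_0(W/\!/G)\cong G'\times^{P}D_0$, with $F_0=\HHp\cap\rho^{-1}(\ell_0)$ and $D_0=p_2^{-1}(\ell_0)=\overline{\Gm\cdot x_0}\cong\Aff$; since $p_2\circ\beta=\rho$, the morphism $\beta$ is induced by its restriction $\beta_e\colon F_0\to D_0$, which is simply $\gamma|_{F_0}$. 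As $\HHp$ is irreducible and $G'\times^{P}(-)$ is Zariski-locally trivial, $F_0$ is an irreducible curve, of dimension $\dim(W/\!/G)-\dim X=1$; thus $\beta_e$ is a proper birational — hence finite — morphism from an irreducible curve onto the smooth curve $D_0$, and so an isomorphism. Consequently $\beta$ is an isomorphism onto $Bl_0(W/\!/G)$.

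The main obstacle is Step~1, and within it the identification $\rho(Z_0)=\ell_0$ over the exceptional locus: one must show that the $Q$-fixed point $\rho(Z_0)$ is exactly the line $\ell_0$ through $\gamma(Z_0)$, and this is where the centrality of $\Gm$ (which yields $P=Q\cdot\Gm$ together with the trivial $\Gm$-action on $X$) and the self-normalizing property of the parabolic $P$ are essential. A secondary point requiring care is verifying that $F_0$ is a reduced irreducible curve and that $D_0=\overline{\Gm\cdot x_0}$ is smooth, so that the concluding appeal to Zariski's main theorem is legitimate.
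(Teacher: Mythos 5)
Your proof is correct and follows essentially the same route as the paper's: first showing that the unique point over $y_0\in U$ is sent to the line $q(y_0)$ by a stabilizer argument in $X\cong G'/P$ (the paper gets there via the dimension count $\dim\langle H,\Gm\rangle=\dim P_0$, you via $P=Q\cdot\Gm$ and the uniqueness of the $P$-fixed point, which is the same idea), then concluding by a closure argument and by reducing, through the associated-bundle structure over $G'/P$, to a proper birational morphism of fibres $F_1\to F_2\cong\Aff$. Your additional care about the irreducibility and dimension of the fibre $F_0$ only makes explicit what the paper leaves implicit.
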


\begin{proof}
On a un morphisme $G'$-équivariant $\gamma \times \rho:\ \HHp \rightarrow W/\!/G \times X$. Soit $y_0 \in U$ et $H:=\Stab_{G'}(y_0)$, alors d'après la proposition \ref{chow}, il existe un unique $z_0 \in \HH$ tel que $\gamma(z_0)=y_0$. Le morphisme $\gamma$ est $G'$-équivariant, donc $H \subset \Stab_{G'}(z_0)$. De même, le morphisme $\rho$ est $G'$-équivariant, donc $H \subset P_0:=\Stab_{G'}(\rho(z_0))$. On note $H'$ le sous-groupe algébrique de $G'$ engendré par $H$ et par $\Gm$. On a $H' \subset P_0$ et 
$$\dim(H')=1+\dim(H)=1+\dim(G')-\dim(W/\!/G)=\dim(G')-\dim(\PP(W/\!/G))=\dim(P_0).$$ 
On en déduit que $H'=P_0$ et donc $H$ stabilise un unique point de $\PP(W/\!/G)$ qui est le point correspondant à la droite $\left\langle y_0 \right\rangle$ de $W/\!/G$. 
Il s'ensuit que, pour tout $z \in \gamma^{-1}(U)$, on a $\gamma \times \rho (z) \in Bl_0(W/\!/G)$. Puis, $(\gamma \times \rho)^{-1}(Bl_0(W/\!/G))$ est un fermé de $\HHp$ qui contient l'ouvert $\gamma^{-1}(U)$, donc c'est $\HHp$ tout entier. On a donc montré que $\gamma \times \rho$ envoie $\HHp$ dans $Bl_0(W/\!/G)$, il reste à voir que c'est un isomorphisme.\\
On considère le diagramme commutatif suivant:
\begin{equation}
 \xymatrix{ \HHp \ar[rr]^{\gamma \times \rho} \ar[rd]_{\rho} && Bl_0(W/\!/G) \ar[ld]^{p_2} \\ & X }
 \end{equation}     
où $p_2$ est la seconde projection. Tous les morphismes qui apparaissent dans ce diagramme sont $G'$-équivariants. 
On fixe $x \in X$ et soit $P:=\Stab_{G'}(x)$, alors $X \cong G'/P$. On note $F_1$ et $F_2$ les fibres schématiques en $eP$ des morphismes $\rho$ et $p_2$ respectivement. D'après (\ref{iissoo}), on a des isomorphismes $G'$-équivariant $\HHp \cong G' \times^P F_1$ et $Bl_0(W/\!/G) \cong G' \times^P F_2$ et la restriction de $\gamma \times \rho$ à $F_1$ donne un morphisme $P$-équivariant $f:\ F_1 \rightarrow F_2$ tel que $\gamma \times \rho$ s'identifie à $G' \times^P f$, le morphisme induit par $f$. Le morphisme $\gamma$ est birationnel et propre (voir la section \ref{generalitéesHilbert}), donc $\gamma \times \rho$ est birationnel et propre, et donc $f$ également. On a bien sûr $F_2 \cong \Aff$, et donc $f$ un isomorphisme. Le résultat s'ensuit.   
\end{proof}

On fait maintenant l'hypothèse supplémentaire que $\HH=\HHp$. On a le carré cartésien
$$ \xymatrix{
    Bl_0(W/\!/G) \times_{W/\!/G} W \ar[r]^(0.7){q_2} \ar[d]_{q_1}  & W \ar[d]^{\nu} \\
    Bl_0(W/\!/G) \ar[r]_{p_1} & W/\!/G
  }$$
où $q_1$, $q_2$ sont les projections naturelles. La variété $\ZZ:=\overline{q_1^{-1}(p_1^{-1}(U))}$ est une composante irréductible de $Bl_0(W/\!/G) \times_{W/\!/G} W$ et on a le

\begin{corollaire} \label{kororg1}
Le morphisme $q_1:\ \ZZ \rightarrow Bl_0(W/\!/G)$ s'identifie à la famille universelle.  
\end{corollaire}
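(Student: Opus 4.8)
The plan is to realise both $\XX$ (the universal family) and $\ZZ$ as one and the same closed subvariety of the fibre product $Bl_0(W/\!/G) \times_{W/\!/G} W$, by exhibiting each as the closure of the common open piece lying over $U$.

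First I would transport the universal family into the fibre product. Taking $S=\HH$ in Lemma~\ref{diagcom} and using diagram~(\ref{diag1}), one gets a closed immersion $\XX \hookrightarrow \HH \times_{W/\!/G} W$ under which $\pi$ is the projection to $\HH$ and $\gamma$ the composite down to $W/\!/G$. By Proposition~\ref{enoncepgene} the morphism $\gamma \times \rho$ is an isomorphism $\HH \xrightarrow{\sim} Bl_0(W/\!/G)$, and since $p_1 \circ (\gamma \times \rho)=\gamma$ it carries $\gamma$ to $p_1$. Hence the two fibre products match, $\HH \times_{W/\!/G} W \cong Bl_0(W/\!/G)\times_{W/\!/G} W$, and $\XX$ becomes a closed subscheme of the latter with $\pi$ corresponding to $q_1$.

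Next I would compare $\XX$ and $\ZZ$ over the dense open $p_1^{-1}(U) \cong U$, the blow-up being an isomorphism away from its centre. By definition $q_1^{-1}(p_1^{-1}(U)) = p_1^{-1}(U)\times_{W/\!/G} W \cong \nu^{-1}(U)$ is the open piece whose closure is $\ZZ$. On the other hand, Proposition~\ref{chow} shows that over $\gamma^{-1}(U) \cong U$ the morphism $\gamma$ is an isomorphism and the universal family is exactly $\nu^{-1}(U) \to U$; thus $\XX$ coincides with the \emph{whole} fibre product over $p_1^{-1}(U)$. So $\XX$ and $\ZZ$ contain, and agree on, the same open subscheme $q_1^{-1}(p_1^{-1}(U))$. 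Since $\XX$ is closed, its closure contains $\ZZ=\overline{q_1^{-1}(p_1^{-1}(U))}$, giving the inclusion $\ZZ \subseteq \XX$ at once.

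It then remains to see that $\XX$ itself is nothing more than this closure. Here I would use that $\HH=\HHp$ is an integral variety and that $\pi$ is flat: every associated point of $\XX$ then lies over the generic point of $\HH$, which belongs to $\gamma^{-1}(U)$, so every such point lies in $q_1^{-1}(p_1^{-1}(U)) \subseteq \ZZ$. As the generic fibre of $\nu$ is reduced and irreducible (it is $\cong G$ in the situations we treat, and reduced by generic smoothness in characteristic $0$), the family $\XX$ is a reduced irreducible variety with generic point in $q_1^{-1}(p_1^{-1}(U))$, whence $\XX = \overline{q_1^{-1}(p_1^{-1}(U))} = \ZZ$ as closed subschemes of the fibre product; under this equality $\pi$ is carried to $q_1$, which is the claim. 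The genuinely delicate point is precisely this last step: ruling out any component or embedded point of $\XX$ away from the locus over $U$, for which flatness of $\pi$ over the integral base $\HH=\HHp$ together with the good generic fibre of $\nu$ is exactly what is needed.
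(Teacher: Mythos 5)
Votre démonstration est correcte, mais elle suit un chemin réellement différent de celui du texte. Le texte établit d'abord que $q_1:\ \ZZ \rightarrow Bl_0(W/\!/G)$ est plat (en utilisant la fibration de fibre $\Aff$ au-dessus de $X$ et \cite[Proposition 9.7]{Ha}), puis invoque la propriété universelle du schéma de Hilbert invariant pour obtenir un morphisme classifiant $\psi:\ Bl_0(W/\!/G) \rightarrow \HH$ tel que $\ZZ=\psi^*\XX$; on vérifie que $(\gamma\times\rho)\circ\psi$ est l'identité sur l'orbite ouverte dense, donc partout, et comme $\gamma\times\rho$ est un isomorphisme (proposition \ref{enoncepgene}), $\psi$ l'est aussi. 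Vous, au contraire, transportez $\XX$ dans $Bl_0(W/\!/G)\times_{W/\!/G}W$ via $\gamma\times\rho$ et montrez directement l'égalité $\XX=\ZZ$ par double inclusion, l'inclusion non triviale reposant sur le fait que, pour une famille plate au-dessus d'une base intègre, tous les points associés du total se trouvent au-dessus du point générique; c'est bien le point délicat et vous l'identifiez correctement. Votre approche évite d'avoir à vérifier la platitude de $q_1$ et la fonction de Hilbert de ses fibres (étape que le texte traite d'ailleurs assez rapidement), au prix d'un ingrédient d'algèbre commutative un peu plus technique; en revanche elle ne fournit pas, comme sous-produit, la platitude de $q_1$. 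Une seule réserve mineure: l'affirmation que la fibre générique de $\nu$ est isomorphe à $G$ ne fait pas partie des hypothèses de cette section générale (elle vaut dans les situations concrètes, pas dans le cadre abstrait de la proposition \ref{enoncepgene}); heureusement elle n'est pas nécessaire, la réduction de $\nu^{-1}(U)$ découlant de celle de $W$ et l'argument de clôture schématique ne requérant que l'énoncé sur les points associés.
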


\begin{proof}
On considère le diagramme commutatif suivant:
\begin{equation*}
 \xymatrix{ \ZZ \ar[rr]^{q_1} \ar@{.>}[rd]_{\delta} && Bl_0(W/\!/G) \ar[ld]^{p_2} \\ & X }
 \end{equation*} 
où $\delta$ est défini comme la composition $p_2 \circ q_1$. On fixe $x_0 \in X$ et on note $F_1'$ et $F_2'$ les fibres schématiques en $x_0$ des morphismes $\delta$ et $p_2$ respectivement. On a $F_2' \cong \Aff$, donc d'après \cite[Proposition 9.7]{Ha}, le morphisme ${q_1}_{|F_1'}$ est plat et donc $q_1$ est plat. Par la propriété universelle du schéma de Hilbert invariant, il existe un morphisme $\psi:\ Bl_0(W/\!/G) \rightarrow \HH$ tel que le diagramme suivant soit cartésien:
$$\xymatrix{
    \ZZ \ar[r] \ar[d]_{q_1}  & \XX \ar[d]^{\pi} \\
    Bl_0(W/\!/G) \ar[r]_(0.6){\psi} & \HH
  }$$
D'après la proposition \ref{chow} et par définition de la famille $q_1:\ \ZZ \rightarrow Bl_0(W/\!/G)$, le morphisme $(\gamma \times \rho) \circ \psi$ est l'identité sur l'orbite ouverte de $Bl_0(W/\!/G)$, et donc sur $Bl_0(W/\!/G)$ tout entier. Or, d'après la proposition \ref{enoncepgene}, $\gamma \times \rho$ est un isomorphisme et donc $\psi$ est un isomorphisme. Le résultat s'ensuit. 
\end{proof}

On finit cette section par quelques remarques sur les résultats obtenus.
\begin{itemize}
\item Le morphisme $\psi$ qui apparaît dans la preuve du corollaire \ref{kororg1} est l'inverse du morphisme $\gamma \times \rho$ de la proposition \ref{enoncepgene}.
\item La variété $Bl_0(W/\!/G)$ est la réunion de deux $G'$-orbites, et donc la famille universelle $q_1:\ \ZZ \rightarrow Bl_0(W/\!/G)$ admet deux types de fibres: les fibres générales, qui sont les fibres en n'importe quel point $(x,L)$, avec $x \neq 0$, et les fibres spéciales, qui sont les fibres en n'importe quel point $(0,L)$. La fibre générale de $q_1$ est bien sûr isomorphe à la fibre générale du morphisme de passage au quotient $W \rightarrow W/\!/G$.   
\end{itemize}


\chapter{Cas du groupe linéaire}  \label{chp2}

\section{Cas de \texorpdfstring{$GL(V)$}{GL(V)} opérant dans \texorpdfstring{$V^{\oplus n_1} \oplus V^{* \oplus n_2}$}{n1V+n2V*}} \label{GLngénéral}

On se place dans la situation $2$: on a $G:=GL(V)$, $G':=GL(V')$, $W:={\Hom}(V_1,V) \times {\Hom}(V,V_2)$ et l'opération de $G' \times G$ dans $W$ est donnée par (\ref{aktionGLn}).

\subsection{Etude du morphisme de passage au quotient} \label{description_quotient}

Les résultats essentiels de cette section sont les propositions \ref{descriptiongeofib} et \ref{ouvertplatitude} qui décrivent les fibres et l'ouvert de platitude de $\nu$. La plupart des résultats de cette section se trouvent dans \cite[§II.4.1]{Kra}. Cependant, d'une part le livre \cite{Kra} est écrit dans la langue de Goethe, ce qui peut être une difficulté pour certains lecteurs, et d'autre part, nos formulations, notations et méthodes sont quelque peu différentes. Donc, dans un souci de cohérence et de complétude, nous pensons que cette section a pleinement sa place dans ce mémoire. 

D'après le premier théorème fondamental pour $GL(V)$ (voir \cite[§9.1.4]{Pro}), la $k$-algèbre des invariants ${k[W]}^G$ est engendrée par les invariants $(i\ |\ j)$, où pour chaque couple $(i,j)$, $i=1,\ldots,n_1$, $j=1,\ldots,n_2$, la forme bilinéaire $(i\ |\ j)$ sur $W \cong (V_{1}^{*} {\otimes} V) \oplus (V^{*} {\otimes} V_2) \cong V^{\oplus n_1} \oplus {V^{* \oplus n_2}}$ est définie par (\ref{formebili}). 

\noindent Il s'ensuit que le morphisme de passage au quotient $\nu:\ W \rightarrow W/\!/G$ est donné par:

$$\begin{array}{lrcl}
 \nu:  &{\Hom}(V_1,V) \times {\Hom}(V,V_2)  & \rightarrow  & {\Hom}(V_1,V_2)  \\
        & (u_1,u_2)  & \mapsto      &  u_2 \circ u_1 .
\end{array}$$
Et donc 
$$W/\!/G=\{ f \in  {\Hom}(V_1,V_2) \ \mid  \ \rg(f) \leq n \}=:{\Hom}(V_1,V_2)^{\leq n} $$
est une variété déterminantielle.\\
Si $n_1,n_2 >n$, alors la variété $W/\!/G$ est de dimension $nn_1+nn_2-n^2$, de Cohen-Macaulay (\cite[Theorem 3.1]{ACGH}), normale et son lieu singulier est ${\Hom}(V_1,V_2)^{\leq n-1}$ (\cite[§ II.2.2]{ACGH}). Nous verrons dans la section \ref{appendice1} que $W/\!/G$ est de Gorenstein si et seulement si $n_1=n_2$.\\
Sinon, $W/\!/G=\Hom(V_1,V_2)$ est un espace affine.\\ 
On note 
$$N:=\min(n_1,n_2,n).$$ 
L'opération de $G'$ dans $W$ induit une opération dans $W/\!/G$ telle que $\nu$ soit $G'$-équivariant. Cette opération est donnée par
$$\forall (g_1,g_2) \in G',\  \forall f \in  {\Hom}(V_1,V_2),\ (g_1,g_2).f:=g_2 \circ f \circ g_{1}^{-1} $$
La variété quotient $W/\!/G$ se décompose en $N+1$ orbites pour cette opération: 
\begin{equation}
U_i:=\{ f \in  {\Hom}(V_1,V_2) \ \mid  \ \rg(f) = i \}
\end{equation}
pour $i=0, \ldots,N$, les adhérences de ces orbites sont imbriquées de la façon suivante: 
\begin{equation}
\{0\}=\overline{U_0} \subset \overline{U_1} \subset \cdots\subset \overline{U_{N}}=W/\!/G .
\end{equation}
En effet, pour chaque $i=0,\ldots,N$, on a $\overline{U_i}={\Hom}(V_1,V_2)^{\leq i}$. En particulier, l'orbite $U_{N}$ est un ouvert dense de $W/\!/G$.

\begin{definition} \label{defnilk}
On appelle nilcône de $\nu$, noté $\NNN(W,G)$, la fibre schématique en $0$ du morphisme $\nu$. 
\end{definition}

Certaines propriétés géométriques de $\NNN(W,G)$ sont étudiées dans \cite{KS}. En particulier, le schéma $\NNN(W,G)$ est toujours réduit mais il est irréductible si et seulement si $n_1+n_2 \leq n$ (\cite[Theorem 9.1]{KS}). Nous allons déterminer les composantes irréductibles de $\NNN(W,G)$ et leurs dimensions. Soit $m \in \{0, \ldots,n\}$, on définit 
$$X_m:=\left\{(u_1,u_2)\in W\ \middle| 
    \begin{array}{ll}
       \Im(u_1) \subset \Ker(u_2),\\
       \rg(u_1) \leq \min(n_1,m),\\
       \dim(\Ker(u_2)) \geq \max(n-n_2,m). 
    \end{array}
 \right\}$$ 
et soit 
$$\xymatrix{ &  Z_m \ar@{->>}[ld]_{p_1} \ar@{->>}[rd]^{p_2} \\   X_m && \Gr(m,V) }$$ 
où 
$$Z_m:=\{(u_1,u_2,L) \in \Hom(V_1,V) \times \Hom(V,V_2)\times \Gr(m,V)\ \mid  \ \Im(u_1) \subset L \subset \Ker(u_2)\}$$ 
et les $p_i$ sont les projections naturelles. On fixe $L_0 \in \Gr(m,V)$. La seconde projection munit $Z_m$ d'une structure de fibré vectoriel homogène au dessus de $\Gr(m,V)$, de fibre en $L_0$ isomorphe à $F_m:=\Hom(V_1,L_0) \times \Hom(V/L_0,V_2)$. Autrement dit, on a $Z_m=\Hom(\underline{V_1},T) \times \Hom(\underline{V}/T,\underline{V_2})$ où $T$ est le fibré tautologique de $\Gr(m,V)$ et $\underline{V},\underline{V_1},\underline{V_2}$ désignent les fibrés triviaux de fibres $V,V_1$ et $V_2$ respectivement. Donc $Z_m$ est une variété lisse de dimension:
 \begin{align*}
\dim(Z_m)&=\dim(\Gr(m,V))+\dim(\Hom(V_1,L_0) \times \Hom(V/L_0,V_2))\\
         &=m(n-m)+n_1 m+(n-m)n_2.
  \end{align*}

\begin{proposition} \label{compirredfibzero}
Chaque $X_m$ est un fermé irréductible de $W$ et les composantes irréductibles de $\NNN(W,G)$ sont:
$$\left\{
    \begin{array}{ll}
        X_i,\ i={\max(0,n-n_2)}, {\max(0,n-n_2)}+1, \ldots ,{\min(n,n_1)} &\text{ si } n<n_1+n_2,\\
        X_{n_1} &\text{ si } n\geq n_1+n_2.
    \end{array}
\right.$$
De plus, lorsque $m \leq n_1$ ou $m \geq n-n_2$, l'application $p_1:\ Z_m \rightarrow X_m$ est birationnelle. 
\end{proposition}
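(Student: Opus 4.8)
The plan is to build everything on the incidence variety $Z_m$ and its projection $p_1\colon Z_m\to W$, using that $Z_m$ is the total space of a homogeneous vector bundle over $\Gr(m,V)$, hence irreducible of the dimension computed above. First I would establish the set-theoretic identity $X_m=p_1(Z_m)$: a pair $(u_1,u_2)$ lies in $p_1(Z_m)$ exactly when there is an $m$-dimensional subspace $L$ with $\Im(u_1)\subset L\subset\Ker(u_2)$, which is possible iff $\Im(u_1)\subset\Ker(u_2)$, $\rg(u_1)\le m$ and $\dim\Ker(u_2)\ge m$; since $\rg(u_1)\le n_1$ and $\dim\Ker(u_2)\ge n-n_2$ hold automatically, these are precisely the conditions defining $X_m$. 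The first assertion then follows at once: $p_1$ is the restriction to $Z_m$ of the projection $W\times\Gr(m,V)\to W$, which is proper because $\Gr(m,V)$ is complete, so $X_m=p_1(Z_m)$ is closed in $W$, and it is irreducible as the image of the irreducible variety $Z_m$.

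For the birationality statement I would exhibit an explicit rational inverse on a dense open subset. Over a point $(u_1,u_2)\in X_m$ the fibre $p_1^{-1}(u_1,u_2)$ is the set of $L\in\Gr(m,V)$ with $\Im(u_1)\subset L\subset\Ker(u_2)$, which is isomorphic to $\Gr(m-\rg(u_1),\,\dim\Ker(u_2)-\rg(u_1))$. Reading off the generic point of $Z_m$ (take $u_1$ of maximal rank $\min(n_1,m)$ into the tautological subspace and $u_2$ of maximal rank on the quotient) shows that the generic point of $X_m$ has $\rg(u_1)=\min(n_1,m)$ and $\dim\Ker(u_2)=\max(m,n-n_2)$. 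If $m\le n_1$ then generically $\rg(u_1)=m$, so $L=\Im(u_1)$ is forced and $(u_1,u_2)\mapsto(u_1,u_2,\Im(u_1))$ inverts $p_1$ on this open set; if $m\ge n-n_2$ then generically $\dim\Ker(u_2)=m$, so $L=\Ker(u_2)$ is forced. In either case the generic fibre is a single reduced point, and since we work in characteristic $0$ a dominant, generically injective morphism of irreducible varieties is birational.

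It remains to locate the irreducible components. As each $X_m\subset\NNN(W,G)$, and conversely every $(u_1,u_2)\in\NNN(W,G)$ satisfies $\rg(u_1)\le\dim\Ker(u_2)$ and hence lies in $X_{\rg(u_1)}$, we get $\NNN(W,G)=\bigcup_{m=0}^n X_m$, so the components are the maximal members of this finite family. A direct comparison of the defining inequalities gives the containments $X_m\subset X_{\max(0,n-n_2)}$ for $m\le n-n_2$ and $X_m\subset X_{\min(n,n_1)}$ for $m\ge n_1$, so only the indices $m$ with $\max(0,n-n_2)\le m\le\min(n,n_1)$ can contribute. When $n\ge n_1+n_2$ one has $n-n_2\ge n_1$, and for every $m$ with $n_1\le m\le n-n_2$ the inequalities $\rg(u_1)\le\min(n_1,m)=n_1$ and $\dim\Ker(u_2)\ge\max(n-n_2,m)=n-n_2$ are vacuous, whence $X_m=\NNN(W,G)$; in particular $X_{n_1}=\NNN(W,G)$ is the unique component. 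When $n<n_1+n_2$ the interval $\max(0,n-n_2)\le m\le\min(n,n_1)$ consists of indices with $m\le n_1$ and $m\ge n-n_2$, so $p_1$ is birational there and the generic point of $X_m$ has ranks $(\rg(u_1),\rg(u_2))=(m,n-m)$; these are distinct for distinct $m$, and for $m<m'$ one checks $X_{m'}\not\subset X_m$ (generically $\rg(u_1)=m'>m$) and $X_m\not\subset X_{m'}$ (generically $\rg(u_2)=n-m>n-m'$), so these $X_m$ are pairwise incomparable and give exactly the components.

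I expect the main difficulty to lie not in any single computation but in the careful bookkeeping of the two thresholds $n_1$ and $n-n_2$: proving the two containments cleanly and, above all, verifying that the candidate components are pairwise incomparable requires systematically tracking the generic ranks of $u_1$ and $u_2$ through $p_1$ and handling the boundary cases $n\le n_2$, $n_1\ge n$ and $n=n_1+n_2$ where several of the $\max/\min$ collapse. The step from "generically injective" to "birational" is where characteristic $0$ is used.
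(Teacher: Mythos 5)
Your proof is correct and follows essentially the same route as the paper: the same incidence variety $Z_m$ with its surjection $p_1$ onto $X_m$ (giving irreducibility), the same open loci of maximal rank where $p_1$ admits the explicit inverse $L=\Im(u_1)$ or $L=\Ker(u_2)$ (giving birationality), and the same case split on $n_1$ versus $n-n_2$ for the components. The only difference is cosmetic: you derive closedness of $X_m$ from properness of the projection rather than from the closedness of the defining rank conditions, and you spell out the pairwise incomparability of the candidate components via generic ranks, a verification the paper leaves to the reader.
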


\begin{proof}
Déjà, par définition, les $X_m$ sont des fermés de $W$. Le morphisme $p_1$ est surjectif, et $Z_m$ est irréductible, donc $X_m$ est irréductible. 
Ensuite
$$\NNN(W,G)=\{(u_1,u_2)\in \Hom(V_1,V) \times \Hom(V,V_2)\ \mid  \ \Im(u_1) \subset \Ker(u_2) \}= \bigcup_{i=0}^{n} X_i . $$
Si  $n_1 \leq n-n_2$, alors  $$\left\{
    \begin{array}{ll}
        X_0 \subset  \cdots \subset X_{n_1},   \\
        X_{n_1} = \cdots=X_{n-n_2},\\
        X_{n-n_2} \supset  \cdots \supset X_n,
    \end{array}
\right.$$  
donc $X=X_{n_1}$.\\
Si  $n_1 > n-n_2$, alors  $$\left\{
    \begin{array}{ll}
        X_0 \subset  \cdots \subset X_{\max(0,n-n_2)},   \\
        X_{\min(n,n_1)} \supset  \cdots \supset X_n,
    \end{array}
\right.$$  
et on vérifie comme conséquence de la définition des $X_m$ qu'il n'y a pas d'autre relation d'inclusion.
Ensuite, soient
$$Z'_m:=\{(u_1,u_2,L) \in Z_m\ \mid  \ \rg(u_1)=\min(m,n_1) \text{ et } \dim(\Ker(u_2))=\max(m,n-n_2)\}$$ et 
$$X'_m:=\{(u_1,u_2)\in X_m\ \mid  \ \rg(u_1)=\min(m,n_1) \text{ et } \dim(\Ker(u_2))=\max(m,n-n_2)\} .$$ 
Alors $Z'_m$ (resp. $X'_m$) est un ouvert dense de $Z_m$ (resp. de $X_m$) et on a $Z'_m=p_{1}^{-1}(X'_m)$.
Si $m \leq n_1$ ou $m \geq n-n_2$, alors $p_1:\ Z'_m \rightarrow X'_m$ est un isomorphisme, donc $p_1$ est birationnelle.
\end{proof}

\begin{corollaire} \label{fibzero}
La dimension de $\NNN(W,G)$ est: 
\begin{itemize} \renewcommand{\labelitemi}{$\bullet$}
\item $n n_2$ lorsque $n \leq n_2-n_1$,
\item $n n_1$ lorsque $n \leq n_1-n_2$,
\item $\frac{1}{4}n(n+2n_1+2n_2)+\frac{1}{4}{(n_1-n_2)}^2$ lorsque $|n_1-n_2|<n<n_1+n_2$ et $n+n_1-n_2$ est pair, 
\item $\frac{1}{4}n(n+2n_1+2n_2)+\frac{1}{4}{(n_1-n_2)}^2-\frac{1}{4}$ lorsque $|n_1-n_2|<n<n_1+n_2$ et $n+n_1-n_2$ est impair, 
\item $n n_1+n n_2 -n_1 n_2$ lorsque $n \geq n_1+n_2$.
\end{itemize}
\end{corollaire}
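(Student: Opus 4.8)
Le plan est de ramener le calcul de $\dim(\NNN(W,G))$ à la maximisation d'une fonction quadratique explicite sur l'ensemble des indices des composantes irréductibles déjà identifiées dans la proposition \ref{compirredfibzero}. Comme $\NNN(W,G)=\bigcup_{i=0}^n X_i$, sa dimension est le maximum des dimensions de ses composantes irréductibles. D'après la proposition \ref{compirredfibzero}, ces composantes sont les $X_m$ pour $m$ parcourant $I:=\{\max(0,n-n_2),\ldots,\min(n,n_1)\}$ lorsque $n<n_1+n_2$, et l'unique $X_{n_1}$ (soit $I=\{n_1\}$) lorsque $n\geq n_1+n_2$.

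Je commencerais par observer que tout indice de composante vérifie $m\leq\min(n,n_1)\leq n_1$, de sorte que le morphisme $p_1:\ Z_m\rightarrow X_m$ est birationnel d'après la proposition \ref{compirredfibzero}. On en déduit $\dim(X_m)=\dim(Z_m)=f(m)$, où l'on pose $f(m):=m(n-m)+n_1 m+(n-m)n_2$, et donc $\dim(\NNN(W,G))=\max_{m\in I} f(m)$. J'écrirais ensuite $f$ sous forme canonique, $f(m)=-\left(m-m^*\right)^2+\left(m^*\right)^2+nn_2$ avec $m^*:=\tfrac{n+n_1-n_2}{2}$: c'est une parabole tournée vers le bas, de sommet $m^*$, et $\left(m^*\right)^2+nn_2$ se développe exactement en $\tfrac{1}{4}n(n+2n_1+2n_2)+\tfrac{1}{4}(n_1-n_2)^2$.

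Il reste alors à localiser $m^*$ par rapport à $I$ et à tenir compte de la parité. Les cas (a) $n\leq n_2-n_1$ et (b) $n\leq n_1-n_2$ équivalent respectivement à $m^*\leq 0$ et $m^*\geq n$; le maximum de $f$ sur $I$ est donc atteint à l'extrémité $m=0$ (où $\max(0,n-n_2)=0$) et vaut $f(0)=nn_2$, respectivement à l'extrémité $m=n$ (où $\min(n,n_1)=n$) et vaut $f(n)=nn_1$. Le cas (e) $n\geq n_1+n_2$ est immédiat puisque $I=\{n_1\}$ et $f(n_1)=nn_1+nn_2-n_1n_2$. Dans le cas intermédiaire $|n_1-n_2|<n<n_1+n_2$, le sommet $m^*$ est intérieur à $I$, donc le maximum de $f$ sur les entiers de $I$ est atteint à l'entier le plus proche de $m^*$: si $n+n_1-n_2$ est pair, $m^*$ est entier et $f(m^*)=\left(m^*\right)^2+nn_2$ donne la valeur du cas (c); si $n+n_1-n_2$ est impair, l'entier le plus proche est à distance $\tfrac12$, d'où une perte de $\left(\tfrac12\right)^2=\tfrac14$ et la valeur du cas (d).

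La principale difficulté, purement technique, sera de vérifier dans le cas intermédiaire que le sommet $m^*$ — ou son entier le plus proche lorsque $n+n_1-n_2$ est impair — appartient bien à $I$, de sorte que l'optimum non contraint de $f$ soit effectivement réalisé sur $I$. Cela se ramène aux encadrements $\max(0,n-n_2)\leq m^*\leq\min(n,n_1)$, qui découlent directement des inégalités $|n_1-n_2|<n<n_1+n_2$ (par exemple $m^*\leq n_1\Leftrightarrow n\leq n_1+n_2$ et $m^*\geq n-n_2\Leftrightarrow n\leq n_1+n_2$). On vérifiera enfin, à titre de contrôle, que les formules des cas (a)--(b), (c)--(d) et (e) coïncident aux frontières $n=|n_1-n_2|$ et $n=n_1+n_2$, le reste n'étant que développement algébrique.
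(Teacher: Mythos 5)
Votre démonstration est correcte et suit essentiellement la même démarche que celle du mémoire : réduction au calcul de $\max P(m)$ sur les indices des composantes irréductibles donnés par la proposition \ref{compirredfibzero}, puis étude du sommet de la parabole et discussion de parité. Vous êtes simplement un peu plus soigneux que le texte original en vérifiant explicitement que le sommet $m^*$ (ou l'entier le plus proche) appartient bien à l'intervalle d'indices dans le cas intermédiaire, ce qui est un détail que la preuve du mémoire passe sous silence.
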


\begin{proof}
D'après la proposition \ref{compirredfibzero}, il nous suffit de calculer la dimension des $X_m$ pour certains $m$ particuliers. 
On note $P(m):=m(n-m)+n_1 m+n_2 (n-m)$ la dimension de $Z_m$. Lorsque $m \leq n_1$ ou $m \geq n-n_2$, on a $\dim({X_m})=\dim(Z_m)=P(m)$.


Si $n \geq n_1+n_2$, alors 
$$\dim(\NNN(W,G))=\dim(X_{n_1})=n n_1+ n n_2- n_1 n_2 . $$

Si $n < n_1+n_2$, alors \begin{align*} 
\dim(\NNN(W,G))&=\dim \left(\bigcup_{i=\max(0,n-n_2)}^{\min(n,n_1)} X_i \right)\\
                 &=\max_{i={\max(0,n-n_2)}, \ldots,{\min(n,n_1)}} \dim(X_i) \\
                 &=\max_{i={\max(0,n-n_2)}, \ldots,{\min(n,n_1)}} P(i). 
                 \end{align*}
On est donc ramené à étudier les variations du polynôme $P$:\\
$$P'(m)=0 \Leftrightarrow m=\frac{1}{2}(n+n_1-n_2)=:m'.$$
Si $m' \leq \max(0,n-n_2)$, alors $m' \leq 0$ et donc  $\dim(\NNN(W,G))=P(0)= n n_2$.\\
Si $m' \geq \min(n,n_1)$, alors $m' \geq n$ et donc $\dim(\NNN(W,G))=P(n)=n n_1$.\\
Si $\max(0,n-n_2)<m'<\min(n,n_1)$, on a deux possibilités: ou bien $m' \in \NN$ et alors $\dim(\NNN(W,G))=P(m')$, ou bien  $m' \notin \NN$ et alors $\dim(\NNN(W,G))=P(m'+\frac{1}{2})$.\\
Et ces différents cas fournissent le résultat annoncé. 
\end{proof}

Nous allons maintenant nous intéresser à la description géométrique des fibres de $\nu$ au dessus de chaque orbite $U_i$. On rappelle que par homogénéité toutes les fibres au dessus d'une orbite donnée sont isomorphes, il suffit donc de décrire la fibre de $\nu$ en un point de chaque orbite.

\begin{notation} \label{defj}
Soit $ \ 0 \leq r \leq N$, on note 
$$J_r =\begin{bmatrix}
{I}_r \ \ &0_{r,n_1-r} \\
0_{n_2-r,r}  \ &0_{n_2-r,n_1-r} 
\end{bmatrix}$$
où $I_r$ est la matrice identité de taille $r$. La matrice $J_r$ s'identifie à un élément de $U_r$ via l'isomorphisme $\Hom(V_1,V_2) \cong \MM_{n_2,n_1}(k)$.        
\end{notation}
On fixe $r \in \{0, \ldots,N\}$ et on définit 
$$w_r:=\left( \begin{bmatrix}
I_r &0 \\
0   &0 \end{bmatrix},\begin{bmatrix}
I_r  &0 \\
0  &0 \end{bmatrix} \right) \in W$$ 
et $G_r$ le stabilisateur de ${w_r}$ dans $G$. On note $E_r$ (resp. $E_r^*$) la représentation standard (resp. duale) de $G_r$ et $V_0$ la représentation triviale de $G_r$.

\begin{lemme} \label{Gw}
$$G_r =\left\{ \begin{bmatrix}
I_r   &0 \\
0     &M \end{bmatrix},\ M\in {GL}_{n-r}(k) \right\} \cong {GL}_{n-r}(k)$$ et $V=r V_0 \oplus E_r$ comme $G_r$-module.    
\end{lemme}

\begin{proof}
On écrit $g \in GL(V)$ sous forme d'une matrice par blocs: $g=\begin{bmatrix}
A  &B \\
C  &D \end{bmatrix}$, alors $$g.\begin{bmatrix}
I_r   &0 \\
0     &0 \end{bmatrix}=\begin{bmatrix}
I_r   &0 \\
0     &0 \end{bmatrix} \Rightarrow A=I_{r} \text{ et } C=0$$ puis  
$$\begin{bmatrix}
I_r   &0 \\
0     &0 \end{bmatrix}.g^{-1}=\begin{bmatrix}
I_r   &0 \\
0     &0\end{bmatrix} \Rightarrow  B=0$$ d'où $g= \begin{bmatrix}
I_r  &0 \\
0  &M \end{bmatrix}$ avec $M \in {GL}_{n-r}(k)$. Réciproquement, si $g$ est de cette forme, alors $g \in G_r$. Ensuite, le sous-espace vectoriel de $V$ engendré par les $n-r$ derniers vecteurs de la base ${\BB}$ est la représentation standard de $G_r$ et $G_r$ opère trivialement dans le sous-espace vectoriel de $V$ engendré par les $r$ premiers vecteurs de la base ${\BB}$. 
\end{proof}

\begin{lemme} \label{orbfermee}
L'orbite $G.{w_r} \subset W$ est fermée dans $W$, et c'est l'unique orbite fermée contenue dans ${\nu}^{-1}(J_r)$.
\end{lemme}

\begin{proof}
On a $\nu ({w_r})=J_r$ et d'après le lemme \ref{Gw}, $G_r \cong {GL}_{n-r}(k)$ est un sous-groupe réductif de $G$. On applique \cite[§I.6.2.5, Theorem 10]{SB} qui nous fournit l'équivalence: 
$$ G.{w_r} \text{ est fermée dans } W \Leftrightarrow C_G(G_r).{w_r} \text{ est fermée dans } W.$$ 
Puis $C_G(G_r)=\left\{\begin{bmatrix}
M  &0 \\
0  &\lambda I_{n-r} \end{bmatrix},\ M \in {GL}_{r}(k),\ \lambda \in \Gm \right\}$, où $\Gm$ est le groupe multiplicatif. Donc $$C_G(G_r).{w_r}=\left\{ \left( \begin{bmatrix}
M   &0 \\
0   &0 \end{bmatrix},\begin{bmatrix}
M^{-1} &0 \\
0     &0 \end{bmatrix} \right)\ ,\ M \in {GL}_{r}(k)\right\}$$ est un fermé de $W$, et donc $G.{w_r}$ est une orbite fermée de ${\nu}^{-1}(J_r)$.\\     
Enfin, d'après \cite[§II.3.1, Théorème 1]{SB}, la fibre ${\nu}^{-1}(J_r)$ contient une unique orbite fermée, d'où le résultat.
\end{proof}

\begin{definition} \label{slice}
Soit $x \in W$ tel que l'orbite $G.x$ est fermée dans $W$ et $G_x$ le stabilisateur de $x$ dans $G$. Alors, d'après \cite[§6.2.1]{SB}, le $G_x$-module $T_x(G.x)$ admet un supplémentaire $G_x$-stable $M_x$ dans $W$. La représentation $M_x$ de $G_x$ ainsi construite est appelée représentation slice de $G$ en $x$. 
\end{definition}

\begin{lemme} \label{slice_explicite}
On a un isomorphisme de $G_r$-modules: 
$$ M_{w_r} \cong (n_1-r) E_r \oplus (n_2-r) E_r^{*} \oplus r(n_1+n_2-r) V_0  . $$
\end{lemme}

\begin{proof}
Par définition de la représentation slice $M_{w_r}$ de $G_r$, on a $M_{w_r} \cong W/T_{w_r}(G.{w_r})$ comme $G_r$-module. On déduit du lemme \ref{Gw} la décomposition de $W$ comme $G_r$-module: 
\begin{align*}
 W&\cong(V_{1}^{*} \otimes V) \oplus (V^{*} \otimes V_2) \\
           &\cong (V_{1}^{*} \otimes (r V_0 \oplus E_r)) \oplus ((r V_0 \oplus E_r^{*}) \otimes V_2) \\
           &\cong (V_{1}^{*} \otimes E_r) \oplus (E_r^{*} \otimes V_2) \oplus ((V_{1}^{*} \otimes r V_0) \oplus (r V_0  \otimes V_2))\\
           &\cong n_1 E_r \oplus n_2 E_r^{*} \oplus r(n_1+n_2) V_0.    
\end{align*}
Puis $T_{w_r}(G.{w_r}) \cong \gg/\gg_r$ où l'on note $\gg$ l'algèbre de Lie de $G$ et $\gg_r$ l'algèbre de Lie de $G_r$. Or 
\begin{align*}
\gg &\cong V^{*} \otimes V\\
       &\cong (r V_0 \oplus E_r^{*}) \otimes (r V_0  \oplus E_r)\\
       &\cong (r V_0  \otimes r V_0 ) \oplus ( r V_0  \otimes  E_r) \oplus (E_r^{*} \otimes r V_0 )  \oplus(E_r^{*} \otimes E_r)  
\end{align*} 
et 
$$\gg_r \cong E_r^{*} \otimes E_r$$ 
donc \begin{align*} T_{w_r}(G.{w_r}) &\cong (r V_0  \otimes r V_0 ) \oplus ( r V_0  \otimes  E_r) \oplus (E_r^{*} \otimes r V_0 )\\
& \cong r E_r \oplus r E_r^{*} \oplus r^2 V_0 \end{align*} 
d'où 
\begin{align*} 
M_{w_r} &\cong W/T_{w_r}(G.{w_r})\\
        &\cong (n_1-r) E_r \oplus (n_2-r) E_r^{*} \oplus r(n_1+n_2-r) V_0 
\end{align*} 
comme $G_r$-module.
\end{proof}

On note ${\nu}_M:\ M_{w_r} \rightarrow M_{w_r}/\!/G_r$ le morphisme de passage au quotient et  
$$\NNN(N_{w_r},G_r):={{\nu}_M}^{-1}({\nu}_M(0))$$
le nilcône de ${\nu}_M$. Le groupe $G_r$ opère naturellement dans $G$ par multiplication à droite, ainsi que dans $\NNN(M_{w_r},G_r)$ par définition de ${\nu}_M$. On peut donc considérer le quotient 
$$G {\times}^{G_r} \NNN(M_{w_r},G_r)$$ 
qui est naturellement muni d'une structure de $G$-schéma d'après \cite[§I.5.14]{Jan}.\\
Ensuite, on définit ${(W/\!/G)}^{(G_r)} \subset W/\!/G$ l'ensemble des $G$-orbites fermées de $W$ telles que $G_r$ est conjugué au stabilisateur d'un point de ces orbites. En particulier, $G.{w_r} \in {(W/\!/G)}^{(G_r)}$ d'après le lemme \ref{orbfermee}. On note $W^{(G_r)}:={\nu}^{-1}({(W/\!/G)}^{(G_r)}) \subset W$. Alors, d'après \cite[§6.2.3, Theorem 8]{SB}, les ensembles ${(W/\!/G)}^{(G_r)}$ et $W^{(G_r)}$ sont des sous-variétés lisses de $W/\!/G$ et $W$ respectivement. Il s'ensuit que ${\nu '}:=\nu_{| W^{(G_r)}}:\ W^{(G_r)} \rightarrow {(W/\!/G)}^{(G_r)}$ est un morphisme de variétés, et en fait, toujours d'après \cite[§6.2.3, Theorem 8]{SB}, c'est même une fibration de fibre isomorphe (comme schéma) à 
$$F_{w_r}:=G \times^{G_r} \NNN(M_{w_r},G_r) . $$ 
En particulier, on a 
$${\nu}^{-1}(J_r)= {\nu '}^{-1}(J_r) \cong G {\times}^{G_r} \NNN(M_{w_r},G_r) . $$
Soient $F_1$, $F_2$ et $F_3$ des espaces vectoriels de dimensions $n_1-r$, $n_2-r$ et $r(n_1+n_2-r)$ respectivement dans lesquels $G_r$ opère trivialement. D'après le lemme \ref{slice_explicite} on a un isomorphisme de $G_r$-modules 
$$ M_{w_r} \cong \Hom(F_1, E_r) \times \Hom(E_r, F_2) \times F_3  . $$   
Ensuite, le morphisme de passage au quotient ${\nu}_M$ est donné par:
 $$\begin{array}{lrcl}
 {\nu}_{M}:\  &\Hom(F_1, E_r) \times \Hom(E_r, F_2) \times F_3  & \rightarrow  & \Hom(F_1, F_2) \times F_3  . \\
        & (u_1',u_2',x)  & \mapsto      &  (u_2' \circ u_1',x) 
\end{array}$$
Donc $\NNN(M_{w_r},G_r):= {\nu}_{M}^{-1}({\nu}_M(0))={\nu}_{M}^{-1}(0) \cong {{\nu}_{M}'}^{-1}(0)$ avec
 $$\begin{array}{lrcl}
 {\nu'_{M}}:\  &\Hom(F_1, E_r) \times \Hom(E_r, F_2) & \rightarrow  & \Hom(F_1, F_2)  .  \\
        & (u_1',u_2')  & \mapsto      &  u_2' \circ u_1' 
\end{array}$$ 
La proposition qui suit résume notre étude de la fibre du morphisme $\nu$ en $J_r$, pour $r=0, \ldots,N$.

\begin{proposition} \label{descriptiongeofib}
Avec les notations précédentes, on a un isomorphisme $G$-équivariant
$${\nu}^{-1}(J_r) \cong G {\times}^{G_r} {\nu'_M}^{-1}(0)  . $$ 
En particulier, si l'on note $H:=G_{N}$, on a 
$${\nu}^{-1}(J_{N}) \cong \left\{
    \begin{array}{ll}
        G &\text{ si } N=n, \\
        G/H &\text{ si } N=n_1=n_2<n, \\
        G {\times}^{H} \Hom(E_{N},F_2) &\text{ si } N=n_1<\min(n,n_2), \\
        G {\times}^{H} \Hom(F_1,E_{N}) &\text{ si } N=n_2<\min(n,n_1). 
    \end{array}
\right.$$ 
où $H$ opère de la façon suivante dans $\Hom(F_1,E_{N}) \times \Hom(E_{N},F_2)$:
$$\forall h \in H,\ \forall (u_1',u_2') \in  \Hom(F_1,E_{N}) \times \Hom(E_{N},F_2),\ h.(u_1',u_2'):=(h \circ  u_1',u_2' \circ h^{-1}).$$    
\end{proposition}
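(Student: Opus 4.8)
The plan is to read both assertions off the étale slice machinery assembled just above the statement, since this proposition really serves to package that analysis. For the first isomorphism the decisive input is already available: Lemma \ref{orbfermee} guarantees that $G.w_r$ is the unique closed orbit of $\nu^{-1}(J_r)$, so the slice theorem in the form \cite[\S 6.2.3, Theorem 8]{SB} presents $\nu^{-1}(J_r)$ as the associated bundle $G \times^{G_r} \NNN(M_{w_r},G_r)$ over that orbit. What then remains is to identify the nullcone $\NNN(M_{w_r},G_r)$. Using the slice decomposition of Lemma \ref{slice_explicite} I would write $M_{w_r} \cong \Hom(F_1,E_r) \times \Hom(E_r,F_2) \times F_3$ with $G_r$ acting trivially on the three $F_i$ and $\dim F_1 = n_1-r$, $\dim F_2 = n_2-r$, $\dim F_3 = r(n_1+n_2-r)$. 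The quotient morphism then reads $\nu_M(u_1',u_2',x)=(u_2'\circ u_1',x)$, and since $\nu_M(0)=0$ its fibre over $0$ is cut out by the condition $x=0$, which is a linear (hence scheme-theoretically transverse) slice, so $\NNN(M_{w_r},G_r)=\nu_M'^{-1}(0)\times\{0\}\cong \nu_M'^{-1}(0)$. Substituting this into the bundle yields the first displayed isomorphism, and it is $G$-equivariant because every identification above is.

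For the particular cases I would simply specialise to $r=N:=\min(n_1,n_2,n)$ and $H:=G_N$ and run through the four positions of the minimum. The controlling remark is that $E_N$ is the standard representation of $G_N\cong GL_{n-N}(k)$, so $E_N=0$ exactly when $N=n$, while $\Hom(F_1,E_N)=0$ whenever $\dim F_1=n_1-N=0$ (i.e. $N=n_1$) and $\Hom(E_N,F_2)=0$ whenever $N=n_2$. Thus: if $N=n$ both $\Hom$-factors vanish and $G_N$ is trivial, giving $\nu_M'^{-1}(0)=\{0\}$ and $\nu^{-1}(J_N)\cong G$; if $N=n_1=n_2<n$ both factors again vanish but $H$ is non-trivial, giving $G/H$; if $N=n_1<\min(n,n_2)$ only $\Hom(F_1,E_N)$ dies and the target $\Hom(F_1,F_2)$ of $\nu_M'$ is zero, so $\nu_M'^{-1}(0)=\Hom(E_N,F_2)$ and $\nu^{-1}(J_N)\cong G\times^H\Hom(E_N,F_2)$; the case $N=n_2<\min(n,n_1)$ is symmetric. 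The $H$-action is just the restriction of the $G_r$-action on the slice, namely $h.(u_1',u_2')=(h\circ u_1',\,u_2'\circ h^{-1})$.

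The substantive content therefore lies entirely in the inputs rather than in this assembly: the closedness and uniqueness of $G.w_r$ (Lemma \ref{orbfermee}) and the correct slice decomposition (Lemma \ref{slice_explicite}), both already proved. The only genuine care I would take is to keep the slice theorem's scheme-theoretic conclusion throughout, so that the displayed isomorphisms hold as isomorphisms of schemes and not merely of varieties, and to confirm that the passage to the fibre over $\nu_M(0)$ removes the whole $F_3$ summand as a reduced linear slice rather than leaving nilpotents behind. I do not expect a real obstacle here, since the $x=0$ cut is manifestly transverse; the remaining case-by-case dimension count is routine bookkeeping.
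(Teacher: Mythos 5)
Your proposal is correct and follows essentially the same route as the paper: the proposition there is explicitly presented as a summary of the preceding analysis, which invokes Luna's slice theorem \cite[\S 6.2.3, Theorem 8]{SB} through Lemma \ref{orbfermee} (uniqueness of the closed orbit $G.w_r$) and Lemma \ref{slice_explicite} (the slice decomposition), then identifies $\NNN(M_{w_r},G_r)\cong \nu_M'^{-1}(0)$ by setting the $F_3$-coordinate to zero. Your case-by-case specialisation at $r=N$ matches the paper's conclusions exactly.
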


Comme pour le cas $G=SL_n(k)$ étudié dans la section \ref{casSln}, si $N=n$, alors \cite[§2.1, Theorem 6]{SB} implique que $\nu$ est un $G$-fibré principal au dessus de l'ouvert $U_{N}$.

\begin{corollaire} \label{dimfibre}
Soit $r \in \{0, \ldots,N\}$, alors la dimension de la fibre du morphisme $\nu$ en $J_r$ vaut: 
\begin{itemize} \renewcommand{\labelitemi}{$\bullet$}
\item $n \, n_2+n \, r-n_2  r$ lorsque $n-r \leq n_2-n_1$,
\item $n \, n_1+n \, r-n_1 r$  lorsque $n-r \leq n_1-n_2$,
\item $\frac{1}{2}(n-r)(n_1+n_2)+\frac{1}{4}{(r+n)}^2+\frac{1}{4}{(n_1-n_2)}^2$ lorsque $|n_1-n_2|<n-r<n_1+n_2-2r$ et $n+n_1-n_2-r$ est pair, 
\item $\frac{1}{2}(n-r)(n_1+n_2)+\frac{1}{4}{(r+n)}^2+\frac{1}{4}{(n_1-n_2)}^2-\frac{1}{4}$ lorsque $|n_1-n_2|<n-r<n_1+n_2-2r$ et $n+n_1-n_2-r$ est impair, 
\item $n n_1+n n_2 -n_1 n_2$ lorsque $n \geq n_1+n_2-r$.
\end{itemize}
\end{corollaire}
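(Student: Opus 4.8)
The plan is to combine the geometric description of the fibre given by Proposition \ref{descriptiongeofib} with the dimension count for the nilcone already carried out in Corollaire \ref{fibzero}. The key observation is that the nilcone appearing in the slice at $w_r$ is of \emph{exactly} the same shape as $\NNN(W,G)$, only with the parameters $(n,n_1,n_2)$ replaced by $(n-r,n_1-r,n_2-r)$; so the whole computation reduces to that corollary plus an elementary dimension count for the homogeneous fibration, followed by algebraic simplification.

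First I would use the $G$-equivariant isomorphism $\nu^{-1}(J_r) \cong G \times^{G_r} {\nu'_{M}}^{-1}(0)$ of Proposition \ref{descriptiongeofib}. Since $G_r$ is a closed subgroup, $G \rightarrow G/G_r$ is a locally trivial $G_r$-bundle, hence the associated bundle $G \times^{G_r} {\nu'_{M}}^{-1}(0) \rightarrow G/G_r$ is Zariski-locally trivial with fibre ${\nu'_{M}}^{-1}(0)$, whence
$$\dim \nu^{-1}(J_r) = \dim(G/G_r) + \dim {\nu'_{M}}^{-1}(0).$$
By Lemme \ref{Gw} one has $G_r \cong GL_{n-r}(k)$, so $\dim(G/G_r) = n^2 - (n-r)^2 = 2nr - r^2$.

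Next I would identify ${\nu'_{M}}^{-1}(0)$. By Lemme \ref{slice_explicite} and the explicit form of $\nu'_M$, the scheme $\NNN(M_{w_r},G_r) = {\nu'_{M}}^{-1}(0)$ is precisely the nilcone of the representation $\Hom(F_1,E_r) \times \Hom(E_r,F_2)$ of $G_r \cong GL(E_r)$, with $\dim E_r = n-r$, $\dim F_1 = n_1-r$, $\dim F_2 = n_2-r$ (all non-negative as $r \leq N$). This is the situation of Corollaire \ref{fibzero} after the substitution $(n,n_1,n_2) \mapsto (n-r,n_1-r,n_2-r)$, and the five case-conditions transform into those of the statement: $n \leq n_2-n_1$ becomes $n-r \leq n_2-n_1$, $|n_1-n_2| < n < n_1+n_2$ becomes $|n_1-n_2| < n-r < n_1+n_2-2r$, the parity of $n+n_1-n_2$ becomes that of $n+n_1-n_2-r$, and $n \geq n_1+n_2$ becomes $n \geq n_1+n_2-r$. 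Applying the corollary and adding $2nr-r^2$ then yields each formula; for example the first case gives $2nr-r^2+(n-r)(n_2-r) = nn_2+nr-n_2r$, the last gives $2nr-r^2+\bigl[(n-r)(n_1-r)+(n-r)(n_2-r)-(n_1-r)(n_2-r)\bigr] = nn_1+nn_2-n_1n_2$, and the two middle cases collapse after expansion to $\tfrac12(n-r)(n_1+n_2)+\tfrac14(r+n)^2+\tfrac14(n_1-n_2)^2$, with the extra $-\tfrac14$ in the odd case.

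The conceptual content here is light, since the slice reduces everything to Corollaire \ref{fibzero}; the only genuine work is twofold. First, one must justify that dimensions add across the associated-bundle construction, which is routine for a locally trivial fibration over the homogeneous base $G/G_r$. Second, and this is where I expect the only real care to be needed, is the purely algebraic verification that the shifted substitution reproduces exactly the five listed expressions, together with the degenerate boundary cases $n_1-r = 0$ or $n_2-r = 0$ (occurring when $r = n_1$ or $r = n_2$): there one of the factors $F_1, F_2$ vanishes and $\nu'_M$ becomes trivial, so the fibre of $\nu$ is read off directly from the explicit list in Proposition \ref{descriptiongeofib} rather than from the generic formula, and one checks these agree with the stated values.
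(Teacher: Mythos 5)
Your proposal is correct and follows essentially the same route as the paper: Proposition \ref{descriptiongeofib} gives $\dim\nu^{-1}(J_r)=2nr-r^2+\dim{\nu'_M}^{-1}(0)$, and Corollaire \ref{fibzero} applied with the shifted parameters $(n-r,n_1-r,n_2-r)$ yields the five formulas after simplification. Your extra attention to the degenerate boundary cases $r=n_1$ or $r=n_2$ is a reasonable refinement that the paper leaves implicit, but it does not change the argument.
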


\begin{proof}
D'après la proposition \ref{descriptiongeofib}:  
\begin{align*}
\dim({\nu}^{-1}(J_r))  &=\dim(G {\times}^{G_r} {\nu'_M}^{-1}(0))\\
        &=\dim(G)+\dim({\nu'_M}^{-1}(0))-\dim(G_r) \\
        &=n^2-(n-r)^2+\dim({\nu'_M}^{-1}(0))\\
        &=2nr-r^2+\dim({\nu'_M}^{-1}(0)).
\end{align*}
Enfin, le corollaire \ref{fibzero} donne $\dim({\nu'_M}^{-1}(0))$ en fonction de $n$, $n_1$, $n_2$ et $r$. Le résultat annoncé s'ensuit. \end{proof}


Pour chaque triplet $(n,n_1,n_2)$, le corollaire \ref{dimfibre} permet d'une part de calculer la dimension de la fibre générique de $\nu$, d'autre part de déterminer l'ouvert de platitude de $\nu$. 

\begin{proposition} \label{ouvertplatitude}
La dimension de la fibre générique et l'ouvert de platitude de $\nu$ sont donnés par le tableau suivant:
\vspace*{0.5mm}
\begin{center}
\begin{tabular}{|c|c|c|}
  \hline
  configuration & dim. de la fibre générique & ouvert de platitude \\
  \hline
  $n>\max(n_1,n_2)$ &     $n n_1+n n_2-n_1 n_2$ & $U_{N} \cup  \cdots \cup U_{\max(n_1+n_2-n-1, 0)}$\\
  $n=\max(n_1,n_2)$ &     $n^2$ & $U_{N} \cup U_{N-1} $\\  
  $\min(n_1,n_2) \leq n<\max(n_1,n_2)$     & $n n_1+n n_2-n_1 n_2$ & $U_{N}$\\
  $n < \min(n_1,n_2)$  & $n^2$ & $U_{N}$\\ 
  \hline    
\end{tabular}\\
\end{center}
\vspace*{0.5mm}
\end{proposition}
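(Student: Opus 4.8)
The plan is to deduce everything from the fibre dimensions already tabulated in Corollaire \ref{dimfibre}, combined with two standard facts: first, if $\nu$ is flat at a point $x$ of the irreducible variety $W$ lying over $y$, then the component of $\nu^{-1}(y)$ through $x$ has dimension exactly $\dim(W)-\dim(W/\!/G)$ (flat morphisms have equidimensional fibres); second, a morphism between \emph{smooth} varieties all of whose fibres have the same dimension is flat (\cite[Exercice 10.9]{Ha}). The flatness locus is $G'$-stable and open, and since the orbit closures are totally ordered, $\overline{U_0}\subset\cdots\subset\overline{U_N}=W/\!/G$, it is necessarily a final segment $U_N\cup\cdots\cup U_{r_0}$; moreover, by upper semi-continuity of fibre dimension the function $r\mapsto\dim(\nu^{-1}(J_r))$ is non-increasing on $\{0,\dots,N\}$, its minimum being attained at $r=N$ and equal to the dimension of the generic fibre. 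Thus the second column of the table is obtained simply by evaluating Corollaire \ref{dimfibre} at $r=N$ in each configuration, which I would carry out as a direct substitution (giving $nn_1+nn_2-n_1n_2$ in rows $1$ and $3$, and $n^2$ in rows $2$ and $4$).

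For the third column I would split according to the geometry of $W/\!/G$ as described after its definition. In the first three rows we are \emph{not} in the case $n<\min(n_1,n_2)$, so every homomorphism $V_1\to V_2$ has rank $\le\min(n_1,n_2)=N$ and $W/\!/G=\Hom(V_1,V_2)$ is an affine space, smooth of dimension $n_1n_2$. Here the expected fibre dimension is $e:=\dim(W)-\dim(W/\!/G)=nn_1+nn_2-n_1n_2$, and the two facts above show that $\nu$ is flat over the open set $V=\{f:\rg(f)\ge r_0\}$ exactly when all fibres over $V$ have dimension $e$ (flatness then follows from \cite[Exercice 10.9]{Ha} applied to the smooth varieties $\nu^{-1}(V)$ and $V$), whereas flatness over an orbit carrying a strictly larger fibre is impossible. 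It therefore remains to find the smallest $r_0$ with $\dim(\nu^{-1}(J_{r_0}))=e$: substituting $r$ into Corollaire \ref{dimfibre} and simplifying (writing $s=n_1+n_2$, $d=n_1-n_2$ and using $s^2-d^2=4n_1n_2$) one locates the threshold, which comes out as $\max(n_1+n_2-n-1,0)$ in row $1$, as $N-1$ in row $2$, and as $N$ in row $3$, matching the table.

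For the last row, $n<\min(n_1,n_2)$, the quotient $W/\!/G=\{f:\rg(f)\le n\}$ is the genuine determinantal variety, of dimension $nn_1+nn_2-n^2$ and singular precisely along $\overline{U_{n-1}}$. Over the open orbit $U_N=U_n$, which is the smooth locus of $W/\!/G$, the fibres all have dimension $n^2=\dim(W)-\dim(W/\!/G)$, so $\nu^{-1}(U_n)\to U_n$ is flat by \cite[Exercice 10.9]{Ha}. Over a point of any smaller orbit $U_i$ with $i<n$, Corollaire \ref{dimfibre} gives $\dim(\nu^{-1}(J_i))>n^2$ (for $i=n-1$ one finds dimension $n^2+(\max(n_1,n_2)-n)$, and monotonicity only increases it for $i<n-1$); since $W/\!/G$ is irreducible of dimension $nn_1+nn_2-n^2$, flatness at such a point would force the fibre to have dimension $n^2$, a contradiction. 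Hence the flatness locus is exactly $U_N$.

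The only genuinely delicate point, and the one I would verify most carefully, is the boundary computation in the first row. A naive reading of Corollaire \ref{dimfibre} suggests flatness only in its ``last case'' $n\ge n_1+n_2-r$, i.e. for $r\ge n_1+n_2-n$; but at $r=n_1+n_2-n-1$ the integer $n+n_1-n_2-r$ is odd, and the corresponding branch of the formula contributes an extra $-\tfrac14$ that makes the fibre dimension drop exactly to $e$, while at $r=n_1+n_2-n-2$ it equals $e+1$. This off-by-one is what produces the threshold $\max(n_1+n_2-n-1,0)$ rather than $\max(n_1+n_2-n,0)$, and establishing it requires the explicit simplification of the quartic expression rather than an appeal to the last case alone.
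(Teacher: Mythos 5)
Your proof is correct and follows essentially the same route as the paper: miracle flatness (\cite[Exercice 10.9]{Ha}) over the smooth quotient combined with Corollaire \ref{dimfibre}, and equidimensionality of flat fibres to rule out flatness beyond $U_N$ when $n<\min(n_1,n_2)$. Your explicit check that the fibre dimension at $r=n_1+n_2-n-1$ drops to $nn_1+nn_2-n_1n_2$ via the odd-parity branch is precisely the computation the paper leaves implicit, and it is carried out correctly.
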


\begin{proof}
On sait que $\nu$ est plat sur $U_{N}$. On distingue alors deux cas de figure.\\
$\bullet$ Soit $W/\!/G$ est une variété lisse (c'est-à-dire $n \geq n_1$ ou $n \geq n_2$), auquel cas d'après \cite[Exercice 10.9]{Ha}, le morphisme $\nu$ est plat sur l'orbite $U_r$, $0 \leq r \leq N-1$ si et seulement si la dimension de la fibre au dessus de $U_r$ est égale à la dimension de la fibre au dessus de $U_{N}$. Il s'agit donc, pour chaque triplet $(n,n_1,n_2)$, de déterminer les valeurs de $r$ telles que la dimension de la fibre ${\nu}^{-1}(J_r)$ coïncide avec la dimension de la fibre générique.\\
$\bullet$ Soit $W/\!/G$ n'est pas une variété lisse (c'est-à-dire $n_1,n_2>n$), alors le critère précédent ne s'applique plus. Cependant, les fibres d'un morphisme plat ont nécessairement toutes la même dimension, or d'après le corollaire \ref{dimfibre}, la dimension de la fibre de $\nu$ en $J_{r} \in U_{r},\ r=0, \ldots,N-1$, est strictement plus grande que la dimension de la fibre générique, donc $U_{N}$ est l'ouvert de platitude de $\nu$. 
\end{proof}

\begin{corollaire} \label{ouverttoutplat}
Le morphisme $\nu$ est plat sur $W/\!/G$ tout entier si et seulement si $n \geq n_1+n_2-1$ et dans ce cas $W/\!/G=\Hom(V_1,V_2)$. 
\end{corollaire}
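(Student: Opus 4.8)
The plan is to deduce the corollary directly from the description of the flatness locus furnished by Proposition \ref{ouvertplatitude}, with no further geometric input. The key reformulation is that $\nu$ is flat on all of $W/\!/G$ if and only if its open flatness locus coincides with $W/\!/G$. Since the closed $G'$-stable subsets of $W/\!/G$ form the chain $\overline{U_0} \subset \overline{U_1} \subset \cdots \subset \overline{U_N} = W/\!/G$, with $\overline{U_i} = \Hom(V_1,V_2)^{\leq i}$, the complement of the flatness locus is a closed $G'$-stable set, hence equals some $\overline{U_m}$ (or is empty); the flatness locus thus equals $W/\!/G$ exactly when this complement is empty, i.e. exactly when the locus contains the minimal orbit $U_0 = \{0\}$. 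So the whole statement reduces to reading off, in each of the four configurations of the table, when $0$ lies in the flatness locus. Throughout I use $N = \min(n_1,n_2,n) \geq 1$, since $n_1, n_2 \in \NN^*$ and $n \geq 1$.

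First I would treat the case $n > \max(n_1,n_2)$: there the flatness locus is $U_N \cup \cdots \cup U_{\max(n_1+n_2-n-1,\,0)}$, which contains $U_0$ iff $\max(n_1+n_2-n-1,0) = 0$, that is iff $n \geq n_1+n_2-1$, giving the desired equivalence verbatim. Next, for $n = \max(n_1,n_2)$ the flatness locus is $U_N \cup U_{N-1}$, which contains $U_0$ iff $N \leq 1$; since here $N = \min(n_1,n_2) \geq 1$, this means $N = 1$, i.e. $\min(n_1,n_2) = 1$. On the other hand, when $n = \max(n_1,n_2)$ the inequality $n \geq n_1+n_2-1$ reads $\max(n_1,n_2) \geq n_1+n_2-1$, i.e. $\min(n_1,n_2) \leq 1$, i.e. $\min(n_1,n_2)=1$; so again the two conditions agree. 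Finally, in the two remaining configurations where $n < \max(n_1,n_2)$, the flatness locus is just $U_N$ with $N \geq 1$, so it never contains $U_0$ and $\nu$ is never flat everywhere; this is consistent, since $n < \max(n_1,n_2) \leq n_1+n_2-1$ forces $n \geq n_1+n_2-1$ to fail. Combining the four cases proves the equivalence.

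For the supplementary claim I would simply observe that $n \geq n_1+n_2-1$ forces $n \geq \max(n_1,n_2)$, because $n_1+n_2-1-\max(n_1,n_2) = \min(n_1,n_2)-1 \geq 0$ (recall $n_1,n_2 \in \NN^*$). In particular $n \geq \min(n_1,n_2)$, so the rank bound $\rg(f) \leq n$ defining $W/\!/G$ is satisfied by every $f \in \Hom(V_1,V_2)$, whence $W/\!/G = \Hom(V_1,V_2)$ is the full affine space. I expect no genuine obstacle here: the argument is entirely bookkeeping on the table of Proposition \ref{ouvertplatitude}. The only point requiring mild care is the boundary configuration $n = \max(n_1,n_2)$, where the flatness locus has the special shape $U_N \cup U_{N-1}$ and one must translate the combinatorial condition $N=1$ into the arithmetic inequality $n \geq n_1+n_2-1$, which is why I isolate it above.
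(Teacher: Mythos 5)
Your proof is correct and follows exactly the route the paper intends: the corollary is stated without a separate proof precisely because it is read off from the table of Proposition \ref{ouvertplatitude}, and your case-by-case verification (including the boundary case $n=\max(n_1,n_2)$, where $N=1$ translates into $\min(n_1,n_2)=1$, i.e. $n\geq n_1+n_2-1$) fills in that bookkeeping accurately. The supplementary observation that $n\geq n_1+n_2-1$ forces $n\geq\max(n_1,n_2)\geq\min(n_1,n_2)$, so that the rank condition is vacuous and $W/\!/G=\Hom(V_1,V_2)$, is also exactly what the paper relies on.
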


\noindent Le corollaire qui suit est une conséquence de la proposition \ref{chow} et du corollaire \ref{ouverttoutplat}:

\begin{corollaire} \label{cas_facile}
Si $n \geq n_1+n_2-1$, alors $\HH \cong \Hom(V_1,V_2)$ et $\gamma$ est un isomorphisme.
\end{corollaire}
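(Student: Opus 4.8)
The plan is to obtain this corollary as an immediate consequence of two results already established, with no further computation. First I would invoke Corollaire \ref{ouverttoutplat}: under the hypothesis $n \geq n_1+n_2-1$, the quotient morphism $\nu$ is flat on all of $W/\!/G$, and moreover $W/\!/G=\Hom(V_1,V_2)$. Since by definition ${(W/\!/G)}_*$ is the largest flatness locus of $\nu$, flatness everywhere forces the equality ${(W/\!/G)}_*=W/\!/G$.

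Next I would apply Proposition \ref{chow} with $h=h_W$, the Hilbert function of the generic fibre of $\nu$. Its hypotheses hold: $W$, being an affine space, is reduced, and $W/\!/G$ is irreducible (because $W$ is). The proposition then yields the isomorphism
$${\gamma}^{-1}({(W/\!/G)}_*) \stackrel{\cong}{\longrightarrow} {(W/\!/G)}_* .$$
As ${(W/\!/G)}_*=W/\!/G$, we have ${\gamma}^{-1}({(W/\!/G)}_*)=\HH$, hence $\gamma:\ \HH \rightarrow W/\!/G=\Hom(V_1,V_2)$ is an isomorphism, which is exactly the desired statement.

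There is no real obstacle here: all the difficulty has been absorbed upstream, in establishing Corollaire \ref{ouverttoutplat}, which itself rests on the computation of the dimensions of the fibres of $\nu$ (Corollaire \ref{dimfibre}) and on the description of the flatness locus (Proposition \ref{ouvertplatitude}). The one point to keep in mind is the overall logic: it is precisely the flatness of $\nu$ \emph{everywhere} that forces the flatness locus to be all of $W/\!/G$, which then lets Proposition \ref{chow} conclude globally rather than merely over a strict open subset.
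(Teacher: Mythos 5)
Votre démonstration est correcte et suit exactement la même voie que le mémoire, qui déduit ce corollaire directement de la proposition \ref{chow} et du corollaire \ref{ouverttoutplat}. Vous explicitez simplement la vérification des hypothèses de la proposition \ref{chow} et l'identification ${(W/\!/G)}_*=W/\!/G$, ce qui est la lecture attendue.
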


On s'intéresse, dans la proposition qui suit, à la fonction de Hilbert de la fibre générique de $\nu$. Autrement dit on va déterminer, pour chaque $M \in \Irr(G)$, la multiplicité $h_W(M)$ du $G$-module $M$ dans le $G$-module $k[{\nu}^{-1}(J_{N})]$. On note comme précédement $H:=G_{N} \cong GL_{n-N}(k)$ le stabilisateur de $w_{N}$ dans $G$.

\begin{proposition} \label{fcthilb}
La fonction de Hilbert de la fibre générique de $\nu$ est donnée par:\\
$\forall M \in \Irr(G),\ h_W(M)= \left\{
    \begin{array}{ll}
        \dim(M) &\text{ si } N=n, \\
        \dim(M^{H}) &\text{ si } N=n_1=n_2<n, \\
        \dim({(M {\otimes} k[\Hom(E_{N},F_2)])}^{{H}}) &\text{ si } N=n_1<\min(n,n_2), \\
        \dim({(M {\otimes} k[\Hom(F_1,E_{N})])}^{{H}}) &\text{ si } N=n_2<\min(n,n_1). 
    \end{array}
\right.$
\end{proposition}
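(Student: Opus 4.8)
The plan is to read off $h_W(M)$ as a multiplicity in the coordinate ring of a generic fibre of $\nu$. By the very definition of $h_W$ as the Hilbert function of the fibre over the open orbit $U_N$ (all such fibres being isomorphic by $G'$-homogeneity, cf. the discussion preceding Proposition \ref{descriptiongeofib}), it suffices to work over $J_N$, so that $h_W(M) = \dim \Hom^G(M, k[\nu^{-1}(J_N)])$ for every $M \in \Irr(G)$. When $N = n$, Proposition \ref{descriptiongeofib} gives $\nu^{-1}(J_N) \cong G$, and the Peter--Weyl decomposition $k[G] \cong \bigoplus_{M \in \Irr(G)} M \otimes M^*$ yields $h_W(M) = \dim M$ immediately; this is exactly Remark \ref{fibre_groupe}.

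For $N < n$ I would exploit the $G$-equivariant isomorphism $\nu^{-1}(J_N) \cong G \times^H (\nu'_M)^{-1}(0)$ of Proposition \ref{descriptiongeofib}. Set $Y := (\nu'_M)^{-1}(0)$, so that $Y$ is the reduced point $\{0\}$ when $N = n_1 = n_2$, the vector space $\Hom(E_N, F_2)$ when $N = n_1 < \min(n, n_2)$, and $\Hom(F_1, E_N)$ when $N = n_2 < \min(n, n_1)$ (in the last two cases one of $F_1, F_2$ is zero, so the nilcone is all of $Y$). The two ingredients are then the identification of $G$-algebras $k[G \times^H Y] \cong \Ind_H^G k[Y]$ (\cite[\S I.5.16]{Jan}, already invoked to put a $G$-scheme structure on the bundle) and Frobenius reciprocity (\cite[\S I.3.4]{Jan}) in the form $\Hom^G(M, \Ind_H^G k[Y]) \cong \Hom^H(M, k[Y])$, the restriction of $M$ to $H$ being understood. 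Putting these together gives the master formula
\[ h_W(M) = \dim \Hom^H(M, k[Y]) = \dim (M^* \otimes k[Y])^H. \]

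It then remains to substitute the three values of $k[Y]$. For $Y = \{0\}$ one has $k[Y] = k$, whence $\dim (M^*)^H = \dim M^H$, the trivial module being self-dual. For $Y = \Hom(E_N, F_2)$ (resp. $Y = \Hom(F_1, E_N)$) one has $k[Y] = S^\bullet(Y^*)$ as an $H$-algebra, and the invariants $(M^* \otimes k[\Hom(E_N, F_2)])^H$ (resp. with $\Hom(F_1, E_N)$) are precisely the remaining multiplicities of the statement. The main obstacle is not conceptual but a matter of bookkeeping: one must keep careful track of the duals and of which translation action of $G$ on itself induces the $H$-action — here $H$ acts on $G \times^H Y$ through right translation on $G$ together with its natural action on $Y$ — so that the induction isomorphism and Frobenius reciprocity are applied with consistent conventions, and so that the order of $M$ versus $M^*$ in the final invariants is the correct one. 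Once this is pinned down, the only genuinely geometric input, namely $\nu^{-1}(J_N) \cong G \times^H (\nu'_M)^{-1}(0)$, is already supplied by Proposition \ref{descriptiongeofib}, and the four cases follow at once.
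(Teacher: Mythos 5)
Your argument is essentially the paper's own proof: it too starts from Proposition \ref{descriptiongeofib}, and its chain of isomorphisms $k[G\times^H Y]\cong (k[G]\otimes k[Y])^H\cong\bigoplus_M M^*\otimes(M\otimes k[Y])^H$ is exactly your ``induction plus Frobenius reciprocity'' written out explicitly, with the $N=n$ and $G/H$ cases handled by Peter--Weyl as you do. The $M$ versus $M^*$ bookkeeping you defer is resolved in the paper only by the reductivity of $H$ (for the $G/H$ case, $\dim M^{*H}=\dim M^H$) and is otherwise left at the same level of implicitness, so this is not a gap relative to the paper's own standard.
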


\begin{proof}
On utilise la description de la fibre de $\nu$ en un point de $U_N$ fournie par la proposition \ref{descriptiongeofib}.\\
$\bullet$ Si $N=n$, alors $$k[{\nu}^{-1}(J_{N})] \cong k[G] \cong \bigoplus_{M \in \Irr(G)} M^{*} \otimes M$$ 
comme $G \times G$-module, d'où $h_W(M)=\dim(M)$.\\
$\bullet$ Si $N=n_1=n_2<n$, alors 
$$k[{\nu}^{-1}(J_{N})] \cong k[G/H] \cong {k[G]}^{H} \cong \bigoplus_{M \in \Irr(G)} M^{*H} \otimes M$$ 
comme $G$-module à gauche et $\dim(M^{*H})=\dim(M^H)$, puisque $H$ est réductif.\\
$\bullet$ Si $N=n_1<\min(n_2,n)$, alors 
\begin{align*} 
k[{\nu}^{-1}(J_{N})] &\cong k[G {\times}^{H} \Hom(E_{N},F_2)]\\
       &\cong{k[G {\times} \Hom(E_{N},F_2)]}^{{H}}\\
       &\cong{\left(k[G] {\otimes} k[\Hom(E_{N},F_2)]\right)}^{H}\\
       &\cong \bigoplus_{M \in \Irr(G)} M^{*} \otimes { \left(M {\otimes} k[\Hom(E_{N},F_2)] \right)}^{{H}}
       \end{align*} 
d'où $h_W(M)=\dim \left({ \left(M {\otimes} k[\Hom(E_{N},F_2)]\right)}^H \right)$.\\
De même, si $N=n_2<\min(n_1,n)$, alors $h_W(M)=\dim \left({\left(M {\otimes} k[\Hom(F_1),E_{N}]\right)}^{{H}} \right)$.  
\end{proof}

\begin{remarque}
Lorsque $N=n_1=n_2<n$ et $M=r_1 \epsilon_1+ \cdots + r_n \epsilon_n$, on peut calculer $\dim(M^{H})$ explicitement en fonction des $r_i$ en utilisant les  
bases dites de "Gelfand-Tsetlin" (voir par exemple \cite{GT}).
\end{remarque}

\subsection{Etude du cas \texorpdfstring{$\min(\dim(V),n_1,n_2)=1$}{N=1}} \label{section_n_1}

Dans cette section, on traite le cas $\min(n,n_1,n_2)=1$ qui est très similaire à la situation $1$ traitée dans la section \ref{casSln}. On a $W/\!/G=\Hom(V_1,V_2)^{\leq 1}=U_1 \cup \{0\}$ et $\rho:\ \HH \rightarrow \Gr(h_W(V^*),V_1^*) \times \Gr(h_W(V),V_2)$ est le morphisme construit dans la section \ref{princreduction}. Si $n \geq n_1+n_2-1$, alors $\HH \cong \Hom(V_1,V_2)$ d'après le corollaire \ref{ouverttoutplat}, on peut donc exclure ce cas par la suite. Il reste les cas suivants (où la fonction de Hilbert $h_W$ est calculée grâce à la proposition \ref{fcthilb}):
\begin{itemize}
\item si $n=1$, alors $h_W(V^*)=h_W(V)=1$,
\item si $n_2>n_1=1$, alors $h_W(V^*)=n_2$ et $h_W(V)=1$, 
\item si $n_1>n_2=1$, alors $h_W(V)=n_1$ et $h_W(V^*)=1$.
\end{itemize}   

Si $n=1$, alors on a le morphisme $\rho:\ \HH \rightarrow \PP(V_1^*) \times \PP(V_2)$. Puis, le plongement de Segre donne
\begin{equation}  \label{segre}
\PP(V_1^*) \times \PP(V_2) \cong \PP ({\Hom(V_1,V_2)}^{\leq 1})=\PP(W/\!/G) 
\end{equation}
et on peut donc considérer le morphisme $\rho:\ \HH \rightarrow \PP(W/\!/G)$. 

Si $\min(n_1,n_2)=1<n$, alors comme $n_1$ et $n_2$ jouent des rôles symétriques, on peut supposer que $n_2>n_1=1$. On a le morphisme $G'$-équivariant $\rho:\ \HH \rightarrow \PP(V_2)$, où $GL(V_1)$ opère trivialement dans $\PP(V_2)$. Or, $\PP(W/\!/G) \cong \PP(V_1^* \otimes V_2) \cong \PP(V_2)$ comme $G'$-variété et on peut donc considérer, comme dans le cas $n=1$, le morphisme $\rho:\ \HH \rightarrow \PP(W/\!/G)$.   

Dans tous les cas, on obtient un morphisme $G'$-équivariant $\HH \rightarrow \PP(W/\!/G)$, où la variété projective $\PP(W/\!/G)$ est $G'$-homogène, et donc on peut appliquer la proposition \ref{enoncepgene} pour déterminer $\HHp$. On note 
$${Bl}_0(W/\!/G):= \left \{(f,L) \in W/\!/G \times \PP (W/\!/G) \ \mid  \ f \in L \right \} = \OO_{\PP(W/\!/G)}(-1)$$ 
l'éclatement de $W/\!/G$ en $0$. On a la 

\begin{theoreme} \label{Hcasn11111}
Si $\min(n_1,n_2,n)=1$ et $n<n_1+n_2-1$, alors on a un isomorphisme $G'$-équivariant
$$\HHp \cong {Bl}_0(W/\!/G)$$ 
et le morphisme de Hilbert-Chow s'identifie à l'éclatement ${Bl}_0(W/\!/G) \rightarrow W/\!/G$. \\ 
De plus, si $n=1$, alors $\HH=\HHp$ et la famille universelle est le morphisme $(u_1,u_2,L)\in \XX \mapsto (u_2 \circ u_1, L) \in \HH$, où la variété $\XX$ est définie par
$$\XX \cong \left \{  (u_1,u_2,L) \in W \times \PP(W/\!/G) \ \middle| \ \begin{array}{l}
\Ker(L) \subset \Ker(u_1),\\
\Im(u_2) \subset \Im(L).
\end{array}  \right\}.$$
\end{theoreme}

\begin{proof}
La première partie de la proposition découle de la proposition \ref{enoncepgene}.\\
Ensuite, si $n=1$, alors d'après la proposition \ref{reduction1} et le corollaire \ref{cas_facile}, on a un isomorphisme de $G'$-schémas $\HH \cong G' \times^P \Aff$, où $P$ est un sous-groupe parabolique de $G'$, et donc $\HH=\HHp$.  Enfin, pour montrer la partie de la proposition qui concerne la famille universelle $\XX \rightarrow \HH$, on procède comme pour la proposition \ref{familleUnivSLn}.
\end{proof}

\begin{remarque}
Si $n=1$, on considère le diagramme suivant
$$\xymatrix{ & \PP(V_1^*) \times \PP(V_2) \ar@{->>}[ld]_{p_1} \ar@{->>}[rd]^{p_2} \\  \PP(V_1^*) && \PP(V_2) }$$ 
où $p_1$ et $p_2$ sont les projections naturelles. On note $T_1$ (resp. $T_2$) le tiré en arrière du fibré tautologique de $\PP(V_1^*)$ (resp. de $\PP(V_2)$) et $\underline{V}$ le fibré trivial de fibre $V$ au dessus de $\PP(V_1^*) \times \PP(V_2)$.   
D'après la proposition \ref{reduction2} et le corollaire \ref{cas_facile}, la variété $\XX$ s'identifie à l'espace total du fibré vectoriel
$$(T_1 \otimes\underline{V}) \oplus  (\underline{V^*} \otimes T_2)$$ 
au dessus de $\PP(V_1^*) \times \PP(V_2)$.
\end{remarque}

\begin{remarque}
Soient $n=1$ et $Z \subset W$ un sous-schéma fermé $\Gm$-stable tel que $k[Z] \cong k[\Gm]$ comme $\Gm$-module. Alors $Z$ s'identifie à un point $(f,L)$ de $\HH$ et est défini comme sous-schéma de $W$ par
$$Z=\left \{  (u_1,u_2) \in W \ \middle|\ \begin{array}{l}
u_2 \circ u_1=f,\\
\Ker(L) \subset \Ker(u_1),\\
\Im(u_2) \subset \Im(L).
\end{array}  \right\} .$$  
En particulier:\\
$\bullet$ Si $f \neq 0$, alors $Z$ est isomorphe à $\Gm$.\\
$\bullet$ Si $f=0$, alors $Z$ est la réunion de deux droites qui se coupent en l'origine. 
\end{remarque}

\subsection{Description de l'algèbre du nilcône}  \label{sectionJ}

Dans les sections \ref{etudeGL2} et \ref{etudeGL3}, nous allons étudier les cas $n=2$ et $n=3$ respectivement. Comme pour le cas cas $n=1$ étudié dans la section \ref{section_n_1}, nous allons utiliser le principe de réduction (voir la section \ref{princreduction}) pour nous ramener du cas $n_1, n_2 \geq n$ au cas $n_1=n_2=n$. Le résultat essentiel de cette section est la proposition \ref{decompoiso} qui donne la description de l'algèbre du nilcône $\NNN(W,G)$ comme $G' \times G$-module lorsque $n_1=n_2=n$. Cette description nous sera très utile par la suite.

\begin{notation} 
On note $J$ l'idéal engendré par les $G$-invariants homogènes de degré positif de $k[W]$. 
\end{notation}
L'idéal $J$ est $G' \times G$-stable par définition. On note
\begin{equation} \label{defslV}
sl(V):=\{ f \in \End(V)\ \mid  \ \tr(f)=0 \}
\end{equation} 
le $G$-module irréductible de plus haut poids $\epsilon_1-\epsilon_n$, où $\tr(f)$ désigne la trace de l'endomorphisme $f$. On a $V^* \otimes V \cong \End(V) \cong sl(V) \oplus V_0$ et
\begin{align}
{k[W]}_2&=S^2((V_1 \otimes V^{*}) \oplus (V \otimes V_{2}^{*})) \label{kW2} \\
       &=S^2(V_1 \otimes V^{*}) \oplus (V_1 \otimes V^{*} \otimes V \otimes V_{2}^{*}) \oplus S^2(V \otimes V_{2}^{*}) \notag \\
       &= (S^2(V_1) \otimes S^2(V^{*})) \oplus  ({\Lambda}^2 (V_1) \otimes {\Lambda}^2 (V^{*})) \oplus (V_1 \otimes V^{*} \otimes V \otimes V_{2}^{*}) \notag \\
       &\  \oplus (S^2(V) \otimes S^2(V_{2}^{*})) \oplus  ({\Lambda}^2 (V) \otimes {\Lambda}^2 (V_{2}^{*})) \notag  \\
       &= (S^2(V_1) \otimes S^2(V^{*})) \oplus (S^2(V) \otimes S^2(V_{2}^{*}))  \oplus  ({\Lambda}^2 (V_1) \otimes {\Lambda}^2 (V^{*}))  \notag  \\
       &\ \oplus  ({\Lambda}^2 (V) \otimes {\Lambda}^2 (V_{2}^{*})) \oplus (V_1 \otimes V_{2}^{*} \otimes sl(V)) \oplus (V_1 \otimes V_{2}^{*} \otimes V_0) \notag
\end{align}
comme $G' \times G$-module. Donc $J \cap {k[W]}_2 = V_1 \otimes V_{2}^{*} \otimes V_0 \cong \Hom(V_2,V_1)$ comme $G' \times G$-module et ce module engendre l'idéal $J$.

On rappelle que l'on a défini les sous-groupes $B', T', U'$ de $G'$ (resp. $B, T, U$ de $G$) dans la section \ref{lesdiffsituations} et que l'on identifie $\Hom(V_1,V)$ à $\MM_{n,n_1}(k)$ (resp. $\Hom(V,V_2)$ à $\MM_{n_2,n}(k)$) via les bases $\BB$, $\BB_1$ et $\BB_2$. 
La proposition qui suit est prouvée dans \cite[§9]{KS}:

\begin{proposition} \label{KSheadings}
Soient $x_i$, $1 \leq i \leq n$, le $i$-ème mineur principal de $\Hom(V_1,V)$ et $y_j$, $1 \leq j \leq n$, le $j$-ième mineur antiprincipal (c'est-à-dire extrait en partant du coin inférieur droit) de $\Hom(V,V_2)$. Alors la $T' \times T$-algèbre ${(k[W]/J)}^{U' \times U}$ est engendrée par les $x_i$ et les $y_j$ et les relations entre ces générateurs sont engendrées par les monômes $\{x_i y_j \mid  i+j>n\}$. Autrement dit, on a une suite exacte 
$$0 \rightarrow J' \rightarrow k[x_1, \ldots,x_n,y_1, \ldots,y_n] \rightarrow {(k[W]/J)}^{U' \times U} \rightarrow 0 $$
où $J'$ est l'idéal monomial engendré par $\{x_i y_j\ \mid \ i+j>n\}$. 
\end{proposition}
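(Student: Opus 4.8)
The plan is to identify $(k[W]/J)^{U'\times U}$ with the monomial quotient $B:=k[x_1,\dots,x_n,y_1,\dots,y_n]/J'$ by constructing a surjection $\bar\phi\colon B\to (k[W]/J)^{U'\times U}$ and then showing it is bijective. First I record that the $x_i$ and $y_j$ really are $U'\times U$-semi-invariants: since $x_i$ depends only on $u_1$ it is automatically $U_2$-invariant, and left multiplication by elements of $U$ (resp. right multiplication by elements of $U_1$) performs row (resp. column) operations among the first $i$ rows and columns only, leaving the leading $i\times i$ minor unchanged; symmetrically $y_j$ is invariant under $U_1$ and under $U_2\times U$ acting on the trailing $j\times j$ block. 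Their $T'\times T$-weights are the fundamental ones $\pm(\epsilon_1+\cdots+\epsilon_i)$ and $\pm(\epsilon_{n-j+1}+\cdots+\epsilon_n)$, which fixes the multigrading. Because $G'\times G$ is reductive and $\mathrm{char}\,k=0$, the functor $M\mapsto M^{U'\times U}$ (sum of highest-weight spaces) is exact, so applying it to $0\to J\to k[W]\to k[W]/J\to 0$ gives $(k[W]/J)^{U'\times U}=k[W]^{U'\times U}/(J\cap k[W]^{U'\times U})$, reducing everything to a computation inside the highest-weight algebra $k[W]^{U'\times U}$.

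Second, the relations. I claim $x_iy_j$ vanishes identically on $\NNN(W,G)=\{(u_1,u_2):\Im u_1\subseteq\Ker u_2\}$ whenever $i+j>n$. Indeed, at a point where $x_i\neq0$ the leading $i\times i$ minor of $u_1$ is invertible, so $\dim\Im u_1\ge i$; at a point where $y_j\neq0$ the trailing $j\times j$ minor of $u_2$ is invertible, so $\dim\Ker u_2\le n-j$. For $i+j>n$ these are incompatible with $\Im u_1\subseteq\Ker u_2$, so at every point of $\NNN(W,G)$ at least one of $x_i,y_j$ vanishes, whence $x_iy_j\equiv0$ on $\NNN(W,G)$. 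Since $\NNN(W,G)$ is reduced (so $J$ is radical), this gives $x_iy_j\in J$. Thus $J'$ maps to $0$ and $\bar\phi\colon B\to (k[W]/J)^{U'\times U}$ is well defined.

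Third, generation (surjectivity of $\bar\phi$). Using Cauchy's formula I would write $k[W]=\bigoplus_{\lambda,\mu}S^\lambda(V_1)\otimes S^\mu(V_2^*)\otimes\bigl(S^\lambda(V^*)\otimes S^\mu(V)\bigr)$, so that $k[W]^{U'\times U}$ is the sum of the $U$-highest-weight spaces of the $GL(V)$-modules $S^\lambda(V^*)\otimes S^\mu(V)$. By the first fundamental theorem for $GL(V)$ (Procesi §9.1.4, §11), every algebra generator of $k[W]^{U'\times U}$ either uses only the coordinates of $u_1$, and is then one of the leading minors $x_i$, or only those of $u_2$, and is then one of the trailing minors $y_j$, or involves a contraction between the two, in which case it is a polynomial in the entries of $u_2\circ u_1$ and hence lies in $J$. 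Modulo $J$ only the $x_i,y_j$ survive, so they generate $(k[W]/J)^{U'\times U}$ and $\bar\phi$ is surjective.

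Finally, injectivity (no further relations). Here I would compare the $T'\times T$-graded Hilbert series of $B$ and of $A:=(k[W]/J)^{U'\times U}$. On the $B$ side the standard monomials are the $x^ay^b$ with $\max\{i:a_i>0\}+\max\{j:b_j>0\}\le n$, whose multigraded generating function is explicit and whose $x$-part and $y$-part are supported on disjoint blocks of coordinates. On the $A$ side the character comes from the Cauchy decomposition together with the Littlewood rule for $S^\lambda(V^*)\otimes S^\mu(V)$, the passage to $k[W]/J$ removing precisely the ``contraction'' summands (those arising from a common sub-partition of $\lambda$ and $\mu$). The heart of the argument — and the step I expect to be the main obstacle — is the combinatorial identity that these two characters coincide, equivalently that the standard monomials above remain linearly independent in $A$. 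Once the characters agree, the surjection $\bar\phi$ is forced to be an isomorphism, which is exactly the assertion of the proposition.
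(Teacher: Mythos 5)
First, note that the paper does not prove this proposition at all: it is stated with the attribution \og La proposition qui suit est prouv\'ee dans [KS, \S 9] \fg{} (Kraft--Schwarz), so there is no internal argument to compare yours against. Your overall plan (exactness of $(-)^{U'\times U}$ in characteristic $0$, hence $(k[W]/J)^{U'\times U}=k[W]^{U'\times U}/J^{U'\times U}$; the semi-invariance and weights of the $x_i,y_j$; and the proof that $x_iy_j\in J$ for $i+j>n$ via the rank estimate $i\le\dim\Im(u_1)\le\dim\Ker(u_2)\le n-j$ on $\NNN(W,G)$ together with the reducedness of the nilc\^one from [KS, Theorem 9.1]) is correct and cleanly argued up to and including the well-definedness of $\bar\phi$.

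The two remaining steps, however, contain genuine gaps. For surjectivity, you invoke the first fundamental theorem to describe \emph{algebra generators of $k[W]^{U'\times U}$}, but the FFT describes $k[W]^{G}$, not the algebra of $U$-highest-weight vectors. The space $(S^\lambda(V^*)\otimes S^\mu(V))^U$ has dimension equal to the number of irreducible constituents of the tensor product, so $k[W]^{U'\times U}$ has many generators that are neither products of minors nor literally polynomials in the entries of $u_2\circ u_1$; what you actually need is that every non-Cartan constituent of $S^\lambda(V^*)\otimes S^\mu(V)$ inside $k[W]$ lies in $J$ (i.e.\ factors through a contraction against an invariant of positive degree), and that statement requires its own proof. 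For injectivity, you yourself flag that the identity of $T'\times T$-characters between $k[x,y]/J'$ and $(k[W]/J)^{U'\times U}$ is \og the main obstacle \fg{} and you do not establish it; since this is exactly the point where one must rule out further relations among the minors modulo $J$, the argument as written is a programme rather than a proof. (If you do carry out the character comparison, it would in fact also yield surjectivity, making the FFT step unnecessary; alternatively, the linear independence of the standard monomials $x^ay^b$ with $\max\{i: a_i>0\}+\max\{j: b_j>0\}\le n$ can be checked by evaluating them on suitable pairs $(u_1,u_2)$ in the nilc\^one, which is probably the most economical way to close both gaps at once.)
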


On reprend les notations de la section \ref{rappelsthedesreps}. On a la

\begin{proposition} \label{decompoiso}
Soit $\lambda \in \Lambda+$, alors la composante isotypique associée au $G$-module $S^{\lambda}(V)$ dans $k[W]/J$ est: 
\begin{itemize} \renewcommand{\labelitemi}{$\bullet$}
\item $S^{\lambda}(V_2^*) \otimes S^{\lambda}(V)$ si $r_n \geq 0$,
\item $S^{\lambda}(V_1^*) \otimes S^{\lambda}(V)$ si $r_1 \leq 0$,
\item $S^{k_n {\epsilon}_{1}+k_{n-1} {\epsilon}_{2}+ \ldots +k_{t+1} {\epsilon}_{n-t}}(V_1) \otimes S^{k_1 \epsilon_1+ \ldots+ k_t \epsilon_t }(V_2^*) \otimes           S^{\lambda}(V)$ sinon. 
\end{itemize}
De plus, la représentation $S^{\lambda}(V)$ apparaît dans $k[W]_p/(J \cap k[W]_p)$ si et seulement si $p=\sum_i k_i$. 
\end{proposition}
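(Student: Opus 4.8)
Je vais d'abord comprendre ce qui est demandé. On cherche à décrire, pour un poids dominant $\lambda \in \Lambda_+$ de $G=GL(V)$, la composante isotypique de $S^\lambda(V)$ dans l'algèbre du nilcône $k[W]/J$ lorsque $n_1=n_2=n$. L'ingrédient de départ naturel est la décomposition de $k[W]$ comme $G' \times G$-module. Puisque $W \cong V^{\oplus n_1} \oplus V^{*\oplus n_2}$, on a $k[W] \cong S(V^* \oplus V)^{\otimes\ldots}$, et il existe une décomposition multiplicité par les foncteurs de Schur (dualité de Cauchy/Howe). En effet, la dualité $(GL(V), GL(V_1) \times GL(V_2))$ de Howe donne
$$
k[W]=S(V_1^* \otimes V) \otimes S(V \otimes V_2^*) \cong \bigoplus_{\mu,\nu} S^\mu(V_1^*) \otimes S^\nu(V_2^*) \otimes \big(S^\mu(V) \otimes S^\nu(V^*)\big),
$$
où $\mu,\nu$ parcourent les partitions. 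Le point crucial est de comprendre ce que devient $S^\mu(V) \otimes S^\nu(V^*)$ après passage au quotient par $J$, puis d'identifier la composante de plus haut poids $\lambda$.

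**Plan de la démonstration.**

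Le plan est d'exploiter la proposition \ref{KSheadings} de Kraft-Schwarz, qui donne la structure de l'algèbre des $U' \times U$-covariants ${(k[W]/J)}^{U' \times U}$ : celle-ci est le quotient de $k[x_1,\ldots,x_n,y_1,\ldots,y_n]$ par l'idéal monomial $J'=\langle x_i y_j \mid i+j>n\rangle$. Par la théorie des plus hauts poids, chaque composante isotypique de $k[W]/J$ est entièrement déterminée par son espace de $U' \times U$-invariants de poids donné. Concrètement, la dimension de $\Hom^G(S^\lambda(V), k[W]/J)$ comme $G'$-module se lit sur les vecteurs de plus haut poids $U$-invariants de poids $T$ égal à $\lambda$. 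D'abord je décomposerais $\lambda$ selon le signe de ses coordonnées $r_i$ (les trois cas de l'énoncé correspondent exactement à $r_n\geq 0$, $r_1\leq 0$, et le cas mixte $r_1>0>r_n$ traité via la notation $(k_1,\ldots,k_n)$ introduite en \S\ref{threpGLn}). Ensuite, dans chaque cas, j'identifierais les monômes $x_i^{a_i} y_j^{b_j}$ survivant dans le quotient par $J'$ qui réalisent le plus haut poids $\lambda$ pour $T$ : la condition $i+j>n \Rightarrow x_i y_j=0$ force précisément la séparation entre les variables $x$ (contribuant aux covariants de $V_1$) et les variables $y$ (contribuant à ceux de $V_2^*$).

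**Étape clé et obstacle principal.**

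Le cœur technique est de traduire la présentation monomiale de ${(k[W]/J)}^{U'\times U}$ en une description de la composante isotypique complète comme $G'\times G$-module. Pour cela j'utiliserais que, $G$ et $G'$ étant réductifs, un $G'\times G$-module est déterminé par ses vecteurs de plus haut poids bi-gradués : si $v$ est un vecteur de $U'\times U$-poids $(\mu,\lambda)$ engendrant une copie de $S^\mu(V_1)\otimes S^{\mu'}(V_2^*)\otimes S^\lambda(V)$, alors le $U'\times U$-invariant correspond à un monôme en les $x_i,y_j$. Le poids $\lambda=r_1\epsilon_1+\cdots+r_n\epsilon_n$ impose que les exposants des $x_i$ (qui ont pour $T$-poids $\epsilon_1+\cdots+\epsilon_i$) et des $y_j$ (qui ont pour $T$-poids $-(\epsilon_{n-j+1}+\cdots+\epsilon_n)$, le mineur antiprincipal) se combinent pour donner $\lambda$. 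Dans le cas mixte, l'écriture $\lambda=k_1\epsilon_1+\cdots+k_t\epsilon_t-k_{t+1}\epsilon_{t+1}-\cdots-k_n\epsilon_n$ se lit : les $k_i$ positifs ($i\leq t$) proviennent des $y_j$ et les $k_i$ pour $i>t$ des $x_i$, ce qui donne exactement les poids $S^{k_n\epsilon_1+\cdots+k_{t+1}\epsilon_{n-t}}(V_1)$ et $S^{k_1\epsilon_1+\cdots+k_t\epsilon_t}(V_2^*)$ annoncés (le renversement d'indices venant du caractère antiprincipal des mineurs $y_j$). L'obstacle principal sera de vérifier rigoureusement la multiplicité : il faut s'assurer que chaque composante isotypique apparaît avec multiplicité exactement un comme annoncé (i.e. que $S^\mu(V_1)\otimes S^{\mu'}(V_2^*)$ est précisément le module covariant, sans facteurs parasites), ce qui découle du fait que l'algèbre des covariants ${(k[W]/J)}^{U'\times U}$ est sans relations au-delà de $J'$ et a une structure de monomes libres compatibles. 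La dernière assertion sur le degré $p=\sum_i k_i$ s'obtient en constatant que chaque générateur $x_i$ et $y_j$ est de degré $1$ en l'une des variables matricielles, de sorte que le degré total du monôme réalisant $\lambda$ vaut la somme des $k_i$.
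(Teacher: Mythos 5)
Votre démonstration suit essentiellement la même route que celle du mémoire : on se ramène via la proposition \ref{KSheadings} à l'algèbre monomiale $k[x_1,\ldots,x_n,y_1,\ldots,y_n]/J'$ des $U'\times U$-covariants, on applique la formule de Cauchy--Littlewood pour les cas $r_n\geq 0$ et $r_1\leq 0$, et dans le cas mixte on identifie l'unique monôme de $T$-poids $\lambda$ (les $x_i$ étant les mineurs principaux, les $y_j$ les mineurs antiprincipaux), ce qui donne exactement la composante isotypique annoncée. Seule petite réserve : pour l'assertion finale sur le degré, votre justification (« chaque générateur est de degré $1$ en l'une des variables matricielles ») n'est pas la bonne — il faut utiliser que $x_i$ est de degré total $i$ et $y_j$ de degré $j$, puis la somme télescopique $(k_n-k_{n-1})+2(k_{n-1}-k_{n-2})+\cdots+(n-t)k_{t+1}+(k_1-k_2)+\cdots+tk_t=\sum_i k_i$, comme le fait le mémoire.
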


\begin{proof}
D'après la proposition \ref{KSheadings}, on a un isomorphisme de $T \times T'$-algèbres:
$${(k[W]/J)}^{U \times U'} \cong k[x_1, \ldots,x_n,y_1, \ldots,y_n]/J'.$$
D'après \cite[§11.4.2]{Pro}, on a les isomorphismes de $T' \times T$-modules suivants:
$$\left\{
    \begin{array}{l}
       k[x_1, \ldots,x_n] \cong {k[V_{1}^{*} \otimes V]}^{U\times U'}, \\
       k[y_1, \ldots,y_n] \cong {k[V^{*} \otimes V_2]}^{U \times U'}. 
      \end{array}
\right. $$
Donc la formule de Cauchy-Littlewood (\cite[$§8.3$, Corollary $3$]{Fu}) nous permet de conclure lorsque $r_n \geq 0$ ou $r_1 \leq 0$. Il reste à étudier le cas $r_1.r_n <0$. 
On vérifie que le poids du monôme 
$$  x_{n-t}^{k_{t+1}} x_{n-t-1}^{k_{t+2}-k_{t+1}} x_{n-t-2}^{k_{t+3}-k_{t+2}}  \cdots x_{1}^{k_{n}-k_{n-1}} y_{t}^{k_{t}} y_{t-1}^{k_{t-1}-k_{t}} y_{t-2}^{k_{t-2}-k_{t-1}}  \cdots y_{1}^{k_{1}-k_{2}}$$ 
est 
$$(\lambda,\ k_n {\epsilon}_{1}+k_{n-1} {\epsilon}_{2}+ \ldots+k_{t+1} {\epsilon}_{1+n-(t+1)},\ -k_t {\epsilon}_{1+n-t}- \ldots-k_1 {\epsilon}_{n})$$
et que $\lambda$ détermine uniquement ce monôme. Il s'ensuit que la composante isotypique du $G$-module $S^{\lambda}(V)$ dans $k[W]/J$ est  
$$ S^{k_n {\epsilon}_{1}+k_{n-1} {\epsilon}_{2}+ \ldots +k_{t+1} {\epsilon}_{n-t}}(V_1) \otimes S^{k_1 \epsilon_1+ \ldots+ k_t \epsilon_t }(V_2^*) \otimes           S^{\lambda}(V) .$$ 
En particulier, la représentation $S^{\lambda}(V)$ apparaît dans $k[W]_p/(J \cap k[W]_p)$ si et seulement si
\begin{align*}
p&=(k_n-k_{n-1})+2(k_{n-1}-k_{n-2})+\ldots+(n-t)k_{t+1}+(k_1-k_2)+2(k_2-k_3)+\ldots+t k_t\\
 &=k_1+k_2+ \ldots+k_n.
\end{align*} 
\end{proof}

\begin{remarque} 
Si $M$ est une représentation polynomiale ou la duale d'une représentation polynomiale, alors il découle de la proposition \ref{decompoiso} que la multiplicité de $M$ dans $k[W]/J$ est égale à sa dimension.
\end{remarque}

\begin{definition}
Si $I$ est un idéal homogène de $k[W]$, on appelle fonction de Hilbert classique de $I$ la fonction définie par:
$$ \forall p \in \NN,\ f_I(p):=\dim \left( k[W]_p/(k[W]_p \cap I)\right) .$$  
\end{definition}

\begin{corollaire} \label{fcthilbclassn3}
Si $I$ un idéal $G$-stable et homogène de $k[W]$  contenant $J$ et qui a pour fonction de Hilbert $h$, alors la fonction de Hilbert classique de $I$ est donnée par:
$$\forall p \in \NN,\ f_I(p)= \sum_{\substack{(r_1,\ldots,r_n) \in \ZZZ^n \\ r_1 \geq \ldots \geq r_n \\ |r_1|+\ldots+|r_n|=p}}  h(S^{r_1 \epsilon_1+ \ldots+r_n \epsilon_n}(V) ) \dim(S^{r_1 \epsilon_1+ \ldots+r_n \epsilon_n}(V) ) .$$
\end{corollaire}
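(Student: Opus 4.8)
The plan is to exploit the fact that the quotient $k[W]/I$ is a \emph{graded} $G$-module and that, thanks to the hypothesis $J \subset I$, each $G$-isotypic component of $k[W]/I$ is concentrated in a single degree.

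First I would record that, since $I$ is homogeneous and $G$-stable, the grading of $k[W]$ descends to $k[W]/I$, so that
$$k[W]/I = \bigoplus_{p \in \NN} k[W]_p/(k[W]_p \cap I)$$
as graded $G$-modules, and hence $f_I(p) = \dim\left(k[W]_p/(k[W]_p \cap I)\right)$ is simply the dimension of the degree-$p$ part. Decomposing this degree-$p$ part into $G$-isotypic components, the contribution of $S^{\lambda}(V)$ to the dimension equals $m_p(\lambda)\,\dim(S^{\lambda}(V))$, where $m_p(\lambda)$ is the multiplicity of $S^{\lambda}(V)$ in $k[W]_p/(k[W]_p \cap I)$. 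Summing these multiplicities over all degrees gives the total multiplicity $h(S^{\lambda}(V)) = \sum_p m_p(\lambda)$, by definition of the Hilbert function $h$ as the multiplicity function of the $G$-module $k[W]/I$.

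The key step is then to show that each irreducible $S^{\lambda}(V)$, with $\lambda = r_1\epsilon_1 + \cdots + r_n\epsilon_n$ and $r_1 \geq \cdots \geq r_n$, occurs in $k[W]/I$ only in degree $p = |r_1| + \cdots + |r_n|$; equivalently, $m_p(\lambda)$ is nonzero for exactly one value of $p$, in which case it equals $h(S^{\lambda}(V))$. For this I would use the inclusion $J \subset I$: it yields a surjective, graded, $G$-equivariant homomorphism $k[W]/J \twoheadrightarrow k[W]/I$, under which the isotypic component of $S^{\lambda}(V)$ in $k[W]/I$ is the image of the corresponding isotypic component in $k[W]/J$. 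By Proposition \ref{decompoiso}, the latter lies entirely in degree $|r_1| + \cdots + |r_n|$; hence so does its image in $k[W]/I$, which proves the concentration claim.

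Combining these observations, the isotypic component of $S^{\lambda}(V)$ contributes $h(S^{\lambda}(V))\dim(S^{\lambda}(V))$ to $f_I(|r_1|+\cdots+|r_n|)$ and nothing to any other degree. Since the irreducible $G$-modules are indexed bijectively by the dominant weights $(r_1,\ldots,r_n) \in \ZZZ^n$ with $r_1 \geq \cdots \geq r_n$ (Section \ref{threpGLn}), summing over those $\lambda$ with $|r_1|+\cdots+|r_n|=p$ yields exactly the claimed formula for $f_I(p)$. I do not expect any genuine obstacle here: the only point requiring care is the single-degree concentration, which is precisely the content of the last assertion of Proposition \ref{decompoiso} and which is transported to $k[W]/I$ by the $G$-equivariance of the graded surjection $k[W]/J \twoheadrightarrow k[W]/I$.
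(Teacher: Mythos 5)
Your proposal is correct and follows essentially the same route as the paper: both arguments reduce the statement to the fact that, by Proposition \ref{decompoiso}, each irreducible $S^{\lambda}(V)$ occurs in $k[W]/J$ (hence, via the graded $G$-equivariant surjection onto $k[W]/I$, in $k[W]/I$) only in degree $|r_1|+\cdots+|r_n|$, after which the multiplicity in that single degree is forced to equal $h(S^{\lambda}(V))$. The only difference is that you make explicit the transport of the concentration property from $k[W]/J$ to $k[W]/I$, which the paper leaves implicit.
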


\begin{proof}
Soit $\lambda=r_1 {\epsilon}_1+\ldots+r_n {\epsilon}_n \in \Lambda+$, alors d'après la proposition \ref{decompoiso} le $G$-module $S^{\lambda}(V)$ apparaît dans $k[W]_p/(k[W]_p \cap I)$ uniquement si $p=|r_1|+\ldots+|r_n|$. 
La multiplicité de chaque $G$-module étant fixée par la fonction de Hilbert $h$ de $I$, on en déduit la dimension de chaque $k[W]_p/(k[W]_p \cap I)$ par la formule annoncée.
\end{proof}

\subsection{Etude du cas \texorpdfstring{$\dim(V)=2$}{n=2}}  \label{etudeGL2}

Dans toute cette section, on fixe $n=2$. On a $G \cong GL_2(k)$, $W/\!/G=\Hom(V_1,V_2)^{\leq 2}$ et $\rho:\ \HH \rightarrow \Gr(2,V_1^*) \times \Gr(2,V_2)$ est le morphisme construit dans la section \ref{princreduction}.
On note 
$$Y_0:=\left\{(f,L)\in W/\!/G \times \PP (W/\!/G)\ |\ f \in L \right\}=\OO_{\PP(W/\!/G)}(-1)$$ 
l'éclatement de $W/\!/G$ en l'origine et $Y_1$ l'éclatement de $Y_0$ le long de la transformée stricte de $\Hom(V_1,V_2)^{\leq 1}$. Nous verrons à la fin de cette section que $Y_1$ est isomorphe à l'éclatement de la section nulle du fibré $\Hom(\underline{V_1}/T_1^{\perp},T_2)$ au dessus de $\Gr(2,V_1^*) \times \Gr(2,V_2)$ où $T_1$ (resp. $T_2$) est le fibré tautologique de $\Gr(2,V_1^*)$ (resp. de $\Gr(2,V_2)$) et $\underline{V_1}$ désigne le fibré trivial de fibre $V_1$ au dessus de $\Gr(2,V_1^*) \times \Gr(2,V_2)$.   
Le but de cette section est de démontrer le

\begin{theoreme} \label{casn2} 
Si $n_1,n_2 \geq 2$, alors $\HH \cong Y_1$ est une variété lisse et $\gamma$ est une résolution des singularités de $W/\!/G$.
\end{theoreme} 

\noindent Lorsque $n_1=n_2=2$, alors on a $Y_1=Y_0$ et $\gamma$ est l'éclatement en $0$ de $W/\!/G$. Pour démontrer le théorème \ref{casn2}, on commence par établir le cas particulier $n_1=n_2=2$ (proposition \ref{gammaiso}). Le cas général $n_1,n_2 \geq 2$ est ensuite traité à l'aide du principe de réduction (voir la section \ref{princreduction}).

\begin{remarque}
Si $n_1+n_2 \leq 3$, alors $\HH \cong W/\!/G$ est une variété lisse et $\gamma$ est un isomorphisme d'après le corollaire \ref{cas_facile}.
\end{remarque}

\begin{remarque} \label{casparticuliers}
Les cas particuliers ($n_1=1$, $n_2>2$) et ($n_2=1$, $n_1 >2$) ont été traités dans la section \ref{section_n_1}.
\end{remarque}


\noindent On suppose dans un premier temps que $n_1=n_2=2$.

\subsubsection{Points fixes de \texorpdfstring{$\HH$}{H} pour l'opération de \texorpdfstring{$B'$}{B'}}  \label{sectionptfixe}

On souhaite montrer que $\HH$ est une variété lisse. D'après le lemme \ref{Hlisse4}, il suffit de montrer que les points fixes de $\HH$ pour l'opération de $B'$ sont tous dans $\HHp$ et de vérifier que la dimension de l'espace tangent à $\HH$ en chacun de ces points fixes est égale à la dimension de $\HHp$. On commence donc par déterminer les points fixes de $B'$ dans $\HH$.

\begin{notation}
On note $D:=\left\langle  e_1 \otimes f_2^*\right\rangle$ l'unique droite $B'$-stable de $V_1 \otimes V_{2}^{*}$. \\
On note $I$ l'idéal de $k[W]$ engendré par $(V_1 \otimes V_{2}^{*} \otimes V_0) \oplus (D \otimes sl(V)) \subset {k[W]}_2$. 
\end{notation}

\begin{remarque}
L'idéal $I$ est homogène, $B' \times G$-stable et contient l'idéal $J$ engendré par les $G$-invariants homogènes de degré positif de $k[W]$.
\end{remarque}

\begin{theoreme} \label{pointfixeborel}
L'idéal $I$ est l'unique point fixe de $\HH$ pour l'opération de $B'$.
\end{theoreme}

\begin{proof}
Soit $Z \in \HH(k)$, vu comme sous-schéma fermé de $W$, et $I_Z$ l'idéal de $Z$ dans $k[W] \cong S(W^{*})=\bigoplus_{p \geq 0} S^{p}(W^{*})$ où l'on identifie comme précédemment $k[W]$, l'algèbre des fonctions régulières sur $W$, avec $S(W^{*})$, l'algèbre symétrique de $W^{*}$. Alors
$$Z \in {\HH}^{B'}  \Leftrightarrow \forall b \in B',\ \forall f \in I_Z,\ b.f \in I_Z . $$
En particulier, $I_Z$ est nécessairement un idéal homogène puisqu'il est stable pour l'opération du groupe des matrices scalaires inversibles qui est un sous-groupe de $B'$.\\
Donc, les points fixes de $\HH$ pour l'opération de $B'$ correspondent exactement aux idéaux homogènes $I_Z$ de $k[W]$ tels que:\\
i) $I_Z$ est $B' \times G$-stable, \\
ii) $k[W]/ I_Z =\bigoplus_{M \in \Irr(G)} M^{\oplus \dim(M)}$ comme $G$-module.\\
Comme $k[W]$ est une algèbre graduée et que $I_Z$ est un idéal homogène, $k[W]/I_Z$ est encore une algèbre graduée: 
$$k[W]/I_Z= \bigoplus_{p \geq 0} {k[W]}_p/(I_Z \cap {k[W]}_p) .$$ 
Nous allons donc pouvoir étudier $I_Z$ degré par degré.\\
$\bullet$ Composante de degré $0$:\\
On a $I_Z \cap {k[W]}_0=0$, sinon $I_Z$ contient les constantes de $k[W]$ et donc $I_Z=k[W]$.\\
$\bullet$ Composante de degré $1$:\\
On a $I_Z \cap {k[W]}_1 \neq {k[W]}_1$, sinon $k[W]/I_Z=V_0$ comme $G$-module. \\
$\bullet$ Composante de degré $2$: on utilise la décomposition (\ref{kW2}).\\
Pour avoir la décomposition souhaitée de $k[W]/I_Z$ comme $G$-module, on a nécessairement ${k[W]}_2 \cap I_Z \supseteq sl(V) \oplus 4 V_0$. En effet, le $G$-module $k[W]/I_Z$ contient déjà une copie de la représentation triviale (les constantes), il ne peut donc pas en contenir d'autre. Ensuite, ${k[W]}_2$ contient $4$ copies de $sl(V)$ qui est un $G$-module de dimension $3$, donc ${k[W]}_2 \cap I_Z$ contient au moins une copie de $sl(V)$. Comme $k[W]_2 \cap I_Z$ est $B'$-stable, il contient $D \otimes sl(V)$ car $D$ est l'unique droite $B'$-stable de $V_1 \otimes V_{2}^{*}$. Il s'ensuit que $I_Z$ contient $(V_1 \otimes V_{2}^{*} \otimes V_0) \oplus (D \otimes sl(V))$ et donc $I_Z \supset I$.
Le lemme qui suit implique que cette inclusion est en fait une égalité et achève ainsi la démonstration du théorème \ref{pointfixeborel}:

\begin{lemme} \label{l3generators}
L'idéal $I$ a pour fonction de Hilbert $h_W$.
\end{lemme}

\textbf{Preuve du lemme:}
On rappelle que par définition de $h_W$, pour chaque $M \in \Irr(G)$, $h_W(M)=\dim(M)$ et donc il faut montrer que 
$$k[W]/I = \bigoplus_{M \in \Irr(G)} M^{\oplus \dim(M)}$$  
comme $G$-module. On a l'inclusion d'idéaux $J \subset I$, d'où la suite exacte de $B' \times G$-modules:
$$ 0 \rightarrow I/J \rightarrow k[W]/J \rightarrow k[W]/I  \rightarrow 0 $$ et donc 
$$k[W]/I \cong \frac{k[W]/J}{I/J}$$ 
comme $B' \times G$-module. 
La proposition \ref{decompoiso} nous fournit la décomposition isotypique du $G$-module $k[W]/J$:\\
-Si $M$ est un $G$-module polynomial (ou le dual d'un $G$-module polynomial), alors la multiplicité de $M$ dans $k[W]/J$ est $\dim(M)$. \\
-Sinon, $M = S^{k_1{\epsilon}_1-k_2 {\epsilon}_2}(V)$ pour un unique couple $k_1,k_2 >0$ et la multiplicité de $S^{k_1{\epsilon}_1-k_2 {\epsilon}_2}(V)$ dans $k[W]/J$ est égale à $\dim(S^{k_2}(V_1) \otimes S^{k_1}(V_{2}^{*}))={(k_1+1)(k_2+1)}$.\\
On note $w \in W$ sous la forme $w=\left( \begin{bmatrix}
x_{11}  &x_{12} \\
x_{21}  & x_{22} 
\end{bmatrix},
\begin{bmatrix}
 y_{22} & y_{12} \\
 y_{21} & y_{11}
\end{bmatrix} \right)$ et on identifie $k[W]$ à $k[x_{ij},y_{ij},\ 1 \leq i,j \leq 2]$. D'après la proposition \ref{KSheadings}, on a 
$${(k[W]/J)}^U \cong k[x_{11},x_{12},x_{11} x_{22}-x_{21} x_{12},y_{11},y_{12},y_{11} y_{22}-y_{21} y_{12}]/K$$
où
\begin{align*} 
K=(&x_{11} (y_{11} y_{22}-y_{21} y_{12}), x_{12} (y_{11} y_{22}-y_{21} y_{12}),\\
   &y_{11} (x_{11} x_{22}-x_{21} x_{12}), y_{12} (x_{11} x_{22}-x_{21} x_{12}),\\
   &(x_{11} x_{22}-x_{21} x_{12})(y_{11} y_{22}-y_{21} y_{12})).
\end{align*}
Et ${(I/J)}^U$ est l'idéal de ${(k[W]/J)}^U$ engendré par $x_{11}y_{11}$. Donc     
$$I/J = \bigoplus_{k_1,k_2 >0} {\left(D.\left(S^{k_2-1}(V_1) \otimes S^{k_1-1}(V_{2}^{*})\right)\right)} \otimes S^{k_1{\epsilon}_1-k_2 {\epsilon}_2}(V)$$
comme $B' \times G$-module. Il s'ensuit que
\begin{align*} 
k[W]/I = &V_0 \oplus {k[V_{1}^{*} \otimes V]}_{+} \oplus {k[V^{*} \otimes V_2]}_{+} \\
           &\oplus  \left(\bigoplus_{k_1,k_2 >0 } \frac{S^{k_2}(V_1) \otimes S^{k_1}(V_{2}^{*})}{(D.(S^{k_2-1}(V_1) \otimes S^{k_1-1}(V_{2}^{*})))} \otimes S^{k_1{\epsilon}_1-k_2 {\epsilon}_2}(V)\right)
\end{align*} 
comme $B' \times G$-module, où ${k[V_{1}^{*} \otimes V]}_{+}$ (resp. ${k[V^{*} \otimes V_2]}_{+}$) désigne l'idéal maximal homogène de $k[V_{1}^{*} \otimes V]$ (resp. de $k[V^{*} \otimes V_2]$). \\
On remarque que les multiplicités des représentations polynomiales (et de leurs duales) dans $k[W]/I$ sont les mêmes que dans $k[W]/J$. En revanche, la multiplicité de 
$ S^{k_1{\epsilon}_1-k_2 {\epsilon}_2}(V)$ dans $k[W]/I$ est: 
\begin{align*} 
\dim \left(\frac{S^{k_2}(V_1) \otimes S^{k_1}(V_{2}^{*})}{(D.(S^{k_2-1}(V_1) \otimes S^{k_1-1}(V_{2}^{*})))}\right)&=(k_1+1)(k_2+1)-k_1 k_2\\
&=k_1+k_2+1\\
&=\dim(S^{k_1{\epsilon}_1-k_2 {\epsilon}_2}(V)) .
\end{align*} 
\end{proof}

\begin{remarque}
On a $\Stab_{G'}(I)=B'$, donc l'unique orbite fermée de $\HH$ est isomorphe à $G'/B' \cong \PP^1 \times \PP^1$.
\end{remarque}

\noindent Le corollaire qui suit découle du lemme \ref{fixespoints} et du théorème \ref{pointfixeborel}.

\begin{corollaire} \label{Hconnexe}
Le schéma $\HH$ est connexe.
\end{corollaire}

\subsubsection{Espace tangent de \texorpdfstring{$\HH$}{H} en \texorpdfstring{$I$}{I}} \label{dimTang}

\noindent On note $Z_0:=\Spec(k[W]/I)$. Nous allons démontrer la  

\begin{proposition} \label{dimTangent}
$\dim(T_{Z_0} \HH)=4$.
\end{proposition}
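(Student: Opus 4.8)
The plan is to use the canonical identification $T_{Z_0}\HH \cong \Hom_R^G(I/I^2,R)$ furnished by Proposition \ref{isoTangent}, where $R=k[W]/I$. By Remark \ref{fibre_groupe} and the definition of $h_W$ one has $R\cong k[G]$ as a $G$-module, so every $G$-isotypic multiplicity in $R$ equals the dimension of the corresponding irreducible; this is exactly the input the tangent-space machinery requires. Since, by the Notation preceding Theorem \ref{pointfixeborel}, the ideal $I$ is generated in degree $2$ by $(V_1\otimes V_2^*\otimes V_0)\oplus(D\otimes sl(V))$, I would take this same $B'\times G$-module as the module of generators $N$, so that $\dim N=4+3=7$ and the natural map $\delta\colon R\otimes N\to I/I^2$ is surjective. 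Lemma \ref{InegTang} then reduces the entire statement to the single equality $\rg(\rho^*)=3$, because it gives $\dim T_{Z_0}\HH=\dim N-\rg(\rho^*)=7-\rg(\rho^*)$.

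The core of the argument is to pin down the relation module $\RRR$ and the map $\rho$ of the presentation (\ref{complexeCornFlakes}). Here I would exploit the inclusion $J\subset I$ together with the explicit description obtained in the proof of Lemma \ref{l3generators}: the $U$-invariants $(k[W]/J)^U$ are generated by $x_{11},x_{12},x_{11}x_{22}-x_{21}x_{12},y_{11},y_{12},y_{11}y_{22}-y_{21}y_{12}$ modulo the ideal $K$, while $(I/J)^U$ is the ideal generated by $x_{11}y_{11}$. Combined with the monomial relations $\{x_iy_j:i+j>2\}$ of Proposition \ref{KSheadings}, this determines the minimal syzygies among the seven generators of $I$, which assemble into a $B'\times G$-submodule $\RRR\subset R\otimes N$. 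Passing to highest-weight vectors (equivalently, to $U'\times U$-invariants) keeps this bookkeeping finite. This identification of $\RRR$ is the step I expect to be the main obstacle: the delicate point is to capture all the relations without double-counting and without missing a degree-$3$ or degree-$4$ syzygy.

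Once $\RRR$ is known, the computation of $\rho^*$ is essentially mechanical. Applying $\Hom_R(\,\cdot\,,R)$ and taking $G$-invariants yields the exact sequence (\ref{complexeCornFlakes2}), where $\Hom_R^G(R\otimes N,R)\cong\Hom^G(N,R)$ and $\Hom_R^G(R\otimes\RRR,R)\cong\Hom^G(\RRR,R)$. Using $R\cong k[G]$, the source splits as $\Hom^G(V_1\otimes V_2^*\otimes V_0,R)\oplus\Hom^G(D\otimes sl(V),R)$, of dimensions $4$ (maps landing in $R^G=k$) and $3$ (the multiplicity of $sl(V)$ in $k[G]$), confirming $\dim\Hom^G(N,R)=7$. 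I would then write $\rho^*$ as an explicit matrix in a weight basis; its $B'$-equivariance constrains it heavily and reduces the rank computation to a small explicit matrix, which one checks has image of dimension exactly $3$.

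Collecting these steps gives $\rg(\rho^*)=3$, hence $\dim T_{Z_0}\HH=7-3=4$, as claimed. The intended payoff, exploited afterwards, is that $4=\dim(\HHp)$: together with connectedness (Corollary \ref{Hconnexe}) and the uniqueness of the $B'$-fixed point (Theorem \ref{pointfixeborel}), Lemma \ref{Hlisse4} will then force $\HH=\HHp$ to be a smooth variety.
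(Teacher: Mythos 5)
Your overall skeleton matches the paper's: the identification $T_{Z_0}\HH\cong\Hom_R^G(I/I^2,R)$, the choice of $N$ with $\dim N=7$, and the reduction via Lemma \ref{InegTang} to computing $\rg(\rho^*)$. But there is a genuine gap at the heart of your argument: you reduce everything to the exact equality $\rg(\rho^*)=3$ and then propose to obtain it by determining the full relation module $\RRR$ from the $U'\times U$-invariant description of $(I/J)^U$ and the monomial relations of Proposition \ref{KSheadings}. You flag this yourself as the main obstacle, and you do not carry it out. It is also not clear that it can be carried out the way you describe: the syzygies of $I$ as an ideal of $k[W]$ are not read off from generators of the ideal $(I/J)^U$ inside $(k[W]/J)^U$ — you would still have to account for Koszul relations, for the relations internal to $J$, and for the non-highest-weight components, and nothing in your sketch controls whether you have found \emph{all} relations, which is exactly what an upper bound $\rg(\rho^*)\le 3$ requires.

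The paper avoids this entirely with a one-line geometric observation that your proposal misses: by Lemma \ref{fixespoints} the closed $G'$-stable subset $\HHp$ contains a $B'$-fixed point, and since $Z_0$ is the \emph{unique} such fixed point (Theorem \ref{pointfixeborel}), one gets $Z_0\in\HHp$ and hence $\dim T_{Z_0}\HH\ge\dim\HHp=4$ for free, i.e.\ $\rg(\rho^*)\le 3$. After that, only the \emph{lower} bound $\rg(\rho^*)\ge 3$ is needed, and for a lower bound it suffices to exhibit any three relations $r_1,r_2,r_3$ (no completeness claim required) and three test homomorphisms $\psi_i\in\Hom_R^G(R\otimes N,R)$ whose images under $\rho^*$ are seen to be independent by evaluation at the $r_j$. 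To repair your proof, replace the computation of the full syzygy module by this two-sided squeeze: the upper bound from $Z_0\in\HHp$, and the lower bound from a handful of explicit relations.
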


On identifie $k[W]$ à $k[x_{ij},y_{ij},\ 1 \leq i,j \leq 2]$ comme dans la preuve du lemme \ref{l3generators} et on explicite des bases de certains $B' \times G$-modules qui apparaissent dans $k[W]_2$: 

 $\left.
    \begin{array}{l}
      f_1:=y_{22} x_{11}+y_{12} x_{21}\\
      f_2:=y_{22} x_{12}+y_{12} x_{22}\\
      f_3:=y_{21} x_{11}+y_{11} x_{21}\\
      f_4:=y_{21} x_{12}+y_{11} x_{22}
      \end{array}
\right \}  \text{ est une base de } V_1 \otimes V_2^* \otimes V_0, $\\

$\left.
    \begin{array}{l}
         h_1:=x_{11} y_{11}\\
         h_2:=x_{11} y_{21}\\  
         h_3:=x_{21} y_{11}\\ 
         h_4:=x_{21} y_{21}
      \end{array}
\right \} \text{ est une base de }  D \otimes V^* \otimes V \cong D \otimes (sl(V) \oplus V_0) .$\\
On reprend les notations de la section \ref{ConnexitéetTangence}. Soient $R:=k[W]/I$ et 
$$N:=\left\langle  f_1,f_2,f_3,f_4,h_1,h_2,h_3,h_4 \right\rangle \subset k[W]$$ 
qui est un $B' \times G$-module qui engendre l'idéal $I$. D'après \cite[Macaulay2]{Mac2}, les relations entre les générateurs ci-dessus du $R$-module $I/I^2$ sont données dans la table \ref{table1}.

\begin{table}[ht]
\begin{center}
\center \includegraphics[scale=0.8]{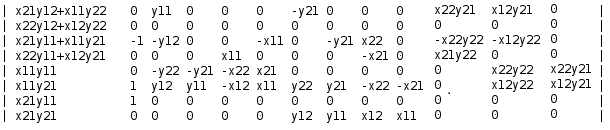}
\end{center}
\caption{\label{table1} Relations entre les générateurs de $I/I^2$}
\end{table}

\noindent La colonne de gauche nous donne les générateurs considérés et chaque autre colonne fournit une relation entre les générateurs des lignes correspondantes.
En particulier, on a les relations suivantes données par les colonnes $6$, $5$ et $4$ respectivement:
$$ \left \{
    \begin{array}{l}
   r_1:=-f_1 \otimes y_{21}+h_2 \otimes y_{22}+ h_4 \otimes y_{12},\\
   r_2:=-f_3 \otimes x_{11}+h_1\otimes x_{21}+ h_2 \otimes x_{11}, \\
   r_3:=f_4 \otimes x_{11}-h_1 \otimes x_{22}- h_2\otimes x_{12}. 
      \end{array}
      \right.$$
On a $\dim(N)=7$ et donc d'après le lemme \ref{InegTang} on a $\dim(T_{Z_0} \HH)=7- \rg(\rho^*)$. D'après le lemme \ref{fixespoints}, la variété $\HHp$ contient au moins un point fixe pour l'opération de $B'$, donc $Z_0 \in \HHp$ et donc $\dim(T_{Z_0} \HH) \geq \dim(\HHp)=4$. Donc, pour montrer la proposition \ref{dimTangent}, il suffit de montrer le

\begin{lemme} 
$\rg(\rho^*) \geq 3.$
\end{lemme}

\begin{proof}
Pour $i=1, \ldots ,4$, on définit ${\psi}_i \in {\Hom}_R^G(R \otimes N,R)$ par 
$$\left\{
    \begin{array}{ll}
        {\psi}_i( h_j \otimes 1)=0 &\text{ pour $j=1,\ldots,4$}, \\
        {\psi}_i(f_j \otimes 1)={\delta}_i^j &\text{ pour $j=1,\ldots,4$},
    \end{array}
\right.
$$
où ${\delta}_i^j$ est le symbole de Kronecker qui vaut $1$ lorsque $i=j$, et qui vaut $0$ sinon. Les ${\psi}_i$ forment une famille libre de $\Hom_R^G(R {\otimes} N,R)$, nous allons voir que $\rho^*({\psi}_1)$, $\rho^*({\psi}_3)$ et $\rho^*({\psi}_4)$ forment une famille libre de $\Hom_R^G(R \otimes \RRR,R)$ ce qui démontrera le lemme. Soient ${\lambda}_1$, ${\lambda}_3$, ${\lambda}_4 \in k$ tels que 
\begin{equation} \label{relLinaire1}
\sum_{i=1,3,4} {\lambda}_i \, \rho^*({\psi}_i)=0.
\end{equation}
On évalue (\ref{relLinaire1}) en $r_1 \otimes 1$, on obtient:
\begin{align*}
0&={\sum}_{i=1,3,4} {\lambda}_i \, \rho^*({\psi}_i)(r_1 \otimes 1)\\
 &= {\sum}_{i=1,3,4} {\lambda}_i \, {\psi}_i(r_1)\\
 &= {\sum}_{i=1,3,4} {\lambda}_i  \, {\psi}_i(-f_1 \otimes y_{21}+h_2 \otimes y_{22}+ h_4 \otimes y_{12})\\
 &= -{\lambda}_1 \, y_{21}.  
 \end{align*}
On évalue (\ref{relLinaire1}) en $r_2\otimes 1$, on obtient:
$0= -{\lambda}_3 \, {x_{11}}.$\\
On évalue (\ref{relLinaire1}) en $r_3\otimes 1$, on obtient:
$0= {\lambda}_4 \, x_{11}.$\\
On en déduit que $({\lambda}_1, {\lambda}_3, {\lambda}_4)=(0,0,0)$ et donc $\rho^*({\psi}_1)$, $\rho^*({\psi}_3)$ et $\rho^*({\psi}_4)$ forment bien une famille libre dans $\Im(\rho^*)$.  
\end{proof}

\begin{remarque}  \label{Bmoduleexplicite}
D'après \cite[Macaulay2]{Mac2}, une famille de générateurs du $R$-module $\Hom_R(I/I^2,R)$ est donnée dans la table \ref{table2}. 

\begin{table}[ht]
\begin{center}
\center \includegraphics[scale=0.8]{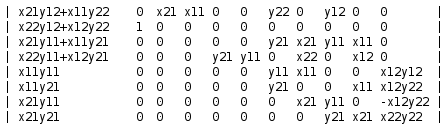}
\end{center}
\caption{\label{table2} Générateurs du $R$-module $\Hom_R(I/I^2,R)$}
\end{table}

\noindent La colonne de gauche donne des générateurs du $R$-module $I/I^2$ et chaque autre colonne représente un $R$-morphisme qui à l'élément de $I/I^2$ de la colonne de gauche associe l'élément de $R$ situé sur la même ligne. On note $\Phi_i$ le morphisme associé à la colonne $i+1$. On vérifie que les quatre morphismes $\Phi_1$, $\Phi_{11}$, $y_{12} \Phi_8+y_{22} \Phi_{10}$, $x_{12} \Phi_7+x_{22} \Phi_9$ sont $G$-équivariants et linéairement indépendants. Ils forment donc une base de l'espace vectoriel ${\Hom}_R^G(I/I^2,R)$. Ensuite, d'après la suite exacte (\ref{complexeCornFlakes2}), le module ${\Hom}_R^G(I/I^2,R)$ s'identifie à un $B'$-sous-module de $\Hom^G(N,R) \cong (V_1^* \otimes V_2) \oplus  \left( D^* \otimes \frac{V_1 \otimes V_2^*}{D}\right)$. Or $\Phi_{11}$, $y_{12} \Phi_8+y_{22} \Phi_{10}$ et $x_{12} \Phi_7+x_{22} \Phi_9$ définissent des éléments de $D^* \otimes \frac{V_1 \otimes V_2^*}{D}$ tandis que $\Phi_1$ définit un élément de $V_1^* \otimes V_2$, donc nécessairement $\left\langle  \Phi_1 \right\rangle$ est l'unique droite $B'$-stable de $V_1^* \otimes V_2$ et donc
$${\Hom}_R^G(I/I^2,R) \cong D^* \oplus \left( D^* \otimes \frac{V_1 \otimes V_2^*}{D}\right)$$
comme $B'$-module.  
\end{remarque}

\noindent On déduit du lemme  \ref{Hlisse4} et de ce qui précède le 

\begin{corollaire} \label{Hcasn2}
$\HH=\HHp$ est une variété lisse de dimension $4$. 
\end{corollaire}

\subsubsection{Construction d'un morphisme \texorpdfstring{$G'$}{G'}-équivariant \texorpdfstring{$\delta:\ \HH \rightarrow\PP(W/\!/G)$}{}}  \label{sectionconstructionmor}  

Le lemme qui suit découle de la théorie classique des invariants de manière analogue au lemme \ref{exi1}.

\begin{lemme} \label{lemmeThClass2}
Le ${k[W]}^G$-module ${k[W]}_{(sl(V))}$ est engendré par ${\Hom}^{G}(sl(V),k[W]_2)$.
\end{lemme}

\begin{lemme} \label{iso3}
On a un isomorphisme de $G'$-modules
$${\Hom}^{G}(sl(V),k[W]_2) \cong V_1 \otimes V_2^* .$$
\end{lemme}

\begin{proof}
On a 
\begin{align*}
{\Hom}^{G}(sl(V),k[W]_2)      &\cong{\Hom}^{G}(sl(V),V_1 \otimes V^* \otimes V \otimes V_2^*) \text{ d'après (\ref{kW2})},\\
                              &\cong V_1 \otimes V_2^* \text{ puisque $V^* \otimes V \cong sl(V) \oplus V_0$.} 
\end{align*} 
\end{proof}


\noindent La proposition \ref{morphismegrass} et le lemme  \ref{iso3} nous donnent un morphisme $G'$-équivariant:
$$\HH \rightarrow \PP(V_1 \otimes V_2^*).$$
Puis $V_1 \otimes V_{2}^{*} =\det(V_1) \otimes V_{1}^{*} \otimes V_{2} \otimes {\det}^{-1}(V_2)$ comme $G'$-module donc
\begin{align} 
\PP (V_1 \otimes V_{2}^{*}) &= \PP (\det(V_1) \otimes V_{1}^{*} \otimes V_{2} \otimes {\det}^{-1}(V_2))\label{maoprhismedelta2} \\
                            &= \PP (V_{1}^{*} \otimes V_{2}) \notag \\
                            &= \PP (W/\!/G) \notag
\end{align}
comme $G'$-variétés. On a donc un morphisme $G'$-équivariant:
\begin{equation} \label{maoprhismedelta}
\delta :\ \HH \rightarrow  \PP (W/\!/G) . 
\end{equation}

\subsubsection{Le morphisme \texorpdfstring{$\gamma \times \delta$}{gamma X delta} est un isomorphisme entre \texorpdfstring{$\HH$}{H} et \texorpdfstring{$Y_0$}{Y0}}  \label{sectionison2}
On rappelle que l'on note 
$$Y_0:=\left\{(f,L)\in W/\!/G \times \PP (W/\!/G)\ |\ f \in L \right\}=\OO_{\PP (W/\!/G)}(-1)$$ 
l'éclatement en $0$ de $W/\!/G=\Hom(V_1,V_2)$. L'opération de $G'$ dans $W/\!/G$ induit une opération de $G'$ dans $W/\!/G \times \PP(W/\!/G)$ qui préserve $Y_0$. On montre alors, en procédant comme dans la preuve du lemme \ref{morphisme_dans_YSLn}, que le morphisme $\gamma \times \delta$ envoie $\HH$ dans $Y_0$. Ensuite, on a le

\begin{lemme} \label{bijens}
Le morphisme $\gamma \times \delta:\ {\HH} \rightarrow Y_0$ est quasi-fini.
\end{lemme}

\begin{proof}
On note comme précédement $Z_0$ l'unique point fixe de $\HH$ pour l'opération de $B'$ et soit $y_0:=(\gamma \times \delta)(Z_0)$. Nous allons montrer dans un premier temps que l'ensemble ${(\gamma \times \delta)}^{-1}(y_0)$ est formé de $Z_0$. Soit $Z \in \HH$ tel que $(\gamma \times \delta)(Z)=y_0$. D'une part $\gamma(Z)=0$, donc $I_Z \supset V_1 \otimes V_2^* \otimes V_0$. D'autre part $\delta(Z)=\delta(Z_0)=D \in \PP(V_1 \otimes V_2^*)$, donc $I_Z \supset D \otimes sl(V)$. Le lemme \ref{l3generators} permet alors de conclure que $Z=Z_0$. Puis
$$E:=\{Z \in \HH \text{ tel que la fibre en } y:=({\gamma \times \delta})(Z) \text{ est de dimension } \geq 1  \}$$
est un fermé ($G'$-stable) de $\HH$ d'après \cite[Exercice II.3.22]{Ha}. D'après le lemme \ref{fixespoints} et ce qui précède, l'ensemble $E$ est vide et le résultat s'ensuit. 
\end{proof}

\begin{proposition} \label{gammaiso}
Le morphisme $\gamma \times \delta:\ \HH  \rightarrow  Y_0$ est un isomorphisme.
\end{proposition}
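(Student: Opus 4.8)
The plan is to show that $\gamma \times \delta$ is a finite and birational morphism onto the smooth --- hence normal --- variety $Y_0$, and then to invoke the classical fact that a finite birational morphism onto a normal variety is an isomorphism. Observe that one cannot appeal to Proposition \ref{enoncepgene} here, since $W/\!/G = \Hom(V_1,V_2)$ splits into \emph{three} $G'$-orbites ($\{0\}$, $U_1$ et $U_2$) rather than two; the argument must therefore be carried out directly.

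First I would establish that $\gamma \times \delta$ is proper. Writing $p_1:\ Y_0 \rightarrow W/\!/G$ for the first projection, one has $\gamma = p_1 \circ (\gamma \times \delta)$. The morphisme de Hilbert-Chow $\gamma$ is proper (\cite[Proposition 3.12]{Br}) and $p_1$ is separated, so by the standard cancellation property for proper morphisms, $\gamma \times \delta$ is proper. Combined with the quasi-finiteness established in Lemme \ref{bijens}, this shows that $\gamma \times \delta$ is a finite morphism.

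Next I would check birationality. By Proposition \ref{ouvertplatitude} the flat locus of $\nu$ is $U_2 \cup U_1$ (configuration $n = \max(n_1,n_2)$), and by Proposition \ref{chow} the morphism $\gamma$ restricts to an isomorphism $\gamma^{-1}(U_2) \stackrel{\cong}{\longrightarrow} U_2$. Since $U_2$ avoids the centre $\{0\}$ of the blow-up, $p_1$ is also an isomorphism over $U_2$; hence $\gamma \times \delta$ restricts to an isomorphism over the dense open subset $U_2$. As $\HH = \HHp = \overline{\gamma^{-1}(U_2)}$ is irreducible, $\gamma \times \delta$ is birational.

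Finally, $Y_0$ is the blow-up of the affine space $\Hom(V_1,V_2) \cong \MM_2(k)$ at the origin, hence a smooth, in particular normal, variety; and $\HH$ is a variety by Corollaire \ref{Hcasn2}. A finite birational morphism from a variety onto a normal variety is an isomorphism: the inclusion $\OO_{Y_0} \hookrightarrow (\gamma \times \delta)_* \OO_{\HH}$ is an integral extension inside the common corps de fonctions, hence an equality by integral closedness of $\OO_{Y_0}$. Therefore $\gamma \times \delta$ is an isomorphism. I expect the main obstacle to be the clean assembly of properness and quasi-finiteness into finiteness, together with the verification of birationality through the flat locus; the normality of $Y_0$ is immediate from its description as a blow-up of a smooth variety.
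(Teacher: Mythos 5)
Votre démonstration est correcte et suit essentiellement la même voie que celle du mémoire : celui-ci invoque directement le théorème principal de Zariski (le morphisme étant birationnel, propre et quasi-fini d'après le lemme \ref{bijens}, et $Y_0$ étant lisse donc normale), tandis que vous en explicitez les deux ingrédients (propre $+$ quasi-fini $\Rightarrow$ fini, puis fini $+$ birationnel sur une variété normale $\Rightarrow$ isomorphisme). Vos vérifications de la propreté (par annulation via $\gamma = p_1 \circ (\gamma \times \delta)$) et de la birationalité (via l'ouvert de platitude et la proposition \ref{chow}) ne font que détailler ce que le texte laisse implicite.
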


\begin{proof}
La variété $ Y_0$ est lisse, donc normale et le morphisme $\gamma \times \delta:\ {\HH} \rightarrow  Y_0$ est birationnel, propre et quasi-fini d'après le lemme \ref{bijens}, donc d'après le théorème principal de Zariski, c'est un isomorphisme. 
\end{proof}

\subsubsection{Cas \texorpdfstring{$n_1,n_2 \geq 2$}{n1,n2>2}}  \label{masterpropositioncasn2}
Nous allons utiliser le principe de réduction (voir la section \ref{princreduction}) afin de traiter le cas général du théorème \ref{casn2} en nous ramenant au cas $n_1=n_2=2$.

\begin{notation} \label{notationsKerd}
Pour tout $L \in \PP(W/\!/G)=\PP(\Hom(V_1,V_2)^{\leq 2})$, on note:
\begin{itemize} \renewcommand{\labelitemi}{$\bullet$}
\item $\Ker(L):=\Ker(l)$,
\item $\Im(L):=\Im(l)$, 
\item $\rg(L):=\rg(l)$,
\end{itemize}
où $l$ est n'importe quel élément non-nul de $L$.\\
Si $\rg(L)=2$, on note $L \wedge L:=\left\langle l \wedge l \right\rangle \in \PP(\Hom(\Lambda^2(V_1), \Lambda^2(V_2)))$.\\
Enfin, on note $\iota_1:\ \Gr(2,V_1^*) \rightarrow \PP(\Lambda^2(V_1^*))$ et $\iota_2:\ \Gr(2,V_2) \rightarrow \PP(\Lambda^2(V_2))$ les plongements de Plücker de $\Gr(2,V_1^*)$ et $\Gr(2,V_2)$ respectivement. 
\end{notation}

\noindent On considère la variété
$$\ZZ:=\left\{(f,L,E_1,E_2) \in W/\!/G \times \PP(W/\!/G) \times \Gr(2,V_1^*) \times \Gr(2,V_2) \ \middle| \ \begin{array}{ll}
       f \in L,\\
       E_1^{\perp} \subset \Ker(L),\\
      \Im(L) \subset E_2.
    \end{array} 
\right\}$$ 
alors on a le diagramme
$$\xymatrix{ &  \ZZ \ar@{->>}[ld]_{q_1} \ar@{->>}[rd]^{q_2} \\   Y_0 && \Gr(2,V_1^*) \times \Gr(2,V_2) }$$
où $q_1$ et $q_2$ sont les projections naturelles. La variété $Y_1$ a été définie au début de la section \ref{etudeGL2}. 

\begin{lemme} \label{identificationblowup}
On a un isomorphisme de variétés 
$Y_1 \cong \ZZ$
et via cet isomorphisme, l'éclatement $Y_1 \rightarrow Y_0$ s'identifie au morphisme $q_1:\ \ZZ \rightarrow Y_0$.
\end{lemme}

\begin{proof}
On commence par remarquer que, si $L \in \PP(W/\!/G)$ est tel que $\rg(L)=2$ et si $l$ un élément non-nul de $L$, alors $l \wedge l \in \Hom(\Lambda^2(V_1), \Lambda^2(V_2))^{\leq 1}$ et donc $L \wedge L \in \PP(\Hom(\Lambda^2(V_1), \Lambda^2(V_2))^{\leq 1})$. On note $F_0$ la transformée stricte de $\Hom(V_1,V_2)^{\leq 1}$ via l'éclatement $Y_0 \rightarrow \Hom(V_1,V_2)^{\leq 2}$ et on considère
$$\alpha: \newdir{ >}{{}*!/-5pt/\dir{>}}
  \xymatrix{
     Y_0 \ar@{-->}[r]  & \PP(\Hom(\Lambda^2(V_1),\Lambda^2(V_2))^{\leq 1}) 
    }$$ 
l'application rationnelle définie sur 
$$U:=\{(f,L) \in Y_0\ \mid  \rg(L)=2\}=Y_0 \backslash F_0$$ 
par $\alpha(f,L)=L \wedge L$. On note  
$$\Gamma \subset  Y_0 \times \PP(\Hom(\Lambda^2(V_1),\Lambda^2(V_2))^{\leq 1})$$ 
le graphe de l'application $\alpha$. Alors 
$$\Gamma=\{(f,L,E)\in U \times \PP(\Hom(\Lambda^2(V_1),\Lambda^2(V_2))^{\leq 1}) \mid  L \wedge L =E \} .$$
L'adhérence de $\Gamma$ dans $Y_0 \times \PP(\Hom(\Lambda^2(V_1),\Lambda^2(V_2))^{\leq 1})$ est donnée par
$$\overline{\Gamma}=\left \{(f,L,E)\in Y_0 \times \PP(\Hom(\Lambda^2(V_1),\Lambda^2(V_2))^{\leq 1})\ \middle|  \begin{array}{l}
\forall l  \in L,\ l \wedge l  \in E, \\
\Im(E) \in \iota_2(\Gr(2,V_2)),\\
\Ker(E)^{\perp} \in \iota_1(\Gr(2,V_1^*)).
\end{array} \right \} $$
où $\Im(E):=\Im(g)$ et $\Ker(E):=\Ker(g)$ pour n'importe quel élément non-nul $g \in E$.\\

\noindent \underline{Fait:} le morphisme $\overline{\Gamma} \rightarrow Y_0,\ (f,L,E) \mapsto (f,L)$ est l'éclatement de $Y_0$ le long du fermé $F_0$. \\
On commence par construire un recouvrement de $Y_0$ par des ouverts affines. On a $\PP(W/\!/G) \subset \PP(\Hom(V_1,V_2)) \cong \PP(\MM_{n_2,n_1}(k))$, où l'on rappelle que l'on identifie $\Hom(V_1,V_2)$ avec $\MM_{n_2,n_1}(k)$ via les bases fixées dans la section \ref{lesdiffsituations}. Quels que soient $1 \leq i \leq n_2$ et $1 \leq j \leq n_1$, on note $O_{i,j}$ l'ouvert affine de $\PP(\MM_{n_2,n_1}(k))$ obtenu en fixant la coordonnée $(i,j)$ égale à $1$. Puis, on note $Y_0^{i,j}:=\{(f,l) \in W/\!/G \times (\PP(W/\!/G) \cap O_{i,j})\ |\ f \in \left\langle l \right\rangle\}$ qui est un ouvert affine (éventuellement vide) de $Y_0$. Alors $Y_0= \bigcup_{i,j} Y_0^{i,j}$ est un recouvrement de $Y_0$ par des ouverts affines. Ensuite, on note $\II$ le faisceau d'idéaux de $F_0$ dans $Y_0$. Alors $\II_{| Y_0^{i,j}}$ est l'idéal de $k[Y_0^{i,j}]$ engendré par les mineurs de taille $2$ de la matrice $l$. Et la restriction de $\alpha$ à $Y_0^{i,j}$ est l'application rationnelle $\alpha_{i,j}: \newdir{ >}{{}*!/-5pt/\dir{>}}
  \xymatrix{
     Y_0^{i,j} \ar@{-->}[r]  & \PP(\Hom(\Lambda^2(V_1),\Lambda^2(V_2))^{\leq 1})} $ 
définie sur $U_{i,j}:=\{(f,l) \in Y_0^{i,j} \ |\ \rg(l)=2\}$ par $\alpha_{i,j}(f,l)=\left\langle  l \wedge l \right\rangle$. 
On note $\Gamma_{i,j} \subset Y_0^{i,j} \times \PP(\Hom(\Lambda^2(V_1),\Lambda^2(V_2))^{\leq 1})$ le graphe de l'application $\alpha_{i,j}$, alors d'après \cite[Proposition IV.22]{EH}, le morphisme $\overline{\Gamma_{i,j}} \rightarrow Y_0^{i,j},\ (f,l,E) \mapsto (f,l)$ est l'éclatement de $Y_0^{i,j}$ le long du fermé $F_0 \cap Y_0^{i,j}$. Enfin, ces éclatements se recollent pour donner le morphisme $\overline{\Gamma} \rightarrow Y_0,\ (f,L,E) \mapsto (f,L)$ ce qui montre le fait annoncé.

Il reste à voir que $\overline{\Gamma}$ est isomorphe à $\ZZ$. On a un isomorphisme
$$\begin{array}{ccc}
\PP(\Hom(\Lambda^2(V_1),\Lambda^2(V_2))^{\leq 1}) & \cong & \PP(\Lambda^2(V_1)^*) \times \PP(\Lambda^2(V_2)) \\
 E & \mapsto & (\Ker(E)^{\perp}, \Im(E)) 
\end{array}$$
donné par le plongement de Segre. On en déduit un isomorphisme
$$\overline{\Gamma} \cong \left \{ (f,L,L_1,L_2)\in Y_0 \times \PP(\Lambda^2(V_1)^*) \times \PP(\Lambda^2(V_2))\ \middle| \begin{array}{l}
\forall l  \in L,\ \text{ on a } \left\{ \begin{array}{l}
       \Ker(l \wedge l)^{\perp} \subset L_1, \\
     \Im(l \wedge l)  \subset L_2, 
    \end{array} \right.\\
    L_1 \in \iota_1(\Gr(2,V_1^*)), \\
L_2 \in \iota_2(\Gr(2,V_2)).
\end{array} \right \} .$$
Soit $f \in \Hom(V_1,V_2)^{\leq 2}$, alors $f \wedge f \in \Hom(\Lambda^2(V_1), \Lambda^2(V_2))^{\leq 1}$ et on a deux cas de figure: si $\rg(f) \leq 1$, alors $f \wedge f=0$, sinon $\Im(f \wedge f)=L_2$ pour un certain $L_2 \in \iota_2(\Gr(2,V_2))$ et $\Ker(f \wedge f)^{\perp}=L_1$ pour un certain $L_1 \in \iota_1(\Gr(2,V_1^*))$. Pour $i=1,2$, on note $E_i$ l'antécédent de $L_i$ par $\iota_i$. Alors on a les équivalences
\begin{align*}
\Im(f \wedge f)=L_2 &\Leftrightarrow \Im(f)=E_2, \\  
\Ker(f \wedge f)^{\perp}=L_1 &\Leftrightarrow \Ker(f)^{\perp}=E_1. 
\end{align*}
Il s'ensuit que
$$\overline{\Gamma} \cong \left \{ (f,L,E_1,E_2)\in Y_0 \times \Gr(2,V_1^*) \times \Gr(2,V_2) \mid \ E_1^{\perp} \subset \Ker(L) \text{ et } \Im(L) \subset E_2 \right \}=\ZZ.$$  
\end{proof}

On identifie la variété $Y_1$ à $\ZZ$ grâce au lemme \ref{identificationblowup}. L'opération naturelle de $G'$ dans $ W/\!/G \times \PP(W/\!/G) \times \Gr(2,V_1^*) \times \Gr(2,V_2)$ stabilise $Y_1$. On fixe $(E_1,E_2) \in \Gr(2,V_1^*) \times \Gr(2,V_2)$, alors on a un isomorphisme $\Gr(2,V_1^*) \times \Gr(2,V_2) \cong {G'}/P$ où $P$ est le stabilisateur de $(E_1,E_2)$ dans $G'$. La projection $q_2:\ Y_1 {\rightarrow} G'/P$ est un morphisme $G'$-équivariant et donc, d'après (\ref{iissoo}), on a un isomorphisme $G'$-équivariant 
$$Y_1 \cong G' {\times}^{P} F'' $$
où $F''$ est la fibre schématique du morphisme $q_2$ en $(E_1,E_2)$. On a
\begin{align*} 
F''&= \{(f,L) \in \Hom(V_1/E_1^{\perp},E_2) \times \PP( \Hom(V_1/E_1^{\perp},E_2))\ |\ f \in L\}\\
 &= Bl_{0}(\Hom(V_1/E_1^{\perp},E_2))
\end{align*}
qui est l'éclatement en $0$ de $\Hom(V_1/E_1^{\perp},E_2)$. Et $P$ opère dans $F''$ de la façon suivante:
$$\forall p=(p_1,p_2) \in P, \forall (f,L) \in F'',\ p.(f,L)=(p_2 \circ f \circ p_{1}^{-1}, \left\langle  p_2 \circ l \circ p_{1}^{-1} \right\rangle)$$
où $l$ est n'importe quel élément non nul de $L$.\\
Avec les notations du début de la section \ref{etudeGL2}, on remarque que $Y_1$ est l'espace total du fibré $Bl_0(\Hom(\underline{V_1}/T_1^{\perp},T_2))$ au dessus de $G'/P$; celui-ci s'obtient en éclatant la section nulle dans le fibré $\Hom(\underline{V_1}/T_1^{\perp},T_2)$.\\ 
Par ailleurs, d'après les propositions \ref{reduction1} et \ref{gammaiso}, on a un isomorphisme $G'$-équivariant
$$\HH \cong G' {\times}^{P} {Bl}_{0}(\Hom(V_1/E_1^{\perp},E_2))$$
où l'opération de $P$ dans ${Bl}_{0}(\Hom(V_1/E_1^{\perp},E_2))$ coïncide avec celle de $P$ dans $F''$.
On a donc un isomorphisme $G'$-équivariant:
\begin{equation} \label{ultimeHiso}
 \HH \cong Y_1.
\end{equation}
On reprend les notations de la section \ref{zectionred}: 
\begin{itemize} \renewcommand{\labelitemi}{$\bullet$}
\item $W':=\Hom(V_1/E_1^{\perp},V) \times \Hom(V,E_2) \subset W$,
\item ${\HH}':= {\Hilb}_{h_{W}}^{G}(W') \subset \HH$,
\item ${\gamma}':\ {\HH}' \rightarrow W'/\!/G$ le morphisme de Hilbert-Chow, 
\item ${\delta}':\ {\HH}' \rightarrow \PP(W'/\!/G)$ le morphisme (\ref{maoprhismedelta}).
\end{itemize}
D'après la proposition \ref{fcthilb}, on a $h_{W'}=h_W$, et donc $\HH'$ est simplement le schéma de Hilbert invariant étudié précédement (avec $n_1=n_2=2$). Puis, d'après la proposition \ref{gammaiso}, $ \gamma' \times {\delta}':\ {\HH}' \rightarrow Bl_0(W'/\!/G)$ est un isomorphisme $P$-équivariant et donc le morphisme induit $G' {\times}^P {\HH}' \rightarrow G' {\times}^P Bl_0(W'/\!/G)$ est un isomorphisme $G'$-équivariant. Les observations précédentes et le corollaire \ref{reductionHdiag} nous donnent le diagramme commutatif suivant:
\begin{equation} \label{diagsynthese}
\xymatrix {
    {G' {\times}^P {\HH}'} \ar[rr]^{G' \times^P {\gamma}'} \ar[rd]_{\cong} \ar[dd]_{\psi_1}^{\cong} && {G' {\times}^P W'/\!/G} \ar[dd]^{\phi} \\
    & {G' {\times}^P Bl_0(W'/\!/G)} \ar[ru]_{pr_{1,2}} \ar[dd]_(0.3){\cong} \\
    \HH \ar[rr]_(0.6){\gamma}  \ar[rd]_{\cong} && {W/\!/G} \\
    & Y_1 \ar[ru]_{pr_1} }  
\end{equation}   
où 
$$\begin{array}{ccccc}
pr_{1,2} & : & G' {\times}^P Bl_0(W'/\!/G) & \rightarrow & G' {\times}^P W'/\!/G \\
& & (g',f,L)P & \mapsto & (g',f)P 
\end{array}$$
et 
$$\begin{array}{ccccc}
pr_1 & : & Y_1 & \rightarrow &  W/\!/G  \\
& & (f,L,E_1,E_2) & \mapsto & f .
\end{array}$$  
On identifie $\HH$ à $Y_1$ grâce à l'isomorphisme (\ref{ultimeHiso}), alors $\gamma$ s'identifie à la composition des éclatements $Y_1 \rightarrow Y_0 \rightarrow W/\!/G$. En particulier $\gamma$ est une résolution de $W/\!/G$.

\subsection{Etude du cas \texorpdfstring{$\dim(V)=3$}{n=3}}  \label{etudeGL3}

Dans toute cette section, on fixe $n=3$. On a $G \cong GL_3(k)$, $W/\!/G=\Hom(V_1,V_2)^{\leq 3}$ et $\rho:\ \HH \rightarrow \Gr(3,V_1^*) \times \Gr(3,V_2)$ est le morphisme construit dans la section \ref{princreduction}.
Nous allons démontrer le 

\begin{theoreme} \label{casn3} 
Si $n_1,n_2 \geq 3$, alors $\HH$ est connexe et singulier.
\end{theoreme} 

Lorsque $n_1=n_2=3$, la connexité de $\HH$ est donnée par le corollaire \ref{Hconnexe2} et la non-lissité de $\HH$ est donnée par le corollaire \ref{Hcasn3}. Le cas général $n_1,n_2 \geq 3$ se déduit immédiatement du cas particulier $n_1=n_2=3$ par la proposition \ref{reduction1}. 

\begin{remarque}
Si $n_1+n_2 \leq 4$, alors $\HH \cong W/\!/G$ est une variété lisse et $\gamma$ est un isomorphisme d'après le corollaire \ref{cas_facile}. 
\end{remarque}

\begin{remarque}
Les cas particuliers suivants ne sont pas traités par le théorème \ref{casn2}:
\begin{itemize} \renewcommand{\labelitemi}{$\bullet$}
\item $n_1=2$ et $n_2 \geq 3$,
\item $n_2=2$ et $n_1 \geq 3$,
\end{itemize}    
mais les cas particuliers $n_1=1$ ou $n_2=1$ ont été traités dans la section \ref{section_n_1}. 
\end{remarque}

\noindent On suppose dorénavant que $n_1=n_2=3$.

\subsubsection{Points fixes de \texorpdfstring{$\HH$}{H} pour l'opération de \texorpdfstring{$B'$}{B'}} \label{sectionptfixeGL3}

On souhaite montrer que $\HH$ est singulier. D'après le lemme \ref{Hlisse4}, il suffit de déterminer un point fixe de $\HHp$ pour l'opération de $B'$ tel que la dimension de l'espace tangent à $\HH$ en ce point soit différente de la dimension de $\HHp$. On commence donc par déterminer les points fixes de $B'$ dans $\HH$.\\
On rappelle que 
\begin{align*}
{k[W]}_2 &= (S^2(V_1) \otimes S^2(V^{*})) \oplus (S^2(V) \otimes S^2(V_{2}^{*}))  \oplus  ({\Lambda}^2 (V_1) \otimes {\Lambda}^2 (V^{*}))   \\
        &\ \oplus  ({\Lambda}^2 (V) \otimes {\Lambda}^2 (V_{2}^{*})) \oplus (V_1 \otimes V_{2}^{*} \otimes sl(V)) \oplus (V_1 \otimes V_{2}^{*} \otimes V_0) 
\end{align*}
comme $G' \times G$-module (c'est l'isomorphisme (\ref{kW2})). Et on a
\begin{align}
{k[W]}_3&=S^3((V_1 \otimes V^{*}) \oplus (V \otimes V_{2}^{*}))  \label{kW3}  \\
       &=S^3(V_1 \otimes V^{*}) \oplus (S^2(V_1 \otimes V^{*}) \otimes V \otimes V_{2}^{*}) \notag \\
       &\oplus  (V_1 \otimes V^{*} \otimes S^2(V \otimes V_{2}^{*})) \oplus     S^3(V \otimes V_{2}^{*}) \notag \\
       &=S^3(V_1 \otimes V^{*}) \oplus (((S^2(V_1) \otimes S^2(V^{*})) \oplus ({\Lambda}^2 (V_1) \otimes {\Lambda}^2 (V^{*})) ) \otimes V \otimes V_{2}^{*}) \notag\\
       &\oplus  (V_1 \otimes V^{*} \otimes ((S^2(V) \otimes S^2(V_2^{*})) \oplus ({\Lambda}^2 (V) \otimes {\Lambda}^2 (V_2^*)) )) \oplus S^3(V \otimes V_{2}^{*}) \notag \\  
       &=S^3(V_1 \otimes V^{*}) \oplus S^3(V \otimes V_{2}^{*}) \notag \\
       &\oplus  (S^2(V_1) \otimes V_{2}^{*} \otimes (V^{*} \oplus S^{{\epsilon}_1-2{\epsilon}_3}(V)))
       \oplus  ({\Lambda}^2 (V_1) \otimes V_2^* \otimes (V^{*} \oplus S^{{\epsilon}_1-{\epsilon}_2-\epsilon_3}(V))) \notag \\
       &\oplus (V_1 \otimes S^2(V_2^{*}) \otimes (V \oplus S^{2{\epsilon}_1-{\epsilon}_3}(V)))
       \oplus (V_1 \otimes {\Lambda}^2 (V_2^{*}) \otimes (V \oplus S^{{\epsilon}_1+\epsilon_2-{\epsilon}_3}(V))) \notag 
\end{align} 
comme $G' \times G$-module. 

\begin{notation}
On note
\begin{itemize} \renewcommand{\labelitemi}{$\bullet$}
\item $E:= \left\langle e_1 \otimes (f_2^* \wedge f_3^*) ,\ e_2 \otimes (f_2^* \wedge f_3^*)  ,\ e_1 \otimes (f_1^* \wedge f_3^*)    \right\rangle$,
\item $F:= \left\langle (e_1 \wedge e_2) \otimes f_3^*,\ (e_1 \wedge e_2) \otimes f_{2}^* ,\ (e_1 \wedge e_3) \otimes f_3^*  \right\rangle$,
\end{itemize}
qui sont des $B'$-sous-modules de $V_1 \otimes \Lambda^2(V_2^*)$ et $\Lambda^2 (V_1) \otimes V_2^*$ respectivement. \\
On note $D$ l'unique droite $B'$-stable de $V_1 \otimes V_{2}^{*}$. \\
Enfin, on note $I$ l'idéal de $k[W]$ engendré par $(V_1 \otimes V_{2}^{*} \otimes V_0) \oplus (D \otimes sl(V)) \subset {k[W]}_2$ et par $(F \otimes S^{{\epsilon}_1-\epsilon_2-{\epsilon}_3}(V)) \oplus (E \otimes S^{{\epsilon}_1+\epsilon_2-{\epsilon}_3}(V)) \subset k[W]_3$.   
\end{notation}

\begin{remarque}
L'idéal $I$ est homogène, $B' \times G$-stable et contient l'idéal $J$ engendré par les $G$-invariants homogènes de degré positif de $k[W]$. 
\end{remarque}

\begin{theoreme} \label{pointfixeborel2}
L'idéal $I$ est l'unique point fixe de $\HH$ pour l'opération de $B'$.
\end{theoreme}

\begin{proof}
Les points fixes de $\HH$ pour l'opération de $B'$ correspondent exactement aux idéaux homogènes $I_Z$ de $k[W]$ tels que:\\
i) $I_Z$ est $B' \times G$-stable, \\
ii) $k[W]/ I_Z =\bigoplus_{M \in \Irr(G)} M^{\oplus \dim(M)}$ comme $G$-module.\\
Soit $I_Z$ un tel point fixe, alors on peut étudier $I_Z$ degré par degré. En raisonnant comme dans la preuve du théorème \ref{pointfixeborel} pour l'étude des composantes de degré $0$, $1$ et $2$ de $I_Z$, on montre que $I_Z$ contient 
$$(V_1 \otimes V_{2}^{*} \otimes V_0) \oplus (D \otimes sl(V)) \subset {k[W]}_2 .$$
En particulier $I_Z \supset J$. On s'intéresse maintenant à la composante de degré $3$ de $I_Z$.
On a les dimensions suivantes:
$$\left\{
    \begin{array}{l}
\dim(S^{{\epsilon}_1-\epsilon_2-{\epsilon}_3}(V))=\dim(S^{{\epsilon}_1+\epsilon_2-{\epsilon}_3}(V))=6,\\
\dim({\Lambda}^2 (V_1) \otimes V_{2}^{*} )=\dim(V_1 \otimes {\Lambda}^2 (V_2^{*}))=9.
      \end{array}
\right.$$
Donc pour avoir la décomposition souhaitée de $k[W]/{I}_Z$ comme $G$-module, ${k[W]}_3 \cap {I}_Z$ contient nécessairement trois copies de $S^{{\epsilon}_1-\epsilon_2-{\epsilon}_3}(V)$ et trois copies de $S^{{\epsilon}_1+\epsilon_2-{\epsilon}_3}(V)$. Comme ${k[W]}_3 \cap {I}_Z$ est $B'$-stable, il contient $(M_2 \otimes S^{{\epsilon}_1-\epsilon_2-{\epsilon}_3}(V)) \oplus (M_1 \otimes S^{{\epsilon}_1+\epsilon_2-{\epsilon}_3}(V))$ où $M_2$ (resp. $M_1$) est un sous-espace $B'$-stable de dimension $3$ de $V_1 \otimes {\Lambda}^2 (V_2^{*})$ (resp. de ${\Lambda}^2 (V_1) \otimes (V_2^{*})$). On note $E_1:=V_1 \otimes  \left\langle f_2^* \wedge f_3^*  \right\rangle$, $E_2:=E$ et $E_3:= \left\langle e_1  \right\rangle \otimes \Lambda^2 (V_2^*)$ qui sont des $B'$-sous-modules de $V_1 \otimes \Lambda^2 (V_2^*)$, et on note $F_1:=\Lambda^2 (V_1) \otimes  \left\langle f_3^*  \right\rangle$, $F_2:=F$ et $F_3:= \left\langle e_1 \wedge e_2  \right\rangle \otimes V_2^*$ qui sont des $B'$-sous-modules de $\Lambda^2 (V_1) \otimes V_2^*$. On vérifie que les $B'$-modules $E_1$, $E_2$ et $E_3$ (resp. $F_1$, $F_2$ et $F_3$) sont les seuls $B'$-sous-modules de dimension $3$ de $V_1 \otimes \Lambda^2 (V_2^*)$ (resp. de $\Lambda^2 (V_1) \otimes V_2^*$). On a donc a priori neuf façons de choisir des générateurs pour $I_Z$ en degré $3$: pour $1 \leq i,j \leq 3$, on note $I_{i,j}$ l'idéal de $k[W]$ engendré par $(V_1 \otimes V_{2}^{*} \otimes V_0) \oplus (D \otimes sl(V)) \subset {k[W]}_2$ et par $(F_i \otimes S^{{\epsilon}_1-\epsilon_2-{\epsilon}_3}(V)) \oplus (E_j \otimes S^{{\epsilon}_1+\epsilon_2-{\epsilon}_3}(V)) \subset k[W]_3$. Alors $I_{2,2}=I$ et le lemme qui suit montre que nécessairement $I_Z \supset I$. 

\begin{lemme}  \label{exclusion}
Si $I_Z$ est un idéal $G$-stable contenant $I_{i,j}$ avec $(i,j) \neq (2,2)$, alors $I_Z$ ne peut pas avoir la fonction de Hilbert $h_W$.
\end{lemme}

\begin{proof}[\textbf{Preuve du lemme}]
On suppose par exemple que $j=3$ et soit $i$ quelconque. On note $w \in W$ sous la forme 
$$w=\left( \begin{bmatrix}
x_{11}  &x_{12} & x_{13} \\
x_{21} &x_{22} & x_{23} \\
x_{31} &x_{32} & x_{33} 
\end{bmatrix},
\begin{bmatrix}
y_{33} & y_{23} & y_{13}\\
y_{32} & y_{22} & y_{12} \\
y_{31} & y_{21} & y_{11}
\end{bmatrix} \right)$$ 
et on identifie $k[W]$ à $k[x_{ij},y_{ij},\ 1 \leq i,j \leq 3]$. D'après la proposition \ref{KSheadings}, l'idéal ${\left(I_{i,3}/J \right)}^U$ de ${\left( k[W]/J \right)}^U$ contient les éléments suivants: 
$$
\left \{
    \begin{array}{l}
  x_{11}(y_{22}y_{11}-y_{21}y_{12}),\\
  x_{11}(y_{23}y_{11}-y_{13}y_{21}),\\
  x_{11}(y_{12}y_{23}-y_{13}y_{22}),
    \end{array}
\right.$$
qui forment une base du $B' \times T$-sous-module $E_3 \otimes (S^{{\epsilon}_1+\epsilon_2-{\epsilon}_3}(V))^U$ de ${\left( k[W]/J \right)}^U$.\\   
D'après la proposition \ref{decompoiso}, le $G$-module $S^{\epsilon_1+\epsilon_2-2 \epsilon_3}(V)$ apparaît dans $k[W]/J$ uniquement dans $ k[W]_4/(k[W]_4 \cap J) $ et avec une multiplicité égale à $18$, cette multiplicité étant égale à la dimension de l'espace vectoriel engendré dans ${\left( k[W]/J \right)}^U$ par l'ensemble $K$ suivant:
$$ 
K \, \left \{
    \begin{array}{l}
x_{11}^2 (y_{22}y_{11}-y_{12}y_{21}),\\
x_{12}^2 (y_{22}y_{11}-y_{12}y_{21}),\\
x_{13}^2 (y_{22}y_{11}-y_{12}y_{21}),\\
x_{11} x_{12} (y_{22}y_{11}-y_{12}y_{21}),\\
x_{11} x_{13} (y_{22}y_{11}-y_{12}y_{21}),\\
x_{12} x_{13} (y_{22}y_{11}-y_{12}y_{21}),\\
x_{11}^2 (y_{23}y_{11}-y_{13}y_{21}),\\
x_{12}^2 (y_{23}y_{11}-y_{13}y_{21}),\\
x_{13}^2 (y_{23}y_{11}-y_{13}y_{21}),\\
x_{11} x_{12} (y_{23}y_{11}-y_{13}y_{21}),\\
x_{11} x_{13} (y_{23}y_{11}-y_{13}y_{21}),\\
x_{12} x_{13} (y_{23}y_{11}-y_{13}y_{21}),\\
x_{11}^2 (y_{12}y_{23}-y_{13}y_{22}),\\
x_{12}^2 (y_{12}y_{23}-y_{13}y_{22}),\\
x_{13}^2 (y_{12}y_{23}-y_{13}y_{22}),\\
x_{11} x_{12} (y_{12}y_{23}-y_{13}y_{22}),\\
x_{11} x_{13} (y_{12}y_{23}-y_{13}y_{22}),\\
x_{12} x_{13} (y_{12}y_{23}-y_{13}y_{22}).      
   \end{array}
\right. 
$$
Montrons que la multiplicité du $G$-module $S^{\epsilon_1+\epsilon_2-2 \epsilon_3}(V)$ dans $ k[W]_4/(k[W]_4 \cap I_{i,3}) $ est strictement inférieure à $\dim(S^{\epsilon_1+\epsilon_2-2 \epsilon_3}(V))=10$. Pour calculer cette multiplicité, il suffit de calculer la dimension de l'espace vectoriel engendré par $K' \subset K$ dans ${\left( k[W]/J \right)}^U$, où $K'$ est obtenu en supprimant les éléments de $K$ qui appartiennent à l'idéal ${\left(I_{i,3}/J \right)}^U$. On obtient: 
$$ 
K' \, \left \{
    \begin{array}{l}
x_{12}^2 (y_{22}y_{11}-y_{12}y_{21}),\\
x_{13}^2 (y_{22}y_{11}-y_{12}y_{21}),\\
x_{12} x_{13} (y_{22}y_{11}-y_{12}y_{21}),\\
x_{12}^2 (y_{23}y_{11}-y_{13}y_{21}),\\
x_{13}^2 (y_{23}y_{11}-y_{13}y_{21}),\\
x_{12} x_{13} (y_{23}y_{11}-y_{13}y_{21}),\\
x_{12}^2 (y_{12}y_{23}-y_{13}y_{22}),\\
x_{13}^2 (y_{12}y_{23}-y_{13}y_{22}),\\
x_{12} x_{13} (y_{12}y_{23}-y_{13}y_{22}).      
   \end{array}
\right. 
$$
Donc $K'$ engendre un espace vectoriel de dimension $9$, et donc on ne va jamais avoir $10$ copies de $S^{\epsilon_1+\epsilon_2-2 \epsilon_3}(V)$ dans un quotient de $k[W]/I_{i,3}$, ce qui montre le lemme pour $j=3$. Le cas $j=1$ se démontre de manière analogue.  
\end{proof}

D'après le lemme \ref{fixespoints}, le schéma $\HH$ admet au moins un point fixe $I_Z$ pour $B'$ et d'après le lemme \ref{exclusion}, on a nécessairement $I_Z \supset I$. Puis, d'après le corollaire \ref{fcthilbclassn3}, l'idéal $I_Z$ admet la fonction de Hilbert classique suivante:
$$\forall p \in \NN,\ f_{I_Z}(p)= \sum_{\substack{(r_1,r_2,r_3) \in \ZZZ^3 \\ r_1 \geq r_2 \geq r_3 \\ |r_1|+|r_2|+|r_3|=p}}  (\dim(S^{r_1 \epsilon_1+ r_2 \epsilon_2 +r_3 \epsilon_3}(V) ))^2 .$$
D'après la formule des dimensions de Weyl, on a
$$\dim(S^{r_1 \epsilon_1+ r_2 \epsilon_2 +r_3 \epsilon_3}(V))=\frac{1}{2}(1+r_1-r_2)(2+r_1-r_3)(1+r_2-r_3) .$$
Donc
$$f_{I_Z}(p)= \sum_{\substack{(r_1,r_2,r_3) \in \ZZZ^3 \\ r_1 \geq r_2 \geq r_3 \\ |r_1|+|r_2|+|r_3|=p}}  \frac{1}{4}(1+r_1-r_2)^2(2+r_1-r_3)^2(1+r_2-r_3)^2 .$$
Après développement puis simplification, on obtient:
$$\forall p \geq 0,\ f_{I_Z}(p)=1+\frac{122}{35}p+\frac{1654}{315}p^2+\frac{547}{120}p^3+\frac{91}{36}p^4+\frac{37}{40}p^5+\frac{79}{360}p^6+\frac{13}{420}p^7+\frac{1}{504}p^8 .$$
Par ailleurs, un calcul direct avec \cite[Macaulay2]{Mac2} de la fonction de Hilbert classique de l'idéal $I$ nous donne $f_{I}=f_{I_Z}$ et donc $I_Z=I$, ce qui achève la preuve du théorème \ref{pointfixeborel2}.  
\end{proof}

\begin{remarque}
On a $\Stab_{G'}(I)=B'$, donc l'unique orbite fermée de $\HH$ est isomorphe à $G'/B'$.
\end{remarque}

\noindent Le corollaire qui suit découle du lemme \ref{fixespoints} et du théorème \ref{pointfixeborel2}.

\begin{corollaire} \label{Hconnexe2}
Le schéma $\HH$ est connexe.
\end{corollaire}


\subsubsection{Espace tangent de \texorpdfstring{$\HH$}{H} en \texorpdfstring{$I$}{I}} \label{dimTang2}

On note $Z_0:=\Spec(k[W]/I)$. Nous allons démontrer la  

\begin{proposition} \label{dimTangent2}
$\dim(T_{Z_0} \HH)=12.$
\end{proposition}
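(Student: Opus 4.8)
La proposition à démontrer affirme que l'espace tangent à $\HH$ au point $Z_0 = \Spec(k[W]/I)$, où $I$ est l'unique point fixe de $B'$ décrit au théorème \ref{pointfixeborel2}, est de dimension $12$ (dans le cas $n=3$, $n_1=n_2=3$). L'enjeu géométrique est clair : on sait déjà que $\HH$ est connexe (corollaire \ref{Hconnexe2}) et que $\dim(\HHp) = 2nn_1 - n^2 = 18-9 = 9$ (dimension de $W/\!/G$ puisque $\gamma$ est birationnelle sur la composante principale). Comme $12 > 9$, obtenir $\dim(T_{Z_0}\HH) = 12$ montrera immédiatement que $\HH$ est singulier en $Z_0$, ce qui est le but du corollaire \ref{Hcasn3} annoncé. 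C'est l'analogue exact de la proposition \ref{dimTangent} du cas $n=2$, mais avec une difficulté computationnelle bien plus lourde.

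**Le plan.** Mon approche suit fidèlement la machinerie de la section \ref{Connexit�etTangence}. D'abord, j'utiliserais la proposition \ref{isoTangent} qui donne $T_{Z_0}\HH \cong \Hom_R^G(I/I^2,R)$ avec $R := k[W]/I$. Ensuite, je construirais explicitement le module des générateurs $N$ de $I/I^2$ : par définition de $I$, il s'agit du $B'\times G$-module engendré par les générateurs de degré $2$, à savoir $(V_1\otimes V_2^*\otimes V_0)\oplus (D\otimes sl(V))$, et de degré $3$, à savoir $(F\otimes S^{\epsilon_1-\epsilon_2-\epsilon_3}(V))\oplus(E\otimes S^{\epsilon_1+\epsilon_2-\epsilon_3}(V))$. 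Via le lemme \ref{dimRmod} (applicable car $R\cong k[G]$ comme $G$-module, puisque $h_W(M)=\dim(M)$ d'après le corollaire \ref{fctHilbSLn} adapté), on a $\dim(\Hom_R^G(R\otimes N,R)) = \dim(N)$. Le lemme \ref{InegTang} ramène alors le calcul à déterminer le rang de l'application $\rho^*$ induite par les relations : $\dim(T_{Z_0}\HH) = \dim(N) - \rg(\rho^*)$.

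**Le cœur du calcul.** La partie technique consiste à identifier les relations entre les générateurs de $I/I^2$ et à calculer $\rg(\rho^*)$. Comme dans la remarque \ref{Bmoduleexplicite} du cas $n=2$, j'exploiterais un calcul explicite sur \cite[Macaulay2]{Mac2} pour obtenir une présentation du $R$-module $I/I^2$, c'est-à-dire la suite exacte (\ref{complexeCornFlakes}) avec un module de relations $\RRR$. On identifie $k[W]$ à $k[x_{ij},y_{ij},\ 1\le i,j\le 3]$ selon les conventions de la preuve du lemme \ref{exclusion}. En appliquant le foncteur $\Hom_R(\,\cdot\,,R)$ puis en prenant les $G$-invariants, on obtient la suite exacte (\ref{complexeCornFlakes2}), d'où $\dim(T_{Z_0}\HH) = \dim\Ker(\rho^*)$. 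Comme tous les modules et morphismes sont $B'$-équivariants (l'idéal $I$ étant $B'$-stable), je structurerais le calcul selon la décomposition en $B'$-modules pour le rendre traçable : ceci décompose $\Hom^G(N,R)$ en sous-espaces associés aux différents types de représentations ($V_1^*\otimes V_2$ polynomiale, et les pièces $D^*\otimes(\cdots)$, $E^*\otimes(\cdots)$, $F^*\otimes(\cdots)$ provenant des générateurs de degrés $2$ et $3$), permettant de localiser précisément les relations effectives.

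**Le principal obstacle.** La difficulté essentielle n'est pas conceptuelle mais combinatoire : contrairement au cas $n=2$ où $\dim(N)=7$ et où l'on avait à montrer simplement $\rg(\rho^*)\ge 3$ en exhibant trois morphismes $\psi_i$ dont les images par $\rho^*$ sont linéairement indépendantes, ici le module $N$ est nettement plus gros et les relations de degré mixte ($2$ et $3$) interagissent. Le risque est qu'un calcul à la main soit intraitable, d'où le recours assumé à Macaulay2 pour produire les tables de relations (comme aux tables \ref{table1} et \ref{table2}). Le travail de vérification consisterait alors à (i) confirmer que la sortie machine respecte bien les contraintes d'équivariance attendues, et (ii) identifier une base explicite de $G$-morphismes équivariants dans $\Ker(\rho^*)$ dont le cardinal vaut $12$, puis vérifier par le complément $\dim(N)-\rg(\rho^*)=12$ que ce nombre est exact. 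La borne inférieure $\dim(T_{Z_0}\HH)\ge\dim(\HHp)=9$ (car $Z_0\in\HHp$ d'après le lemme \ref{fixespoints}) sert de garde-fou cohérent, et c'est l'écart strict final $12-9=3$ qui portera toute l'information de singularité.
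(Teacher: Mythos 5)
Votre plan reproduit fidèlement la démarche du mémoire : réduction via la proposition \ref{isoTangent} et le lemme \ref{InegTang} à l'égalité $\dim(T_{Z_0}\HH)=\dim(N)-\rg(\rho^*)$, puis encadrement par deux bornes obtenues sur machine — d'une part l'exhibition de $12$ morphismes $G$-équivariants linéairement indépendants dans $\Hom_R^G(I/I^2,R)$ (le lemme \ref{inegTang1}, indispensable ici puisque la borne $\dim(\HHp)=9$ ne suffit plus, contrairement au cas $n=2$), d'autre part la minoration $\rg(\rho^*)\geq\dim(N)-12$ en évaluant des relations explicites. C'est essentiellement la preuve du texte (avec $\dim(N)=29$ et $\rg(\rho^*)=17$); rien à redire.
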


Pour démontrer cette proposition, nous allons procéder comme pour la proposition \ref{dimTangent}. Cette section est essentiellement calculatoire et on recommande donc au lecteur qui souhaite s'épargner un mal de tête d'en admettre les résultats sans trop s'attarder sur les démonstrations.\\
On identifie $k[W]$ à $k[x_{ij},y_{ij},\ 1 \leq i,j \leq 3]$ comme dans la preuve du lemme \ref{exclusion} et on explicite des bases de certains $G$-modules qui apparaissent dans $k[W]$:\\
$
 \left.
    \begin{array}{l}
f_1:=y_{33} x_{11}+y_{23} x_{21}+y_{13} x_{31}\\
f_2:=y_{33} x_{12}+y_{23} x_{22}+y_{13} x_{32}\\
f_3:=y_{33} x_{13}+y_{23} x_{23}+y_{13} x_{33}\\
f_4:=y_{32} x_{11}+y_{22} x_{21}+y_{12} x_{31}\\
f_5:=y_{32} x_{12}+y_{22} x_{22}+y_{12} x_{32}\\
f_6:=y_{32} x_{13}+y_{22} x_{23}+y_{12} x_{33}\\
f_7:=y_{31} x_{11}+y_{21} x_{21}+y_{11} x_{31}\\
f_8:=y_{31} x_{12}+y_{21} x_{22}+y_{11} x_{32}\\
f_9:=y_{31} x_{13}+y_{21} x_{23}+y_{11} x_{33}
      \end{array}
\right\} \text{ est une base de } V_1 \otimes V_2^* \otimes V_0, \\
 \left.
    \begin{array}{l}
         h_1:=x_{11} y_{11}\\
         h_2:=x_{11} y_{21}\\
         h_3:= x_{11} y_{31}\\
         h_4:=x_{21} y_{11}\\
         h_5:= x_{21} y_{21}\\
         h_6:= x_{21} y_{31}\\ 
         h_7:=x_{31} y_{11}\\
         h_8:= x_{31} y_{21}\\
         h_9:= x_{31} y_{31}
               \end{array}
\right\} \text{ est une base de } D \otimes V^* \otimes V \cong D \otimes (sl(V) \oplus V_0), \\
 \left.
    \begin{array}{l}
s_1:=x_{12} (y_{22} y_{11}-y_{21} y_{12})\\
s_2:=x_{12} (y_{32} y_{11}-y_{31} y_{12})\\
s_3:=x_{22} (y_{32} y_{11}-y_{31} y_{12})\\
s_4:=x_{12} (y_{32} y_{21}-y_{31} y_{22})\\
s_5:=x_{22} (y_{32} y_{21}-y_{31} y_{22})\\
s_6:=x_{32} (y_{32} y_{21}-y_{31} y_{22})\\
s_7:=x_{22} (y_{22} y_{11}-y_{21} y_{12})\\
s_8:=x_{32} (y_{22} y_{11}-y_{21} y_{12})\\
s_9:=x_{32} (y_{32} y_{11}-y_{31} y_{12})
               \end{array}
\right\} \text{ est une base de } e_2 \otimes (f_{n-1}^* \wedge f_n^*) \otimes \Lambda^2(V) \otimes V^* \subset k[W]_3, \\
 \left.
    \begin{array}{l}
t_1:=y_{12} (x_{22} x_{11}-x_{21} x_{12})\\
t_2:=y_{12} (x_{32} x_{11}-x_{31} x_{12})\\
t_3:=y_{22} (x_{32} x_{11}-x_{31} x_{12})\\
t_4:=y_{12} (x_{32} x_{21}-x_{31} x_{22})\\ 
t_5:=y_{22} (x_{32} x_{21}-x_{31} x_{22})\\
t_6:=y_{32} (x_{32} x_{21}-x_{31} x_{22})\\
t_7:=y_{22} (x_{22} x_{11}-x_{21} x_{12})\\
t_8:=y_{32} (x_{22} x_{11}-x_{21} x_{12})\\
t_9:=y_{32} (x_{32} x_{11}-x_{31} x_{12})
               \end{array}
\right\} \text{ est une base de } (e_1 \wedge e_2) \otimes f_{n-1}^* \otimes \Lambda^2(V^*) \otimes V  \subset k[W]_3$.\\
Alors
$$N:= \left\langle  f_1,\ldots,f_9,h_1,\ldots,h_9,s_1,\ldots,s_6,t_1,\ldots,t_6 \right\rangle \subset k[W]$$
est un $G$-module qui engendre l'idéal $I$. On note $R:=k[W]/I$. On a le

\begin{lemme}  \label{inegTang1}
$\dim ({\Hom}_R^G(I/I^2,R)) \geq 12.$
\end{lemme}

\begin{proof}
D'après \cite[Macaulay2]{Mac2}, une famille de générateurs du $R$-module $\Hom_R(I/I^2,R)$ est donnée dans la table \ref{homom_cas_n3}.
\begin{table}[!p] 
\includegraphics[scale=0.8]{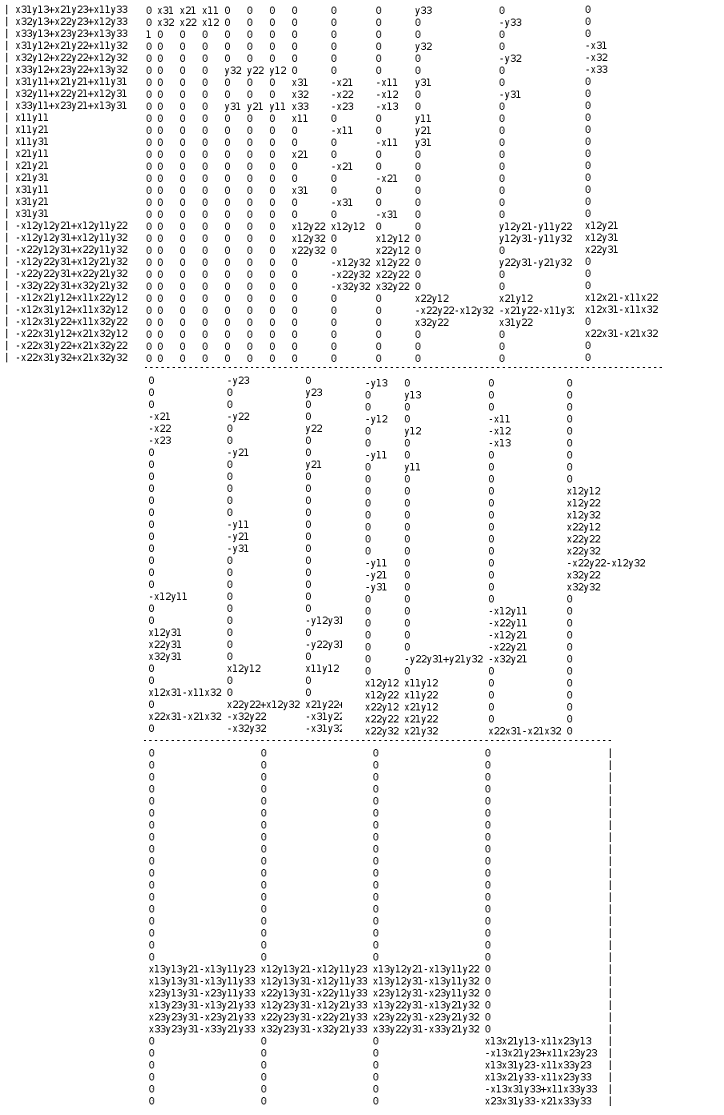}
\caption{\label{homom_cas_n3} Générateurs du $R$-module $\Hom_R(I/I^2,R)$}
\end{table}
On note $\Phi_i$ le morphisme donné par la $i+1$-ème colonne de cette matrice. On vérifie que les douze morphismes suivants sont $G$-équivariants et linéairement indépendants:
\begin{itemize} \renewcommand{\labelitemi}{$\bullet$}
   \item $\Phi_1$      
   \item $\Phi_{20}$,   
   \item $\Phi_{21}$,  
   \item $\Phi_{22}$,  
   \item $\Phi_{23}$,  
   \item $\Phi_{24}$, 
   \item $-x_{13} \Phi_{12}+x_{23} \Phi_{16}+x_{33} \Phi_{18}$, 
   \item $ y_{13} \Phi_{13}+y_{23} \Phi_{14}+y_{33} \Phi_{19}$,  
   \item $y_{12} \Phi_8-y_{22} \Phi_9-y_{32} \Phi_{10}$, 
   \item $y_{13} \Phi_8-y_{23} \Phi_9-y_{33} \Phi_{10}$, 
   \item $x_{12} \Phi_{11}-x_{22} \Phi_{15}-x_{32} \Phi_{17}$, 
   \item $x_{13} \Phi_{11}-x_{23} \Phi_{15}-x_{33} \Phi_{17}$. 
\end{itemize}  
Il s'ensuit que ${\Hom}_R^G(I/I^2,R)$ est de dimension au moins $12$.
\end{proof}

On reprend les notations de la section \ref{ConnexitéetTangence}. On a $\dim(N)=29$ et donc d'après le lemme \ref{InegTang}, on a $\dim(T_{Z_0} \HH)=29- \rg(\rho^*)$. Puis, d'après le lemme \ref{inegTang1}, on a $\rg(\rho^*) \leq 17$ et donc il suffit de montrer que $\rg(\rho^*) \geq 17$ pour prouver la proposition \ref{dimTangent2}.\\  
Pour $i=1, \ldots ,9$, on définit ${\psi}_i \in {\Hom}_R^G(R {\otimes} N,R)$ par 
$$ \left\{
    \begin{array}{ll}
        {\psi}_i(f_j \otimes 1)={\delta}_i^j & \text{ pour } j=1,\ldots,9, \\
        {\psi}_i( h_j \otimes 1)=0 & \text{ pour } j=1,\ldots,9,\\
        {\psi}_i( s_j \otimes 1)={\psi}_i( t_j \otimes 1)=0 & \text{ pour } j=1,\ldots,6,\\
    \end{array}
\right.$$
où ${\delta}_i^j$ est le symbole de Kronecker.\\
Ensuite, on définit les neuf éléments $\phi_1, \ldots, \phi_5, \gamma_1, \gamma_2, \delta_1, \delta_2$ de $\Hom^G(N,R)$ dans la table \ref{homom_cas_n333} de la façon suivante: pour chaque colonne $2$ à $10$, la première ligne indique le nom du morphisme et chaque autre ligne donne l'image dans $R$ de l'élément de $N$ donné dans la première colonne. Et les images de $f_1, \ldots, f_9, h_1, s_1, t_1$ déterminent entièrement chaque morphisme par $G$-équivariance. 
\begin{table}
$$\begin{array}{|l|c|c|c|c|c|}
  \hline
   & \phi_1 & \phi_2  & \phi_3 & \phi_4 & \phi_5   \\
  \hline
  f_i,\ 1 \leq i \leq 9  & 0 & 0  & 0 & 0 & 0  \\
   \hline
   h_1:=x_{11} y_{11}   & x_{11} y_{13} & x_{12} y_{13}  & x_{13} y_{11} & x_{13} y_{12} & x_{13} y_{13} \\
    \hline
   s_1:=x_{12} (y_{22} y_{11}-y_{21} y_{12}) & 0 & 0  & 0 & 0 & 0  \\
    \hline
   t_1:=y_{12} (x_{22} x_{11}-x_{21} x_{12}) & 0 & 0  & 0 & 0 & 0  \\
    \hline
\end{array}$$  
$$\begin{array}{|c|c|c|c|}
  \hline
     \gamma_1 & \gamma_2  & \delta_1 & \delta_2  \\
  \hline
   0 & 0  & 0 & 0   \\
   \hline
    0 & 0 & 0 & 0\\
    \hline
     x_{11} (y_{22} y_{13}-y_{23} y_{12}) &  x_{13} (y_{22} y_{13}-y_{23} y_{12}) & 0 & 0\\
    \hline
    0 & 0 &  y_{11} (x_{22} x_{13}-x_{23} x_{12}) &  y_{13} (x_{22} x_{13}-x_{23} x_{12})\\
    \hline
\end{array}$$ 
\vspace{0.1 cm}  
\caption{\label{homom_cas_n333}}
\end{table}

\noindent Enfin, on identifie ces morphismes à des éléments de $\Hom_R^G(R \otimes N,R)$ via l'isomorphisme $\Hom^G(N,R) \cong \Hom_R^G(R \otimes N,R)$.
Par ailleurs, \cite[Macaulay2]{Mac2} nous donne les relations suivantes entre les générateurs du $R$-module $I/I^2$ définis précédement:
$$
 \left \{
    \begin{array}{l}
   r_1:=-h_1 \otimes y_{21}+h_2 \otimes y_{11},\\
   r_2:=-f_1 \otimes y_{21}+h_2\otimes y_{33}+ h_5 \otimes y_{23}+h_8 \otimes y_{13}, \\
   r_3:=-f_4 \otimes y_{21}+h_2\otimes y_{32}+ h_5 \otimes y_{22}+h_8 \otimes y_{12}, \\
   r_4:=-f_7 \otimes y_{21}+h_2\otimes y_{31}+ h_5 \otimes y_{21}+h_8 \otimes y_{11}, \\
   r_5:=-f_8 \otimes x_{21}+h_4\otimes y_{32}+ h_5 \otimes y_{22}+h_6 \otimes y_{12}, \\
   r_6:=-f_9 \otimes x_{21}+h_4\otimes y_{33}+ h_5 \otimes y_{23}+h_6 \otimes y_{13}, \\   
   r_7:=f_6 \otimes x_{12} y_{11}-f_9 \otimes x_{12} y_{12}-s_1 \otimes x_{23}-s_7 \otimes x_{13}, \\
   r_8:=-f_1 \otimes x_{22} y_{12}+f_2 \otimes x_{21} y_{12}+t_1 \otimes y_{33}-t_2 \otimes y_{13}, \\
   r_9:=f_5 \otimes x_{12} y_{11}-f_9 \otimes x_{12} y_{12}-s_1 \otimes x_{22}-s_7 \otimes x_{12}, \\
   r_{10}:=-h_1 \otimes x_{32} y_{22}-h_2 \otimes (x_{22} y_{22}+x_{12} y_{32})+s_1 \otimes x_{31}-t_7 \otimes y_{21}+ t_8 \otimes y_{11}. 
      \end{array}
      \right. $$

\begin{remarque}
Ces relations ne suffisent pas à engendrer le module des relations entre les générateurs de $I/I^2$.
\end{remarque}

\noindent Le lemme qui suit achève la preuve de la proposition \ref{dimTangent2}:

\begin{lemme} 
$\rg(\rho^*) \geq 17.$
\end{lemme}

\begin{proof}
Soit $({\lambda}_1, {\lambda}_2, {\lambda}_4,\ldots,{\lambda}_9, \alpha_1, \ldots, \alpha_5, \epsilon_1, \epsilon_2, \beta_1,\beta_2) \in k^{17}$ tel que 
\begin{equation} \label{relLinaire2}
 \sum_{\substack{1 \leq i \leq 9\\ i \neq 3}} {\lambda}_i \, \rho^*({\psi}_i)+\sum_{1 \leq j \leq 5} {\alpha}_j \, \rho^*({\phi}_j)+\epsilon_1 \rho^*(\gamma_1)+ \epsilon_2 \rho^*(\gamma_2)+\beta_1 \rho^*(\delta_1)+ \beta_2 \rho^*(\delta_2)=0 .
\end{equation}
On évalue (\ref{relLinaire2}) en $r_1 \otimes 1$, on obtient:
$$\alpha_2 x_{12} (y_{11} y_{23}-y_{13} y_{21}) + \alpha_4 x_{13} (y_{11} y_{22}-y_{12} y_{21}) + \alpha_5 x_{13} (y_{11} y_{23}-y_{13} y_{21})=0 .$$
Donc $\alpha_2=\alpha_4=\alpha_5=0$.\\
On évalue (\ref{relLinaire2}) en $r_2\otimes 1,\ r_3\otimes 1,\ r_4\otimes 1,\ r_5\otimes 1,\ r_6\otimes 1$ successivement, on obtient $\lambda_1=\lambda_4=\lambda_7=\lambda_8=\lambda_9=0$.\\
On évalue (\ref{relLinaire2}) en $r_7\otimes 1$, on obtient $\lambda_6=\epsilon_1=\epsilon_2=0$.\\
On évalue (\ref{relLinaire2}) en $r_8\otimes 1$, on obtient $\lambda_2=\beta_1=\beta_2=0$.\\
On évalue (\ref{relLinaire2}) en $r_9\otimes 1$, on obtient $\lambda_5=0$.\\
On évalue (\ref{relLinaire2}) en $r_{10}\otimes 1$, on obtient $\alpha_1=\alpha_3=0$. \\
Donc $\left \{ \rho^*({\psi}_i),\ 1 \leq i \leq 9, i \neq 3 \right \} \cup \left \{ \rho^*(\phi_1), \ldots, \rho^*(\phi_5), \rho^*(\gamma_1),\rho^*(\gamma_2), \rho^*(\delta_1), \rho^*(\delta_2) \right \}$ est une famille libre de cardinal $17$ dans $\Im(\rho^*)$. 
\end{proof}

\noindent On déduit du lemme  \ref{Hlisse4} et de la proposition \ref{dimTangent2} le 

\begin{corollaire} \label{Hcasn3}
Le schéma $\HH$ est singulier en $Z_0$.
\end{corollaire}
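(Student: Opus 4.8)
The plan is to combine the tangent space computation of Proposition \ref{dimTangent2} with the birationality of the Hilbert--Chow morphism on the principal component. First I would record that $Z_0$ lies on $\HHp$: since $\HHp$ is a $G'$-stable closed subscheme of $\HH$ (Définition \ref{Horb}), Lemme \ref{fixespoints} guarantees that it contains at least one point fixed by $B'$, and by Théorème \ref{pointfixeborel2} the only such point of $\HH$ is $Z_0$; hence $Z_0 \in \HHp$.

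Next I would compute $\dim(\HHp)$. By Définition \ref{Horb} the restriction $\gamma\colon \HHp \rightarrow W/\!/G$ is birationnelle et propre, so $\dim(\HHp) = \dim(W/\!/G)$. In the present situation $n_1 = n_2 = n = 3$, every $f \in \Hom(V_1,V_2)$ has rank at most $3$, so $W/\!/G = \Hom(V_1,V_2)^{\leq 3} = \Hom(V_1,V_2)$ is an affine space of dimension $n_1 n_2 = 9$; thus $\dim(\HHp) = 9$.

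Finally I would conclude. Since $Z_0 \in \HHp$, the local dimension of $\HH$ at $Z_0$ is at least $\dim(\HHp) = 9$, whereas Proposition \ref{dimTangent2} gives $\dim(T_{Z_0}\HH) = 12$. Were $\HH$ lisse (equivalently régulier) at $Z_0$, the dimension of its tangent space there would equal the local dimension of $\HH$ at $Z_0$, forcing $Z_0$ to lie on a single composante irréductible of dimension $12$; but $\HHp$, of dimension $9$, already passes through $Z_0$, a contradiction. Equivalently, the failure of the equality $\dim(T_{Z_0}\HH) = \dim(\HHp)$ violates the criterion of Lemme \ref{Hlisse4}, so $\HH$ cannot be smooth at $Z_0$, which is the assertion.

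Because every input is already established in the cited statements, there is no genuine obstacle remaining; the only points requiring care are verifying that the unique $B'$-fixed point $Z_0$ really belongs to the principal component, and that $\dim(\HHp)$ is correctly identified with $\dim(W/\!/G) = 9$ rather than confused with the strictly larger tangent space dimension $12$.
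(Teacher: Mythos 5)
Your proposal is correct and follows exactly the strategy the paper announces at the start of the section (find a $B'$-fixed point of $\HHp$ where the tangent space dimension differs from $\dim(\HHp)$) and then compresses into the one-line deduction from Lemme \ref{Hlisse4} and Proposition \ref{dimTangent2}. Your explicit verifications — that $Z_0\in\HHp$ via Lemme \ref{fixespoints} and the uniqueness of the $B'$-fixed point, that $\dim(\HHp)=\dim(W/\!/G)=9$, and that smoothness at $Z_0$ would force local irreducibility there, contradicting $12\neq 9$ — are precisely the details the paper leaves implicit.
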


\section{Cas de \texorpdfstring{$GL(V)$}{GLV} opérant dans \texorpdfstring{$\mu^{-1}(0)$}{la fibre en 0 de mu}} \label{posisimpy}

On considère le cas particulier suivant:
\begin{itemize} \renewcommand{\labelitemi}{$\bullet$}
\item $V_1=V_2=:E$ et $d:=\dim(E)$, 
\item $W:=\Hom(E,V) \times \Hom(V,E)=\Hom(E,V) \times \Hom(E,V)^*$,
\item $G':=GL(E)$ vu comme sous-groupe de $GL(V_1) \times GL(V_2)$ via le plongement diagonal,
\item $B':=(B_1 \times B_2) \cap GL(E)$, où $B_1$ et $B_2$ ont été définis dans la section \ref{lesdiffsituations}.
\end{itemize}
L'opération de $GL(V_1) \times GL(V_2) \times G$ dans $W$ induit une opération de $G' \times G$ dans $W$. On note 
$$\gg'=\End(E)$$ 
l'algèbre de Lie de $G'$ et 
$$\gg'^{\leq n}=\{f \in \End(E)\ |\ \rg(f) \leq n\}.$$
On a vu dans la section \ref{description_quotient} que $W/\!/G \cong \gg'^{\leq n}$ comme $G'$-variété. 
Nous allons étudier
$$\HHm:=\mathrm{Hilb}_{h_s}^{G}(\mu^{-1}(0))$$
le schéma de Hilbert invariant pour l'opération de $G$ dans $\mu^{-1}(0)$ et $\HHmp$ sa composante principale. Dans la section \ref{appzmoment}, on décrit l'application moment $\mu:\ W \rightarrow \End(V)$ ainsi que sa fibre en $0$. Dans la section \ref{sectionavecX}, on étudie le morphisme de passage au quotient $\mu^{-1}(0) \rightarrow \mu^{-1}(0)/\!/G$. On appelle $\mu^{-1}(0)/\!/G$ la réduction symplectique de $W$. On verra que ${\mu}^{-1}(0)/\!/G$ est irréductible et on pourra donc parler de la fonction de Hilbert $h_s$ de la fibre générique de $\nu$. Dans la section \ref{MropRRED}, on établit un résultat de réduction qui permet de ramener la détermination de $\HHmp$ à celle de $\Hilb_{h_W}^{G}(W)$; ce dernier a été étudié dans la section \ref{GLngénéral}. Dans la section \ref{reductibilité_cas_symp}, on montre que $\HHm$ admet plusieurs composantes irréductibles lorsque $d \geq 2n$. Enfin, on étudie en détail $\HHm$ lorsque $n=1$ dans la section \ref{GLsympN1}.

\subsection{L'application moment}  \label{appzmoment}
Des rappels concernant les variétés symplectiques, les applications moment et les résolutions symplectiques sont donnés dans la section \ref{appendice2}.

Soit $\phi: (u_1,u_2) \mapsto \tr(u_2 \circ u_1)$ qui est une forme quadratique sur $W$ non-dégénérée et $G' \times G$-invariante et soit $\Omega$ la forme symplectique sur $W$ définie par:
\begin{equation} \label{defsymp}
\forall u_1,v_1 \in \Hom(E,V),\ \forall u_2,v_2 \in \Hom(V,E),\ \Omega((u_1,u_2),(v_1,v_2)):=\phi(v_1,u_2)-\phi(u_1,v_2).
\end{equation}
Alors $G' \times G$ opère symplectiquement dans $(W,\Omega)$, c'est-à dire que l'opération de $G' \times G$ préserve $\Omega$. On vérifie qu'une application moment pour l'opération de $G$ dans $(W,\Omega)$ est donnée par:
\begin{equation} \label{defmu}
\begin{array}{ccccc}
\mu & : & W & \to & \gg^* \\
& & (u_1,u_2) & \mapsto & (\phi:\ h \mapsto \phi(h \circ u_1,u_2)) \\
\end{array}
\end{equation}
où l'on rappelle que l'on note $\gg=\End(V)$ l'algèbre de Lie de $G$. Puis, on a l'isomorphisme $G$-équivariant
\begin{align*}
\gg &\cong \gg^*\\
h & \mapsto (h' \mapsto \tr(h \circ h'))
\end{align*}
et donc $\mu:\ W \rightarrow \gg$ est donnée par:
\begin{equation}  \label{defMoment}
\forall (u_1,u_2) \in W,\ \mu(u_1,u_2)=u_1 \circ u_2.
\end{equation}

\begin{remarque}
On fera par la suite l'abus de désigner cette application $\mu$ comme l'application moment pour l'opération de $G$ dans $(W,\Omega)$ bien qu'il en existe une infinité qui se déduisent de $\mu$ par translation par un élément de $\gg^G \cong k$.
\end{remarque}

L'application moment coïncide avec le morphisme de passage au quotient $W \rightarrow W/\!/G'$. En particulier, $\mu^{-1}(0)$ est le nilcône $\NNN(W,G')$ (cf définition \ref{defnilk}) et on déduit de \cite[Theorem 9.1]{KS} que le schéma $\mu^{-1}(0)$ est toujours réduit. La $G$-équivariance et la $G'$-invariance de $\mu$ impliquent que $\mu^{-1}(0)$ est un fermé $G' \times G$-stable de $W$, on pourra donc considérer le quotient $\mu^{-1}(0)/\!/G$.
On a bien sûr 
$$\mu^{-1}(0)=\left\{(u_1,u_2) \in W \ |\ u_1 \circ u_2=0\right\}.$$

Déterminons les composantes irréductibles de $\mu^{-1}(0)$ et leurs dimensions. Soit $m \in \{0,\ldots,d\}$, on définit le fermé
$$X'_m:=\left\{(u_1,u_2)\in W \ \middle| \ 
    \begin{array}{l}
       \Im(u_2) \subset \Ker(u_1),\\
       \rg(u_2) \leq \min(n,m),\\
       \dim(\Ker(u_1)) \geq \max(d-n,m).   
    \end{array}
\right\}.$$ 

\noindent La preuve de la proposition \ref{compirredfibzero2} (resp. du corollaire \ref{fibzero2}) est analogue à celle de la proposition \ref{compirredfibzero} (resp. du corollaire \ref{fibzero}).

\begin{proposition} \label{compirredfibzero2}
Chaque $X'_m$ est un fermé irréductible de $\mu^{-1}(0)$ et les composantes irréductibles de $\mu^{-1}(0)$ sont:
$$\left\{
    \begin{array}{ll}
        X'_0, \ldots ,X'_{d} &\text{ si } d \leq n,\\
        X'_{d-n}, \ldots ,X'_n &\text{ si } n < d < 2n,\\
        X'_n &\text{ si } d \geq 2n.
    \end{array}
\right.$$
\end{proposition}

\begin{corollaire} \label{fibzero2}
Si $m \leq n$ ou $m \geq d-n$, alors $\dim(X_m)=m(d-m)+dn$ et donc
$$
\dim(\mu^{-1}(0))= \left\{
    \begin{array}{ll}
            nd+\frac{1}{4}{d}^2   &\text{ si }  d < 2n \text{ et $d$ est pair,}\\
            nd+\frac{1}{4}({d}^2-1) &\text{ si }  d < 2n \text{ et $d$ est impair,} \\
            2nd-n^2 &\text{ si } d \geq 2n.
    \end{array}
\right.
$$
\end{corollaire}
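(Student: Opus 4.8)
Le corollaire \ref{fibzero2} est présenté comme une conséquence directe de la proposition \ref{compirredfibzero2}, exactement comme le corollaire \ref{fibzero} découle de la proposition \ref{compirredfibzero} dans le cas non symplectique. Le plan est donc de calquer la preuve du corollaire \ref{fibzero}. D'abord, je calculerais la dimension des ferm\'es irr\'eductibles $X'_m$ lorsque $m \leq n$ ou $m \geq d-n$. Pour cela, il faut reconna\^itre $X'_m$ comme image par la premi\`ere projection $p_1$ d'une vari\'et\'e $T'_m$ construite comme espace total d'un fibr\'e vectoriel homog\`ene au dessus de $\Gr(m,E)$, de fibre $\Hom(E/L_0,V) \times \Hom(V,L_0)$ en un point $L_0 \in \Gr(m,E)$. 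Ce fibr\'e est lisse de dimension $m(d-m)+mn+(d-m)n = m(d-m)+dn$ (en effet $\dim(\Hom(E/L_0,V))=(d-m)n$ et $\dim(\Hom(V,L_0))=mn$, d'o\`u la somme $dn$). Exactement comme au lemme \ref{compirredfibzero}, le morphisme $p_1:\ T'_m \rightarrow X'_m$ est birationnel d\`es que $m \leq n$ ou $m \geq d-n$, ce qui donne $\dim(X'_m)=m(d-m)+dn$ dans ce r\'egime.

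\textbf{Optimisation sur $m$.} Une fois la formule $\dim(X'_m)=m(d-m)+dn$ \'etablie, la dimension de $\mu^{-1}(0)$ est le maximum des $\dim(X'_m)$ sur les indices $m$ fournissant les composantes irr\'eductibles par la proposition \ref{compirredfibzero2}. Je poserais $P(m):=m(d-m)+dn$, un polyn\^ome du second degr\'e en $m$ de coefficient dominant n\'egatif, dont le sommet est atteint en $m=\frac{d}{2}$. On distingue alors les deux r\'egimes de la proposition \ref{compirredfibzero2}. Lorsque $d \geq 2n$, la seule composante est $X'_n$, d'o\`u directement $\dim(\mu^{-1}(0))=P(n)=n(d-n)+dn=2nd-n^2$. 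Lorsque $d<2n$, les indices pertinents sont $m=\max(0,d-n),\ldots,\min(n,d)$, intervalle qui contient toujours $\frac{d}{2}$ puisque $d-n < \frac{d}{2} < n$ dans ce r\'egime ; tous ces indices v\'erifient $m \leq n$, de sorte que la formule $P(m)$ s'applique \`a chacun. Le maximum est donc atteint au plus pr\`es de $\frac{d}{2}$ : si $d$ est pair, en $m=\frac{d}{2}$, donnant $P(\frac{d}{2})=\frac{d}{2}\cdot\frac{d}{2}+dn=nd+\frac{1}{4}d^2$ ; si $d$ est impair, en $m=\frac{d\pm 1}{2}$, donnant $P(\frac{d-1}{2})=\frac{d-1}{2}\cdot\frac{d+1}{2}+dn=nd+\frac{1}{4}(d^2-1)$. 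Ces trois valeurs sont pr\'ecis\'ement celles annonc\'ees.

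\textbf{Point d\'elicat et r\'edaction.} L'essentiel du travail a d\'ej\`a \'et\'e absorb\'e par la proposition \ref{compirredfibzero2}, qui fournit la liste exacte des composantes irr\'eductibles ; le corollaire se r\'eduit alors \`a une manipulation \'el\'ementaire de dimensions et \`a l'optimisation d'une parabole sur un ensemble fini d'entiers. Le seul point qui demande un peu de soin est de v\'erifier que, dans le r\'egime $d<2n$, tous les indices listant les composantes irr\'eductibles satisfont bien $m\leq n$ (condition qui l\'egitime l'emploi de la formule $P(m)$ via la birationalit\'e de $p_1$), et que le sommet $\frac{d}{2}$ de la parabole tombe \`a l'int\'erieur de l'intervalle des indices admissibles --- ce qui garantit que le maximum est atteint \`a un indice entier voisin de $\frac{d}{2}$ et non \`a une extr\'emit\'e. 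Je structurerais donc la preuve ainsi : \'enoncer $P(m)=m(d-m)+dn$, invoquer la proposition \ref{compirredfibzero2} pour la liste des composantes, traiter d'abord le cas facile $d\geq 2n$, puis le cas $d<2n$ en localisant le maximum de $P$ selon la parit\'e de $d$. Je m'attends \`a ce que la v\'erification de l'encadrement $\max(0,d-n)\leq \frac{d}{2}\leq \min(n,d)$ dans le r\'egime interm\'ediaire constitue la seule subtilit\'e, toutes les autres \'etapes \'etant des calculs directs parall\`eles \`a ceux du corollaire \ref{fibzero}.
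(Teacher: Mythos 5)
Votre démonstration est correcte et suit exactement la voie que le mémoire indique lui-même : le texte précise que la preuve du corollaire \ref{fibzero2} est analogue à celle du corollaire \ref{fibzero}, c'est-à-dire le calcul de $\dim(X'_m)=m(d-m)+dn$ via la fibration $T'_m \rightarrow \Gr(m,E)$ et la birationalité de $p_1$, puis la maximisation de la parabole $P(m)=m(d-m)+dn$ sur les indices des composantes irréductibles donnés par la proposition \ref{compirredfibzero2}. Votre vérification de l'encadrement $\max(0,d-n) \leq \frac{d}{2} \leq \min(n,d)$ dans le régime $d<2n$ est bien le seul point demandant un soin particulier, et elle est exacte.
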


\subsection{Etude du morphisme de passage au quotient}  \label{sectionavecX}
Dans cette section, on décrit géométriquement la réduction symplectique ${\mu}^{-1}(0)/\!/G$. On note 
$$N:=\min \left(E\left( \frac{d}{2} \right),n \right)$$
où l'on rappelle que $E(x)$ désigne la partie entière inférieure de $x$. On introduit les notations suivantes qui nous seront utiles pour les preuves de la proposition \ref{descQuotient} et du lemme \ref{fibreUnsymp1}.

\begin{notation}  \label{BipBip}
Pour $l=0,\ldots,N$, on note\\
\begin{itemize} \renewcommand{\labelitemi}{$\bullet$}
\item $u_1^l \in \Hom(E,V)$ le morphisme défini par la matrice $\begin{bmatrix}
0_{l,d-l}  &I_l \\
0_{n-l,d-l}  & 0_{n-l,l} 
\end{bmatrix},$
\item $u_2^l \in \Hom(V,E)$ le morphisme défini par la matrice $\begin{bmatrix}
I_l  &0_{l,n-l} \\
0_{d-l,l}  & 0_{d-l,n-l} 
\end{bmatrix},$
\item $f^l \in \End(E)^{\leq N}$ l'endomorphisme défini par la matrice $\begin{bmatrix}
0_{l,d-l}  &I_l \\
0_{d-l,d-l}  & 0_{d-l,l} 
\end{bmatrix}.$
\end{itemize}
\end{notation}

On va utiliser des résultats sur la combinatoire des orbites nilpotentes dans les algèbres de Lie semi-simples pour lesquels on renvoie à \cite{CoMc}. On rappelle que l'on a une correspondance bijective entre les partitions $(d_1 \geq \ldots \geq d_k)$ de $n'$ et les orbites nilpotentes $\OO_{[d_1,\ldots,d_k]}$ de $\gg'$ (\cite[§3.1]{CoMc}). On a vue que $\mu^{-1}(0)$ est réduit, donc $\mu^{-1}(0)/\!/G$ est réduit et on a la 

\begin{proposition}  \label{descQuotient}
$\mu^{-1}(0)/\!/G=\overline{\OO_{[2^N,1^{d-2N}]}}.$ 
\end{proposition}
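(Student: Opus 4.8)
The plan is to compute the image $\nu(\mu^{-1}(0))$ inside $W/\!/G \cong \gg'^{\leq n}$, where $\nu(u_1,u_2) = u_2 \circ u_1$ is the quotient morphism of Section~\ref{description_quotient}, and then to match that image with the prescribed nilpotent orbit closure. First I would record the scheme-theoretic reduction: since $G$ is reductive, the restriction $k[W]^G \twoheadrightarrow k[\mu^{-1}(0)]^G$ is surjective, so $\mu^{-1}(0)/\!/G$ is a closed subscheme of $W/\!/G$; as $\mu^{-1}(0)$ is reduced (recalled from \cite[Theorem 9.1]{KS}), its categorical quotient is reduced, and since $\mu^{-1}(0)$ is closed and $G$-stable its image $\nu(\mu^{-1}(0))$ is a closed subset. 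It therefore suffices to identify $\nu(\mu^{-1}(0))$ as a set and to note it is reduced.

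The inclusion $\subseteq$ is immediate. If $f = u_2 \circ u_1$ with $(u_1,u_2) \in \mu^{-1}(0)$, that is $u_1 \circ u_2 = 0$, then $f^2 = u_2 \circ (u_1 \circ u_2) \circ u_1 = 0$. Moreover $f$ factors through $V$, so $\rg(f) \leq n$, while $f^2 = 0$ gives $\Im(f) \subseteq \Ker(f)$, hence $2\,\rg(f) \leq d$ and $\rg(f) \leq E(\tfrac{d}{2})$; thus $\rg(f) \leq N$. A nilpotent endomorphism $g$ of $E$ with $g^2 = 0$ and $\rg(g) = r$ has Jordan type $[2^r,1^{d-2r}]$, and by the description of nilpotent orbit closures via the dominance order (\cite[\S6.2]{CoMc}) one has $[2^r,1^{d-2r}] \leq [2^N,1^{d-2N}]$ precisely when $r \leq N$. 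Hence $\{g \in \gg' \mid g^2 = 0,\ \rg(g) \leq N\} = \overline{\OO_{[2^N,1^{d-2N}]}}$, and the above shows $\nu(\mu^{-1}(0)) \subseteq \overline{\OO_{[2^N,1^{d-2N}]}}$.

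For the reverse inclusion I would give an explicit factorisation. Fix $f$ with $f^2 = 0$ and $r := \rg(f) \leq N$, and write $f = B \circ A$ with $A \colon E \twoheadrightarrow k^r$ surjective of kernel $\Ker(f)$ and $B \colon k^r \hookrightarrow E$ injective of image $\Im(f)$. Since $r \leq N \leq n$, choose an injection $\iota \colon k^r \hookrightarrow V$ and a retraction $\pi \colon V \to k^r$ with $\pi \circ \iota = \mathrm{id}$, and set $u_1 := \iota \circ A \in \Hom(E,V)$ and $u_2 := B \circ \pi \in \Hom(V,E)$. Then $u_2 \circ u_1 = B \circ A = f$, whereas $u_1 \circ u_2 = \iota \circ (A \circ B) \circ \pi = 0$ because $\Im(B) = \Im(f) \subseteq \Ker(f) = \Ker(A)$. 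Hence $(u_1,u_2) \in \mu^{-1}(0)$ and $\nu(u_1,u_2) = f$, giving $\overline{\OO_{[2^N,1^{d-2N}]}} \subseteq \nu(\mu^{-1}(0))$.

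The only genuinely delicate point is the bookkeeping of the scheme structure: one must ensure the result is an equality of reduced schemes and not merely of underlying sets, which is exactly why the reducedness of $\mu^{-1}(0)$ and the surjectivity of invariants under the reductive group $G$ are invoked at the outset. The two inclusions are otherwise elementary — the forward one rests on the identity $f^2 = u_2 \circ (u_1 \circ u_2) \circ u_1$ together with the rank estimates, and the reverse one on the linear-algebra factorisation above — so I expect no serious obstacle there; the combinatorial identification of $\{g^2 = 0,\ \rg(g) \leq N\}$ with $\overline{\OO_{[2^N,1^{d-2N}]}}$ is standard once the dominance order on partitions is available.
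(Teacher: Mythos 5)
Your argument is correct and follows essentially the same route as the paper: the forward inclusion rests on the identity $f\circ f=u_2\circ(u_1\circ u_2)\circ u_1=0$ and the classification of square-zero endomorphisms, and the reverse inclusion on exhibiting an explicit preimage in $\mu^{-1}(0)$ (the paper uses the concrete matrices $u_1^l$, $u_2^l$, $f^l$ of Notation \ref{BipBip} where you factor $f$ abstractly through $k^r$, but this is the same construction). The scheme-theoretic remarks on reducedness at the outset match what the paper records just before the statement, so there is nothing to add.
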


\begin{proof}
Si $f \in \mu^{-1}(0)/\!/G \subset \gg'^{\leq n}$, alors il existe $(u_1,u_2) \in \mu^{-1}(0)$ tel que $f=u_2 \circ u_1$ et donc $f \circ f=(u_2 \circ u_1) \circ (u_2 \circ u_1)=u_2 \circ (u_1 \circ u_2) \circ u_1=0$. 
Donc $f$ est conjuguée à $f^l$ pour un certain $l \leq N$ et on a donc l'inclusion $\mu^{-1}(0)/\!/G \subset \overline{\OO_{[2^N,1^{d-2N}]}}$. Réciproquement, soit $f \in \overline{\OO_{[2^N,1^{d-2N}]}}$; quitte à conjuguer $f$ par un élément de $G'$, on peut supposer que $f=f^l$ pour un certain $l \leq N$. On vérifie alors que $u_1^l \circ u_2^l=0$ et $u_2^l \circ u_1^l=f^l$, ce qui achève la preuve de cette proposition. 
\end{proof}   

\begin{remarque}
Nous verrons dans la section \ref{appendice2} que la variété $\mu^{-1}(0)/\!/G$ est symplectique et admet toujours des résolutions symplectiques.  
\end{remarque}

\begin{corollaire}
Le quotient $\mu^{-1}(0)/\!/G$ se décompose en $N+1$ orbites pour l'opération de $G':$ 
$$U_i:=\OO_{[2^i,1^{d-2i}]},\ \text{ pour } i=0, \ldots, N.$$
\end{corollaire}

\noindent Les adhérences de ces orbites sont imbriquées de la façon suivante:
$$\{0\}=\overline{U_0} \subset \cdots \subset \overline{U_N}=\mu^{-1}(0)/\!/G.$$
La géométrie de l'adhérence d'une orbite nilpotente dans $\gg'$ est décrite dans \cite{KP2} et \cite{KP4}. En particulier $\mu^{-1}(0)/\!/G$ est normale (\cite{KP2}), de dimension $2N(d-N)$ (\cite[Corollary 6.1.4]{CoMc}) et son lieu singulier est $\overline{U_{N-1}}$ (\cite[§3.2]{KP4}). On sait que $\nu$ est plat sur $U_N$ qui est un ouvert dense de $\mu^{-1}(0)/\!/G$ et donc $\HHmp=\overline{\gamma^{-1}(U_N)}$. 

\begin{remarque}  \label{cas_debile}
Si $d=1$, alors $\HHm$ est le point réduit correspondant au sous-schéma fermé $\mu^{-1}(0) \subset W$ et $\gamma$ est un isomorphisme.
\end{remarque}

\noindent La dimension de la fibre de $\nu$ en un point de $U_N$ est:
$$\left\{
    \begin{array}{ll}
          nd-\frac{1}{4} {d}^2 &\text{ si } d <2n \text{ et $d$ est pair, } \\
          nd-\frac{1}{4} ({d}^2-1) &\text{ si } d <2n \text{ et $d$ est impair,}  \\
          n^2 &\text{ si } d \geq 2n.
    \end{array}
\right.
$$

\begin{notation}
Si $d<2n$, on note 
$$H:=\left\{ \begin{bmatrix}
M  &0_{n-N,N} \\
0_{N,n-N}  & I_N 
\end{bmatrix},\ M \in GL_{n-N}(k)\right\} \cong GL_{n-N}(k)$$
qui est un sous-groupe réductif de $G$. 
\end{notation}

\begin{lemme} \label{fibreUnsymp1}
La fibre de $\nu$ en un point de $U_N$ est isomorphe à 
$$\left\{
    \begin{array}{ll}
        G/H  &\text{ si } d <2n \text{ et $d$ est pair,} \\
        G    &\text{ si } d \geq 2n.
    \end{array}
\right.
$$
\end{lemme}

\begin{proof}
On rappelle que $u_1^N$, $u_2^N$ et $f^N$ ont été défini dans la notation \ref{BipBip}. On reprend ici des arguments similaires à ceux utilisés lors de la preuve du lemme \ref{orbfermee}.
Si $d <2n$ et $d$ est pair, alors $N=\frac{d}{2}$ et on vérifie que $\Stab_G((u_1^N,u_2^N))=H$. Or $H$ est un sous-groupe réductif de $G$, on a donc l'équivalence: 
$$ G.(u_1^N,u_2^N) \text{ est fermée dans } \mu^{-1}(0) \Leftrightarrow C_G(H).(u_1^N,u_2^N) \text{ est fermée dans } \mu^{-1}(0).$$ 
Puis $C_G(H)=\left\{ \begin{bmatrix}
M  &0 \\
0  &\lambda I_{n-N} \end{bmatrix},\ M\in GL_{N}(k),\ \lambda \in \Gm \right\}$. Donc 
$$C_G(H).(u_1^N,u_2^N)=\left\{ \left(\begin{bmatrix}
0   &M \\
0   &0 \end{bmatrix},\begin{bmatrix}
M^{-1} &0 \\
0     &0 \end{bmatrix} \right)\ ,\ M\in {GL}_{N}(k)\right\}$$ 
est un fermé de $\mu^{-1}(0)$, et donc $G.(u_1^N,u_2^N)$ est l'unique orbite fermée contenue dans $\nu^{-1}(f^N)$. 
Enfin $\dim(G/H)=n^2-(n-N)^2=N(2n-N)$ ce qui est aussi la dimension de la fibre générique de $\nu$. Donc $\nu^{-1}(f^N) \cong G/H$.\\
Si $d \geq 2n$, alors $N=n$ et a on vérifie que $\Stab_G((u_1^N,u_2^N))=Id$ et donc $\nu^{-1}(f^N)$ contient une unique orbite fermée isomorphe à $G$. Or $\dim(G)=n^2$ est la dimension de la fibre générique de $\nu$. Donc $\nu^{-1}(f^N) \cong G$. 
\end{proof}

\begin{remarque}
Si $d <2n$ et $d$ impair, alors la situation se complique car la fibre de $\nu$ en un point de l'orbite ouverte n'est plus irréductible. 
On considérera donc uniquement par la suite les cas $d \geq 2n$ et ($d <2n$ et $d$ pair). 
\end{remarque}

\begin{proposition} \label{fctH2}
La fonction de Hilbert $h_s$ de la fibre générique du morphisme de passage au quotient $\nu:\ \mu^{-1}(0) \rightarrow \mu^{-1}(0)/\!/G$ est donnée par:
$$\forall M \in \Irr(G),\ h_s(M)=\left\{
    \begin{array}{ll}
       \dim(M)  &\text{ si } d \geq 2n,  \\
       \dim(M^H)    &\text{ si } d <2n \text{ et $d$ est pair.} 
    \end{array}
\right.
$$
\end{proposition}

La fibre $\mu^{-1}(0)$ est un sous-schéma fermé $G' \times G$-stable de $W$, donc d'après \cite[Lemma 3.3]{Br}, le schéma $\HHm$ est un sous-schéma fermé $G'$-stable de $\Hilb_{h_s}^G(W)$. Ensuite, d'après les propositions \ref{fcthilb} et \ref{fctH2}, les fonctions de Hilbert $h_W$ et $h_s$ coïncident lorsque ($d \geq 2n$) ou ($d<n$ et $d$ pair). Donc, dans ces deux cas, $\HHm$ s'identifie à un sous-schéma fermé de $\Hilb_{h_W}^G(W)$ et ce dernier a été étudié dans la section \ref{GLngénéral}. 

\subsection{Réduction au cas classique pour la composante principale} \label{MropRRED}

\subsubsection{Construction d'un morphisme équivariant vers \texorpdfstring{$\Gr(N,E) \times \Gr(d-N,E)$}{des Gr}}  
On souhaite déterminer $\HHm$ en procédant comme dans la section \ref{GLngénéral}, c'est-à-dire en utilisant le principe de réduction pour se ramener à un schéma de Hilbert invariant plus simple. Dans la section \ref{princreduction}, on a construit un morphisme $G'$-équivariant 
\begin{equation} \label{rosymp}
\Hilb_{h_s}^G(W) \rightarrow \Gr(h_s(V),E) \times \Gr(d-h_s(V^*),E).
\end{equation}
D'après la proposition \ref{fctH2}, on a $h_s(V)=h_s(V^*)=N$, donc la restriction du morphisme (\ref{rosymp}) à $\HHm$ donne un morphisme $G'$-équivariant 
\begin{equation} \label{mmmorphisme_red}
\rho_s:\ \HHm \rightarrow \Gr(N,E) \times \Gr(d-N,E).
\end{equation} 
On note
$$O_i:=\{(L_1,L_2) \in \Gr(N,E) \times \Gr(d-N,E)\ |\ \dim(L_1 \cap L_2)=N-i\} \text{ pour } i=0, \ldots, N.$$
Alors les $O_i$ sont les $N+1$ orbites pour l'opération de $G'$ dans $\Gr(N,E) \times \Gr(d-N,E)$ et on a:
$$\overline{O_0} \subset \overline{O_1} \subset \cdots \subset \overline{O_N}=\Gr(N,E) \times \Gr(d-N,E).$$ 
En particulier, $O_N$ est l'unique orbite ouverte et 
$$O_0=\FF_{N,d-N}:=\{(L_1,L_2) \in \Gr(N,E) \times \Gr(d-N,E)\ |\ L_1 \subset L_2\},$$ 
qui est une variété de drapeaux partiels, est l'unique orbite fermée. La variété $\Gr(N,E) \times \Gr(d-N,E)$ n'étant pas $G'$-homogène, on ne peut pas ramener l'étude de $\HHm$ à l'étude de la fibre de $\rho_s$ en un point comme nous l'avons fait dans la section \ref{princreduction}. Cependant, nous allons voir que $\rho_s$ envoie la composante principale $\HHmp$ dans $O_0$. Nous allons donc pouvoir ramener l'étude de $\HHmp$ à celle de la fibre de $\rho_s$ en un point de $O_0$. La définition qui suit nous sera utile ultérieurement: 

\begin{definition}  \label{pulbacktauto}
On considère le diagramme suivant
$$\xymatrix{ & \FF_{N,d-N} \ar@{->>}[ld]_{p_1} \ar@{->>}[rd]^{p_2} \\   \Gr(N,E) && \Gr(d-N,E) }$$ 
où $p_1$ et $p_2$ sont les projections naturelles. 
On note $T_1$ (resp. $T_2$) le tiré en arrière du fibré tautologique de $\Gr(N,E)$ (resp. de $\Gr(d-N,E)$).    
\end{definition}


\subsubsection{Réduction au cas classique pour la composante principale de \texorpdfstring{$\HHm$}{Hs}}   

On suppose que ($d \geq 2n$) ou bien que ($d<2n$ et $d$ pair). Soient $x_0:=(L_1,L_2) \in O_0$ et $P:=\Stab_{G'}(x_0)$ le sous-groupe parabolique de $G'$ qui stabilise $x_0$. On note 
\begin{itemize} \renewcommand{\labelitemi}{$\bullet$}
\item $W':=\Hom(E/L_2,V) \times \Hom(V,L_1)$ qui est un $P \times G$-module qui s'identifie naturellement à un sous-espace vectoriel de $W$ contenu dans $\mu^{-1}(0)$,
\item $\nu':\ W' \rightarrow W'/\!/G$ le morphisme de passage au quotient,
\item $h_{W'}$ la fonction de Hilbert de la fibre générique de $\nu'$,
\item $\HH':=\Hilb_{h_{W'}}^{G}(W')$ et $\HH'^{\mathrm{prin}}$ sa composante principale.
\end{itemize} 
On remarque que $h_s=h_{W'}$. Le but de cette section est de démontrer la 

\begin{proposition} \label{reduction3}
On a un isomorphisme de $G'$-variétés
$$ \HHmp \cong G' {\times}^{P} \HH'^{\mathrm{prin}}.$$ 
\end{proposition}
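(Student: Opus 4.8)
~Le plan est de reproduire la strat�gie de r�duction d�velopp�e dans la section \ref{zectionred}, mais en prenant garde que la vari�t� but $\Gr(N,E) \times \Gr(d-N,E)$ du morphisme $\rho_s$ n'est pas $G'$-homog�ne. On ne peut donc pas appliquer directement (\ref{iissoo}) � $\HHm$ tout entier. L'id�e-cl� est que $\rho_s$ envoie la composante principale $\HHmp$ dans l'unique orbite ferm�e $O_0 \cong G'/P$, ce qui nous ram�ne � l'�tude de la fibre de $\rho_s$ au dessus d'un point de $O_0$, exactement comme dans la proposition \ref{reduction1}.

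Premi�rement, je montrerais que $\rho_s(\HHmp) \subset O_0$. Pour cela, je fixe un point g�n�rique $y_0 \in U_N$ de la r�duction symplectique et $z_0 \in \HHm$ l'unique point tel que $\gamma(z_0)=y_0$, qui existe d'apr�s la proposition \ref{chow} puisque $\nu$ est plat sur $U_N$. Par $G'$-�quivariance de $\rho_s$, le stabilisateur $\Stab_{G'}(y_0)$ stabilise $\rho_s(z_0) \in \Gr(N,E) \times \Gr(d-N,E)$. En explicitant $\Stab_{G'}(f^N)$ � l'aide de la notation \ref{BipBip} et de la description de $z_0$ comme la fibre $\nu^{-1}(f^N) \cong G/H$ (lemme \ref{fibreUnsymp1}), je v�rifierais que l'unique point de $\Gr(N,E) \times \Gr(d-N,E)$ fixe sous ce stabilisateur et compatible avec la structure de $z_0$ est le couple $(\Im(u_2^N), \Ker(u_1^N))$, qui appartient � $O_0$ car $\Im(u_2^N) \subset \Ker(u_1^N)$. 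Ainsi $\rho_s(z_0) \in O_0$, et comme $O_0$ est ferm�e et $G'$-stable, l'image r�ciproque $\rho_s^{-1}(O_0)$ est un ferm� $G'$-stable contenant l'ouvert dense $\gamma^{-1}(U_N)$, donc contient son adh�rence $\HHmp$.

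Deuxi�mement, ayant �tabli $\rho_s(\HHmp) \subset O_0 \cong G'/P$, la restriction $\rho_s:\ \HHmp \rightarrow G'/P$ est un morphisme $G'$-�quivariant vers un espace homog�ne. D'apr�s (\ref{iissoo}), on obtient un isomorphisme $G'$-�quivariant $\HHmp \cong G' \times^P F_0$, o� $F_0$ est la fibre sch�matique de $\rho_s$ en $x_0=(L_1,L_2) \in O_0$. Il reste � identifier $F_0$ comme $P$-sch�ma. Par un argument fonctoriel identique � celui du lemme \ref{fibrehil}, une famille plate $\ZZ \rightarrow S$ de $G$-sous-sch�mas de $\mu^{-1}(0)$ dont toutes les fibres ont image $(L_1,L_2)$ par $\rho_s$ v�rifie la condition $\rho_s(\ZZ_s)=(L_1,L_2)$, laquelle �quivaut � l'annulation des projections compl�mentaires, c'est-�-dire � l'inclusion $\ZZ_s \subset \Hom(E/L_2,V) \times \Hom(V,L_1)=W'$. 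On en d�duit que $\Mor(S,F_0) = {Hilb}_{h_s}^G(W')(S)$, d'o� $F_0 \cong \Hilb_{h_s}^G(W')=\HH'$ comme $P$-sch�ma, l'op�ration de $P$ provenant de l'op�ration de $P$ dans $W'$. Enfin, comme $\HHmp$ est irr�ductible et que l'isomorphisme $\HHmp \cong G' \times^P F_0$ respecte les orbites ouvertes, $F_0$ est lui-m�me irr�ductible et s'identifie pr�cis�ment � la composante principale $\HH'^{\mathrm{prin}}$ de $\HH'$; cela r�sulte de ce que $\rho_s$ restreint � $\gamma^{-1}(U_N)$ est surjectif sur $O_0$ avec fibre la pr�image par $\gamma'$ de l'orbite ouverte de $W'/\!/G$. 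On obtient ainsi l'isomorphisme annonc� $\HHmp \cong G' \times^P \HH'^{\mathrm{prin}}$.

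L'obstacle principal sera le contr�le pr�cis de l'inclusion $\rho_s(\HHmp) \subset O_0$ dans la premi�re �tape: il faut s'assurer que le point fixe du stabilisateur $\Stab_{G'}(y_0)$ dans $\Gr(N,E) \times \Gr(d-N,E)$ est effectivement unique et tombe dans l'orbite ferm�e $O_0$, et non dans une orbite de rang interm�diaire $O_i$. Ceci repose sur un calcul explicite de dimension de stabilisateurs, analogue � celui de la proposition \ref{enoncepgene}, combin� avec la description g�om�trique de la fibre g�n�rique fournie par le lemme \ref{fibreUnsymp1}; la distinction entre les cas $d \geq 2n$ et ($d<2n$, $d$ pair) devra �tre men�e s�par�ment car la nature du stabilisateur $H$ y diff�re.
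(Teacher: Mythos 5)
Votre d�monstration suit pour l'essentiel la m�me d�marche que celle du texte : on montre d'abord que $\rho_s$ envoie $\HHmp$ dans l'orbite ferm�e $O_0$ par l'argument de stabilisateur du lemme \ref{morphisme_dans_YSLn}, on applique ensuite (\ref{iissoo}) pour �crire $\HHmp \cong G' \times^P F$, puis on identifie la fibre de $\rho_s$ sur $\HHm$ � $\HH'$ par l'argument fonctoriel du lemme \ref{fibrehil}. La seule variante est la derni�re �tape, o� vous identifiez $F$ � $\HH'^{\mathrm{prin}}$ par densit� de $\gamma'^{-1}$ de l'orbite ouverte plut�t que par le comptage de dimensions ($\dim F = N^2 = \dim \HH'^{\mathrm{prin}}$) utilis� dans le texte; les deux arguments sont valables.
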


\noindent Nous aurons besoin du

\begin{lemme}  \label{versX0}
Le morphisme $\rho_s$ envoie $\HHmp$ dans $O_0$.
\end{lemme}

\begin{proof}
La preuve est analogue à celle du lemme \ref{morphisme_dans_YSLn}.
\end{proof}

Le morphisme $\rho_s$ est $G'$-équivariant, il munit donc $\HHmp$ d'une structure de fibration $G'$-homogène. Soit $F_s$ la fibre schématique du morphisme $\rho_s$ en $x_0$. L'opération de $P$ dans $\HHmp$ induite par l'opération de $G'$ dans $\HHmp$ préserve $F_s$ par $G'$-équivariance. D'après (\ref{iissoo}), on dispose d'un isomorphisme $G'$-équivariant: 
$$\HHmp \cong G' {\times}^{P} F_s.$$ 
Donc, pour déterminer $\HHmp$, on est ramené à déterminer $F_s$ comme $P$-schéma. On commence par déterminer $F'_s$ la fibre schématique de ${\rho_s}:\ \HHm \rightarrow  \Gr(N,E) \times \Gr(d-N,E)$ en $x_0$ comme $P$-schéma.

\begin{lemme} \label{fibrehil2s}
La fibre $F'_s$ est isomorphe au schéma de Hilbert invariant $\HH'$ et l'opération de $P$ dans $F'_s$ coïncide, via cet isomorphisme, avec l'opération de $P$ dans $\HH'$ induite par l'opération de $P$ dans $W'$.
\end{lemme}

\begin{proof}
La preuve est analogue à celle du lemme \ref{fibrehil}.
\end{proof}

\noindent Comme $\HHmp$ est une variété de dimension $2N(d-N)$, on en déduit que $F_s$ est une variété de dimension $N^2$. 
Ensuite, d'après le lemme \ref{fibrehil2s}, la fibre $F_s$ est isomorphe à une sous-variété de $\HH'^{\mathrm{prin}}$. Or $\dim(\HH'^{\mathrm{prin}})=N^2$ donc on a un isomorphisme de $P$-variétés:
$$ F_s \cong \HH'^{\mathrm{prin}}$$
et la proposition \ref{reduction3} s'ensuit.

\begin{remarque}
Le schéma $\HH'$ est $P$-stable et s'identifie à un sous-schéma fermé de $\HHm$, donc on a l'inclusion de $G'$-schémas $G' \times^{P} \HH' \subset \HHm$. 
\end{remarque}

D'après la proposition \ref{reduction3}, la variété $\HHmp$ est lisse si et seulement si $\HH'^{\mathrm{prin}}$ est lisse, et alors $\gamma$ est une résolution de $\mu^{-1}(0)/\!/G$. On va utiliser les résultats de la section \ref{GLngénéral} pour démontrer le    

\begin{corollaire}  \label{symplisssse}
\begin{enumerate}
\item Si $d$ est pair et $d \leq n+1$, alors 
$$\HHmp \cong \Hom(\underline{E}/T,T)$$ 
où $T$ est le fibré tautologique de $\Gr(N,E)$ et $\underline{E}$ est le fibré trivial de fibre $E$ au dessus de $\Gr(N,E)$.
\item Si $n=1$ et $d \geq 3$, alors 
$$\HHmp \cong \Hom(\underline{E}/T_2,T_1)$$ 
où $T_1$, $T_2$ et $\underline{E}$ sont les fibrés au dessus de $\FF_{1,d-1}$ de la définition \ref{pulbacktauto}. 
\item Si $n=2$ et $d \geq 4$, alors 
$$\HHmp \cong Bl_0(\Hom(\underline{E}/T_2,T_1))$$ 
est l'éclatement de la section nulle du fibré $\Hom(\underline{E}/T_2,T_1))$, où $T_1$, $T_2$ et $\underline{E}$ sont les fibrés au dessus de $\FF_{2,d-2}$ de la définition \ref{pulbacktauto}. 
\end{enumerate}  
\noindent En particulier, dans chacun de ces cas la variété $\HHmp$ est lisse et donc le morphisme de Hilbert-Chow $\gamma$ est une résolution de $\mu^{-1}(0)/\!/G$.
\end{corollaire}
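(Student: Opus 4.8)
La stratégie est d'appliquer systématiquement la proposition \ref{reduction3}, qui ramène la lissité de $\HHmp$ à celle de $\HH'^{\mathrm{prin}}$, combinée aux résultats déjà établis dans la section \ref{GLng�n�ral}. En effet, d'après la proposition \ref{reduction3}, on a $\HHmp \cong G' \times^P \HH'^{\mathrm{prin}}$ où $W'=\Hom(E/L_2,V) \times \Hom(V,L_1)$; or ce $W'$ est exactement une représentation du type \textit{situation 2} avec les paramètres $n_1=d-N$ (la dimension de $E/L_2$, puisque $\dim L_2=d-N$) et $n_2=N$ (la dimension de $L_1$). Puisque $\HHmp$ est lisse si et seulement si $\HH'^{\mathrm{prin}}$ l'est, il suffit dans chaque cas d'identifier les paramètres $(n,n_1,n_2)$ de $\HH'$ et d'invoquer le théorème approprié du chapitre sur $GL(V)$.

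\textbf{Traitement des trois cas.} Pour le premier cas ($d$ pair et $d \leq n+1$), on a $N=\frac{d}{2}$, donc $n_1=d-N=\frac{d}{2}=N$ et $n_2=N$. L'inégalité $d \leq n+1$ assure que $n \geq d-1 \geq n_1+n_2-1$, de sorte que le corollaire \ref{cas_facile} s'applique à $W'$: on obtient $\HH' \cong W'/\!/G \cong \Hom(E/L_2,L_1)$ et $\gamma'$ est un isomorphisme. En particulier $\HH'=\HH'^{\mathrm{prin}}$ est lisse, et la description fibrée $\HHmp \cong \Hom(\underline{E}/T,T)$ au-dessus de $\Gr(N,E)$ (en identifiant $L_1=L_2=L$ via l'orbite $O_0$) résulte de la structure de fibré $G'$-homogène. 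Pour le deuxième cas ($n=1$, $d \geq 3$), on a $N=1$, donc $W'=\Hom(E/L_2,V) \times \Hom(V,L_1)$ avec $\dim V=1$, $n_1=d-1$, $n_2=1$: c'est précisément le cas $\min(n,n_1,n_2)=1$ traité par le théorème \ref{Hcasn11111}, qui donne $\HH'^{\mathrm{prin}} \cong Bl_0(W'/\!/G)$ lisse (et même $\HH'=\HH'^{\mathrm{prin}}$ puisque $n=1$). La description comme espace total de $\Hom(\underline{E}/T_2,T_1)$ au-dessus de $\FF_{1,d-1}$ s'obtient alors en recollant les fibres via $\rho_s$ et la définition \ref{pulbacktauto}. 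Pour le troisième cas ($n=2$, $d \geq 4$), on a $N=2$, donc $n_1=d-2 \geq 2$ et $n_2=2$: c'est le cas $n_1,n_2 \geq 2$ du théorème \ref{casn2}, qui donne $\HH'^{\mathrm{prin}} \cong Y_1$, lisse, réalisé comme l'éclatement de la section nulle du fibré $\Hom(\underline{E}/T_2,T_1)$ au-dessus de $\FF_{2,d-2}$.

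\textbf{La difficulté principale.} L'obstacle essentiel n'est pas la lissité elle-même (garantie par les théorèmes antérieurs) mais la traduction correcte des fibrés tautologiques à travers l'isomorphisme $\HHmp \cong G' \times^P \HH'^{\mathrm{prin}}$. Il faut vérifier que, lorsqu'on recolle les fibres $\HH'^{\mathrm{prin}}$ sur l'orbite $O_0 \cong \FF_{N,d-N}$, les fibrés triviaux $\underline{V}$ intervenant dans la description de $\HH'^{\mathrm{prin}}$ (par exemple via $Y_1=\Hom(\underline{V_1}/T_1^\perp,T_2)$ dans la situation $n_1=n_2=2$) deviennent bien les fibrés tautologiques $T_1,T_2$ au-dessus de $\FF_{N,d-N}$ de la définition \ref{pulbacktauto}. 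Concrètement, le sous-espace $L_1=\Im(L) \subset E$ et le quotient $E/L_2=E/\Ker(L)$ varient avec le point de l'orbite $O_0$, et c'est cette variation qui transforme les facteurs constants $V_1=E/L_2$ et $V_2=L_1$ de $W'$ en les fibrés $T_2$ et $T_1$ respectivement. Une fois cette identification des fibrés établie (en suivant explicitement l'action de $P$ sur les sous-espaces marqués), le corollaire découle immédiatement des trois théorèmes cités, et la dernière assertion sur $\gamma$ résulte du fait que $\gamma'$ est une résolution de $W'/\!/G$ dans chaque cas, donc $\gamma$ est une résolution de $\mu^{-1}(0)/\!/G$ par la remarque précédant l'énoncé.
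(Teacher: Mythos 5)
Votre strat\'egie globale --- appliquer la proposition \ref{reduction3} pour ramener la d\'etermination de $\HHmp$ \`a celle de $\HH'^{\mathrm{prin}}$, puis invoquer les r\'esultats du chapitre sur $GL(V)$ --- est exactement celle du texte. Mais votre identification des param\`etres de $W'$ contient une erreur qui fausse les cas (2) et (3). On a $W'=\Hom(E/L_2,V)\times\Hom(V,L_1)$ avec $(L_1,L_2)\in O_0$, donc $\dim L_1=N$ et $\dim L_2=d-N$; par cons\'equent $\dim(E/L_2)=d-(d-N)=N$, et non $d-N$ comme vous l'\'ecrivez. Les param\`etres de la situation 2 pour $\HH'$ sont donc toujours $n_1=n_2=N$; vous ne retombez sur les bons nombres dans le cas (1) que parce que $d-N=N$ lorsque $d=2N$.

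Cette erreur n'est pas anodine. Dans le cas (2), avec les vrais param\`etres $n_1=n_2=n=1$ on a $n\geq n_1+n_2-1$, donc c'est le corollaire \ref{cas_facile} qui s'applique et donne $\HH'=\HH'^{\mathrm{prin}}\cong\Hom(E/L_2,L_1)\cong\Aff$, de dimension $1$, ce qui est coh\'erent avec $\dim\HHmp=2N(d-N)=2d-2$ et $\dim\FF_{1,d-1}=2d-3$. Le th\'eor\`eme \ref{Hcasn11111} que vous invoquez avec $n_1=d-1$ et $n_2=1$ donnerait une fibre $Bl_0(\Hom(V_1,V_2)^{\leq 1})$ de dimension $d-1>1$, en contradiction avec ce calcul de dimension. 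De m\^eme dans le cas (3), le th\'eor\`eme \ref{casn2} est bien le bon \'enonc\'e, mais il faut l'appliquer avec $n_1=n_2=2$: la fibre est alors $Y_1=Y_0=Bl_0(\Hom(E/L_2,L_1))$, de dimension $4=N^2$, tandis que vos param\`etres $n_1=d-2$, $n_2=2$ produiraient une fibre de dimension $2d-4$. Une fois ces param\`etres corrig\'es, votre argument rejoint exactement la preuve du texte (cas (1) et (2) par le corollaire \ref{cas_facile}, cas (3) par le th\'eor\`eme \ref{casn2}), et votre discussion du recollement des fibr\'es tautologiques au-dessus de $\FF_{N,d-N}$ est correcte.
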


\begin{proof}
Les cas  ($d$ est pair et $d  \leq n+1$) et ($n=1$ et $d \geq 3$) découlent du corollaire \ref{cas_facile}. \\
Le cas $n=2$ et $d \geq 4$ découle du théorème \ref{casn2}.
\end{proof}

\begin{remarque}
Lorsque ($n=1$ et $d \geq 2$) ou ($d=2$ et $n \geq 2$), on a $\mu^{-1}(0)/\!/G=U_1 \cup \{0\}$. La proposition \ref{enoncepgene} s'applique et on retrouve les mêmes résultats que ceux donnés par le corollaire \ref{symplisssse}.  
\end{remarque}

\subsection{Réductibilité de \texorpdfstring{$\HHm$}{Hs}}  \label{reductibilité_cas_symp}

On suppose dans cette section que $d \geq 2n$, c'est-à-dire que $N=n$. Nous allons démontrer la 

\begin{proposition}  \label{noIrred}
Le schéma $\HHm$ est réductible. 
\end{proposition}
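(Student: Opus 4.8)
L'id�e est de comparer la dimension de la fibre centrale $\gamma^{-1}(0)$ avec celle de la composante principale. Comme $N=n$, la proposition \ref{fctH2} donne $h_s(M)=\dim(M)$ pour tout $M \in \Irr(G)$, et la proposition \ref{descQuotient} montre que $\mu^{-1}(0)/\!/G=\overline{\OO_{[2^n,1^{d-2n}]}}$ est de dimension $2n(d-n)$. Le morphisme de Hilbert-Chow $\gamma:\ \HHmp \rightarrow \mu^{-1}(0)/\!/G$ �tant birationnel (d�finition \ref{Horb}), on a $\dim(\HHmp)=2n(d-n)$. De plus, $\gamma$ restreint � $\HHmp$ est dominant sur une vari�t� de dimension strictement positive, donc $\gamma^{-1}(0) \cap \HHmp$ est un ferm� propre de la vari�t� irr�ductible $\HHmp$; il s'ensuit que
$$\dim(\gamma^{-1}(0) \cap \HHmp) \leq 2n(d-n)-1.$$
La premi�re �tape est donc de ramener la r�ductibilit� � une minoration de $\dim(\gamma^{-1}(0))$: s'il existe un ferm� $\mathcal{F} \subset \gamma^{-1}(0) \subset \HHm$ de dimension au moins $2n(d-n)$, alors $\mathcal{F}$ n'est pas contenu dans $\gamma^{-1}(0) \cap \HHmp$, donc une composante irr�ductible de $\mathcal{F}$ n'est pas contenue dans $\HHmp$; cette composante est alors contenue dans une composante irr�ductible de $\HHm$ distincte de la composante principale, et $\HHm$ est r�ductible.

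La construction de $\mathcal{F}$ s'inspire du cas transparent $n=1$, que je traiterais d'abord. Pour $G=\Gm$ on a $\mu^{-1}(0)=\{(\phi,v)\in E^* \times E\ |\ \phi(v)=0\}$, et tout couple de droites $\ell_1 \subset E$, $\ell_2 \subset E^*$ fournit un sous-sch�ma $Z_{\ell_1,\ell_2}=\ell_1 \cup \ell_2 \subset \mu^{-1}(0)$ avec $\nu(Z_{\ell_1,\ell_2})=0$ et $k[Z_{\ell_1,\ell_2}] \cong k[\Gm]$. On obtient ainsi une famille $\mathcal{F}\cong \Gr(1,E) \times \Gr(d-1,E)$ de dimension $2(d-1)=\dim(\HHmp)$ contenue dans $\gamma^{-1}(0)$. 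Les limites plates des fibres g�n�riques, qui sont les points de $\HHmp$ au-dessus de $0$, correspondent exactement au sous-lieu d'incidence $\ell_1 \subset \ell_2^{\perp}$ (de dimension $2d-3$), tandis que le membre g�n�rique de $\mathcal{F}$ ne v�rifie pas cette incidence: ceci exhibe directement des points de $\HHm$ hors de $\HHmp$.

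Pour $n$ quelconque avec $d \geq 2n$, je construirais de m�me une famille $\mathcal{F}$ param�tr�e par (un ouvert de) $\Gr(n,E) \times \Gr(d-n,E)$, de dimension $2n(d-n)$, de sous-sch�mas $G$-stables de $\mu^{-1}(0) \cap \NNN(W,G)$ de fonction de Hilbert $h_s$, dont le sous-lieu d'incidence $L_1 \subset L_2$ (qui correspond aux limites plates, c'est-�-dire � $\rho_s$ � valeurs dans l'orbite ferm�e $O_0$) est de codimension au moins $1$. Le point d�licat, et l'obstacle principal, est le suivant: pour $n \geq 2$ le r�gulier $k[G]=\bigoplus_{M}M^{\oplus \dim(M)}$ contient des repr�sentations irr�ductibles \emph{mixtes} $S^{\lambda}(V)$ (avec $r_1>0>r_n$) avec multiplicit� non nulle, alors que la r�union na�ve des deux sous-espaces de coordonn�es $\Hom(E,V)\times\{0\}$ et $\{0\}\times\Hom(V,E)$ ne produit que des repr�sentations polynomiales et leurs duales. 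Il faut donc enrichir la structure de sous-sch�ma (typiquement en la rendant non r�duite le long de $L_1 \cap L_2$) pour faire appara�tre toutes les composantes isotypiques avec la bonne multiplicit�, puis v�rifier que $k[Z_{L_1,L_2}] \cong k[G]$ comme $G$-module. Cette v�rification passe par la description explicite de l'alg�bre du nilc�ne fournie par les propositions \ref{KSheadings} et \ref{decompoiso}: on contr�le $(k[W]/J)^{U' \times U}$ et l'id�al d�finissant $Z_{L_1,L_2}$ degr� par degr�, en s'assurant que chaque $S^{\lambda}(V)$ survit avec multiplicit� $\dim(S^{\lambda}(V))$. � titre de contr�le ind�pendant, je v�rifierais aussi l'existence d'un point fixe $Z_1$ de $B'$ dans $\gamma^{-1}(0)$ en dehors de $\HHmp$ (ou de dimension d'espace tangent $\dim(T_{Z_1}\HHm)$ incompatible avec $\dim(\HHmp)$), par une analyse analogue � celle des th�or�mes \ref{pointfixeborel} et \ref{pointfixeborel2}, ce qui redonnerait la r�ductibilit� par le lemme \ref{fixespoints}.
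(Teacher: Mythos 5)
Votre strat\'egie d'ensemble est correcte et, dans l'esprit, proche de celle du m\'emoire : dans les deux cas il s'agit d'exhiber un point de $\HHm$ dont l'image par $\rho_s$ n'appartient pas \`a l'orbite ferm\'ee $O_0$ de $\Gr(n,E)\times\Gr(d-n,E)$, ce qui suffit puisque $\rho_s$ envoie $\HHmp$ dans $O_0$ (lemme \ref{versX0}) --- votre comptage de dimensions est valable mais devient superflu d\`es qu'un seul tel point est construit. Le cas $n=1$ que vous d\'etaillez est correct et co\"incide avec la description donn\'ee par la proposition \ref{casSympn1}.

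Le probl\`eme est que, pour $n\geq 2$, vous identifiez l'obstacle (les repr\'esentations mixtes $S^{\lambda}(V)$ n'apparaissent pas dans la r\'eunion na\"ive des deux sous-espaces de coordonn\'ees) mais vous ne le levez pas : la construction d'un sous-sch\'ema $G$-stable de $\mu^{-1}(0)$ de fonction de Hilbert $h_s$ au-dessus d'un couple non incident $(L_1,L_2)$ est pr\'ecis\'ement le coeur de la d\'emonstration, et elle reste enti\`erement \`a faire dans votre proposition ; votre piste (\'epaissir la r\'eunion le long de l'intersection) n'est d'ailleurs pas celle qui aboutit. Le m\'emoire proc\`ede ainsi : la fibre sch\'ematique de $\rho_s$ en un point $x_n$ de l'orbite \emph{ouverte} $O_n$ s'identifie \`a $\Hilb_{h_s}^{G}(\mu'^{-1}(0))$ pour $W''\cong\Hom(E',V)\times\Hom(V,E')$ avec $\dim(E')=n=\dim(V)$ (lemme \ref{fibrehil3}), et l'on exhibe dans cette fibre l'id\'eal explicite $I_0$ engendr\'e par $(sl(E')\otimes V_0)\oplus(M_0\otimes V_0)\oplus(M_0\otimes sl(V))\subset k[W'']_2$, c'est-\`a-dire par les invariants homog\`enes de degr\'e positif \`a la fois de $G$ et de $GL(E')$ : ce n'est pas un \'epaississement de la r\'eunion des deux sous-espaces de coordonn\'ees, mais le lieu des z\'eros commun (avec sa structure sch\'ematique naturelle) des deux nilc\^ones. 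Le point cl\'e est alors l'isomorphisme de $GL(E')\times G$-modules $k[W'']/I_0\cong\bigoplus_{\lambda}S^{\lambda}(E'^*)\otimes S^{\lambda}(V)$, qui donne la fonction de Hilbert $h_s$ pr\'ecis\'ement parce que $\dim(E')=\dim(V)$ (proposition \ref{pppfixe}). Votre v\'erification alternative par un point fixe de $B'$ hors de $\HHmp$ se ram\`ene exactement \`a la m\^eme construction (et $I_0$ est bien un tel point fixe). Sans cet id\'eal explicite et le calcul de sa fonction de Hilbert, votre argument ne d\'emontre pas la proposition.
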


Soient
$$x_n:=( \left\langle  e_1, \ldots, e_n \right\rangle , \left\langle  e_{n+1}, \ldots, e_{d} \right\rangle) \in O_n$$ 
un point de l'orbite ouverte de $\Gr(n,E) \times \Gr(d-n,E)$ et 
$$W'':=\Hom(E/\left\langle  e_{n+1}, \ldots, e_d \right\rangle,V) \times \Hom(V,\left\langle  e_1, \ldots, e_n \right\rangle)$$ 
qui s'identifie naturellement à un $G$-sous-module de $W$. On a un isomorphisme de $G$-modules:
\begin{equation} \label{idenWW}
W'' \cong \Hom(\left\langle  e_1, \ldots, e_n \right\rangle,V) \times \Hom(V,\left\langle  e_1, \ldots, e_n \right\rangle).  
\end{equation}
On peut alors munir $W''$ de la forme symplectique $\Omega$ définie par \ref{defsymp} et considérer $\mu'$ l'application moment pour l'opération de $G$ dans $(W'',\Omega)$ comme nous l'avons fait pour $(W,\Omega)$ dans la section \ref{appzmoment}.\\ 
Le lemme qui suit se démontre comme le lemme \ref{fibrehil}:

\begin{lemme} \label{fibrehil3}
La fibre schématique $F_s''$ du morphisme ${\rho_s}:\ \HHm \rightarrow \Gr(n,E) \times \Gr(d-n,E)$ en $x_n$ est isomorphe au schéma de Hilbert invariant ${\Hilb}_{h_s}^{G} (\mu'^{-1}(0))$.
\end{lemme}

\begin{remarque}
La fonction de Hilbert $h_s$ qui apparaît dans le lemme \ref{fibrehil3} ne coïncide pas en général avec la fonction de Hilbert de la fibre générique du morphisme de passage au quotient $\mu'^{-1}(0) \rightarrow \mu'^{-1}(0)/\!/G$. 
\end{remarque}

D'après le lemme \ref{versX0}, le morphisme ${\rho_s}$ envoie la composante principale $\HHmp$ dans $O_0$. Donc, pour montrer la proposition \ref{noIrred}, il suffit de s'assurer que ${\Hilb}_{h_s}^{G} (\mu'^{-1}(0))$ est non vide. Pour ce faire, on identifie $W''$ avec $\Hom(E',V) \times \Hom(V,E')$ via l'isomorphisme (\ref{idenWW}), où l'on note $E':=\left\langle  e_1, \ldots, e_n \right\rangle$. Alors $W''$ est naturellement muni d'une structure de $GL(E') \times G$-module et on a 
\begin{align}  
{k[W'']}_2 &= (S^2(E') \otimes S^2(V^{*})) \oplus (S^2(V) \otimes S^2(E'^*)) \label{kW4} \\
       &\ \oplus  ({\Lambda}^2 (E') \otimes {\Lambda}^2 (V^{*})) \oplus  ({\Lambda}^2 (V) \otimes {\Lambda}^2 (E'^*)) \notag \\
       &\ \oplus ((sl(E') \oplus M_0) \otimes (sl(V) \oplus V_0)) \text{ comme $GL(E') \times G$-module,} \notag
\end{align} 
où $M_0$ désigne le $GL(E')$-module trivial.

\noindent Ensuite, soit $J_0$ l'idéal de $k[W'']$ engendré par $M_0 \otimes (sl(V) \oplus V_0) \subset k[W'']_2$. Alors $\mu'^{-1}(0)=\Spec(k[W'']/J_0)$ et donc on a une correspondance bijective, donnée par le morphisme de passage au quotient, entre les idéaux de $k[W'']$ contenant l'idéal $J_0$ et les idéaux de $k[\mu'^{-1}(0)] \cong k[W'']/J_0$: 
\begin{equation} \label{IcontientJ}
\begin{array}{ccccc}
\pi & : & k[W''] & \to & k[\mu'^{-1}(0)] \\
& & I_1 & \mapsto & I_2:=\pi(I_1). 
\end{array}
\end{equation}

\begin{notation}
On note $I_0$ l'idéal de $k[W'']$ engendré par $(sl(E') \otimes V_0) \oplus (M_0 \otimes V_0) \oplus (M_0 \otimes sl(V)) \subset {k[W'']}_2$. 
\end{notation}

L'idéal $I_0$ est homogène, $GL(E') \times G$-stable et contient l'idéal engendré par les $G$-invariants (resp. par les $GL(E')$-invariants) homogènes de degré positif de $k[W'']$. D'après ce qui précède, l'idéal $I_0$ s'identifie naturellement à un idéal de $k[\mu'^{-1}(0)]$ et on a la 

\begin{proposition}   \label{pppfixe}
$I_0 \in {\Hilb}_{h_s}^{G} (\mu'^{-1}(0)).$
\end{proposition}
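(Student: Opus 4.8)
Puisque $d \geq 2n$, on a $N=n$ et la proposition \ref{fctH2} donne $h_s(M)=\dim(M)$ pour tout $M \in \Irr(G)$. Par ailleurs $I_0$ est homog\`ene et $GL(E') \times G$-stable, et il contient $J_0$ (l'id\'eal engendr\'e par $M_0 \otimes (sl(V) \oplus V_0)$, c'est-\`a-dire par l'application moment $\mu'$); via la correspondance (\ref{IcontientJ}), $I_0$ d\'efinit donc un sous-sch\'ema ferm\'e $G$-stable de $\mu'^{-1}(0)$. Il ne reste qu'\`a v\'erifier que $k[W'']/I_0$ a pour fonction de Hilbert $h_s$, c'est-\`a-dire que la multiplicit\'e de $S^\lambda(V)$ dans $k[W'']/I_0$ vaut $\dim(S^\lambda(V))$ pour tout $\lambda \in \Lambda_+$.

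La premi\`ere \'etape est de se ramener au quotient par l'id\'eal $J$ de $k[W'']$ engendr\'e par les $G$-invariants homog\`enes de degr\'e strictement positif (comme dans la section \ref{sectionJ}). On v\'erifie sur les g\'en\'erateurs que $I_0=J+J_0$: en effet $J \cap k[W'']_2 = \End(E') \otimes V_0 = (sl(E') \oplus M_0) \otimes V_0$ et $J_0 \cap k[W'']_2 = M_0 \otimes (sl(V) \oplus V_0)$ d'apr\`es (\ref{kW4}), leur somme \'etant exactement le module des g\'en\'erateurs de $I_0$. On a donc un isomorphisme de $GL(E') \times G$-modules
$$k[W'']/I_0 \cong (k[W'']/J)/\overline{J_0},$$
o\`u $\overline{J_0}$ est l'id\'eal de $A:=k[W'']/J$ engendr\'e par l'image de $M_0 \otimes sl(V) \subset A_2$ (l'image de $M_0 \otimes V_0$ \'etant nulle, car $A$ n'admet aucun $G$-invariant de degr\'e strictement positif). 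Le r\'esultat vis\'e est l'isomorphisme de $GL(E') \times G$-modules
$$k[W'']/I_0 \cong \bigoplus_{\lambda \in \Lambda_+} S^\lambda(E'^*) \otimes S^\lambda(V) \cong k[\mathrm{Isom}(E',V)];$$
comme $\dim(S^\lambda(E'))=\dim(S^\lambda(V))$, ceci redonne, en oubliant $GL(E')$, l'isomorphisme $k[W'']/I_0 \cong k[G]$ (th\'eor\`eme de Peter-Weyl alg\'ebrique pour $GL_n$), et donc la fonction de Hilbert $h_s$ voulue.

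Pour \'etablir cet isomorphisme, je passe aux invariants sous $U \subset G$, qui est un foncteur exact sur les $G$-modules rationnels. D'apr\`es la proposition \ref{decompoiso}, $(k[W'']/J)^U = \bigoplus_{\lambda} B_\lambda$ o\`u, la diagonale $GL(E')$ \'etant plong\'ee dans $GL(V_1) \times GL(V_2)$, la composante $B_\lambda$ est le $GL(E')$-module $S^{a}(E') \otimes S^{b}(E'^*)$ (avec $a,b$ les partitions associ\'ees aux parties positive et n\'egative de $\lambda$). L'exactitude de $(-)^U$ donne $(k[W'']/I_0)^U = \bigoplus_\lambda B_\lambda/(\overline{J_0}^U \cap B_\lambda)$, et il suffit de montrer que $\overline{J_0}^U \cap B_\lambda$ est la somme de toutes les composantes irr\'eductibles de $B_\lambda$ (vu comme $GL(E')$-module diagonal) \emph{autres} que la composante de Cartan, laquelle est isomorphe \`a $S^\lambda(E'^*)$ et a pour dimension $\dim(S^\lambda(V))$. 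C'est bien la contraction $E' \otimes E'^* \to M_0$ d\'efinissant l'application moment qui produit les composantes non-Cartan, ce qui rend le r\'esultat plausible; pour $\lambda$ le poids de $sl(V)$ lui-m\^eme ($B_\lambda=\End(E')=sl(E') \oplus M_0$), on v\'erifie directement que $\overline{J_0}^U$ est la droite $M_0$, de codimension $\dim(sl(V))=n^2-1$, de sorte que la composante de Cartan $sl(E')$ survit.

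Le point d\'elicat est le calcul de $\overline{J_0}^U$ pour $\lambda$ quelconque: les vecteurs de plus haut poids d'un id\'eal ne sont pas engendr\'es par ceux du module g\'en\'erateur, et il faut contr\^oler les composantes de plus haut poids des produits $a \cdot s$ avec $a \in A$ et $s \in M_0 \otimes sl(V)$. C'est ce calcul qui, pour $n=2$ et $n=3$, a \'et\'e men\'e \`a l'aide de \cite{Mac2}; pour $n$ arbitraire, le plan est d'exploiter la structure multiplicative explicite de $(k[W'']/J)^{U' \times U}$ (proposition \ref{KSheadings}) et la r\`egle de Littlewood-Richardson pour $S^a(E') \otimes S^b(E'^*)$ afin de montrer que l'id\'eal engendr\'e par le vecteur de contraction \'epuise exactement le compl\'ement de la composante de Cartan. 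Une alternative serait de r\'ealiser le sous-sch\'ema d\'efini par $I_0$ comme limite plate d'une famille de sous-sch\'emas de fonction de Hilbert $h_s$, ce qui fournirait simultan\'ement la platitude et la fonction de Hilbert recherch\'ees.
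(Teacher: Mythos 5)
Your setup is correct and follows the same strategy as the paper: everything reduces to establishing the $GL(E')\times G$-module isomorphism $k[W'']/I_0 \cong \bigoplus_{\lambda \in \Lambda_+} S^{\lambda}(E'^*)\otimes S^{\lambda}(V)$, after which $\dim(S^{\lambda}(E'))=\dim(S^{\lambda}(V))$ gives the Hilbert function $h_s$ --- this is exactly the isomorphism the paper's proof uses. The identification $I_0=J+J_0$, where $J$ is the ideal generated by the positive-degree $G$-invariants, and the vanishing of the image of $M_0\otimes V_0$ in $k[W'']/J$ are both correct.

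The gap is that this isomorphism is never proved for general $n$. Your final paragraph concedes that the key computation --- that $\overline{J_0}^U\cap B_\lambda$ is exactly the complement of the Cartan component of $S^{a}(E')\otimes S^{b}(E'^*)$ for \emph{every} $\lambda$ --- is verified only for $\lambda$ the highest weight of $sl(V)$ and is otherwise left as a ``plan''. But that statement \emph{is} the proposition; nothing is proved without it, and the difficulty is real: as a diagonal $GL(E')$-module, $B_\lambda$ is not multiplicity-free in general, so controlling which constituents lie in the ideal generated by a single degree-$2$ contraction vector requires an actual argument. The paper sidesteps your diagonal Littlewood--Richardson analysis: it adapts the proofs of Propositions \ref{KSheadings} and \ref{decompoiso}, describing the $U'\times U$-invariant algebra of $k[W'']/I_0$ explicitly in the style of Kraft--Schwarz and reading off the isotypic decomposition from the $T'\times T$-weights, each of which occurs at most once in that algebra. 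Your second alternative (exhibiting $I_0$ as a flat limit of fibers of $\nu$ along a one-parameter subgroup) would indeed suffice, but it is not carried out either, and the paper's experience with $SO_3(k)$ shows that a suitable one-parameter subgroup need not exist.
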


\begin{proof}
Il suffit de vérifier que $I_0$ a la bonne fonction de Hilbert, c'est-à-dire que 
$$k[W'']/I_0 \cong \bigoplus_{M \in \Irr(G)}  M^{\oplus \dim(M)}$$ comme $G$-module. Des arguments similaires à ceux utilisés dans les preuves des propositions \ref{KSheadings} et \ref{decompoiso} nous donnent la décomposition de $k[W'']/I_0$ comme $GL(E') \times G$-module:
$$k[W'']/I_0 \cong \bigoplus_{\lambda \in \Lambda+} S^{\lambda}(E'^*) \otimes S^{\lambda}(V).$$
Comme $\dim(V)=\dim(E')$, on a $\dim(S^{\lambda}(V))=\dim(S^{\lambda}(E'^*))$ pour tout $\lambda \in \Lambda+$ et le résultat s'ensuit.
\end{proof}

\begin{remarque}
L'idéal $I_0$ est un point fixe de ${\Hilb}_{h_s}^{G}(\mu'^{-1}(0))$ pour l'opération de $GL(E')$. On en déduit que $\HHm$ admet une composante irréductible distincte de $\HHmp$ de dimension supérieure ou égale à $\dim(O_n)=2n(d-n)$.
\end{remarque}



\subsection{Etude du cas \texorpdfstring{$\dim(V)=1$}{}} \label{GLsympN1}

Dans cette section, on fixe $n=1$ et $d \geq 2$. Alors $G=\Gm$ est le groupe multiplicatif, $\mu^{-1}(0)/\!/G=\overline{\OO_{[2,1^{d-2}]}}$ et $\rho_s:\ \HHm \rightarrow \PP(E) \times \Gr(d-1,E)$ est le morphisme (\ref{mmmorphisme_red}). On a vu que $\HHmp$ est une variété lisse (corollaire \ref{symplisssse}). Nous allons utiliser les résultats des sections précédentes et de la section \ref{section_n_1} pour déterminer $\HHm$ comme $G'$-schéma. On  a la 

\begin{proposition}  \label{casSympn1}
On a un isomorphisme $G'$-équivariant
$$\HHm \cong \left \{(f,L) \in \overline{\OO_{[2,1^{d-2}]}} \times \PP (\gg'^{\leq 1}) \ \mid  \ f \in L  \right \}.$$
Le schéma $\HHm$ est donc la réunion de deux composantes irréductibles lisses $C_1$ et $C_2$ de dimension $2d-2$:
\begin{itemize} \renewcommand{\labelitemi}{$\bullet$}
\item $C_1:=\left \{(f,L) \in \overline{\OO_{[2,1^{d-2}]}} \times \PP (\overline{\OO_{[2,1^{d-2}]}}) \ \mid  \ f \in L \right \}=\HHmp$ et $\gamma:\ \HHmp \rightarrow \overline{\OO_{[2,1^{d-2}]}}$ est l'éclatement en $0$ de $\overline{\OO_{[2,1^{d-2}]}}$, 
\item $C_2:=\left \{(0,L) \in \overline{\OO_{[2,1^{d-2}]}} \times \PP ({\gg'}^{\leq 1}) \right \} \cong  \PP ({\gg'}^{\leq 1}).$
\end{itemize}
Et l'intersection de ces deux composantes irréductibles est: \\
$C_1 \cap C_2=\left \{(0,L) \in \overline{\OO_{[2,1^{d-2}]}} \times \PP (\overline{\OO_{[2,1^{d-2}]}}) \right \} \cong \PP (\overline{\OO_{[2,1^{d-2}]}})$.
\end{proposition}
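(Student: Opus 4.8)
The plan is to realise $\HHm$ as a closed subscheme of the Hilbert scheme $\HH := \Hilb_{h_W}^G(W)$ already determined in Section \ref{section_n_1}, and to read off its points and its components from the explicit universal family given there. First I would record the two reductions that make this possible: since $n=1$ and $d \geq 2$ we have $N=1$ by Proposition \ref{fctH2}, and comparing Propositions \ref{fcthilb} and \ref{fctH2} gives $h_s = h_W$ (both send $M$ to $\dim M$). As explained after Proposition \ref{fctH2}, the fact that $\mu^{-1}(0)$ is a closed $G'\times G$-stable subscheme of $W$ then exhibits $\HHm$ as a closed $G'$-stable subscheme of $\HH$. Moreover, by Theorem \ref{Hcasn11111} (case $n=1$) one has $\HH = \HHp \cong Bl_0(W/\!/G)$ with $W/\!/G = \gg'^{\leq 1}$, and the point $(f,L) \in Bl_0(\gg'^{\leq 1})$ corresponds to the subscheme
$$Z_{(f,L)} = \left\{(u_1,u_2) \in W \ \middle|\ u_2\circ u_1 = f,\ \Ker(L) \subset \Ker(u_1),\ \Im(u_2) \subset \Im(L)\right\}.$$

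Next I would characterise $\HHm$ inside $\HH$ as the locus of pairs $(f,L)$ for which $Z_{(f,L)} \subset \mu^{-1}(0)$, that is, for which the single quadratic function $\mu:(u_1,u_2)\mapsto u_1\circ u_2$ lies in the ideal of $Z_{(f,L)}$. Writing a generator of $L$ as $l = v\otimes\phi$ with $v \in E$, $\phi \in E^*$, and parametrising $Z_{(f,L)}$ by $u_1 = c\,\phi$, $u_2 = c'\,v$, a direct computation shows that along $Z$ both $\mu$ and $u_2\circ u_1$ are proportional to the single coordinate $cc'$, namely $\mu = \phi(v)\,cc'$ and $u_2\circ u_1 = cc'\,l$. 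When $f \neq 0$ one has $L = \langle f\rangle$ and the relation $u_2\circ u_1 = f$ forces $cc'=1$, so $\mu|_Z = \phi(v) = \tr(f)$ is constant; hence $Z \subset \mu^{-1}(0)$ iff $\tr(f)=0$, i.e. $f^2=0$, i.e. $f \in \overline{\OO_{[2,1^{d-2}]}}$. When $f=0$ the relation $u_2\circ u_1 = 0$ gives $cc'=0$, whence $\mu|_Z = 0$ for every $L$. Consequently, as a set,
$$\HHm = \left\{(f,L) \in \overline{\OO_{[2,1^{d-2}]}} \times \PP(\gg'^{\leq 1}) \ \middle|\ f \in L\right\},$$
which is exactly the asserted variety, and this computation already subsumes the reducibility of Proposition \ref{noIrred}.

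I would then identify the two irreducible components. The pairs with $f \neq 0$ force $L = \langle f\rangle$, and their closure is $Bl_0(\overline{\OO_{[2,1^{d-2}]}})$; by Corollary \ref{symplisssse} (equivalently by Proposition \ref{enoncepgene}, applicable here since $\mu^{-1}(0)/\!/G = U_1 \cup \{0\}$) this is precisely the principal component $\HHmp = C_1$, on which $\gamma$ restricts to the blow-up of $\overline{\OO_{[2,1^{d-2}]}}$ at $0$. The pairs with $f=0$ form $C_2 = \{0\}\times\PP(\gg'^{\leq 1}) \cong \PP(\gg'^{\leq 1})$, mapped isomorphically by $\rho_s$ after the Segre identification $\PP(E)\times\Gr(d-1,E)\cong\PP(\gg'^{\leq 1})$; this is the extra component, recovered concretely as the fibres of $\rho_s$ over the open orbit $O_1$ via Lemma \ref{fibrehil3} and the fixed point $I_0$ of Proposition \ref{pppfixe}. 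Both components have dimension $2d-2$, and imposing $f=0$ together with $L \in \PP(\overline{\OO_{[2,1^{d-2}]}})$ gives $C_1 \cap C_2 \cong \PP(\overline{\OO_{[2,1^{d-2}]}})$.

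The delicate point, and the one I expect to be the main obstacle, is to promote this set-theoretic description to the claimed isomorphism of $G'$-schemes, i.e. to prove that $\HHm$ is reduced and that $\gamma\times\rho_s$ is a closed immersion onto the variety above. Since the target is reducible, Zariski's main theorem cannot be invoked directly as in Proposition \ref{gammaiso}. I would instead argue componentwise: $C_1 = \HHmp$ is smooth by Corollary \ref{symplisssse}, $C_2 \cong \PP(\gg'^{\leq 1})$ is smooth, and the scheme structure of $\HHm$ along $C_1\cap C_2$ would be controlled by computing the tangent spaces of $\HHm$ at the finitely many $B'$-fixed points and comparing them with the local dimension $2d-2$, in the spirit of Lemmas \ref{Hlisse4} and \ref{InegTang}; should any residual non-reducedness remain, an explicit computation of the relevant ideal (as performed with \cite[Macaulay2]{Mac2} in the other sections) would settle it.
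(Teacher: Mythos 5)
Your reduction $h_s=h_W$, the resulting closed immersion $\HHm\hookrightarrow\HH\cong Bl_0(W/\!/G)$, and the explicit test of which subschemes $Z_{(f,L)}$ lie in $\mu^{-1}(0)$ are all correct, and this last computation is a genuinely different route from the paper's. The paper never examines the fibres: it combines the closed immersion $\gamma\times\rho_s:\ \HHm\hookrightarrow Y$ with a pure dimension count ($\HHmp$ and $C_1$ are varieties of the same dimension $2d-2$, so the immersion between them is an isomorphism) and with the existence, imported from the section on reducibility (the fixed point $I_0$ over the open orbit), of a second irreducible component of dimension at least $2d-2$, which is then forced to coincide with $C_2$. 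Your fibrewise computation replaces that imported existence result and, as you note, reproves it; this is a small gain in self-containedness, at the price of a coordinate computation the paper avoids.

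The genuine gap is your last step, and the tools you propose for it would not close it. Every $B'$-fixed point of $\HHm$ lies on the unique closed $G'$-orbit of $Y$, which is $C_1\cap C_2\cong\PP(\overline{\OO_{[2,1^{d-2}]}})$; at such a point the \emph{reduced} scheme $Y$ already has Zariski tangent space of dimension $2d-1$ (two smooth hypersurfaces of the smooth $(2d-1)$-dimensional $Bl_0(\gg'^{\leq 1})$ meeting transversally), which is also the tangent space of any non-reduced thickening of $Y$ along $C_1\cap C_2$ inside $Bl_0(\gg'^{\leq 1})$. So comparing $\dim T_Z\HHm$ with the local dimension $2d-2$, in the spirit of Lemma \ref{Hlisse4}, is guaranteed to be inconclusive, and a Macaulay2 computation only treats one value of $d$ at a time. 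The observation that actually finishes the proof, and which is what the paper's phrase \guillemotleft{} immersion ferm\'ee $\HHm\hookrightarrow Y$ \guillemotright{} encodes, is that the immersion lands scheme-theoretically in the \emph{reduced} $Y$: the Hilbert--Chow morphism $\gamma$ factors through $\mu^{-1}(0)/\!/G=\overline{\OO_{[2,1^{d-2}]}}$, which is reduced and is cut out in $\gg'^{\leq 1}$ by the single equation $\tr$; on $Bl_0(\gg'^{\leq 1})$ this equation pulls back to the product of the reduced equations of the exceptional divisor $C_2$ and of the strict transform $C_1$, so the scheme-theoretic preimage is exactly the reduced $Y=C_1\cup C_2$. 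A surjective closed immersion into a reduced scheme is an isomorphism, so your set-theoretic surjectivity already yields $\HHm\cong Y$ with no tangent-space computation at all.
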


\begin{proof}
D'après la proposition \ref{Hcasn11111}, on a une immersion fermée
$$\gamma \times \rho_s:\ \HHm \hookrightarrow Y:=\left \{(f,L) \in \overline{\OO_{[2,1^{d-2}]}} \times \PP ({\gg'}^{\leq 1}) \ \mid  \ f \in L  \right \}.$$ 
On vérifie que $Y$ est la réunion des deux fermés irréductibles $C_1$ et $C_2$ qui sont tout deux de dimension $2d-2$. Le morphisme $\gamma \times \rho_s$ envoie $\HHmp$ dans $C_1$; ce sont deux variétés de même dimension, donc $\gamma \times \rho_s:\ \HHmp \rightarrow C_1$ est un isomorphisme. Puis, on a vu dans la section \ref{reductibilité_cas_symp} que $\HHm$ admet une deuxième composante irréductible de dimension au moins $2d-2$, ce qui est la dimension de $C_2$ et donc $\gamma \times \rho_s$ est un isomorphisme entre cette seconde composante de $\HH$ et $C_2$. Enfin, on vérifie que $C_1 \cap C_2$ est bien ce qui est annoncé dans la proposition.
\end{proof}

\begin{remarque}
On vérifie que la composante $C_2$ de $\HHm$ est formée des idéaux homogènes de $k[\mu^{-1}(0)]$.
\end{remarque}

Lorsque $n \geq 2$ et $d \geq 2n$, les choses se compliquent car des composantes irréductibles de $\HHm$ de grande dimension peuvent apparaître. Par exemple, pour $n=2$ et $d \geq 4$, on vérifie que la composante formée des idéaux homogènes de $k[\mu^{-1}(0)]$ est de dimension $4d-5$, alors que la composante principale est de dimension $4d-8$. De plus, on a montré l'existence d'au moins deux composantes irréductibles pour $\HHm$, mais il peut exister encore d'autres composantes irréductibles a priori.


\chapter{Cas des autres groupes classiques} \label{chp3}

\section{Cas de \texorpdfstring{$O(V)$}{O(V)} opérant dans \texorpdfstring{$V^{\oplus  n'}$}{n'V}} \label{HclassOngeneral}

On se place dans la situation $3$: on a $G:=O(V)$, $G':=GL(V')$, $W:={\Hom}(V',V)$ et l'opération de $G' \times G$ dans $W$ est donnée par (\ref{actionSLLn}).

\subsection{Etude du morphisme de passage au quotient} \label{description_quotientOn}

Les résultats essentiels de  cette section sont les propositions \ref{descriptiongeofibOn} et \ref{ouvertplatitudeOn} qui décrivent les fibres et l'ouvert de platitude de $\nu$. On suit dans cette section le même cheminement que dans la section \ref{description_quotient}.

D'après le premier théorème fondamental pour $O(V)$ (voir \cite[§11.2.1]{Pro}) l'algèbre des invariants $k[W]^G$ est engendrée par les $(i\ |\ j)$, où pour chaque couple $(i,j)$, $\ 1\leq i \leq j \leq n'$, on définit la forme bilinéaire symétrique $(i\ |\ j)$ sur $W \cong V^{\oplus n'}$ par: 
\begin{equation} \label{Oninvariants}
\forall v_1,\ldots,v_{n'}\in V,\ (i\ |\ j):(v_1,\ldots,v_{n'}) \mapsto \phi(v_i,v_j) 
\end{equation} 
où l'on note $\phi$ la forme polaire associée à la forme quadratique $q$ définie par (\ref{defQ}).
On a le morphisme naturel $G' \times G$-équivariant
\begin{equation}  \label{appznatO}
\Hom(V',V)    \rightarrow  \Hom(S^2(V'),S^2(V)), w \mapsto S^2(w).
\end{equation}
D'après \cite[§19.5]{FH}, il existe un $G$-module irréductible $M$ (différent de $V_0$) tel que $S^2(V) \cong V_0 \oplus M$ comme $G$-module, et la représentation triviale $V_0$ est engendrée par la forme quadratique $q$. Le morphisme de passage au quotient $\nu$ est obtenu en composant le morphisme (\ref{appznatO}) et le morphisme $G$-invariant 
$$  \Hom(S^2(V'),S^2(V))  \rightarrow   \Hom(S^2(V'),V_0) \cong S^2(V'^*)$$ 
induit par la projection $S^2(V) \rightarrow V_0$. On a donc
$$\begin{array}{lccl}
\nu:  &\Hom(V',V)   & \rightarrow  & S^2(V'^*) \\
        & w  & \mapsto      &  \leftexp{t}{w}  w 
\end{array}$$ 
où l'on note $\leftexp{t}{w}$ la transposée du morphisme $w$.  \\
Et donc 
$$W/\!/G=S^2(V'^*)^{\leq n}:=\left\{ Q \in  S^2(V'^*) \  \mid \ \rg(Q) \leq n \right\} $$
est une variété déterminantielle symétrique.\\
Si $n' \leq n$, alors $W/\!/G=S^2(V'^*)$ est un espace affine. Sinon, c'est une variété normale (\cite[§3.2, Théorème 2]{SB}), de dimension $n'n-\frac{1}{2}n(n-1)$, de Cohen-Macaulay (\cite[§3.4, Théorème 4]{SB}) et singulière le long du fermé $S^2(V'^*)^{\leq n-1}$ (\cite[§6.3]{Wey}). De plus, $W/\!/G$ est de Gorenstein si et seulement si $n'-n$ est impair (\cite[Corollary 6.3.7]{Wey}).\\
On pose 
$$N:=\min(n',n).$$
L'opération de $G'$ dans $W$ induit une opération dans $W/\!/G$ telle que $\nu$ soit $G'$-équivariant: 
$$\forall Q \in W/\!/G \subset S^2(V'^*),\ \forall g' \in G',\ g'.Q=\leftexp{t}{g'^{-1}}  Q g'^{-1} . $$
Pour cette opération, $W/\!/G$ se décompose en $N+1$ orbites
$$U_i:=\left\{  Q \in  S^2(V'^*) \  \mid \ \rg(Q) = i \right\}$$
pour $i=0,\ldots,N$. Les adhérences de ces orbites sont imbriquées de la façon suivante: 
$$ \{0\}=\overline{U_0} \subset \overline{U_1} \subset \cdots \subset \overline{U_N}=W/\!/G .$$ 
En effet, pour chaque $i=0,\ldots,N$, on a $\overline{U_i}=S^2(V'^*)^{\leq i}$. En particulier, l'orbite $U_N$ est un ouvert dense de $W/\!/G$.

On rappelle que l'on note $\NNN(W,G)$ le nilcône, c'est-à-dire la fibre schématique en $0$ du morphisme $\nu$. Dans \cite{KS}, Kraft et Schwarz ont montré certaines propriétés géométriques du nilcône: il est irréductible si $n'<\frac{n}{2}$ et il est réduit si et seulement si $n' \leq \frac{n}{2}$. Pour simplifier, on considère dorénavant $\NNN(W,G)$ muni de sa structure réduite. Nous allons déterminer les composantes irréductibles de $\NNN(W,G)$ ainsi que leurs dimensions.\\
Soit $m \in \NN$, on note $\OG(m,V)$ le schéma (projectif) des sous-espaces isotropes pour la forme quadratique $q$ et de dimension $m$ dans $V$ lorsqu'elle existe. On rappelle que $q$ est non dégénérée, donc la dimension de n'importe quel sous-espace de $V$ totalement isotrope maximal pour l'inclusion est $E(\frac{n}{2})$. Il s'ensuit que le schéma $\OG(m,V)$ existe si et seulement si $m \in \{0,\ldots,E(\frac{n}{2})\}$, et dans ce cas 
$$\dim \left(\OG\left(m,V\right) \right)=m(n-m)-\frac{1}{2}m(m+1) . $$ 
Le schéma $\OG(m,V)$ est réduit et homogène sous $G$. Il est irréductible, sauf lorsque $n$ est pair et $m=\frac{n}{2}$, auquel cas $\OG(m,V)$ est la réunion de deux composantes irréductibles isomorphes, chacune homogène pour l'opération de $SO(V)$ mais échangées par n'importe quel élément de $O(V) \backslash SO(V)$.  

\begin{proposition} \label{nilcôneOn}
\begin{itemize}
\item Si $n$ est pair et $n' \geq \frac{n}{2}$ alors $\NNN(W,G)$ est la réunion de deux variétés, chacune de dimension $\frac{1}{2} n \, n' + \frac{1}{8} n^{2}-\frac{1}{4}n$.  
\item Si $n$ est impair et $n' \geq \frac{n-1}{2}$ alors $\NNN(W,G)$ est une variété de dimension $\frac{1}{2} n' (n-1) + \frac{1}{8} n^{2}-\frac{1}{8}$. 
\item Si $n' < \frac{n}{2}$, alors $\NNN(W,G)$ est une variété de dimension $n n' - \frac{1}{2} n' (n'+1)$.
\end{itemize}
\end{proposition}   

\begin{proof}
On a 
$$\NNN(W,G)=\left\{ w \in \Hom(V',V) \  \mid \ \leftexp{t}{w} w=0 \right\} .$$ 
Or
\begin{align*}
\leftexp{t}{w}  w =0 &\Leftrightarrow \forall x,y \in V,\ \leftexp{t}{x} \leftexp{t}{w} w y=0 \\
 &\Leftrightarrow \Im(w) \text{ est un sous-espace vectoriel isotrope de } V . 
\end{align*} 
On pose $\alpha:=\min\left(n',E(\frac{n}{2})\right)$ et soit
$$\ZZ:=\left\{(w,L) \in \Hom(V',V) \times \OG(\alpha,V) \  \mid \ \Im(w) \subset L\right\}$$ 
alors on a le diagramme suivant:
$$\xymatrix{ &  \ZZ \ar@{->>}[ld]_{p_1} \ar@{->>}[rd]^{p_2} \\  \NNN(W,G)  && \OG(\alpha,V) }$$ 
où les $p_i$ sont les projections naturelles. 
On fixe $L_0 \in \OG(\alpha,V)$, alors $\ZZ$ est un fibré vectoriel $G$-homogène au dessus de $\OG(\alpha,V)$ de fibre en $L_0$ isomorphe à $\Hom(V',L_0)$. Et donc
\begin{align*}
\dim(\ZZ) &=\dim \left(\OG\left(\alpha,V\right)\right)+\dim\left(\Hom \left(V',L_0\right) \right)\\
       &=\alpha(n-\alpha)-\frac{1}{2}\alpha(\alpha+1)+\alpha n'\\
       &=\alpha(n'+n-\alpha)-\frac{1}{2}\alpha(\alpha+1) .
\end{align*}
Soient $\ZZ':=\left\{(w,L) \in \ZZ\  \mid \ \rg(w)=\alpha \right\}$ et $X:=\{w \in \NNN(W,G)\ \mid \ \rg(w)=\alpha \}$ qui sont des ouverts denses de $\ZZ$ et $\NNN(W,G)$ respectivement. Alors $p_1:\ \ZZ' \rightarrow X$ est un isomorphisme. Si $n$ est impair ou $n'<\frac{n}{2}$, alors $\ZZ$ est irréductible et donc $\NNN(W,G)$ est irréductible. En revanche, si $n$ est pair et $n' \geq \frac{n}{2}$, alors $\ZZ$ est la réunion de deux variétés de même dimension et il en va de même pour $\NNN(W,G)$. Dans tous les cas, on a $\dim(\NNN(W,G))=\dim(\ZZ)$ et le résultat s'ensuit.   
\end{proof}

Nous allons maintenant nous intéresser à la description géométrique des fibres de $\nu$ au dessus de chaque orbite $U_i$.

\begin{notation} \label{defjOn}
Soit $ \ 0 \leq r \leq N$, on note 
$$J_r =\begin{bmatrix}
I_r \ \ &0_{r,n'-r} \\
0_{n'-r,r}  \ &0_{n'-r,n'-r} 
\end{bmatrix}$$
où $I_r$ est la matrice identité de taille $r$. La matrice $J_r$ est symétrique de rang $r$ et donc s'identifie naturellement à un élément de $U_r$. 
\end{notation}

\noindent On fixe $r \in \{0, \ldots,N\}$ et on définit $w_r:= \begin{bmatrix}
I_r &0_{r,n'-r} \\
0_{n-r,r}   &0_{n-r,n'-r} \end{bmatrix} \in W$ et $G_r$ le stabilisateur de $w_r$ dans $G$. On vérifie que
$$G_r =\left\{ \begin{bmatrix}
I_r   &0 \\
0     &M \end{bmatrix},\ M\in O_{n-r}(k) \right\} \cong O_{n-r}(k)$$  
et $V=r V_0 \oplus E_r$ comme $G_r$-module, où $E_r$ désigne la représentation standard de $G_r$ et $V_0$ la représentation triviale de $G_r$. On note $N_{w_r}$ la représentation slice de $G_r$ en $w_r$ (voir la définition \ref{slice}), alors on a le

\begin{lemme} \label{slice_expliciteOn}
On a un isomorphisme de $G_r$-modules 
$$ N_{w_r} \cong (n'-r) E_r \oplus r(n'-\frac{1}{2}(r-1)) V_0 .$$
\end{lemme}

\begin{proof}
La preuve est analogue à celle du lemme \ref{slice_explicite}.
\end{proof}

Soient $F_1$ et $F_2$ des espaces vectoriels de dimensions $n'-r$ et $r(n'-\frac{1}{2}(r-1))$ respectivement et dans lesquels $G_r$ opère trivialement. D'après le lemme \ref{slice_expliciteOn}, on a un isomorphisme de $G_r$-modules 
$$ N_{w_r} \cong \Hom(F_1, E_r) \times F_2  $$ 
et le morphisme de passage au quotient $\nu_N:\ N_{w_r} \rightarrow N_{w_r}/\!/G_r$ est donné par:
$$\begin{array}{lrcl}
 \nu_N:\  &\Hom(F_1, E_r) \times F_2 & \rightarrow  & S^2(F_1^*) \times F_2 . \\
        & (w,x)  & \mapsto      &  ( \leftexp{t}{w} w , x)  
\end{array}$$ 
Donc $\NNN(N_{w_r},G_r):= \nu_{N}^{-1}(\nu_N(0))=\nu_{N}^{-1}(0) \cong {\nu'}_{N}^{-1}(0)$, comme schéma, avec
 $$\begin{array}{lrcl}
 {\nu'_N}:\  &\Hom(F_1, E_r)  & \rightarrow  & S^2(F_1^*)  .\\
        & w  & \mapsto     &  \leftexp{t}{w}w
\end{array}$$ 

On en déduit la

\begin{proposition} \label{descriptiongeofibOn}
Avec les notations précédentes, on a un isomorphisme $G$-équivariant
$$\nu^{-1}(J_r) \cong G \times^{G_r} {\nu'}_{N}^{-1}(0) .$$ 
En particulier, si l'on note $H:=G_N$, on a 
$$\nu^{-1}(J_N) \cong \left\{
    \begin{array}{ll}
        G &\text{ si } n' \geq n, \\
        G/H &\text{ si } n'<n.
    \end{array}
\right.$$ 
\end{proposition}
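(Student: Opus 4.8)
The plan is to follow the same route as in the proof of Proposition \ref{descriptiongeofib} (the linear case), the only change being that the slice representation is now the one computed in Lemma \ref{slice_expliciteOn}. First I would record that the orbit $G.w_r$ is closed in $W$ and is the unique closed orbit contained in $\nu^{-1}(J_r)$: this is the orthogonal analogue of Lemma \ref{orbfermee} and is proved in the same way, using that $G_r \cong O_{n-r}(k)$ is reductive together with the criterion \cite[\S I.6.2.5, Theorem 10]{SB}, after computing the centraliser $C_G(G_r)$ and checking that $C_G(G_r).w_r$ is closed in $W$; the uniqueness then follows from \cite[\S II.3.1, Th\'eor\`eme 1]{SB}. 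Having a closed orbit is precisely what is needed for the slice representation $N_{w_r}$ of Definition \ref{slice} to make sense.

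Next I would invoke the Luna-type fibration result \cite[\S 6.2.3, Theorem 8]{SB} exactly as in the linear case. Letting ${(W/\!/G)}^{(G_r)}$ be the locus of closed orbits whose stabiliser is conjugate to $G_r$, and $W^{(G_r)} := \nu^{-1}({(W/\!/G)}^{(G_r)})$, this result asserts that both are smooth and that the restriction $\nu|_{W^{(G_r)}}$ is a fibration whose fibre is $G$-equivariantly isomorphic to $G \times^{G_r} \NNN(N_{w_r},G_r)$. Since $\nu(w_r)=J_r$ and $\Stab_G(w_r)=G_r$, the point $J_r$ lies in ${(W/\!/G)}^{(G_r)}$, so $\nu^{-1}(J_r) \cong G \times^{G_r} \NNN(N_{w_r},G_r)$, with $G$ acting by left multiplication on the first factor.

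It then remains to identify the nilcone of the slice. Using Lemma \ref{slice_expliciteOn}, write $N_{w_r} \cong \Hom(F_1,E_r) \times F_2$ with $G_r$ acting trivially on $F_1$ and $F_2$; the quotient morphism is $\nu_N:(w,x)\mapsto(\leftexp{t}{w}w,x)$, so $\NNN(N_{w_r},G_r)=\nu_N^{-1}(0) \cong {\nu'_N}^{-1}(0)$ as a scheme, where $\nu'_N:\Hom(F_1,E_r)\to S^2(F_1^*)$, $w \mapsto \leftexp{t}{w}w$. This yields the general isomorphism $\nu^{-1}(J_r) \cong G \times^{G_r} {\nu'_N}^{-1}(0)$.

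Finally, specialising to $r=N=\min(n',n)$, the space $F_1$ has dimension $n'-N=0$ in both cases, so $\Hom(F_1,E_N)=0$ and ${\nu'_N}^{-1}(0)=\{0\}$ is a reduced point. Hence $\nu^{-1}(J_N) \cong G \times^{H} \{0\} \cong G/H$ with $H:=G_N$. When $n' \geq n$ one has $N=n$ and $G_N \cong O_0(k)$ is the trivial group, so this reads $\nu^{-1}(J_N) \cong G$; when $n'<n$ one has $N=n'$ and $H=G_{n'} \cong O_{n-n'}(k)$, giving $G/H$. I expect the only genuine obstacle to be the closed-orbit and centraliser bookkeeping of the first step together with the correct invocation of \cite[\S 6.2.3, Theorem 8]{SB}; once the slice is known through Lemma \ref{slice_expliciteOn}, the remaining steps are formal.
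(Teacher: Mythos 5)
Your proposal follows exactly the route the paper takes: the paper's own proof of this proposition consists of the single line that it is analogous to Proposition \ref{descriptiongeofib}, and the steps you spell out (closedness of $G.w_r$ via the centraliser criterion, the Luna slice fibration \cite[\S 6.2.3, Theorem 8]{SB}, and the identification of the nilcone of the slice through Lemma \ref{slice_expliciteOn}) are precisely the intended analogues. One small slip in the final specialisation: when $n' \geq n$ you have $N=n$, so $\dim F_1 = n'-N = n'-n$, which is \emph{not} zero in general; what vanishes in that case is $E_N$, the standard representation of $G_N \cong O_{n-N}(k) = O_0(k)$, and it is for that reason that $\Hom(F_1,E_N)=0$ and ${\nu'_N}^{-1}(0)$ is a reduced point. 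The conclusion $\nu^{-1}(J_N)\cong G$ (resp. $G/H$) is unaffected.
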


\begin{proof}
La preuve est analogue à celle de la proposition \ref{descriptiongeofib}.
\end{proof}

\begin{corollaire} \label{dimfibreOn}
Soit $r \in \{0, \ldots,N\}$, alors la dimension de la fibre du morphisme $\nu$ en $J_r$ vaut: 
\begin{itemize} \renewcommand{\labelitemi}{$\bullet$}
\item $n' n-\frac{1}{2}n'(n'+1)$ lorsque $2 n'-r < n$,
\item $\frac{1}{2} n' (n-r)+\frac{1}{8}{(r+n)}^{2}-\frac{1}{4}(n+r)$  lorsque $2 n'-r \geq n$ et $n-r$ est pair,
\item $\frac{1}{2} n' (n-r-1)+\frac{1}{8}{(r+n)}^{2}-\frac{1}{8}$ lorsque $2 n'-r \geq n$ et $n-r$ est impair.
\end{itemize}
\end{corollaire}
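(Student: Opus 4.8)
Le plan est de reproduire exactement le sch\'ema de preuve du corollaire \ref{dimfibre} du cas lin\'eaire. D'apr\`es la proposition \ref{descriptiongeofibOn}, on dispose de l'isomorphisme $G$-\'equivariant $\nu^{-1}(J_r) \cong G \times^{G_r} {\nu'}_N^{-1}(0)$, d'o\`u imm\'ediatement
$$\dim(\nu^{-1}(J_r)) = \dim(G) + \dim({\nu'}_N^{-1}(0)) - \dim(G_r).$$
Comme $G = O_n(k)$ et $G_r \cong O_{n-r}(k)$ (voir le calcul pr\'ec\'edant le lemme \ref{slice_expliciteOn}), je commencerais par \'evaluer la contribution des groupes:
$$\dim(G) - \dim(G_r) = \tfrac{1}{2}n(n-1) - \tfrac{1}{2}(n-r)(n-r-1) = nr - \tfrac{1}{2}r(r+1).$$

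L'\'etape cl\'e est ensuite de reconna\^itre ${\nu'}_N^{-1}(0)$. Par construction $\nu'_N:\ \Hom(F_1,E_r) \to S^2(F_1^*)$, avec $\dim(F_1) = n'-r$ et $E_r$ la repr\'esentation standard de $G_r \cong O_{n-r}(k)$, donc $\dim(E_r) = n-r$. Autrement dit, ${\nu'}_N^{-1}(0)$ est exactement le nilc\^one $\NNN(E_r^{\oplus(n'-r)}, O_{n-r})$, c'est-\`a-dire le sch\'ema d\'ecrit par la proposition \ref{nilc\`eneOn} apr\`es substitution $n \mapsto n-r$ et $n' \mapsto n'-r$. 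Il suffit donc d'appliquer cette proposition pour obtenir $\dim({\nu'}_N^{-1}(0))$ dans chacun des trois r\'egimes qu'elle distingue.

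La derni\`ere \'etape consiste \`a faire correspondre les conditions, puis \`a additionner les deux contributions. Je v\'erifierais que $n'-r < \tfrac{n-r}{2}$ \'equivaut \`a $2n'-r < n$ (premier cas du corollaire), tandis que $2n'-r \geq n$ force $n'-r \geq \tfrac{n-r}{2}$, la parit\'e de $n-r$ d\'epartageant alors les deux autres r\'egimes de la proposition \ref{nilc\`eneOn}; il faut notamment s'assurer que, lorsque $n-r$ est impair et $2n'-r \geq n$, l'in\'egalit\'e $n'-r \geq \tfrac{n-r}{2}$ entra\^ine bien $n'-r \geq \tfrac{(n-r)-1}{2}$. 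Dans chacun des trois cas, j'ajouterais $nr - \tfrac{1}{2}r(r+1)$ \`a la dimension du nilc\^one fournie par la proposition \ref{nilc\`eneOn} et je simplifierais, ce qui donne les trois formules annonc\'ees. Cette v\'erification est purement calculatoire et sans difficult\'e conceptuelle; le seul point demandant un peu de soin est la gestion coh\'erente des distinctions de cas et des parit\'es.
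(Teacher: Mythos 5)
Votre démarche est correcte et coïncide avec celle que le texte laisse implicite : le corollaire \ref{dimfibreOn} se déduit de la proposition \ref{descriptiongeofibOn} exactement comme le corollaire \ref{dimfibre} se déduit de la proposition \ref{descriptiongeofib} dans le cas linéaire, en ajoutant $\dim(G)-\dim(G_r)=nr-\tfrac{1}{2}r(r+1)$ à la dimension du nilcône de $\nu'_N$, laquelle est donnée par la proposition décrivant $\NNN(W,G)$ dans la situation 3 après substitution $n\mapsto n-r$, $n'\mapsto n'-r$. La correspondance des cas et les simplifications que vous indiquez sont exactes.
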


Pour chaque couple $(n,n')$, le corollaire \ref{dimfibreOn} permet d'une part de calculer la dimension de la fibre générique de $\nu$, d'autre part de déterminer l'ouvert de platitude de $\nu$. En procédant comme pour la proposition \ref{ouvertplatitude}, on montre la

\begin{proposition} \label{ouvertplatitudeOn}
La dimension de la fibre générique et l'ouvert de platitude de $\nu$ sont donnés par le tableau suivant:\\
\begin{center}
\begin{tabular}{|c|c|c|}
  \hline
  configuration & dim. de la fibre générique & ouvert de platitude \\
  \hline
  $n' < n$ &     $n' n -\frac{1}{2}n'(n'+1)$ & $U_{n'} \cup \cdots \cup U_{\max(2 n'-n-1, 0)}$\\
  $n' = n$ &     $\frac{1}{2}n(n-1)$       & $U_{n} \cup U_{n-1} $\\ 
  $n' > n$ &     $\frac{1}{2}n(n-1)$       & $U_{n} $\\ 
  \hline    
\end{tabular}
\end{center}
\vspace*{0.5mm}
\end{proposition}

\begin{corollaire} \label{ouverttoutplatOn}
Le morphisme $\nu$ est plat sur $W/\!/G$ tout entier si et seulement si $n \geq 2n'-1$ et dans ce cas $W/\!/G=S^2(V'^*)$. 
\end{corollaire}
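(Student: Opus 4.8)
The plan is to read the corollary off directly from Proposition \ref{ouvertplatitudeOn}, which records the exact flatness locus of $\nu$, and to combine it with the orbit stratification $\{0\}=\overline{U_0} \subset \overline{U_1} \subset \cdots \subset \overline{U_N}=W/\!/G$ established earlier (with $N=\min(n',n)$ and $\overline{U_i}=S^2(V'^*)^{\leq i}$). The statement "$\nu$ plat sur $W/\!/G$ tout entier" is equivalent to saying that the open flatness locus ${(W/\!/G)}_{*}$ equals all of $W/\!/G$, so the whole problem reduces to comparing the union of orbits given in the table with the full quotient.

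First I would record a structural observation that simplifies the bookkeeping: the flatness locus is an open, $G'$-stable subset of $W/\!/G$, and since each $U_i$ lies in $\overline{U_{i+1}}$, any open $G'$-stable set that meets $U_i$ must contain the entire orbit $U_{i+1}$ (a neighbourhood of a boundary point of $U_{i+1}$ meets $U_{i+1}$, and $G'$-stability forces it to contain the whole orbit). Hence ${(W/\!/G)}_{*}$ is automatically of the form $\bigcup_{i \geq i_0} U_i$, and $\nu$ is flat everywhere if and only if ${(W/\!/G)}_{*}$ contains the closed orbit $U_0=\{0\}$. I would then run the three cases of Proposition \ref{ouvertplatitudeOn}. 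If $n'<n$, the locus is $\bigcup_{i\geq \max(2n'-n-1,0)}U_i$, which contains $U_0$ exactly when $\max(2n'-n-1,0)=0$, i.e. $n\geq 2n'-1$. If $n'=n$, the locus is $U_n\cup U_{n-1}$, which contains $U_0$ iff $n-1=0$, i.e. $n=1$, and this matches $n\geq 2n'-1=2n-1$. If $n'>n$, the locus is the single open orbit $U_n$, which never contains $\{0\}$, in agreement with $n\geq 2n'-1>2n-1\geq n$ being impossible. In all three configurations the conclusion is the asserted equivalence $\nu \text{ plat partout} \Leftrightarrow n\geq 2n'-1$.

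It remains to justify the last sentence. When $n\geq 2n'-1$ one has $n\geq 2n'-1\geq n'$ (since $n'\geq 1$), so every symmetric form on $V'$ automatically has rank at most $n'\leq n$; the determinantal condition defining $W/\!/G=S^2(V'^*)^{\leq n}$ is then vacuous and $W/\!/G=S^2(V'^*)$ is the full affine space, as claimed. I expect no genuine obstacle here, the whole argument being a case check once Proposition \ref{ouvertplatitudeOn} is granted; the only point requiring slight care is the boundary configuration $n'=n$, where one must notice that $U_{n-1}$ coincides with $\{0\}$ precisely for $n=1$ and otherwise leaves the stratum $U_0$ outside the flatness locus, so that $\nu$ fails to be flat at the origin even though the quotient $S^2(V'^*)$ is smooth.
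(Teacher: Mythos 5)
Your proposal is correct and is exactly the intended derivation: the paper states this corollary without proof as an immediate consequence of the table in Proposition \ref{ouvertplatitudeOn}, and your case-by-case reading of that table (together with the observation that $n\geq 2n'-1$ forces $n\geq n'$, so the rank condition defining $S^2(V'^*)^{\leq n}$ is vacuous) is precisely the argument that is left implicit. The preliminary remark about the flatness locus being a union of the top strata is harmless but not needed, since the table already gives the locus explicitly.
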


\noindent Le corollaire qui suit est une conséquence de la proposition \ref{chow} et du corollaire \ref{ouverttoutplatOn}.

\begin{corollaire} \label{cas_facileOn}
Si $n \geq 2n'-1$, alors $\HH \cong S^2(V'^*)$ et $\gamma$ est un isomorphisme.
\end{corollaire}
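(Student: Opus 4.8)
The plan is to deduce the statement directly from the two results it is announced to rest upon, mirroring exactly what was done for the linear group in Corollaire \ref{cas_facile}. First I would invoke Corollaire \ref{ouverttoutplatOn}: under the hypothesis $n \geq 2n'-1$ it furnishes simultaneously the two facts I need, namely that the quotient morphism $\nu$ is flat over the whole of $W/\!/G$ and that $W/\!/G = S^2(V'^*)$ is the full symmetric space (rather than a proper determinantal subvariety). The crucial consequence is that flatness everywhere forces the largest flatness open set ${(W/\!/G)}_{*}$ to coincide with all of $W/\!/G$.

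Next I would apply Proposition \ref{chow} with the Hilbert function $h = h_W$ of the generic fiber of $\nu$. That proposition asserts that $\gamma$ restricts to an isomorphism $\gamma^{-1}({(W/\!/G)}_{*}) \stackrel{\cong}{\longrightarrow} {(W/\!/G)}_{*}$; indeed, its proof begins by reducing to the case where $\nu$ is flat on all of $W$ and then shows that in that case $W/\!/G$ represents the invariant Hilbert functor, with $\nu$ itself as universal family. Since here ${(W/\!/G)}_{*} = W/\!/G$ by the previous step, we have $\gamma^{-1}(W/\!/G) = \HH$, so the isomorphism of Proposition \ref{chow} is the whole morphism $\gamma:\ \HH \rightarrow W/\!/G$.

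Combining the two steps, $\gamma:\ \HH \rightarrow W/\!/G = S^2(V'^*)$ is an isomorphism, which is precisely the assertion. There is no genuine obstacle here: all the substantive content is already packaged inside Proposition \ref{chow} and Corollaire \ref{ouverttoutplatOn}. The only point requiring attention is the elementary bookkeeping that the numerical hypothesis $n \geq 2n'-1$ makes the flatness locus exhaust the entire quotient, so that the isomorphism supplied by Proposition \ref{chow} is defined on all of $\HH$ and not merely over a dense open subscheme.
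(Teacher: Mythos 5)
Your argument is exactly the one the paper intends: it states that this corollary is a consequence of Proposition \ref{chow} and Corollaire \ref{ouverttoutplatOn}, and your two steps (flatness of $\nu$ on all of $W/\!/G=S^2(V'^*)$ under $n\geq 2n'-1$, then the isomorphism over the full flatness locus via Proposition \ref{chow}) are precisely that deduction. The proposal is correct and matches the paper's route.
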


On s'intéresse, dans la proposition qui suit (et qui se démontre comme la proposition \ref{fcthilb}), à la fonction de Hilbert de la fibre générique de $\nu$. On note comme précédement $H:=G_N \cong O_{n-N}(k)$ le stabilisateur de $w_N$ dans $G$.

\begin{proposition} \label{fcthilbOn}
La fonction de Hilbert de la fibre générique du morphisme $\nu$ est donnée par:\\
$$\forall M \in \Irr(G),\ h_W(M)= \left\{
    \begin{array}{ll}
        \dim(M) &\text{ si } n' \geq n, \\
        \dim(M^{H}) &\text{ si } n'<n. 
    \end{array}
\right.$$
\end{proposition}

\begin{remarque}
Si $n=1$, alors $G \cong \ZZZ_2$ qui est le groupe d'ordre $2$. Les mêmes arguments que ceux utilisés dans la preuve du théorème \ref{Hcasn11111} permettent de montrer que $\HH=\HHp$ est une variété lisse isomorphe à l'éclatement en $0$ de $W/\!/G$ (mais cette méthode revient ici à utiliser un marteau pour écraser une mouche).
\end{remarque} 

\subsection{Etude du cas \texorpdfstring{$\dim(V)=2$}{n=2}}   \label{casO2}

Dans toute cette section, on fixe $n=2$. Alors on a $G \cong O_2(k)$, $W/\!/G=S^2(V'^*)^{\leq 2}$ et $\rho:\ \HH \rightarrow \Gr(2,V'^*)$ est le morphisme de la section \ref{redGrass2}. On note 
$$Y_0:=\left\{(Q,L)\in W/\!/G \times \PP (W/\!/G)\ \mid \ Q \in L \right\}=\OO_{\PP(W/\!/G)}(-1)$$ 
l'éclatement de $W/\!/G$ en l'origine et $Y_1$ l'éclatement de $Y_0$ le long de la transformée stricte de $S^2(V'^*)^{\leq 1}$. Nous verrons que $Y_1$ est isomorphe à l'éclatement de la section nulle du fibré $S^2\left( T \right)$ au dessus de $\Gr(2,V'^*) $ où $T$ désigne le fibré tautologique de $\Gr(2,V'^*)$.    

Nous allons démontrer le

\begin{theoreme} \label{casn2O2}
\begin{itemize} 
\item Si $n'=2$, alors $\HH \cong Y_0$ et $\gamma$ est l'éclatement de $W/\!/G$ en l'origine. 
\item Si $n'> 2$, alors $\HH \cong Y_1$ et $\gamma$ est une résolution des singularités de $W/\!/G$.
\end{itemize}
En particulier $\HH$ est toujours une variété lisse.
\end{theoreme}

\begin{remarque}
Si $n'=1$, alors $\HH \cong W/\!/G$ et $\gamma$ est un isomorphisme d'après le corollaire \ref{cas_facileOn}.
\end{remarque}

La stratégie adoptée pour démontrer le théorème \ref{casn2O2} est la même que celle utilisée pour démontrer le théorème \ref{casn2}. On commence par établir le cas particulier $n'=2$, puis on établit le cas $n'>2$ à la fin de la section à l'aide du principe de réduction (voir la section \ref{princreduction}) en se ramenant au cas $n'=2$.

\subsubsection{Représentations de $O_2(k)$}

Des rappels concernant la théorie des représentations de $SO_n(k)$ et $O_n(k)$ ont été effectués dans la section \ref{rappelsthedesreps}. Cependant cette théorie est particulièrement simple pour $n=2$. En effet, d'après \cite[Exercise 18.2]{FH}, on a un isomorphisme de groupes algébriques $SO_2(k) \cong \Gm$ et donc d'après (\ref{OnSonScinde}), on a $O_2(k) \cong \Gm \ltimes \ZZZ_2$. Les $\Gm$-modules irréductibles sont tous de dimension $1$ et si $M$ est un $\Gm$-module irréductible, il existe un unique $d \in \ZZZ$ tel que:
$$ \forall t \in \Gm,\ \forall m \in M,\ t.m=t^d m.$$
On note alors $M=\Gamma_{d \epsilon_1}$. Les $\Gm$-modules irréductibles sont donc paramétrés par les entiers relatifs: 
$$d \in \ZZZ \leftrightarrow \Gamma_{d \epsilon_1}$$
et les $O_2(k)$-modules irréductibles sont:
\begin{itemize}
\item le module trivial $\Gamma_{0}$,
\item le module signe noté $\epsilon$,
\item les modules $\Gamma_{i \epsilon_1} \oplus \Gamma_{-i \epsilon_1}$, où $i >0$, qui sont tous de dimension $2$. 
\end{itemize}
En particulier, la représentation standard de $O_2(k)$ (et de $SO_2(k)$) est isomorphe à $\Gamma_{- \epsilon_1} \oplus \Gamma_{\epsilon_1}$.\\ 
Ensuite, par définition de la forme quadratique $q$, on a $Mat_{\BB}(q)=\begin{bmatrix}
  1     & 0 \\          
  0    & 1   
  \end{bmatrix} $
où $\BB$ est la base de $V$ fixée dans la section \ref{lesdiffsituations}. 
On va effectuer un changement de base dans $V$. Soit $\BB_0:=\{v_1, v_2\}$ la base de $V$ telle que  $Mat_{\BB_0}(q)=\begin{bmatrix}          
  0     & 1 \\          
  1    & 0   
  \end{bmatrix}$. 
Dans la base $\BB_0$, on a 
  $$SO(V) \cong \left\{ \begin{bmatrix}           
  \alpha     & 0  \\          
  0    & {\alpha}^{-1}   \end{bmatrix},\ \alpha \in \Gm \right\} \cong \Gm$$ 
et $\Gamma_{\epsilon_1} \cong  \left\langle v_1 \right\rangle,\  \Gamma_{- \epsilon_1} \cong  \left\langle v_2 \right\rangle$. Dans toute la section \ref{casO2}, on travaillera dans les bases $\BB_0$ et $\BB'$ de $V$ et $V'$ respectivement. Nous aurons besoin par la suite d'expliciter des bases de certains $B' \times G$-sous-module de $k[W]$ et ce changement de base va nous permettre d'avoir des générateurs "simples" pour ces modules.

\subsubsection{Points fixes de \texorpdfstring{$\HH$}{H} pour l'opération de \texorpdfstring{$B'$}{B'}}   \label{pointsfixesO2}
On suppose pour le moment que $n'=2$.
Nous allons déterminer les points fixes de $B'$ dans $\HH$. 
On commence par écrire la décomposition de ${k[W]}_2$ comme $G' \times G$-module:
\begin{equation} \label{kW2O2}
{k[W]}_2 \cong S^2(W^*) \cong (S^2(V') \otimes (\Gamma_{2 \epsilon_1} \oplus \Gamma_{-2 \epsilon_1} \oplus \Gamma_0)) \oplus ({\Lambda}^{2}(V') \otimes \epsilon).
\end{equation}

\begin{notation} \label{l3generators2}
On note:
\begin{itemize} \renewcommand{\labelitemi}{$\bullet$}
\item $J$ l'idéal engendré par les $G$-invariants homogènes de degré positif de $k[W]$,
\item $D:=<e_1.e_1>$ l'unique droite $B'$-stable de $S^2(V')$,
\item $I$ l'idéal de $k[W]$ engendré par $(S^2(V')\otimes \Gamma_0) \oplus (D \otimes (\Gamma_{2 \epsilon_1} \oplus \Gamma_{-2 \epsilon_1})) \subset {k[W]}_2$.
\end{itemize}
\end{notation}

\begin{remarque}
On a $J \cap k[W]_2=S^2(V') \otimes \Gamma_0$ comme $G' \times G$-module et ce module engendre l'idéal $J$.
L'idéal $I$ est homogène, $B' \times G$-stable et contient l'idéal $J$.
\end{remarque}

\begin{theoreme} \label{pointfixeborelO2}
L'idéal $I$ est l'unique point fixe de $\HH$ pour l'opération de $B'$.
\end{theoreme}

\begin{proof}
On raisonne comme dans la preuve du théorème \ref{pointfixeborel} en considérant $I_Z$ un point fixe de $B'$ et en étudiant $k[W]/I_Z$ composante par composante:\\
$\bullet$ Composantes de degré $0$ et $1$:\\
On a bien sûr $I_Z \cap {k[W]}_0=\{0\}$ et $I_Z \cap {k[W]}_1 \neq {k[W]}_1$.\\  
$\bullet$ Composante de degré $2$: on utilise la décomposition (\ref{kW2O2}).\\
Pour avoir la décomposition souhaitée de $k[W]/I_Z$ comme $G$-module, on a nécessairement ${k[W]}_2 \cap I_Z \supseteq 3 \Gamma_0 \oplus  (\Gamma_{2 \epsilon_1} \oplus \Gamma_{-2 \epsilon_1})$. 
En effet, le $G$-module $k[W]/I_Z$ contient déjà une copie de la représentation triviale (qui provient de la composante de degré $0$), il ne peut donc pas en contenir d'autre. Ensuite, ${k[W]}_2$ contient $3$ copies de $\Gamma_{2 \epsilon_1} \oplus \Gamma_{-2 \epsilon_1}$ qui est un $G$-module de dimension $2$, donc ${k[W]}_2 \cap I_Z$ contient au moins une copie de $\Gamma_{2 \epsilon_1} \oplus \Gamma_{-2 \epsilon_1}$. Comme $k[W]_2 \cap I_Z$ est $B'$-stable, il contient $D \otimes ((\Gamma_{2 \epsilon_1} \oplus \Gamma_{-2 \epsilon_1}))$ car $D$ est l'unique droite $B'$-stable de $S^2(V')$. Il s'ensuit que $I_Z$ contient $(S^2(V') \otimes V_0) \oplus (D \otimes ((\Gamma_{2 \epsilon_1} \oplus \Gamma_{-2 \epsilon_1})))$ et donc $I_Z \supset I$.
Le lemme qui suit implique que cette inclusion est en fait une égalité et achève ainsi la démonstration du théorème \ref{pointfixeborelO2}:

\begin{lemme} \label{l3generatorsO2}
L'idéal $I$ a pour fonction de Hilbert $h_W$.
\end{lemme}

\noindent \begin{itshape} \textbf{Preuve du lemme:} \end{itshape}
Il faut montrer que 
$$k[W]/I \cong \bigoplus_{M \in \Irr(G)} M^{\oplus \dim(M)}$$  
comme $G$-module. On a l'inclusion d'idéaux $J \subset I$, d'où l'isomorphisme de $B' \times G$-modules 
\begin{equation} \label{isoquotientIJ} 
k[W]/I \cong \frac{k[W]/J}{I/J}.
\end{equation} 
On commence par déterminer la décomposition de $k[W]/J$ en $G$-modules irréductibles. On note les éléments $w \in W$ sous forme matricielle via le choix des bases $\BB_0$ et $\BB'$: $w=\begin{bmatrix}           
  x_1     & y_1 \\          
  x_2    & y_2   \end{bmatrix}$ et on identifie $k[W]$ à $k[x_1,x_2,y_1,y_2]$.\\
Alors
$$J=(x_1 x_2, y_1 y_2, x_1 y_2+x_2 y_1)$$
et $k[W]/J$ a pour base les images des monômes
$$\{x_1^p y_1^q, x_2^p y_2^q, x_1 y_2;\ p,q \geq 0\} \subset k[W]$$
dans $k[W]/J$. On a donc une décomposition de $k[W]/J$ en droites $SO(V)$-stables. On en déduit la décomposition de $k[W]/J$ en $G$-modules irréductibles:
$$\begin{tabular}{|c|c|c|}
  \hline
  représentation & droite(s) des vecteurs de plus haut poids & multiplicité dans $k[W]/J$ \\
  \hline
  $\Gamma_0$ &              $ \left\langle  1 \right\rangle$  & $1$\\
  $\epsilon$ &            $ \left\langle x_1 y_2  \right\rangle$ & $1$\\  
  $\Gamma_{j \epsilon_1} \oplus \Gamma_{-j \epsilon_1}$  & $ \left\langle x_1^{p} y_1^{j-p}  \right\rangle,\ j\geq 1,\ 0 \leq p \leq j$ & $j+1$\\  
  \hline    
\end{tabular}$$

\noindent Ensuite, on a $I=J + (x_1^2,x_2^2)$ donc l'idéal $I/J$ est engendré par les images des éléments $x_1^2$ et $x_2^2$ modulo $J$. Donc, l'image du monôme $x_1^{p} y_1^{j-p}$ dans $k[W]/J$ est dans $I/J$ si et seulement si $j \geq 2$ et $p \geq 2$. On en déduit, via l'isomorphisme (\ref{isoquotientIJ}), que chaque $G$-module irréductible apparaît avec une multiplicité égale à sa dimension dans $k[W]/I$. 
\end{proof}

\begin{remarque}
On a $\Stab_{G'}(I)=B'$, donc l'unique orbite fermée de $\HH$ est isomorphe à $G'/B' \cong \PP^1$.
\end{remarque}

\noindent Le corollaire qui suit découle du lemme \ref{fixespoints} et du théorème \ref{pointfixeborelO2}:

\begin{corollaire} \label{HconnexeO2}
Le schéma $\HH$ est connexe.
\end{corollaire}

\subsubsection{Espace tangent de \texorpdfstring{$\HH$}{H} en \texorpdfstring{$Z_0$}{Z0}} \label{espacTangO2}

\noindent On note $Z_0:=\Spec(k[W]/I)$. Nous allons démontrer la  

\begin{proposition} \label{dimTangentO2}
$\dim(T_{Z_0} \HH)=3 .$
\end{proposition}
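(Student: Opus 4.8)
The plan is to follow exactly the template used for Proposition \ref{dimTangent} in the $GL_2$ case: combine the tangent-space description of Proposition \ref{isoTangent}, the dimension count of Lemma \ref{InegTang}, and a lower bound coming from the principal component $\HHp$. Throughout I set $R := k[W]/I$ and identify $k[W] \cong k[x_1,x_2,y_1,y_2]$ via the bases $\BB_0$ and $\BB'$, as in the proof of Lemma \ref{l3generatorsO2}, where one computes $J=(x_1x_2,\,y_1y_2,\,x_1y_2+x_2y_1)$ and $I=J+(x_1^2,x_2^2)$.

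First I would record the module of generators of $I/I^2$. Since $I$ is generated in degree $2$ by $(S^2(V')\otimes \Gamma_0)\oplus (D\otimes(\Gamma_{2\epsilon_1}\oplus\Gamma_{-2\epsilon_1}))$, I take $N$ to be this $5$-dimensional $B'\times G$-submodule of $k[W]_2$, with basis $g_1:=x_1^2$, $g_2:=x_2^2$ (spanning $D\otimes(\Gamma_{2\epsilon_1}\oplus\Gamma_{-2\epsilon_1})$) and the three invariants $g_3:=x_1x_2$, $g_4:=y_1y_2$, $g_5:=x_1y_2+x_2y_1$ (spanning $S^2(V')\otimes\Gamma_0$). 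Then the exact sequence (\ref{complexeCornFlakes}) holds and Lemma \ref{InegTang} gives $\dim(T_{Z_0}\HH)=\dim(N)-\rg(\rho^*)=5-\rg(\rho^*)$. Next I would establish the lower bound $\dim(T_{Z_0}\HH)\geq 3$: by Theorem \ref{pointfixeborelO2} the ideal $I$ is the unique $B'$-fixed point, so Lemma \ref{fixespoints} forces $Z_0\in\HHp$; since $\gamma\colon\HHp\to W/\!/G$ is birational (Definition \ref{Horb}) and $W/\!/G=S^2(V'^*)\cong\Affds$ has dimension $3$ when $n'=n=2$, we get $\dim(\HHp)=3$ and hence $\dim(T_{Z_0}\HH)\geq 3$. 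Combined with the identity above, this yields $\rg(\rho^*)\leq 2$.

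The heart of the argument is the reverse inequality $\rg(\rho^*)\geq 2$, which forces equality and gives $\dim(T_{Z_0}\HH)=3$. Here I would exhibit the two explicit syzygies $r_1:=x_2\otimes g_1-x_1\otimes g_3$ and $r_2:=x_1\otimes g_5-y_2\otimes g_1-y_1\otimes g_3$, both of which vanish identically in $k[W]$, together with the two $G$-equivariant functionals $\psi_3,\psi_5\in\Hom_R^G(R\otimes N,R)$ determined by $\psi_i(g_j\otimes 1)=\delta_i^j$. These are well defined because $\langle g_3,g_4,g_5\rangle$ is the $G$-trivial isotypic part of $N$ while $\langle g_1,g_2\rangle$ is sent to $0$, and $\Gamma_{2\epsilon_1}\oplus\Gamma_{-2\epsilon_1}$ is an irreducible $G$-module. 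Evaluating through the exact sequence (\ref{complexeCornFlakes2}) gives $\rho^*(\psi_3)(r_1)=-x_1$, $\rho^*(\psi_5)(r_1)=0$, $\rho^*(\psi_3)(r_2)=-y_1$ and $\rho^*(\psi_5)(r_2)=x_1$. Since $x_1,y_1$ are nonzero in $R$ (the ideal $I$ contains nothing of degree $1$), any relation $\lambda_3\rho^*(\psi_3)+\lambda_5\rho^*(\psi_5)=0$ forces $\lambda_3=0$ upon evaluation at $r_1$ and then $\lambda_5=0$ upon evaluation at $r_2$. Thus $\rho^*(\psi_3),\rho^*(\psi_5)$ are linearly independent in $\Im(\rho^*)$, so $\rg(\rho^*)\geq 2$, whence $\rg(\rho^*)=2$ and $\dim(T_{Z_0}\HH)=3$.

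The main obstacle is purely bookkeeping: producing enough genuine relations among the $g_i$ and checking the $G$-equivariance of the functionals. As in the $GL_2(k)$ and $GL_3(k)$ cases one may alternatively read off a generating family of $\Hom_R(I/I^2,R)$ from \cite[Macaulay2]{Mac2} and verify the count of $G$-equivariant members directly; but because this example is so small, the two syzygies above suffice and no genuine computational difficulty arises.
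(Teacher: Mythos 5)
Your proposal is correct and follows essentially the same route as the paper: the same five-dimensional generating module $N$, the identity $\dim(T_{Z_0}\HH)=5-\rg(\rho^*)$ from Lemma \ref{InegTang}, the lower bound $\dim(T_{Z_0}\HH)\geq\dim(\HHp)=3$ via Lemma \ref{fixespoints}, and two explicit syzygies paired with the $G$-equivariant functionals dual to the invariant generators to force $\rg(\rho^*)\geq 2$. The only difference is cosmetic: your Koszul-type relations are verified by hand rather than read off a Macaulay2 computation, and your $(g_3,g_5)$ and $(r_1,r_2)$ correspond, up to relabeling and sign, to the paper's $(f_1,f_3)$ and $(r_2,r_1)$.
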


On identifie $k[W]$ à $k[x_1,x_2,y_1,y_2]$ comme dans la preuve du lemme \ref{l3generatorsO2} et on explicite des bases de certains $B' \times G$-modules qui apparaissent dans $k[W]_2$:\\

$
 \left.
    \begin{array}{l}
        f_1:=x_1 x_2 \\
        f_2:=y_1 y_2 \\
        f_3:=x_1 y_2+ x_2 y_1 
      \end{array}
\right \} \text{ est une base de } S^2(V') \otimes \Gamma_0, 
$

$
 \left.
    \begin{array}{l}
        h_1:=x_1^2 \\
        h_2:=x_2^2 \\ 
        h_3:=y_1^2 \\
        h_4:=y_2^2 \\
        h_5:=x_1 y_1 \\
        h_6:=x_2 y_2 
      \end{array}
\right \} \text{ est une base de } S^2(V') \otimes (\Gamma_{2 \epsilon_1} \oplus \Gamma_{-2 \epsilon_1}). 
$\\

On reprend les notations de la section \ref{ConnexitéetTangence}. Soit $R:=k[W]/I$ et soit 
$$N:= \left\langle  f_1,f_2,f_3,h_1,h_2 \right\rangle \subset k[W]$$ 
qui est un $B' \times G$-module qui engendre l'idéal $I$. D'après \cite[Macaulay2]{Mac2}, les relations entre les générateurs ci-dessus du $R$-module $I/I^2$ sont données dans la table \ref{table4}.

\begin{table}[ht]
\begin{center}
\center \includegraphics[scale=0.8]{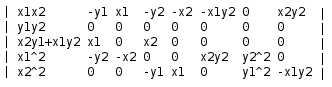}
\end{center}
\caption{ \label{table4} Relations entre les générateurs de $I/I^2$}
\end{table}

\noindent En particulier, on a les relations suivantes données par les colonnes $2$ et $5$ respectivement:
$$ \left \{
    \begin{array}{l}
r_1:=x_1 \otimes f_3-y_1 \otimes f_1 -y_2 \otimes h_1, \\
r_2:=x_2 \otimes f_1-x_1 \otimes h_2.    
 \end{array}
      \right.$$
On a $\dim(N)=5$ et donc d'après le lemme \ref{InegTang}, on a $\dim(T_{Z_0} \HH)=5- \rg(\rho^*)$. 
D'après le lemme \ref{fixespoints}, la variété $\HHp$ contient au moins un point fixe pour l'opération de $B'$, donc $Z_0 \in \HHp$ et donc $\dim(T_{Z_0} \HH) \geq \dim(\HHp)=3$. Donc, pour montrer la proposition \ref{dimTangentO2}, il suffit de montrer le

\begin{lemme} 
$\rg(\rho^*) \geq 2 .$
\end{lemme}

\begin{proof} 
Pour $i=1,2,3$, on définit $\psi_i \in \Hom_R^G(R \otimes N,R)$ par 
$$
\left\{
    \begin{array}{ll}
        \psi_i(h_j \otimes 1)=0 \ \text{ pour }j=1,2, \\
        \psi_i(f_j \otimes 1)={\delta}_i^j\ \text{ pour }j=1,2,3,
    \end{array}
\right.
$$
où $\delta_i^j$ est le symbole de Kronecker. Les $\psi_i$ forment une famille libre de $\Hom_R^{G}(R \otimes N,R)$, nous allons voir que $\{ \rho^*(\psi_1),\, \rho^*(\psi_3) \}$ est une famille libre de $\Hom_R^G(R \otimes \RRR,R)$ ce qui démontrera le lemme. Soient ${\lambda}_1$, ${\lambda}_3 \in k$ tels que 
$$ {\lambda}_1 \, \rho^*({\psi}_1)+ {\lambda}_3 \, \rho^*({\psi}_3)=0.$$
On évalue cette relation en $r_2$, on obtient:
$${\lambda}_1 \, {\psi}_1(\rho(1 \otimes r_2))+ {\lambda}_3 \, {\psi}_3(\rho(1 \otimes r_2))={\lambda}_1 \, {\psi}_1(r_2)+ {\lambda}_3 \, {\psi}_3(r_2)={\lambda}_1 x_2=0.$$
De même, on évalue cette relation en $r_1$, on obtient:
$${\lambda}_3 \, x_1- {\lambda}_1 \, y_1=0.$$
On en déduit que $({\lambda}_1, {\lambda}_3)=(0,0)$ et donc $\{ \rho^*(\psi_1),\, \rho^*(\psi_3) \}$ est bien une famille libre.
\end{proof}

\begin{remarque}
D'après \cite[Macaulay2]{Mac2}, une famille de générateurs du $R$-module $\Hom_R(I/I^2,R)$ est donnée dans la table \ref{table5}. 
\begin{table}[ht]
\begin{center}
\center \includegraphics[scale=0.8]{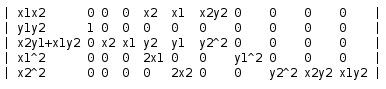}
\end{center}
\caption{\label{table5} Générateurs du $R$-module $\Hom_R(I/I^2,R)$}
\end{table}

\noindent On note $\Phi_i$ le morphisme associé à la colonne $i+1$. On vérifie que les trois morphismes $\Phi_1$, $\Phi_7+\Phi_8$ et $\frac{1}{2}(y_1 \Phi_4+y_2 \Phi_5)$ sont $G$-équivariants et linéairement indépendants. Ils forment donc une base de l'espace vectoriel ${\Hom}_R^G(I/I^2,R)$ et on en déduit, comme dans la remarque \ref{Bmoduleexplicite}, que 
$${\Hom}_R^G(I/I^2,R) \cong D^* \oplus \left( D^* \otimes \frac{S^2(V')}{D}\right)$$
comme $B'$-module.   
\end{remarque}

\noindent On déduit du lemme  \ref{Hlisse4} et et de ce qui précède le 

\begin{corollaire} \label{Hcasn2O2}
$\HH=\HHp$ est une variété lisse de dimension $3$. 
\end{corollaire}

\subsubsection{Construction d'un morphisme équivariant \texorpdfstring{$\delta:\ \HH \rightarrow \PP(W/\!/G)$}{}} \label{construction_delta}

Le lemme qui suit découle de la théorie classique des invariants comme les lemmes \ref{exi1} et \ref{lemmeThClass2}.

\begin{lemme} 
Le ${k[W]}^{G}$-module ${k[W]}_{(\Gamma_{2 \epsilon_1} \oplus \Gamma_{-2 \epsilon_1})}$ est engendré par ${\Hom}^{G}(\Gamma_{2 \epsilon_1} \oplus \Gamma_{-2 \epsilon_1},{k[W]}_2)$.
\end{lemme}

Puis, on a l'isomorphisme de $G'$-modules
$$\Hom^{G}((\Gamma_{2 \epsilon_1} \oplus \Gamma_{-2 \epsilon_1}),{k[W]}_2) \cong S^2(V').$$ 
La proposition \ref{morphismegrass} donne un morphisme $G'$-équivariant 
$$\HH \rightarrow \Gr(2,S^2(V'^*)).$$
Et $V'^* \otimes \det(V') \cong V'$, donc $S^2(V'^*) \otimes \det^2(V) \cong S^2(V')$ et donc 
$$\Gr(2,S^2(V'^*)) \cong \PP ( S^2(V')) \cong \PP ( S^2(V'^*))=\PP(W/\!/G)$$ 
comme $G'$-variété. On obtient donc un morphisme $G'$-équivariant 
\begin{equation} \label{maoprhismedeltaO2}
\delta:\ \HH \rightarrow \PP (W/\!/G).
\end{equation}

On rappelle que l'on note 
$$Y_0:=\left\{(Q,L)\in W/\!/G \times \PP (W/\!/G)\ |\ Q \in L \right\}=\OO_{\PP (W/\!/G)}(-1)$$ 
l'éclatement en $0$ de $W/\!/G$. On vérifie alors que le morphisme $\gamma \times \delta$ envoie ${\HH}$ dans $Y_0$. Puis, en procédant comme pour la proposition \ref{gammaiso}, on montre que le morphisme $\gamma \times \delta:\ \HH \rightarrow Y_0$ est un isomorphisme $G'$-équivariant.

\subsubsection{Cas \texorpdfstring{$n'>2$}{n'>2}}  \label{masterpropositioncasn2O2}

Pour tout $L \in \PP(W/\!/G)$, on définit $\Im(L)$ comme dans la notation \ref{notationsKerd}.
On considère la variété
$$\ZZ:=\left\{(Q,L,E) \in W/\!/G \times \PP(W/\!/G) \times \Gr(2,V'^*) \ \mid \ Q \in L \text{ et } \Im(L) \subset E  \right\}.$$       
On a le diagramme
$$\xymatrix{ &  \ZZ \ar@{->>}[ld]_{q_1} \ar@{->>}[rd]^{q_2} \\   Y_0 && \Gr(2,V'^*) }$$
où $q_1$ et $q_2$ sont les projections naturelles. En particulier, avec les notations du début de la section, on vérifie que $\ZZ \cong S^2\left( T \right)$.


\begin{lemme} \label{identificationblowupO2}
On a un isomorphisme de variétés 
$Y_1 \cong \ZZ$
et via cet isomorphisme, l'éclatement $Y_1 \rightarrow Y_0$ s'identifie au morphisme $q_1:\ \ZZ \rightarrow Y_0$.
\end{lemme}

\begin{proof}
La preuve est analogue à celle du lemme \ref{identificationblowup}.
\end{proof}

On raisonne alors comme dans la section \ref{masterpropositioncasn2}: on identifie $Y_1$ à $\ZZ$ grâce au lemme \ref{identificationblowupO2}, on montre l'existence d'un isomorphisme $G'$-équivariant $\HH \cong Y_1$, on identifie $\HH$ à $Y_1$ via cet isomorphisme et on montre que $\gamma$ est la composition des éclatements $Y_1 \rightarrow Y_0 \rightarrow W/\!/G$. En particulier, $\gamma$ est toujours une résolution de $W/\!/G$.

\subsection{Etude du cas \texorpdfstring{$\dim(V)=3$}{n=3}}  \label{monnio}

Dans toute cette section, on fixe $n=3$. On a $G \cong O_3(k)$, $W/\!/G=S^2(V'^*)^{\leq 3}$ et $\rho:\ \HH \rightarrow \Gr(3,V'^*)$ est le morphisme de la section \ref{redGrass2}. Nous allons démontrer le 

\begin{theoreme} \label{casn3O3} 
Si $n' \geq 3$, alors le schéma $\HH$ est singulier.
\end{theoreme} 

\begin{remarque}
Si $n' \leq 2$, alors $\HH \cong W/\!/G$ est une variété lisse et $\gamma$ est un isomorphisme d'après le corollaire \ref{cas_facileOn}. 
\end{remarque}

Lorsque $n'=3$, la non-lissité de $\HH$ est donnée par le corollaire \ref{HpaslisseO3}. Le cas général $n' \geq 3$ se déduit du cas particulier $n'=3$ grâce au principe de réduction (proposition \ref{reduction1}). Nous allons procéder comme pour la démonstration du théorème \ref{casn3}, cependant les choses vont s'avérer un peu plus compliquées ici car nous allons voir que $\HH$ possède deux points fixes pour $B'$.

\subsubsection{Représentations de \texorpdfstring{$O_3(k)$}{O(V)}}  \label{threpO3}

Comme pour le cas $n=2$, la théorie des représentation de $O_n(k)$ est particulièrement simple lorsque $n=3$. D'après \cite[§10.4]{FH}, on a un isomorphisme de groupes algébriques $SO_3(k) \cong {PSL}_2(k)$, où ${PSL}_2(k):={SL}_2(k)/\{\pm I_2\}$. Et donc, $O_3(k) \cong {PSL}_2(k) \times \ZZZ_2$ d'après (\ref{OnSonScinde}).\\
Les représentations irréductibles de ${SL}_2(k)$ sont paramétrées par les entiers naturels:
$d \in \NN \leftrightarrow V(d)$, où $V(d):={k[x,y]}_d$ est l'ensemble des polynômes homogènes de degré $d$. En particulier, on a $\dim(V(d))=d+1$. Les représentations irréductibles de $SO_3(k)$ sont donc paramétrées par les entiers naturels pairs: 
$d \in 2 \NN  \leftrightarrow V(d).$
On note $M_0$ (resp. $\epsilon$) la représentation triviale (resp. la représentation signe) de $\ZZZ_2$. Les représentations irréductibles de $O_3(k)$ sont:\\ 
$\bullet \ V(d) \otimes M_0$, avec $d$ pair, de dimension $d+1$,\\
$\bullet \ V(d) \otimes \epsilon$, avec $d$ pair, de dimension $d+1$.\\
Si $g \in O_3(k)$, on note $\overline{g}$ l'image de $g$ dans $O_3(k)/SO_3(k) \cong \ZZZ_2$ et alors $\epsilon(\overline{g})=\det(g)$. Pour alléger les notations, on fera l'abus d'écrire $V(d)$ pour désigner la représentation $V(d) \otimes M_0$. Avec ces notations, $V(0)$ est la représentation triviale de $O_3(k)$ et $SO_3(k)$, $V(2)$ est la représentation standard de $SO_3(k)$ et $V(2) \otimes \epsilon$ est la représentation standard de $O_3(k)$.\\
On rappelle enfin que l'on dispose de la formule de Clebsch-Gordan (\cite[Exercise 11.11]{FH}) pour décomposer les produits tensoriels de représentations de $SL_2(k)$.

\subsubsection{Points fixes de \texorpdfstring{$\HH$}{H} pour l'opération de \texorpdfstring{$B'$}{B'}} \label{sectionptfixeO3}

On suppose dorénavant que $n'=3$. On souhaite montrer que $\HH$ est singulier. Pour ce faire, on commence par déterminer les points fixes de $B'$ dans $\HH$. 
On a:
\begin{align}
{k[W]}_1 \cong &V' \otimes V(2) \otimes \epsilon, \label{kW1O3}\\
{k[W]}_2 \cong &(S^2(V') \otimes (V(4) \oplus V(0))) \oplus ({\Lambda}^2(V') \otimes V(2)), \label{kW2O3} \\
{k[W]}_3 \cong &(S^3(V') \otimes (V(6) \oplus V(2)) \otimes \epsilon) \oplus  (S^{2,1}(V') \otimes (V(4) \oplus V(2)) \otimes \epsilon) \label{kW3O3} \\ 
               &\oplus (\Lambda^3(V') \otimes V(0) \otimes \epsilon), \notag 
\end{align}
comme $G'\times G$-modules.

On reprend les notations de la section \ref{rappelsthedesreps} et on considère les différents poids présents dans le $G'$-module $S^{2,1}(V')$:
$$\xymatrix{ & 2 \epsilon_1+\epsilon_2 \\ \epsilon_1+2 \epsilon_2 \ar[ru] && 2\epsilon_1+ \epsilon_3 \ar[lu] \\ & \epsilon_1+\epsilon_2+ \epsilon_3 \ar[ru] \ar[lu] \\ 2 \epsilon_2+\epsilon_3 \ar[ru] &&  \epsilon_1+2 \epsilon_3 \ar[lu] \\ & \epsilon_2+2 \epsilon_3 \ar[ru] \ar[lu] }$$
Le poids $\epsilon_1+\epsilon_2+\epsilon_3$ est présent avec multiplicité $2$ et tous les autres poids avec multiplicité $1$. Les flèches indiquent comment opère $B'$ dans $S^{2,1}(V')$: deux poids $\lambda_1$ et $\lambda_2$ peuvent être reliés par une suite de flèches si et seulement si il existe un élément $b' \in B'$ tel que, si l'on note $v_i$ un vecteur de poids $\lambda_i$, alors $b'.v_1= \alpha v_2 + \ldots$ pour un certain $\alpha \neq 0$. On note $\PPP$ le plan $B'$-stable de $S^{2,1}(V')$ engendré par les deux droites $T'$-stables associées aux poids $2 \epsilon_1+\epsilon_2$ et $2 \epsilon_1+\epsilon_3$. On vérifie que le plan $T'$-stable associé au poids $\epsilon_1+\epsilon_2+\epsilon_3$ contient une unique droite $L$ telle que pour tout $v \in L$, pour tout $b' \in B',\ b'.v \in \PPP \oplus L$. On note $E_1:=\PPP \oplus L$ et $E_2$ le sous-espace de $S^{2,1}(V')$ engendré par les trois droites $T'$-stables associées aux poids $2 \epsilon_1+\epsilon_2$, $2 \epsilon_1+\epsilon_3$ et $\epsilon_1+2\epsilon_2$. Les espaces $E_1$ et $E_2$ sont les deux seuls $B'$-sous-module de $S^{2,1}(V')$ de dimension $3$ contenant $\PPP$.

\begin{notation}  \label{espacesdim3O3}
On note:
\begin{itemize} \renewcommand{\labelitemi}{$\bullet$}
\item $J$ l'idéal engendré par les $G$-invariants homogènes de degré positif de $k[W]$,
\item $D:=\left\langle  e_1.e_1 \right\rangle \subset S^2(V')$ l'unique droite $B'$-stable de $S^2(V')$,
\item $I_i$  l'idéal engendré par  $(D \otimes V(4)) \oplus (S^2(V') \otimes V(0)) \subset k[W]_2$ \text{ et par } $(E_i \otimes V(4)) \subset {k[W]}_3$ pour $i=1,2$.
\end{itemize}
\end{notation}

\begin{remarque}
On a $J \cap k[W]_2=S^2(V') \otimes V(0)$ comme $G' \times G$-module et ce module engendre l'idéal $J$.
Les idéaux $I_1$ et $I_2$ sont homogènes, $B' \times G$-stable et contiennent $J$.
\end{remarque}

\noindent Nous allons montrer le

\begin{theoreme} \label{pointfixeborel32}
Les idéaux $I_1$ et $I_2$ sont les deux points fixes de $B'$ dans $\HH$. 
\end{theoreme}

\noindent Pour ce faire, nous aurons besoin du

\begin{lemme} \label{decompo_Si2}
Pour chaque $i \geq 1$, on a les isomorphismes de $G$-modules suivants:
\begin{enumerate}
\item        $S^i(V) \cong V(2i) \oplus V(2i-4) \oplus \cdots \oplus V(0) \text{ si $i$ est pair,}$ 
\item        $S^i(V) \cong (V(2i) \oplus V(2i-4) \oplus \cdots \oplus V(2)) \otimes \epsilon \text{ si $i$ est impair,}$ 
\item        $S^{i,1}(V) \cong (V(2i) \oplus V(2i-2) \oplus \cdots \oplus V(2)) \otimes \epsilon \text{ si $i$ est pair,}$
\item        $S^{i,1}(V) \cong V(2i) \oplus V(2i-2) \oplus \cdots \oplus V(2) \text{ si $i$ est impair.}$
\end{enumerate}
\end{lemme}

\begin{proof}
On remarque que $O(V)/SO(V) \cong \ZZZ_2$ opère dans $S^i(V)$ trivialement lorsque $i$ est pair, et par le signe lorsque $i$ est impair. Il suffit donc de montrer que, pour chaque $i \geq 1$, on a les isomorphismes de $SO(V)$-modules suivants:
\begin{enumerate}
\item  $S^i(V) \cong V(2i) \oplus V(2i-4) \oplus \cdots \oplus V(0) \text{ si $i$ est pair,}$   
\item  $S^i(V) \cong V(2i) \oplus V(2i-4) \oplus \cdots \oplus V(2) \text{ si $i$ est impair,}$ 
\item  $S^{i,1}(V) \cong V(2i) \oplus V(2i-2) \oplus \cdots \oplus V(2)$.
\end{enumerate}
Les isomorphismes $(1)$ et $(2)$ sont donnés dans \cite[Exercise 11.14]{FH}. Ensuite, soit $i \geq 1$ un entier pair, alors
$$S^i(V) \otimes V \cong S^{i+1}(V) \oplus S^{i,1}(V)$$ 
comme $GL(V)$-module. On utilise l'isomorphisme $(1)$ et la formule de Clebsch-Gordan pour décomposer $S^i(V) \otimes V$ en $SO(V)$-modules irréductibles:
\begin{align*}
S^i(V) \otimes V \cong &(V(2i) \oplus V(2i-4) \oplus \cdots \oplus V(0)) \otimes V(2)\\
 \cong &(V(2i) \oplus V(2i-2) \oplus \cdots \oplus V(2)) \\
 &\ \oplus (V(2i+2) \oplus V(2i-2) \oplus V(2i-6) \oplus \cdots \oplus V(2)).     
\end{align*}
On utilise alors l'isomorphisme $(2)$ pour décomposer $S^{i+1}(V)$ et le résultat s'ensuit. La démarche est analogue dans le cas où $i$ est impair. 
\end{proof}  

\begin{proof}[\textbf{Preuve du théorème \ref{pointfixeborel32}}]
On raisonne comme dans la preuve du théorème \ref{pointfixeborel} en considérant $I_Z$ un point fixe de $B'$ et en étudiant $k[W]/I_Z$ composante par composante:\\
$\bullet$ Composantes de degré $0$ et $1$:\\
On a bien sûr $I_Z \cap {k[W]}_0=\{0\}$ et $I_Z \cap {k[W]}_1 \neq {k[W]}_1$.\\ 
$\bullet$ Composante de degré $2$: on utilise la décomposition (\ref{kW2O3}).\\
Pour avoir la décomposition souhaitée de $k[W]/I_Z$ comme $G$-module, on a nécessairement ${k[W]}_2 \cap I_Z \supseteq 6 V(0) \oplus V(4)$. 
En effet, le $G$-module $k[W]/I_Z$ contient déjà une copie de la représentation triviale (qui provient de la composante de degré $0$), il ne peut donc pas en contenir d'autre. Ensuite, ${k[W]}_2$ contient $6$ copies de $V(4)$ qui est un $G$-module de dimension $5$, donc ${k[W]}_2 \cap I_Z$ contient au moins une copie de $V(4)$. Comme $k[W]_2 \cap I_Z$ est $B'$-stable, il contient $D \otimes V(4)$ car $D$ est l'unique droite $B'$-stable de $S^2(V')$. Il s'ensuit que $I_Z$ contient $(S^2(V') \otimes V(0)) \oplus (D \otimes V(4))$.\\
$\bullet$ Composante de degré $3$: on utilise la décomposition (\ref{kW3O3}).\\
On remarque que ${k[W]}_3$ contient huit copies de $V(4) \otimes \epsilon$ qui est un $G$-module de dimension $5$, donc nécessairement $I_Z \cap {k[W]}_3 \supset 3 V(4) \otimes \epsilon$. On reprend les notations du début de la section \ref{sectionptfixeO3}.
On vérifie que l'idéal engendré par $(D \otimes V(4)) \oplus (S^2(V') \otimes V(0)) \subset k[W]_2$ contient $\PPP \otimes V(4) \otimes \epsilon$. Comme $k[W]_3 \cap I_Z$ est $B'$-stable, il contient $E_i \otimes V(4) \otimes \epsilon$, pour $i=1$ ou $2$, car on a vu que $E_1$ et $E_2$ sont les seuls espaces $B'$-stables de dimension $3$ dans $S^{2,1}(V')$ contenant $\PPP$. Donc ${I}_Z$ contient nécessairement $I_1$ ou $I_2$. Le lemme qui suit achève la preuve du théorème \ref{pointfixeborel32}:  

\begin{lemme} \label{bonnefcthilbO3}
Les idéaux $I_1$ et $I_2$ ont pour fonction de Hilbert $h_W$.
\end{lemme}

\noindent \begin{itshape} \textbf{Preuve du lemme:} \end{itshape} 
On considère l'idéal $J$ défini dans la notation \ref{espacesdim3O3}. D'après le lemme \ref{decompo_Si2},  pour chaque $i \geq 2$, on a les inclusions:
\begin{equation} \label{decompoInclu}
\left\{
    \begin{array}{l}
      {k[W]}_i/({J \cap {k[W]}_i}) \supset (S^i(V') \otimes V(2i)) \oplus (S^{i-1,1}(V') \otimes V(2i-2)) \text{ si $i$ est pair,}\\
      {k[W]}_i/({J \cap {k[W]}_i}) \supset (S^i(V') \otimes V(2i) \otimes \epsilon) \oplus (S^{i-1,1}(V') \otimes V(2i-2) \otimes \epsilon) \text{ si $i$ est impair.} 
       \end{array}
\right.
\end{equation}
En effet, l'idéal $J$ est engendré par les copies de $V(0)$ dans ${k[W]}_2$ et donc, si $p$ est un entier tel que $V(2p) \subset J \cap {k[W]}_i$, alors nécessairement $p \leq i-2$. On dispose donc, pour chaque $i \geq 1$, d'une minoration de la dimension de ${k[W]}_i/({J \cap {k[W]}_i})$ par 
$$Q(i):=\dim((S^i(V') \otimes V(2i)) \oplus (S^{i-1,1}(V') \otimes V(2i-2)))=3i^3+\frac{5}{2}i^2+\frac{3}{2}i+2 .$$
Ensuite, \cite[Macaulay2]{Mac2} nous fournit le polynôme de Hilbert $P_J$ et la fonction de Hilbert classique $f_J$ de l'idéal $J$:
$$
 \left\{
    \begin{array}{l}
        P_J(X)=3X^3+\frac{5}{2}X^2+\frac{3}{2}X+2,\\
        \forall n \notin \{0,3\},\ f_J(n)=P_J(n) \text{ et } f_J(0)=1,\ f_J(3)=111.
    \end{array}
\right.
$$
Il s'ensuit que les inclusions (\ref{decompoInclu}) sont en fait des égalités pour $i=2$ et $i \geq 4$. Pour $i=3$, on a 
$${k[W]}_3/({J \cap {k[W]}_3}) \cong (S^3(V') \otimes V(6) \otimes \epsilon) \oplus (S^{2,1}(V') \otimes V(4) \otimes \epsilon) \oplus \Lambda^3(V') \otimes \epsilon .$$ 
On note $K$ l'idéal de $k[W]$ engendré par $(D \otimes V(4)) \oplus (S^2(V') \otimes V(0)) \subset {k[W]}_2$. Alors $(K \cap k[W]_2)/(J \cap k[W]_2) \cong D \otimes V(4)$ comme $B' \times G$-module. Donc ${(K/J)}^{U}$ est un idéal de ${(k[W]/J)}^{U}$ engendré par un unique élément de ${({k[W]}_2/ {k[W]}_2 \cap J)}^{U}$ et donc, pour tout $i \geq 1$, la multiplicité du $G$-module $V(2i) \otimes \left( \epsilon^{\otimes i} \right)$ dans $k[W]_i/(K \cap k[W]_i)$ est égale à
$$\dim(S^i(V'))-\dim(S^{i-2}(V'))=2i+1=\dim(V(2i) \otimes \epsilon^{\otimes i}) .$$
Ensuite, on fixe $j \in \{1,2\}$, alors $I_j$ est l'idéal de $k[W]$ engendré par $K$ et par le $B'\times G$-module $E_j \otimes V(4) \otimes \epsilon \subset k[W]_3$, donc le nombre de copies de $V(2i) \otimes \left(\epsilon^{\otimes i}\right)$ dans $k[W]_i/(K \cap k[W]_i)$ est le même que dans $k[W]_i/(I_j \cap k[W]_i)$. On calcule le polynôme de Hilbert $Q_j$ et la fonction de Hilbert $g_j$ de l'idéal $I_j$ avec \cite[Macaulay2]{Mac2}:
$$
 \left\{
    \begin{array}{l}
        Q_j(X)=8X^2+2,\\
        \forall n \geq 4,\ g_j(n)=Q_j(n) \text{ et } g_j(0)=1,\ g_j(1)=9,\ g_j(2)=34,\ g_j(3)=75.
    \end{array}
\right.
$$ 
Et donc, la multiplicité du $G$-module $V(2i)$ dans $k[W]/I_j$ est:
$$
 \left\{
    \begin{array}{l}
        2i+1=\dim(V(2i)) \text{ si $i$ est pair,}\\
       \frac{g(i+1)-{(2i+3)}^2}{2i+1}=2i+1=\dim(V(2i)) \text{ si $i$ est impair,}
    \end{array}
\right.
$$ 
et la multiplicité du $G$-module $V(2i) \otimes \epsilon$ dans $k[W]/I_j$ est:
$$
 \left\{
    \begin{array}{l}
\frac{g(i+1)-{(2i+3)}^2}{2i+1}=2i+1=\dim(V(2i)) \text{ si $i$ est pair,}\\
2i+1=\dim(V(2i) \otimes \epsilon) \text{ si $i$ est impair.}   
 \end{array}
\right.
$$ 
Il s'ensuit que l'idéal $I_j$ a la bonne fonction de Hilbert. 
\end{proof}


\begin{remarque}
On a $\Stab_{G'}(I_1)=\Stab_{G'}(I_2)=B'$, donc chacune des deux orbites fermées de $\HH$ est isomorphe à $G'/B'$.
\end{remarque} 

\subsubsection{\texorpdfstring{$I_2$}{I2} est un point de \texorpdfstring{$\HHp$}{Hp}}  \label{reductibilitéO3}

Le but de cette section est de montrer la

\begin{proposition}  \label{reductiO3}
L'idéal $I_2$ appartient à $\HHp$.
\end{proposition}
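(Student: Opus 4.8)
We need to show that the ideal $I_2$ lies in the principal component $\HHp$. Recall from the previous sections that $W/\!/G = S^2(V'^*)^{\leq 3}$ has a unique closed $G'$-orbit $\{0\}$, and by the general framework (Lemme \ref{fixespoints}), the principal component $\HHp$ contains at least one $B'$-fixed point. By Théorème \ref{pointfixeborel32}, $\HH$ has exactly two $B'$-fixed points, $I_1$ and $I_2$. So at least one of them is in $\HHp$; the claim is that it is precisely $I_2$.

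Let me think about the structure here. The point $I_2$ corresponds to the $B'$-submodule $E_2 = \langle 2\epsilon_1+\epsilon_2,\ 2\epsilon_1+\epsilon_3,\ \epsilon_1+2\epsilon_2\rangle$, whereas $I_1$ uses $E_1 = \PPP \oplus L$. The distinguishing feature is which $B'$-stable $3$-dimensional subspace of $S^{2,1}(V')$ one picks for the degree-$3$ generators of the ideal.

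The natural strategy to prove that a specific $B'$-fixed point lies in the principal component is to exhibit a curve, or better a $1$-parameter family, of ideals in $\HH$ that are generically in $\gamma^{-1}(U_N)$ (the open flat locus) and whose limit is $I_2$. Concretely:

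**Plan de la preuve.**

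First I would set up the explicit affine coordinates $k[W] \cong k[x_{ij}]$ (as in the degree-by-degree analysis of the preceding proofs) and realize $I_2$ as an explicit homogeneous ideal, using the bases for $E_2 \otimes V(4)\otimes\epsilon$ and $(D\otimes V(4))\oplus(S^2(V')\otimes V(0))$. Then I would construct a one-parameter family: take a generic point $y_0 \in U_N \subset W/\!/G$ (rank-$3$ symmetric form), let $Z_{y_0} = \gamma^{-1}(y_0)$ be the corresponding fiber, which lies in $\gamma^{-1}(U_N) \subset \HHp$ by Proposition \ref{chow}, and degenerate it toward the origin. Because $\HHp$ is closed in $\HH$ (it is an irreducible component of $\HHr$, by Définition \ref{Horb}) and $B'$-stable, the limit must be a $B'$-fixed point lying in $\HHp$; it suffices to identify that limit as $I_2$ rather than $I_1$.

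The cleanest realization of this degeneration uses the $\Gm \subset T'$ action: choose a generic element $z \in \gamma^{-1}(U_N)$ whose orbit closure $\overline{T'.z}$ meets the fixed-point set, and compute $\lim_{t\to 0}\lambda(t).z$ for a suitable one-parameter subgroup $\lambda$ of $T'$ chosen so that the limiting ideal is $B'$-stable. The key computation is to determine which degree-$3$ leading terms survive in the limit: one takes the ideal $I_z$ of a rank-$3$ point, writes down a Gröbner-type degeneration with respect to the weight order defined by $\lambda$ (using \cite[Macaulay2]{Mac2} if needed, as elsewhere in this chapter), and reads off that the initial ideal contains $E_2 \otimes V(4)\otimes\epsilon$ and not $E_1\otimes V(4)\otimes\epsilon$. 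The arrow diagram for the weights of $S^{2,1}(V')$ drawn in Section \ref{sectionptfixeO3} is exactly what controls which $3$-dimensional $B'$-submodule appears as a limit: the ``highest'' weights $2\epsilon_1+\epsilon_2,\ 2\epsilon_1+\epsilon_3,\ \epsilon_1+2\epsilon_2$ of $E_2$ are those that dominate under the degeneration, so $E_2$ arises naturally while $E_1$ (which involves the lower weight line $L$ at $\epsilon_1+\epsilon_2+\epsilon_3$) does not.

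**Principal obstacle.**

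The main difficulty will be the explicit identification of the limit ideal as $I_2$ rather than $I_1$ — that is, controlling precisely the degree-$3$ part of $\lim_{t\to 0}\lambda(t).I_z$. This is a genuinely computational step: one must verify that the surviving $B'$-submodule of $S^{2,1}(V')\otimes V(4)\otimes\epsilon$ is exactly $E_2$. An alternative, possibly cleaner, route that avoids choosing an explicit degeneration is to compute the tangent spaces $T_{I_1}\HH$ and $T_{I_2}\HH$ via Proposition \ref{isoTangent} and Lemme \ref{InegTang}, and to use the fact that $\HHp$ is irreducible of known dimension $\dim(\HHp) = \dim(W/\!/G) = 3n' - 3$: a $B'$-fixed point $Z$ lies in $\HHp$ if and only if it lies in the closure of $\gamma^{-1}(U_N)$, and one can often detect this by showing that $Z$ admits tangent directions deforming it into the flat locus. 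In practice I expect the explicit Gröbner degeneration to be the most reliable, with the weight diagram of $S^{2,1}(V')$ providing the qualitative guide that singles out $E_2$.
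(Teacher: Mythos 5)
Votre stratégie coïncide pour l'essentiel avec celle du mémoire : on y prend l'idéal $L$ du point $Z_{Id}=\gamma^{-1}(Id)$ de l'ouvert de platitude, on en calcule une base de Gröbner, puis on le dégénère par le sous-groupe à un paramètre $\theta_{-3,-2,-1}$ de $T'$ pour obtenir une famille plate $(L_t)_{t\in\Aff}$ avec $L_1=L$ et $L_0=I_2$, d'où $I_2\in\HHp$. Votre proposition est donc correcte et suit la même démarche (seule petite imprécision : la limite d'une telle dégénération est fixée par le sous-groupe à un paramètre choisi, mais rien ne garantit a priori qu'elle soit fixe sous $B'$ tout entier ; c'est la vérification explicite $L_0=I_2$ qui conclut).
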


Cette proposition nous permettra ensuite de montrer que $\HH$ est singulier en $I_2$. 
On note $w \in W$ sous la forme $w=\begin{bmatrix}
x_1 &y_1 & z_1 \\
x_2 &y_2 & z_2 \\
x_3 &y_3 & z_3 
\end{bmatrix}$ et on identifie $k[W]$ à $k[x_1,x_2,x_3,y_1,y_2,y_3,z_1,z_2,z_3]$. On commence par expliciter des bases de certains $B' \times G$-modules qui apparaissent dans $k[W]$:\\ 
$\left.
    \begin{array}{l}
        x_1^2+x_2^2+x_3^2 \\
        y_1^2+y_2^2+y_3^2 \\
        z_1^2+z_2^2+z_3^2 \\
        x_1 y_1+x_2 y_2+x_3 y_3\\
        x_1 z_1+x_2 z_2+x_3 z_3\\
        z_1 y_1+z_2 y_2+z_3 y_3
      \end{array}
\right \} \text{ est une base de } S^2(V') \otimes V(0) \subset k[W]_2,\\
\left.
    \begin{array}{l}
        x_1^2 \\
        x_2^2\\
        x_3^2\\
        x_1 x_2 \\
        x_2 x_3\\
        x_1 x_3 
      \end{array}
\right \} \text{ est une base de }  D \otimes S^2(V)  \subset k[W]_2,\\
\left.
    \begin{array}{l}
x_1(x_1y_2-x_2y_1)\\
x_2(x_1y_2-x_2y_1)\\
x_3(x_1y_2-x_2y_1)\\
x_1(x_1y_3-x_3y_1)\\
x_2(x_1y_3-x_3y_1)\\
x_3(x_1y_3-x_3y_1)\\
x_1(x_2y_3-x_3y_2)\\
x_2(x_2y_3-x_3y_2)\\
x_3(x_2y_3-x_3y_2)\\  
x_1(x_1z_2-x_2z_1)\\
x_2(x_1z_2-x_2z_1)\\
x_3(x_1z_2-x_2z_1)\\
x_1(x_1z_3-x_3z_1)\\
x_2(x_1z_3-x_3z_1)\\
x_3(x_1z_3-x_3z_1)\\
x_1(x_2z_3-x_3z_2)\\
x_2(x_2z_3-x_3z_2)\\
x_3(x_2z_3-x_3z_2)\\   
y_1(x_1y_2-x_2y_1)\\
y_2(x_1y_2-x_2y_1)\\
y_3(x_1y_2-x_2y_1)\\
y_1(x_1y_3-x_3y_1)\\
y_2(x_1y_3-x_3y_1)\\
y_3(x_1y_3-x_3y_1)\\
y_1(x_2y_3-x_3y_2)\\
y_2(x_2y_3-x_3y_2)\\
y_3(x_2y_3-x_3y_2)    
      \end{array}
\right \} \text{ est une famille génératrice de }  E_2 \otimes S^{2,1}(V). 
$\\

\noindent On en déduit des générateurs de l'idéal $I_2$:
\begin{align*}
I_2= (       &y_1^2+y_2^2+y_3^2, z_1^2+z_2^2+z_3^2, x_1^2+x_2^2+x_3^2, x_2^2, x_3^2, x_1 x_2, x_2 x_3, x_1 x_3, \\
             & x_1 y_1+x_2 y_2+x_3 y_3, x_1 z_1+x_2 z_2+x_3 z_3, z_1 y_1+z_2 y_2+z_3 y_3,\\
             & y_3(x_1y_2-x_2y_1), y_2(x_1y_3-x_3y_1), y_3(x_1y_3-x_3y_1),\\
             &y_2(x_2y_3-x_3y_2), y_3(x_2y_3-x_3y_2) ) . 
\end{align*}

D'après la proposition \ref{chow}, le morphisme de Hilbert-Chow est un isomorphisme au dessus de $U_3 \subset W/\!/G$, donc il existe un unique $Z_{Id} \in \HH$ tel que $\gamma(Z_{Id})=I_d$. On note $L$ l'idéal de $Z_{Id}$ dans $k[W]$. D'après le lemme \ref{diagcom}, on a:
\begin{align*}
L=(&x_1^2+x_2^2+x_3^2-1,\ y_1^2+y_2^2+y_3^2-1,\ z_1^2+z_2^2+z_3^2-1, \\
         &x_1 y_1+x_2 y_2+x_3 y_3, x_1 z_1+x_2 z_2+x_3 z_3, z_1 y_1+z_2 y_2+z_3 y_3).   
\end{align*}

Soit $I$ un idéal de $k[W]$ correspondant à un point de $\HH$. On va construire une famille plate d'idéaux ${(L_t)}_{t \in \Aff}$ de $k[W]$ telle que $L_1=L$ et $L_0=I_2$. Par définition de la composante principale $\HHp$ de $\HH$, on en déduira que $I_2$ est un point de $\HHp$.\\ 
On commence par calculer une base de Gröbner $\GG_L$ de l'idéal $L$ à l'aide de \cite[Macaulay2]{Mac2}:
\begin{align*}
\GG_L=\{ &z_1^2+z_2^2+z_3^2-1, y_1z_1+y_2z_2+y_3z_3, x_1z_1+x_2z_2+x_3z_3,\\ 
              &y_1^2+y_2^2+y_3^2-1, x_1y_1+x_2y_2+x_3y_3, x_3^2+y_3^2+z_3^2-1,\\ 
              &x_2x_3+y_2y_3+z_2z_3, x_1x_3+y_1y_3+z_1z_3, x_2^2+y_2^2+z_2^2-1,\\
              &x_1x_2+y_1y_2+z_1z_2, x_1^2-y_2^2-y_3^2-z_2^2-z_3^2+1, x_2y_1y_2-x_1y_2^2-x_3z_1z_3+x_1z_3^2,\\
              &y_2z_1z_2-y_1z_2^2+y_3z_1z_3-y_1z_3^2+y_1, x_2z_1z_2-x_1z_2^2+x_3z_1z_3-x_1z_3^2+x_1,\\
              &x_2y_1z_2-x_1y_2z_2+x_3y_1z_3-x_1y_3z_3 , y_2^2z_1+y_3^2z_1-y_1y_2z_2-y_1y_3z_3-z_1,\\
              &x_2y_2z_1+x_3y_3z_1-x_1y_2z_2-x_1y_3z_3, x_3y_2y_3-x_2y_3^2+x_3z_2z_3-x_2z_3^2+x_2, \\
              &x_3y_1y_3-x_1y_3^2+x_3z_1z_3-x_1z_3^2+x_1 ,x_2y_1y_3-x_1y_2y_3+x_2z_1z_3-x_1z_2z_3,\\
              &x_3y_2^2-x_2y_2y_3+x_3z_2^2-x_2z_2z_3-x_3 , x_3y_1y_2-x_1y_2y_3+x_3z_1z_2-x_1z_2z_3,\\
              &x_3y_2z_1z_3-x_2y_3z_1z_3-x_3y_1z_2z_3+x_1y_3z_2z_3+x_2y_1z_3^2-x_1y_2z_3^2-x_2y_1+x_1y_2,\\
              &y_3^2z_2^2-2y_2y_3z_2z_3+y_2^2z_3^2-y_2^2-y_3^2-z_2^2-z_3^2+1, \\
              &x_3y_3z_2^2-x_3y_2z_2z_3-x_2y_3z_2z_3+x_2y_2z_3^2-x_2y_2-x_3y_3,\\
              &y_3^2z_1z_2-y_2y_3z_1z_3-y_1y_3z_2z_3+y_1y_2z_3^2-y_1y_2-z_1z_2, \\
              &x_3y_3z_1z_2-x_2y_3z_1z_3-x_3y_1z_2z_3+x_2y_1z_3^2-x_2y_1 \}.
\end{align*}

\noindent On fixe $(n_1,n_2,n_3) \in {\ZZZ}^3$ et soit 
\begin{equation}  \label{ssgroupeparam2}
\begin{array}{ccccc}
\theta_{n_1,n_2,n_3} & : & \Gm & \to & T' \\
& & t & \mapsto & (t^{n_1},t^{n_2},t^{n_3}) 
\end{array}
\end{equation}
un sous-groupe à un paramètre du tore $T'$. Par définition de l'opération de $T'$ dans $W$, on a:
$$\forall t \in \Gm,\ \forall w= \begin{bmatrix}       
  x_1    & y_1   &z_1 \\          
  x_2    & y_2   &z_2 \\
  x_3    & y_3   &z_3 \end{bmatrix} \in W,\ {\theta}_{n_1,n_2,n_3}(t).w= \begin{bmatrix}         
  t^{-n_1} x_1    & t^{-n_2} y_1  &t^{-n_3} z_1 \\          
  t^{-n_1} x_2    & t^{-n_2} y_2   &t^{-n_3} z_2 \\
  t^{-n_1} x_3    & t^{-n_2} y_3   &t^{-n_3} z_3 \end{bmatrix}.$$ 
Pour $t \in \Gm$ fixé, l'opération de ${\theta}_{n_1,n_2,n_3}(t)$ dans $W$ induit un automorphisme ${\sigma}_{n_1,n_2,n_3,t}$ de $k[W]$ défini par:
$$\forall i=1,2,3,\ \left\{
    \begin{array}{l}
        {\sigma}_{n_1,n_2,n_3,t}(x_i)=t^{n_1} x_i, \\
        {\sigma}_{n_1,n_2,n_3,t}(y_i)=t^{n_2} y_i,\\
        {\sigma}_{n_1,n_2,n_3,t}(z_i)=t^{n_3} z_i.
    \end{array}
\right.$$
On note $L_t$ l'image de l'idéal $L$ par cet automorphisme. 
Soit $g \in k[W]$, on l'écrit $g=\sum_i u_i m_i$, où les $m_i$ sont des monômes de $k[W]$ et les $u_i$ sont des éléments de $\Gm$. On définit $$\tilde{g}:=t^{m}g(t^{n_1}x_1,\ldots,t^{n_3}z_3) \in k[W]$$ 
avec $m$ le plus petit entier relatif tel que, lorsque l'on décompose $\tilde{g}$ en une somme de monômes:
$\tilde{g}=\sum_i u_i t^{N_i} m_i'$, chaque entier $N_i$ qui apparaît est supérieur ou égal à $0$.
On dispose alors d'une description explicite de l'idéal $L_t$:
$$L_t=\{ \tilde{g}\ |\ g \in L\}.$$
D'après \cite[Exercise 15.25]{Ei}, si $\{g_1,\ldots,g_r\}$ est une base de Gröbner de l'idéal $L$, alors $L_t=(\tilde{g_1},\ldots,\tilde{g_r})$. En particulier, on a $L_1=L$. On note $L_0$ l'idéal de $k[W]$ obtenu en posant $t=0$ pour chaque élément de $L_t$. Alors, d'après \cite[Theorem 15.17]{Ei}, la famille d'idéaux ${(L_t)}_{t \in \Aff}$ est une famille plate au dessus de $\Aff$. \\
Pour démontrer que $I_2 \in \HHp$, il suffit donc de trouver un triplet $(n_1,n_2,n_3) \in {\ZZZ}^3$ tel que l'on ait $L_0=I_2$. On vérifie que le triplet $(-3,-2,-1)$ convient. 

\begin{remarque}
On souhaiterait procéder de manière analogue pour montrer que $I_1 \in \HHp$, malheureusement il n'existe aucun triplet $(n_1,n_2,n_3)$ tel que l'on ait $L_0=I_1$. 
\end{remarque}

\subsubsection{Espace tangent de \texorpdfstring{$\HH$}{H} en \texorpdfstring{$I_2$}{I2}} \label{dimTang2O3}

On note $Z_2:=\Spec(k[W]/I_2)$. On a la  

\begin{proposition} \label{dimTangent2O3}
$\dim(T_{Z_2} \HH)\geq 7.$
\end{proposition}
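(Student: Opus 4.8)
The plan is to follow closely the proof of Lemma \ref{inegTang1}, which underlies the analogous Proposition \ref{dimTangent2} for $GL_3(k)$. By Proposition \ref{isoTangent} there is a canonical identification $T_{Z_2}\HH \cong \Hom_R^G(I_2/I_2^2, R)$, where $R := k[W]/I_2$, so that to prove $\dim(T_{Z_2}\HH) \geq 7$ it suffices to exhibit seven linearly independent $G$-equivariant homomorphisms $I_2/I_2^2 \to R$. I would take as module of generators of $I_2/I_2^2$ the $B' \times G$-module $N$ spanned by the explicit generators of $I_2$ listed in Section \ref{reductibilit�O3}, namely the degree-two generators $(S^2(V') \otimes V(0)) \oplus (D \otimes V(4)) \subset k[W]_2$ together with the degree-three generators $E_2 \otimes V(4) \otimes \epsilon \subset k[W]_3$, working in the coordinates $k[W] \cong k[x_i, y_i, z_i]$ fixed there.

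Concretely, I would first compute with \cite[Macaulay2]{Mac2} a generating family of the $R$-module $\Hom_R(I_2/I_2^2, R)$, as was done in Lemma \ref{inegTang1}. From suitable $k[W]$-linear combinations of these generators I would then single out seven homomorphisms that are at once $G$-equivariant and linearly independent over $k$. As in the earlier tangent-space computations, $G$-equivariance is verified on the highest-weight generators of $N$, and linear independence is checked by the standard device of evaluating a putative dependence $\sum_j \lambda_j \Phi_j = 0$ against an explicit list of relations $r_1, r_2, \ldots$ among the generators of $I_2/I_2^2$ (also supplied by Macaulay2): each evaluation isolates one coefficient $\lambda_j$ and forces it to vanish, exactly as in the proofs of the rank lemmas in Sections \ref{dimTang} and \ref{dimTang2}. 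This produces the required seven independent elements of $\Hom_R^G(I_2/I_2^2, R)$ and hence the bound $\dim(T_{Z_2}\HH) \geq 7$.

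The main obstacle is computational rather than conceptual: the $O_3(k)$-presentation of $I_2/I_2^2$ is substantially heavier than in the $O_2(k)$ case, because of the degree-three generators $E_2 \otimes V(4) \otimes \epsilon$ and the appearance of the weight $\epsilon_1 + \epsilon_2 + \epsilon_3$ with multiplicity two in $S^{2,1}(V')$, which must be handled carefully when choosing $E_2$ and when deciding which relations genuinely constrain the homomorphisms. It is useful to note that the inequality $\dim(T_{Z_2}\HH) \geq 7$ is sharper than what semicontinuity alone gives: since $Z_2 \in \HHp$ by Proposition \ref{reductiO3} and $\dim \HHp = \dim(W/\!/G) = 6$, one already has $\dim(T_{Z_2}\HH) \geq 6$, so in practice only a single tangent direction transverse to $T_{Z_2}\HHp$ need be produced. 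Combined with $\dim \HHp = 6$, the strict inequality $7 > 6$ is precisely what yields, in the subsequent corollary, that $\HH$ is singular at $Z_2$.
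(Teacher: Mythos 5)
Your proposal follows essentially the same route as the paper: identify $T_{Z_2}\HH$ with $\Hom_R^G(I_2/I_2^2,R)$, compute a generating family of $\Hom_R(I_2/I_2^2,R)$ with Macaulay2, and exhibit seven explicit $G$-equivariant, linearly independent combinations of those generators (the paper's list is $\Phi_1$, $z_3\Phi_2+z_2\Phi_3+z_1\Phi_4$, $y_1\Phi_5-y_2\Phi_6-y_3\Phi_7$, $z_1\Phi_5-z_2\Phi_6-z_3\Phi_7$, $\Phi_8$, $\Phi_9+\Phi_{10}$, $\Phi_{11}$). One small correction: the device of evaluating a dependence against relations $r_1,r_2,\ldots$ is what the paper uses to bound $\rg(\rho^*)$ from below (hence the tangent space from \emph{above}, as in the remark following this proposition); for the lower bound here the independence of the seven homomorphisms is checked directly by their values on the generators of $I_2/I_2^2$, no relations needed.
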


\begin{proof}
D'après \cite[Macaulay2]{Mac2}, une famille d'éléments du $R$-module $\Hom_R(I/I^2,R)$ est donnée dans la table \ref{homom_cas_n3OO3}.  

\begin{table}[ht]
\begin{center}
\center \includegraphics[scale=0.8]{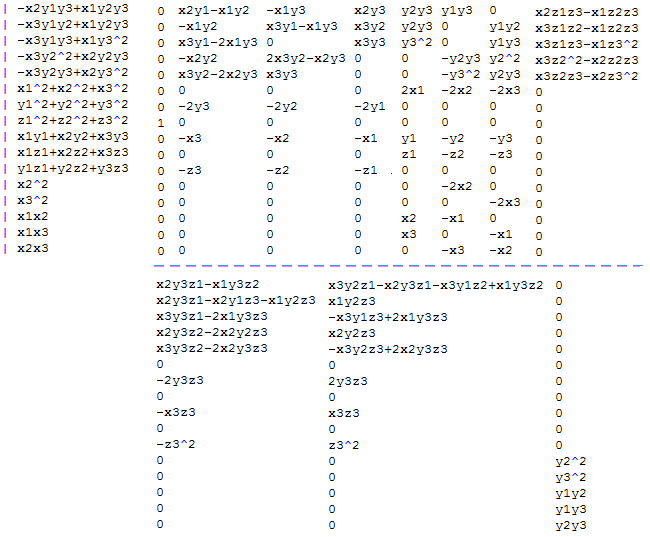}
\end{center}
\caption{\label{homom_cas_n3OO3} Quelques éléments du $R$-module $\Hom_R(I/I^2,R)$}
\end{table}

\noindent On note $\Phi_i$ le morphisme donné par la $i+1$-ème colonne de cette matrice. On vérifie alors que les sept morphismes suivants sont $G$-équivariants et linéairement indépendants:
\begin{itemize} \renewcommand{\labelitemi}{$\bullet$}
   \item $\Phi_1,$      
   \item $z_3 \Phi_2+ z_2 \Phi_3+z_1 \Phi_4$,   
   \item $y_1 \Phi_5- y_2 \Phi_6-y_3 \Phi_7$,  
   \item $z_1 \Phi_5- z_2 \Phi_6-z_3 \Phi_7$,  
   \item $\Phi_8$,
   \item $\Phi_9+ \Phi_{10}$,
   \item $\Phi_{11}$.
\end{itemize}  
Il s'ensuit que ${\Hom}_R^G(I_2/I_2^2,R)$ est de dimension au moins $7$ et le résultat découle alors de l'isomorphisme $T_{Z_2} \HH \cong \Hom_{R}^{G}(I_2/I_2^2,R)$.
\end{proof}

\begin{remarque}
Une étude des relations entre les générateurs du $R$-module $I/I^2$ similaire à celle réalisée dans la section \ref{dimTang2} permet de montrer que  $\dim(T_{Z_2} \HH) \leq 8$. Je ne sais malheureusement pas si $\dim(T_{Z_2} \HH)$ vaut $7$ ou $8$.  
\end{remarque}

\begin{corollaire}  \label{HpaslisseO3}
Le schéma $\HH$ est singulier en $Z_2$.
\end{corollaire}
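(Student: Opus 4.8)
La proposition finale (corollaire \ref{HpaslisseO3}) affirme que le sch�ma $\HH$ est singulier au point $Z_2$ correspondant � l'id�al $I_2$. Gr�ce au travail d�j� accompli dans les sections pr�c�dentes, la preuve doit �tre tr�s courte : il s'agit d'un simple crit�re de comparaison de dimensions, exactement comme pour les corollaires analogues (\ref{Hcasn3} dans le cas $GL_3$). Le plan est d'invoquer le lemme \ref{Hlisse4}, qui �tablit l'�quivalence entre la lissit� de $\HH$ en un point fixe de $B'$ et l'�galit� de la dimension de l'espace tangent avec $\dim(\HHp)$.

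**�tapes principales.** D'abord, je rappellerais que $I_2 \in \HHp$ d'apr�s la proposition \ref{reductiO3}, de sorte que $Z_2$ est bien un point de la composante principale. Ensuite, la dimension de $\HHp$ est connue : comme $\gamma$ se restreint en un isomorphisme au-dessus de l'ouvert de platitude $U_3$ (proposition \ref{chow}) et que $\dim(W/\!/G)=\dim(S^2(V'^*)^{\leq 3})$ est la dimension de la vari�t� d�terminantielle sym�trique pour $n=n'=3$, on a $\dim(\HHp)=\dim(W/\!/G)=6$. Par ailleurs, la proposition \ref{dimTangent2O3} fournit l'in�galit� $\dim(T_{Z_2}\HH) \geq 7$. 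Ces deux valeurs �tant distinctes, on a $\dim(T_{Z_2}\HH) > \dim(\HHp)$, ce qui contredit la condition n�cessaire de lissit� au point fixe $Z_2$ donn�e par le lemme \ref{Hlisse4}. Il s'ensuit que $\HH$ n'est pas lisse en $Z_2$, donc $\HH$ est singulier.

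**Structure de la r�daction et point d�licat.** La difficult� essentielle n'est pas dans la preuve du corollaire lui-m�me, qui est purement formelle, mais a d�j� �t� surmont�e en amont : c'est l'�tablissement de l'in�galit� tangentielle $\dim(T_{Z_2}\HH) \geq 7$ (proposition \ref{dimTangent2O3}), obtenue par l'exhibition explicite de sept morphismes $G$-�quivariants lin�airement ind�pendants dans $\Hom_R^G(I_2/I_2^2,R) \cong T_{Z_2}\HH$, et le fait que $I_2$ appartienne � $\HHp$ (proposition \ref{reductiO3}), �tabli par une d�g�n�rescence plate via le sous-groupe � un param�tre $\theta_{-3,-2,-1}$. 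Une fois ces r�sultats admis, je v�rifierais avec soin que la dimension de $\HHp$ vaut bien $6$ en calculant $\dim S^2(V'^*)^{\leq 3}$ pour $n'=3$, afin de garantir que l'�cart avec $7$ est strict. Je conclurais en observant que l'existence d'un point de $\HHp$ o� l'espace tangent exc�de la dimension attendue force la singularit� ; il n'est pas n�cessaire de conna�tre la valeur exacte de $\dim(T_{Z_2}\HH)$ (que ce soit $7$ ou $8$, comme le signale la remarque qui pr�c�de), la minoration stricte suffisant amplement.
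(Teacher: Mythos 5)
Votre preuve est correcte et suit essentiellement la m�me d�marche que celle du m�moire : on utilise $Z_2 \in \HHp$ (proposition \ref{reductiO3}), $\dim(\HHp)=\dim(W/\!/G)=6$, et la minoration $\dim(T_{Z_2}\HH)\geq 7$ de la proposition \ref{dimTangent2O3} pour conclure que la lissit� en $Z_2$ est impossible. Votre observation que la valeur exacte ($7$ ou $8$) de la dimension de l'espace tangent est sans importance est �galement conforme � l'esprit de l'argument du texte.
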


\begin{proof}
On a $\dim(\HHp)=6$ et donc $\HH$ est lisse en $Z_2$ si et seulement si $\dim(T_{Z_2} \HH)=6$, ce qui est faux d'après la proposition \ref{dimTangent2O3}.  
\end{proof}

\section{Cas de \texorpdfstring{$SO(V)$}{SO(V)} opérant dans \texorpdfstring{$V^{\oplus  n'}$}{n'V}}  \label{HclassSOngeneral}

On se place dans la situation $4$: on a $G:=SO(V)=O(V) \cap SL(V)$, $G':=GL(V')$, $W:=\Hom(V',V)$ et l'opération de $G' \times G$ dans $W$ est donnée par (\ref{actionSLLn}).

Lorsque $n=2$, on a $G \cong \Gm$ et alors l'étude du schéma de Hilbert invariant pour l'opération de $G$ dans $W$ a déjà été effectuée dans la section \ref{section_n_1}. On supposera donc que $n \geq 3$ par la suite.

\subsection{Etude du morphisme de passage au quotient}  \label{SSOONN}

D'après le premier théorème fondamental pour $SO(V)$ (voir \cite[§11.2.1]{Pro}) l'algèbre des invariants $k[W]^G$ est engendrée par les $(i\ |\ j)$, $1 \leq i \leq j \leq n'$, définis par (\ref{Oninvariants}), et par les $[i_1, \ldots, i_n]$, $1 \leq i_1 < \ldots < i_n \leq n'$, définis par (\ref{SLinvariants} ). 
Le morphisme de passage au quotient $\nu:\ W \rightarrow W/\!/G$ s'exprime donc en fonction des morphismes de passage au quotient $W \rightarrow W/\!/O(V)$ et $W \rightarrow W/\!/SL(V)$ étudiés dans les sections \ref{description_quotientOn} et \ref{casSln} respectivement. Plus précisément
$$\begin{array}{lrcl}
 \nu:  &\Hom(V',V)   & \rightarrow  & S^2(V'^*) \times \Lambda^n (V'^*) \\
        & w  & \mapsto      &  (\leftexp{t}{w}w,\ L_1 \wedge \ldots \wedge L_n) 
\end{array}$$
où les $L_i$ sont définis dans la notation \ref{lignescolonnes}. On a donc un diagramme commutatif
\begin{equation} \label{diagOvSOv}
\xymatrix{ & W/\!/SL(V) \\ W \ar[r]^{\nu} \ar[rd] \ar[ru] & W/\!/G \ar[d]^{p_1} \ar[u]_{p_2} \\ & W/\!/O(V) }
\end{equation}
où les $p_i$ sont les projections naturelles et les deux flèches diagonales sont les morphismes de passage au quotient.\\
Si $n'<n$, alors $\Lambda^n (V'^*)=\{0\}$, donc $W/\!/G=W/\!/O(V)$ et $\nu$ est le morphisme de passage au quotient $W \rightarrow W/\!/O(V)$ de la section \ref{description_quotientOn}. \\
Si $n'=n$, alors $\Lambda^n (V'^*) \cong \Aff$ et $\nu:\ w \mapsto (\leftexp{t}{w}w, \det(w))$. On a donc 
$$W/\!/G=\left\{ (Q,x) \in  S^2(V'^*) \times \Aff  \  \mid  \  \det(Q)=x^2 \right\}$$ 
et donc $p_1:\ W/\!/G \rightarrow S^2(V'^*)$ est un revêtement double de $S^2(V'^*)$ dont le lieu de ramification est $S^2(V'^*)^{\leq n-1}$. On vérifie, grâce au critère jacobien, que le lieu singulier de $W/\!/G$ est $\{(Q,x) \in W/\!/G \ |\ \rg(Q) \leq n-2\}$.\\
Enfin, si $n'>n$ et $D:=O(V)/G \cong \ZZZ_2$. On a l'inclusion $p_1^*:\ k[W]^{O(V)} \hookrightarrow k[W]^{G}$ qui induit un isomorphisme $k[W]^{O(V)} \cong (k[W]^{G})^D$. Donc $W/\!/O(V) \cong (W/\!/G)/\!/D$ et $p_1$ s'identifie au morphisme de passage au quotient $W/\!/G \rightarrow (W/\!/G)/\!/D$. Or $D$ est un groupe d'ordre $2$ qui opère librement dans l'ouvert $\{(Q,x) \in W/\!/G \mid \ \rg(Q)=n \text{ et } x \neq 0\}$ en multipliant $x$ par $\pm 1$ et qui opère trivialement dans le complémentaire. Il s'ensuit que $p_1$ est un revêtement double de $W/\!/O(V)$ dont le lieu de ramification est $\{(Q,0) \in W/\!/G \ \mid \ \rg(Q) \leq n-1\}$. J'ignore malheureusement quel est le lieu singulier de $W/\!/O(V)$ lorsque $n'>n$. Néanmoins, des calculs pour des petites valeurs de $n$ suggèrent que le lieu singulier est $\{(Q,0) \in W/\!/G \ \mid \ \rg(Q) \leq n-2\}$. \\

\begin{lemme} \label{revuniv}
On suppose que $n' > n$ et on note $U:=\{ Q \in S^2(V'^*) \ |\ \rg(Q)=n\}$ l'orbite ouverte de $W/\!/O(V)$, alors $p_1:\ p_1^{-1}(U) \rightarrow U$ est le revêtement universel.
\end{lemme}

\begin{proof}
Soit $\tilde{U}$ l'image réciproque de $U$ par le morphisme de passage au quotient $q:\ W \rightarrow W/\!/O(V)$. D'après \cite[§2.1, Theorem 6]{SB}, le morphisme $q$ est un $O(V)$-fibré principal au dessus de l'ouvert $U$. On a donc la suite exacte de groupes suivante:
\begin{equation}
\pi_1(\tilde{U}) \longrightarrow \pi_1(U) \longrightarrow \pi_0(O(V)) \longrightarrow \pi_0(\tilde{U}). 
\end{equation}
Or, $\tilde{U}=\{ w \in W\ |\ \rg(w)=n\}$, donc la codimension de $\tilde{U}$ dans $W$ est $n'-n+1 \geq 2$, et donc $\pi_1(\tilde{U}) \cong  \pi_1(W) \cong \{0\}$. On en déduit que $\pi_1(U) \cong \pi_0(O(V)) \cong \ZZZ_2$ et le résultat en découle.     
\end{proof}

Si $n' \geq n$, alors la variété $W/\!/G$ est normale (\cite[§3.2, Théorème 2]{SB}) et de Gorenstein (\cite[§4.4, Théorème 4]{SB}) puisque $G$ est semi-simple et connexe. 
Si $n'<n$, alors $W/\!/G=S^2(V'^*)$ est un espace affine.\\
On note 
$$N:=\min(n',n) .$$
L'opération de $G'$ dans $W$ induit une opération dans $W/\!/G$ telle que $\nu$ soit $G'$-équivariant. Pour cette opération, on vérifie que $W/\!/G$ se décompose en $N+1$ orbites:
$$U_i:=\left\{  (Q,x) \in  W/\!/G \  \mid \ \rg(Q) = i \right\}$$
pour $i=0,\ldots,N$. Les adhérences de ces orbites sont imbriquées de la façon suivante: 
$$ \{0\}=\overline{U_0} \subset \overline{U_1} \subset \cdots \subset \overline{U_N}=W/\!/G.$$ 
En effet, pour chaque $i=0,\ldots,N-1$, on a $\overline{U_i}=\{ (Q,0) \ \mid \ Q \in S^2(V'^*)^{\leq i} \} \cong S^2(V'^*)^{\leq i}$ et $p_1:\ U_N \rightarrow \{Q \in S^2(V'^*)\ |\ \rg(Q)=N\}$ est $G'$-équivariant donc $U_N$ est l'unique orbite ouverte de $W/\!/G$. La discussion précédente et le corollaire \ref{dimfibreOn} permettent de déduire la

\begin{proposition} \label{ouvertplatitudeSOn}
La dimension de la fibre générique et l'ouvert de platitude de $\nu$ sont donnés dans le tableau suivant:
\begin{center}
\begin{tabular}{|c|c|c|}
  \hline
  configuration & dim. de la fibre générique & ouvert de platitude \\
  \hline
  $n' < n$ &     $n' n -\frac{1}{2}n'(n'+1)$ & $U_{n'} \cup \cdots \cup U_{\max(2 n'-n-1, 0)}$\\
  $n' = n$ &     $\frac{1}{2}n(n-1)$       & $U_{n} \cup U_{n-1} $\\ 
  $n' > n$ &     $\frac{1}{2}n(n-1)$       & $U_{n} $\\ 
  \hline    
\end{tabular}
\end{center}
\vspace*{0.5mm}
\end{proposition}

\begin{corollaire} \label{ouvert_platitudeSOn}
Le morphisme $\nu$ est plat sur $W/\!/G$ tout entier si et seulement si $n \geq 2n'-1$ et dans ce cas $W/\!/G=S^2(V'^*)$. 
\end{corollaire}

Le corollaire qui suit est une conséquence immédiate de la proposition \ref{chow} et du corollaire \ref{ouvert_platitudeSOn}:

\begin{corollaire} \label{cas_facileSOn}
Si $n \geq 2n'-1$, alors $\HH \cong S^2(V'^*)$ et $\gamma$ est un isomorphisme.
\end{corollaire}

\begin{notation}
Si $n'<n$, on note 
$$H:=\left\{ \begin{bmatrix}
M  &0_{n-n',n'} \\
0_{n',n-n'}  & I_{n'} 
\end{bmatrix},\ M \in SO_{n-n'}(k)\right\} \cong SO_{n-n'}(k)$$
qui est un sous-groupe de $G$. 
\end{notation}

\noindent Pour déterminer la fonction de Hilbert de la fibre générique de $\nu$, on a besoin du

\begin{lemme} \label{fibgeneSOn}
La fibre de $\nu$ en un point de $U_N$ est isomorphe à 
$$\left\{
    \begin{array}{ll}
        G/H & \text{ si } n'<n, \\
         G & \text{ si } n' \geq n
    \end{array}
\right.$$ 
\end{lemme}

\begin{proof}
La preuve est analogue à celle du lemme \ref{fibreUnsymp1}.
\end{proof}

\noindent Grâce au lemme \ref{fibgeneSOn} et à la proposition \ref{ouvertplatitudeSOn}, on montre la

\begin{proposition} \label{fctHilbSOn}
La fonction de Hilbert de la fibre générique de $\nu$ est donnée par:
$$\forall M \in \Irr(G),\ h_W(M)=\left\{
    \begin{array}{ll}
       \dim(M^H) \text{ si } n'<n, \\
       \dim(M) \text{ si } n' \geq n.
    \end{array}
\right.$$
\end{proposition}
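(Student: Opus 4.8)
Le plan est de calculer directement la multiplicité $h_W(M)$ de chaque $G$-module irréductible $M$ dans $k[\nu^{-1}(J_N)]$, en exploitant la description géométrique de la fibre générique fournie par le lemme \ref{fibgeneSOn}. La démonstration suivra donc exactement le schéma de la preuve de la proposition \ref{fcthilb} (cas de $GL(V)$) et de la proposition \ref{fcthilbOn} (cas de $O(V)$), dont l'adaptation au groupe $SO(V)$ est ici essentiellement formelle.

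Je distinguerais les deux cas selon le lemme \ref{fibgeneSOn}. Lorsque $n' \geq n$, la fibre de $\nu$ en un point de l'orbite ouverte $U_N$ est isomorphe à $G$ comme $G$-variété, donc
$$k[\nu^{-1}(J_N)] \cong k[G] \cong \bigoplus_{M \in \Irr(G)} M^* \otimes M$$
comme $G \times G$-module. En lisant la décomposition comme $G$-module à gauche, la multiplicité de $M$ vaut $\dim(M^*)=\dim(M)$, d'où $h_W(M)=\dim(M)$. Lorsque $n'<n$, la fibre est isomorphe à l'espace homogène $G/H$ avec $H \cong SO_{n-n'}(k)$, et l'on a alors
$$k[\nu^{-1}(J_N)] \cong k[G/H] \cong k[G]^H \cong \bigoplus_{M \in \Irr(G)} M^{*H} \otimes M$$
comme $G$-module à gauche, en prenant les $H$-invariants dans l'action à droite. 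Comme $H$ est réductif (c'est un groupe orthogonal spécial, donc réductif), on a $\dim(M^{*H})=\dim(M^H)$, d'où $h_W(M)=\dim(M^H)$.

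Le point technique principal, et la seule différence de fond avec les propositions \ref{fcthilb} et \ref{fcthilbOn}, est la justification du fait que la fonction de Hilbert est bien constante sur $U_N$ et qu'elle coïncide avec celle de la fibre générique: ceci est garanti par la proposition \ref{ouvertplatitudeSOn}, qui affirme que $\nu$ est plat sur $U_N$ (orbite ouverte dense), et par la décomposition (\ref{eq1}) qui assure que la fonction de Hilbert est constante sur toute base connexe pour une famille plate à multiplicités finies. L'homogénéité sous $G'$ entraîne alors que toutes les fibres au-dessus de $U_N$ ont même fonction de Hilbert. Je ne m'attends à aucun obstacle sérieux: l'unique subtilité est de s'assurer que l'isomorphisme $k[\nu^{-1}(J_N)] \cong k[G/H]$ du lemme \ref{fibgeneSOn} est bien un isomorphisme de $G$-modules (et non seulement de variétés), ce qui découle de sa construction $G$-équivariante, et que la réductivité de $H$ permet l'identification $\dim(M^{*H})=\dim(M^H)$ via la dualité entre invariants et coïnvariants.
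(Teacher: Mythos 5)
Votre démonstration est correcte et suit exactement la même démarche que celle du mémoire, qui renvoie pour cette proposition à la combinaison du lemme \ref{fibgeneSOn} et de la proposition \ref{ouvertplatitudeSOn}, le calcul des multiplicités étant mené comme dans la preuve de la proposition \ref{fcthilb} (décomposition de $k[G]$ ou $k[G/H]$ comme $G$-module à gauche, puis $\dim(M^{*H})=\dim(M^H)$ par réductivité de $H$). Rien à ajouter.
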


\subsection{Etude du cas \texorpdfstring{$\dim(V)=3$}{n=3}}

Dans toute cette section, on fixe $n'=n=3$. On a $G \cong SO_3(k)$, le quotient $W/\!/G$ est un revêtement double de $S^2(V'^*)$ ramifié en $S^2(V'^*)^{\leq 2}$.
Nous allons démontrer le 

\begin{theoreme} \label{casn3SO3} 
Le schéma $\HH$ est connexe et singulier.
\end{theoreme} 

\begin{remarque}
Si $n' \leq 2$, alors $\HH \cong W/\!/G$ est une variété lisse et $\gamma$ est un isomorphisme d'après le corollaire \ref{cas_facileSOn}. 
\end{remarque}

La connexité de $\HH$ est donnée par la proposition \ref{Hconnexe4} et la non-lissité de $\HH$ est donnée par le corollaire \ref{Hcasn3SO3}. Remarquons que, contrairement aux exemples traités précédement, on ne dispose pas du principe de réduction lorsque $G=SO(V)$ et donc les propriétés géométriques de $\HH$ lorsque $n'>3$ ne se déduisent pas du cas $n'=3$ a priori.
On renvoie au début de la section \ref{threpO3} pour des rappels concernant les représentations de $SO_3(k)$.

\subsubsection{Points fixes de \texorpdfstring{$\HH$}{H} pour l'opération de \texorpdfstring{$B'$}{B'}} \label{sectionptfixeSO3}

Nous allons procéder comme pour les démonstrations des théorèmes \ref{casn3} et \ref{casn3O3}. On commence par déterminer les points fixes de $B'$ dans $\HH$. On a:
\begin{align}
&{k[W]}_1 \cong V' \otimes V(2), \label{kW1SO3}\\
&{k[W]}_2 \cong (S^2(V') \otimes (V(4) \oplus V(0))) \oplus ({\Lambda}^2(V') \otimes V(2)), \label{kW2SO3} 
\end{align}
comme $G'\times G$-modules.

\begin{notation}  \label{espacesdim3SO3}
On note:
\begin{itemize} \renewcommand{\labelitemi}{$\bullet$}
\item $J$ l'idéal engendré par les $G$-invariants homogènes de degré positif de $k[W]$,
\item $D_1:=\left\langle  e_1 \right\rangle \subset V'$ l'unique droite $B'$-stable de $V'$,
\item $D_2:=\left\langle  e_1.e_1 \right\rangle \subset S^2(V')$ l'unique droite $B'$-stable de $S^2(V')$,
\item $I_1$  l'idéal engendré par  $D_1 \otimes V(2) \subset {k[W]}_1$ et par $(S^2(V') \otimes V(0)) \subset {k[W]}_2$, 
\item $I_2$ l'idéal engendré par  $(D_2 \otimes V(4)) \oplus (S^2(V') \otimes V(0)) \oplus ({\Lambda}^2 (V') \otimes V(2)) \subset {k[W]}_2$.
\end{itemize}
\end{notation}

\begin{remarque}
On a $J \cap k[W]_2=S^2(V') \otimes V(0)$ et $J \cap k[W]_3=\Lambda^3(V') \otimes V(0)$ comme $G' \times G$-modules et ces modules engendrent l'idéal $J$.
Les idéaux $I_1$ et $I_2$ sont homogènes, $B' \times G$-stables et contiennent l'idéal $J$.
\end{remarque}

\noindent Nous allons montrer le

\begin{theoreme} \label{pointfixeborel36}
Les idéaux $I_1$ et $I_2$ sont les deux points fixes de $B'$ dans $\HH$. 
\end{theoreme}

\noindent Pour ce faire, nous aurons besoin du

\begin{lemme} \label{decompo_SiSO3}
Pour chaque $i \geq 1$, on a les isomorphismes de $G$-modules suivants:
\begin{enumerate}
\item  $S^i(V) \cong V(2i) \oplus V(2i-4) \oplus \cdots \oplus V(0) \text{ si $i$ est pair,}$   
\item  $S^i(V) \cong V(2i) \oplus V(2i-4) \oplus \cdots \oplus V(2) \text{ si $i$ est impair,}$ 
\item  $S^{i,1}(V) \cong V(2i) \oplus V(2i-2) \oplus \cdots \oplus V(2)$.
\end{enumerate}
\end{lemme}

\begin{proof}
C'est une conséquence immédiate du lemme \ref{decompo_Si2}.
\end{proof}

\begin{proof}[\textbf{Preuve du théorème \ref{pointfixeborel36}}]
On raisonne comme dans la preuve du théorème \ref{pointfixeborel} en considérant $I_Z$ un point fixe de $B'$ et en étudiant $k[W]/I_Z$ composante par composante:\\
$\bullet$ Composante de degré $0$: on a bien sûr $I_Z \cap {k[W]}_0=\{0\}$.\\  
$\bullet$ Composante de degré $1$: on utilise la décomposition (\ref{kW1SO3}).\\
Si $I_Z \cap {k[W]}_1={k[W]}_1$, alors $k[W]/ I_Z=V(0)$ ne donne pas la décomposition souhaitée comme $G$-module.\\ 
Si $I_Z \cap {k[W]}_1$ contient deux copies de $V(2)$, alors $k[W]/I_Z$ est un quotient de $k[V(2)]$ comme $G$-module. Mais $S^2(V(2))=V(4) \oplus V(0)$ et ${k[W]}_0/(I_Z \cap {k[W]}_0)=V(0)$ donc nécessairement ${k[W]}_2/(I_Z \cap {k[W]}_2) \subset V(4)$. Il s'ensuit que, pour chaque $i \geq 0$,  ${k[W]}_i/(I_Z \cap {k[W]}_i) \subset V(2i)$ et donc $I_Z$ ne peut pas avoir la bonne fonction de Hilbert. Donc $I_Z \cap {k[W]}_1$ contient au plus une copie de $V(2)$.\\

\underline{$1^{er}$ cas}: $I_Z \cap {k[W]}_1=\{0\}.$ \\
$\bullet$ Composante de degré $2$: on utilise la décomposition (\ref{kW2SO3}). \\
On a ${k[W]}_2 \cap I_Z \supset (S^2(V') \otimes  V(0)) \oplus ({\Lambda}^2 (V') \otimes V(2))$. En effet, le $G$-module $k[W]/I_Z$ contient déjà une copie de la représentation triviale $V(0)$ (l'image des constantes), il ne peut donc pas en contenir d'autres. De même, comme $I_Z \cap {k[W]}_1=\{0\}$, le $G$-module ${k[W]}/I_Z$ contient déjà $3$ copies de la représentation standard $V(2)$ qui est de dimension $3$ et donc il ne peut pas en contenir d'autres. Ensuite, ${k[W]}_2$ contient $6$ copies de $V(4)$ qui est de dimension $5$, donc nécessairement, $I_Z \cap {k[W]}_2$ contient une copie de $V(4)$. Comme $I_Z \cap {k[W]}_2$ est $B'$-stable, il contient $D_2 \otimes V(4)$ car $D_2$ est l'unique droite $B'$-stable de $S^2(V')$. Donc $I_Z$ contient $I_2$. Le lemme qui suit montre que cette inclusion est nécessairement une égalité:

\begin{lemme} \label{pfixeI1}
L'idéal $I_2$ a pour fonction de Hilbert $h_W$.
\end{lemme}

\begin{proof}[\textbf{Preuve du lemme:}]
On note $J_2 \subset I_2$ l'idéal engendré par $(S^2(V') \otimes V(0)) \oplus ({\Lambda}^2 (V') \otimes V(2)) \subset {k[W]}_2$, alors $k[W]_2/(J_2 \cap k[W]_2) \cong S^2(V') \otimes V(4)$. Mais $k[W] \cong k[V(2)^{\oplus 3}] \cong k[V(2)]^{\otimes 3}$ comme $G$-module et $V(2) \otimes V(2) \cong V(4) \oplus V(2) \oplus V(0)$, donc deux copies quelconques de la représentation standard de $G$ dans ${k[W]}_1$ sont orthogonales dans $k[W]_2/J_2$, c'est-à-dire que l'image du $G$-module $V(2) \otimes V(2) \subset k[W]_2$ dans $k[W]/J_2$ est isomorphe à $V(4)$.  
On déduit alors de \cite[Lemme 4.1]{Br5} que l'on a un isomorphisme de $G' \times G$-modules 
$$k[W]/J_2 \cong \bigoplus_{i \geq 0} S^i(V') \otimes V(2i).$$ 
Donc $(I_2 \cap k[W]_2)/(J_2 \cap k[W]_2) \cong D_2 \otimes V(4)$ comme $B' \times G$-module. Soit $U$ le sous-groupe unipotent de $G$ défini dans la section \ref{lesdiffsituations}. Alors ${(I_2/J_2)}^{U}$ est un idéal de ${(k[W]/J_2)}^{U}$ engendré par un unique élément de ${({k[W]}_2/ {k[W]}_2 \cap J_2)}^{U}$ et donc la multiplicité du $G$-module $V(2i)$ dans $k[W]/I_2$ est:\\
$$\dim(S^i(V'))-\dim(S^{i-2}(V'))=2i+1=\dim(V(2i)).$$
Et donc l'idéal $I_2$ a la bonne fonction de Hilbert. 
\end{proof}


\underline{$2^{nd}$ cas}: $I_Z \cap {k[W]}_1=V(2)$.\\
Comme $I_Z \cap {k[W]}_1$ est $B'$-stable, on a $I_Z \cap {k[W]}_1 = D_1 \otimes V(2)$ car $D_1$ est l'unique droite $B'$-stable de $V'$. 
On a nécessairement $I_Z \cap {k[W]}_2 \supset S^2(V') \otimes  V(0)$ d'après l'étude de la composante de degré $0$ de $I_Z$. Donc $I_Z$ contient $I_1$. Le lemme qui suit montre que cette inclusion est en fait une égalité:  

\begin{lemme} \label{boubouI1SO3}
L'idéal $I_1$ a pour fonction de Hilbert $h_W$.
\end{lemme}

\noindent \textbf{ \textit{Preuve du lemme:}}
On note $V'':=V'/D_1$ et $W':=\Hom(V'',V)$ alors $\dim(V'')=2$ et 
$${k[W']}_2 \cong (S^2(V'') \otimes (V(4) \oplus V(0))) \oplus ({\Lambda}^2 (V'') \otimes V(2))$$ 
comme $B' \times G$-module.\\
On note $J_1$ l'idéal de $k[W']$ engendré par $S^2(V'') \otimes V(0) \subset {k[W']}_2$, alors $k[W]/{I_1} \cong k[W']/{J_1}$ comme $B' \times G$-module. 
D'après le lemme \ref{decompo_SiSO3},  pour chaque $i \geq 2$, on a 
\begin{equation} \label{minoration}
{k[W']}_i/({J_1 \cap {k[W']}_i}) \supset (S^i(V'') \otimes V(2i)) \oplus (S^{i-1,1}(V'') \otimes V(2i-2)). 
\end{equation}
En effet, l'idéal $J_1$ est engendré par les copies de $V(0)$ dans ${k[W']}_2$ et donc, si $p$ est un entier tel que $V(2p) \subset J_1 \cap {k[W]}_i$, alors nécessairement $p \leq i-2$. On dispose donc, pour chaque $i \geq 1$, d'une minoration de la dimension de ${k[W']}_i/({J_1 \cap {k[W']}_i})$ par 
$$Q(i):=\dim((S^i(V'') \otimes V(2i)) \oplus (S^{i-1,1}(V'') \otimes V(2i-2)))=4i^2+2.$$
Ensuite, un calcul direct avec \cite[Macaulay2]{Mac2} donne le polynôme de Hilbert $P_{J_1}$ et la fonction de Hilbert $f_{J_1}$ de l'idéal $J_1$:
$$
 \left\{
    \begin{array}{l}
        P_{J_1}(X)=4X^2+2,\\
        \forall n \geq 1,\ f_{J_1}(n)=P_{J_1}(n) \text{ et } f_{J_1}(0)=1.
    \end{array}
\right.
$$
Il s'ensuit que l'inclusion (\ref{minoration}) est en fait une égalité. On est maintenant en mesure de calculer les multiplicités de chaque $G$-module irréductible qui apparaît dans $k[W]/{I_1}$:
pour chaque $i \geq 0$, la représentation $V(2i)$ apparaît $i+1$ fois dans ${k[W]}_i/({I_1 \cap {k[W]}_i})$ et $i$ fois dans ${k[W]}_{i+1}/({I_1 \cap {k[W]}_{i+1}})$. Et donc la multiplicité de $V(2i)$ dans $k[W]/{I_1}$ est $2i+1=\dim(V(2i))$.
\end{proof}

\begin{remarque}
On vérifie que $\Stab_{G'}(I_1)=\Stab_{G'}(I_2)$ est le sous-groupe parabolique de $G'$ qui stabilise la droite $D_1 \subset V'$. Il est donc maximal, de dimension $7$, et les deux orbites fermées de $\HH$ sont isomorphes à $\PP^2$. 
\end{remarque}

\subsubsection{Connexité de \texorpdfstring{$\HH$}{H}} \label{grobner}

\noindent Nous allons montrer la 
\begin{proposition} \label{Hconnexe4}
Le schéma $\HH$ est connexe.
\end{proposition}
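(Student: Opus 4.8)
The plan is to deduce connectedness from the two facts already at our disposal: on the one hand, Theorem \ref{pointfixeborel36} tells us that $\HH$ has exactly two fixed points for $B'$, namely $I_1$ and $I_2$; on the other hand, Lemma \ref{fixespoints} (applicable since $W/\!/G$ has a unique closed $G'$-orbit, the point $0$) guarantees that every closed $G'$-stable subset of $\HH$ contains at least one $B'$-fixed point. Each connected component of $\HH$ is closed and, $G'$ being connected, $G'$-stable, hence contains $I_1$ or $I_2$. Therefore $\HH$ has at most two connected components, and it suffices to show that $I_1$ and $I_2$ lie in a common one. Since the principal component $\HHp$ is irreducible (Definition \ref{Horb}), hence connected, the whole proof reduces to establishing that both $I_1 \in \HHp$ and $I_2 \in \HHp$.

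To prove $I_j \in \HHp$ I would imitate the degeneration argument of section \ref{reductibilité O3} (Proposition \ref{reductiO3}). By Proposition \ref{chow} the morphism $\gamma$ is an isomorphism over the open orbit $U_3 \subset W/\!/G$, so there is a unique point $Z_{Id} \in \gamma^{-1}(U_3)$; using Lemma \ref{diagcom} one writes down explicit generators of its ideal $L \subset k[W] \cong k[x_i,y_i,z_i]$, then computes a reduced Gröbner basis $\GG_L$ with \cite[Macaulay2]{Mac2}. For a triple $(n_1,n_2,n_3) \in \ZZZ^3$ one forms the one-parameter subgroup $\theta_{n_1,n_2,n_3}$ of $T'$ as in \eqref{ssgroupeparam2}; by \cite[Theorem 15.17]{Ei} and \cite[Exercise 15.25]{Ei} this yields a flat family of ideals $(L_t)_{t \in \Aff}$ with $L_1 = L$, whose limit $L_0 := \lim_{t \to 0} L_t$ lies in $\overline{\gamma^{-1}(U_3)} = \HHp$. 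It then suffices to exhibit one triple $(n_1,n_2,n_3)$ for which $L_0 = I_1$ and another for which $L_0 = I_2$; each verification amounts to a degree-by-degree comparison of the limiting generators against the explicit descriptions of $I_1$ and $I_2$ in Notation \ref{espacesdim3SO3}, again supported by \cite[Macaulay2]{Mac2}.

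The main obstacle is the fixed point $I_1$. In the analogous $O_3$ situation it was noted (Remark following Proposition \ref{reductiO3}) that no torus degeneration of the generic ideal reaches $I_1$, so reaching it here requires a genuinely different computation; the delicate feature is that $I_1$ possesses a degree-one generator ($D_1 \otimes V(2) \subset k[W]_1$), which forces the degenerating subscheme to collapse into a proper linear subspace in the limit. I expect that finding the correct weights $(n_1,n_2,n_3)$ and certifying that the limit ideal has \emph{exactly} the prescribed generators in every degree — with no spurious monomials surviving — will be the crux, whereas the case of $I_2$ should follow by the same routine as in section \ref{reductibilité O3}. Once both memberships are secured, connectedness follows immediately from the reduction in the first paragraph.
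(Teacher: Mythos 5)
Your plan coincides with the paper's proof: reduce via Lemma \ref{fixespoints} to showing that both $B'$-fixed points $I_1$ and $I_2$ lie in $\HHp$, then realize each as the flat limit $L_0$ of a Gr\"obner degeneration of the ideal $L$ of $Z_{Id}$ along a one-parameter subgroup $\theta_{n_1,n_2,n_3}$ of $T'$. The obstacle you anticipate for $I_1$ does not materialize in this $SO_3$ situation: the triple $(n_1,n_2,n_3)=(-3,-1,-1)$ works, the degree-one generators of $I_1$ arising as limits of terms such as $t(y_3z_2-y_2z_3)+x_1$, while $(-3,-2,-2)$ yields $I_2$.
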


D'après le lemme \ref{fixespoints}, il suffit de montrer que les deux points fixes $I_1$ et $I_2$ de $B'$ sont dans $\HHp$. Pour ce faire, nous allons procéder comme dans la section \ref{reductibilitéO3} et construire deux familles plates d'idéaux ${(L_t)}_{t \in \Aff}$ de $k[W]$ telles que la première famille vérifie $L_0=I_1$ et la seconde famille vérifie $L_0=I_2$.

On note $w \in W$ sous la forme $w=\begin{bmatrix}
x_1 &y_1 & z_1 \\
x_2 &y_2 & z_2 \\
x_3 &y_3 & z_3 
\end{bmatrix}$ et on identifie $k[W]$ à $k[x_1,x_2,x_3,y_1,y_2,y_3,z_1,z_2,z_3]$. On commence par expliciter des bases de certains $B' \times G$-modules qui apparaissent dans $k[W]$:\\ 
$
 \left.
    \begin{array}{l}
        x_1 \\
        x_2 \\
        x_3 
      \end{array}
\right \} \text{ est une base de } D_1 \otimes V(2) \subset k[W]_1,\\
\left.
    \begin{array}{l}
        x_1^2+x_2^2+x_3^2 \\
        y_1^2+y_2^2+y_3^2 \\
        z_1^2+z_2^2+z_3^2 \\
        x_1 y_1+x_2 y_2+x_3 y_3\\
        x_1 z_1+x_2 z_2+x_3 z_3\\
        z_1 y_1+z_2 y_2+z_3 y_3
      \end{array}
\right \} \text{ est une base de } S^2(V') \otimes V(0) \subset k[W]_2,\\
\left.
    \begin{array}{l}
        x_2 y_3-x_3 y_2 \\
        x_1 y_3-x_3 y_1 \\
        x_1 y_2-x_2 y_1 \\
        x_2 z_3-x_3 z_2 \\
        x_1 z_3-x_3 z_1 \\
        x_1 z_2-x_2 z_1 \\
        z_2 y_3-z_3 y_2 \\
        z_1 y_3-z_3 y_1 \\
        z_1 y_2-z_2 y_1 \\
     \end{array}
\right \} \text{ est une base de } {\Lambda}^2(V') \otimes V(2) \subset k[W]_2,\\
\left.
    \begin{array}{l}
        x_1^2 \\
        x_2^2\\
        x_3^2\\
        x_1 x_2 \\
        x_2 x_3\\
        x_1 x_3 
      \end{array}
\right \} \text{ est une base de }  D_2 \otimes S^2(V)  \subset k[W]_2.$\\

On en déduit des générateurs des idéaux $I_1$ et $I_2$:
\begin{align*}
I_1=(&x_1,\ x_2,\ x_3,\ y_1^2+y_2^2+y_3^2,\ z_1^2+z_2^2+z_3^2,\ z_1 y_1+z_2 y_2+z_3 y_3),\\
I_2=( &x_1^2-x_3^2,\ x_2^2,\ x_1 x_2 ,\ x_2 x_3,\ x_1 x_3,\\
      & x_1^2+x_2^2+x_3^2,\ y_1^2+y_2^2+y_3^2,\ z_1^2+z_2^2+z_3^2, \\
      & x_1 y_1+x_2 y_2+x_3 y_3,\ x_1 z_1+x_2 z_2+x_3 z_3,\ z_1 y_1+z_2 y_2+z_3 y_3,\\
      & x_2 y_3-x_3 y_2,\ x_1 y_3-x_3 y_1,\ x_1 y_2-x_2 y_1, \\
      & x_2 z_3-x_3 z_2,\ x_1 z_3-x_3 z_1,\ x_1 z_2-x_2 z_1, \\
      & z_2 y_3-z_3 y_2,\ z_1 y_3-z_3 y_1,\ z_1 y_2-z_2 y_1).
\end{align*}

D'après la proposition \ref{chow}, le morphisme de Hilbert-Chow est un isomorphisme au dessus de $U_3 \subset W/\!/G$, donc il existe un unique $Z_{Id} \in \HH$ tel que $\gamma(Z_{Id})=(Id,1)$. On note $L$ l'idéal de $Z_{Id}$ dans $k[W]$. D'après le lemme \ref{diagcom}, on a:
\begin{align*}
L=(&x_1^2+x_2^2+x_3^2-1,\ y_1^2+y_2^2+y_3^2-1,\ z_1^2+z_2^2+z_3^2-1, \\
   &x_1 y_1+x_2 y_2+x_3 y_3, x_1 z_1+x_2 z_2+x_3 z_3, z_1 y_1+z_2 y_2+z_3 y_3, \\
   &x_1(y_2 z_3-y_3 z_2)-x_2(y_1 z_3-y_3 z_1)+x_3(y_1 z_2-y_2 z_1)-1).   
\end{align*}
On calcule une base de Gröbner de l'idéal $L$ avec \cite[Macaulay2]{Mac2}:
$$
 \left.
    \begin{array}{l}
      y_3 z_2-y_2 z_3+x_1\\
      y_3 z_1-y_1 z_3-x_2\\
      y_2 z_1-y_1 z_2+x_3\\
      x_3 z_2-x_2 z_3-y_1\\
      x_3 z_1-x_1 z_3+y_2\\
      x_2 z_1-x_1 z_2-y_3\\
      x_3 y_2-x_2 y_3+z_1\\
      x_3 y_1-x_1 y_3-z_2\\
      x_2 y_1-x_1 y_2+z_3\\
      y_1^2+y_2^2+y_3^2-1\\
      z_1^2+z_2^2+z_3^2-1\\
      x_2^2+y_2^2+z_2^2-1\\
      x_3^2+y_3^2+z_3^2-1\\
      y_1 z_1+y_2 z_2+y_3 z_3\\
      x_1 z_1+x_2 z_2+x_3 z_3\\
      x_1 y_1+x_2 y_2+x_3 y_3\\
      x_2 x_3+y_2 y_3+z_2 z_3\\
      x_1 x_3+y_1 y_3+z_1 z_3\\
      x_1 x_2+y_1 y_2+z_1 z_2\\
      x_1^2-y_2^2-y_3^2-z_2^2-z_3^2+1  
     \end{array}
\right \} \text{ base de Gröbner de l'idéal $L$.} 
$$
On fixe $(n_1,n_2,n_3) \in {\ZZZ}^3$ et soit $\theta_{n_1,n_2,n_3}$ un sous-groupe à un paramètre du tore $T'$ comme défini par (\ref{ssgroupeparam2}). 
Pour tout $t \in \Gm$, on définit alors l'idéal $L_t$ comme dans la section \ref{reductibilitéO3}.

Pour démontrer la proposition \ref{Hconnexe4}, il suffit de trouver deux triplets $(n_1,n_2,n_3) \in {\ZZZ}^3$ tels que l'on obtienne  $L_0=I_1$ dans un cas et $L_0=I_2$ dans l'autre cas.\\
On pose $(n_1,n_2,n_3)=(-3,-1,-1)$, alors pour chaque $t \in \Gm$, on obtient les générateurs suivants de l'idéal $L_t$: 
$$
 \left.
    \begin{array}{l}
      t(y_3 z_2-y_2 z_3)+x_1\\
      t(y_3 z_1-y_1 z_3)-x_2\\
      t(y_2 z_1-y_1 z_2)+x_3\\
      x_3 z_2-x_2 z_3-t^3 y_1\\
      x_3 z_1-x_1 z_3+t^3 y_2\\
      x_2 z_1-x_1 z_2- t^3 y_3\\
      x_3 y_2-x_2 y_3+t^3 z_1\\
      x_3 y_1-x_1 y_3-t^3 z_2\\
      x_2 y_1-x_1 y_2+t^3 z_3\\
      y_1^2+y_2^2+y_3^2-t^2\\
      z_1^2+z_2^2+z_3^2-t^2\\
      x_2^2+t^4 y_2^2+t^4 z_2^2-t^6\\ 
      x_3^2+t^4 y_3^2+t^4 z_3^2-t^6\\ 
      y_1 z_1+y_2 z_2+y_3 z_3\\ 
      x_1 z_1+x_2 z_2+x_3 z_3\\ 
      x_1 y_1+x_2 y_2+x_3 y_3\\ 
      x_2 x_3+t^4(y_2 y_3+z_2 z_3)\\ 
      x_1 x_3+t^4(y_1 y_3+z_1 z_3)\\ 
      x_1 x_2+t^4(y_1 y_2+z_1 z_2)\\ 
      x_1^2+t^4(-y_2^2-y_3^2-z_2^2-z_3^2)+t^6  
     \end{array}
\right \} \text{ générateurs de l'idéal $L_t$.}  
$$
Si l'on pose $t=0$, alors on retrouve les générateurs de l'idéal $I_1$, autrement dit $L_0=I_1$. De même, si l'on considère $(n_1,n_2,n_3)=(-3,-2,-2)$, alors on obtient une famille plate d'idéaux ${(L_t)}_{t \in \Aff}$ telle que $L_0=I_2$. La proposition \ref{Hconnexe4} s'ensuit.

\subsubsection{Dimensions des espaces tangents de \texorpdfstring{$\HH$}{H} en \texorpdfstring{$Z_1$}{Z1} et \texorpdfstring{$Z_2$}{Z2}}  \label{tangSO3}

Pour $i \in \{1,2\}$, on note $R_i:=k[W]/I_i$ et $Z_i:=\Spec(k[W]/I_i)$. Nous allons montrer la

\begin{proposition} \label{dimTangentSO3} 
On a $\dim(T_{Z_1} \HH)=6$ et $\dim(T_{Z_2} \HH)=8$. 
\end{proposition}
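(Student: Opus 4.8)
L'objectif est de calculer les dimensions des espaces tangents $\dim(T_{Z_i}\HH)$ pour $i=1,2$, o\`u $Z_i:=\Spec(k[W]/I_i)$ et $I_1,I_2$ sont les deux points fixes de $B'$ d\'etermin\'es au th\'eor\`eme \ref{pointfixeborel36}. La strat\'egie est la m\^eme que celle employ\'ee dans les sections \ref{dimTang}, \ref{dimTang2} et \ref{dimTang2O3}, \`a savoir exploiter l'isomorphisme canonique $T_{Z_i}\HH \cong \Hom_{R_i}^{G}(I_i/I_i^2,R_i)$ de la proposition \ref{isoTangent}. Pour chaque $i$, je fixerais un $B' \times G$-module $N_i \subset k[W]$ qui engendre l'id\'eal $I_i$ (les g\'en\'erateurs explicites sont d\'ej\`a donn\'es au d\'ebut de la section \ref{grobner}), puis j'appliquerais le lemme \ref{InegTang}: on a $\dim(T_{Z_i}\HH)=\dim(N_i)-\rg(\rho_i^*)$, o\`u $\rho_i^*$ est le morphisme de la suite exacte (\ref{complexeCornFlakes2}) associ\'e au module des relations $\RRR_i$.

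Le calcul se ram\`ene donc \`a un encadrement du rang de $\rho_i^*$, men\'e en deux temps. D'une part, pour obtenir une minoration de $\dim(T_{Z_i}\HH)$, j'exhiberais une famille libre de morphismes $G$-\'equivariants dans $\Hom_{R_i}^G(I_i/I_i^2,R_i)$, en m'appuyant sur le calcul Macaulay2 d'une famille de g\'en\'erateurs du $R_i$-module $\Hom_{R_i}(I_i/I_i^2,R_i)$ (comme dans le lemme \ref{inegTang1} et la proposition \ref{dimTangent2O3}). On s\'electionne parmi ces g\'en\'erateurs des combinaisons lin\'eaires \`a coefficients dans $k[W]$ qui soient \`a la fois $G$-\'equivariantes et lin\'eairement ind\'ependantes: pour $Z_1$ on en attend exactement $6$, pour $Z_2$ exactement $8$. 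D'autre part, pour la majoration, j'\'etablirais que $\rg(\rho_i^*)$ est assez grand, en \'evaluant une relation de d\'ependance lin\'eaire g\'en\'erique entre les $\rho_i^*(\psi)$ sur un choix explicite de relations $r_1,r_2,\ldots$ du module $\RRR_i$ (fournies par Macaulay2), et en montrant que tous les coefficients s'annulent n\'ecessairement, exactement comme dans les preuves des lemmes $\rg(\rho^*)\geq 3$ et $\rg(\rho^*)\geq 17$ des sections \ref{dimTang} et \ref{dimTang2}.

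Le point d\'elicat sera le cas de $Z_2$, o\`u la dimension attendue $8$ est strictement sup\'erieure \`a $\dim(\HHp)=6$: il faut donc l'\'egalit\'e exacte, et non un simple encadrement. Dans la section \ref{dimTang2O3} consacr\'ee au cas $O_3(k)$, l'auteur n'avait d'ailleurs pu conclure qu'\`a un encadrement $7 \leq \dim(T_{Z_2}\HH) \leq 8$ sans trancher. Ici, puisque l'\'enonc\'e affirme la valeur pr\'ecise $8$, je m'attends \`a ce que la majoration du rang soit le v\'eritable obstacle, car elle exige de contr\^oler \emph{toutes} les relations engendrant $\RRR_2$ et non seulement un sous-ensemble commode; la liste des relations donn\'ees par Macaulay2 ne suffit en g\'en\'eral pas \`a engendrer le module des relations (voir la remarque apr\`es les relations $r_1,\ldots,r_{10}$ de la section \ref{dimTang2}), de sorte qu'il faudra soit un calcul complet de la r\'esolution, soit une identification $G$-\'equivariante pr\'ecise de $\Hom_{R_2}^G(I_2/I_2^2,R_2)$ comme $B'$-module, analogue \`a celle de la remarque \ref{Bmoduleexplicite}. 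Le cas de $Z_1$, pour lequel $\dim(T_{Z_1}\HH)=6=\dim(\HHp)$, est plus simple et confirmera que $\HHp$ est lisse en $Z_1$, tandis que la valeur $8$ en $Z_2$ fournira la non-lissit\'e annonc\'ee au corollaire \ref{Hcasn3SO3}.
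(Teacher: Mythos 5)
Votre strat\'egie pour $Z_2$ co\"incide avec celle du m\'emoire: on fixe les $20$ g\'en\'erateurs de $I_2$, on minore $\dim(T_{Z_2}\HH)$ par $8$ en extrayant huit morphismes $G$-\'equivariants lin\'eairement ind\'ependants de la famille de g\'en\'erateurs de $\Hom_{R_2}(I_2/I_2^2,R_2)$ calcul\'ee par Macaulay2 (lemme \ref{inek}), puis on la majore en montrant $\rg(\rho^*)\geq 12$ par \'evaluation d'une combinaison lin\'eaire g\'en\'erique sur sept relations explicites $r_1,\ldots,r_7$, d'o\`u $\dim(T_{Z_2}\HH)=20-12=8$. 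Deux diff\'erences avec le m\'emoire m\'eritent d'\^etre signal\'ees. D'abord, pour $Z_1$ le m\'emoire n'emploie pas cette machinerie: il observe que $I_1/I_1^2\cong((D_1\otimes V(2))\oplus(S^2(V'/D_1)\otimes V(0)))\otimes R_1$ est un $R_1$-module libre, de sorte que $\delta$ est un isomorphisme et que le lemme \ref{dimRmod} donne imm\'ediatement $\dim(T_{Z_1}\HH)=6$; votre calcul de rang aboutirait au m\^eme r\'esultat mais est inutilement lourd ici. Ensuite, votre inqui\'etude sur la n\'ecessit\'e de contr\^oler \emph{toutes} les relations engendrant $\RRR_2$ est mal plac\'ee: la minoration $\rg(\rho^*)\geq 12$ ne requiert qu'un sous-ensemble commode de relations, puisque l'ind\'ependance lin\'eaire des images se teste sur n'importe quels \'el\'ements de $\RRR_2$; quant \`a la majoration $\rg(\rho^*)\leq 12$, elle d\'ecoule de la borne $\dim\Hom_{R_2}^G(I_2/I_2^2,R_2)\geq 8$ du lemme \ref{inek}, qui contourne enti\`erement le module des relations --- c'est la seconde alternative que vous \'evoquez, et c'est celle que suit effectivement le m\'emoire. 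L'\'egalit\'e exacte en $Z_2$ tient simplement au fait que ces deux bornes se rejoignent en $8$, contrairement au cas de $O_3(k)$ o\`u elles laissaient un \'ecart.
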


$\bullet$ Montrons que $\dim(T_{Z_1} \HH)=6$. 
On vérifie que
$$I_1/I_1^2 \cong ((D_1 \otimes V(2)) \oplus (S^2(V'/D_1) \otimes V(0))) \otimes R_1 $$
comme $R_1,B' \times G$-module. Le résultat découle alors du lemme \ref{dimRmod}.\\

$\bullet$ Montrons que $\dim(T_{Z_2} \HH)=8$. 
On procède comme dans la preuve de la proposition \ref{dimTangent2}. Soient
$$
\left\{
    \begin{array}{l}
f_1:=x_1^2+x_2^2+x_3^2,\\
f_2:=y_1^2+y_2^2+y_3^2,\\
f_3:=z_1^2+z_2^2+z_3^2,\\
f_4:=x_1 y_1+x_2 y_2+x_3 y_3,\\
f_5:=y_1 z_1+y_2 z_2+y_3 z_3,\\
f_6:=x_1 z_1+x_2 z_2+x_3 z_3,\\
g_{11}:=x_2 y_3-x_3 y_2,\\
g_{12}:=x_3 y_1-x_1 y_3,\\
g_{13}:=x_1 y_2-x_2 y_1,\\
g_{21}:=x_2 z_3-x_3 z_2,\\
g_{22}:=x_3 z_1-x_1 z_3,\\
g_{23}:=x_1 z_2-x_2 z_1,\\
g_{31}:=z_3 y_2-y_3 z_2,\\
g_{32}:=z_1 y_3-z_3 y_1,\\
g_{33}:=z_2 y_1-z_1 y_2,\\
h_1:=x_1^2-x_3^2,\\
h_2:=x_1 x_2,\\
h_3:=x_2^2,\\
h_4:=x_2 x_3,\\
h_5:=x_1 x_3,
   \end{array}
\right.
$$
des générateurs de l'idéal $I_2$ et soit $N \subset k[W]$ le $B' \times G$-module engendré par ces générateurs. 

\begin{lemme}  \label{inek}
$\dim(\Hom_{R_2}^G(I_2/I_2^2,R_2)) \geq 8.$
\end{lemme}

\begin{proof}
D'après \cite[Macaulay2]{Mac2}, une famille de générateurs du $R_2$-module $\Hom_{R_2}(I_2/I_2^2,R_2)$ est donnée dans la table \ref{table8}.
\begin{table}[ht]
\begin{center}
\includegraphics[scale=0.8]{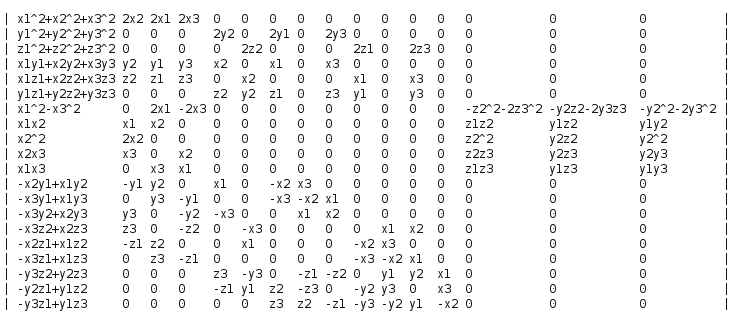}
\end{center}
\caption{\label{table8} Générateurs du $R_2$-module $\Hom_{R_2}(I_2/I_2^2,R_2)$}
\end{table}

\noindent On note $\Phi_i$ le morphisme donné par la $i+1$-ème colonne de cette matrice. On vérifie que les huit morphismes suivants sont $G$-équivariants et linéairement indépendants:
\begin{itemize} \renewcommand{\labelitemi}{$\bullet$}
\item $\Phi_{7}$,
\item $\Phi_{10}$,
\item $\Phi_{12}$,
\item $\Phi_{13}$,
\item $\Phi_{14}$,
\item $\Phi_{15}$,
\item $y_2 \Phi_{1}+y_1 \Phi_2+y_3 \Phi_3$,
\item $z_2 \Phi_{1}+z_1 \Phi_2+z_3 \Phi_3$.
\end{itemize}  
Il s'ensuit que ${\Hom}_{R_2}^G(I_2/I_2^2,R_2)$ est de dimension au moins $8$. 
\end{proof}

On reprend les notations de la section \ref{ConnexitéetTangence}. On a $\dim(N)=20$ et donc, d'après le lemme \ref{InegTang}, on a $\dim(T_{Z_0} \HH)=20- \rg(\rho^*)$. D'après le lemme \ref{inek}, il suffit de montrer que $\rg(\rho^*) \geq 12$ pour montrer la proposition \ref{dimTangentSO3}.\\
Pour $i=1,\ldots, 6$, on définit ${\psi}_i \in {\Hom}_{R_2}^G(R_2 \otimes N,R_2)$ par 
$$
\left\{
    \begin{array}{ll}
        {\psi}_i(f_j \otimes 1)={\delta}_i^j, &\text{ pour }  j=1,\ldots,6,\\
        {\psi}_i( h_j \otimes 1)=0, &\text{ pour }   j=1,\ldots,5, \\
        {\psi}_i( g_{jk} \otimes 1)=0, &\text{ pour }   j,k=1,\ldots,3,
    \end{array}
\right.
$$
où ${\delta}_i^j$ est le symbole de Kronecker. 
De même, pour $1 \leq j$,$k \leq 3$, on définit ${\phi}_{jk} \in {\Hom}_{R_2}^G(R_2 \otimes N,R_2)$ par 
$$
\left\{
    \begin{array}{ll}
        {\phi}_{jk}(f_i \otimes 1)=0, &\text{ pour }  i=1,\ldots,6,\\
        {\phi}_{jk}( h_i \otimes 1)=0, &\text{ pour }   i=1,\ldots,5, \\
        {\phi}_{jk}( g_{p,l} \otimes 1)= &\left\{
    \begin{array}{ll}
        x_l {\delta}_j^p &\text{ si } k=1,\\
        y_l {\delta}_j^p &\text{ si }  k=2,\\
        z_l {\delta}_j^p &\text{ si }  k=3.\\
     \end{array}
            \right.
    \end{array}
\right.
$$ 
On identifie ces morphismes à des éléments de $\Hom^G(N,R)$ via l'isomorphisme $\Hom^G(N,R) \cong \Hom_R^G(R \otimes N,R)$.
Enfin, soient
$$
 \left \{
    \begin{array}{l}
r_1:=-y_1 \otimes f_1+2x_1 \otimes f_4-y_1 \otimes h_1-2y_2 \otimes h_2+y_1 \otimes h_3-2y_3 \otimes h_5,\\
r_2:=y_2 \otimes f_1-y_2 \otimes h_1-y_2 \otimes h_3 -2 y_3 \otimes h_4+2 x_3 \otimes g_{11},\\
r_3:=z_1 \otimes f_1-z_1 \otimes h_1-z_1 \otimes h_3 -2 z_3 \otimes h_4+2 x_3 \otimes g_{21},\\  
r_4:=-z_2 \otimes f_4+y_2 \otimes f_6 +z_3 \otimes g_{11}-y_3 \otimes g_{21}+x_1 \otimes g_{33},\\
r_5:=x_3 \otimes f_2-y_3 \otimes f_4 +y_2 \otimes g_{11}+y_1 \otimes g_{12},\\   
r_6:=x_3 \otimes f_3-z_3 \otimes f_6 +z_2 \otimes g_{21}+z_1 \otimes g_{22},\\ 
r_7:=x_2 \otimes f_5-y_2 \otimes f_6 -z_3 \otimes g_{11}+z_1 \otimes g_{13},
       \end{array}
      \right. $$
des relations entre les générateurs du $R_2$-module $I_2/I_2^2$. Le lemme qui suit achève la preuve de la proposition \ref{dimTangentSO3}:

\begin{lemme} 
$\rg(\rho^*) \geq 12.$
\end{lemme}

\begin{proof}
Les ${\psi}_i$ et les ${\phi}_{jk}$ forment une famille libre de ${\Hom}_{R_2}^G(R_2 {\otimes} N,R_2)$. Nous allons voir que la famille 
$$\left\{ \rho^*({\psi}_i),\ i=1,\ldots,6 \right \} \cup \left \{  \rho^*({\phi}_{jk}),\ j,k \in \{ 1,2,3\} \right\}$$ 
engendre un sous-espace de dimension au moins $12$ dans  
${\Hom}_{R_2}^G(R_2 \otimes \RRR,R_2)$.\\
Soient ${\lambda}_1,\ldots,{\lambda}_6, \gamma_{11}, \ldots, \gamma_{33} \in k$ tels que 
\begin{equation}  \label{relaSO3}
\sum_{i=1}^{6} \lambda_i \rho^*(\psi_i) + \sum_{j,k=1}^{3}  \gamma_{jk} \rho^*(\phi_{jk})=0.
\end{equation}
On évalue (\ref{relaSO3}) en $r_1 \otimes 1$, on obtient:
$$ (2 {\lambda}_4 x_1 -{\lambda}_1 y_1=0)  \Rightarrow ({\lambda}_1={\lambda}_4=0).$$
On évalue (\ref{relaSO3}) en $r_2\otimes 1$, on obtient:
$$(2 \gamma_{11} x_3 x_1 + 2 \gamma_{12} x_3 y_1+ 2 \gamma_{13} x_3 z_1=0) \Rightarrow (\gamma_{12}=\gamma_{13}=0). $$
On évalue (\ref{relaSO3}) en $r_3\otimes 1$, on obtient:
$$(2 \gamma_{21} x_3 x_1 + 2 \gamma_{22} x_3 y_1+ 2 \gamma_{23} x_3 z_1=0) \Rightarrow (\gamma_{22}=\gamma_{23}=0). $$
On évalue (\ref{relaSO3}) en $r_4\otimes 1$, on obtient:
$$(y_2 {\lambda}_6 + \gamma_{11} z_3 x_1 - \gamma_{21} y_3 x_1+ \gamma_{31} x_3 x_1+ \gamma_{32} y_3 x_1+ \gamma_{33} z_3 x_1=0) \Rightarrow ({\lambda}_6=0,\ \gamma_{11}=-\gamma_{33} \text{ et } \gamma_{21}=\gamma_{32}).$$ 
De même, si on évalue (\ref{relaSO3}) en $r_5\otimes 1$, $r_6\otimes 1$ et $r_7\otimes 1$, on obtient ${\lambda}_2={\lambda}_3={\lambda}_5=0$. 
\end{proof} 

\begin{corollaire} \label{Hcasn3SO3}
Le schéma $\HH$ est lisse en $Z_1$ et singulier en $Z_2$.
\end{corollaire}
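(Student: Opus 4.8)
Le corollaire \ref{Hcasn3SO3} affirme que $\HH$ est lisse en $Z_1$ et singulier en $Z_2$. Le plan est de comparer, pour chacun des deux points fixes, la dimension de l'espace tangent $T_{Z_i}\HH$ avec la dimension de la composante principale $\HHp$, en s'appuyant sur le crit�re de lissit� ponctuelle. D'abord je rappellerais que la composante principale $\HHp$ se surjecte birationnellement sur $W/\!/G$ via le morphisme de Hilbert-Chow $\gamma$ (d�finition \ref{Horb}); comme $W/\!/G$ est ici le rev�tement double de $S^2(V'^*)$ ramifi� en $S^2(V'^*)^{\leq 2}$, sa dimension est celle de $S^2(V'^*)$, � savoir $\binom{n'+1}{2}=\binom{4}{2}=6$ lorsque $n'=3$. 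Donc $\dim(\HHp)=6$.

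Ensuite, le point d�cisif est que l'on dispose d�j� de tous les ingr�dients n�cessaires dans l'excerpt. La proposition \ref{dimTangentSO3} fournit directement les deux valeurs $\dim(T_{Z_1}\HH)=6$ et $\dim(T_{Z_2}\HH)=8$, et les propositions \ref{Hconnexe4} (via le th�or�me \ref{pointfixeborel36}) assurent que $Z_1$ et $Z_2$ sont bien deux points de $\HHp$. Je conclurais alors par l'�quivalence suivante: un sch�ma est lisse en un point ferm� si et seulement si la dimension de son espace tangent en ce point �gale la dimension de la composante (irr�ductible) qui le contient. Pour $Z_1$, on a $\dim(T_{Z_1}\HH)=6=\dim(\HHp)$, donc $\HH$ est lisse en $Z_1$. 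Pour $Z_2$, on a $\dim(T_{Z_2}\HH)=8>6=\dim(\HHp)$, donc $\HH$ ne peut pas �tre lisse en $Z_2$ (un point lisse aurait un espace tangent de dimension �gale � la dimension locale, qui vaut au plus $6$ si $Z_2$ n'appartient qu'� $\HHp$, et de toute fa�on l'espace tangent d�passe strictement $\dim(\HHp)$).

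La principale subtilit� que je devrais traiter avec soin concerne le cas de $Z_2$: il faut s'assurer que $Z_2$ n'appartient pas � une autre composante irr�ductible de dimension sup�rieure � $6$ qui rendrait la condition $\dim(T_{Z_2}\HH)=8$ compatible avec la lissit�. Cependant, comme $\dim(\HHp)=6$ et que $\gamma$ est propre et surjectif, toute composante irr�ductible de $\HH$ se surjecte sur $W/\!/G$ (qui est irr�ductible), donc est de dimension au moins $6$; mais pour conclure � la non-lissit� il suffit en r�alit� que $\dim(T_{Z_2}\HH)$ d�passe strictement la dimension de \emph{toute} composante contenant $Z_2$, ou plus simplement d'invoquer que $Z_2 \in \HHp$ avec $\dim(T_{Z_2}\HH)=8>6$, ce qui force d�j� $\HH$ � �tre singulier en $Z_2$ puisqu'un point lisse d'une vari�t� de dimension $6$ a un espace tangent de dimension $6$. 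L'argument le plus propre est donc simplement que $Z_2\in\HHp$, $\dim(\HHp)=6$, et $\dim(T_{Z_2}\HHp)\le \dim(T_{Z_2}\HH)$ avec l'in�galit� $\dim(T_{Z_2}\HH)=8>6$ qui exclut la lissit� de $\HHp$ en $Z_2$, et donc celle de $\HH$. La preuve se r�duit ainsi � une courte comparaison num�rique une fois les r�sultats pr�c�dents invoqu�s.
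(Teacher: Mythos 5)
Votre argument est correct et co\"incide essentiellement avec la preuve du texte : tous deux reposent sur le fait que $Z_1,Z_2\in\HHp$ (\'etabli dans la section \ref{grobner}), sur $\dim(\HHp)=6$ et sur la proposition \ref{dimTangentSO3}, la conclusion r\'esultant d'une simple comparaison des dimensions d'espaces tangents. Deux remarques sans incidence sur la validit\'e : l'affirmation parenth\'etique selon laquelle toute composante irr\'eductible de $\HH$ se surjecte sur $W/\!/G$ est fausse en g\'en\'eral (la composante des id\'eaux homog\`enes s'envoie sur $0$), et l'in\'egalit\'e finale $\dim(T_{Z_2}\HHp)\le\dim(T_{Z_2}\HH)$ va dans le mauvais sens pour exclure la lissit\'e de $\HHp$ en $Z_2$ --- mais votre formulation ant\'erieure (un point lisse de $\HH$ appartient \`a une unique composante, qui devrait alors \^etre de dimension $8$, contredisant le fait que $\HHp$, de dimension $6$, contient $Z_2$) suffit et est exactement l'argument du texte.
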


\begin{proof}
On a vu dans la section \ref{grobner} que $\HH$ est connexe et en fait, on a même montré que $Z_1$ et $Z_2$ appartiennent à $\HHp$. Donc $\HH$ est singulier en $Z_i$ si et seulement $\dim(T_{Z_i}\HH) > \dim(\HHp)$. Or, on a $\dim(\HHp)=6$ et la proposition \ref{dimTangentSO3} permet alors de conclure.
\end{proof}

\section{Cas de \texorpdfstring{$Sp(V)$}{Sp(V)} opérant dans \texorpdfstring{$V^{\oplus  n'}$}{n'V}}  \label{schcasSymp}

On se place dans la situation $5$: on suppose que $n$ est pair et on a $G:=Sp(V)$, $G':=GL(V')$, $W:={\Hom}(V',V)$ et l'opération de $G' \times G$ dans $W$ est donnée par (\ref{actionSLLn}).\\
Lorsque $n=2$, on a $G \cong SL_2(k)$ et alors l'étude du schéma de Hilbert invariant pour l'opération de $G$ dans $W$ a déjà été effectuée dans la section \ref{casSln}.

\subsection{Etude du morphisme de passage au quotient}  \label{etudequotientSp2n}

Les résultats essentiels de  cette section sont les propositions \ref{descriptiongeofibSpn} et \ref{ouvertplatitudeSpn} qui décrivent les fibres et l'ouvert de platitude de $\nu$. On suit dans cette section le même cheminement que dans les sections \ref{description_quotient} et \ref{description_quotientOn}.

D'après le premier théorème fondamental pour $Sp(V)$ (voir \cite[§11.2.1]{Pro}) l'algèbre des invariants $k[W]^G$ est engendrée par les $\left\langle  i\ |\ j \right\rangle$, où pour chaque couple $(i,j)$, $\ 1\leq i < j \leq n'$, on définit la forme bilinéaire antisymétrique $\left\langle  i\ |\ j \right\rangle$ sur $W \cong V^{\oplus n'}$ par: 
\begin{equation} \label{GLinvariants}
\forall v_1,\ldots,v_{n'}\in V,\ \left\langle  i\ |\ j \right\rangle:\ (v_1,\ldots,v_{n'}) \mapsto \Omega(v_i,v_j)
\end{equation}
où $\Omega$ est la forme symplectique définie par (\ref{defFormSymp}).
On note $A:=\begin{bmatrix}
0  & 1 \\
-1 & 0 
\end{bmatrix}$ et $J:=\begin{bmatrix}
A & &\\
 & \ddots & \\
 && A 
\end{bmatrix} \in \MM_n(k)$ la matrice avec $\frac{n}{2}$ blocs $A$ sur la diagonale et des $0$ partout ailleurs. Alors $J$ est la matrice de $\Omega$ relativement à la base $\BB$. On a le morphisme naturel $G' \times G$-équivariant
\begin{equation} \label{appznatSp}
\Hom(V',V)    \rightarrow  \Hom(\Lambda^2(V'),\Lambda^2(V)),\ w \mapsto \Lambda^2(w).
\end{equation}
D'après \cite[§17.2]{FH}, on a $\Lambda^2(V) \cong V_0 \oplus \Gamma_{\epsilon_1+\epsilon_2}(V)$ comme $G$-module si $n \geq 4$ (sinon, $\Lambda^2(V) \cong V_0$) et la représentation triviale $V_0$ est engendrée par la forme symplectique $\Omega$. Le morphisme de passage au quotient $\nu$ est obtenu en composant le morphisme (\ref{appznatSp}) et le morphisme $G$-invariant 
$$  \Hom(\Lambda^2(V'),\Lambda^2(V))  \rightarrow   \Hom(\Lambda^2(V'),V_0) \cong \Lambda^2(V'^*)$$ 
induit par la projection $\Lambda^2(V) \rightarrow V_0$. On a donc
$$\begin{array}{lrcl}
\nu:  &\Hom(V',V)   & \rightarrow  & \Lambda^2(V'^*) \\
        & w  & \mapsto      &  \leftexp{t}{w}J w 
\end{array}$$ 
et 
$$W/\!/G=\Lambda^2(V'^*)^{\leq n}:=\left\{ Q \in  \Lambda^2(V'^*) \  \mid \ \rg(Q) \leq n \right\} $$
est une variété déterminantielle antisymétrique.

\begin{remarque}
Si $Q \in W/\!/G$, alors $Q$ s'identifie naturellement à une application bilinéaire antisymétrique et donc son rang est pair.
\end{remarque}

Si $n' \leq n$, alors $W/\!/G=\Lambda^2(V'^*)$ est un espace affine. Sinon, c'est une variété normale (\cite[§3.2, Théorème 2]{SB}), de dimension $n'n-\frac{1}{2}n(n+1)$, singulière le long du fermé $\Lambda^2(V'^*)^{\leq n-2}$ (\cite[6.4]{Wey}) et de Gorenstein (\cite[§4.4, Théorème 4]{SB}) car $G$ est semi-simple et connexe. On pose 
$$N:=\min \left( E \left(\frac{n'}{2}\right),\frac{n}{2} \right).$$
L'opération de $G'$ dans $W$ induit une opération dans $W/\!/G$ telle que $\nu$ soit $G'$-équivariant: 
$$\forall Q \in W/\!/G \subset \Lambda^2(V'^*),\ \forall g' \in G',\ g'.Q=\leftexp{t}{g'^{-1}} Q g'^{-1}  .$$
Pour cette opération, $W/\!/G$ se décompose en $N+1$ orbites:
$$U_i:=\left\{  Q \in  \Lambda^2(V'^*) \  \mid \ \rg(Q) = 2i \right\}$$
pour $i=0,\ldots,N$. Les adhérences de ces orbites sont imbriquées de la façon suivante: 
$$ \{0\}=\overline{U_0} \subset \overline{U_1} \subset \cdots \subset \overline{U_N}=W/\!/G  .$$ 
En effet, pour chaque $i=0,\ldots,N$, on a $\overline{U_i}=\Lambda^2(V'^*)^{\leq i}$. En particulier, l'orbite $U_N$ est un ouvert dense de $W/\!/G$. 

Dans \cite{KS}, Kraft et Schwarz montrent que $\NNN(W,G)$ est toujours irréductible et réduit (\cite[Theorem 9.1]{KS}). Nous allons retrouver le fait que le nilcône $\NNN(W,G)$ est irréductible et déterminer sa dimension. Soit $m \in \NN$, on note $\IG(m,V)$ la variété (projective) des sous-espaces isotropes pour $\Omega$ de dimension $m$ dans $V$. On rappelle que $\Omega$ est non dégénérée, donc la dimension de n'importe quel sous-espace de $V$ totalement isotrope maximal pour l'inclusion est $\frac{n}{2}$. Il s'ensuit que la variété $\IG(m,V)$ existe si et seulement si $m \in \{0,\ldots,\frac{n}{2}\}$, et dans ce cas 
$$\dim \left(\IG\left(m,V\right) \right)=m(n-m)-\frac{1}{2}m(m-1)  .$$ 
La variété $\IG(m,V)$ est homogène sous $G$.

\begin{proposition} \label{nilconeSpn}
Le nilcône $\NNN(W,G)$ est une variété de dimension
\begin{itemize} \renewcommand{\labelitemi}{$\bullet$}
\item $nn'-\frac{1}{2}n'(n'-1)$ si $n' \leq n(\Omega)$,
\item $\frac{1}{2}nn'+\frac{1}{8}n^2+\frac{1}{4}n$ sinon.
\end{itemize}
\end{proposition}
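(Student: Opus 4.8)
The nilcone $\NNN(W,G)$ is a variety of dimension $nn'-\frac{1}{2}n'(n'-1)$ if $n' \leq n$, and $\frac{1}{2}nn'+\frac{1}{8}n^2+\frac{1}{4}n$ otherwise.

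Let me plan the proof.

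The plan is to mirror exactly the strategy used for the orthogonal case in Proposition \ref{nilc�neOn}, adapting it to the symplectic setting. The nilcone is
$$\NNN(W,G)=\left\{ w \in \Hom(V',V) \ \mid \ \leftexp{t}{w}J w=0 \right\},$$
and the first step is to reinterpret this condition geometrically: $\leftexp{t}{w}J w=0$ holds if and only if $\Omega(wx,wy)=0$ for all $x,y \in V'$, i.e. if and only if $\Im(w)$ is a totally isotropic subspace of $V$ for the symplectic form $\Omega$. Since a maximal isotropic subspace has dimension $\frac{n}{2}$ and $w$ has rank at most $\min(n',\frac{n}{2}) \cdot$(appropriate bound), the relevant bound on the dimension of $\Im(w)$ is $\alpha:=\min\left(n',\frac{n}{2}\right)$.

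Next I would introduce the incidence variety resolving the nilcone. Set
$$\ZZ:=\left\{(w,L) \in \Hom(V',V) \times \IG(\alpha,V) \ \mid \ \Im(w) \subset L\right\},$$
together with the two natural projections $p_1:\ \ZZ \rightarrow \NNN(W,G)$ and $p_2:\ \ZZ \rightarrow \IG(\alpha,V)$. Fixing $L_0 \in \IG(\alpha,V)$, the second projection exhibits $\ZZ$ as the total space of the $G$-homogeneous vector bundle over $\IG(\alpha,V)$ whose fibre is $\Hom(V',L_0)$. Hence
$$\dim(\ZZ)=\dim\left(\IG(\alpha,V)\right)+\dim\left(\Hom(V',L_0)\right)=\alpha(n-\alpha)-\frac{1}{2}\alpha(\alpha-1)+\alpha n'.$$
Crucially, unlike the orthogonal Grassmannian $\OG(\frac{n}{2},V)$ which has two components, the symplectic isotropic Grassmannian $\IG(m,V)$ is always irreducible and homogeneous under $G$; this is precisely why $\NNN(W,G)$ is irreducible in all cases (recovering the Kraft--Schwarz result), with no dichotomy as in the orthogonal case.

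To conclude I would pass to the dense opens $\ZZ':=\{(w,L)\in \ZZ \mid \rg(w)=\alpha\}$ and $X:=\{w \in \NNN(W,G) \mid \rg(w)=\alpha\}$ and verify that $p_1:\ \ZZ' \rightarrow X$ is an isomorphism, so that $p_1$ is birational and $\dim(\NNN(W,G))=\dim(\ZZ)$. It then remains to simplify the expression for $\dim(\ZZ)$ in the two regimes, substituting $\alpha=n'$ when $n' \leq \frac{n}{2}$ and $\alpha=\frac{n}{2}$ when $n' > \frac{n}{2}$. The first case gives $\alpha(n-\alpha)-\frac{1}{2}\alpha(\alpha-1)+\alpha n'$ with $\alpha=n'$, which after collecting terms yields $nn'-\frac{1}{2}n'(n'-1)$; the second case, with $\alpha=\frac{n}{2}$, yields $\frac{1}{2}nn'+\frac{1}{8}n^2+\frac{1}{4}n$. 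The main point requiring care (rather than difficulty) is correctly identifying the cutoff between the two regimes and matching it to the stated condition $n' \leq n$ versus $n' > n$; one must check that for $\frac{n}{2} < n' \leq n$ the bound $\alpha=\frac{n}{2}$ already governs, so the dividing line in the formula corresponds to $n' \leq \frac{n}{2}$ as dictated by $\alpha$, and reconcile this with the phrasing in the statement. The birationality of $p_1$ and the irreducibility of $\IG(\alpha,V)$ are the two structural facts that make everything go through, and neither presents a genuine obstacle since they are standard; the only real work is the bookkeeping in the dimension count.
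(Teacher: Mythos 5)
Votre preuve est correcte et suit exactement la même démarche que celle du mémoire, qui renvoie d'ailleurs explicitement à la preuve de la proposition \ref{nilc�neOn} : interprétation de la condition $\leftexp{t}{w}Jw=0$ comme l'isotropie de $\Im(w)$, désingularisation par la variété d'incidence $\ZZ$ fibrée en $\Hom(V',L_0)$ au-dessus de $\IG(\alpha,V)$ avec $\alpha=\min(n',\frac{n}{2})$, birationalité de $p_1$ sur les ouverts de rang maximal, puis calcul de $\dim(\ZZ)$ dans les deux régimes. La seule hésitation que vous signalez (la ligne de partage entre les deux formules) se dissipe en notant que $n(\Omega)$ dans l'énoncé désigne l'indice de Witt de $\Omega$, c'est-à-dire $\frac{n}{2}$, de sorte que votre coupure $n' \leq \frac{n}{2}$ coïncide exactement avec celle de l'énoncé et qu'il n'y a rien à réconcilier.
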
   

\begin{proof}
La preuve est analogue à celle de la proposition \ref{nilcôneOn}.
\end{proof}  

On s'intéresse maintenant à la description géométrique des fibres de $\nu$ au dessus de chaque orbite $U_i$.  

\begin{notation} 
Pour $0 \leq r \leq N$, on note
$$J_{2r} =\begin{bmatrix}
A & 0& \cdots & \cdots & \cdots & \cdots & 0\\
0 & A & &&&& \vdots \\
\vdots && \ddots &&&& \vdots \\
\vdots &&&A&&& \vdots \\
\vdots &&&&0&& \vdots \\
\vdots &&&&& \ddots & \vdots \\
0 & \cdots & \cdots & \cdots & \cdots & \cdots & 0
\end{bmatrix}$$
la matrice avec $r$ blocs $A$ sur la diagonale et des $0$ partout ailleurs. La matrice $J_{2r}$ est antisymétrique de rang $2r$ et donc s'identifie naturellement à un élément de $U_r$. 
\end{notation}

On fixe $r \in \{0, \ldots,N\}$ et on définit $w_r:= \begin{bmatrix}
I_{2r} &0_{2r,n'-2r} \\
0_{n-2r,2r}   &0_{n-2r,n'-2r} \end{bmatrix} \in W$ et $G_r$ le stabilisateur de $w_r$ dans $G$. 
On vérifie alors que
$$G_r =\left\{ \begin{bmatrix}
I_{2r}   &0 \\
0     &M \end{bmatrix},\ M\in Sp_{n-2r}(k) \right\} \cong Sp_{n-2r}(k)$$  
et $V=2r V_0 \oplus E_r$ comme $G_r$-module, où $E_r$ désigne la représentation standard de $G_r$ et $V_0$ la représentation triviale de $G_r$. On note $N_{w_r}$ la représentation slice de $G$ en $w_r$ (voir la définition \ref{slice}), alors on a le

\begin{lemme} \label{slice_expliciteSpn}
On a un isomorphisme de $G_r$-modules 
$$ N_{w_r} \cong (n'-2r) E_r \oplus r(2n'-2r-1) V_0 .$$
\end{lemme}

\begin{proof}
La preuve est analogue à celle du lemme \ref{slice_explicite}.
\end{proof}

Soient $F_1$ et $F_2$ des espaces vectoriels de dimensions $n'-2r$ et $r(2n'-2r-1)$ respectivement et dans lesquels $G_r$ opère trivialement. D'après le lemme \ref{slice_expliciteSpn}, on a un isomorphisme de $G_r$-modules 
$$ N_{w_r} \cong \Hom(F_1, E_r) \times F_2 $$
et le morphisme de passage au quotient $\nu_N:\ N_{w_r} \rightarrow N_{w_r}/\!/G_r$ est donné par:
$$\begin{array}{lrcl}
 \nu_N:\  &\Hom(F_1, E_r) \times F_2 & \rightarrow  & \Lambda^2(F_1^*) \times F_2 \\
        & (w,x)  & \mapsto     &  ( \leftexp{t}{w} J' w , x)  
\end{array}$$ 
où $J' \in \MM_{n-2r}(k)$ est la matrice avec $\frac{n}{2}-r$ blocs $A$ sur la diagonale et des $0$ partout ailleurs.
Et donc 
$$\NNN(N_{w_r},G_r):= {\nu}_{N}^{-1}({\nu}_N(0))={\nu}_{N}^{-1}(0) \cong {{\nu}_{N}'}^{-1}(0)$$ 
avec
$$\begin{array}{lrcl}
 {\nu'_N}:\  &\Hom(F_1, E_r)  & \rightarrow  & \Lambda^2(F_1^*) \\
        & w  & \mapsto      &  \leftexp{t}{w} J' w.
\end{array}$$

\begin{proposition} \label{descriptiongeofibSpn}
Avec les notations précédentes, on a un isomorphisme $G$-équivariant
$${\nu}^{-1}(J_{2r}) \cong G {\times}^{G_r} {{\nu}_{N}'}^{-1}(0) .$$ 
En particulier, si l'on note $H:=G_N$, on a 
$${\nu}^{-1}(J_N) \cong \left\{
    \begin{array}{ll}
        G &\text{ si } n' \geq n, \\
        G/H &\text{ si } n'<n \text{ et $n'$ pair},\\
        G \times^H E_r &\text{ si } n'<n \text{ et $n'$ impair}.\\
    \end{array}
\right.$$ 
\end{proposition}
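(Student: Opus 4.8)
The plan is to follow exactly the pattern that has already worked twice in the excerpt: first for $GL(V)$ acting on $V^{\oplus n_1}\oplus V^{*\oplus n_2}$ (Proposition \ref{descriptiongeofib}) and then verbatim for $O(V)$ acting on $V^{\oplus n'}$ (Proposition \ref{descriptiongeofibOn}). Concretely, the fibre ${\nu}^{-1}(J_{2r})$ will be analyzed through the Luna slice machinery of \cite[\S 6.2]{SB}. The orbit $G.{w_r}$ is closed in $W$ (this is the analogue of Lemma \ref{orbfermee}: since $G_r\cong Sp_{n-2r}(k)$ is reductive, it suffices to check that $C_G(G_r).{w_r}$ is closed, which is a direct matrix computation), and it is the unique closed orbit inside ${\nu}^{-1}(J_{2r})$ by \cite[\S II.3.1]{SB}. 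The slice theorem then gives an isomorphism ${\nu}^{-1}(J_{2r})\cong G\times^{G_r}\NNN(N_{w_r},G_r)$, and the identification $\NNN(N_{w_r},G_r)\cong {\nu'_N}^{-1}(0)$ has already been set up in the paragraph preceding the statement via the explicit slice representation $N_{w_r}\cong\Hom(F_1,E_r)\times F_2$ from Lemma \ref{slice_expliciteSpn} (the factor $F_2$, on which $G_r$ acts trivially, drops out of the nilcone). This yields the first displayed isomorphism immediately.

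For the special case $r=N$, the plan is to read off the three regimes from the value of $N=\min(E(n'/2),n/2)$ and from the structure of the residual moment-type map ${\nu'_N}^{-1}(0)$. When $n'\ge n$ one has $N=n/2$, so $F_1$ has dimension $n'-n\ge 0$, the group $H=G_N$ is trivial, and $G\times^H{\nu'_N}^{-1}(0)\cong G$ because the residual quotient map is trivial on the relevant fibre. When $n'<n$ and $n'$ is even, $N=n'/2$, so $F_1=0$ and ${\nu'_N}^{-1}(0)$ is a single reduced point; hence $G\times^H\{pt\}\cong G/H$. When $n'<n$ and $n'$ is odd, $N=(n'-1)/2$, so $\dim(F_1)=n'-2N=1$, the map ${\nu'_N}:\Hom(F_1,E_r)\to\Lambda^2(F_1^*)=0$ is identically zero (an alternating form on a one-dimensional space vanishes), and therefore ${\nu'_N}^{-1}(0)=\Hom(F_1,E_r)\cong E_r$. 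This last observation — that $\Lambda^2(F_1^*)$ collapses precisely when $\dim F_1=1$ — is exactly what produces the third, genuinely new line $G\times^H E_r$ that did not appear in the orthogonal case; I would emphasize it explicitly.

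Since every structural step is formally identical to the proof of Proposition \ref{descriptiongeofibOn}, I expect the body of the proof to consist of the single sentence ``La preuve est analogue \`a celle de la proposition \ref{descriptiongeofibOn}'', exactly as the paper does for its other routine slice computations, together with the short case analysis for $r=N$ above. The one place where I would be slightly careful — and hence the main obstacle — is the odd case $n'<n$: here the fibre fails to be a homogeneous space, and one must check that $\Stab_G((w_N)) = H$ really is $Sp_{n-n'+1}$ rather than something smaller, and that the $H$-action on $E_r$ is the standard one, so that $G\times^H E_r$ is the correct homogeneous vector bundle. This requires confirming that the odd leftover direction in $F_1$ pairs trivially under $\Omega$ with the symplectic complement, which follows from the explicit block form of $J$ but deserves an honest verification rather than an appeal to analogy, precisely the subtlety the author flags in the preceding remarks about the odd/irreducibility complications.
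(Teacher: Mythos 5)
Your proposal is correct and follows exactly the route the paper takes (implicitly, since no proof block is even given for this proposition): the Luna slice theorem applied to the closed orbit $G.{w_r}$, the slice representation computed in Lemma \ref{slice_expliciteSpn}, and the identification $\NNN(N_{w_r},G_r)\cong{\nu'_N}^{-1}(0)$ already set up in the paragraph preceding the statement, with the three regimes for $r=N$ read off from $N=\min(E(\frac{n'}{2}),\frac{n}{2})$. The only phrasing to tighten is the case $n'\geq n$: the fibre is $G$ not because ``the residual quotient map is trivial'' but because $E_N$ is the standard representation of $G_N\cong Sp_{0}(k)$, hence zero, so $\Hom(F_1,E_N)={\nu'_N}^{-1}(0)$ is a reduced point even though $\dim(F_1)=n'-n$ may be positive.
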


\begin{corollaire} \label{dimfibreSpn}
Soit $r \in \{0, \ldots,N\}$, alors la dimension de la fibre du morphisme $\nu$ au dessus de $J_{2r}$ vaut: 
\begin{itemize} \renewcommand{\labelitemi}{$\bullet$}
\item $n' n-\frac{1}{2}n'(n'-1)$ lorsque $n'-r \leq \frac{n}{2}$,
\item $\frac{1}{2} n' (n-2r)+\frac{1}{8}{(n+2r)}^{2}+\frac{1}{4}(n+2r)$ sinon.
\end{itemize}
\end{corollaire}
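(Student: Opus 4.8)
Corollaire 3.60 (dimension de la fibre de $\nu$ en $J_{2r}$).

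The plan is to mimic verbatim the arguments already used for Corollaire \ref{dimfibre} and Corollaire \ref{dimfibreOn}. First I would invoke Proposition \ref{descriptiongeofibSpn}, which furnishes the $G$-equivariant isomorphism ${\nu}^{-1}(J_{2r}) \cong G {\times}^{G_r} {\nu'_N}^{-1}(0)$. Since $G {\times}^{G_r} {\nu'_N}^{-1}(0)$ is a fibration over $G/G_r$ with fibre ${\nu'_N}^{-1}(0)$, its dimension is $\dim(G) + \dim({\nu'_N}^{-1}(0)) - \dim(G_r)$. Here $G = Sp_n(k)$, so $\dim(G) = \frac{1}{2}n(n+1)$, and $G_r \cong Sp_{n-2r}(k)$ by the computation preceding Lemme \ref{slice_expliciteSpn}, whence $\dim(G_r) = \frac{1}{2}(n-2r)(n-2r+1)$. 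Consequently
\begin{equation*}
\dim({\nu}^{-1}(J_{2r})) = \tfrac{1}{2}n(n+1) - \tfrac{1}{2}(n-2r)(n-2r+1) + \dim({\nu'_N}^{-1}(0)).
\end{equation*}

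The key step is then to identify ${\nu'_N}^{-1}(0)$ as a nilc\^one for a smaller symplectic group. By the explicit description of ${\nu'_N}$ given just before Proposition \ref{descriptiongeofibSpn}, this map is precisely the morphisme de passage au quotient for $G_r \cong Sp_{n-2r}(k)$ acting on $\Hom(F_1, E_r)$, where $E_r$ is the repr\'esentation standard of $G_r$ (of dimension $n-2r$, equipped with the symplectic form $J'$) and $\dim(F_1) = n'-2r$. Hence ${\nu'_N}^{-1}(0) = \NNN(\Hom(F_1, E_r), G_r)$ is exactly the nilc\^one of Proposition \ref{nilconeSpn}, applied with $\tilde{n} := n-2r$ (dimension of the standard module) and $\tilde{n}' := n'-2r$ (number of copies).

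Next I would observe that the dichotomy of Proposition \ref{nilconeSpn} matches the two cases of the corollary: the threshold $\tilde{n}' \leq \frac{\tilde{n}}{2}$, that is $n'-2r \leq \frac{n-2r}{2}$, is equivalent to $n'-r \leq \frac{n}{2}$. In the first r\'egime Proposition \ref{nilconeSpn} gives $\dim \NNN = (n-2r)(n'-2r) - \frac{1}{2}(n'-2r)(n'-2r-1)$, and in the second $\dim \NNN = \frac{1}{2}(n-2r)(n'-2r) + \frac{1}{8}(n-2r)^2 + \frac{1}{4}(n-2r)$. Substituting either expression into the displayed formula and simplifying yields the two announced values.

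The only real obstacle is the final polynomial simplification: one must check that $\frac{1}{2}n(n+1) - \frac{1}{2}(n-2r)(n-2r+1) + \dim\NNN$ collapses to $n'n - \frac{1}{2}n'(n'-1)$ in the first case and to $\frac{1}{2}n'(n-2r) + \frac{1}{8}(n+2r)^2 + \frac{1}{4}(n+2r)$ in the second. This is entirely routine, exactly as in the proof of Corollaire \ref{dimfibre} where the analogous reduction to Corollaire \ref{fibzero} was invoked; there is no conceptual difficulty, and the $G_r$-equivariance already encoded in Proposition \ref{descriptiongeofibSpn} ensures that no genericity issues arise.
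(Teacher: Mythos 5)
Your proposal is correct and follows exactly the route the paper intends: the corollary is stated without proof precisely because it is the symplectic analogue of Corollaire \ref{dimfibre}, obtained by combining Proposition \ref{descriptiongeofibSpn} with Proposition \ref{nilconeSpn} applied to $G_r \cong Sp_{n-2r}(k)$ acting on $\Hom(F_1,E_r)$, and your translation of the threshold $n'-2r \leq \frac{n-2r}{2}$ into $n'-r \leq \frac{n}{2}$ together with the final polynomial simplification both check out.
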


On rappelle que $N:=\min \left( E \left( \frac{n'}{2} \right), \frac{n}{2} \right)$. En procédant comme pour la proposition \ref{ouvertplatitude}, on montre la

\begin{proposition} \label{ouvertplatitudeSpn}
La dimension de la fibre générique et l'ouvert de platitude de $\nu$ sont donnés dans le tableau suivant:\\
\begin{center}
\begin{tabular}{|c|c|c|}
  \hline
  configuration & dim. de la fibre générique & ouvert de platitude \\
  \hline
  $n' < n$ &     $n' n -\frac{1}{2}n'(n'-1)$ & $U_{N} \cup \cdots \cup U_{\max(n'-\frac{n}{2}-1, 0)}$\\
  $n' = n$ &     $\frac{1}{2}n(n+1)$       & $U_N \cup U_{N-1} $\\  
  $n' > n$ &     $\frac{1}{2}n(n+1)$       & $U_N $\\  
  \hline    
\end{tabular}
\end{center}
\vspace*{0.5mm}
\end{proposition}

\begin{corollaire} \label{ouverttoutplatSpn}
Le morphisme $\nu$ est plat sur $W/\!/G$ tout entier si et seulement si $n+2 \geq 2n'$ et dans ce cas $W/\!/G=\Lambda^2(V'^*)$. 
\end{corollaire}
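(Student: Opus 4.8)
Le morphisme $\nu$ est plat sur $W/\!/G$ tout entier si et seulement si $n+2 \geq 2n'$, et dans ce cas $W/\!/G=\Lambda^2(V'^*)$.

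Le plan est de d�duire ce corollaire directement de la proposition \ref{ouvertplatitudeSpn}, qui d�crit l'ouvert de platitude de $\nu$ dans chacune des trois configurations $n'<n$, $n'=n$ et $n'>n$. Le morphisme $\nu$ est plat sur $W/\!/G$ tout entier si et seulement si l'ouvert de platitude co�ncide avec $W/\!/G$, c'est-�-dire s'il contient toutes les orbites $U_0, \ldots, U_N$. L'id�e est donc, pour chaque configuration, d'identifier quand l'ouvert de platitude donn� par la proposition \ref{ouvertplatitudeSpn} est �gal � $W/\!/G=\overline{U_N}$, puis de comparer la condition obtenue avec l'in�galit� $n+2 \geq 2n'$.

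Premi�rement, je traiterais la configuration $n' > n$: l'ouvert de platitude est $U_N$ seul, qui est strictement contenu dans $W/\!/G$ d�s que $N \geq 1$ (ce qui est le cas ici puisque $n' > n \geq 2$ entra�ne $N=\frac{n}{2} \geq 1$); de plus la condition $n+2 \geq 2n'$ est exclue car $n' > n$ donne $2n' > 2n \geq n+2$ pour $n \geq 2$. Donc $\nu$ n'est jamais plat partout dans ce cas, en accord avec l'�quivalence annonc�e. Deuxi�mement, pour $n'=n$, l'ouvert de platitude est $U_N \cup U_{N-1}$; comme $N = \frac{n}{2}$, ceci �gale $W/\!/G$ si et seulement si $N \leq 1$, soit $n \leq 2$, ce qui �quivaut pr�cis�ment � $n+2 \geq 2n'=2n$ lorsque $n'=n$. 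Troisi�mement, pour $n'<n$, l'ouvert de platitude est $U_N \cup \cdots \cup U_{\max(n'-\frac{n}{2}-1,\,0)}$; celui-ci �gale $W/\!/G=\overline{U_N}$ si et seulement si le plus petit indice vaut $0$, c'est-�-dire si et seulement si $n'-\frac{n}{2}-1 \leq 0$, ce qui s'�crit $n+2 \geq 2n'$.

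En rassemblant ces trois cas, l'�quivalence avec la condition uniforme $n+2 \geq 2n'$ est �tablie: dans la configuration $n'<n$ elle r�sulte directement du calcul de l'indice minimal, dans la configuration $n'=n$ elle se r�duit � $n \leq 2$, et dans la configuration $n'>n$ les deux membres de l'�quivalence sont faux simultan�ment. Enfin, pour la derni�re assertion, je remarquerais que lorsque $n+2 \geq 2n'$ on a en particulier $n \geq n'-1$, donc $W/\!/G=\Lambda^2(V'^*)$ est un espace affine d'apr�s la description de $W/\!/G$ �tablie au d�but de la section \ref{etudequotientSp2n} (la vari�t� d�terminantielle antisym�trique $\Lambda^2(V'^*)^{\leq n}$ co�ncide avec $\Lambda^2(V'^*)$ tout entier d�s que $n \geq n'-1$, puisque le rang maximal d'une forme antisym�trique sur $V'$ est $2\,E(\frac{n'}{2}) \leq n'$). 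La principale subtilit� sera de v�rifier soigneusement la manipulation des parties enti�res et la comparaison des in�galit�s � la fronti�re entre les configurations, mais il n'y a pas d'obstacle conceptuel: tout le contenu g�om�trique est d�j� contenu dans la proposition \ref{ouvertplatitudeSpn}.
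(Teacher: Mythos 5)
Votre d�monstration est correcte et suit exactement la voie du texte: le papier �nonce ce corollaire sans preuve, comme cons�quence imm�diate du tableau de la proposition \ref{ouvertplatitudeSpn}, et votre analyse cas par cas (avec $N=\frac{n}{2}$ si $n'\geq n$ et $N=E(\frac{n'}{2})$ si $n'<n$) explicite fid�lement cette d�duction, y compris la derni�re assertion $W/\!/G=\Lambda^2(V'^*)$ qui d�coule de $n'\leq n$ sous l'hypoth�se $n+2\geq 2n'$.
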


\noindent Le corollaire qui suit est une conséquence de la proposition \ref{chow} et du corollaire \ref{ouverttoutplatSpn}:

\begin{corollaire} \label{cas_facileSpn}
Si $n+2 \geq 2n'$, alors $\HH \cong \Lambda^2(V'^*)$ est une variété lisse et $\gamma$ est un isomorphisme.
\end{corollaire}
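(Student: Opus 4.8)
Corollaire \ref{cas_facileSpn} affirme que si $n+2 \geq 2n'$, alors $\HH \cong \Lambda^2(V'^*)$ est une variété lisse et $\gamma$ est un isomorphisme.

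Le plan est d'invoquer directement la proposition \ref{chow} en combinaison avec le corollaire \ref{ouverttoutplatSpn}, exactement comme l'indique la phrase introductive ("Le corollaire qui suit est une conséquence de la proposition \ref{chow} et du corollaire \ref{ouverttoutplatSpn}"). D'abord, je remarque que l'hypothèse $n+2 \geq 2n'$ est précisément l'hypothèse du corollaire \ref{ouverttoutplatSpn}. Celui-ci nous fournit deux informations : le morphisme de passage au quotient $\nu:\ W \rightarrow W/\!/G$ est plat sur $W/\!/G$ tout entier, et de plus $W/\!/G = \Lambda^2(V'^*)$ est donc un espace affine (en particulier une variété lisse).

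Ensuite, j'applique la proposition \ref{chow} avec $h = h_W$ la fonction de Hilbert de la fibre générique de $\nu$. Cette proposition établit que $\gamma$ induit un isomorphisme $\gamma^{-1}((W/\!/G)_*) \stackrel{\cong}{\longrightarrow} (W/\!/G)_*$, où $(W/\!/G)_*$ désigne l'ouvert de platitude de $\nu$. Puisque la platitude vaut sur $W/\!/G$ tout entier d'après le corollaire \ref{ouverttoutplatSpn}, on a $(W/\!/G)_* = W/\!/G$, et donc $\gamma:\ \HH \rightarrow W/\!/G$ est un isomorphisme global. Plus précisément, la preuve de la proposition \ref{chow} montre qu'en présence de platitude partout, $W/\!/G$ représente le foncteur de Hilbert et $\nu$ en est la famille universelle.

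Enfin, je conclus que $\HH \cong W/\!/G = \Lambda^2(V'^*)$ via cet isomorphisme, et que $\Lambda^2(V'^*)$ étant un espace affine, c'est une variété lisse. Il n'y a pas d'obstacle technique sérieux ici : ce corollaire est purement formel, se déduisant par simple enchaînement de deux résultats déjà établis. Le seul point à vérifier est la cohérence de la fonction de Hilbert utilisée, à savoir que $h_W$ apparaissant dans la définition de $\HH$ coïncide bien avec celle de la fibre générique de $\nu$, ce qui est immédiat par définition de $\HH := \Hilb_{h_W}^G(W)$. Ce corollaire est l'analogue symplectique exact des corollaires \ref{cas_facile}, \ref{cas_facileOn} et \ref{cas_facileSOn}, dont les preuves sont identiques.
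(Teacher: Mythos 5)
Votre preuve est correcte et suit exactement le chemin du texte, qui se contente d'indiquer que le corollaire d\'ecoule de la proposition \ref{chow} et du corollaire \ref{ouverttoutplatSpn} : platitude de $\nu$ sur $W/\!/G=\Lambda^2(V'^*)$ tout entier, puis isomorphisme de $\gamma$ au-dessus de l'ouvert de platitude. Rien \`a ajouter.
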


On s'intéresse, dans la proposition qui suit (et qui se démontre comme la proposition \ref{fcthilb}), à la fonction de Hilbert de la fibre générique de $\nu$. On note comme précédement $H:=G_N \cong Sp_{n-2N}(k)$ le stabilisateur de $w_N$ dans $G$.

\begin{proposition} \label{fcthilbSpn}
La fonction de Hilbert de la fibre générique de $\nu$ est donnée par:
$$\forall M \in \Irr(G),\ h_W(M)= \left\{
    \begin{array}{ll}
        \dim(M) &\text{ si } n' \geq n, \\
        \dim(M^{H}) &\text{ si } n'<n \text{ et $n'$ est pair}, \\ 
        \dim \left( (M \otimes k[E_r])^H \right) &\text{ si } n'<n \text{ et $n'$ est impair}.
    \end{array}
\right.$$
\end{proposition}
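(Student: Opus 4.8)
The plan is to read off the multiplicities $h_W(M)$ directly from the geometric description of the generic fibre furnished by Proposition~\ref{descriptiongeofibSpn}, imitating word for word the argument of Proposition~\ref{fcthilb}. Since $U_N$ is a single $G'$-orbit and $\nu$ is $G'$-equivariant, all fibres over $U_N$ are isomorphic as $G$-schemes; because $U_N$ lies in the flatness locus of $\nu$ (Proposition~\ref{ouvertplatitudeSpn}), the common Hilbert function of these fibres is precisely $h_W$, so it suffices to compute the multiplicity of each $M \in \Irr(G)$ in $k[\nu^{-1}(J_N)]$. Proposition~\ref{descriptiongeofibSpn} then separates the computation into exactly the three cases of the statement.

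First, when $n' \geq n$ one has $\nu^{-1}(J_N) \cong G$, whence $k[\nu^{-1}(J_N)] \cong k[G] \cong \bigoplus_{M \in \Irr(G)} M^* \otimes M$ as a $G \times G$-module. The multiplicity of $M$ for the left regular action is therefore $\dim(M^*) = \dim(M)$, which is the first line.

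When $n' < n$ and $n'$ is even, Proposition~\ref{descriptiongeofibSpn} gives $\nu^{-1}(J_N) \cong G/H$ with $H = G_N \cong Sp_{n-2N}(k)$, so that $k[\nu^{-1}(J_N)] \cong k[G/H] \cong k[G]^H \cong \bigoplus_{M} M^{*H} \otimes M$, the invariants being taken for the right action of $H$. As $H$ is reductive, $\dim(M^{*H}) = \dim(M^H)$, giving the second line. When $n' < n$ and $n'$ is odd, the fibre is $\nu^{-1}(J_N) \cong G \times^H E_N$, where $E_N$ is the standard representation of $H$ (written $E_r$ in the statement); taking $H$-invariants as before yields
\[
k[\nu^{-1}(J_N)] \cong \bigl(k[G] \otimes k[E_N]\bigr)^H \cong \bigoplus_{M \in \Irr(G)} M^* \otimes \bigl(M \otimes k[E_N]\bigr)^H,
\]
so that $h_W(M) = \dim\bigl((M \otimes k[E_N])^H\bigr)$, the third line.

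The reasoning is purely formal once Proposition~\ref{descriptiongeofibSpn} is granted, and the only case-specific ingredient is the symplectic geometry already encoded there; no new representation-theoretic computation is needed. The one step requiring attention is the odd case: one must justify the identification $k[G \times^H E_N] \cong (k[G] \otimes k[E_N])^H$ and keep in mind that $G$ acts on the first factor alone, so that the multiplicity space attached to $M$ is $(M \otimes k[E_N])^H$ rather than $M^H$. This is exactly the verification carried out in the third case of Proposition~\ref{fcthilb}, and it presents no additional difficulty here.
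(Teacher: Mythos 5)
Your proof is correct and is exactly the argument the paper intends: the text states that Proposition~\ref{fcthilbSpn} \og se d�montre comme la proposition \ref{fcthilb} \fg, and your three cases reproduce that computation verbatim, reading the multiplicities off the fibre descriptions $G$, $G/H$ and $G\times^H E_N$ from Proposition~\ref{descriptiongeofibSpn}. The point you single out in the odd case (that $G$ acts only on the $k[G]$ factor, so the multiplicity space is $(M\otimes k[E_N])^H$ and not $M^H$) is indeed the only place requiring care, and you handle it as in the third case of Proposition~\ref{fcthilb}.
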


\subsection{Description de l'algèbre du nilcône}  \label{sectionJSymp}

Dans cette section, on décrit l'algèbre du nilcône de $\nu$ comme $G' \times G$-module lorsque $n'=n$ en procédant comme dans la section \ref{sectionJ}. 

\begin{notation} 
On note $J$ l'idéal engendré par les $G$-invariants homogènes de degré positif de $k[W]$. 
\end{notation}

L'idéal $J$ est $G' \times G$-stable par définition. Avec les notations de la section \ref{rappelsthedesreps}, on a
\begin{equation} \label{kW2Sp4}
{k[W]}_2 \cong (S^2(V') \otimes \Gamma_{2 \epsilon_1}(V)) \oplus ({\Lambda}^{2}(V') \otimes (V_0 \oplus \Gamma_{\epsilon_1+\epsilon_2}(V))).
\end{equation}
comme $G' \times G$-module. Donc $J \cap {k[W]}_2 ={\Lambda}^{2}(V') \otimes V_0$ comme $G' \times G$-module et ce module engendre l'idéal $J$. 

On rappelle que l'on a défini les sous-groupes $B', T', U'$ de $G'$ (resp. $B, T, U$ de $G$) dans la section \ref{lesdiffsituations}. Soient $N':=\frac{n}{2}$ et $1 \leq p \leq N'$. D'après le formule de Cauchy-Littlewood (\cite[§8.3, Corollary 3]{Fu}), on a un isomorphisme de $G' \times GL(V)$-modules
\begin{equation} 
k[W]_p \cong \bigoplus_{|\lambda|=p} S^{\lambda}(V') \otimes S^{\lambda}(V)^*
\end{equation}
où $|\lambda|:= \sum_i r_i$. Puis, d'après \cite[Theorem 17.5]{FH}, le $GL(V)$-module $\Lambda^p V^*$ contient une unique copie du $G$-module $\Gamma_{\epsilon_1+\ldots+\epsilon_p}(V)$. On fixe $x_p \in k[W]_p^{U' \times U}$ un vecteur de plus haut poids du $G' \times G$-module $\Lambda^p V' \otimes \Gamma_{\epsilon_1+\ldots+\epsilon_p}(V)$. La proposition qui suit est prouvée dans \cite[§9]{KS}:

\begin{proposition} \label{KSheadingsSp}
On a un isomorphisme de $T' \times T$-algèbres
$$ k[x_1, \ldots,x_{N'}] \cong (k[W]/J)^{U' \times U}.$$
\end{proposition}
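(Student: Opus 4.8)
On a un isomorphisme de $T' \times T$-algèbres
$$k[x_1, \ldots,x_{N'}] \cong (k[W]/J)^{U' \times U}.$$

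Le plan est d'imiter la démonstration de la proposition \ref{KSheadings} (le cas $G=GL(V)$), mais en exploitant cette fois la structure particulière des représentations du groupe symplectique. L'idée centrale est que l'algèbre des covariants $(k[W]/J)^{U' \times U}$ est une algèbre graduée dont chaque composante de poids est au plus de dimension $1$, et qu'elle est engendrée par les classes des $N'$ vecteurs de plus haut poids $x_1,\ldots,x_{N'}$ définis dans l'énoncé.

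D'abord, je décomposerais $(k[W]/J)^{U'\times U}$ en composantes isotypiques. D'après la formule de Cauchy-Littlewood rappelée juste avant l'énoncé, on a $k[W]_p \cong \bigoplus_{|\lambda|=p} S^{\lambda}(V') \otimes S^{\lambda}(V)^*$ comme $G' \times GL(V)$-module. L'étape clé consiste à comprendre l'effet du passage au quotient par $J$, c'est-à-dire de la projection qui annule la forme symplectique $\Omega$. Comme $J$ est engendré par $\Lambda^2(V')\otimes V_0 \subset k[W]_2$, quotienter par $J$ revient à ne retenir, dans chaque $S^{\lambda}(V)^*$ vu comme $G$-module, que les composantes de $G$-type $\Gamma_{\mu}(V)$ ne provenant pas de contractions par $\Omega$. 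Je m'appuierais sur \cite[Theorem 17.5]{FH} : le $GL(V)$-module $\Lambda^p V^*$ contient une unique copie du $G$-module irréductible $\Gamma_{\epsilon_1+\ldots+\epsilon_p}(V)$, et plus généralement la décomposition de $S^{\lambda}(V)$ sous $Sp(V)$ fait intervenir les diagrammes obtenus par suppressions de paires de cases (règle de branchement $GL_n \downarrow Sp_n$). Après passage aux $U$-invariants côté $G$ et aux $U'$-invariants côté $G'$, chaque $S^{\lambda}(V') \otimes S^{\lambda}(V)^*$ contribue une droite de plus haut poids pour chaque diagramme $\lambda$ réduit.

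Ensuite, il s'agit d'identifier précisément quels $\lambda$ survivent : je montrerais que les monômes $x_1^{a_1}\cdots x_{N'}^{a_{N'}}$ (avec $x_p$ de $T'$-poids $\epsilon_1+\ldots+\epsilon_p$ et de $T$-poids correspondant au plus haut poids de $\Gamma_{\epsilon_1+\ldots+\epsilon_p}(V)$) fournissent, pour chaque multi-indice $(a_1,\ldots,a_{N'}) \in \NN^{N'}$, exactement un vecteur de plus haut poids dans $(k[W]/J)^{U'\times U}$, et que ces poids sont deux à deux distincts. La bijection entre les $N'$-uplets et les partitions $\lambda$ à parts $\leq N'$ apparaissant dans le quotient est le cœur combinatoire : un diagramme $\lambda=(r_1 \geq \ldots \geq r_{N'})$ se traduit en l'exposant $a_p = r_p - r_{p+1}$ (avec $r_{N'+1}=0$), exactement comme dans la preuve de la proposition \ref{KSheadings}. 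Ceci établit d'une part la surjectivité du morphisme $k[x_1,\ldots,x_{N'}] \to (k[W]/J)^{U'\times U}$ et d'autre part son injectivité, puisque deux monômes distincts portent des poids distincts et qu'il n'y a aucune relation (contrairement au cas $GL(V)$ où apparaissaient les relations $x_i y_j$).

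Le principal obstacle sera l'étape de branchement $GL(V) \downarrow Sp(V)$ appliquée à $S^{\lambda}(V)^*$ : il faut vérifier soigneusement que, modulo $J$, seule survit la composante de $G$-type $\Gamma_{\lambda}(V)$ (la composante de Cartan), et que toutes les autres composantes $\Gamma_{\mu}(V)$ avec $|\mu|<|\lambda|$ — celles obtenues en contractant par $\Omega$ — tombent précisément dans l'idéal $J$. C'est l'analogue symplectique du fait, utilisé implicitement dans \ref{KSheadings}, que les contractions engendrent l'idéal des invariants. Je m'attends à ce que le renvoi à \cite[§9]{KS} fournisse exactement cette identification, de sorte que la vérification se ramène à un décompte de dimensions par poids, l'absence de relations étant la simplification notable par rapport au cas linéaire.
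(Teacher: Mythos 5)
The paper does not actually prove this statement: it is quoted directly from Kraft and Schwarz (\cite[\S 9]{KS}), exactly as Proposition \ref{KSheadings} was in the linear case. Your proposal is therefore an attempt to reconstruct an argument that the memoir delegates entirely to the literature, and as a reconstruction it follows the right general lines (Cauchy--Littlewood decomposition, branching from $GL(V)$ to $Sp(V)$, weight bookkeeping on the $U' \times U$-invariants, and the bijection between partitions of length at most $N'$ and monomials in $x_1,\ldots,x_{N'}$).

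That said, as written the argument has a gap at the two load-bearing points, both of which you defer back to \cite[\S 9]{KS}. For surjectivity you must show that \emph{every} non-Cartan constituent $\Gamma_{\mu}(V)$ of $S^{\lambda}(V)^*$ with $|\mu|<|\lambda|$, as well as every constituent coming from a $\lambda$ with more than $N'$ rows, lies in the image of $J$; this is not a formal consequence of ``the contractions by $\Omega$ generate the invariants'' but is the actual content of the second fundamental theorem for $Sp(V)$ as exploited by Kraft--Schwarz. For injectivity, the fact that the monomials $x_1^{a_1}\cdots x_{N'}^{a_{N'}}$ carry pairwise distinct $T' \times T$-weights only yields linear independence of those monomials that are nonzero in $k[W]/J$; it does not exclude that some of them vanish in the quotient. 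You still need a lower bound on the multiplicity of $\Gamma_{\lambda}(V)$ in $k[W]/J$ (equivalently, on the classical Hilbert function of $J$), which is again supplied by \cite{KS}. So the proposal is a sensible sketch, but the two steps that actually carry the proof are exactly the ones you --- like the paper --- import from the reference.
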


\begin{proposition} \label{decompoisoSp}
Soit $\lambda=r_1 \epsilon_1+ \ldots + r_{N'} \epsilon_{N'} \in \Lambda+$, alors la composante isotypique associée au $G$-module $\Gamma_{\lambda}(V)$ dans $k[W]/J$ est 
$$S^{\lambda}(V') \otimes \Gamma_{\lambda}(V).$$
De plus, la représentation $\Gamma_{\lambda}(V)$ apparaît dans $k[W]_p/(J \cap k[W]_p)$ si et seulement si $p=\sum_i r_i$. 
\end{proposition}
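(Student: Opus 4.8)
Le plan est de proc�der comme dans la d�monstration de la proposition \ref{decompoiso}, en exploitant la description de $(k[W]/J)^{U' \times U}$ fournie par la proposition \ref{KSheadingsSp}. D'apr�s cette derni�re, $(k[W]/J)^{U' \times U}$ est l'alg�bre de polyn�mes $k[x_1, \ldots, x_{N'}]$, o� chaque $x_p$ est un vecteur de plus haut poids du $G' \times G$-module $\Lambda^p V' \otimes \Gamma_{\epsilon_1+ \ldots + \epsilon_p}(V)$. Je commencerais donc par d�terminer le $T' \times T$-poids de chaque g�n�rateur $x_p$: c'est le couple $(\epsilon_1+ \ldots + \epsilon_p,\ \epsilon_1+ \ldots + \epsilon_p)$, les deux composantes co�ncidant puisque $\Lambda^p V'$ a pour plus haut poids $\epsilon_1+ \ldots + \epsilon_p$ et que $\Gamma_{\epsilon_1+ \ldots +\epsilon_p}(V)$ a ce m�me plus haut poids par d�finition.

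Ensuite, je fixerais $\lambda = r_1 \epsilon_1 + \ldots + r_{N'} \epsilon_{N'} \in \Lambda+$ et je chercherais les mon�mes $\prod_p x_p^{a_p}$ de $T$-poids $\lambda$. Le $T$-poids d'un tel mon�me vaut $\sum_i \left( \sum_{p \geq i} a_p \right) \epsilon_i$, de sorte que l'�galit� avec $\lambda$ impose $\sum_{p \geq i} a_p = r_i$ pour tout $i$, c'est-�-dire $a_p = r_p - r_{p+1}$ (avec la convention $r_{N'+1} = 0$). Il existe donc un \emph{unique} tel mon�me, et comme les poids de $T'$ et de $T$ sur chaque $x_p$ co�ncident, ce mon�me a aussi pour $T'$-poids $\lambda$.

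Il resterait � traduire cette information sur les $U' \times U$-invariants en une information sur la d�composition isotypique compl�te. La composante isotypique de $\Gamma_\lambda(V)$ dans $k[W]/J$ s'�crit $A_\lambda \otimes \Gamma_\lambda(V)$ avec $A_\lambda$ un $G'$-module, et comme $\Gamma_\lambda(V)^U$ est la droite de plus haut poids $\lambda$, l'espace des vecteurs de plus haut poids pour $G$ dans cette composante s'identifie � $A_\lambda$, en $T$-poids $\lambda$. En prenant de plus les $U'$-invariants, l'�tape pr�c�dente montre que $A_\lambda^{U'}$ est de dimension $1$ et de $T'$-poids $\lambda$; puisque $A_\lambda$ est une repr�sentation polynomiale de $GL(V')$, la th�orie des plus hauts poids entra�ne $A_\lambda \cong S^\lambda(V')$, d'o� la premi�re assertion. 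Enfin, le degr� du mon�me $\prod_p x_p^{r_p - r_{p+1}}$ vaut $\sum_p p(r_p - r_{p+1}) = \sum_i r_i$ apr�s sommation par parties, et son unicit� fournit la seconde assertion: $\Gamma_\lambda(V)$ n'appara�t qu'en l'unique degr� $\sum_i r_i$.

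Le point le plus d�licat sera l'�tape de traduction ci-dessus, o� il faut garantir que $A_\lambda$ est \emph{exactement} $S^\lambda(V')$ et non un $G'$-module plus gros. C'est pr�cis�ment l'unicit� simultan�e du mon�me, en $T$-poids et en $T'$-poids, qui l'assure: la multiplicit� de $S^\mu(V')$ dans $A_\lambda$ est la dimension de la composante de $T'$-poids $\mu$ de $A_\lambda^{U'}$, laquelle est nulle pour $\mu \neq \lambda$ faute d'un autre mon�me de $T$-poids $\lambda$. Le reste (calcul du degr� par sommation par parties, v�rification des poids des $x_p$) est routinier.
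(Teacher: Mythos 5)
Votre preuve est correcte et suit essentiellement la m�me d�marche que celle du texte : on utilise la proposition \ref{KSheadingsSp} pour se ramener � l'alg�bre de polyn�mes $k[x_1,\ldots,x_{N'}]$, on calcule le $T' \times T$-poids des mon�mes, et on observe que $\lambda$ d�termine uniquement le mon�me $\prod_p x_p^{r_p-r_{p+1}}$, ce qui donne � la fois la composante isotypique $S^{\lambda}(V') \otimes \Gamma_{\lambda}(V)$ et le degr� $\sum_i r_i$. Votre �tape de traduction explicite (identification de $A_\lambda$ avec $S^\lambda(V')$ via $\dim (A_\lambda^{U'})_\mu$) ne fait que d�tailler ce que le texte r�sume par \og le r�sultat annonc� s'ensuit \fg.
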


\begin{proof}
La démarche est la même que dans la preuve de la proposition \ref{decompoiso}. Les monômes forment une base de $k[x_1, \ldots,x_{N'}]$ et chaque monôme engendre une droite $T' \times T$-stable. Soit $p_1,\ldots,p_{N'}$ des entiers positifs, alors le poids $(\lambda',\lambda)$ du monôme $x_1^{p_1} \ldots x_{N'}^{p_{N'}}$ est 
$$(\ p_{N'}\epsilon_{N'}+(p_{N'}+p_{N'-1}) \epsilon_{N'-1}+ \ldots+(p_{N'}+\ldots+p_1) \epsilon_1,\ p_{N'}\epsilon_{N'}+ \ldots+(p_{N'}+\ldots+p_1) \epsilon_1)$$ 
et le poids $\lambda$ détermine uniquement ce monôme. Le résultat annoncé s'ensuit. 
\end{proof}

On en déduit le

\begin{corollaire} \label{fcthilbclassn3Sp4}
Si $I_Z$ un idéal $G$-stable homogène de $k[W]$  contenant $J$ et qui a pour fonction de Hilbert $h$, alors la fonction de Hilbert classique de $I$ est donnée par:
$$\forall p \geq 0,\ f_I(p)= \sum_{\substack{ r_1 \geq \ldots \geq r_{N'} \geq 0, \\ r_1+ \ldots+r_{N'}=p }}  h(\Gamma_{r_1 \epsilon_1+ \ldots+r_{N'}\epsilon_{N'}}(V)) \dim(\Gamma_{r_1 \epsilon_1+ \ldots+r_{N'}\epsilon_{N'}}(V)) .$$
\end{corollaire}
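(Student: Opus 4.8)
Le plan est de reproduire dans le cas symplectique l'argument d\'ej\`a employ\'e pour le cas lin\'eaire au corollaire \ref{fcthilbclassn3}, en s'appuyant enti\`erement sur la proposition \ref{decompoisoSp}. Par d\'efinition de la fonction de Hilbert classique, on a $f_I(p)=\dim(k[W]_p/(k[W]_p \cap I))$. Comme l'id\'eal $I$ est homog\`ene et $G$-stable, chaque espace $k[W]_p/(k[W]_p \cap I)$ est naturellement un $G$-module de dimension finie, et la surjection $k[W]/J \twoheadrightarrow k[W]/I$ fournie par l'inclusion $J \subset I$ est un morphisme de $G$-modules gradu\'es. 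La strat\'egie consiste donc \`a d\'eterminer, pour chaque $p$, la d\'ecomposition isotypique de $k[W]_p/(k[W]_p \cap I)$, puis \`a en prendre la dimension.

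L'\'etape cl\'e est de localiser en degr\'e chaque composante isotypique. D'apr\`es la seconde assertion de la proposition \ref{decompoisoSp}, pour $\lambda=r_1\epsilon_1+\ldots+r_{N'}\epsilon_{N'} \in \Lambda+$ avec $r_1 \geq \ldots \geq r_{N'} \geq 0$, le $G$-module irr\'eductible $\Gamma_{\lambda}(V)$ n'appara\^{\i}t dans $k[W]/J$ qu'en degr\'e $p=\sum_i r_i$. Via la surjection gradu\'ee ci-dessus, $\Gamma_{\lambda}(V)$ n'appara\^{\i}t alors dans $k[W]/I$ qu'en ce m\^eme degr\'e. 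Autrement dit, chaque repr\'esentation irr\'eductible de $G$ est concentr\'ee en un unique degr\'e de l'alg\`ebre gradu\'ee $k[W]/I$.

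Il reste \`a conclure. Puisque $I$ a pour fonction de Hilbert $h$, la multiplicit\'e de $\Gamma_{\lambda}(V)$ dans $k[W]/I$ vaut $h(\Gamma_{\lambda}(V))$. En combinant les deux observations pr\'ec\'edentes, on obtient pour chaque $p \geq 0$ l'isomorphisme de $G$-modules
$$k[W]_p/(k[W]_p \cap I) \cong \bigoplus_{\substack{r_1 \geq \ldots \geq r_{N'} \geq 0 \\ r_1+\ldots+r_{N'}=p}} \Gamma_{\lambda}(V)^{\oplus h(\Gamma_{\lambda}(V))},$$
o\`u $\lambda=r_1\epsilon_1+\ldots+r_{N'}\epsilon_{N'}$, d'o\`u la formule annonc\'ee en prenant les dimensions. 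Le seul point d\'elicat, qui constitue d'ailleurs le c\oe{}ur de l'argument, est la compatibilit\'e de la graduation avec la d\'ecomposition isotypique --- c'est-\`a-dire le fait que chaque $\Gamma_{\lambda}(V)$ soit concentr\'e en un unique degr\'e --- et ce point est pr\'ecis\'ement garanti par la proposition \ref{decompoisoSp}.
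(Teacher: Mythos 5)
Votre argument est correct et suit exactement la même démarche que le texte, qui déduit ce corollaire directement de la proposition \ref{decompoisoSp} (de la même façon que le corollaire \ref{fcthilbclassn3} se déduit de la proposition \ref{decompoiso} dans le cas linéaire) : la concentration de chaque composante isotypique $\Gamma_{\lambda}(V)$ en l'unique degré $p=\sum_i r_i$, transmise au quotient $k[W]/I$ par la surjection graduée $k[W]/J \twoheadrightarrow k[W]/I$, combinée aux multiplicités prescrites par $h$, donne la formule. Rien à ajouter.
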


\subsection{Etude du cas \texorpdfstring{$\dim(V)=4$}{n=4}}  \label{etudeSp4}

Dans toute cette section, on fixe $n=4$. On a $G \cong Sp_4(k)$, $W/\!/G=\Lambda^2(V'^*)^{\leq 4}$ et $\rho:\ \HH \rightarrow \Gr(4,V'^*)$ est le morphisme de la section \ref{redGrass2}. On note 
$$Y_0:=\left\{(Q,L)\in W/\!/G \times \PP (W/\!/G)\ \mid \ Q \in L \right\}=\OO_{\PP(W/\!/G)}(-1)$$ 
l'éclatement de $W/\!/G$ en l'origine et $Y_1$ l'éclatement de $Y_0$ le long de la transformée stricte de $\Lambda^2(V'^*)^{\leq 2}$.
Nous verrons à la fin de cette section que $Y_1$ est isomorphe à l'éclatement de la section nulle du fibré $\Lambda^2\left(T \right)$ au dessus de $\Gr(4,V'^*)$, où $T$ désigne le fibré tautologique de $\Gr(4,V'^*)$. 

Nous allons démontrer le

\begin{theoreme} \label{casn2Sp4}
\begin{itemize}  
\item Si $n'=4$, alors $\HH \cong Y_0$ et $\gamma$ est l'éclatement de $W/\!/G$ en l'origine. 
\item Si $n'> 4$, alors $\HH \cong Y_1$ et $\gamma$ est une résolution des singularités de $W/\!/G$.
\end{itemize}
En particulier $\HH$ est toujours une variété lisse.
\end{theoreme} 

\begin{remarque}
Si $n'\leq 3$, alors $\HH \cong W/\!/G$ et $\gamma$ est un isomorphisme d'après le corollaire \ref{cas_facileSpn}. 
\end{remarque}

Nous allons démontrer le théorème \ref{casn2Sp4} en procédant comme pour les théorèmes \ref{casn2} et \ref{casn2O2}.

\subsubsection{Points fixes de \texorpdfstring{$\HH$}{H} pour l'opération de \texorpdfstring{$B'$}{B'}}   \label{pointsfixesSp4}


On suppose pour le moment que $n'=4$ et on commence par déterminer les points fixes de $B'$ dans $\HH$.

\begin{notation} \label{l3generatorsSp4}
On note  
\begin{itemize} \renewcommand{\labelitemi}{$\bullet$}
\item $D:=\left\langle  e_1 \wedge e_2 \right\rangle$ l'unique droite $B'$-stable de $\Lambda^2(V')$,
\item $I$ l'idéal de $k[W]$ engendré par $({\Lambda}^{2}(V') \otimes V_0) \oplus (D \otimes \Gamma_{\epsilon_1+\epsilon_2}(V)) \subset {k[W]}_2$.
\end{itemize}
\end{notation}

\begin{remarque}
L'idéal $I$ est homogène, $B' \times G$-stable et contient l'idéal $J$.
\end{remarque}

\begin{theoreme} \label{pointfixeborelSp4}
L'idéal $I$ est l'unique point fixe de $B'$ dans $\HH$.
\end{theoreme}

\begin{proof}
On raisonne comme dans la preuve du théorème \ref{pointfixeborel} en considérant $I_Z$ un point fixe de $B'$ et en étudiant $k[W]/I_Z$ composante par composante.\\
$\bullet$ Composantes de degré $0$ et $1$:\\
On a bien sûr $I_Z \cap {k[W]}_0=\{0\}$ et $I_Z \cap {k[W]}_1 \neq {k[W]}_1$.\\  
$\bullet$ Composante de degré $2$: on utilise la décomposition (\ref{kW2Sp4}).\\
Pour avoir la décomposition souhaitée de $k[W]/I_Z$ comme $G$-module, on a nécessairement ${k[W]}_2 \cap I_Z \supseteq 6 V_0 \oplus \Gamma_{\epsilon_1+\epsilon_2}(V)$. 
En effet, le $G$-module $k[W]/I_Z$ contient déjà une copie de la représentation triviale (qui provient de la composante de degré $0$), il ne peut donc pas en contenir d'autre. Ensuite, ${k[W]}_2$ contient $6$ copies de $\Gamma_{\epsilon_1+\epsilon_2}(V)$ qui est un $G$-module de dimension $5$, donc ${k[W]}_2 \cap I_Z$ contient au moins une copie de $\Gamma_{\epsilon_1+\epsilon_2}(V)$. Comme $k[W]_2 \cap I_Z$ est $B'$-stable, il contient $D \otimes \Gamma_{\epsilon_1+\epsilon_2}(V)$ car $D$ est l'unique droite $B'$-stable de $\Lambda^2(V')$. Il s'ensuit que $I_Z$ contient $(\Lambda^2(V') \otimes V_0) \oplus (D \otimes \Gamma_{\epsilon_1+\epsilon_2}(V))$ et donc $I_Z \supset I$. En particulier, $I_Z \supset J$ et donc d'après le corollaire \ref{fcthilbclassn3Sp4}, la fonction de Hilbert classique de $I_Z$ est donnée par:
$$\forall p \geq 0,\ f_{I_Z}(p)= \sum_{\substack{r_1 \geq r_2 \geq 0 \\ r_1 + r_2=p}}  \dim(\Gamma_{r_1 \epsilon_1+r_2 \epsilon_2}(V))^2 .$$
La formule des dimensions de Weyl permet de déduire que
$$\forall p \geq 0,\ f_{I_Z}(p)=\frac{1}{15120} p^9+\frac{1}{560} p^8+\frac{3}{140} p^7+\frac{3}{20} p^6+\frac{97}{144} p^5+\frac{481}{240} p^4+\frac{29683}{7560} p^3+\frac{4069}{840} p^2+\frac{473}{140} p+1.$$
Par ailleurs, un calcul direct avec \cite[Macaulay2]{Mac2} nous donne la fonction de Hilbert classique $f_I$ de l'idéal $I$:
$$ \forall p \geq 0,\ f_I(p)=f_{I_Z}(p).$$
Donc $I_Z=I$ est l'unique point fixe de $B'$.
\end{proof}

\begin{remarque}
On a $\Stab_{G'}(I)=\left \{ \begin{bmatrix} M_1 & M_2 \\ 0 & M_3 \end{bmatrix};\ M_1, M_3 \in GL_2(k),\ M_2 \in \MM_2(k) \right \}$, donc l'unique orbite fermée de $\HH$ est isomorphe à $\Gr(2,V')$. 
\end{remarque}

\noindent Le corollaire qui suit découle du lemme \ref{fixespoints} et du théorème \ref{pointfixeborelSp4}.

\begin{corollaire} \label{HconnexeSp4}
Le schéma $\HH$ est connexe.
\end{corollaire}

\subsubsection{Espace tangent de \texorpdfstring{$\HH$}{H} en \texorpdfstring{$Z_0$}{Z0}}  \label{tanggSp4}

On note $Z_0:=\Spec(k[W]/I)$. Nous allons démontrer la  

\begin{proposition} \label{dimTangentSp4}
$\dim(T_{Z_0} \HH)=6.$
\end{proposition}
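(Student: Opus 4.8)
The plan is to compute $\dim(T_{Z_0}\HH)$ by following exactly the same strategy as in the proof of Proposition \ref{dimTangent} (the analogous $GL_2$ result) and of Proposition \ref{dimTangent2O3}. By Proposition \ref{isoTangent} we have the canonical isomorphism $T_{Z_0}\HH \cong \Hom_R^G(I/I^2,R)$ where $R:=k[W]/I$, so the whole computation reduces to determining the dimension of this space of $G$-equivariant $R$-module homomorphisms. The idea is to pin this dimension between a lower bound and an upper bound that both equal $6$.

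First I would establish the lower bound $\dim(T_{Z_0}\HH)\geq 6$. Since $Z_0$ is the unique $B'$-fixed point of $\HH$ (Theorem \ref{pointfixeborelSp4}), Lemma \ref{fixespoints} guarantees $Z_0 \in \HHp$, whence $\dim(T_{Z_0}\HH)\geq \dim(\HHp)$. Here $\dim(\HHp)=\dim(W/\!/G)$ equals the dimension of the determinantal variety $\Lambda^2(V'^*)^{\leq 4}$ for $n'=4$, which is $6$ (this is $n'n-\tfrac12 n(n+1)=16-10$ from the computation in Section \ref{etudequotientSp2n}). So the birationality of $\gamma$ on $\HHp$ already forces $\dim(T_{Z_0}\HH)\geq 6$.

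For the upper bound, I would proceed as in the $GL_2$ and $O_3$ cases. Using the explicit generators of $I$ coming from $(\Lambda^2(V')\otimes V_0)\oplus(D\otimes\Gamma_{\epsilon_1+\epsilon_2}(V))\subset k[W]_2$ (Notation \ref{l3generatorsSp4}), I would fix a $B'\times G$-module $N\subset k[W]$ generating $I$ and set up the exact sequence (\ref{complexeCornFlakes}) of $R$-modules, with a suitable relation module $\RRR$. By Lemma \ref{InegTang}, $\dim(T_{Z_0}\HH)=\dim(N)-\rg(\rho^*)$, so the task becomes bounding $\rg(\rho^*)$ from below by an appropriate quantity. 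Concretely I would exhibit explicit $G$-equivariant elements $\psi_i\in\Hom_R^G(R\otimes N,R)$ (defined by sending the $V_0$-generators to Kronecker values and the $\Gamma_{\epsilon_1+\epsilon_2}$-generators to $0$), evaluate the images $\rho^*(\psi_i)$ against a computer-generated (Macaulay2) list of relations $r_1,\dots,r_k$ between the generators of $I/I^2$, and show by a linear-independence argument — substituting each $r_j\otimes 1$ and reading off a triangular system of monomial coefficients — that enough of the $\rho^*(\psi_i)$ are independent to force $\rg(\rho^*)$ up to the value yielding exactly $6$. Combined with the lower bound, this gives equality.

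The main obstacle will be the bookkeeping of the relation module: as already flagged in the $GL_3$ and $SO_3$ cases, one typically cannot write down a full generating set of relations by hand, so I would rely on \cite[Macaulay2]{Mac2} to produce the relations and the generators of $\Hom_R(I/I^2,R)$, then isolate by hand the $G$-equivariant combinations. The delicate point is ensuring that the explicitly chosen equivariant morphisms really are linearly independent \emph{after} applying $\rho^*$, since a naive count of generators of $I/I^2$ overestimates the tangent space; the evaluation against the relations is what cuts the count down to $6$. Once the upper bound $\rg(\rho^*)\geq \dim(N)-6$ is verified this way, the proposition follows, and one concludes (via Lemma \ref{Hlisse4}, exactly as in Corollary \ref{Hcasn2}) that $\HH=\HHp$ is smooth of dimension $6$.
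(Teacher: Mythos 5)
Your proposal is correct and follows essentially the same route as the paper: the lower bound $\dim(T_{Z_0}\HH)\geq 6$ comes from $Z_0\in\HHp$ via Lemma \ref{fixespoints}, and the upper bound from Lemma \ref{InegTang} with $\dim(N)=11$, by exhibiting the equivariant morphisms $\psi_i$ and checking that five of the $\rho^*(\psi_i)$ remain independent after evaluation on two Macaulay2-computed relations. The paper's proof is exactly this computation carried out explicitly.
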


On note $w \in W$ sous la forme $w=\begin{bmatrix}
x_1 &y_1 & z_1 &t_1 \\
x_2 &y_2 & z_2 &t_2\\
x_3 &y_3 & z_3 &t_3 \\
x_4 & y_4 & z_4 &t_4
\end{bmatrix}$ et on identifie $k[W]$ à $k[x_i,y_i,z_i,t_i,\ 1 \leq i \leq 4]$. On explicite des bases de certains $B' \times G$-modules qui apparaissent dans $k[W]_2$:\\ 

$
 \left.
    \begin{array}{l}
f_1:=-x_3y_1-x_4y_2+x_1y_3+x_2y_4 \\
f_2:=-x_3z_1-x_4z_2+x_1z_3+x_2z_4 \\
f_3:=-x_3t_1-x_4t_2+x_1t_3+x_2t_4 \\
f_4:=-y_3z_1-y_4z_2+y_1z_3+y_2z_4 \\
f_5:=-y_3t_1-y_4t_2+y_1t_3+y_2t_4 \\
f_6:=-z_3t_1-z_4t_2+z_1t_3+z_2t_4
      \end{array}
\right \} \text{ est une base de } \Lambda^2(V') \otimes V_0, 
$

$
 \left.
    \begin{array}{l}
h_1:=x_1y_2-x_2y_1 \\
h_2:=x_1y_3-x_3y_1 \\
h_3:=x_1y_4-x_4y_1 \\
h_4:=x_2y_3-x_3y_2 \\
h_5:=x_3y_4-x_4y_3 \\
h_6:=x_2y_4-x_4y_2 \\

      \end{array}
\right \} \text{ est une base de } D \otimes \Lambda^2(V). 
$

\ \\
On reprend les notations de la section \ref{ConnexitéetTangence}. Soient $R:=k[W]/I$ et 
$$N:= \left\langle  f_1,\ldots,f_6,h_1,\ldots,h_5 \right\rangle \subset k[W]$$ 
qui est un $B' \times G$-module qui engendre l'idéal $I$. D'après \cite[Macaulay2]{Mac2}, les relations entre les générateurs ci-dessus du $R$-module $I/I^2$ sont données dans la table \ref{table7}.
\begin{table}[ht]
\begin{center}
\center \includegraphics[scale=0.72]{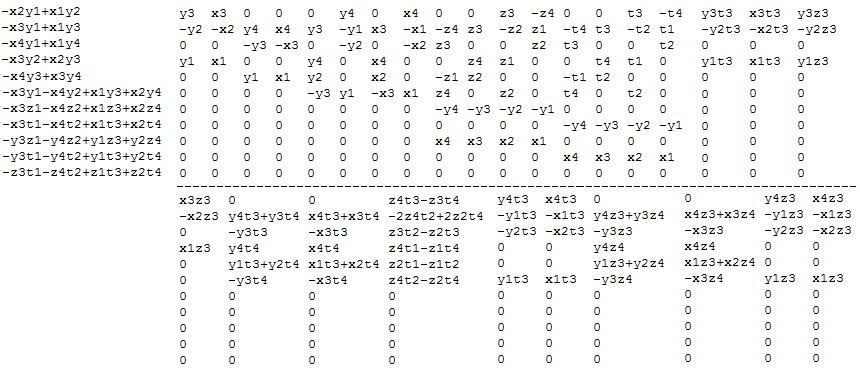}
\end{center}
\caption{ \label{table7} Relations entre les générateurs de $I/I^2$}
\end{table}

\noindent En particulier, on a les relations suivantes données par les colonnes 10 et 14 respectivement:
$$ \left \{
    \begin{array}{l}
r_1:=-z_3 \otimes h_2+z_3 \otimes h_3 -z_1 \otimes h_5 +z_4 \otimes f_1 -y_4 \otimes f_2+x_4 \otimes f_4, \\
r_2:=t_4 \otimes h_2+t_3 \otimes h_3 -t_1 \otimes h_5 +t_4 \otimes f_1 -y_4 \otimes f_3+x_4 \otimes f_5.  
   \end{array}
      \right.$$
On a $\dim(N)=11$ et donc d'après le lemme \ref{InegTang}, on a $\dim(T_{Z_0} \HH)=11- \rg(\rho^*)$. 
D'après le lemme \ref{fixespoints}, la variété $\HHp$ contient au moins un point fixe pour l'opération de $B'$, donc $Z_0 \in \HHp$ et donc $\dim(T_{Z_0} \HH) \geq \dim(\HHp)=6$. Donc pour montrer la proposition \ref{dimTangentSp4}, il suffit de montrer le

\begin{lemme} 
$\rg(\rho^*) \geq 5.$
\end{lemme}

\begin{proof} 
Pour $i=1,\ldots,6$, on définit $\psi_i \in \Hom_R^G(R \otimes N,R)$ par: 
$$
\left\{
    \begin{array}{ll}
        \psi_i(1 \otimes f_j)={\delta}_i^j &\text{ pour }j=1,\ldots,6,\\
        \psi_i(1 \otimes h)=0  &\text{ pour tout } h \in D \otimes \Gamma_{\epsilon_1+\epsilon_2}(V),        
    \end{array}
\right.
$$
où $\delta_i^j$ est le symbole de Kronecker.  
La famille $\{ \psi_1, \ldots, \psi_6\}$ est libre dans $\Hom_R^{G}(R \otimes N,R)$. Nous allons voir que 
$\{ \rho^*(\psi_1),\ldots, \rho^*(\psi_5)\}$ 
est une famille libre de $\Hom_R^G(R \otimes \RRR,R)$, ce qui démontrera le lemme. \\
Soient ${\lambda}_1, \ldots, {\lambda}_5 \in k$ tels que
\begin{equation}  \label{relaziSp4} 
{\lambda}_1 \, \rho^*({\psi}_1)+ \ldots + {\lambda}_5 \, \rho^*({\psi}_5)=0.
\end{equation}
On évalue (\ref{relaziSp4}) en $r_1 \otimes 1$, on obtient:
$$\lambda_1 z_4- \lambda_2 y_4 + \lambda_4 x_4=0 \Rightarrow \lambda_1=\lambda_2=\lambda_4=0.$$
De même, on évalue (\ref{relaziSp4}) en $r_2 \otimes 1$, on obtient:
$$-\lambda_3 y_4+ \lambda_5 x_4=0 \Rightarrow \lambda_3=\lambda_5=0.$$
Et donc $\{ \rho^*(\psi_1),\ldots, \rho^*(\psi_5)\}$ est bien une famille libre.
\end{proof}

\begin{remarque}
D'après \cite[Macaulay2]{Mac2}, une famille de générateurs du $R$-module $\Hom_R(I/I^2,R)$ est donnée dans la table \ref{table9}. 

\begin{table}[h]
\begin{center}
\center \includegraphics[scale=0.8]{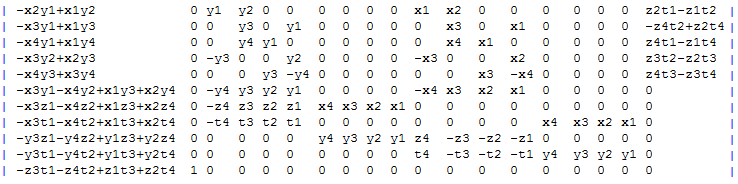}
\end{center}
\caption{\label{table9} Générateurs du $R$-module $\Hom_R(I/I^2,R)$}
\end{table}

\noindent On note $\Phi_i$ le morphisme associé à la colonne $i+1$. On vérifie que les six morphismes suivants sont $G$-équivariants et linéairement indépendants:
\begin{itemize} \renewcommand{\labelitemi}{$\bullet$}
\item $\Phi_1$, 
\item  $-z_2 \Phi_2+z_1 \Phi_3-z_4 \Phi_4-z_3 \Phi_5$,
\item  $-t_2 \Phi_2+t_1 \Phi_3-t_4 \Phi_4-t_3 \Phi_5$,
\item  $-z_2 \Phi_{10}+z_1 \Phi_{11}-z_4 \Phi_{12}-z_3 \Phi_{13}$,
\item  $-t_2 \Phi_{10}+t_1 \Phi_{11}-t_4 \Phi_{12}-t_3 \Phi_{13}$.
\item $\Phi_{18}$.
\end{itemize}
Ils forment donc une base de l'espace vectoriel ${\Hom}_R^G(I/I^2,R)$ et on en déduit, comme dans la remarque \ref{Bmoduleexplicite}, que 
$${\Hom}_R^G(I/I^2,R) \cong D^* \oplus \left( D^* \otimes \frac{\Lambda^2(V')}{D}\right)$$
comme $B'$-module.   
\end{remarque}

\noindent On déduit du lemme  \ref{Hlisse4} et de ce qui précède le 

\begin{corollaire} \label{Hcasn2Sp4}
$\HH=\HHp$ est une variété lisse de dimension $6$. 
\end{corollaire}

\subsubsection{Construction d'un morphisme équivariant \texorpdfstring{$\delta:\ \HH \rightarrow \PP(W/\!/G)$}{}} \label{construction_deltaSp4}

Le lemme qui suit découle de la théorie classique des invariants de manière analogue au lemme \ref{exi1}.

\begin{lemme} 
Le ${k[W]}^{G}$-module ${k[W]}_{(\Gamma_{\epsilon_1+\epsilon_2}(V))}$ est engendré par ${\Hom}^{G}(\Gamma_{\epsilon_1+\epsilon_2}(V),{k[W]}_2)$.
\end{lemme}

Puis, on a l'isomorphisme de $G'$-modules
$$\Hom^{G}(\Gamma_{\epsilon_1+\epsilon_2}(V),{k[W]}_2) \cong \Lambda^2(V').$$ 
La proposition \ref{morphismegrass} donne un morphisme $G'$-équivariant 
$$\HH \rightarrow \Gr(5,\Lambda^2(V'^*)).$$
Et $\Lambda^2(V'^*) \otimes \det(V') \cong \Lambda^2(V')$, donc 
$$\Gr(5,\Lambda^2(V'^*)) \cong \PP (\Lambda^2(V')) \cong \PP(\Lambda^2(V'^*))=\PP(W/\!/G)$$ 
comme $G'$-variété. On a donc un morphisme $G'$-équivariant 
\begin{equation} \label{maoprhismedeltaSp4}
\delta:\ \HH \rightarrow \PP (W/\!/G).
\end{equation}

On rappelle que l'on note 
$$Y_0:=\left\{(Q,L)\in W/\!/G \times \PP (W/\!/G)\ |\ Q \in L \right\}=\OO_{\PP (W/\!/G)}(-1)$$ 
l'éclatement en $0$ de $W/\!/G$. On vérifie que le morphisme $\gamma \times \delta$ envoie $\HH$ dans $Y_0$. Puis, en procédant comme pour la proposition \ref{gammaiso}, on montre que le morphisme $\gamma \times \delta:\ \HH \rightarrow Y_0$ est un isomorphisme $G'$-équivariant.

\subsubsection{Cas \texorpdfstring{$n'>4$}{n'>4}}  \label{masterpropositioncasn2Sp4}


On considère la variété
$$\ZZ:=\left\{(Q,L,E) \in W/\!/G \times \PP(W/\!/G) \times \Gr(4,V'^*) \ \mid \ Q \in L \text{ et } \Im(L) \subset E  \right\}$$       
où $\Im(L)$ est défini dans la notation \ref{notationsKerd}.
On a le diagramme
$$\xymatrix{ &  \ZZ \ar@{->>}[ld]_{q_1} \ar@{->>}[rd]^{q_2} \\   Y_0 && \Gr(4,V'^*) }$$
où $q_1$ et $q_2$ sont les projections naturelles. En particulier, avec les notations du début de la section \ref{etudeSp4}, on vérifie que $\ZZ \cong \Lambda^2\left(T \right)$. Ensuite, on montre en procédant comme pour le lemme \ref{identificationblowup} que $Y_1 \cong \ZZ$
et que, via cet isomorphisme, l'éclatement $Y_1 \rightarrow Y_0$ s'identifie au morphisme $q_1:\ \ZZ \rightarrow Y_0$. On raisonne alors comme dans la section \ref{masterpropositioncasn2}: on identifie $Y_1$ à $\ZZ$, on montre l'existence d'un isomorphisme $G'$-équivariant $\HH \cong Y_1$, on identifie $\HH$ à $Y_1$ via cet isomorphisme et enfin on montre que $\gamma$ est la composition des éclatements $Y_1 \rightarrow Y_0 \rightarrow W/\!/G$. En particulier, $\gamma$ est toujours une résolution de $W/\!/G$.

\section{Cas de \texorpdfstring{$O(V)$}{O(V)} et \texorpdfstring{$SO(V)$}{SO(V)} opérant dans \texorpdfstring{$\mu^{-1}(0)$}{}}  \label{sektionsympOn}

On se place à nouveau dans la situation $3$ et on considère le cas particulier où l' espace vectoriel $V'$ est de dimension paire $n'=2d$. Plus précisément, soit $E$ un espace vectoriel de dimension $d$ et soit $V':=E \oplus E^*$. Alors 
\begin{align*}
W&=\Hom(V',V)\\
 &=\Hom(E \oplus E^*,V)\\
 &=\Hom(E,V) \times \Hom(V,E) \text{ puisque $V \cong V^*$ comme $G$-module,}\\
 &=\Hom(E,V) \times \Hom(E,V)^*.
\end{align*}
On a $G=O(V)$ et on note $G^0=SO(V)$ la composante neutre de $G$ qui opère dans $W$ par restriction de l'opération de $G$. Soient $A:=\begin{bmatrix}
0  & 1 \\
-1 & 0 
\end{bmatrix}$ et $J':=\begin{bmatrix}
A & &\\
 & \ddots & \\
 && A 
\end{bmatrix} \in \MM_{n'}(k)$ la matrice avec $d$ blocs $A$ sur la diagonale et des $0$ partout ailleurs. On note $\Omega'$ la forme symplectique sur $V'$ définie par $J'$ et $G':=Sp(V')$ le sous-groupe de $GL(V')$ qui préserve $\Omega'$.  
L'opération de $GL(V') \times G$ dans $W$ induit une opération de $G' \times G$ dans $W$. On note 
$$\gg':=\{ M \in \End(V') \ |\ J'M + \leftexp{t}{M}J'=0\}$$
l'algèbre de Lie de $G'$ et
$${\gg'}^{\leq n}:=\{ M \in \gg' \ |\ \rg(M) \leq n\}.$$
Le groupe $G'$ opère dans $\gg'$ par l'action adjointe et on a un isomorphisme $G'$-équivariant
$$\begin{array}{ccc}
 \gg' & \cong & S^2(V'^*) \\
 M & \mapsto & J'M 
\end{array}$$
qui permet d'identifier $\gg'^{\leq n}$ au quotient $W/\!/G=S^2(V'^*)^{\leq n}$ étudié dans la section \ref{description_quotientOn}. 

Notre but est d'étudier les schémas de Hilbert invariants
$$\HHmg:=\mathrm{Hilb}_{h_s}^{G}(\mu^{-1}(0))$$
et 
$$\HHmgz:=\mathrm{Hilb}_{h_s^0}^{G^0}(\mu^{-1}(0)).$$
L'étude de ces deux schémas étant fortement liée, nous faisons le choix de les considérer simultanément. On procède comme dans la section \ref{posisimpy}. Dans la section \ref{appzmomentOn}, on décrit l'application moment $\mu:\ W \rightarrow \gg$, où $\gg$ désigne l'algèbre de Lie de $G$, ainsi que sa fibre en $0$. Dans la section \ref{sectionavecXOn}, on étudie les morphismes de passage au quotient $\nu:\ \mu^{-1}(0) \rightarrow \mu^{-1}(0)/\!/G$ et $\nu_0:\ \mu^{-1}(0) \rightarrow \mu^{-1}(0)/\!/G^0$. Nous verrons que les réductions symplectiques ${\mu}^{-1}(0)/\!/G$ et ${\mu}^{-1}(0)/\!/G^0$ sont irréductibles et on pourra donc parler de la fonction de Hilbert $h_s$ (resp. $h_s^0$) de la fibre générique de $\nu$ (resp. de $\nu_0$). On note $\HHmgp$ et $\HHmgzp$ les composantes principales de $\HHmg$ et $\HHmgz$ respectivement. Dans la section \ref{MropRREDOn}, on établit un résultat de réduction qui permet de ramener la détermination de $\HHmgp$ au schéma de Hilbert invariant étudié dans la section \ref{HclassOngeneral}. Le résultat essentiel de cette section est le corollaire \ref{symplisssseOn}. 

\subsection{L'application moment}  \label{appzmomentOn}
On considère $\Omega$ la forme symplectique sur $W$ définie dans la section \ref{appzmoment}, alors $G' \times G$ opère symplectiquement dans $(W, \Omega)$. D'après \cite[§4.2]{Tanja}, l'application moment pour l'opération de $G$ dans $(W,\Omega)$ est donnée par:
$$\begin{array}{cccc}
\mu : & W & \to & \gg \\
 & w & \mapsto & -\frac{1}{2}wJ' \leftexp{t}{w}.\\
\end{array}$$
Et donc
$$\mu^{-1}(0)=\{ w \in W \ |\ wJ' \leftexp{t}{w}=0\}.$$

\begin{remarque}
On vérifie que $Sp(V')$ est le plus grand sous-groupe de $GL(V')$ qui stabilise $\mu^{-1}(0)$ dans $W$ et ce constat justifie a posteriori notre choix pour $G'$. 
\end{remarque}

L'application moment coïncide (à un scalaire près) avec le morphisme de passage au quotient $W \rightarrow W/\!/G'$. En particulier, $\mu^{-1}(0)$ est le nilcône $\NNN(W,G')$ et on déduit de \cite[Theorem 9.1]{KS} que le schéma $\mu^{-1}(0)$ est une variété normale. La preuve de la proposition qui suit est analogue à celle de la proposition \ref{nilconeSpn}. 

\begin{proposition} \label{dimmudezero3}
La variété $\mu^{-1}(0)$ est de dimension
\begin{itemize} \renewcommand{\labelitemi}{$\bullet$}
\item $2dn-\frac{1}{2}n(n-1)$ si $n \leq d$,
\item $dn+\frac{1}{2}d^2+\frac{1}{2}d$ sinon.
\end{itemize}\end{proposition}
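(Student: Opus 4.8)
The plan is to follow the proof of Proposition \ref{nilconeSpn} almost verbatim, taking care that the roles of the two vector spaces are now exchanged: the symplectic form $\Omega'$ lives on $V'$ (via $G'=Sp(V')$) whereas $G=O(V)$ acts orthogonally on $V$. First I would translate the defining equation into a geometric isotropy condition. Writing $w\in\Hom(V',V)\cong\MM_{n,2d}(k)$ and denoting its rows by $R_1,\ldots,R_n\in V'^*$, the $(i,j)$-entry of $\mu(w)=-\tfrac12 wJ'\leftexp{t}{w}$ equals, up to a scalar, $\Omega'(R_i,R_j)$; hence
$$\mu^{-1}(0)=\left\{w\in\Hom(V',V)\ \middle|\ \Im(\leftexp{t}{w})\subset V'^*\text{ est un sous-espace isotrope pour }\Omega'\right\}.$$
Since an isotropic subspace of the $2d$-dimensional symplectic space $V'$ has dimension at most $d$, and $\Im(\leftexp{t}{w})$ has dimension $\rg(w)\le n$, the relevant bound is $\alpha:=\min(n,d)$.

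Next I would introduce the incidence variety
$$\ZZ:=\left\{(w,L)\in\Hom(V',V)\times\IG(\alpha,V')\ \middle|\ \Im(\leftexp{t}{w})\subset L\right\}$$
together with its two projections $p_1\colon\ZZ\to\mu^{-1}(0)$ and $p_2\colon\ZZ\to\IG(\alpha,V')$. Fixing $L_0\in\IG(\alpha,V')$, the condition $\Im(\leftexp{t}{w})\subset L_0$ is equivalent to $\Ker(w)\supset L_0^\perp$, i.e. $w\in\Hom(V'/L_0^\perp,V)$; thus $p_2$ realises $\ZZ$ as un fibré vectoriel homogène au-dessus de $\IG(\alpha,V')$ with fibre $\Hom(V'/L_0^\perp,V)$ of dimension $\alpha n$. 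Using $\dim\IG(\alpha,V')=\alpha(2d-\alpha)-\tfrac12\alpha(\alpha-1)$ (the symplectic isotropic Grassmannian of the $2d$-dimensional space $V'$), I obtain
$$\dim\ZZ=\alpha(2d-\alpha)-\tfrac12\alpha(\alpha-1)+\alpha n.$$
Since $\IG(\alpha,V')$ is homogeneous under $G'=Sp(V')$, it is irreducible, hence so is $\ZZ$. On the dense open locus $\ZZ':=\{(w,L)\in\ZZ\mid\rg(w)=\alpha\}$ the subspace $L=\Im(\leftexp{t}{w})$ is forced, so $p_1$ restricts to un isomorphisme onto the open set $\{w\in\mu^{-1}(0)\mid\rg(w)=\alpha\}$; combined with the fact that $\mu^{-1}(0)$ is an irreducible (normal) variety by \cite[Theorem 9.1]{KS}, this yields $\dim\mu^{-1}(0)=\dim\ZZ$.

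It only remains to substitute. If $n\le d$ then $\alpha=n$ and $\dim\ZZ=n(2d-n)-\tfrac12 n(n-1)+n^2=2dn-\tfrac12 n(n-1)$. If $n>d$ then $\alpha=d$ and $\dim\ZZ=d^2-\tfrac12 d(d-1)+dn=dn+\tfrac12 d^2+\tfrac12 d$, which are exactly the two asserted values. The only point requiring genuine care — and the main potential pitfall — is the symplectic/orthogonal bookkeeping when passing from $w$ to $\leftexp{t}{w}$: one must fibre over the isotropic Grassmannian of $V'$ (not of $V$, as in Proposition \ref{nilconeSpn}) and correctly identify the fibre $\Hom(V'/L_0^\perp,V)$ via the annihilator $L_0^\perp$, everything else being a routine transcription of the earlier argument.
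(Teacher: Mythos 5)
Your proof is correct and is precisely the argument the paper intends: the paper's proof of Proposition \ref{dimmudezero3} simply says it is analogous to that of Proposition \ref{nilconeSpn}, and you carry out exactly that transposition — viewing $\mu^{-1}(0)$ as the nilcone of the $Sp(V')$-action, fibering the incidence variety over the isotropic Grassmannian $\IG(\alpha,V'^*)$ with $\alpha=\min(n,d)$, and concluding by birationality of the first projection. The bookkeeping (isotropy of $\Im(\leftexp{t}{w})$ in $V'^*$, fibre $\Hom(V'/L_0^\perp,V)$, and the two substitutions for $\alpha$) is all correct.
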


\subsection{Etude des morphismes de passage au quotient}  \label{sectionavecXOn}
Soient
$$\nu:\ {\mu}^{-1}(0) \rightarrow {\mu}^{-1}(0)/\!/G$$ 
et 
$$\nu_0:\ {\mu}^{-1}(0) \rightarrow {\mu}^{-1}(0)/\!/G^0$$ 
les morphismes de passage au quotient. Dans cette section, on décrit géométriquement les variétés ${\mu}^{-1}(0)/\!/G$ et ${\mu}^{-1}(0)/\!/G^0$. On note $h_s$ (resp. $h_s^0$) la fonction de Hilbert de la fibre générique de $\nu$ (resp. de $\nu_0$) et
$$N:=\min \left(d,n \right).$$ 

On rappelle que l'on a une correspondance bijective entre les partitions $(d_1 \geq \ldots \geq d_k)$ de $n'=2d$ dans lesquelles les $d_i$ impairs sont présents avec multiplicité paire et les orbites nilpotentes $\OO_{[d_1,\ldots,d_k]}$ de $\gg'$ (voir \cite[§5.1]{CoMc}). La proposition qui suit se démontre comme la proposition \ref{descQuotient}. 

\begin{proposition}  \label{descQuotientOn}
$\mu^{-1}(0)/\!/G =\overline{\OO_{[2^N,1^{2(d-N)}]}}.$
\end{proposition}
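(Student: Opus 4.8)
The plan is to argue exactly as for Proposition \ref{descQuotient}, combining the explicit shape of the quotient morphism with the moment equation. Under the $G'$-equivariant isomorphism $\gg' \cong S^2(V'^*)$, $M \mapsto J'M$, the categorical quotient $W/\!/G = S^2(V'^*)^{\leq n}$ computed in Section \ref{description_quotientOn} is identified with $\gg'^{\leq n}$, and the quotient morphism $\nu:\ w \mapsto \leftexp{t}{w}w$ becomes
$$\nu':\ W \rightarrow \gg'^{\leq n},\quad w \mapsto (J')^{-1}\,\leftexp{t}{w}\,w.$$
First I would record that $\nu'$ is $G'$-equivariant for the adjoint (conjugation) action of $G' = Sp(V')$ on $\gg'$: the defining relation $\leftexp{t}{g'}J'g' = J'$ rewrites as $(J')^{-1}\leftexp{t}{g'^{-1}} = g'(J')^{-1}$, which is precisely what is needed so that $\nu'(w\,g'^{-1}) = g'\,\nu'(w)\,g'^{-1}$. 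Consequently $\nu'(\mu^{-1}(0))$ is a $G'$-stable subset of $\gg'$, and (since $\mu^{-1}(0)$ is closed and $G \times G'$-stable in $W$) the subvariety $\mu^{-1}(0)/\!/G$ is closed in $W/\!/G = \gg'^{\leq n}$, with underlying set $\nu'(\mu^{-1}(0))$.

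For the direct inclusion, recall from Section \ref{appzmomentOn} that $\mu^{-1}(0) = \{w \mid wJ'\leftexp{t}{w} = 0\}$, and that $(J')^{-1} = -J'$, so this equation is equivalent to $w(J')^{-1}\leftexp{t}{w} = 0$. Hence for $w \in \mu^{-1}(0)$ the endomorphism $f := \nu'(w)$ satisfies
$$f^2 = (J')^{-1}\leftexp{t}{w}\bigl(w(J')^{-1}\leftexp{t}{w}\bigr)w = 0.$$
Thus $f$ is a nilpotent element of $\mathfrak{sp}(V')$ with $f^2 = 0$ and $\rg(f) \leq n$; such an element has all Jordan blocks of size at most $2$, i.e. Jordan type $[2^a, 1^{2d-2a}]$ with $a = \rg(f)$, the parity condition for symplectic partitions being automatic since $2d-2a$ is even. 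From $a \leq n$ and $a \leq d$ we get $a \leq N = \min(d,n)$, whence $\nu'(\mu^{-1}(0)) \subseteq \overline{\OO_{[2^N,1^{2(d-N)}]}}$ and therefore $\mu^{-1}(0)/\!/G \subseteq \overline{\OO_{[2^N,1^{2(d-N)}]}}$.

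For the reverse inclusion I would note that $\mu^{-1}(0)/\!/G$ is reduced and irreducible, being the quotient of the normal variety $\mu^{-1}(0)$ (recorded via \cite[Theorem 9.1]{KS}); since its image is closed and $G'$-stable, it suffices to produce a single $w_0 \in \mu^{-1}(0)$ with $\nu'(w_0)$ in the open orbit $\OO_{[2^N,1^{2(d-N)}]}$, after which closedness forces the image to contain the whole closure. Writing $w = (u_1,u_2)$ under $W \cong \Hom(E,V) \times \Hom(V,E)$, such a $w_0$ is obtained by choosing explicit matrices exactly as in the proof of Proposition \ref{descQuotient} (Notation \ref{BipBip}), with $u_1$ of rank $N$ onto an isotropic subspace of $V$ so that $w_0 J'\leftexp{t}{w_0} = 0$ while $\nu'(w_0)$ acquires exactly $N$ Jordan blocks of size $2$. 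The main obstacle is this reverse step: one must verify that the moment constraint $wJ'\leftexp{t}{w}=0$ is compatible with reaching the generic rank $N$, and confirm the symplectic-orbit combinatorics (that square-zero nilpotents of $\mathfrak{sp}(V')$ are exactly those of type $[2^a,1^{2(d-a)}]$, with $\OO_{[2^N,1^{2(d-N)}]}$ open in its closure), whereas the passage from set-theoretic to scheme equality is handled by the reducedness just noted.
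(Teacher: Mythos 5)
Your overall strategy is the one the paper itself follows (its proof is simply declared analogous to that of Proposition \ref{descQuotient}): the forward inclusion via $f^2=0$ together with the combinatorics of square-zero nilpotents in $\gg'$ is correct and complete, and replacing the paper's orbit-by-orbit exhibition of preimages by a single witness in the open orbit, combined with closedness and $G'$-stability of the image, is a legitimate and even slightly slicker variant.

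However, the construction of the witness $w_0$ is wrong as stated, and this is precisely the step you flag as the crux. The moment equation $w J' \leftexp{t}{w}=0$ says that the row space of $w$, i.e. $\Im(\leftexp{t}{w}) \subset V'^*$, is isotropic for the symplectic form $\Omega'$; it has nothing to do with isotropy of $\Im(w)\subset V$ for the quadratic form $q$. Worse, if you actually take $\Im(u_1)$ (or $\Im(w_0)$) to be $q$-isotropic in $V$, then $\leftexp{t}{w_0}w_0=0$, hence $\nu'(w_0)=0$, which is the opposite of landing in the open orbit. The correct requirements are: the row space of $w_0$ must be an $N$-dimensional $\Omega'$-isotropic subspace of $V'^*$ (this is the moment constraint, satisfiable because $N\leq d$), while $q$ restricted to $\Im(w_0)\subset V$ must be \emph{non-degenerate}, so that $\leftexp{t}{w_0}w_0$ attains rank $N$. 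Concretely $w_0:=\sum_{i=1}^{N} b_i\otimes c_{2i-1}^*$ works: its rows span $\left\langle c_1^*,c_3^*,\ldots,c_{2N-1}^*\right\rangle$, which is $\Omega'$-isotropic since $J'$ only pairs $c_{2i-1}$ with $c_{2i}$, so $w_0J'\leftexp{t}{w_0}=0$; and $\leftexp{t}{w_0}w_0$ is the diagonal symmetric matrix of rank $N$, so $\nu'(w_0)=(J')^{-1}\leftexp{t}{w_0}w_0$ is a square-zero element of $\gg'$ of rank $N$, i.e. a point of $\OO_{[2^N,1^{2(d-N)}]}$. With this correction the argument is complete.
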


\begin{corollaire}
Le quotient $\mu^{-1}(0)/\!/G$ se décompose en $N+1$ orbites pour l'opération de $G':$ 
$$U_i:=\OO_{[2^i,1^{2(d-i)}]},\ \text{ pour } i=0, \ldots, N.$$
\end{corollaire}

Les adhérences de ces orbites sont imbriquées de la façon suivante:
\begin{equation} \label{orbImb}
\{0\}=\overline{U_0} \subset \cdots \subset \overline{U_N}=\mu^{-1}(0)/\!/G.
\end{equation}

La géométrie de l'adhérence d'une orbite nilpotente dans $\gg'$ est étudiée dans \cite{KP3}. En particulier, la variété $\mu^{-1}(0)/\!/G$ est normale (car $\mu^{-1}(0)$ est normale), de dimension $N(2d+1-N)$ (\cite[Corollary 6.1.4]{CoMc}) et son lieu singulier est $\overline{U_{N-1}}$ (\cite[Theorem 2]{KP3}). 

On s'intéresse maintenant à la géométrie de la variété $\mu^{-1}(0)/\!/G^0$. On considère le morphisme $G'$-équivariant $p_1:\ \mu^{-1}(0)/\!/G^0 \rightarrow \mu^{-1}(0)/\!/G$ qui apparaît dans le diagramme (\ref{diagOvSOv}). Si $d<n$, alors $\mu^{-1}(0) \subset \Hom(V',V)^{\leq d}$ et donc $p_1$ est un isomorphisme. En revanche, si $d \geq n$, alors $\mu^{-1}(0) \cap \{w \in W \ |\ \rg(w)=n\}$ est non vide et donc $p_1$ est un revêtement double de $\mu^{-1}(0)/\!/G$ dont le lieu de ramification est $\overline{\OO_{[2^{n-1},1^{2(d-n+1)}]}}$ (en particulier les dimensions de $\mu^{-1}(0)/\!/G$ et $\mu^{-1}(0)/\!/G^0$ coïncident). Malheureusement, j'ignore quel est le lieu singulier de $\mu^{-1}(0)/\!/G^0$, on sait seulement qu'il est de codimension au moins 2.  
D'après \cite[§6.1]{CoMc}, on a $\pi_1(\OO_{[2^n,1^{2(d-n)}]}) \cong \ZZZ_2$  et donc, si $d \geq n$, le morphisme $p_1$ est le revêtement universel de $\OO_{[2^n,1^{2(d-n)}]}$.\\
Dans tous les cas, la variété quotient $\mu^{-1}(0)/\!/G^0$ est normale et se décompose en $N+1$ orbites $U_i$, $i=0, \ldots, N$, qui sont imbriquées comme dans (\ref{orbImb}). On fait ici l'abus d'utiliser les mêmes notations pour désigner les orbites de $\mu^{-1}(0)/\!/G$ et de $\mu^{-1}(0)/\!/G^0$ mais on sera attentif à toujours préciser le contexte.

\begin{remarque}
Nous verrons dans la section \ref{appendice2} que les variétés $\mu^{-1}(0)/\!/G$ et $\mu^{-1}(0)/\!/G^0$ sont symplectiques. Nous verrons aussi que $\mu^{-1}(0)/\!/G$ admet des résolutions symplectiques si et seulement si $d \leq n$.  
\end{remarque}

La dimension de la fibre générique de $\nu$ (resp. de $\nu_0$) est:
$$\left\{
    \begin{array}{ll}
   \frac{1}{2}n(n-1) &\text{ si } d \geq n,\\
    dn-\frac{1}{2}d(d+1) &\text{ si } d<n.
    \end{array}
\right.
$$


\begin{notation}
Si $d<n$, on note 
$$H:=\left\{ \begin{bmatrix}
M  &0_{n-d,d} \\
0_{d,n-d}  & I_d 
\end{bmatrix},\ M \in O_{n-d}(k)\right\} \cong O_{n-d}(k)$$
(qui est un sous-groupe réductif de $G$) et $H^0\cong SO_{n-d}(k)$ la composante neutre de $H$ (qui est un sous-groupe réductif de $G^0$). 
\end{notation}

\begin{lemme}
La fibre de $\nu$ (resp. $\nu_0$) en un point de $U_N$ est isomorphe à 
$$\left\{
    \begin{array}{ll}
        G   \ (\text{resp. } G^0)  &\text{ si } d \geq n,\\
        G/H  \ (\text{resp. } G^0/H^0) &\text{ si } d <n.
        
    \end{array}
\right.
$$\end{lemme}

\noindent On en déduit la

\begin{proposition} \label{fctH2Osymp}
La fonction de Hilbert $h_s$ (resp. $h_s^0$) de la fibre générique de $\nu$ (resp. de $\nu_0$) est donnée par:
$$\forall M \in \Irr(G),\ h_s(M)=\left\{
    \begin{array}{ll}
       \dim(M)  &\text{ si } d \geq n,  \\
       \dim(M^H)    &\text{ si } d <n. 
    \end{array}
\right.
$$
$$\left( resp.\ \forall M \in \Irr(G^0),\ h_s^0(M)=\left\{
    \begin{array}{ll}
       \dim(M)  &\text{ si } d \geq n,  \\
       \dim(M^{H^0})    &\text{ si } d <n. 
    \end{array}
\right. \right)
$$
\end{proposition}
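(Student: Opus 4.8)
The plan is to deduce the statement directly from the preceding lemma, which identifies the generic fiber of $\nu$ (resp. $\nu_0$) over the open orbit $U_N$, following verbatim the strategy already used for Proposition~\ref{fcthilb} and Proposition~\ref{fctH2}. By definition, $h_s$ (resp. $h_s^0$) is the Hilbert function of the generic fiber of $\nu$ (resp. $\nu_0$), and since all fibers over a given $G'$-orbit are isomorphic, it suffices to decompose the coordinate ring $k[\nu^{-1}(x)]$ (resp. $k[\nu_0^{-1}(x)]$) as a $G$-module (resp. $G^0$-module) for a single point $x \in U_N$, then read off the multiplicities.

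First I would treat the case $d \geq n$. The lemma gives $\nu^{-1}(x) \cong G$ as a $G$-variety, so using the left regular representation and the algebraic Peter--Weyl decomposition
\[
k[G] \cong \bigoplus_{M \in \Irr(G)} M^{*} \otimes M
\]
as a $G \times G$-module, the multiplicity of $M$ as a left $G$-module equals $\dim(M^{*}) = \dim(M)$, which gives $h_s(M) = \dim(M)$. The identical computation with $G^0$ in place of $G$ yields $h_s^0(M) = \dim(M)$. It is worth noting here that this decomposition is legitimate for the disconnected group $G = O(V)$ precisely because $O(V)$ is reductive.

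Next I would treat the case $d < n$. Here the lemma gives $\nu^{-1}(x) \cong G/H$, where $H$ acts by right translation, so
\[
k[\nu^{-1}(x)] \cong k[G/H] \cong k[G]^{H} \cong \bigoplus_{M \in \Irr(G)} M^{*H} \otimes M
\]
as a left $G$-module, and the multiplicity of $M$ is $\dim(M^{*H})$. Since $H \cong O_{n-d}(k)$ is reductive, taking $H$-invariants commutes with dualization, so $\dim(M^{*H}) = \dim((M^{H})^{*}) = \dim(M^{H})$, giving $h_s(M) = \dim(M^{H})$. Replacing $H$ by its reductive neutral component $H^0 \cong SO_{n-d}(k)$ produces the corresponding formula $h_s^0(M) = \dim(M^{H^0})$.

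I do not expect a genuine obstacle: the argument is a routine application of the fiber description combined with Peter--Weyl. The only points requiring mild care are the validity of the Peter--Weyl decomposition for the disconnected group $O(V)$, and the identity $\dim(M^{*H}) = \dim(M^{H})$, which rests on the reductivity of $H$ and $H^0$; both $O_{n-d}(k)$ and $SO_{n-d}(k)$ are reductive, so these identities hold and the proof closes. In fact the whole argument is formally identical to the $GL$ cases, the only substantive input specific to the orthogonal setting being the shape of the stabilizers $H$ and $H^0$ furnished by the preceding lemma.
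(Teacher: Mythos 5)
Your proof is correct and follows exactly the route the paper takes: the paper deduces this proposition from the preceding lemma on the fibers over $U_N$ by the same Peter--Weyl argument used for Proposition \ref{fcthilb} (including the identification $k[G/H]\cong k[G]^H$ and the identity $\dim(M^{*H})=\dim(M^H)$ via reductivity of $H$). Nothing is missing.
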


La fibre $\mu^{-1}(0)$ est un sous-schéma fermé $G' \times G$-stable de $W$, donc le schéma $\HHmg$ est un sous-schéma fermé $G'$-stable de $\Hilb_{h_s}^G(W)$. Ensuite, d'après les propositions \ref{fcthilbOn} et \ref{fctH2Osymp}, les fonctions de Hilbert $h_W$ et $h_s$ coïncident lorsque $d \geq n$ ou $d<\frac{n}{2}$ et alors $\HHmg$ s'identifie à un sous-schéma fermé de $\Hilb_{h_W}^G(W)$; ce dernier a été étudié dans la section \ref{HclassOngeneral}. De même, lorsque $d \geq n$ ou $d<\frac{n}{2}$, le schéma $\HHmgz$ s'identifie à un sous-schéma fermé de $\Hilb_{h_W}^{G^0}(W)$; ce dernier a été étudié dans la section \ref{HclassSOngeneral}.  

\begin{remarque}
Il n'y a pas de relation d'inclusion entre $\HHmg$ et $\HHmgz$ en général.
\end{remarque}

\subsection{Construction de morphismes équivariants et réduction} \label{MropRREDOn}
On souhaite déterminer $\HHmgp$ en procédant comme nous l'avons fait pour $GL(V)$ dans la section \ref{MropRRED}, c'est-à-dire en utilisant le principe de réduction pour se ramener à une variété plus simple. On n'a malheureusement pas de résultat semblable pour $\HHmgzp$ puisque, comme nous l'avons vu dans la section \ref{zectionred}, on ne dispose pas du principe de réduction lorsque $G=SO(V)$.\\
Dans la section \ref{redGrass2}, on a construit un morphisme $G'$-équivariant 
\begin{equation} \label{rosympOn}
\Hilb_{h_s}^G(W) \rightarrow \Gr(h_s(V),V'^*).
\end{equation}
D'après la proposition \ref{fctH2Osymp}, on a $h_s(V)=N$. La restriction du morphisme (\ref{rosympOn}) à $\HHmg$ donne donc un morphisme $G'$-équivariant 
\begin{equation}  \label{yeah}
\rho_s:\ \HHmg \rightarrow \Gr(N,V'^*).
\end{equation}
On rappelle que l'on note $\Omega'$ la forme symplectique sur $V'$ définie par la matrice $J'$ au début de la section \ref{sektionsympOn}. On a $V' \cong V'^*$ comme $G'$-module et donc $\Omega'$ s'identifie à une forme symplectique sur $V'^*$. Soient 
$$N':=E \left( \frac{N}{2} \right)$$ 
et
$$O_i:=\{L \in \Gr(N,V'^*)\ |\ {\Omega'}_{|L} \text{ est de rang } 2i\} \text{ où } i=0,\ldots, N'.$$
Alors les $O_i$ sont les orbites pour l'opération de $G'$ dans $\Gr(N,V'^*)$ et on a:
$$O_0 \subset \overline{O_1} \subset \cdots \subset \overline{O_{N'}}=\Gr(N,V'^*).$$ 
En particulier, $O_{N'}$ est l'unique orbite ouverte et 
$$O_0 = \IG(N,V'^*),$$ 
qui est la grassmannienne isotrope introduite dans la section \ref{etudequotientSp2n}, est l'unique orbite fermée de $\Gr(N,V'^*)$. 


Soient: 
\begin{itemize} \renewcommand{\labelitemi}{$\bullet$}
\item $L_0 \in O_0$ et $P:=\Stab_{G'}(L_0)$ le sous-groupe parabolique de $G'$ qui préserve $L_0$,
\item $W':=\Hom(V'/L_0^{\perp},V)$ et $\nu':\ W' \rightarrow W'/\!/G$ le morphisme de passage au quotient,
\item $h_{W'}$ la fonction de Hilbert de la fibre générique de $\nu'$,
\item $\text{$G$-}\HH':=\Hilb_{h_{W'}}^{G}(W')$ et $\text{$G$-}\HH'^{\mathrm{prin}}$ sa composante principale.
\end{itemize}
On remarque que $h_s=h_{W'}$. En procédant comme pour la proposition \ref{reduction3}, on montre la

\begin{proposition}  \label{isoOnSonsymp}
On a isomorphisme de $G'$-variétés $\HHmgp \cong G' \times^P (\text{$G$-}\HH'^{\mathrm{prin}})$.
\end{proposition}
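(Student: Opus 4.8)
Le plan est de démontrer la proposition \ref{isoOnSonsymp} en reproduisant fidèlement la stratégie développée pour le cas $GL(V)$ dans la section \ref{MropRRED}, en particulier celle de la proposition \ref{reduction3}. Le point de départ est le morphisme $G'$-équivariant $\rho_s:\ \HHmg \rightarrow \Gr(N,V'^*)$ construit en (\ref{yeah}). L'idée centrale est que, bien que la grassmannienne $\Gr(N,V'^*)$ ne soit pas $G'$-homogène (elle se décompose en $N'+1$ orbites $O_0, \ldots, O_{N'}$), le morphisme $\rho_s$ envoie la composante principale $\HHmgp$ dans l'unique orbite fermée $O_0 = \IG(N,V'^*)$. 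Ceci nous ramène à l'étude de la fibre de $\rho_s$ au-dessus d'un point $L_0 \in O_0$, laquelle s'identifie à un schéma de Hilbert invariant plus simple associé à $W' = \Hom(V'/L_0^{\perp},V)$.

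Voici les étapes que je mènerais dans l'ordre. D'abord, il faut établir un lemme analogue au lemme \ref{versX0}: le morphisme $\rho_s$ envoie $\HHmgp$ dans $O_0$. La preuve reprend l'argument du lemme \ref{morphisme_dans_YSLn}, qui repose sur la $G'$-équivariance de $\rho_s$ et sur le fait que la fibre générique de $\nu$ correspond (via la proposition \ref{chow}) à un unique point $z_0$ de $\HHmg$ au-dessus de l'orbite ouverte $U_N$ de $\mu^{-1}(0)/\!/G$. Le stabilisateur dans $G'$ d'un point de l'orbite ouverte de $\mu^{-1}(0)/\!/G$ doit stabiliser $\rho_s(z_0)$, ce qui force par un argument de comptage de dimensions $\rho_s(z_0)$ à être un sous-espace isotrope, donc un point de $O_0$. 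Ensuite, le morphisme $\rho_s$ étant $G'$-équivariant, il munit $\HHmgp$ d'une structure de fibration $G'$-homogène au-dessus de $O_0 \cong G'/P$; par l'isomorphisme (\ref{iissoo}), on obtient $\HHmgp \cong G' \times^P F_s$, où $F_s$ est la fibre schématique de $\rho_s$ en $L_0$. Il reste alors à identifier $F_s$ comme $P$-schéma. Pour cela, on détermine d'abord $F_s'$, la fibre de $\rho_s:\ \HHmg \rightarrow \Gr(N,V'^*)$ en $L_0$, en reprenant mot pour mot l'argument fonctoriel du lemme \ref{fibrehil} (et de son analogue symplectique, le lemme \ref{fibrehil2s}): la condition $\rho_s(\ZZ_s)=L_0$ équivaut à $\ZZ_s \subset W'$, ce qui montre $F_s' \cong \text{$G$-}\HH'$ comme $P$-schéma.

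Le dernier pas est un argument dimensionnel. Comme $\HHmgp$ est irréductible de dimension $\dim(\mu^{-1}(0)/\!/G) = N(2d+1-N)$ et que $O_0 = \IG(N,V'^*)$ a pour dimension $N(n'-N)-\frac{1}{2}N(N-1)$ avec $n'=2d$, on calcule que $F_s$ est une variété de dimension égale à $\dim(\text{$G$-}\HH'^{\mathrm{prin}})$. Puisque $F_s$ s'identifie, via le lemme de type \ref{fibrehil2s}, à une sous-variété de $\text{$G$-}\HH'^{\mathrm{prin}}$ de même dimension, on conclut $F_s \cong \text{$G$-}\HH'^{\mathrm{prin}}$ comme $P$-variété, d'où $\HHmgp \cong G' \times^P (\text{$G$-}\HH'^{\mathrm{prin}})$.

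L'obstacle principal me semble être la vérification précise que $\rho_s$ envoie bien $\HHmgp$ dans \emph{l'orbite fermée} $O_0$ et non dans une orbite $O_i$ de dimension supérieure. Dans le cas $GL(V)$ de la section \ref{MropRRED}, la cible était $\Gr(N,E) \times \Gr(d-N,E)$ et l'orbite visée était la variété de drapeaux $\FF_{N,d-N}$; ici la forme symplectique $\Omega'$ sur $V'^*$ joue un rôle nouveau, car c'est son annulation sur le sous-espace image qui caractérise $O_0 = \IG(N,V'^*)$. Il faudra donc contrôler, via la description explicite de la fibre générique (lemme précédant la proposition \ref{fctH2Osymp}) et du stabilisateur $H$, que l'espace $\rho_s(z_0)$ est totalement isotrope pour $\Omega'$ — c'est précisément ce qui reflète au niveau de $V'^*$ la condition $wJ'\leftexp{t}{w}=0$ définissant $\mu^{-1}(0)$. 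Le reste de la démonstration est essentiellement une transcription des arguments déjà établis pour $GL(V)$, la $G'$-équivariance garantissant à chaque étape la compatibilité des opérations de $P$.
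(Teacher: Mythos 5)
Votre démonstration est correcte et suit exactement la stratégie que le mémoire invoque (le texte se contente de renvoyer à la preuve de la proposition \ref{reduction3}, dont vous explicitez fidèlement l'adaptation au cas orthogonal : image de $\HHmgp$ dans l'orbite fermée $\IG(N,V'^*)$, identification fonctorielle de la fibre à $\text{$G$-}\HH'$, puis comptage de dimensions donnant $F_s \cong \text{$G$-}\HH'^{\mathrm{prin}}$). Le calcul de dimensions est juste ($\dim F_s = \frac{1}{2}N(N+1) = \dim(\text{$G$-}\HH'^{\mathrm{prin}})$) et votre remarque sur le rôle de l'isotropie pour $\Omega'$, reflet de la condition $wJ'\leftexp{t}{w}=0$, est précisément le point qui distingue ce cas du cas $GL(V)$.
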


\noindent On déduit de la proposition \ref{isoOnSonsymp} et des résultats de la section \ref{HclassOngeneral} le

\begin{corollaire}  \label{symplisssseOn}
$\bullet$ Si $d \leq \frac{n+1}{2}$, alors 
$$\HHmgp \cong S^2(T),$$ 
où $T$ est le fibré tautologique de $\IG(d,V'^*)$.\\
$\bullet$ Si $d \geq n=2$, alors 
$$\HHmgp \cong Bl_0(S^2(T)),$$ 
où $T$ est le fibré tautologique de $\IG(2,V'^*)$ et $Bl_0(S^2(T))$ désigne l'éclatement de la section nulle du fibré $S^2(T)$.
\end{corollaire}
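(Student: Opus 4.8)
The plan is to combine the reduction of Proposition~\ref{isoOnSonsymp} with the explicit description of the invariant Hilbert scheme $\text{$G$-}\HH'=\Hilb_{h_{W'}}^{G}(W')$ obtained in Section~\ref{HclassOngeneral}, the fibre $W'=\Hom(V'/L_0^{\perp},V)$ being a far simpler representation than $\mu^{-1}(0)$. Recall that $O_0=\IG(N,V'^*)$ with $N=\min(d,n)$, and that $P=\Stab_{G'}(L_0)$, so that $O_0 \cong G'/P$. The two cases of the corollary correspond to the two regimes $N=d$ and $N=n=2$, and in each of them the reduction brings us to a fibre Hilbert scheme that has already been computed.

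First I would treat the case $d \le \frac{n+1}{2}$. Here $N=\min(d,n)=d$, so $O_0=\IG(d,V'^*)$ and $\dim(V'/L_0^{\perp})=d$. The inequality $d \le \frac{n+1}{2}$ rewrites as $n \ge 2d-1$, which is exactly the flatness hypothesis of Corollary~\ref{cas_facileOn} applied to $W'$ (with $d$ playing the role of $n'$). Hence $\text{$G$-}\HH' \cong S^2((V'/L_0^{\perp})^*)$ and $\gamma'$ is an isomorphism; in particular $\text{$G$-}\HH'^{\mathrm{prin}}=\text{$G$-}\HH'$. Plugging this into Proposition~\ref{isoOnSonsymp} yields $\HHmgp \cong G' \times^P S^2((V'/L_0^{\perp})^*)$. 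For the case $d \ge n=2$ one has $N=2$, $O_0=\IG(2,V'^*)$ and $\dim(V'/L_0^{\perp})=2=\dim(V)$; Theorem~\ref{casn2O2} (its case $n'=2$) gives $\text{$G$-}\HH' \cong Y_0=Bl_0(S^2((V'/L_0^{\perp})^*))$, a smooth and hence irreducible variety, so again $\text{$G$-}\HH'^{\mathrm{prin}}=\text{$G$-}\HH'$, and Proposition~\ref{isoOnSonsymp} gives $\HHmgp \cong G' \times^P Bl_0(S^2((V'/L_0^{\perp})^*))$.

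It then remains to rewrite these associated bundles in terms of the tautological bundle $T$ of $\IG(N,V'^*)$. The key identification is $(V'/L_0^{\perp})^* \cong L_0$: indeed $L_0^{\perp}$ is the dual-pairing annihilator of $L_0 \subset V'^*$ inside $V'$, so by biduality $(V'/L_0^{\perp})^*$ is the annihilator of $L_0^{\perp}$ in $V'^*$, namely $L_0$ itself, and this isomorphism is $P$-equivariant. Since the fibre of $T$ over $L$ is $L$, we obtain $G' \times^P S^2((V'/L_0^{\perp})^*) \cong G' \times^P S^2(L_0) \cong S^2(T)$, which settles the first case. In the second case the same identification, together with the fact that the associated-bundle functor $G' \times^P(-)$ commutes with blowing up the $P$-stable zero section of the fibre, turns $G' \times^P Bl_0(S^2(L_0))$ into the blow-up of the zero section of $S^2(T)$, settling the second case.

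The argument is essentially formal once Proposition~\ref{isoOnSonsymp} is available, so the bulk of the work lies upstream in that reduction: one must check, as in Lemma~\ref{versX0} and Proposition~\ref{reduction3} for the $GL$ case, that $\rho_s$ sends $\HHmgp$ into the closed orbit $O_0$ and that the schematic fibre of $\rho_s$ over $L_0$ is exactly $\text{$G$-}\HH'^{\mathrm{prin}}$, which in turn rests on the equality $h_{W'}=h_s$ of Hilbert functions. Within the present corollary the only genuinely delicate point is the duality bookkeeping of the identification step: one must keep the two orthogonality operations apart, the symplectic one defining the isotropic grassmannian $\IG(N,V'^*)$ and the dual-pairing annihilator defining $L_0^{\perp}$, so as to be sure the homogeneous bundle really is $S^2(T)$ and not some twist of it.
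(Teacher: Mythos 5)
Your proof is correct and follows essentially the same route as the paper: the paper's own proof consists precisely of invoking Corollaire \ref{cas_facileOn} for the case $d \leq \frac{n+1}{2}$ and Th�or�me \ref{casn2O2} for the case $d \geq n=2$, the reduction via Proposition \ref{isoOnSonsymp} and the $P$-equivariant identification $(V'/L_0^{\perp})^* \cong L_0$ being left implicit. You have simply spelled out those implicit steps, and your bookkeeping (in particular the translation of $d \leq \frac{n+1}{2}$ into the flatness bound $n \geq 2d-1$ and the passage from $G' \times^P S^2(L_0)$ to $S^2(T)$) is accurate.
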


\begin{proof}
Le cas $d \leq \frac{n+1}{2}$ découle du corollaire \ref{cas_facileOn}. \\
Le cas $d \geq n=2$ découle du théorème \ref{casn2O2}.
\end{proof}

\begin{remarque}
Si $n=1$, alors on a $\HHmgp \cong S^2(T)$, où $T$ est le fibré tautologique de $\IG(1,V'^*)=\PP(V'^*)$.
\end{remarque}

\begin{remarque}  \label{rkobs}
Dans la section \ref{reductibilité_cas_symp}, nous avons montré l'existence d'une deuxième composante irréductible pour $\HHm$ en nous ramenant à montrer qu'un certain schéma de Hilbert invariant $\Hilb_{h_s}^{G}(\mu'^{-1}(0))$ était toujours non-vide. On ne peut malheureusement pas obtenir de résultat analogue ici. La raison est que, dans notre situation, le schéma $\Hilb_{h_s}^{G}(\mu'^{-1}(0))$ peut être vide. Par exemple si $G=O_2(k)$, nous allons voir dans la section \ref{MMO} que $\HHmg$ est une variété.    
\end{remarque}

\begin{remarque}
Si $d<\frac{n}{2}$, alors comme nous l'avons suggéré dans le remarque \ref{rkOninutile}, on peut construire un morphisme $G'$-équivariant $\HHmgz \rightarrow \Gr(d,V'^*)$ et obtenir un résultat de réduction pour $SO(V)$ analogue à celui obtenu pour $O(V)$ dans la proposition \ref{isoOnSonsymp}. Néanmoins, lorsque $d<\frac{n}{2}$, on a 
$$\text{$G$-}\HH'^{\mathrm{prin}} \cong W'/\!/O(V) \cong W'/\!/SO(V) \cong \text{$G^0$-}\HH'^{\mathrm{prin}}$$
et on n'obtient donc rien de nouveau dans ce cas.  
\end{remarque}

\subsection{Etude du cas \texorpdfstring{$\dim(V)=2$}{O2(k)}}  \label{MMO}

Dans cette section, on fixe $n=2$ et $d \geq 2$. Alors $\mu^{-1}(0)/\!/G=\overline{\OO_{[2^2,1^{2d-4}]}}$ et $\rho_s:\ \HHmg \rightarrow \Gr(2,V'^*)$ est le morphisme (\ref{yeah}). On a vu que $\Gr(2,V'^*)$ se décompose en deux orbites $O_0 \cup O_1$ pour l'opération de $G'$.

\begin{lemme}
Le morphisme $\rho_s$ envoie $\HHmg$ dans $O_0$.
\end{lemme} 

\begin{proof}
Soit $L_1\in O_1$ un point de l'orbite ouverte, on va montrer que la fibre schématique de $\rho_s$ en $L_1$ est vide.
On note $V'':=V'/L_1^{\perp}$ et $W'':=\Hom(V'',V)$, alors $W'' \cong V \oplus V^*$ comme $G$-module. On munit $W''$ de la forme symplectique $\Omega$ définie par (\ref{defsymp}) et on considère $\mu':\ W'' \rightarrow \gg$ l'application moment pour l'opération de $G$ dans $(W'',\Omega)$ comme nous l'avons fait pour $(W,\Omega)$ dans la section \ref{appzmomentOn}. En procédant comme pour le lemme \ref{fibrehil}, on montre que la fibre schématique de $\rho_s$ en $L_1$ est isomorphe au schéma de Hilbert invariant $\Hilb_{h_s}^{G}(\mu'^{-1}(0))$.\\
Il s'agit donc de montrer que ce schéma de Hilbert invariant est vide. Dans le cas contraire, il existe un idéal homogène $I_0$ de $k[\mu'^{-1}(0)]$ tel que $k[\mu'^{-1}(0)]/I_0 \cong \bigoplus_{M \in \Irr(G)}  M^{\oplus \dim(M)}$ comme $G$-module. On note $I$ l'image réciproque de $I_0$ par le morphisme de passage au quotient $k[W''] \rightarrow k[\mu'^{-1}(0)]$. On a
$$k[W'']_2 \cong (S^2(V'') \otimes (V_0 \oplus (\Gamma_{2 \epsilon_1} \oplus \Gamma_{-2 \epsilon_1}))) \oplus (\Lambda^2(V'') \otimes \epsilon)$$
comme $GL(V'') \times G$-module. L'idéal $I$ contient nécessairement $(S^2(V'') \otimes V_0) \oplus (\Lambda^2(V'') \otimes \epsilon)$ puisque $k[W'']/I$ contient déjà une copie de $V_0$ (l'image des constantes dans le quotient) et  $\Lambda^2(V'') \cong V_0$ comme $Sp(V'')$-module. Donc $k[W'']_2/(I \cap k[W'']_2)$ est un quotient du $G' \times G$-module $S^2(V'') \otimes (\Gamma_{2 \epsilon_1} \oplus \Gamma_{-2 \epsilon_1})$. Mais alors, un raisonnement analogue à celui effectué dans la preuve du lemme \ref{pfixeI1} permet de montrer qu'un tel quotient ne contient aucune copie de la représentation signe $\epsilon$. Le résultat s'ensuit puisque le $G$-module $k[\mu'^{-1}(0)]/I_0 \cong k[W'']/I$ est supposé contenir chaque représentation irréductible de $G$ avec une multiplicité égale à sa dimension.     
\end{proof}

Donc, avec les notations de la section \ref{MropRREDOn}, on a un morphisme $G'$-équivariant $\HHmg \rightarrow G'/P$, et donc d'après (\ref{iissoo}) on a un isomorphisme $G'$-équivariant
$$\HHmg \cong G' \times^P F$$
où $F$ est la fibre schématique de $\rho_s$ en $L_0$. Ainsi, pour déterminer $\HHmg$, il suffit de déterminer $F$. En procédant comme pour le lemme \ref{fibrehil}, on montre le  

\begin{lemme}
La fibre $F$ est isomorphe au schéma de Hilbert invariant $\text{$G$-}\HH'$ et l'opération de $P$ dans $F$ coïncide, via cet isomorphisme, avec l'opération de $P$ dans $\text{$G$-}\HH'$ induite par l'opération de $P$ dans $W'$.
\end{lemme}

\noindent D'où la 

\begin{proposition}
On a un isomorphisme $G'$-équivariant
$\HHmg \cong G' \times^P (\text{$G$-}\HH')$. 
\end{proposition}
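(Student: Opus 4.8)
Le plan est de reprendre la strat\'egie du principe de r\'eduction d\'evelopp\'ee dans la section \ref{zectionred}, dont cette proposition est une variante adapt\'ee \`a la fibre de l'application moment. L'ingr\'edient central est l'isomorphisme (\ref{iissoo}): d\`es qu'un $G'$-sch\'ema est muni d'un morphisme $G'$-\'equivariant vers un espace homog\`ene $G'/P$, il s'identifie canoniquement \`a $G' \times^P F$, o\`u $F$ est la fibre sch\'ematique en $eP$. Il suffit donc de placer $\HHmg$ dans cette situation, puis de reconna\^itre la fibre $F$.

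D'abord, je pars du morphisme $G'$-\'equivariant $\rho_s:\ \HHmg \rightarrow \Gr(N,V'^*)$ construit en (\ref{yeah}), avec ici $N=2$ puisque $d \geq n=2$. La grassmannienne $\Gr(2,V'^*)$ n'est pas $G'$-homog\`ene, ce qui emp\^echait d'appliquer directement (\ref{iissoo}) dans la proposition \ref{isoOnSonsymp} (o\`u l'on devait se restreindre \`a la composante principale). Le point d\'ecisif, propre au cas $n=2$, est que le premier lemme pr\'ec\'edent montre que $\rho_s$ envoie $\HHmg$ tout entier dans l'orbite ferm\'ee $O_0=\IG(2,V'^*)$: la fibre au-dessus de l'orbite ouverte $O_1$ est vide, gr\^ace \`a l'annulation d'une copie de la repr\'esentation signe dans un quotient de $k[W'']_2$, par un argument de th\'eorie classique des invariants analogue \`a celui du lemme \ref{pfixeI1}. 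Comme $O_0$ est $G'$-homog\`ene et s'identifie \`a $G'/P$ avec $P=\Stab_{G'}(L_0)$, on obtient un morphisme $G'$-\'equivariant $\HHmg \rightarrow G'/P$, et (\ref{iissoo}) fournit l'isomorphisme $G'$-\'equivariant $\HHmg \cong G' \times^P F$, o\`u $F$ est la fibre sch\'ematique de $\rho_s$ en $L_0$.

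Ensuite, il reste \`a identifier $F$ comme $P$-sch\'ema. C'est exactement le contenu du second lemme pr\'ec\'edent, dont la d\'emonstration calque celle du lemme \ref{fibrehil}: $F \cong \text{$G$-}\HH'$, l'op\'eration de $P$ sur $F$ co\"incidant avec celle de $P$ sur $\text{$G$-}\HH'$ induite par l'op\'eration de $P$ dans $W'$. En substituant cette identification $P$-\'equivariante dans l'isomorphisme pr\'ec\'edent, on conclut $\HHmg \cong G' \times^P (\text{$G$-}\HH')$, l'isomorphisme obtenu \'etant $G'$-\'equivariant par construction du quotient contract\'e.

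La v\'eritable difficult\'e ayant \'et\'e absorb\'ee par les deux lemmes pr\'eparatoires, la preuve de la proposition se r\'eduit \`a leur assemblage via (\ref{iissoo}); je m'attends donc \`a ce que l'\'etape la plus d\'elicate soit simplement d'assurer la compatibilit\'e des op\'erations de $P$, c'est-\`a-dire de v\'erifier que l'isomorphisme $F \cong \text{$G$-}\HH'$ est bien $P$-\'equivariant, afin que $G' \times^P F$ et $G' \times^P (\text{$G$-}\HH')$ co\"incident en tant que $G'$-sch\'emas. Il conviendra aussi de rappeler que l'\'egalit\'e des fonctions de Hilbert $h_s=h_{W'}$, not\'ee dans la section \ref{MropRREDOn}, garantit que $\text{$G$-}\HH'$ est bien le sch\'ema de Hilbert invariant pertinent \'etudi\'e dans la section \ref{HclassOngeneral}.
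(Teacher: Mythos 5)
Votre démonstration suit exactement la même démarche que celle du texte : le lemme précédent montre que $\rho_s$ envoie $\HHmg$ tout entier dans l'orbite fermée $O_0 \cong G'/P$, l'isomorphisme (\ref{iissoo}) donne alors $\HHmg \cong G' \times^P F$, et le second lemme identifie la fibre $F$ à $\text{$G$-}\HH'$ de façon $P$-équivariante. L'assemblage est correct et complet.
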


Enfin, on a vu dans la section \ref{casO2} que $\text{$G$-}\HH'$ est une variété, donc $\HHmg$ est une variété. En particulier, on a $\HHmg=\HHmgp$ et donc, d'après le corollaire \ref{symplisssseOn}, on a $\HHmg \cong Bl_0(S^2(T))$.

\section{Cas de \texorpdfstring{$Sp(V)$}{Sp(V)} opérant dans \texorpdfstring{$\mu^{-1}(0)$}{}}  \label{sektionsympSpn}

On se place à nouveau dans la situation $5$ et on considère le cas particulier $W=\Hom(E,V) \times \Hom(E,V)^*$ comme dans la section \ref{sektionsympOn}. Soient $O(V')$ le sous-groupe de $GL(V')$ qui préserve une forme quadratique non-dégénérée $q'$ sur $V'$ et $G':=SO(V')$ sa composante neutre. L'opération de $GL(V') \times G$ dans $W$ induit une opération de $G' \times G$ dans $W$. On note 
$$\gg':=\Lambda^2(V')$$
l'algèbre de Lie de $G'$ et
$${\gg'}^{\leq n}:=\Lambda^2(V')^{\leq n}.$$
Le quotient $W/\!/G={\gg'}^{\leq n}$ a été étudié dans la section \ref{etudequotientSp2n}.

Le but de cette section est d'étudier le schéma de Hilbert invariant 
$\HHm:=\Hilb_{h_{s}}^{G}(\mu^{-1}(0)).$ 
Nous allons procéder comme dans les sections \ref{posisimpy} et \ref{sektionsympOn}. Lorsque $d>n$ ou ($d \leq n$ et $d$ est impair), nous verrons que $\mu^{-1}(0)/\!/G$ est irréductible et donc $h_s$ est bien définie. En revanche, lorsque $d \leq n$ et $d$ est pair, nous verrons que $\mu^{-1}(0)/\!/G$ est la réunion de deux composantes irréductibles. Cette situation ne s'était jamais produite jusqu'à présent et on ne peut alors plus parler de la fibre générique du morphisme de passage au quotient. Pour surmonter cette difficulté, on va considérer indépendamment les deux composantes irréductibles de $\mu^{-1}(0)/\!/G$ et associer à chacune d'elles un schéma de Hilbert invariant. Les principaux résultats de cette section sont la proposition \ref{reduction37} et son corollaire \ref{resuss}. Le cas particulier $n=2$ et $d=3$ a été traité par Becker dans \cite{Tanja2}.

\subsection{L'application moment} \label{appzmomentSpn}
On considère $\Omega$ la forme symplectique sur $W$ définie dans la section \ref{appzmoment}. Le groupe $G' \times G$ opère symplectiquement dans $(W,\Omega)$ et l'application moment pour l'opération de $G$ dans $(W,\Omega)$ est donnée par \cite[Proposition 3.1]{Tanja}:
$$\begin{array}{ccccc}
\mu & : & W & \to & \gg \\
& & w & \mapsto & \frac{1}{2}w (\leftexp{t}{w}) J \\
\end{array}$$
et donc
$$\mu^{-1}(0)= \{ w \in W \ |\ w (\leftexp{t}{w})=0\}.$$

\begin{remarque}
On vérifie que le plus grand sous-groupe de $GL(V')$ qui stabilise $\mu^{-1}(0)$ dans $W$ est $O(V')$. Cependant, pour des raisons d'ordre pratique (étude des orbites nilpotentes de $\gg'$), on préfère considérer l'opération de $SO(V')$.
\end{remarque}

Le morphisme de passage au quotient $W \rightarrow W/\!/O(V')$ est donné par $w \mapsto w (\leftexp{t}{w})$. On en déduit que les schémas $\mu^{-1}(0)$ et $\NNN(W,O(V'))$ sont isomorphes. En particulier, le schéma $\mu^{-1}(0)$ est réduit si et seulement si $d \geq n$ et il est irréductible et normal lorsque $d>n$ (\cite[Theorem 9.1]{KS}). La preuve de la proposition qui suit est analogue à celle de la proposition \ref{nilcôneOn}. 

\begin{proposition} \label{dimmudezero33}
Le schéma $\mu^{-1}(0)$ est 
\begin{itemize} 
\item une variété de dimension $2dn-\frac{1}{2}n(n+1)$ si $d>n$,
\item la réunion de deux fermés irréductibles de dimension $dn+\frac{1}{2}d(d-1)$ sinon.
\end{itemize}
\end{proposition}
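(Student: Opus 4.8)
Le schéma $\mu^{-1}(0)$ est une variété de dimension $2dn-\frac{1}{2}n(n+1)$ si $d>n$, la réunion de deux fermés irréductibles de dimension $dn+\frac{1}{2}d(d-1)$ sinon.

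Je dois d'abord me remémorer la situation : $W = \text{Hom}(E,V) \times \text{Hom}(E,V)^*$, $G = Sp(V)$ avec $\dim(V) = n$ (pair), et $\dim(E) = d$. L'énoncé précise que $\mu^{-1}(0) = \{w \in W \mid w\,(\leftexp{t}{w}) = 0\}$ et que $\mu^{-1}(0) \cong \NNN(W, O(V'))$, le nilcône pour l'action de $O(V')$ sur $W$, où $V' = E \oplus E^*$ est de dimension $n'=2d$.

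Le plan serait de suivre exactement la stratégie de la preuve de la proposition \ref{nilc�neOn} (cas de $O(V)$ opérant dans $V^{\oplus n'}$), puisque c'est la référence explicitement invoquée. Concrètement, $\mu^{-1}(0)$ est le lieu des $w$ dont l'image est un sous-espace totalement isotrope de $V'$ pour la forme quadratique $q'$ (ici c'est l'image de $\leftexp{t}{w}$ dans $V'$, ou de façon équivalente la condition $w\,\leftexp{t}{w}=0$). On pose donc $\alpha := \min(n, E(\tfrac{n'}{2})) = \min(n,d)$, et l'on introduit la variété d'incidence $\ZZ := \{(w,L) \in W \times \OG(\alpha, V') \mid \Im \subset L\}$ avec ses deux projections $p_1$ et $p_2$. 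La seconde projection $p_2 : \ZZ \rightarrow \OG(\alpha, V')$ munit $\ZZ$ d'une structure de fibré vectoriel homogène de fibre $\text{Hom}(V, L_0)$ (en transposant pour s'adapter à la présentation de cette section), ce qui donne la dimension de $\ZZ$ par la formule $\dim(\OG(\alpha,V')) + \alpha n$. L'ouvert dense $\ZZ' = \{(w,L) \mid \rg = \alpha\}$ s'envoie isomorphiquement par $p_1$ sur l'ouvert dense correspondant de $\mu^{-1}(0)$, d'où l'égalité des dimensions et le transfert des composantes irréductibles.

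Le point de bifurcation, et sans doute l'obstacle principal, est la structure de $\OG(\alpha, V')$ et la gestion du cas non réduit. Rappelons que $V'$ est de dimension paire $n'=2d$ : lorsque $\alpha = d = \tfrac{n'}{2}$ (ce qui se produit exactement quand $d \leq n$), la grassmannienne orthogonale $\OG(d, V')$ des sous-espaces totalement isotropes maximaux est la réunion de \emph{deux} composantes irréductibles isomorphes, échangées par tout élément de $O(V') \setminus SO(V')$ — c'est précisément ce qui est rappelé juste avant la proposition \ref{nilc�neOn}. C'est ce phénomène qui produit les deux composantes irréductibles de $\mu^{-1}(0)$ annoncées dans le cas $d \leq n$. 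En revanche, quand $d > n$, on a $\alpha = n < d = \tfrac{n'}{2}$, donc $\OG(n, V')$ est irréductible et $\mu^{-1}(0)$ l'est aussi. Il faudra donc traiter séparément : (i) si $d > n$, $\alpha = n$, $\OG(n,V')$ irréductible, et $\dim = \alpha(n'+n-\alpha) - \tfrac{1}{2}\alpha(\alpha+1)$ évaluée en $\alpha = n$, $n'=2d$, donne $n(2d) - \tfrac{1}{2}n(n+1) = 2dn - \tfrac12 n(n+1)$ ; (ii) si $d \leq n$, $\alpha = d$, $\OG(d,V')$ a deux composantes, et la dimension vaut $d(2d+n-d) - \tfrac12 d(d+1) = dn + d^2 - \tfrac12 d(d+1) = dn + \tfrac12 d(d-1)$. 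La cohérence avec l'affirmation sur le caractère réduit (tiré de \cite[Theorem 9.1]{KS}, réduit ssi $d \geq n$) devra être vérifiée, en prenant garde que dans le cas frontière $d = n$ la variété reste réunion de deux composantes tout en étant réduite ; je m'appuierai sur les formules de dimension de $\OG(m,V')$ rappelées dans la section \ref{description_quotientOn} pour conclure les calculs numériques.
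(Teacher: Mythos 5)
Your proposal is correct and follows exactly the route the paper intends: the paper's proof is literally the one-line remark that the argument is analogous to Proposition \ref{nilc�neOn}, and you have spelled out that analogy correctly — the isotropy condition on $\Im(\leftexp{t}{w})\subset V'$, the incidence variety over $\OG(\alpha,V'^{})$ with $\alpha=\min(n,d)$, the dimension count $\dim(\OG(\alpha,V'))+\alpha n$, and the fact that the two components arise precisely when $\alpha=d=\tfrac{n'}{2}$, i.e.\ $d\leq n$. Both dimension evaluations check out, so nothing further is needed.
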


\subsection{Etude du morphisme de passage au quotient}  \label{morppquotientSpn}

On note 
$$\nu:\ \mu^{-1}(0) \rightarrow \mu^{-1}(0)/\!/G$$ 
le morphisme de passage au quotient et 
$$N:=\min(d,n).$$ 
Dans cette section, nous allons décrire géométriquement le quotient $\mu^{-1}(0)/\!/G$ et voir que, contrairement aux situations étudiées jusqu'à présent, il n'est pas toujours irréductible.

On a une correspondance entre certaines partitions de $n'=2d$ et les orbites nilpotentes de $\gg'$ (voir \cite[§5.1]{CoMc}). Soit $(d_1 \geq \ldots \geq d_k)$ une partition de $n'$ telle que au moins l'un des $d_i$ est impair et les $d_i$ pairs sont présents avec une multiplicité paire. A une telle partition, on associe une unique orbite nilpotente de $\gg'$ que l'on note $\OO_{[d_1,\ldots,d_k]}$. 
On peut aussi considérer une partition "très paire" $(d_1 \geq \ldots \geq d_k)$ de $n'$, c'est-à-dire une partition telle que tous les $d_i$ sont pairs et présents avec multiplicité paire. A une telle partition, il correspond cette fois deux orbites nilpotentes distinctes que l'on note $\OO_{[d_1, \ldots, d_k]}^{I}$ et $\OO_{[d_1, \ldots, d_k]}^{I\!I}$. Ces deux orbites sont échangées par l'opération de n'importe quel élément de $O(V') \backslash SO(V')$. Le résultat qui suit est démontré dans \cite[Proposition 3.6]{Tanja}:

\begin{proposition}
On a l'égalité ensembliste
$$
\mu^{-1}(0)/\!/G = \left\{
    \begin{array}{ll}
        \overline{\OO_{[2^n,1^{2(d-n)}]}} & \text{ si } d>n,\\
        \overline{\OO_{[2^{d-1},1^2]}} & \text{ si } d < n \text{ et } d \text{ est impair},\\
        \overline{\OO_{[2^d]}^{I}} \cup \overline{\OO_{[2^d]}^{I\!I}} & \text{ si } d \leq n \text{ et } d \text{ est pair}.
    \end{array}
\right.
$$
\end{proposition}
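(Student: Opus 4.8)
Le résultat à établir décrit $\mu^{-1}(0)/\!/G$ comme adhérence(s) d'orbite(s) nilpotente(s) dans $\gg'=\Lambda^2(V')$, selon la parité de $d$ et sa position relative à $n$. L'approche que je suivrais s'inspire directement des propositions \ref{descQuotient} et \ref{descQuotientOn}, déjà établies dans les cas $GL(V)$ et $O(V)$. La première étape consiste à identifier $\mu^{-1}(0)/\!/G$ comme un sous-ensemble explicite de $\gg'^{\leq n}$. Comme $\mu:\ w \mapsto \frac{1}{2}w(\leftexp{t}{w})J$ coïncide (à un scalaire près) avec le morphisme de passage au quotient $W \rightarrow W/\!/O(V')$, je disposerais de l'identification $W/\!/G \cong \gg'^{\leq n}$ via l'isomorphisme $G'$-équivariant $\gg' \cong \Lambda^2(V'^*)$, $M \mapsto J'M$ (analogue à celui de la section \ref{sektionsympOn}). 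Concrètement, je montrerais d'abord que si $f \in \mu^{-1}(0)/\!/G$ correspond à un élément $M \in \gg'^{\leq n}$, alors $M$ est nilpotent d'ordre $2$ (c'est-à-dire $M^2=0$) : en effet, pour $w \in \mu^{-1}(0)$, la relation $w(\leftexp{t}{w})=0$ force la composée correspondante à s'annuler, exactement comme dans la preuve de la proposition \ref{descQuotient} où l'on utilisait $f \circ f = u_2 \circ (u_1 \circ u_2) \circ u_1 = 0$.

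La deuxième étape est de déterminer quelles orbites nilpotentes de $\gg'$ sont de ce type $M^2=0$, et lesquelles sont effectivement atteintes par $\nu$. Les orbites vérifiant $M^2=0$ sont précisément celles associées aux partitions $[2^l,1^{2d-2l}]$ de $n'=2d$, pour $0 \leq l \leq N=\min(d,n)$ ; la borne $l \leq n$ vient de la contrainte de rang $\rg(M) \leq n$, et la borne $l \leq d$ de la taille de l'espace isotrope maximal dans $V'$ (puisque $\Im(\leftexp{t}{w})$ est isotrope pour $\Omega'$, de dimension au plus $d$). Pour la réciproque — montrer que chaque telle orbite est bien dans l'image — je construirais explicitement, pour chaque $l$, des morphismes $w_1^l$, $w_2^l$ (comme dans la notation \ref{BipBip} et la preuve de la proposition \ref{descQuotientOn}) réalisant l'élément de $\OO_{[2^l,\ldots]}$, et je vérifierais directement les relations $w(\leftexp{t}{w})=0$ et $\frac{1}{2}w(\leftexp{t}{w})J = $ l'endomorphisme nilpotent voulu.

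Le point le plus délicat, et l'obstacle principal, sera le cas $d \leq n$ avec $d$ pair. Ici intervient le phénomène des partitions \emph{très paires} $[2^d]$ de $2d$, auxquelles correspondent \emph{deux} orbites nilpotentes distinctes $\OO_{[2^d]}^{I}$ et $\OO_{[2^d]}^{I\!I}$, échangées par $O(V') \backslash SO(V')$ (cf. le rappel de la section \ref{threportho} sur les représentations de $O_n$ et $SO_n$, et la combinatoire de \cite[\S5.1]{CoMc}). Comme $G'=SO(V')$ ne préserve chacune de ces deux orbites que séparément, leur réunion apparaît dans $\mu^{-1}(0)/\!/G$, ce qui explique la réductibilité annoncée et la dimension doublée de la proposition \ref{dimmudezero33}. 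Il faudra ici distinguer soigneusement l'action de $O(V')$ (qui ne voit qu'une seule orbite) de celle de $SO(V')$, et relier ce dédoublement à la décomposition de $\mu^{-1}(0)$ en deux composantes irréductibles établie dans la proposition \ref{dimmudezero33}. La cohérence dimensionnelle fournira une vérification : la dimension $dn+\frac{1}{2}d(d-1)$ de chaque composante de $\mu^{-1}(0)$ doit correspondre, via la fibration de $\nu$, à la dimension de chaque $\overline{\OO_{[2^d]}}$, que je calculerais à l'aide de \cite[Corollary 6.1.4]{CoMc}. Les cas $d>n$ et ($d<n$, $d$ impair) sont plus simples car la partition $[2^N,1^{2(d-N)}]$ n'est pas très paire (un bloc impair $1$ est présent), donc une unique orbite irréductible apparaît, conformément à l'irréductibilité de $\mu^{-1}(0)$ pour $d>n$.
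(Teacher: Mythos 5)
Un point de contexte d'abord : le mémoire ne démontre pas cette proposition, il renvoie à \cite[Proposition 3.6]{Tanja}. Votre stratégie est donc à comparer aux preuves internes des énoncés analogues (propositions \ref{descQuotient} pour $GL(V)$ et \ref{descQuotientOn} pour $O(V)$), et elle en est bien le décalque correct : identification de $\mu^{-1}(0)/\!/G$ à un sous-ensemble de $\gg'^{\leq n}$ via $Q \mapsto$ endomorphisme associé, vérification que $w\,\leftexp{t}{w}=0$ entraîne $M^2=0$, puis surjectivité par construction de représentants explicites et équivariance sous $O(V')$ pour atteindre les deux orbites du cas très pair.

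Il y a cependant un trou concret dans l'énumération des orbites. Vous écrivez que les orbites nilpotentes de $\gg'=\mathfrak{so}(V')$ vérifiant $M^2=0$ sont les $\OO_{[2^l,1^{2d-2l}]}$ pour \emph{tout} $0 \leq l \leq N$. C'est faux : dans $\mathfrak{so}_{2d}$, les parts paires d'une partition doivent appara\^itre avec multiplicité paire, donc $[2^l,1^{2d-2l}]$ ne définit une orbite que pour $l$ \emph{pair} (de façon équivalente : un endomorphisme antisymétrique est de rang pair). C'est précisément cette contrainte qui produit le découpage de l'énoncé : pour $d<n$ et $d$ impair, le rang maximal atteignable n'est pas $d$ mais $d-1$, d'où $\overline{\OO_{[2^{d-1},1^2]}}$ et non $\overline{\OO_{[2^d]}}$ (qui n'existe pas). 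Sans cette observation, votre argument « le plus grand $l$ admissible est $N=\min(d,n)$ » donne une conclusion erronée dans ce cas. Notez que cette difficulté n'appara\^it pas dans la proposition \ref{descQuotientOn} (où $\gg'=\mathfrak{sp}_{2d}$ et la part $2$ n'est pas contrainte), ce qui explique pourquoi le simple copier-coller de cette preuve ne suffit pas ici. Signalons enfin un lapsus sans gravité : l'isotropie de $\Im(\leftexp{t}{w})$ est relative à la forme quadratique $q'$ sur $V'$ (le groupe $G'$ étant $SO(V')$ dans la situation 5), et non à une forme symplectique $\Omega'$.
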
   

\begin{remarque}
Le schéma $\mu^{-1}(0)/\!/G$ n'est pas toujours réduit. Par exemple si $n=2$ et $d=1$, on vérifie que $\mu^{-1}(0)/\!/G \cong \Spec(k[x]/(x^2))$ qui n'est pas réduit. 
Dorénavant, on considère toujours $\mu^{-1}(0)$ et $\mu^{-1}(0)/\!/G$ munis de leurs structures réduites pour simplifier.  
\end{remarque}

\begin{corollaire}
Les orbites pour l'opération de $G'$ dans le quotient $\mu^{-1}(0)/\!/G$ sont:
\begin{itemize} \renewcommand{\labelitemi}{$\bullet$}
\item $U_i:=\OO_{[2^i,1^{2(d-i)}]}$ pour $i=0,2,4, \ldots,n$,  si $d>n$, 
\item $U_i:=\OO_{[2^i,1^{2(d-i)}]}$ pour $i=0,2,4, \ldots, d-1$, si $d<n$ et $d$ impair,
\item $U_i:=\OO_{[2^i,1^{2(d-i)}]}$ pour $i=0,2,4, \ldots, d-2$ et $U_d^{I}:=\OO_{[2^d]}^{I}$, $U_d^{I\!I}:=\OO_{[2^d]}^{I\!I}$, si $d \leq n$ et $d$ pair.
\end{itemize}
\end{corollaire}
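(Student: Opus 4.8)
The plan is to deduce this enumeration directly from the preceding proposition, which identifies $\mu^{-1}(0)/\!/G$ (with its reduced structure) either as the closure $\overline{\OO_{[2^n,1^{2(d-n)}]}}$, or as $\overline{\OO_{[2^{d-1},1^2]}}$, or as the union $\overline{\OO_{[2^d]}^{I}} \cup \overline{\OO_{[2^d]}^{I\!I}}$. Since this set is $G'$-stable and is a finite union of nilpotent orbit closures, its $G'$-orbits are precisely the nilpotent orbits of $\gg'=\Lambda^2(V')$ that it contains. The enumerating tool is the standard description of the closure order on nilpotent orbits (\cite[\S 6.2]{CoMc}): the Zariski closure of $\OO_\lambda$ is the disjoint union of the orbits $\OO_\mu$ with $\mu \leq \lambda$ in the dominance order on partitions of $n'=2d$ satisfying the orthogonal admissibility condition. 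First I would therefore translate the question into: which admissible partitions $\mu$ are dominated by the top partition(s) produced by the proposition?

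Next I would exploit that in all three cases the maximal partition has largest part equal to $2$. For any $\mu \leq \lambda$ the first partial sums satisfy $\mu_1 \leq \lambda_1 = 2$, so every dominated partition has all parts $\leq 2$, i.e.\ is of the form $[2^i,1^{2d-2i}]$ (equivalently, every element of $\mu^{-1}(0)/\!/G$ is a square-zero endomorphism). I would then impose the orthogonal constraint recalled in \cite[\S 5.1]{CoMc}: a partition of $n'$ parametrizes a nilpotent orbit of $\gg'$ only if each even part occurs with even multiplicity. Here the even part $2$ occurs with multiplicity $i$, which forces $i$ to be even; this is exactly why the index runs through $0,2,4,\ldots$ rather than through all integers. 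Reading off the upper bound for $i$ from each case of the proposition, and using that $n$ is even (we are in situation $5$), gives $i$ up to $n$ when $d>n$, up to the even integer $d-1$ when $d<n$ with $d$ odd, and up to $d$ when $d\leq n$ with $d$ even.

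The genuinely delicate point, which I would treat last, is the splitting in the case $d\leq n$ with $d$ even. The orthogonal admissibility condition already explains the parity dichotomy for the top partition: $[2^d]$ has the even part $2$ with multiplicity $d$, so it is an admissible orthogonal partition exactly when $d$ is even, which is why $[2^{d-1},1^2]$ appears instead when $d$ is odd. When $d$ is even, $[2^d]$ is moreover a \emph{very even} partition (all parts even), and for such partitions the Jordan type corresponds to two distinct $SO(V')$-orbits $\OO^{I}_{[2^d]}$ and $\OO^{I\!I}_{[2^d]}$, interchanged by any element of $O(V')\setminus SO(V')$; this is the only place where a single Jordan type yields two orbits. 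By contrast every smaller partition $[2^i,1^{2d-2i}]$ with $i<d$ contains the odd part $1$, hence is not very even and gives a single orbit $U_i$. Matching the two top orbits $U_d^{I}, U_d^{I\!I}$ with the boundary orbits $U_0,U_2,\ldots,U_{d-2}$ produces exactly the claimed list, the inclusions $\{0\}=\overline{U_0}\subset\cdots$ being a restatement of the dominance order. I expect the bookkeeping of the very-even splitting, rather than any hard geometry, to be the main thing to get right.
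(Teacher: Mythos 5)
Votre argument est correct et correspond exactement à ce que le mémoire laisse implicite : le corollaire y est énoncé sans démonstration comme conséquence directe de la proposition précédente, via l'ordre de dominance sur les partitions orthogonales de $n'=2d$ (parts $\leq 2$, multiplicité paire de la part $2$, dédoublement des partitions très paires). Le seul point délicat, le dédoublement de $[2^d]$ lorsque $d$ est pair et l'unicité des orbites pour les partitions strictement plus petites (qui contiennent la part impaire $1$), est traité correctement dans votre proposition.
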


Les adhérences de ces orbites sont imbriquées de la façon suivante:
\begin{equation*}
\left\{
    \begin{array}{ll}
        \{0\}=\overline{U_0} \subset \overline{U_2} \subset \cdots  \subset \overline{U_n} &\text{ lorsque $d>n$,} \\
         \{0\}=\overline{U_0} \subset \overline{U_2} \subset \cdots \subset \overline{U_{d-1}} &\text{ lorsque $d<n$ et $d$ est impair,} \\
         \{0\}=\overline{U_0} \subset \overline{U_2} \subset \cdots \subset \overline{U_{d-2}} \subset \overline{U_d^{I}} \text{ et } \overline{U_{d-2}} \subset \overline{U_d^{I\!I}}&\text{ lorsque $d \leq n$ et $d$ est pair.}    
    \end{array}
\right.
\end{equation*}

Donc, si $d \leq n$ et $d$ pair, le quotient $\mu^{-1}(0)/\!/G$ est la réunion des adhérences des deux orbites nilpotentes $U_d^{I}$ et $U_d^{I\!I}$. L'intersection des adhérences de ces deux orbites est $\overline{U_{d-2}}$. En revanche, lorsque ($d>n$) ou ($d \leq n$ et $d$ est impair), le quotient $\mu^{-1}(0)/\!/G$ est irréductible. La géométrie de l'adhérence d'une orbite nilpotente dans $\gg'$ est étudiée dans \cite{Hes} et \cite{KP3}.

\begin{proposition}
Si $d>n$, alors la variété $\mu^{-1}(0)/\!/G$ est normale, de dimension $2dn-n(n+1)$ et son lieu singulier est $\overline{U_{n-2}}$.\\
Si $d \leq n$, alors chaque composante irréductible de $\mu^{-1}(0)/\!/G$ est normale et de dimension $d(d-1)$. Le lieu singulier de $\mu^{-1}(0)/\!/G$ est $\overline{U_{d-2}}$ (resp. $\overline{U_{d-3}}$) lorsque $d$ est pair (resp. $d$ est impair). 
\end{proposition}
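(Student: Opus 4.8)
The plan is to exploit the set-theoretic identification of $\mu^{-1}(0)/\!/G$ with nilpotent orbit closures in $\gg'=\Lambda^2(V')=\mathfrak{so}(V')$ obtained in the preceding proposition, and then to read off normality, dimension and singular locus from the well-understood geometry of such closures, following Hesselink \cite{Hes} and Kraft--Procesi \cite{KP3}. The three regimes ($d>n$; $d<n$ with $d$ odd; $d\le n$ with $d$ even) can be treated in parallel, since they differ only in which orbit, or pair of orbits, sits at the top of the chain $\overline{U_0}\subset\overline{U_2}\subset\cdots$ described above.

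For normality I would distinguish two cases. When $d>n$ the argument is direct: $\mu^{-1}(0)$ is the nilcone $\NNN(W,O(V'))$, which is a normal variety by \cite[Theorem 9.1]{KS}, so its categorical quotient $\mu^{-1}(0)/\!/G$ is normal by \cite[\S3.2, Th\'eor\`eme 2]{SB}. When $d\le n$ this shortcut is unavailable, since $\mu^{-1}(0)$ is then no longer normal (indeed not reduced for $d<n$); instead I would invoke the normality of the individual closures $\overline{\OO_{[2^d]}^{I}}$ and $\overline{\OO_{[2^d]}^{I\!I}}$ (for $d$ even) or $\overline{\OO_{[2^{d-1},1^2]}}$ (for $d$ odd) established in \cite{KP3}. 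These orbits all have partition parts at most $2$, which keeps us inside the range where Kraft--Procesi obtain normality for orthogonal classes.

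The dimensions come from the centralizer formula \cite[Corollary 6.1.4]{CoMc}. Writing $\lambda^t$ for the transposed partition, one has $\dim Z_{G'}(x)=\tfrac12\bigl(\sum_i(\lambda^t_i)^2-\#\{i:\lambda_i\text{ odd}\}\bigr)$ in the orthogonal case, and $\dim\OO_\lambda=\dim\gg'-\dim Z_{G'}(x)$ with $\dim\gg'=2d^2-d$. For $\lambda=[2^n,1^{2(d-n)}]$ one gets $\lambda^t=[2d-n,n]$, whence $\dim\OO_\lambda=2dn-n(n+1)$; for $\lambda=[2^d]$ or $\lambda=[2^{d-1},1^2]$ one gets $\dim\OO_\lambda=d(d-1)$. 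These are short computations I would carry out explicitly, and they reproduce the stated values.

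Finally, for the singular locus: the smooth locus is a dense open $G'$-stable subset, hence contains the open $G'$-orbit of each irreducible component (namely $U_n$, or $U_{d-1}$, or $U_d^{I}$ and $U_d^{I\!I}$). The real content, and the step I expect to be the main obstacle, is to show that every boundary point is singular, i.e. that the minimal degeneration from the top orbit to the next orbit down is genuinely singular; I would settle this by quoting the explicit determination of singular loci of orthogonal nilpotent orbit closures in \cite{KP3}, together with the transverse-slice analysis of \cite{Hes}. This yields singular locus $\overline{U_{n-2}}$ when $d>n$ and $\overline{U_{d-3}}$ when $d<n$ is odd. When $d$ is even a further observation is needed: the quotient is then reducible, so every point of $\overline{\OO_{[2^d]}^{I}}\cap\overline{\OO_{[2^d]}^{I\!I}}=\overline{U_{d-2}}$ is automatically singular in the union; since moreover each component is smooth away from its own boundary $\overline{U_{d-2}}$ (note that $U_d^{I}$ is disjoint from the closed set $\overline{\OO_{[2^d]}^{I\!I}}$, so locally the union coincides with a single component there), the singular locus of $\mu^{-1}(0)/\!/G$ is exactly $\overline{U_{d-2}}$. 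As a consistency check I would verify that this boundary has codimension at least $2$ (it is $4d-4n+2\ge 6$ for $d>n$, equals $2$ for $d\le n$ even, and equals $6$ for $d<n$ odd), which is the necessary $R_1$ condition accompanying the normality claim.
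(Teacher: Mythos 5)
Your proposal is correct and follows essentially the same route as the paper, whose proof consists precisely of citing \cite[Criterion 2]{Hes} for normality, \cite[Corollary 6.1.4]{CoMc} for the dimensions and \cite[Theorem 2]{KP3} for the singular loci; your explicit transpose-partition computations (which do check out) and your argument for the reducible case $d\le n$, $d$ even, merely fill in details the paper leaves implicit. The one small variation, deducing normality for $d>n$ from normality of $\mu^{-1}(0)$ via \cite[\S 3.2, Th\'eor\`eme 2]{SB}, is the same shortcut the paper itself uses in the analogous orthogonal situation of Section \ref{sectionavecXOn}.
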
 

\begin{proof}
La normalité des composantes irréductibles de $\mu^{-1}(0)/\!/G$ est donnée par \cite[Criterion 2]{Hes}. Leur dimension est donnée par \cite[Corollary 6.1.4]{CoMc}. 
Enfin, le lieu singulier est donné par \cite[Theorem 2]{KP3}.  
\end{proof}

\begin{remarque}
Nous verrons dans la section \ref{appendice2} que les composantes irréductibles de $\mu^{-1}(0)/\!/G$ sont symplectiques et mais qu'elles admettent des résolutions symplectiques si et seulement si $d \leq n+1$. 
\end{remarque}

Si $d<n$ et $d$ impair, alors la fibre générique de $\nu$ est réductible ce qui rend plus compliqué la détermination de sa fonction de Hilbert. On exclura donc toujours ce cas par la suite. Il reste deux cas de figure à considérer:\\
$\bullet$ \textit{Cas $d>n$}.
La fibre de $\nu$ en un point de l'orbite ouverte est irréductible de dimension $\frac{1}{2}n(n+1)$. On note $h_s$ la fonction de Hilbert de la fibre générique de $\nu$,
$$\HHm:=\Hilb_{h_s}^G(\mu^{-1}(0))$$ 
et $\HHmp$ la composante principale de $\HHm$.\\
$\bullet$ \textit{Cas $d\leq n$ et $d$ pair.}
On note $X^I$, $X^{I\!I}$ les deux composantes irréductibles de $\mu^{-1}(0)$ et $Y^I:=\overline{U_d^{I}}$, $Y^{I\!I}:=\overline{U_d^{I\!I}}$ les deux composantes irréductibles de $\mu^{-1}(0)/\!/G$. Quitte à échanger $X^I$ et $X^{I\!I}$, on a $X^I/\!/G=Y^I$ et $X^{I\!I}/\!/G=Y^{I\!I}$. On note $\nu_{I}:\ X^I \rightarrow Y^I$ et $\nu_{I\!I}:\ X^{I\!I} \rightarrow Y^{I\!I}$ les morphismes de passage au quotient. On vérifie que la dimension de la fibre de $\nu_I$ (resp. de $\nu_{I\!I}$) en un point de l'orbite ouverte de $Y^I$ (resp. de $Y^{I\!I}$) est $dn-\frac{1}{2}d(d-1)$. Le groupe $O(V')$ opère transitivement sur $U_d^I \cup U_d^{I\!I}$ donc les fibres génériques de $\nu_{I}$ et $\nu_{I\!I}$ sont isomorphes. En particulier, elles ont la même dimension et la même fonction de Hilbert $h_s$. On note 
$$
\left\{
    \begin{array}{l}
\HHm:=\Hilb_{h_s}^G(\mu^{-1}(0)),\\
\HHmx:=\Hilb_{h_s}^G(X^I),\\
\HHmy:=\Hilb_{h_s}^G(X^{I\!I}).
   \end{array}
\right.
$$
Enfin, on note $\HHmxp$ et $\HHmyp$ les composantes principales de $\HHmx$ et $\HHmy$ respectivement. Comme $X^I$ et $X^{I\!I}$ sont des fermés de $\mu^{-1}(0)$, on a l'inclusion $\HHmx \cup \HHmy \subset \HHm$, mais celle-ci est stricte a priori.

\begin{notation}
Si $d < n$ et $d$ est pair, on note 
$$H:=\left\{ \begin{bmatrix}
M  &0_{n-d,d} \\
0_{d,n-d}  & I_d 
\end{bmatrix},\ M \in Sp_{n-d}(k) \right\} \cong Sp_{n-d}(k)$$
qui est un sous-groupe réductif de $G$. 
\end{notation}

\begin{lemme} \label{fibreUnsymp}
Si $d>n$, la fibre de $\nu$ en un point de $U_n$ est isomorphe à $G$.\\
Si $d=n$, la fibre de $\nu_I$ (resp. de $\nu_{I\!I}$) en un point de $U_n^{I}$ (resp. de $U_n^{I\!I}$) est isomorphe à $G$.\\
Si $d<n$ et $d$ est pair, la fibre de $\nu_I$ (resp. de $\nu_{I\!I}$) en un point de $U_n^{I}$ (resp. de $U_n^{I\!I}$) est isomorphe à $G/H$. 
\end{lemme}

\begin{proof}
La preuve est analogue à celle du lemme \ref{fibreUnsymp1}.
\end{proof}

\noindent D'où la

\begin{proposition} \label{fctHSp2} 
Soit $M \in \Irr(G)$, alors
\begin{itemize} \renewcommand{\labelitemi}{$\bullet$}
\item $h_s(M)=\dim(M)$ si $d \geq n$,
\item $h_s(M)=\dim(M^H)$ si $d<n$ et $d$ est pair. 
\end{itemize}
\end{proposition}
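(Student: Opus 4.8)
The plan is to deduce the statement directly from the geometric description of the generic fibre obtained in Lemma \ref{fibreUnsymp}, following verbatim the argument used to establish Proposition \ref{fcthilb}. By definition, $h_s(M)$ is the multiplicity of the irreducible $G$-module $M$ in the coordinate ring $k[F]$ of the generic fibre $F$ of the quotient morphism (be it $\nu$, $\nu_I$ or $\nu_{I\!I}$), so the whole point is to decompose $k[F]$ as a $G$-module once $F$ is known explicitly as a $G$-variety. Since $O(V')$ acts transitively on $U_d^{I} \cup U_d^{I\!I}$, the generic fibres of $\nu_I$ and $\nu_{I\!I}$ are isomorphic and share the same Hilbert function, so in the reducible case it suffices to treat a single component.

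First I would handle the case $d \geq n$. By Lemma \ref{fibreUnsymp}, the generic fibre $F$ is isomorphic to $G$ as a $G$-variety. The Peter--Weyl decomposition gives $k[G] \cong \bigoplus_{M \in \Irr(G)} M^{*} \otimes M$ as a $G \times G$-module, where the acting copy of $G$ (coming from the operation on the fibre) is the right-hand factor and $M^{*}$ is the multiplicity space. Hence the multiplicity of $M$ in $k[F]$ equals $\dim(M^{*}) = \dim(M)$, which is the first line of the statement. Note this is consistent with the second line in the boundary case $d = n$, where $H = Sp_{0}(k)$ is trivial and $\dim(M^{H}) = \dim(M)$.

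Next I would treat the case $d < n$ with $d$ even. Here Lemma \ref{fibreUnsymp} gives $F \cong G/H$ with $H \cong Sp_{n-d}(k)$. Taking right $H$-invariants in the Peter--Weyl decomposition yields $k[G/H] \cong k[G]^{H} \cong \bigoplus_{M \in \Irr(G)} M^{*H} \otimes M$ as a left $G$-module, so the multiplicity of $M$ in $k[F]$ is $\dim(M^{*H})$. The only genuine point requiring care is the equality $\dim(M^{*H}) = \dim(M^{H})$: this follows from the reductivity of $H$ (complete reducibility in characteristic $0$), since the invariants functor is exact and commutes with dualisation, giving $(M^{H})^{*} \cong (M^{*})^{H}$. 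This establishes the second line.

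The argument is essentially routine given Lemma \ref{fibreUnsymp}, and I do not expect a serious obstacle; the delicate bookkeeping lies entirely upstream, in the reducibility of $\mu^{-1}(0)/\!/G$ when $d \leq n$ is even. The identification of the two components $X^{I}, X^{I\!I}$ with $Y^{I}, Y^{I\!I}$ and the transitivity of the $O(V')$-action on $U_d^{I} \cup U_d^{I\!I}$ are what legitimise speaking of a single well-defined function $h_s$ in the first place; once that is granted, the decomposition of $k[G]$ and $k[G/H]$ finishes the proof immediately.
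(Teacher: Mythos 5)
Your proposal is correct and follows exactly the route the paper takes: the paper derives the proposition immediately from Lemme \ref{fibreUnsymb} (stated as ``D'o\`u la''), the underlying computation being the one spelled out in the proof of Proposition \ref{fcthilb} — Peter--Weyl for the fibre $G$, passage to $H$-invariants for the fibre $G/H$, and $\dim(M^{*H})=\dim(M^{H})$ by reductivity of $H$. Your remarks on the well-definedness of $h_s$ via the transitive $O(V')$-action on $U_d^{I}\cup U_d^{I\!I}$ also match the paper's preceding discussion.
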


Le fermé $\mu^{-1}(0)$ est $G' \times G$-stable dans $W$, donc le schéma $\HHm$ est un sous-schéma fermé $G'$-stable de $\Hilb_{h_s}^{G}(W)$. Ensuite, d'après les propositions \ref{fcthilbSpn} et \ref{fctHSp2}, les fonctions de Hilbert $h_W$ et $h_s$ coïncident lorsque $d \geq n$ ou $d < \frac{n}{2}$ et alors $\HHm$ s'identifie à un sous-schéma fermé de $\Hilb_{h_W}^{G}(W)$; ce dernier a été étudié dans la section \ref{schcasSymp}. De même, lorsque $d=n$ ou ($d <\frac{n}{2}$ et $d$ est pair), les schémas $\HHmx$ et $\HHmy$ s'identifient à des sous-schémas fermés de $\Hilb_{h_W}^{G}(W)$.

\subsection{Construction d'un morphisme équivariant et réduction}  \label{sssympSpn}
Dans cette section, on procède comme dans les sections \ref{MropRRED} et \ref{MropRREDOn} pour  montrer que l'étude des composantes principales $\HHmp$, $\HHmxp$ et $\HHmyp$ se ramène à l'étude du schéma de Hilbert invariant effectuée dans la section \ref{schcasSymp}.

Dans la section \ref{redGrass2}, on a construit un morphisme $G'$-équivariant 
\begin{equation} \label{rosympSpn}
\Hilb_{h_s}^G(W) \rightarrow \Gr(h_s(V),V'^*).
\end{equation}
D'après la proposition \ref{fctHSp2}, on a $h_s(V)=N$. La restriction du morphisme (\ref{rosympSpn}) à $\HHm$ donne un morphisme $G'$-équivariant 
\begin{equation}  \label{zorrro}
\rho_s:\ \HHm \rightarrow \Gr(N,V'^*).
\end{equation} 
On rappelle que l'on note $q'$ la forme quadratique non-dégénérée sur $V'$ préservée par $O(V')$. On a $V' \cong V'^*$ comme $O(V')$-module et donc $q'$ s'identifie à une forme quadratique non-dégénérée sur $V'^*$. Pour $i=0, \ldots,N$, on note
$$O_i:=\{L \in \Gr(N,V'^*)\ |\ {q'}_{|L} \text{ est de rang } i\}.$$
Si $d>n$, alors les $O_i$ sont les $n+1$ orbites pour l'opération de $G'$ dans $\Gr(n,V')$.
En revanche, si $d \leq n$ et $d$ est pair, alors les $O_i$ sont des $G'$-orbites pour $i=1,\ldots,d$ mais  $O_0=\OG(d,V')$, qui est la grassmannienne isotrope introduite dans la section \ref{description_quotientOn}, est la réunion de deux $G'$-orbites que l'on note $O_0^I$ et $O_0^{I\!I}$ et qui sont échangées par l'opération de n'importe quel élément de $O(V') \backslash SO(V')$. 
Dans tous les cas, on a:
$$ \OG(N,V')=\overline{O_0} \subset \overline{O_1} \subset \cdots \subset \overline{O_N}=\Gr(N,V').$$

\noindent Ensuite, soient: 
\begin{itemize} \renewcommand{\labelitemi}{$\bullet$}
\item $L_0 \in \OG(N,V'^*)$ et $P:=\Stab_{G'}(L_0)$ le sous-groupe parabolique de $G'$ qui préserve $L_0$,
\item $W':=\Hom(V'/L_0^{\perp},V)$ et $\nu':\ W' \rightarrow W'/\!/G$ le morphisme de passage au quotient,
\item $h_{W'}$ la fonction de Hilbert de la fibre générique de $\nu'$,
\item $\HH':=\Hilb_{h_{W'}}^{G}(W')$ et $\HH'^{\mathrm{prin}}$ sa composante principale.
\end{itemize}
On remarque que $h_s=h_{W'}$. Par ailleurs, lorsque $d \leq n$ et $d$ est pair, on vérifie que le morphisme $\rho_s$ envoie $\HHmxp$ dans l'une des deux composantes irréductibles de $O_0$ et $\HHmyp$ dans l'autre composante. Quitte à échanger $O_0^{I}$ et $O_0^{I\!I}$, on peut supposer que $\rho_s$ envoie $\HHmxp$ dans $O_0^{I}$ et $\HHmyp$ dans $O_0^{I\!I}$.
En procédant comme pour la proposition \ref{reduction3}, on montre la

\begin{proposition} \label{reduction37}
On a les isomorphismes de $G'$-variétés suivants:
\begin{itemize}
\item si $d>n$, alors 
$$\HHmp \cong G' {\times}^{P} \HH'^{\mathrm{prin}},$$
\item si ($d \leq n$ et $d$ est pair) et $L_0 \in O_0^I$ (resp. $L_0 \in O_0^{I\!I}$), alors 
$$\HHmxp \cong G' {\times}^{P} \HH'^{\mathrm{prin}} \text{(resp. $\HHmyp \cong G' {\times}^{P} \HH'^{\mathrm{prin}}$).}$$
\end{itemize}
\end{proposition}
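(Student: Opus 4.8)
Proposition 3.37 établit un isomorphisme de réduction pour les composantes principales dans le cas symplectique pour $Sp(V)$. La proposition dont il faut donner la preuve est la Proposition \ref{reduction37}.

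Regardons ce qu'il faut prouver exactement. C'est un résultat de réduction qui exprime $\HHmp$ (ou $\HHmxp$, $\HHmyp$) comme fibré homogène $G' \times^P \HH'^{\mathrm{prin}}$. L'auteur dit explicitement "En procédant comme pour la proposition \ref{reduction3}, on montre la..." — donc la preuve suit le même schéma que la Proposition 3.31 (dite reduction3).

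Laissez-moi réfléchir à la structure de la preuve de reduction3 pour calquer dessus.

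On suit le schéma de la preuve de la proposition \ref{reduction3}. La première étape consiste à établir l'analogue du lemme \ref{versX0}, à savoir que le morphisme $\rho_s$ de (\ref{zorrro}) envoie la composante principale dans l'orbite fermée de $\Gr(N,V'^*)$. Plus précisément, lorsque $d>n$, on montre que $\rho_s$ envoie $\HHmp$ dans $O_0=\OG(N,V'^*)$; lorsque $d \leq n$ et $d$ est pair, on a déjà observé avant l'énoncé que $\rho_s$ envoie $\HHmxp$ dans $O_0^I$ et $\HHmyp$ dans $O_0^{I\!I}$. L'argument est analogue à celui du lemme \ref{morphisme_dans_YSLn}: au dessus de l'orbite ouverte de $\mu^{-1}(0)/\!/G$, le morphisme $\gamma$ est un isomorphisme d'après la proposition \ref{chow}, ce qui permet de contrôler l'image par $\rho_s$ d'un point de $\gamma^{-1}(U_N)$; on conclut par $G'$-équivariance et par le fait que $\OG(N,V'^*)$ (resp. chacune de ses composantes) est un fermé $G'$-stable, de sorte que son image réciproque par $\rho_s$ est un fermé contenant l'ouvert dense considéré.

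Une fois ce point acquis, le morphisme $\rho_s$ munit la composante principale d'une structure de fibration $G'$-homogène au dessus de l'espace homogène $G'/P \cong \OG(N,V'^*)$ (ou de l'une de ses composantes). D'après (\ref{iissoo}), on dispose donc d'un isomorphisme $G'$-équivariant entre cette composante et $G' \times^P F_s$, où $F_s$ est la fibre schématique de $\rho_s$ en $L_0$, munie de l'opération induite de $P$. Il reste à identifier $F_s$. On procède comme dans la preuve du lemme \ref{fibrehil}: pour tout schéma $S$, la condition $\rho_s(\ZZ_s)=L_0$ pour toute fibre équivaut à l'inclusion $\ZZ_s \subset \Hom(V'/L_0^{\perp},V)=W'$, de sorte que le foncteur des points de $F_s$ s'identifie au foncteur de Hilbert invariant ${Hilb}_{h_{W'}}^{G}(W')$. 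Comme $L_0$ est isotrope de dimension $N$, on a $W' \cong V^{\oplus N}$ comme $G$-module, et $\HH'$ est donc bien le sché\-ma de Hilbert invariant étudié dans la section \ref{schcasSymp}. On en déduit que $F_s$ s'identifie à une sous-variété de $\HH'^{\mathrm{prin}}$, munie de la bonne opération de $P$.

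Il ne reste qu'à comparer les dimensions. La composante principale étant envoyée birationnellement sur $\mu^{-1}(0)/\!/G$ (ou sur l'une de ses composantes irréductibles) par $\gamma$, sa dimension est connue, et comme elle est l'espace total d'un fibré au dessus de $\OG(N,V'^*)$, on en déduit $\dim(F_s)$ par soustraction; un calcul direct montre que celle-ci coïncide avec $\dim(\HH'^{\mathrm{prin}})=\dim(W'/\!/G)$. L'inclusion $F_s \hookrightarrow \HH'^{\mathrm{prin}}$ étant une inclusion, entre un fermé et une variété irréductible, de variétés de même dimension, c'est un isomorphisme, d'où la proposition.

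La difficulté principale réside dans le cas réductible ($d \leq n$ et $d$ pair): il faut s'assurer que la correspondance entre les deux composantes irréductibles $X^I$, $X^{I\!I}$ de $\mu^{-1}(0)$, donc entre $\HHmx$, $\HHmy$, et les deux composantes $O_0^I$, $O_0^{I\!I}$ de la grassmannienne isotrope maximale est compatible, c'est-à-dire que $\rho_s$ envoie effectivement $\HHmxp$ dans la composante associée à $Y^I=X^I/\!/G$. Ceci repose sur le fait que les deux composantes de $\OG(N,V'^*)$ sont échangées par n'importe quel élément de $O(V') \backslash SO(V')$, exactement comme les deux orbites nilpotentes $U_d^I$ et $U_d^{I\!I}$; cette symétrie assure, quitte à renuméroter, la cohérence de l'identification, puis permet de dupliquer l'argument précédent sur chacune des deux composantes.
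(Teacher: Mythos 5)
Votre preuve suit exactement la démarche du texte, qui se contente de renvoyer à la démonstration de la proposition \ref{reduction3}: image de la composante principale dans l'orbite fermée (analogue du lemme \ref{versX0}), structure de fibration homogène via (\ref{iissoo}), identification de la fibre schématique avec un sous-schéma fermé de $\HH'$ (analogue du lemme \ref{fibrehil}), puis comparaison des dimensions pour conclure que cette inclusion est un isomorphisme sur $\HH'^{\mathrm{prin}}$. Le traitement du cas réductible par échange éventuel de $O_0^{I}$ et $O_0^{I\!I}$ correspond aussi à ce que fait le texte juste avant l'énoncé; rien à redire.
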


\noindent On déduit de la proposition \ref{reduction37} et des résultats de la section \ref{schcasSymp} le

\begin{corollaire}  \label{resuss}
\begin{itemize}  \renewcommand{\labelitemi}{$\bullet$}
\item Si $d \leq \frac{n}{2}+1$ et $d$ est pair, alors 
$$\HHmxp \cong \Lambda^2(T_I) \text{ et } \HHmyp \cong \Lambda^2(T_{I\!I}),$$ 
où $T_I$ (resp. $T_{I\!I}$) est le fibré tautologique de $O_0^{I}$ (resp. de $O_0^{I\!I}$).
\item Si $d=n=4$, alors 
$$\HHmxp \cong Bl_0(\Lambda^2(T_I)) \text{ et } \HHmyp \cong Bl_0(\Lambda^2(T_{I\!I})),$$ 
où $Bl_0(\Lambda^2(T_I))$ (resp. $Bl_0(\Lambda^2(T_{I\!I}))$) désigne l'éclatement de la section nulle du fibré $\Lambda^2(T_I)$ (resp. du fibré $\Lambda^2(T_{I\!I})$).
\item Si $d>n=2$, alors 
$$\HHmp \cong  \Lambda^2(T),$$ 
où $T$ est le fibré tautologique de $\OG(2,V'^*)$.
\item Si $d>n=4$, alors 
$$\HHmp \cong Bl_0(\Lambda^2(T))$$ 
qui est l'éclatement de la section nulle du fibré $\Lambda^2(T)$, où $T$ est le fibré tautologique de $\OG(4,V'^*)$.
\end{itemize}
\end{corollaire}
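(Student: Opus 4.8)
Il s'agit de démontrer le corollaire \ref{resuss}, qui décrit explicitement les composantes principales $\HHmp$, $\HHmxp$ et $\HHmyp$ dans quatre régimes numériques. La stratégie est de combiner la proposition de réduction \ref{reduction37}, qui ramène chaque composante principale à la composante principale $\HH'^{\mathrm{prin}}$ d'un schéma de Hilbert invariant associé au $G$-module $W'=\Hom(V'/L_0^{\perp},V)$, avec les résultats de la section \ref{schcasSymp} (essentiellement le corollaire \ref{cas_facileSpn} et le théorème \ref{casn2Sp4}).

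\medskip

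\noindent\textbf{Plan.} Le point de départ est l'identité $h_s=h_{W'}$ signalée juste avant la proposition \ref{reduction37}, qui garantit que $\HH'=\Hilb_{h_{W'}}^G(W')$ est bien le schéma de Hilbert invariant classique étudié dans la section \ref{schcasSymp}, pour le groupe $G=Sp(V)$ agissant dans $W'\cong \Hom(V'/L_0^{\perp},V)$. Comme $L_0\in\OG(N,V'^*)$ est isotrope maximal, on a $\dim(V'/L_0^{\perp})=N$, de sorte que $W'$ est exactement la copie de $V^{\oplus N}$ (avec $N=\min(d,n)$) à laquelle s'appliquent les résultats de la section \ref{schcasSymp}. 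Je procéderais en traitant les quatre cas séparément.

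\medskip

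\noindent Dans le premier cas ($d\leq \frac{n}{2}+1$, $d$ pair), on a $N=d$ et donc $n'_{W'}:=\dim(V'/L_0^{\perp})=d$ pour le schéma $\HH'$ ; la condition $d\leq\frac{n}{2}+1$ se réécrit $n+2\geq 2d=2n'_{W'}$, ce qui est précisément l'hypothèse du corollaire \ref{cas_facileSpn}. Celui-ci donne $\HH'\cong W'/\!/G\cong\Lambda^2((V'/L_0^{\perp})^*)$, donc $\HH'^{\mathrm{prin}}\cong\Lambda^2((V'/L_0^{\perp})^*)$. En appliquant la seconde assertion de la proposition \ref{reduction37} avec $L_0\in O_0^I$ (resp. $O_0^{I\!I}$) puis la construction $G'\times^P(-)$, le fibré vectoriel $\Lambda^2((V'/L_0^{\perp})^*)$ au-dessus du point $L_0$ se globalise en le fibré $\Lambda^2(T_I)$ (resp. $\Lambda^2(T_{I\!I})$) au-dessus de $O_0^I$ (resp. $O_0^{I\!I}$), où $T_I$ est le fibré tautologique. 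Le troisième cas ($d>n=2$) est analogue : ici $N=n=2$, $W'\cong V^{\oplus 2}$, le corollaire \ref{cas_facileSpn} s'applique encore (car $n+2=4\geq 2n'_{W'}=4$), et la globalisation sur $\OG(2,V'^*)=O_0$ donne $\HHmp\cong\Lambda^2(T)$. Les deux cas restants ($d=n=4$ et $d>n=4$) utilisent le théorème \ref{casn2Sp4} : avec $N=4$, on a $n'_{W'}\geq 4$ et $W'\cong V^{\oplus n'_{W'}}$ avec $G\cong Sp_4(k)$, donc $\HH'^{\mathrm{prin}}\cong Y_1$, c'est-à-dire l'éclatement de la section nulle du fibré $\Lambda^2(T)$ au-dessus de $\Gr(4,V'^*)$, ce qui donne après globalisation $\HHmp\cong Bl_0(\Lambda^2(T))$ (resp. les versions $O_0^I$, $O_0^{I\!I}$ pour $\HHmxp$, $\HHmyp$ dans le cas $d=n=4$).

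\medskip

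\noindent\textbf{Principal obstacle.} Les ingrédients sont tous disponibles, et la difficulté n'est pas conceptuelle mais réside dans la vérification soigneuse que la globalisation $G'\times^P(\text{fibre})$ transforme bien le fibré tautologique local $(V'/L_0^{\perp})^*$ en le fibré tautologique $T$ (ou $T_I$, $T_{I\!I}$) de la grassmannienne isotrope appropriée, et que les opérations de $P$ se recollent correctement. En particulier, dans les cas pairs il faut justifier que $\rho_s$ envoie $\HHmxp$ et $\HHmyp$ respectivement dans les deux composantes $O_0^I$ et $O_0^{I\!I}$ de $\OG(d,V'^*)$ (fait signalé avant la proposition \ref{reduction37}, mais dont le suivi à travers l'isomorphisme est le point délicat), afin d'attribuer le bon fibré tautologique à chaque composante principale. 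Une fois ce suivi effectué, le corollaire découle par simple substitution des quatre énoncés de la section \ref{schcasSymp} dans la proposition \ref{reduction37}.
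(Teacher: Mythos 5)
Votre d\'emonstration est correcte et suit exactement la m\^eme d\'emarche que celle du m\'emoire, dont la preuve consiste pr\'ecis\'ement \`a combiner la proposition \ref{reduction37} avec le corollaire \ref{cas_facileSpn} (cas $d \leq \frac{n}{2}+1$ et $d>n=2$) et le th\'eor\`eme \ref{casn2Sp4} (cas $d \geq n=4$). Seule petite impr\'ecision : dans les cas $n=4$, la r\'eduction donne toujours $n'_{W'}=N=4$ exactement, de sorte que c'est le premier point du th\'eor\`eme \ref{casn2Sp4} qui s'applique ($\HH'\cong Y_0$, \'eclatement en $0$ de l'espace affine $\Lambda^2(L_0)$, fibre au-dessus d'un point de $\OG(4,V'^*)$), et non l'identification $Y_1\cong Bl_0(\Lambda^2(T))$ au-dessus de $\Gr(4,V'^*)$ que vous invoquez ; votre conclusion finale reste n\'eanmoins correcte.
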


\begin{proof}
Les cas $d \leq \frac{n}{2}+1$ et $d>n=2$ découlent du corollaire \ref{cas_facileSpn}.\\ 
Les cas $d \geq n=4$ découlent du théorème \ref{casn2Sp4}.
\end{proof}

\begin{remarque}
On ne peut malheureusement pas procéder comme nous l'avons fait dans la section \ref{reductibilité_cas_symp} pour déterminer si $\HHm$ est ou non réductible (pour les mêmes raisons que celles données dans la remarque \ref{rkobs}). A priori, $\HHm$ peut admettre plusieurs composantes irréductibles. Cependant, si $n=2$ nous allons voir dans la section \ref{cassympn2} que $\HHm$ est toujours une variété.  
\end{remarque}

\subsection{Etude du cas \texorpdfstring{$\dim(V)=2$}{Sp2(k)}} \label{cassympn2}

Dans cette section, on fixe $d \geq n=2$. Alors $G \cong Sp_2(k)=SL_2(k)$,\\ 
$\mu^{-1}(0)/\!/G= \left\{
    \begin{array}{ll}
           \overline{\OO_{[2^2,1^{2d-4}]}} & \text{ si } d \geq 3,\\
      \overline{\OO_{[2^2]}^{I}} \cup \overline{\OO_{[2^2]}^{I\!I}} & \text{ si } d=2,\\
     \end{array}
\right.$
et $\rho_s:\ \HHm \rightarrow \Gr(2,V'^*)$ est le morphisme (\ref{zorrro}). Nous allons utiliser les résultats des sections précédentes et de la section \ref{casSln} pour déterminer $\HHm$ comme $G'$-schéma.  

\begin{proposition}  \label{wxcv}
Si $d \geq 3$, alors $\HHm=\HHmp$ est une variété lisse isomorphe à 
$$Bl_0(\overline{\OO_{[2^2,1^{2d-4}]}}):=\left \{(f,L) \in \overline{\OO_{[2^2,1^{2d-4}]}} \times \PP (\overline{\OO_{[2^2,1^{2d-4}]}}) \ \mid  \ f \in L  \right \}$$ 
et le morphisme de Hilbert-Chow $\gamma$ s'identifie à l'éclatement $Bl_0(\overline{\OO_{[2^2,1^{2d-4}]}}) \rightarrow \overline{\OO_{[2^2,1^{2d-4}]}}$.\\
Si $d=2$, alors $\HHm=\HHmxp \cup \HHmyp$ est la réunion de deux variétés lisses isomorphes à $Bl_0(\overline{\OO_{[2^2]}^{I}})$ et $Bl_0(\overline{\OO_{[2^2]}^{I\!I}})$ respectivement. De plus, l'intersection $\HHmxp \cap \HHmyp$ est l'ensemble des idéaux homogènes de $\HHm$. Enfin, la restriction de $\gamma$ sur $\HHmxp$ (resp. sur $\HHmyp$) s'identifie à l'éclatement de $\overline{\OO_{[2^2]}^{I}}$ (resp. de $\overline{\OO_{[2^2]}^{I\!I}}$) en $0$.    
\end{proposition}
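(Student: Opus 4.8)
Proposition \ref{wxcv} — plan de démonstration.

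Le plan est de traiter séparément les cas $d \geq 3$ et $d=2$, en s'appuyant sur le corollaire \ref{resuss} qui fournit déjà la description des composantes principales comme espaces totaux de fibrés. Pour $d \geq 3$, on commence par invoquer le corollaire \ref{resuss} (dernier cas, $d>n=2$) qui donne $\HHmp \cong \Lambda^2(T)$ où $T$ est le fibré tautologique de $\OG(2,V'^*)$. Ensuite, comme $n=2$ et que le quotient $\mu^{-1}(0)/\!/G = \overline{\OO_{[2^2,1^{2d-4}]}}$ n'est constitué que de deux $G'$-orbites (l'origine et l'orbite ouverte $U_2$, puisque les orbites intermédiaires sont absentes pour $N=2$), on vérifie qu'on est exactement dans la situation couverte par la proposition \ref{enoncepgene}: le sous-groupe central $\Gm \subset G'$ agit avec $0$ comme unique orbite fermée, et le quotient $(\mu^{-1}(0)/\!/G \backslash \{0\})/\Gm$ est une unique orbite de $G'$. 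D'après la proposition \ref{fctHSp2} on a $h_s(V)=N=2$, et le morphisme $\rho_s$ de (\ref{zorrro}) joue le rôle du morphisme $\rho:\ \HHm \rightarrow X$ requis par l'hypothèse de la proposition \ref{enoncepgene}. Celle-ci donne alors que $\gamma \times \rho_s$ envoie isomorphiquement $\HHmp$ dans $Bl_0(\overline{\OO_{[2^2,1^{2d-4}]}})$, et identifie $\gamma$ à l'éclatement. Il reste à montrer que $\HHm=\HHmp$, c'est-à-dire l'irréductibilité: on utilise la réduction de la proposition \ref{reduction37}, qui ramène $\HHm$ (et non seulement sa composante principale) à $G' \times^P \HH'$, où $\HH'=\Hilb_{h_{W'}}^G(W')$ avec $W'=\Hom(V'/L_0^{\perp},V)$ de dimension adéquate. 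Comme $h_s=h_{W'}$ et que $G=SL_2(k)$ agit sur $W'$ via la situation $1$ de la section \ref{casSln}, le schéma $\HH'$ est une variété lisse (théorème \ref{SLLn}), d'où la lissité et l'irréductibilité de $\HHm$.

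Pour le cas $d=2$, la difficulté principale est que $\mu^{-1}(0)/\!/G$ est désormais réductible: c'est la réunion $\overline{\OO_{[2^2]}^{I}} \cup \overline{\OO_{[2^2]}^{I\!I}}$ des adhérences de deux orbites nilpotentes très paires, échangées par $O(V') \backslash SO(V')$. Le plan est d'appliquer le corollaire \ref{resuss} (cas $d \geq n=4$ étant exclu, on se place dans le cas $d \leq \frac{n}{2}+1$, soit $d=2$), qui donne $\HHmxp \cong \Lambda^2(T_I)$ et $\HHmyp \cong \Lambda^2(T_{I\!I})$ sur $O_0^{I}$ et $O_0^{I\!I}$ respectivement. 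La proposition \ref{reduction37} (deuxième alinéa) ramène $\HHmx$ et $\HHmy$ à des fibrés au-dessus des deux composantes de $O_0 = \OG(2,V'^*)$, via $G' \times^P \HH'$. On montre alors que $\gamma$ restreint à $\HHmxp$ (resp. $\HHmyp$) est l'éclatement de $\overline{\OO_{[2^2]}^{I}}$ (resp. $\overline{\OO_{[2^2]}^{I\!I}}$) en $0$, en réutilisant localement l'argument de la proposition \ref{enoncepgene} sur chaque composante prise séparément (chacune étant la réunion de deux orbites $\{0\}$ et $U_2^{I}$, resp. $U_2^{I\!I}$).

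L'obstacle principal sera d'établir l'égalité $\HHm = \HHmxp \cup \HHmyp$ et de décrire précisément l'intersection $\HHmxp \cap \HHmyp$. D'une part, on dispose seulement de l'inclusion $\HHmx \cup \HHmy \subset \HHm$, stricte a priori comme l'excerpt le souligne; il faut donc exclure l'existence de composantes supplémentaires de $\HHm$. On ne peut pas procéder comme dans la section \ref{reductibilit�_cas_symp} (cf. remarque après le corollaire \ref{resuss}), mais dans le cas $n=2$ on peut espérer calculer directement les points fixes de $B'$ dans $\HHm$ via le lemme \ref{fixespoints}, puis vérifier par un calcul d'espace tangent (lemme \ref{Hlisse4}, proposition \ref{isoTangent}) que tout point fixe appartient à l'une des deux composantes principales, forçant ainsi $\HHm = \HHmxp \cup \HHmyp$. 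D'autre part, l'identification de $\HHmxp \cap \HHmyp$ avec l'ensemble des idéaux homogènes de $\HHm$ se fait en observant que les deux fibrés $\Lambda^2(T_I)$ et $\Lambda^2(T_{I\!I})$ se recollent exactement le long de leurs sections nulles: un point $(f,L)$ avec $f=0$ ne distingue pas les deux composantes de $\OG(2,V'^*)$ via $\rho_s$ uniquement à travers la partie homogène de l'idéal, et c'est précisément la condition $\gamma = 0$ (point de $\NNN$) qui caractérise l'intersection. La vérification finale que cette intersection coïncide ensemblistement et schématiquement avec les idéaux homogènes de $k[\mu^{-1}(0)]$ demandera un examen soigneux des fibres spéciales de la famille universelle, analogue à la remarque suivant la proposition \ref{casSympn1} dans le cas $GL_1$.
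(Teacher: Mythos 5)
Votre traitement de la composante principale est correct et rejoint celui du texte (corollaire \ref{resuss}), mais votre argument d'irr\'eductibilit\'e pour $d \geq 3$ comporte une lacune s\'erieuse. Vous affirmez que la proposition \ref{reduction37} \og ram\`ene $\HHm$ (et non seulement sa composante principale) \`a $G' \times^P \HH'$ \fg{} : c'est inexact, cette proposition n'est \'enonc\'ee et d\'emontr\'ee que pour $\HHmp$, pr\'ecis\'ement parce que l'on ne sait pas a priori que le morphisme $\rho_s$ envoie tout $\HHm$ dans l'orbite ferm\'ee $O_0=\OG(2,V'^*)$ --- c'est exactement ce qu'il faut d\'emontrer, et la remarque qui suit le corollaire \ref{resuss} souligne que la m\'ethode utilis\'ee pour $GL(V)$ (proposition \ref{noIrred}) ne s'applique pas ici. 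La d\'emonstration du texte contourne cet obstacle autrement : puisque $G=Sp_2(k)=SL_2(k)$, le th\'eor\`eme \ref{SLLn} fournit une immersion ferm\'ee $\gamma \times \rho_s$ de $\HHm$ dans $Y=\left\{(f,L) \in \overline{\OO_{[2^2,1^{2d-4}]}} \times \PP(\gg'^{\leq 2}) \mid f \in L\right\}$, r\'eunion de $C_1=Bl_0(\overline{\OO_{[2^2,1^{2d-4}]}})$ et de $C_2 \cong \PP(\gg'^{\leq 2})$ (les id\'eaux homog\`enes) ; on d\'etermine ensuite explicitement quels points $(0,L)$ de $C_2$ appartiennent \`a $\HHm$, en montrant que l'id\'eal $I_L$ a la fonction de Hilbert $h_s$ si et seulement si $q'_{|L}=0$, c'est-\`a-dire $L \in \OG(2,V'^*)=\PP(\overline{\OO_{[2^2,1^{2d-4}]}})$ ; ces points \'etant d\'ej\`a dans $C_1$, on conclut $\HHm=\HHmp$. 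Ce calcul de multiplicit\'es dans $k[W']/I'$ est le c\oe ur de la preuve et manque enti\`erement dans votre proposition.

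Pour $d=2$, vous identifiez correctement l'obstacle (\'etablir $\HHm = \HHmxp \cup \HHmyp$ et d\'ecrire l'intersection) mais vous ne proposez qu'un plan (points fixes de $B'$ et espaces tangents) sans l'ex\'ecuter, et rien ne garantit que ce plan aboutirait sans le m\^eme type de calcul. Le texte traite ce cas par le m\^eme argument d'immersion ferm\'ee et le m\^eme crit\`ere d'isotropie, ce qui donne directement que les seuls id\'eaux homog\`enes de $\HHm$ sont les $(0,L)$ avec $L \in \OG(2,V'^*)$ et identifie l'intersection des deux composantes.
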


\begin{proof}
Les cas $d=2$ et $d \geq 3$ se traitent de manière analogue. On considère le cas où $d \geq 3$. D'après le théorème \ref{SLLn}, on a une immersion fermée
$$\gamma \times \rho_s:\ \HHm \hookrightarrow Y:=\left \{(f,L) \in \overline{\OO_{[2^2,1^{2d-4}]}} \times \PP ({\gg'}^{\leq 2}) \ \mid  \ f \in L  \right \}.$$ 
On vérifie que $Y$ est la réunion de deux variétés $C_1$ et $C_2$ définies par:
\begin{align*}
C_1&:=Bl_0(\overline{\OO_{[2^2,1^{2d-4}]}}), \\
C_2&:=\left \{(f,L) \in \overline{\OO_{[2^2,1^{2d-4}]}} \times \PP ({\gg'}^{\leq 2}) \ |\ f=0 \right \} \cong \PP ({\gg'}^{\leq 2}).
\end{align*}
Ces deux variétés sont de dimension $4d-6$ et $4d-4$ respectivement. Le morphisme $\gamma \times \rho_s$ envoie $\HHmp$ dans $C_1$; ce sont deux variétés de même dimension, donc $\gamma \times \rho_s:\ \HHmp \rightarrow C_1$ est un isomorphisme.\\
Ensuite, on vérifie que la composante $C_2$ est formée des idéaux homogènes de $k[W]$ (c'est une conséquence de la proposition \ref{decompoisoSp}). On a un isomorphisme $G'$-équivariant 
\begin{equation}  \label{isomonn}
\PP(\gg'^{\leq 2}) \cong \Gr(2,V'^*)
\end{equation} 
qui permet d'identifier tout point de $\PP(\gg'^{\leq 2})$ avec un sous-espace de $V'^*$ de dimension $2$. Via cette identification, l'idéal $I_{L}$ de $k[W]$ associé au point $(0,L) \in C_2$ est l'idéal engendré par les $G$-invariants homogènes de degré positif de $k[W]$ ainsi que par le $G$-module $L^{\perp} \otimes V \subset k[W]_1$. Nous allons montrer que $I_L$ est un point de $\HHm$ si et seulement si $L \in \OG(2,V'^*)$. Le résultat en découlera puisque $\PP (\overline{\OO_{[2^2,1^{2d-4}]}})$ s'identifie à $\OG(2,V'^*)$ via l'isomorphisme (\ref{isomonn}) et que $\left \{(f,L) \in \overline{\OO_{[2^2,1^{2d-4}]}} \times \PP (\overline{\OO_{[2^2,1^{2d-4}]}})  \ |\ f=0 \right\}$ est une sous-variété de $C_1$.\\  
On note $W':=\Hom(V'/L^{\perp},V)$, alors
$$k[W']_2 \cong S^2(V'/L^{\perp}) \otimes S^2(V) \oplus \Lambda^2(V'/L^{\perp}) \otimes \Lambda^2(V)$$ 
comme $G$-module. On note $I'$ l'idéal de $k[W']$ engendré par $\Lambda^2(V'/L^{\perp}) \otimes \Lambda^2(V) \subset k[W']_2$. On a 
$$k[W']/I' \cong k[W]/I_L \cong \bigoplus_{M \in \Irr(G)}  M^{\oplus \dim(M)}$$ 
comme $G$-module. Donc
\begin{align*}
I_L \in \HHm & \Leftrightarrow I_L \supset V'_0 \otimes S^2(V) \\
             &  \Leftrightarrow  {q'}_{|L}=0 \\
             & \Leftrightarrow L \in \OG(2,V'^*)
\end{align*} 
et le résultat s'ensuit.     
\end{proof}

\begin{remarque}
Lorsque $d=3$ et $n=2$, Becker avait déjà montré dans \cite{Tanja2} que $\HHm=\HHmp$ est une variété lisse. 
\end{remarque}

\begin{remarque}
Dans la preuve de la proposition \ref{wxcv}, on a montré que si $d > n=2$, alors les idéaux homogènes de $\HHm$ sont contenus dans $\HHmp$. En reprenant les arguments de la fin de la preuve, on vérifie que cette assertion est vraie plus généralement si $d > n \geq 2$. 
\end{remarque}

\chapter*{Questions ouvertes}   
\addcontentsline{toc}{chapter}{Questions ouvertes} 

Dans ce mémoire, nous avons déterminé plusieurs familles infinies de schémas de Hilbert invariants pour l'opération des groupes algébriques classiques dans certains schémas affines. Plus précisément, nous avons étudié d'une part le cas dit "classique": \\ 
$\bullet$ $G=GL_n(k)$ opère dans $W=V^{\oplus n_1} \oplus V^{* \oplus n_2}$, ou bien\\
$\bullet$ $G=SL_n(k)$, $O_n(k)$, $SO_n(k)$ ou $Sp_{2n}(k)$ opère dans $W=V^{\oplus n'}$,\\  
où $V$ est la représentation standard de $G$, et $V^*$ est sa duale;\\
d'autre part le cas dit "symplectique":\\
$\bullet$ $G=GL_n(k)$, $O_n(k)$, $SO_n(k)$ ou $Sp_{2n}(k)$ opère dans la fibre en $0$ de l'application moment. \\
On note $h_W$ la fonction de Hilbert de la fibre générique du morphisme $W \rightarrow W/\!/G$ et 
$$\HH:=\Hilb_{h_W}^G(W),$$ 
le schéma de Hilbert invariant associé au $G$-schéma affine $W$ et à la fonction de Hilbert $h_W$. Nous avons pu déterminer $\HH$ pour des petites valeurs de $n$, mais le cas général reste ouvert et paraît très compliqué. Les schémas de Hilbert invariants semblent donc respecter la loi de Murphy qui se traduit ici par: «Plus $n$ est grand, plus $\HH$ est pathologique».

\section*{Cas classique}

Soit $G'$ le sous-groupe algébrique réductif de $\Aut^G(W)$ défini dans la section \ref{lesdiffsituations}, et $B'$ un sous-groupe de Borel de $G'$. Si $n'=n=3$ et  $G=GL_n(k)$, $O_n(k)$ ou $SO_n(k)$, on a montré que $\HH$ est singulier (théorèmes \ref{casn3}, \ref{casn3O3} et \ref{casn3SO3}) en déterminant l'espace tangent aux points fixes de $B'$. 

\begin{question1}
Est-ce que $\HH$ est réduit, irréductible?
\end{question1}
\begin{question2}
Est-ce que la composante principale $\HHp$ est lisse?
\end{question2}

On remarque que ces deux questions sont liées. En effet, si $\HHp$ est lisse alors nécessairement $\HH$ est réductible et/ou non-réduit. Pour répondre à la question 1, une stratégie possible serait de calculer l'espace de base de la déformation verselle aux points fixes de $B'$ (voir par exemple \cite[§3]{Stevens}). Lorsque $G=O_3(k)$ ou $GL_3(k)$, je pense que la composante des idéaux homogènes de $\HH$ donne une composante irréductible distincte de $\HHp$, et donc que $\HH$ est réductible. \\
Dans tous les exemples de ce mémoire (à l'exception du cas  $G=O_3$ qui reste à déterminer), le schéma $\HH$ est connexe. Il est de plus toujours singulier dès lors que $W//G$ admet au moins $4$ orbites pour l'opération de $G'$. D'où la
 
\begin{question3}
Si $G=GL_n(k)$, $O_n(k)$, $SO_n(k)$, et si $n \geq 4$ (ou si $G=Sp_{2n}(k)$, et si $n \geq 3$), est-ce que $\HH$ est toujours connexe et singulier? 
\end{question3}

Des calculs effectués pour $GL_4(k)$ et $Sp_6(k)$ (et qui ne figurent pas dans ce mémoire par égard pour le lecteur) laissent penser que la réponse à la question 3 est positive. Pour montrer la connexité, on peut essayer de procéder comme dans la section \ref{grobner}: on détermine les points fixes de $B'$ puis on utilise des bases de Gröbner pour construire des chaînes de courbes rationnelles entre les points fixes. Pour montrer que $\HH$ est singulier, il suffit (par un argument de semi-continuité) de calculer la dimension de l'espace tangent aux points fixes de $B'$. Malheureusement, la méthode calculatoire employée dans ce mémoire est inapplicable lorsque $n$ est grand. Quelles que soient les méthodes employées pour répondre à la question 3, il sera sans doute nécessaire de commencer par répondre à la 

\begin{question4}
Quels sont les points fixes de $B'$ dans $\HH$?
\end{question4} 

Plus précisément, on aimerait avoir une description de chacun de ces points fixes en termes de $B' \times G$-sous-modules de $k[W]$. 

\begin{conjecture1}
Si $I$ est un point fixe de $B'$ dans $\HH$, alors l'idéal $I$ est engendré par ses composantes homogènes de degré $\leq n$. 
\end{conjecture1}

\begin{conjecture2}
Si $G=GL_n(k)$, alors $\HH$ admet un unique point fixe de $B'$, et $B'$ est son groupe d'isotropie. 
\end{conjecture2}

Par ailleurs, dans le cas du schéma de Hilbert multigradué de Haiman et Sturmfels (\cite{HS}), de nombreux exemples pathologiques ont pu être déterminés en partie grâce à une description combinatoire de celui-ci (\cite[...]{Cart2,Cart,Aholt}). Une telle description combinatoire pour $\HH$ (ou au moins pour les points fixes de $B'$) serait très précieuse mais reste à expliciter.

Signalons enfin que quelques cas particuliers ont été mis sous silence dans ce mémoire. Par exemple, si $G=GL_3(k)$, les cas ($n_1=2$ et $n_2 \geq 3$) et
($n_2=2$ et $n_1 \geq 3$) n'ont pas été traités. La raison en est que la fonction de Hilbert est alors compliquée, et l'on ne peut donc pas appliquer le principe de réduction pour se ramener à un cas plus simple déjà traité. On peut toutefois supposer que les méthodes employées pour traiter les autres cas s'appliquent encore.

\section*{Cas symplectique}

Soient $W=V^{\oplus d} \oplus V^{* \oplus d}$, où $d$ est entier positif, $\mu^{-1}(0)$ la fibre en $0$ de l'application moment $\mu:\ W \rightarrow Lie(G)$, $h_s$ la fonction de Hilbert de la fibre générique du morphisme $\mu^{-1}(0) \rightarrow  \mu^{-1}(0)//G$, et 
$$\HHm:=\Hilb_{h_s}^G(\mu^{-1}(0)).$$ 
On a vu que $\HHm$ est toujours réductible lorsque $G=GL_n(k)$ et $d \geq 2n$ (proposition \ref{noIrred}).

\begin{question5}
Si $G=O_n(k)$, $SO_n(k)$ ou $Sp_{2n}(k)$ et si $d \geq n$, est-ce que $\HHm$ est réductible? 
\end{question5}
\begin{question6}
Est-ce que la composante principale $\HHmp$ est toujours lisse?
\end{question6}

Lorsque $n \geq 3$, je pense que $\HHm$ est réductible par analogie avec le cas des groupes finis. Pour le prouver, il suffirait par exemple de montrer que la fibre du morphisme $\rho$ (défini dans les sections \ref{MropRREDOn} et \ref{sssympSpn}) en un point de l'orbite ouverte est non-vide. Malheureusement, je n'ai pas été capable de déterminer cette fibre en général. 
Quant à la lissité de $\HHmp$, le principe de réduction nous dit qu'elle est équivalente à la lissité de $\HHp$ dans le cas classique. 
 
Comme dans le cas classique, on a été amené à négliger certains cas. Par exemple si $G=GL_n(k)$, $d<2n$ et $d$ est impair, alors on ne connaît pas $h_s$; je n'ai donc pas pu déterminer des exemples de schémas de Hilbert invariants pour cette configuration (à l'exception du cas trivial $d=1$).
 
Enfin, il serait intéresser de déterminer $\HHm$ pour l'opération de $SL_n(k)$ dans $\mu^{-1}(0) \subset V^{\oplus d} \oplus V^{* \oplus d}$. Ce cas n'a pas été traité dans ce mémoire (excepté pour $n=2$) essentiellement par manque de temps.

\section*{Déterminer la composante principale à l'aide de la théorie des plongements des espaces homogènes sphériques}

Dans le cas classique, la variété $W/\!/G$ est normale et, dans chaque cas, il est connu qu'elle admet une orbite ouverte pour l'opération de $B'$. De même, dans le cas symplectique, les composantes irréductibles de $\mu^{-1}(0)/\!/G$ sont normales et elles admettent une orbite ouverte pour l'opération de $B'$ d'après \cite[§4.2 et §4.3]{Pany}. Autrement dit, tous les quotients que l'on a regardé dans ce mémoire sont des \textit{variétés sphériques} (ou des réunions de variétés sphériques dans la section \ref{sektionsympSpn}). 
On rappelle qu'une $G'$-variété $X$ est dite \textit{sphérique} si $X$ est normale et si $X$ contient une orbite ouverte pour l'opération de $B'$. En particulier, un espace $G'$-homogène $U$ est sphérique s'il contient une orbite ouverte pour l'opération de $B'$. On dit que $X$ est un \textit{plongement sphérique} de $U$ si $X$ est une variété sphérique qui admet une $G'$-orbite ouverte isomorphe à $U$. A tout plongement sphérique, on peut associer un objet combinatoire appelé \textit{éventail colorié} qui classifie le plongement. En particulier, si $G'=B'$ est un tore, alors on retrouve la théorie des variétés toriques. Pour une introduction à la théorie des plongements sphériques, le lecteur peut par exemple consulter \cite{Tim}. 

Tous les quotients qui apparaissent dans ce mémoire sont des plongements sphériques connus (variétés déterminantielles, adhérences d'orbites nilpotentes,...) et on peut donc aisément déterminer l'éventail colorié associé. Par ailleurs, le morphisme de Hilbert-Chow $\gamma:\ \HHp \rightarrow W/\!/G$ est $G'$-équivariant, birationnel et propre. Donc $\HHp$ contient une orbite ouverte qui est un espace homogène sphérique. On souhaite utiliser la théorie des plongements sphériques pour déterminer $\HHp$. Malheureusement, on ne sait pas si $\HHp$ est normale et il faut donc considérer sa normalisation $\widetilde{\HHp}$; celle-ci est un plongement sphérique de l'orbite ouverte de $W/\!/G$. Le morphisme $\widetilde{\gamma}:  \widetilde{\HHp} \rightarrow W/\!/G$ obtenu en composant $\gamma$ avec le morphisme de normalisation $\widetilde{\HHp} \rightarrow \HHp$ vérifie les mêmes propriétés que $\gamma$. Si $\HHp$ admet une unique orbite fermée $F$ isomorphe à $G'/B'$, on montre que l'éventail colorié de $\widetilde{\HHp}$ est entièrement déterminé par celui  de $W/\!/G$. En général, la situation est plus compliquée, mais on peut encore espérer décrire (au moins partiellement) l'éventail colorié de $\widetilde{\HHp}$ à partir de celui de $W/\!/G$.

Cette méthode offre une approche possible pour étudier la composante principale $\HHp$ lorsque $\HH$ est réductible ou non-réduit. Cependant, il semble que l'on ne puisse pas faire mieux que de décrire la normalisation de $\HHp$. D'où la 

\begin{question7}
Est-ce que $\HHp$ est normale?
\end{question7}

On peut bien sûr reprendre tout ce qui précède en remplaçant $W$ par $\mu^{-1}(0)$ et $\HHp$ par $\HHmp$. Je pense que cette approche du schéma de Hilbert invariant par la théorie des plongements des espaces homogènes sphériques est prometteuse et j'espère la développer ultérieurement.

\appendix

\chapter{Résolutions crépantes de \texorpdfstring{$W/\!/G$}{W//G} et \texorpdfstring{$\text{ résolutions symplectiques de }\mu^{-1}(0)/\!/G$}{résolutions symplectiques de W///G}} \label{appendiceA}

\section{Résolutions crépantes de \texorpdfstring{$W/\!/G$}{W//G}} \label{appendice1}

\subsection{Généralités}

Soit $X$ une variété de Cohen-Macaulay, alors $X$ admet un faisceau dualisant (unique à isomorphisme près) que l'on note $\omega_X$. On dit que $X$ est de Gorenstein si $X$ est de Cohen-Macaulay et si $\omega_X$ est inversible. Si de plus $X$ est lisse, alors son faisceau dualisant coïncide avec son faisceau canonique $\OO_X(K_X)$. Toutes les variétés quotients $W/\!/G$ étudiées dans les situations 1 à 5 sont de Cohen-Macaulay, mais elles ne sont pas toujours de Gorenstein. De plus, ce sont toutes des cônes affines, c'est-à-dire que $\Gm$ opère dans $W/\!/G$ avec un unique point fixe (appelé sommet du cône et noté $0$) et pas d'autre orbite fermée. Le lemme qui suit nous fournit un critère pour déterminer si un cône affine est de Gorenstein. 

\begin{lemme} \label{critereGor}
Soit $X$ un cône affine de Cohen-Macaulay. Alors on a l'équivalence
$$X \text{ est de Gorenstein } \Leftrightarrow \omega_X \cong \OO_X.$$
\end{lemme}

\begin{proof}
L'implication $\Leftarrow$ est immédiate. Montrons l'autre implication.
L'opération de $\Gm$ dans $X$ munit la $k$-algèbre $R:=k[X]$ d'une graduation:
$$R= \bigoplus_{i \geq 0} R_i$$
où $\Gm$ opère dans $R_i$ par $t.f=t^if$ pour tout $t \in \Gm$ et pour tout $f \in R_i$.\\
De même, le $R$-module $M:=\omega_X(X)$ est un $R$-module gradué:
$$M= \bigoplus_{j \geq 0} M_j \text{ avec pour tous } i,j \geq 0,\ R_i.M_j \subset M_{i+j}.$$
On souhaite montrer que $M \cong R$ comme $R$-module. On note $R_+:=\bigoplus_{i \geq 1} R_i$ l'idéal maximal homogène de $R$ et soit $\psi:\ M \rightarrow M/R_+ M \cong k$ le morphisme de passage au quotient. 
Il existe $\tilde{m} \in M$, homogène, tel que $\psi(\tilde{m})=1$. On considère l'application 
$$\begin{array}{ccccc}
\phi & : & R & \to & M \\
& & r & \mapsto & r.\tilde{m} \\
\end{array}$$
Alors $\phi$ est clairement un morphisme injectif de $R$-modules gradués et $\phi$ induit un isomorphisme $R/R_+ \rightarrow M/R_+M$. Donc, d'après le lemme de Nakayama gradué (\cite[Exercise 4.6]{Ei}), le morphisme $\phi$ est surjectif, donc c'est un isomorphisme de $R$-modules et le résultat s'ensuit. 
\end{proof}

Soient $X$ une variété de Gorenstein et $f:\ Y \rightarrow X$ une résolution des singularités de $X$. La résolution $f$ est dite crépante si $f$ préserve le faisceau dualisant de $X$, c'est-à-dire si $f^*(\omega_X) \cong \omega_{Y}$, où $\omega_X$ (resp.  $\omega_{Y}$) est le faisceau dualisant de $X$ (resp. de $Y$). Signalons qu'une variété $X$ n'admet pas toujours de résolution crépante et lorsqu'elle en admet, il peut en exister plusieurs non isomorphes. On conseille au lecteur qui aimerait en savoir plus sur les résolutions crépantes de consulter l'article d'exposition \cite{Re}. 

On souhaite déterminer les liens entre le morphisme de Hilbert-Chow $\gamma:\ \HH \rightarrow W/\!/G$ et les résolutions crépantes (lorsqu'elles existent) de $W/\!/G$ dans les situations 1,2,3 et 5. Nous aurons besoin du

\begin{lemme}  \label{pascrepant3}
Soient $X$ une variété singulière, normale et de Gorenstein, et $Y_1$, $Y_2$ des variétés lisses telles que l'on ait:
\begin{equation*}
\xymatrix{ Y_2  \ar[d]^{f_2}  \ar@/_2pc/[dd]_{g}  \\
          Y_1   \ar[d]^{f_1}  \\
          X }
\end{equation*}
où $g$ et $f_1$ sont des résolutions de $X$.\\
Si $g$ est crépante, alors $f_1$ est crépante et $f_2$ est un isomorphisme. 
\end{lemme} 

\begin{proof}
La formule de ramification nous donne les égalités suivantes dans les groupes des classes de diviseurs $Cl(Y_1)$ et $Cl(Y_2)$ respectivement:
\begin{align*} 
K_{Y_1} &= f_1^*(K_X) + E_1,\\
K_{Y_2} &= f_2^*(K_{Y_1}) + E_2,
\end{align*}
où $E_1$ (resp. $E_2$) est un diviseur à support dans le lieu exceptionnel de $f_1$ (resp. de $f_2$). On en déduit que
\begin{equation} \label{discrep13}
K_{Y_2} = g^*(K_X)+ f_2^*(E_1) + E_2.
\end{equation}
Or, $g$ est crépante par hypothèse, donc 
\begin{equation} \label{discrep14}
f_2^*(E_1) + E_2=0. 
\end{equation}
Puis, le poussé en avant préserve l'équivalence linéaire, donc
$${f_2}_*(f_2^*(E_1) + E_2)={f_2}_*(f_2^*(E_1))+{f_2}_*(E_2)=E_1.$$
Mais
$${f_2}_*(f_2^*(E_1) + E_2)={f_2}_*(0)=0$$ 
d'après (\ref{discrep14}).\\
Il s'ensuit que $E_1=0$, et donc $f_1$ est crépante. L'égalité (\ref{discrep14}) implique alors que $E_2=0$. Enfin, $f_2$ est un morphisme birationnel et surjectif entre deux variété lisses dont le diviseur de ramification est trivial, c'est donc un isomorphisme d'après \cite[§1.41]{Deb}.  
\end{proof}

\begin{remarque}
Ce lemme justifie pourquoi les résolutions crépantes sont parfois qualifiées de \textit{résolutions minimales}.
\end{remarque}

\subsection{Situation 1}

On se place dans la situation 1 et on reprend les notations de la section \ref{casSln}. On suppose que la variété $W/\!/G$ est singulière, c'est-à-dire que $1<n<n'-1$. On a vu que $W/\!/G$ est de Gorenstein et que $\gamma:\ \HH \rightarrow W/\!/G$ est une résolution des singularités de $W/\!/G$. Malheureusement, on a la 

\begin{proposition}  \label{res_crepantes_SLn}
La résolution $\gamma$ n'est pas crépante.
\end{proposition}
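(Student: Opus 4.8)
Le plan est de calculer la discrépance de l'unique diviseur exceptionnel de $\gamma$ et de vérifier qu'elle est non nulle. D'après le théorème \ref{SLLn}, on identifie $\HH$ à l'espace total du fibré en droites $\OO_{\Gr(n,V'^*)}(-1)$ et $\gamma$ à l'éclatement de $C(\Gr(n,V'^*))$ en son sommet $0$. On note $Y:=\Gr(n,V'^*)$, $L:=\OO_Y(1)$ le fibré de Plücker (très ample, donc $c_1(L) \neq 0$ dans $\Pic(Y)$) et $E \cong Y$ le diviseur exceptionnel de $\gamma$, qui s'identifie à la section nulle de $\OO_Y(-1)$. Comme $\gamma$ est un isomorphisme au-dessus de $C(\Gr(n,V'^*)) \backslash \{0\}$ et que $\gamma^{-1}(0)=E$, le diviseur $E$ est l'unique diviseur exceptionnel, et $\gamma$ est crépante si et seulement si la discrépance $a$ définie par la formule de ramification $K_{\HH}=\gamma^* K_{W/\!/G}+ a E$ est nulle. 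Ici $K_{W/\!/G}$ est de Cartier puisque $W/\!/G$ est de Gorenstein (le groupe des caractères de $SL(V)$ étant trivial), de sorte que cette formule a un sens et que $a \in \ZZZ$.

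Pour calculer $a$, je restreindrais cette égalité au diviseur $E$. D'un côté, la formule d'adjonction donne $K_{\HH}|_E=K_Y-E|_E$, et comme $N_{E/\HH} \cong \OO_Y(-1)$ (fibré normal de la section nulle), on a $E|_E=c_1(\OO_Y(-1))=-c_1(L)$, d'où
$$K_{\HH}|_E=K_Y+c_1(L).$$
Le point clé est alors le calcul du fibré canonique de la grassmannienne: à partir de la suite tautologique $0 \rightarrow S \rightarrow \OO_Y^{n'} \rightarrow Q \rightarrow 0$ et de l'isomorphisme $T_Y \cong \Hom(S,Q)$, on obtient $\omega_Y \cong \OO_Y(-n')$, c'est-à-dire $K_Y=-n'\, c_1(L)$ (la grassmannienne $\Gr(n,n')$ est de Fano d'indice $n'$). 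De l'autre côté, $\gamma(E)=\{0\}$ est un point, donc $(\gamma^* K_{W/\!/G})|_E=0$ (le tiré en arrière d'un fibré en droites depuis un point est trivial), tandis que $(aE)|_E=-a\, c_1(L)$. En égalant les deux expressions obtenues pour $K_{\HH}|_E$, il vient
$$-(n'-1)\, c_1(L)=-a\, c_1(L),$$
et donc $a=n'-1$ puisque $c_1(L) \neq 0$.

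Comme on est dans le cas $1<n<n'-1$, on a $n' \geq 4$, donc $a=n'-1 \geq 3 \neq 0$: la résolution $\gamma$ n'est pas crépante. La principale difficulté technique réside dans la justification soigneuse de la formule de discrépance dans ce cadre singulier — en particulier le fait que $(\gamma^* K_{W/\!/G})|_E$ soit trivial (ce qui repose sur la contraction de $E$ en un point et sur le caractère de Cartier de $K_{W/\!/G}$), ainsi que le contrôle des signes dans l'adjonction et dans l'identification $N_{E/\HH} \cong \OO_Y(-1)$. On peut d'ailleurs présenter ce calcul comme cas particulier d'un énoncé général, à savoir que l'éclatement du sommet d'un cône affine normal et de Gorenstein sur une variété projective lisse $Y \subset \PP^N$ vérifiant $\omega_Y \cong L^{\otimes r}$ est crépant si et seulement si $r=-1$; cela clarifie la structure de l'argument et pourra être réutilisé pour les autres situations de cette annexe.
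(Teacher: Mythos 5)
Votre preuve est correcte et suit essentiellement la même démarche que celle du mémoire : restriction au diviseur exceptionnel identifié à la section nulle, formule d'adjonction avec $N_{E/\HH} \cong \OO_{\Gr(n,V'^*)}(-1)$, et calcul du fibré canonique $\omega_{\Gr(n,V'^*)} \cong \OO_{\Gr(n,V'^*)}(-n')$. La seule différence est de présentation — vous calculez explicitement la discrépance $a=n'-1$ là où le mémoire raisonne par l'absurde en utilisant $\omega_{W/\!/G} \cong \OO_{W/\!/G}$ (lemme \ref{critereGor}) pour trivialiser $\gamma^*\omega_{W/\!/G}$ — mais les ingrédients sont identiques.
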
 

\begin{proof}
On identifie $\gamma:\HH  \rightarrow W/\!/G$ à l'éclatement $f:\ Bl_0(W/\!/G) \rightarrow W/\!/G$ grâce au théorème \ref{SLLn}, et on considère le diagramme 
$$\xymatrix{ &  Bl_0(W/\!/G) \ar@{->>}[ld]^{f} \ar@{->>}[rd]_{p} \\   W/\!/G && \PP(W/\!/G) \ar@/_/@{.>}[lu]_{\sigma} }$$
où $p$ est la projection naturelle et $\sigma$ est la section nulle. Le diviseur exceptionnel de $f$, noté $D_f$, s'identifie à $\PP(W/\!/G)$ grâce à $\sigma$.
La formule d'adjonction (\cite[Proposition 8.20]{Ha}) donne:
$$\omega_{D_f} \cong \omega_{Bl_0(W/\!/G)} \otimes \OO(D_f) \otimes \OO_{D_f}.$$
On rappelle que $Bl_0(W/\!/G)$ est naturellement plongée dans $\Lambda^n V'^* \times \PP(\Lambda^n V'^*)$. D'après \cite[Proposition 6.18]{Ha}, on a $\OO(-D_f) \cong \II_{D_f}$, où $\II_{D_f}$ est le faisceau d'idéaux de $D_f$ dans $Bl_0(W//G)$. Mais $D_f$ est l'image de la section nulle dans $Bl_0(W/\!/G)$, donc ${\II_{D_f}}_{|D_f} \cong \OO_{D_f}(1)$. Il s'ensuit que 
$$\omega_{D_f} \cong \omega_{Bl_0(W/\!/G)} \otimes \OO_{D_f}(-1).$$
On suppose que $f$ est crépante, alors 
$$\omega_{Bl_0(W/\!/G)} \cong f^*(\omega_{W/\!/G}) \cong f^*(\OO_{W/\!/G}) \cong \OO_{Bl_0(W/\!/G)}$$ 
où le deuxième isomorphisme découle du lemme \ref{critereGor}.\\
On a donc $\omega_{D_f} \cong \OO_{D_f}(-1)$. Mais $D_f \cong \PP(W/\!/G)=\Gr(n,V'^*)$ et donc 
$$\omega_{D_f} \cong \omega_{\Gr(n,V'^*)} \cong \OO_{\Gr(n,V'^*)}(-n').$$ 
Or, $n'>1$, ce qui contredit l'hypothèse que $f$ est crépante. 
\end{proof}

\subsection{Situation 2}  \label{GroGLn}

On se place dans la situation 2 et on reprend les notations de la section \ref{GLngénéral}. On suppose que la variété déterminantielle $W/\!/G=\Hom(V_1,V_2)^{\leq n}$ est singulière, c'est-à-dire que $n_1,n_2 >n$. On a vu que $W/\!/G$ est de Cohen-Macaulay et que $\gamma:\ \HH \rightarrow W/\!/G$ est une résolution des singularités de $W/\!/G$ lorsque $n \leq 2$.
On s'intéresse aux résolutions crépantes (éventuelles) de $W/\!/G$, donc la première chose à faire est de déterminer pour quelles valeurs de $n,n_1,n_2$ la variété $W/\!/G$ est de Gorenstein. La proposition qui suit est un résultat connu (\cite[Theorem 5.5.6]{Sv}). On en redonne ici une preuve dans le but d'être complet.

\begin{proposition}  \label{det_gorenstein}
La variété $W/\!/G$ est de Gorenstein si et seulement si $n_1=n_2$.
\end{proposition}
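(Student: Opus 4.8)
The claim is that the determinantal variety $W/\!/G = \Hom(V_1,V_2)^{\leq n}$ (with $n_1, n_2 > n$) is Gorenstein if and only if $n_1 = n_2$.

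Let me think about the approach.

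**Setting up the tools:**

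From the excerpt, I know:
- $W/\!/G$ is Cohen-Macaulay (stated, citing ACGH).
- $W/\!/G$ is an affine cone: $\Gm$ acts with a single fixed point $0$ and no other closed orbit.
- Lemma A.2 (critereGor): For an affine cone $X$ that is Cohen-Macaulay, $X$ is Gorenstein $\iff \omega_X \cong \OO_X$.
- The dimension is $n n_1 + n n_2 - n^2$.

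So the question reduces to computing the dualizing sheaf/module $\omega_X$ of the determinantal variety and determining when it's trivial as a graded module.

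**The plan:**

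First I would invoke Lemma A.2 to reduce the Gorenstein property to the condition $\omega_X \cong \OO_X$ (trivial dualizing module). Since $X$ is an affine cone, $\omega_X$ corresponds to a graded $R$-module $\omega = \omega_R$ where $R = k[X] = k[\Hom(V_1,V_2)]/I_{n+1}$, the coordinate ring with $I_{n+1}$ the ideal of $(n+1) \times (n+1)$ minors. The key point is that $\omega_R$, being a rank-one reflexive (indeed Cohen-Macaulay, since $R$ is CM) graded module, is trivial as a graded $R$-module precisely when it is free of rank one with the correct grading shift, i.e. $\omega_R \cong R(a)$ for some integer $a$, AND that shift $a$ is zero. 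Actually for the cone, Lemma A.2 says Gorenstein $\iff \omega_R \cong R$ ungraded-isomorphically; but since both are graded, this forces the grading shift to vanish.

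**The main computation (the key step):**

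The essential step is to compute the dualizing module of the generic determinantal variety explicitly. The standard and cleanest route is through the Eagon–Northcott / Lascoux resolution, or via the canonical module formula for determinantal rings. I would use the known structure: for $R = k[\Hom(V_1,V_2)]/I_{n+1}$ (rank $\leq n$ locus), the canonical module is
$$\omega_R \cong \left(\det(V_1)^{\otimes ?} \otimes \det(V_2)^{\otimes ?}\right) \otimes R(a)$$
for appropriate twists. Concretely, I would show that $\omega_R$ as a $GL(V_1) \times GL(V_2)$-equivariant graded module is a rank-one module whose $GL(V_1)\times GL(V_2)$-character is $\det(V_1)^{n_1-n} \otimes \det(V_2)^{-(n_2-n)}$ (up to sign conventions) and whose grading shift is $a = n(n_1 - n_2)$ or similar. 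The cleanest way to pin this down is to use the Knörrer/Bruns–Vetter description (or Weyman's geometric technique, which the paper already cites via \cite{Wey}): the variety $X$ has a desingularization $Z_n = \Hom(\underline{V_1}, T) \oplus \dots$ over a Grassmannian (exactly the spaces $Z_m$ constructed earlier in the excerpt), and one computes $\omega_X$ by pushing forward $\omega_{Z_n}$ and using the relative canonical bundle of the Grassmannian bundle together with the adjunction/vanishing from Kempf's theorem. The grading shift of $\omega_R$ is then read off from the character.

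**Concluding:**

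Once $\omega_R \cong R(a) \otimes \chi$ where $\chi = \det(V_1)^{\otimes p} \otimes \det(V_2)^{\otimes q}$ is a one-dimensional $G'$-representation, the condition $\omega_R \cong R$ forces both the grading shift $a$ and the twists $p, q$ to vanish. I expect the computation to yield $p = -(n_1 - n)$ (coming from $\det(V_1)$ twists) and $q = n_2 - n$ (from $\det(V_2)$), so that triviality demands $n_1 - n = 0 = n_2 - n$ — which is too strong. Therefore I must track carefully: what actually matters for Lemma A.2 is triviality as an $R$-module, and the $G'$-character is an additional refinement. The correct statement should be that $\omega_R$ is free (hence $X$ Gorenstein) exactly when the "asymmetry" between the two defining factors vanishes, which is $n_1 = n_2$; when $n_1 = n_2$ the $\det(V_1)$ and $\det(V_2)$ twists cancel against each other (since $\det(V_1)$ and $\det(V_2)$ appear to equal powers) and the grading shift also vanishes, giving $\omega_R \cong R$. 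The hard part — and the main obstacle — is getting the exact grading shift and the exact power of each determinant character right; a sign or off-by-one error there would give the wrong inequality. I would therefore double-check the final character against the known dimension formula and against the two sanity checks: the case $n_1 = n_2$ (which must give Gorenstein) and a small asymmetric example such as the ordinary determinant hypersurface versus a non-hypersurface determinantal variety, where Gorenstein-ness is classically understood.
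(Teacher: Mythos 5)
Your overall strategy (reduce to the triviality of the dualizing sheaf via the affine-cone lemma, then compute that sheaf equivariantly) is the right one, and your proposed route through the desingularization over the Grassmannian is a legitimate alternative to what the paper does. But as written the proof has a genuine gap: the central computation is never carried out. You state that $\omega_R$ "should be" $R(a)\otimes\det(V_1)^{\otimes p}\otimes\det(V_2)^{\otimes q}$ for some twists, guess values of $p,q$ that would give the wrong answer ($n_1=n=n_2$), and then assert without justification that "the correct statement should be" Gorenstein iff $n_1=n_2$. The issue you run into — that triviality as an $R$-module is weaker than triviality of the equivariant structure — is precisely the crux, and you leave it unresolved.

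The paper resolves it as follows. Restricting to the open orbit $U_n\cong G'/H$ (legitimate because the complement has codimension $\ge 2$ and $\omega$ is reflexive), the canonical bundle is the $G'$-homogeneous line bundle $E_{\chi_0}=G'\times^H \Aff$ attached to the character $\chi_0$ of $H$ on $\Lambda^{m}(T_{J_n}U_n)$, computed explicitly to be $\det(A)^{n_1-n_2}\det(C_1)^{-n}\det(C_2)^{n}$. The correct triviality criterion is not $\chi_0=1$ but $\chi_0\in X(G')|_H$, coming from the exact sequence $X(G')\to X(H)\to \Pic(G'/H)\to \Pic(G')=0$; here $X(G')|_H=\left\langle \det(A)\det(C_1),\ \det(A)\det(C_2)\right\rangle$, and $\chi_0$ lies in this subgroup exactly when $n_1=n_2$. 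Without this criterion (or the equivalent explicit computation of $\omega_R$ as a graded module via the Eagon--Northcott or Lascoux resolution, which you mention but do not execute), the "if" direction and the "only if" direction are both unproved.
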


\begin{proof}
Le groupe multiplicatif $\Gm$ opère dans $W/\!/G={\Hom}(V_1,V_2)^{\leq n}$ par:
$$\forall \alpha \in \Gm,\ \forall f \in {\Hom}(V_1,V_2)^{\leq n},\ \alpha.f:=\alpha f$$
Donc $W/\!/G$ est un cône affine et donc, d'après le lemme \ref{critereGor}, il suffit de déterminer pour quelles valeurs de $n,\ n_1,\ n_2$ le faisceau dualisant $\omega_{W/\!/G}$ est trivial.\\ 
On note $i$ l'immersion ouverte $\ U_n \hookrightarrow W/\!/G$. Si $\omega_{W/\!/G}$ est trivial, alors $\omega_{U_n}=i^* \omega_{W/\!/G}$ est trivial. Réciproquement, si $\omega_{U_n}$ est trivial, alors comme $(W/\!/G) \backslash U_n$ est de codimension $\geq 2$, le faisceau réflexif $\omega_{W/\!/G}$ est trivial. On est donc ramené à déterminer pour quelles valeurs de $n,n_1,n_2$ le fibré canonique $\omega_{U_n}$ est trivial.\\
Soient $J_n =\begin{bmatrix}
I_n  &0_{n,n_1-n} \\
0_{n_2-n,n}   &0_{n_2-n,n_1-n} 
\end{bmatrix} \in U_n$ et 
\begin{align*}
H:&=\Stab_{G'}(J_n) \\
  &=\left\{ \left( \begin{bmatrix}
A  &0 \\
B_1   &C_1 \end{bmatrix},\begin{bmatrix}
A  &B_2 \\
0  &C_2 \end{bmatrix}\right) \middle| 
    \begin{array}{l}
      A \in GL_n(k), C_1 \in GL_{n_1-n}(k), C_2 \in GL_{n_2-n}(k),\\
      B_1 \in M_{n_1-n,n}(k), B_2 \in M_{n,n_2-n}(k) 
    \end{array} \right\}.
\end{align*}
Le fibré $\omega_{U_n}$ est $G'$-linéarisé sur $U_n \cong G'/H$, donc l'espace total du fibré $\omega_{U_n}$ est $G'$-isomorphe à la variété 
$$E_{\chi_0}:=G' \times^H \Aff$$ 
où $H$ opère $\Aff$ par un caractère $\chi_0 \in \XX(H)$; celui-ci est le caractère de $H$ dans $\Lambda^m (T_{J_n} U_n)$, où $T_{J_n} U_n$ est l'espace tangent à $U_n$ en $J_n$ et $m:=\dim(U_n)$.\\

\noindent \underline{Fait:} le caractère $\chi_0 \in \XX(H)$ est donné par 
$$\begin{array}{ccccc}
\chi_0 & : & H & \longrightarrow & \Gm \\
& & \left( \begin{bmatrix}
A  &0 \\
B_1   &C_1 \end{bmatrix},\begin{bmatrix}
A  &B_2 \\
0  &C_2 \end{bmatrix}\right) & \mapsto & \det(A)^{n_1-n_2} \det(C_1)^{-n} \det(C_2)^{n}. \\
\end{array}$$

Comme $\chi_0$ est entièrement déterminé par sa restriction à n'importe quel sous-groupe de Levi de $H$, on fixe
$$L:=\left\{ \left( \begin{bmatrix}
A  &0 \\
0   &C_1 \end{bmatrix},\begin{bmatrix}
A  &0 \\
0  &C_2 \end{bmatrix}\right)\ \middle| \ A \in GL_n(k), C_1 \in GL_{n_1-n}(k), C_2 \in GL_{n_2-n}(k) \right\}$$
un tel sous-groupe de Levi. L'espace tangent $T_{J_n} U_n$ s'identifie à $\gg'/\hh$ comme $L$-module, où $\gg'$ et $\hh$ sont les algèbres de Lie de $G'$ et $H$ respectivement. On commence donc par déterminer $\gg'$ et $\hh$ comme $L$-modules. Soient 
\begin{align*}
&E_1:=\left\langle e_1, \ldots, e_n \right\rangle \subset V_1,\\
&E_2:=\left\langle f_1, \ldots, f_n \right\rangle \subset V_2,\\
&F_1:=\left\langle e_{n+1}, \ldots, e_{n_1} \right\rangle \subset V_1,\\
&F_2:=\left\langle f_{n+1}, \ldots, f_{n_2} \right\rangle \subset V_2,
\end{align*}
qui sont des $L$-sous-modules de $V_1$ et $V_2$. On a les isomorphismes de $L$-modules suivants: 
\begin{align*}
&V_1 \cong E_1 \oplus F_1,\\
&V_2 \cong E_2 \oplus F_2,\\
&E_1 \cong E_2,
\end{align*}
et donc
\begin{align*}
\gg' \cong &\End(V_1) \oplus \End(V_2) \\
         \cong & \End(E_1) \oplus \End(F_1) \oplus \Hom(E_1,F_1) \oplus \Hom(F_1,E_1) \\
               & \oplus \End(E_2) \oplus \End(F_2) \oplus \Hom(E_2,F_2) \oplus \Hom(F_2,E_2), \\
\hh \cong &\End(E_1) \oplus \Hom(E_1,F_1) \oplus \End(F_1) \\
              &\oplus \Hom(F_2,E_2) \oplus \End(F_2).
\end{align*}
On en déduit que 
$$T_{J_n} U_n \cong \End(E_1) \oplus \Hom(E_2,F_2) \oplus \Hom(F_1,E_1)$$ 
comme $L$-module. D'où
\begin{align*}
\Lambda^m (T_{J_n} U_n) \cong &\Lambda^{n^2} (\End(E_1)) \otimes \Lambda^{n(n_2-n)} (\Hom(E_2,F_2)) \otimes \Lambda^{n(n_1-n)} (\Hom(F_1,E_1))\\
                       \cong  & (\Lambda^n (E_2^*))^{\otimes n_2-n} \otimes (\Lambda^{n_2-n} (F_2))^{\otimes n} \otimes (\Lambda^n (E_1))^{\otimes n_1-n} \otimes (\Lambda^{n_1-n} (F_1^*))^{\otimes n}\\
                       \cong  &(\Lambda^n (E_2^*))^{\otimes n_2-n} \otimes (\Lambda^n (E_1))^{\otimes n_1-n} \otimes (\Lambda^{n_2-n} (F_2))^{\otimes n} \otimes (\Lambda^{n_1-n} (F_1^*))^{\otimes n}\\
                       \cong  &(\Lambda^n (E_1))^{\otimes n_1-n_2} \otimes (\Lambda^{n_2-n} (F_2))^{\otimes n} \otimes (\Lambda^{n_1-n} (F_1^*))^{\otimes n}
\end{align*} 
comme $L$-module. Le fait annoncé s'ensuit.\\ 

On revient maintenant à la démonstration de la proposition \ref{det_gorenstein}. On cherche pour quelles valeurs de $n,n_1,n_2$ la $G'$-variété $E_{\chi_0}$ est l'espace total du fibré trivial de $U_n$. D'après \cite[Proposition 3.2]{KKV}, on a la suite exacte de groupes suivante:
$$ X(G) \stackrel{res}{\longrightarrow} X(H) \stackrel{\epsilon}{\longrightarrow} \Pic(G'/H) \stackrel{p^*}{\longrightarrow} \Pic(G')$$
avec
\begin{itemize} \renewcommand{\labelitemi}{$\bullet$}
\item $\Pic(G')$ et $\Pic(G'/H)$ les groupes de Picard de $G'$ et $G'/H$ respectivement,
\item $X(G')$ et $X(H)$ les groupes des caractères de $G'$ et de $H$ respectivement,
\item $res$ l'homomorphisme de restriction,
\item $p^*$ l'homomorphisme induit par le morphisme de passage au quotient $p:\ G' \rightarrow G'/H$,
\item $\epsilon$ l'homomorphisme canonique qui à un caractère $\chi \in X(H)$ associe le fibré en droites $E_{\chi}$ qui est défini comme précédement.
\end{itemize} 
Or $\Pic(G')=0$ donc $\Pic(G'/H) \cong X(H)/{X(G')}_{|H}$. Il s'ensuit que $\omega_{U_n}$ est trivial si et seulement si $\chi_0 \in {X(G')}_{|H}=\left\langle  \det(A) \det(C_1), \det(A) \det(C_2) \right\rangle$, c'est-à-dire si et seulement si $n_1=n_2$.      
\end{proof}


On suppose dorénavant que $n_1=n_2$, alors la variété $W/\!/G$ est de Gorenstein d'après la proposition \ref{det_gorenstein}, et l'on va en construire des résolutions crépantes. On considère le diagramme suivant:
$$\xymatrix{ &  R_1 \ar[ld]_{p_{12}} \ar[rd]^{p_{11}} && R_2 \ar[ld]_{p_{21}} \ar[rd]^{p_{22}} & \\   
            \Gr(n,V_1^*) &&   W/\!/G && \Gr(n,V_2)  }$$
où
\begin{align*}
R_1:&=\left \{(M,L_1) \in W/\!/G \times \Gr(n,V_1^*) \ \mid  \ L_1^{\perp} \subset \Ker(M) \right \},\\
R_2:&=\left \{(M,L_2) \in W/\!/G \times \Gr(n,V_2) \ \mid  \ \Im(M) \subset L_2 \right \},
\end{align*}
et les $p_{ij}$ sont les projections naturelles. On note $T_1$ le fibré tautologique de $\Gr(n,V_1^*)$, alors $R_1$ s'identifie à l'espace total du fibré vectoriel $\Hom(\underline{V_1}/T_1^{\perp},\underline{V_2}) \cong T_1 \otimes \underline{V_2}$ au dessus de $\Gr(n,V_1^*)$. De même, si l'on note $T_2$ le fibré tautologique de $\Gr(n,V_2)$, alors $R_2$ s'identifie à l'espace total du fibré vectoriel $\Hom(\underline{V_1},T_2) \cong \underline{V_1^*} \otimes T_2$ au dessus de $\Gr(n,V_2)$. 

Les variétés $R_1$ et $R_2$ sont lisses et donc les morphismes $p_{11}$ et $p_{21}$ sont des résolutions des singularités de $W/\!/G$.     
Le lieu exceptionnel $X_1:=p_{11}^{-1}(\overline{U_{n-1}})$ (resp. $X_2:=p_{21}^{-1}(\overline{U_{n-1}})$) de $p_{11}$ (resp. de $p_{21}$) est de codimension $n_2-n+1 \geq 2$ (resp. $n_1-n+1\geq 2$). Donc les résolutions $p_{11}$ et $p_{21}$ sont crépantes.

On considère maintenant $R:=R_1 \times_{W/\!/G} R_2$. Alors 
\begin{align*}
R& = \left\{(M,L_1,L_2) \in W/\!/G \times \Gr(n,V_1^*) \times \Gr(n,V_2)\ |\ L_1^{\perp} \subset \Ker(M) \text{ et } \Im(M) \subset L_2 \right\}\\
  &\cong \Hom(\underline{V_1}/T_1^{\perp},T_2) \\
  &\cong T_1 \otimes T_2
\end{align*} 
qui est l'espace total d'un fibré vectoriel au dessus de $\Gr(n,V_1^*) \times \Gr(n,V_2)$, donc une variété lisse. On a le diagramme cartésien: 
\begin{eqnarray} \label{res_crep_casnn}
\xymatrix{
 &R \ar@{->>}[dd]^\phi \ar@{->>}[ld]_{\phi_1} \ar@{->>}[rd]^{\phi_2} & \\
    R_1 \ar@{->>}[rd]_(0.4){p_{11}} & & R_2 \ar@{->>}[ld]^(0.4){p_{21}} \\
     &W/\!/G &
  }
\end{eqnarray}
où $\phi,\ \phi_1$ et $\phi_2$ sont les projections naturelles. En particulier, $\phi$ est une résolution de $W/\!/G$ qui domine $p_{11}$ et $p_{21}$.

\begin{proposition}
Si $n \leq 2$, alors la résolution $\gamma:\ \HH \rightarrow W/\!/G$ n'est pas crépante.
\end{proposition}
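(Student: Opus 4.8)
The plan is to argue by contradiction, exploiting the minimality of crepant resolutions encoded in Lemma \ref{pascrepant3} together with the crepant resolution $p_{11}\colon R_1 \to W/\!/G$ constructed above. Since $n_1=n_2$, the cone $W/\!/G$ is Gorenstein (Proposition \ref{det_gorenstein}), so the notion of crepant resolution is meaningful and $R_1$ is smooth. Suppose for contradiction that $\gamma\colon \HH \to W/\!/G$ is crepant. I would then produce a morphism $\psi\colon \HH \to R_1$ between smooth varieties such that $\gamma = p_{11}\circ\psi$ and $\psi$ is \emph{not} an isomorphism; applying Lemma \ref{pascrepant3} to the tower $\HH \xrightarrow{\psi} R_1 \xrightarrow{p_{11}} W/\!/G$ would then force $\psi$ to be an isomorphism, the desired contradiction.

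To build $\psi$ I would invoke the explicit descriptions of $\HH$ obtained earlier. When $n=2$ (hence $n_1=n_2\geq 3$), Theorem \ref{casn2} and the analysis of Section \ref{masterpropositioncasn2} identify $\HH$ with $Y_1$, the blow-up of the zero section of the vector bundle $R=\Hom(\underline{V_1}/T_1^{\perp},T_2)=R_1\times_{W/\!/G}R_2$ over $\Gr(2,V_1^*)\times\Gr(2,V_2)$; this furnishes a blow-down $\beta\colon \HH\to R$ with $\gamma=\phi\circ\beta=p_{11}\circ(\phi_1\circ\beta)$, so I set $\psi:=\phi_1\circ\beta$. When $n=1$ (hence $n_1=n_2\geq 2$), Theorem \ref{Hcasn11111} gives $\HH=\HHp\cong Bl_0(W/\!/G)$, and one checks that $Bl_0(W/\!/G)$ is exactly the line bundle $R\cong\OO_{\PP(W/\!/G)}(-1)$, so that $\gamma$ is identified with $\phi$ and I take $\psi:=\phi_1$. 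In both cases $\HH$ is smooth (Corollary \ref{Hcasn2} and Section \ref{section_n_1}), so $\gamma$ is a genuine resolution and the hypotheses of Lemma \ref{pascrepant3} are satisfied.

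It then remains to see that $\psi$ is not an isomorphism, the key point being that $\psi$ contracts a divisor. The resolution $\gamma$ has an exceptional divisor $D$ lying over the vertex $0$ of the cone $W/\!/G$, namely the exceptional divisor of $\beta$ when $n=2$ and the zero section of $R$ when $n=1$, and in both cases $\dim D = \dim(W/\!/G)-1$. Since $\gamma = p_{11}\circ\psi$ and $\gamma(D)=\{0\}$, one has $\psi(D)\subseteq p_{11}^{-1}(0)=\Gr(n,V_1^*)$, whence $\dim\psi(D)\leq \dim\Gr(n,V_1^*)=n(n_1-n)$, which is strictly less than $\dim(W/\!/G)-1=n(2n_1-n)-1$ because $nn_1>1$. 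Thus $\psi$ collapses $D$ and cannot be bijective, contradicting Lemma \ref{pascrepant3}; hence $\gamma$ is not crepant. The main obstacle in writing this out is the careful identification of $\HH$ as the (iterated) blow-up of the bundle $R$ and the verification that the induced map $\psi$ really does factor $\gamma$ through the crepant resolution $p_{11}$ and contract $D$; once the explicit models of Sections \ref{section_n_1} and \ref{masterpropositioncasn2} are used, everything reduces to the elementary dimension estimate above.
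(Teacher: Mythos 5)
Your proof is correct and follows essentially the same route as the paper: both arguments hinge on Lemma \ref{pascrepant3} together with the crepant resolution $p_{11}\colon R_1\to W/\!/G$ and the identification of $\HH$ with $R$ (for $n=1$) or with the blow-up of the zero section of $R$ (for $n=2$). The only difference is organizational — the paper applies the lemma twice (first to conclude $\phi$ is not crepant via the non-isomorphism $\phi_1$, then to rule out anything factoring through $\phi$), whereas you compose the maps and apply it once, checking non-isomorphy of $\psi$ by your dimension count on the contracted divisor; both verifications are sound.
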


\begin{proof}
Les morphismes $\phi_1$ et $\phi_2$ sont surjectifs mais ne sont jamais des isomorphismes (car $n<n_1,\,n_2$), donc d'après le lemme \ref{pascrepant3}, la résolution $\phi$ n'est jamais crépante. Donc, toujours d'après le lemme \ref{pascrepant3}, si une résolution de $W/\!/G$ se factorise par $\phi$, alors elle n'est pas crépante.\\
Si $n=1$, alors $\gamma$ s'identifie à $\phi$ grâce à la proposition \ref{Hcasn11111}.\\
Si $n=2$, alors d'après le théorème \ref{casn2}, la résolution $\gamma$ s'identifie à $\phi \circ q$, où $q$ est l'éclatement de la section nulle dans $R \cong T_1 \otimes T_2$.\\
Dans les deux cas, on en déduit que $\gamma$ n'est pas crépante.
\end{proof}

\subsection{Situation 3}  \label{GroOn}

On se place dans la situation 3 et on reprend les notations de la section \ref{HclassOngeneral}. On suppose que la variété déterminantielle symétrique $W/\!/G=S^2(V'^*)^{\leq n}$ est singulière et de Gorenstein, c'est-à-dire que $n'-n=2p+1$ pour un certain entier $p$. Lorsque $n=2$, on a vu que $\gamma:\ \HH \rightarrow W/\!/G$ est une résolution. Nous allons montrer la

\begin{proposition} \label{moniose2}
Si $n=2$, alors $\gamma$ n'est pas crépante.
\end{proposition}

\begin{proof}
On commence par construire une résolution de $S^2(V'^*)^{\leq n}$ pour un $n$ général. Soient 
$$R:=\{(Q,L) \in S^2(V'^*)^{\leq n} \times \Gr(n,V'^*)\ |\ \Im(Q) \subset L\},$$ 
et $p_1$, $p_2$ les projections naturelles sur $S^2(V'^*)^{\leq n}$ et $\Gr(n,V'^*)$ respectivement. Le morphisme $p_2$ permet d'identifier $R$ à l'espace total du fibré vectoriel $S^2(T)$, où $T$ est le fibré tautologique de $\Gr(n,V'^*)$. Il s'ensuit que $R$ est une variété lisse et donc $p_1$ est une résolution de $S^2(V'^*)^{\leq n}$.\\
D'après le théorème \ref{casn2O2}, lorsque $n=2$ on a
\begin{equation*}
\xymatrix{ \HH  \ar[d]^{q}  \ar@/_2pc/[dd]_{\gamma}  \\
          R   \ar[d]^{p_1}  \\
          S^2(V'^*)^{\leq n} }
\end{equation*}  
où $q$ est l'éclatement de la section nulle dans $R \cong S^2(T)$. En particulier, $q$ n'est pas un isomorphisme et donc, d'après le lemme \ref{pascrepant3}, la résolution $\gamma$ n'est pas crépante.
\end{proof}

\begin{remarque}
On peut montrer que $p_1:\ R \rightarrow S^2(V'^*)^{\leq n}$ n'est jamais crépante. En particulier, si $n=1$, alors $\HH \cong R$ et $\gamma$, qui s'identifie à $p_1$, n'est donc pas crépante. 
\end{remarque}

\subsection{Situation 5}  \label{GroSpn}

On se place dans la situation 5 et on reprend les notations de la section \ref{schcasSymp}. On suppose que la variété déterminantielle antisymétrique $W/\!/G=\Lambda^2(V'^*)^{\leq n}$ est singulière, c'est-à-dire que $n'>n$. On a vu que $\Lambda^2(V'^*)^{\leq n}$ est toujours de Gorenstein, donc lorsque $\gamma:\ \HH \rightarrow W/\!/G$ est une résolution, il est naturel de se demander si celle-ci est crépante. Si $n=2$, alors $Sp_2(k)=SL_2(k)$ et donc $\gamma$ n'est pas crépante d'après la proposition \ref{res_crepantes_SLn}. Il reste à traiter le cas $n=4$. La proposition qui suit se démontre de manière analogue à la proposition \ref{moniose2} (il suffit essentiellement de remplacer partout $S^2(V'^*)^{\leq n}$ par $\Lambda^2(V'^*)^{\leq n}$).  

\begin{proposition} \label{moniose3}
Si $n=4$, alors $\gamma$ n'est pas crépante.
\end{proposition}

\begin{remarque}
J'ignore si les variétés quotient $W/\!/G$ qui apparaissent dans ce mémoire admettent ou non des résolutions crépantes (à l'exception de $R_1$ et de $R_2$ pour la variété déterminantielle). 
\end{remarque}

\section{Résolutions symplectiques de \texorpdfstring{$\mu^{-1}(0)/\!/G$}{W///G}} \label{appendice2}

\subsection{Généralités}

L'article d'exposition \cite{FuBB} fournit une introduction détaillée aux variétés symplectiques et à leurs résolutions. 
Soit $X$ une variété normale dont la partie lisse $X_{\mathrm{reg}}$ admet une forme symplectique $\Omega$ telle que pour n'importe quelle résolution $f:\ Y \rightarrow X$, la $2$-forme $f^*(\Omega)$ s'étend en une $2$-forme sur $Y$ tout entier. On dit alors que $(X, \Omega)$ est une variété symplectique. Si de plus $f^*(\Omega_X)$ s'étend en une forme symplectique, alors la résolution $f$ est dite symplectique. Si $(X,\Omega)$ est une variété symplectique, il doit être souligné que $X$ n'admet pas toujours de résolution symplectique, et que lorsqu'elle en admet, il peut en exister plusieurs non-isomorphes.

Soit maintenant $(X, \Omega)$ une variété symplectique dans laquelle un groupe réductif $G$ opère symplectiquement, c'est-à-dire que l'opération de $G$ préserve $\Omega$. Il existe une application $G$-équivariante $\mu:\ X \rightarrow \gg^*$, où $\gg$ est l'algèbre de Lie de $G$, telle que:
$$\forall x \in X,\ \forall \xi \in T_x X,\ \forall M \in \gg, D\mu_x(\xi)(M)=\Omega(\xi,Mx).$$
Une telle application $\mu$ est appelée application moment pour l'opération de $G$ dans $(X,\Omega)$ et est définie à un élément près de ${(\gg^*)}^G$.

On rappelle le résultat suivant dû à Kostant, Kirillov et Souriau (voir par exemple \cite[§2.2]{FuBB} pour plus de détails):

\begin{proposition}
La normalisation de l'adhérence d'une orbite nilpotente dans une algèbre de Lie semi-simple complexe est une variété symplectique.
\end{proposition}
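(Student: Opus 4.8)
The plan is to exhibit the Kostant--Kirillov--Souriau form on the orbit, push it onto the normalization, and then control its behaviour on an arbitrary resolution. First I would use the Killing form of $\gg$ (non-degenerate since $\gg$ is semisimple) to fix a $G$-equivariant isomorphism $\gg \cong \gg^*$, so that the nilpotent orbit $\OO$ is identified with a coadjoint orbit. On any coadjoint orbit one has the canonical $2$-form defined, at a point $\xi$, by $\Omega_\xi(\mathrm{ad}^*_X \xi, \mathrm{ad}^*_Y \xi) := \xi([X,Y])$ for $X,Y \in \gg$; I would check in the standard way that this is well defined, $G$-invariant, closed, and non-degenerate, so that $(\OO,\Omega)$ is a smooth symplectic variety. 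The extra fact I would record here, and which is the engine of the whole argument, is that $\Omega$ is homogeneous of weight $1$ for the scaling action of $\Gm$ on $\gg$: writing $h_t$ for multiplication by $t$, one has $h_t^* \Omega = t\,\Omega$, immediate from the formula above since both the tangent vectors $\mathrm{ad}^*_X \xi$ and the value $\xi([X,Y])$ scale linearly in $\xi$.

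Next set $X$ to be the normalization of $\overline{\OO}$. It is normal by construction, and since $\OO$ is smooth and open dense in $\overline{\OO}$ the normalization morphism is an isomorphism over $\OO$; thus $\OO$ sits as an open dense subset of the smooth locus $X_{\mathrm{reg}}$. The boundary $\overline{\OO} \setminus \OO$ is a finite union of nilpotent orbits of strictly smaller dimension, and since nilpotent orbits all have even dimension this boundary has codimension at least $2$ in $\overline{\OO}$, hence so does its preimage in $X$. I would then extend $\Omega$ from $\OO$ to all of $X_{\mathrm{reg}}$: the complement of $\OO$ in $X_{\mathrm{reg}}$ has codimension $\geq 2$ and $X_{\mathrm{reg}}$ is smooth, so by the Hartogs-type extension of regular forms across a codimension-$2$ locus, $\Omega$ prolongs to a closed regular $2$-form $\Omega_X$ on $X_{\mathrm{reg}}$. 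Non-degeneracy of $\Omega_X$ everywhere on $X_{\mathrm{reg}}$ I would obtain from the fact that $\overline{\OO}$, and hence its normalization, is Gorenstein with trivial dualizing sheaf, so that the top power $\Omega_X^{(\dim X)/2}$ is a nowhere-vanishing section of the canonical bundle of $X_{\mathrm{reg}}$; a degeneration of $\Omega_X$ at a point would force this section to vanish there.

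The heart of the matter is the extension to a resolution. Given any resolution $f: Y \to X$, the form $f^*\Omega_X$ is a priori only a $2$-form with possible poles along the exceptional locus, and I must show it is in fact regular on all of $Y$. The plan is to invoke that the normalization of a nilpotent orbit closure has rational singularities --- a theorem of Hinich and, independently, Panyushev --- together with the general principle (Flenner, Namikawa) that on a normal variety with rational Gorenstein singularities a regular form on the smooth locus pulls back without poles to any resolution. Concretely I would pass to a log resolution and bound the pole order of $f^*\Omega_X$ along each exceptional prime divisor $E_i$ by comparing it with the discrepancy $a_i$, which is $\geq 0$ because canonical (equivalently here, rational Gorenstein) singularities have non-negative discrepancies; the positive weight of $\Omega$ under the $\Gm$-action recorded in the first step is what rules out the borderline poles and forces regularity. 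Once $f^*\Omega_X$ is shown to be a regular $2$-form, $(X,\Omega_X)$ satisfies the definition of a symplectic variety given in this appendix, and the proposition follows.

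The main obstacle is exactly this last step. Producing the symplectic form on the open orbit and spreading it over $X_{\mathrm{reg}}$ is formal once the codimension-$2$ boundary estimate is in place; what carries genuine content is the assertion that no poles appear along the exceptional divisor of an arbitrary resolution. This rests on the rationality of the singularities of the normalized orbit closure, which is a substantial input and the only place where one truly uses the semisimplicity of $\gg$ beyond the mere existence of the Killing form. If one wished to avoid citing Hinich--Panyushev, the alternative would be a hands-on discrepancy-and-weight computation on a $\Gm$-equivariant resolution, which is considerably more delicate.
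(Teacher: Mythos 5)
The paper does not actually prove this statement: it is recalled as a classical result attributed to Kostant, Kirillov and Souriau, and the reader is sent to [FuBB, \S 2.2] for details. Your proposal essentially reconstructs the proof given in that reference, so the only question is whether your reconstruction is sound.

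It largely is. The KKS form, its weight-$1$ homogeneity under the scaling action, the codimension-$\geq 2$ bound on $\overline{\OO}\setminus\OO$ (even-dimensionality of nilpotent orbits), the Hartogs extension of $\Omega$ to $X_{\mathrm{reg}}$, and the non-degeneracy argument are all correct. For the non-degeneracy you do not in fact need the Gorenstein property: $\Omega_X^{\wedge \dim X/2}$ is a section of the line bundle $K_{X_{\mathrm{reg}}}$ on the smooth variety $X_{\mathrm{reg}}$, so its zero locus is either empty or of pure codimension $1$; being contained in $X_{\mathrm{reg}}\setminus\OO$, which has codimension $\geq 2$, it is empty. The extension to an arbitrary resolution is, as you correctly identify, the real content, and the right inputs are Hinich/Panyushev (rational singularities of the normalized orbit closure) together with Namikawa's criterion that a normal variety with rational Gorenstein singularities whose smooth locus carries a symplectic form is a symplectic variety. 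Where I would push back is on your closing sketch of a ``hands-on'' alternative: non-negativity of discrepancies controls the pole order of the pulled-back \emph{canonical} form $f^*\bigl(\Omega_X^{\wedge \dim X/2}\bigr)$, not of the $2$-form $f^*\Omega_X$ itself, and passing from regularity of the top power to regularity of the $2$-form is precisely the non-trivial step that Namikawa's theorem supplies; the positive $\Gm$-weight of $\Omega$ does not obviously close that gap. As written, that alternative is not a proof, so your argument stands or falls with the citation of Namikawa --- which is legitimate, and is exactly what the survey cited by the paper does.
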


On en déduit que les composantes irréductibles des quotients $\mu^{-1}(0)/\!/G$ étudiés dans les sections \ref{posisimpy}, \ref{sektionsympOn} et \ref{sektionsympSpn} sont des variétés symplectiques. On note $p_1:\ \mu^{-1}(0)/\!/G^0 \rightarrow \mu^{-1}(0)/\!/G$ le morphisme considéré dans la section \ref{sectionavecXOn}. Si $d<n$, alors $p_1$ est un isomorphisme. Si $d \geq n$, alors c'est un revêtement double ramifié. 

\begin{lemme}
La variété $\mu^{-1}(0)/\!/G^0$ est symplectique.
\end{lemme}

\begin{proof}
Le cas $d<n$ est trivial, on peut donc supposer que $d \geq n$. On considère le diagramme commutatif suivant:
\begin{equation} \label{klekle}
\xymatrix{
    X' \ar[r]^{h} \ar[d]_{f'} & X \ar[d]^{f} \\
    \mu^{-1}(0)/\!/G^0 \ar[r]_{p_1} & \mu^{-1}(0)/\!/G
  }  
  \end{equation}
où $f$ est une résolution de $\mu^{-1}(0)/\!/G$ et $f'$ est une résolution de $\mu^{-1}(0)/\!/G^0$ qui fait commuter le diagramme (une telle résolution existe toujours). On tire en arrière par $p_1$ la forme symplectique $\Omega$ sur $\OO_{[2^n,1^{d-2n}]} \subset \mu^{-1}(0)/\!/G$. On obtient une forme symplectique $\Omega'$ sur $p_1^{-1}(\OO_{[2^n,1^{d-2n}]})$ qui s'étend sur l'ouvert lisse de $\mu^{-1}(0)/\!/G^0$ de manière unique. En effet, le complémentaire de l'orbite ouverte est de codimension au moins $2$. Ensuite, $f \circ h$ est une résolution de la variété symplectique $\mu^{-1}(0)/\!/G$, donc $(f \circ h)^*(\Omega)$ se prolonge en une $2$-forme non sur $X'$ tout entier. Comme le diagramme (\ref{klekle}) commute, $(f \circ h)^*(\Omega)$ coïncide avec $f'^*(\Omega')$ sur l'ouvert $f'^{-1}(p_1^{-1}(\OO_{[2^n,1^{d-2n}]}))$ et le résultat s'ensuit.
\end{proof}     

Les résolutions symplectiques des adhérences d'orbites nilpotentes ont été étudiées par Fu et Namikawa dans \cite{FuB}, \cite{Fu2}, \cite{Fu3}, \cite{FuNa} et \cite{Nam}. Malheureusement, les résolutions symplectiques des revêtements finis de degré pair d'orbites nilpotentes sont peu connues, et j'ignore si $\mu^{-1}(0)/\!/SO(V)$ admet ou non des résolutions symplectiques. Dans ce qui suit, on détermine les liens entre le morphisme de Hilbert-Chow $\gamma:\ \HHmp \rightarrow \mu^{-1}(0)/\!/G$ (lorsque celui-ci est une résolution) et les résolutions symplectiques (éventuelles) de $\mu^{-1}(0)/\!/G$.

\subsection{Situation 2}

On reprend les notations de la section \ref{posisimpy}. On a vu que $\mu^{-1}(0)/\!/G=\overline{\OO_{[2^N,1^{d-2N}]}}$ et donc $\mu^{-1}(0)/\!/G$ admet toujours une résolution symplectique (\cite[Corollary 3.16]{FuB}). Plus précisément, on distingue deux cas:

\noindent  $\bullet$ Si $N=\frac{d}{2}$ (c'est-à-dire si $d$ est pair et si $\frac{d}{2}\leq n$), alors $\mu^{-1}(0)/\!/G$ admet une unique résolution symplectique (à isomorphisme près) $\phi:\ R \rightarrow \mu^{-1}(0)/\!/G$ où
$$R:=\{ (f,L) \in \mu^{-1}(0)/\!/G \times \Gr(N,E) \ |\ \Im(f) \subset L \subset \Ker(f) \}$$
et $\phi$ est la première projection (\cite[§2]{Fu3}). On note $T$ le fibré tautologique de $\Gr(N,E)$ et $\underline{E}$ le fibré trivial de fibre $E$ au dessus de $\Gr(N,E)$, alors $R \cong \Hom(\underline{E}/T,T)$ qui est le fibré cotangent de $\Gr(N,E)$. Le corollaire \ref{symplisssse} nous permet de déduire la

\begin{proposition}  \label{dewey}
Si $d$ est pair et $d \leq n+1$, alors $\gamma:\ \HHmp \rightarrow \mu^{-1}(0)/\!/G$ est l'unique résolution symplectique de $\mu^{-1}(0)/\!/G$.\\
En revanche, si $n=2$ et $d=4$, alors $\HHmp \cong Bl_0(\Hom(\underline{E}/T,T))$, et donc $\gamma$ n'est pas la résolution symplectique de $\mu^{-1}(0)/\!/G$ (mais $\gamma$ se factorise par celle-ci). 
\end{proposition}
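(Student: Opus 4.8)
Proposition \ref{dewey} asserts two things. First, that when $d$ is pair and $d \leq n+1$, the Hilbert--Chow morphism $\gamma:\ \HHmp \rightarrow \mu^{-1}(0)/\!/G$ is *the* (unique) resolution symplectique of $\overline{\OO_{[2^N,1^{d-2N}]}}$. Second, that for $n=2,\ d=4$ the morphism $\gamma$ factors through the resolution symplectique but is not itself one, since $\HHmp$ is a blow-up.

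Let me sketch how I would prove this.

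proof sketch.

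The plan is to combine the explicit description of $\HHmp$ given by Corollaire \ref{symplisssse} with the classification of the symplectic resolution due to Fu recalled just above the statement. First I would treat the case $d$ pair, $d \leq n+1$. By Corollaire \ref{symplisssse}(1), in this regime we have the $G'$-equivariant isomorphism $\HHmp \cong \Hom(\underline{E}/T,T)$ over $\Gr(N,E)$, where $N=\frac{d}{2}$ since $d$ is pair and $d \leq n+1 \leq 2n$. This is precisely the total space of the variety $R=\{(f,L) \in \mu^{-1}(0)/\!/G \times \Gr(N,E)\ |\ \Im(f) \subset L \subset \Ker(f)\}$ identified in the paragraph preceding the statement, since the condition $\Im(f) \subset L \subset \Ker(f)$ describes exactly the fiber $\Hom(\underline{E}/T,T)$ over each $L$. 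Thus I would verify that the isomorphism of Corollaire \ref{symplisssse}(1) carries $\gamma$ to the projection $\phi:\ R \rightarrow \mu^{-1}(0)/\!/G$; this follows from the commutative diagram of Corollaire \ref{reductionHdiag}, which identifies $\gamma$ (under the reduction principle) with the morphism induced by $\nu'$, hence with the natural projection. Since Fu's result states that $\phi$ is the unique symplectic resolution up to isomorphism when $N=\frac{d}{2}$, the first assertion follows.

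For the second assertion ($n=2$, $d=4$), I would invoke Corollaire \ref{symplisssse}(3), which gives $\HHmp \cong Bl_0(\Hom(\underline{E}/T_2,T_1))$ over $\FF_{2,d-2}$, i.e. $\HHmp$ is the blow-up of the zero section of the relevant homogeneous bundle. The key point is that here $n=2$ and $d=4$ satisfy both $N=\frac{d}{2}=2 \leq n=2$ (so Fu's uniqueness applies and $R$ is the symplectic resolution) and $n=2,\ d \geq 4$ (so Corollaire \ref{symplisssse}(3) applies). I would then exhibit the factorization: the blow-up morphism $Bl_0(\Hom(\underline{E}/T,T)) \rightarrow \Hom(\underline{E}/T,T) \cong R$ composed with $\phi$ recovers $\gamma$, exactly as in the classical analogue of Corollaire \ref{symplisssse} and the blow-up structure established in Section \ref{masterpropositioncasn2}. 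Since this intermediate blow-up $q$ is not an isomorphism, $\gamma = \phi \circ q$ cannot be a symplectic resolution itself (a symplectic resolution being minimal in the sense of Lemme \ref{pascrepant3}-type arguments), but it does factor through the symplectic resolution $\phi$.

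The main obstacle I anticipate is the careful verification that, under the chain of identifications coming from the reduction principle (Propositions \ref{reduction1}, \ref{reduction3} and Corollaire \ref{reductionHdiag}) and the explicit isomorphisms of Corollaire \ref{symplisssse}, the Hilbert--Chow morphism $\gamma$ really corresponds to Fu's projection $\phi$ and not merely to some other resolution with the same source. Concretely, one must track the identification $\HHmp \cong G' \times^P \HH'^{\mathrm{prin}}$ and check that the fiber of $\rho_s$ over the base point of $O_0$ is sent by $\gamma$ to the tautological condition $\Im(f) \subset L \subset \Ker(f)$; this is where the geometry of the isotropic-type flag and the explicit form of $W'=\Hom(E/L_2,V) \times \Hom(V,L_1)$ must be matched against the incidence description of $R$. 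Once this matching is done, the dimension count $\dim(\HHmp)=2N(d-N)=\dim(R)$ together with properness and birationality of $\gamma$ (from the general theory in Section \ref{generalit�esHilbert}) forces $\gamma$ to be an isomorphism onto $R$ in the first case, and to be $\phi \circ q$ with $q$ a nontrivial blow-up in the second.
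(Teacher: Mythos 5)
Your proposal is correct and follows essentially the same route as the paper: the paper derives the proposition directly from Corollaire \ref{symplisssse} combined with Fu's classification (the unique symplectic resolution being $\phi\colon R\cong \Hom(\underline{E}/T,T)\to \mu^{-1}(0)/\!/G$ when $N=\tfrac{d}{2}$), observing that for $n=2$, $d=4$ one has $\FF_{2,2}\cong\Gr(2,E)$ so that Corollaire \ref{symplisssse}(3) yields the blow-up $Bl_0(\Hom(\underline{E}/T,T))$. The only point you elaborate beyond the paper — checking via Corollaire \ref{reductionHdiag} that the Hilbert--Chow morphism really matches Fu's projection $\phi$ under the reduction-principle identifications — is a legitimate detail the paper leaves implicit, and your treatment of it is sound.
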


\noindent $\bullet$ Si $N<\frac{d}{2}$ (c'est-à-dire si $\frac{d}{2}>n$ ou si $d$ est impair), alors $\mu^{-1}(0)/\!/G$ admet exactement deux résolutions symplectiques non-isomorphes (\cite[§2]{Fu3}) données par:
$$\xymatrix{
    T^*\Gr(N,E) \ar@{->>}[rd]_{\phi_1} & & T^*\Gr(d-N,E) \ar@{->>}[ld]^{\phi_2} \\
     & \mu^{-1}(0)/\!/G &
  }$$
où 
\begin{itemize} \renewcommand{\labelitemi}{$\bullet$}
\item $T^*\Gr(N,E)=\{(f,L_1) \in \mu^{-1}(0)/\!/G \times \Gr(N,E) \ |\ \Im(f) \subset L_1\}$ est le fibré cotangent de $\Gr(N,E)$,
\item $T^*\Gr(d-N,E) = \{(f,L_2) \in \mu^{-1}(0)/\!/G \times \Gr(d-N,E) \ |\ L_2 \subset \Ker(f)\}$ est son dual, 
\item les morphismes $\phi_i$ sont les projections sur le premier facteur.
\end{itemize}

Soient $\FF_{N,d-N}$ la variété de drapeaux partiels introduite dans la section \ref{MropRRED}, $\underline{E}$ le fibré trivial de fibre $E$ au dessus de $\FF_{N,d-N}$ et $T_1$, $T_2$ les fibrés vectoriels de la définition \ref{pulbacktauto}. On considère
\begin{align*}
R:&=\left\{(f,L_1,L_2) \in \mu^{-1}(0)/\!/G \times \Gr(N,E) \times \Gr(d-N,E)\ |\ \Im(f) \subset L_1 \subset L_2 \subset \Ker(f) \right\} \\
&\cong \Hom(\underline{E}/T_2,T_1) 
\end{align*} 
qui est une composante irréductible (lisse) du produit fibré de $T^*\Gr(N,E)$ et $T^*\Gr(d-N,E)$ au dessus de $\mu^{-1}(0)/\!/G$.
La première projection $\phi:\ R \rightarrow \mu^{-1}(0)/\!/G$ est une résolution de $\mu^{-1}(0)/\!/G$ et c'est la résolution minimale qui domine les deux résolutions symplectiques de $\mu^{-1}(0)/\!/G$. Le corollaire \ref{symplisssse} nous permet de déduire la

\begin{proposition}
Si $n=1$ et $d \geq 3$, alors $\gamma:\ \HHmp \rightarrow \mu^{-1}(0)/\!/G$ s'identifie à $\phi:\ R \rightarrow \mu^{-1}(0)/\!/G$. \\
Si $n=2$ et $d\geq 5$, alors $\gamma$ se factorise par $\phi$ mais $\HHmp$ n'est pas isomorphe à $R$. Plus précisément, on a le diagramme commutatif suivant:
\begin{equation}
\xymatrix{\HHmp \ar[rd]^{f} \ar@/_7pc/[rddd]_(0.4){\gamma} && \\
 &R \ar[dd]^{\phi} \ar[ld] \ar[rd] & \\
 T^*\Gr(N,E) \ar[rd]_{\phi_1} && T^*\Gr(d-N,E) \ar[ld]^{\phi_2} \\
 & \mu^{-1}(0)/\!/G &
}
\end{equation}
où $f$ est l'éclatement de la section nulle dans $R \cong \Hom(\underline{E}/T_2,T_1)$.
\end{proposition}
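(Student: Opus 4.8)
<br>

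The final statement is the proposition describing how the Hilbert--Chow morphism $\gamma:\ \HHmp \rightarrow \mu^{-1}(0)/\!/G$ relates to the (two) symplectic resolutions when $G=GL_n(k)$ and $N<\frac{d}{2}$, i.e. when $\frac{d}{2}>n$ or $d$ is odd. Let me reconstruct how the proof should go.

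The excerpt has set up the proposition in two cases. The key input from earlier is Corollary~\ref{symplisssse}, which identifies $\HHmp$ explicitly as the total space of a homogeneous bundle over the flag variety $\FF_{N,d-N}$.

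=== PROOF PROPOSAL ===

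The plan is to read off the conclusion directly from the explicit descriptions of $\HHmp$ furnished by the corollary \ref{symplisssse}, combined with the structure of the minimal resolution $\phi:\ R \rightarrow \mu^{-1}(0)/\!/G$ dominating the two symplectic resolutions $\phi_1$ and $\phi_2$. First I would treat the case $n=1$, $d \geq 3$: here $N=h_s(V)=1$, so $\mu^{-1}(0)/\!/G=\overline{\OO_{[2,1^{d-2}]}}$ and part (2) of the corollary \ref{symplisssse} gives a $G'$-equivariant isomorphism $\HHmp \cong \Hom(\underline{E}/T_2,T_1)$ over $\FF_{1,d-1}$, which is precisely the variety $R$ above. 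I would check that under this isomorphism the Hilbert--Chow morphism $\gamma$, which sends a closed subscheme $Z$ to its image point $\nu(Z) \in \mu^{-1}(0)/\!/G$, matches the first projection $\phi:\ R \rightarrow \mu^{-1}(0)/\!/G$; this identification of $\gamma$ with $\phi$ is exactly the content already established when proving the corollary \ref{symplisssse} via the reduction principle (proposition \ref{reduction3}) and the commutative diagram of the corollary \ref{reductionHdiag}, so no new computation is needed.

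For the case $n=2$, $d \geq 5$, I would again invoke the corollary \ref{symplisssse}, part (3), which identifies $\HHmp \cong Bl_0(\Hom(\underline{E}/T_2,T_1)) = Bl_0(R)$, the blow-up of the zero section of the bundle $R \cong \Hom(\underline{E}/T_2,T_1)$ over $\FF_{2,d-2}$. The blow-up morphism $f:\ Bl_0(R) \rightarrow R$ is then the natural map, and I would assemble the commutative diagram by composing: $\gamma$ factors as $\phi \circ f$, where $\phi:\ R \rightarrow \mu^{-1}(0)/\!/G$ is the first projection. The two downward maps $R \rightarrow T^*\Gr(N,E)$ and $R \rightarrow T^*\Gr(d-N,E)$ are the projections forgetting $L_2$ and $L_1$ respectively, and $\phi_1$, $\phi_2$ are the symplectic resolutions described before the proposition. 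That $\gamma = \phi \circ f$ follows once more from the compatibility of Hilbert--Chow with the reduction isomorphisms, precisely as in the end of the section \ref{masterpropositioncasn2} where the analogous factorisation $\gamma:\ Y_1 \rightarrow Y_0 \rightarrow W/\!/G$ was obtained in the classical $GL_2$ case. Since $f$ is a genuine blow-up (not an isomorphism), $\HHmp$ is not isomorphic to $R$ and a fortiori $\gamma$ is not itself a symplectic resolution, but it does factor through both $\phi_1$ and $\phi_2$ via $\phi$.

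The main obstacle, and the only point requiring real care, is verifying that under the isomorphisms of the corollary \ref{symplisssse} the Hilbert--Chow morphism is genuinely the first projection (respectively its composition with the blow-up), rather than merely some birational morphism with the same source and target. I would handle this by tracking the reduction data: the $G'$-equivariant isomorphism $\HHmp \cong G' \times^P \HH'^{\mathrm{prin}}$ of the proposition \ref{reduction3}, together with the commutativity of the right-hand square in the corollary \ref{reductionHdiag}, reduces the identification to the fibre over $eP$, where $\gamma$ restricts to the Hilbert--Chow morphism $\gamma':\ \HH'^{\mathrm{prin}} \rightarrow W'/\!/G$ of the simpler invariant Hilbert scheme. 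For $n=1$ this $\gamma'$ is the blow-up (resp. isomorphism) already computed in the section \ref{section_n_1} via the corollary \ref{cas_facile}; for $n=2$ it is the composition of blow-ups of the theorem \ref{casn2}. Gluing over $G'/P$ and comparing with the explicit bundle descriptions of $R$ and $Bl_0(R)$ then yields the stated diagrams. The geometric facts about the two symplectic resolutions of $\overline{\OO_{[2^N,1^{d-2N}]}}$ themselves I would simply cite from Fu's work, as the excerpt does.
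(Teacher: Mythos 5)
Votre d�monstration est correcte et suit exactement la d�marche du m�moire : la proposition y est pr�sent�e comme une cons�quence directe du corollaire \ref{symplisssse} (qui donne $\HHmp \cong \Hom(\underline{E}/T_2,T_1)=R$ pour $n=1$, $d\geq 3$, et $\HHmp \cong Bl_0(R)$ pour $n=2$), l'identification du morphisme de Hilbert--Chow avec la projection $\phi$ (resp. $\phi\circ f$) reposant comme vous le dites sur le principe de r�duction (proposition \ref{reduction3}) et le corollaire \ref{reductionHdiag}. Vous explicitez simplement les d�tails que le texte laisse implicites.
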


\subsection{Situation 3}

On reprend les notations de la section \ref{sektionsympOn}. On a vu que $\mu^{-1}(0)/\!/G=\overline{\OO_{[2^N,1^{2(d-N)}]}}$, et donc $\mu^{-1}(0)/\!/G$ admet une résolution symplectique si et seulement si $N=d$ (\cite[Proposition 3.19]{FuB}), c'est-à-dire si $d \leq n$. Dans ce cas, $\mu^{-1}(0)/\!/G$ admet une unique résolution symplectique; celle-ci est donnée par $T^*\IG(d,V'^*)$, le fibré cotangent de $\IG(d,V'^*)$ (c'est une conséquence de \cite[Main Theorem]{FuB} et de \cite[Proposition 3.5]{FuNa}). 

Pour pouvoir comparer cette résolution symplectique avec celles données par le corollaire \ref{symplisssseOn}, il faut exprimer le fibré $T^*\IG(d,V'^*)$ en fonction du fibré tautologique $T$ de $\IG(d,V'^*)$. La variété $\IG(d,V'^*)$ est $G'$-homogène, donc il existe un sous-groupe parabolique $P \subset G'$ tel que $\IG(d,V'^*) \cong G'/P$. On note $\pp$ l'algèbre de Lie de $P$, alors l'espace tangent à $G'/P$ au point base est isomorphe à $\gg'/ \pp$ comme $P$-module. On note $E_0 \subset V'^*$ le sous-espace de dimension $d$ stabilisé par $P$. Un calcul similaire à celui effectué dans la preuve de la proposition \ref{det_gorenstein} permet de montrer que $\gg'/ \pp \cong S^2(E_0^*)$ comme $P$-module. Il s'ensuit que 
$$T^* \IG(d,V'^*) \cong S^2(T).$$
Le corollaire \ref{symplisssseOn} nous permet de déduire la  

\begin{proposition}
Si $d \leq \frac{n+1}{2}$, alors $\gamma:\ \HHmgp \rightarrow \mu^{-1}(0)/\!/G$ est la résolution symplectique de $\mu^{-1}(0)/\!/G$.\\
En revanche, si $d=n=2$, alors $\HHmgp \cong Bl_0(S^2(T))$ et donc $\gamma$ n'est pas la résolution symplectique de $\mu^{-1}(0)/\!/G$ (mais $\gamma$ se factorise par celle-ci).
\end{proposition}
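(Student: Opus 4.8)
Le plan est de combiner le corollaire \ref{symplisssseOn}, qui d\'ecrit explicitement la composante principale $\HHmgp$, avec l'identification de l'unique r\'esolution symplectique de l'adh\'erence d'orbite nilpotente $\mu^{-1}(0)/\!/G = \overline{\OO_{[2^N,1^{2(d-N)}]}}$ rappel\'ee juste avant l'\'enonc\'e, \`a savoir $T^* \IG(d,V'^*) \cong S^2(T)$ (cons\'equence de \cite[Main Theorem]{FuB} et \cite[Proposition 3.5]{FuNa}, et du calcul $\gg'/\pp \cong S^2(E_0^*)$ d\'ej\`a effectu\'e). Toute la preuve consiste donc \`a faire co\"incider la description intrins\`eque de $\HHmgp$ avec celle de la r\'esolution de Springer de l'orbite nilpotente.

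Traitons d'abord le cas $d \leq \frac{n+1}{2}$. Comme $\frac{n+1}{2} \leq n$ pour $n \geq 1$, on a $d \leq n$, donc $N = d$ et le crit\`ere de Fu assure que $\mu^{-1}(0)/\!/G$ admet une r\'esolution symplectique, unique et isomorphe \`a $T^* \IG(d,V'^*) \cong S^2(T)$. Par ailleurs, le corollaire \ref{symplisssseOn} donne $\HHmgp \cong S^2(T)$. Il reste \`a v\'erifier que, via cet isomorphisme, le morphisme de Hilbert-Chow $\gamma$ co\"incide avec la projection $S^2(T) \rightarrow \mu^{-1}(0)/\!/G$, c'est-\`a-dire avec l'application de Springer qui r\'ealise la r\'esolution symplectique. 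Pour cela, je suivrais le diagramme de r\'eduction (proposition \ref{isoOnSonsymp} et corollaire \ref{reductionHdiag}): la restriction de $\gamma$ \`a la fibre de $\rho_s$ s'identifie au morphisme de Hilbert-Chow de $\HH'$, lequel est un isomorphisme d'apr\`es le corollaire \ref{cas_facileOn} (puisque $d \leq \frac{n+1}{2}$ entra\^ine $n \geq 2d-1$); ainsi $\gamma$ est la projection induite, donc la r\'esolution symplectique.

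Passons au cas $d = n = 2$. Le corollaire \ref{symplisssseOn} fournit $\HHmgp \cong Bl_0(S^2(T))$, l'\'eclatement de la section nulle du fibr\'e $S^2(T)$ au dessus de $\IG(2,V'^*)$. Comme $d = n$, la r\'esolution symplectique existe encore et vaut $T^* \IG(2,V'^*) \cong S^2(T)$. J'utiliserais le th\'eor\`eme \ref{casn2O2}, qui d\'ecrit $\gamma$ dans la situation r\'eduite comme la composition des deux \'eclatements $Y_1 \rightarrow Y_0 \rightarrow W'/\!/G$; par le principe de r\'eduction, cela signifie que $\gamma$ se factorise en
$$Bl_0(S^2(T)) \longrightarrow S^2(T) \longrightarrow \mu^{-1}(0)/\!/G,$$
o\`u la seconde fl\`eche est la r\'esolution symplectique et la premi\`ere est l'\'eclatement (non trivial) de la section nulle. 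Cet \'eclatement n'\'etant pas un isomorphisme, $\gamma$ ne peut \^etre elle-m\^eme la r\'esolution symplectique (par un argument de minimalit\'e analogue au lemme \ref{pascrepant3}), mais elle se factorise bien par celle-ci.

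Le principal obstacle sera la v\'erification de la compatibilit\'e des isomorphismes du corollaire \ref{symplisssseOn} avec les morphismes de r\'esolution: il faut identifier $S^2(T)$ non seulement comme vari\'et\'e abstraite, mais comme espace total du fibr\'e cotangent $T^* \IG(d,V'^*)$ muni de son application de Springer, et s'assurer que $\gamma$ correspond pr\'ecis\'ement \`a cette application (et non \`a un autre morphisme propre birationnel $G'$-\'equivariant). Ce point repose sur le suivi soigneux du diagramme de r\'eduction. Pour le second cas, il faudra en outre justifier proprement qu'une r\'esolution dominant strictement une r\'esolution symplectique n'est jamais symplectique, ce qui d\'ecoule de la cr\'epance des r\'esolutions symplectiques.
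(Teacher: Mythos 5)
Votre démonstration est correcte et suit essentiellement la même voie que le texte: le papier ne donne d'ailleurs aucune preuve explicite, la proposition étant déduite directement du corollaire \ref{symplisssseOn} combiné avec l'identification $T^*\IG(d,V'^*) \cong S^2(T)$ établie juste avant l'énoncé (et, pour le cas $d=n=2$, du fait que l'éclatement de la section nulle n'est pas un isomorphisme). Vous explicitez en plus la vérification que $\gamma$ coïncide bien avec la projection de Springer via le diagramme de réduction, ce que le papier laisse implicite; c'est un complément utile mais qui ne change pas la nature de l'argument.
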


\subsection{Situation 5}

On reprend les notations de la section \ref{sektionsympSpn}. On a vu que 
$$
\mu^{-1}(0)/\!/G = \left\{
    \begin{array}{ll}
        \overline{\OO_{[2^n,1^{2(d-n)}]}} & \text{ si } d>n,\\
        \overline{\OO_{[2^{d-1},1^2]}} & \text{ si } d < n \text{ et } d \text{ est impair},\\
        \overline{\OO_{[2^d]}^{I}} \cup \overline{\OO_{[2^d]}^{II}} & \text{ si } d \leq n \text{ et } d \text{ est pair}.
    \end{array}
\right.
$$
On a le critère suivant, dû à Fu (\cite[Proposition 3.20]{FuB}), pour savoir si l'adhérence d'une orbite nilpotente dans $\gg'$ admet une résolution symplectique:
\begin{proposition}
La variété symplectique $\overline{\OO_{[k_1,\ldots, k_r]}}$ admet une résolution symplectique si et seulement si:
\begin{itemize}
\item ou bien il existe un entier pair $q \neq 2$ tel que les $q$ premières parts de $[k_1,\ldots, k_r]$ sont impaires et toutes les autres parts sont paires,
\item ou bien il existe exactement deux parts impaires dans $[k_1,\ldots, k_r]$ qui sont en position $2m-1$ et $2m$ pour un certain $m \geq 1$.
\end{itemize}
\end{proposition}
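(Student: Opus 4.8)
La proposition � d�montrer est un crit�re combinatoire, d� � Fu, caract�risant l'existence d'une r�solution symplectique pour l'adh�rence d'une orbite nilpotente $\overline{\OO_{[k_1,\ldots,k_r]}}$ dans une alg�bre de Lie classique. Puisqu'il s'agit d'un �nonc� attribu� � une r�f�rence ext�rieure (\cite[Proposition 3.20]{FuB}), la strat�gie naturelle n'est pas de reprouver le th�or�me de Fu \emph{ab initio}, mais d'en rappeler les ingr�dients essentiels et d'expliquer comment ils s'articulent. Le plan est donc d'abord de rappeler le cadre g�om�trique: une adh�rence d'orbite nilpotente $\overline{\OO}$ dans $\gg'$ admet une r�solution symplectique si et seulement si elle admet une r�solution $\QQ$-factorielle terminale qui soit lisse, et ces r�solutions proviennent, dans le cas classique, de fibr�s cotangents $T^* (G'/P)$ sur des vari�t�s de drapeaux partiels $G'/P$. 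L'application moment de Springer g�n�ralis�e $T^*(G'/P) \rightarrow \overline{\OO}$ fournit le candidat naturel � une r�solution, et il s'agit de d�terminer quand ce morphisme est � la fois birationnel et � fibres de dimension contr�l�e.

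La premi�re �tape consiste � param�trer les sous-groupes paraboliques $P \subset G' = SO(V')$ (ou plus g�n�ralement $Sp$, $SO$) par les drapeaux isotropes, et � calculer, pour chaque tel $P$, la dimension de l'image de l'application moment $T^*(G'/P) \rightarrow \gg'$ ainsi que le type d'orbite nilpotente obtenu comme orbite dense de cette image. Ce calcul repose sur la th�orie des \textit{polarisations} des orbites nilpotentes et sur la combinatoire des partitions duales: l'orbite induite � partir d'un Levi $L \subset P$ s'obtient par une recette explicite sur les partitions (l'induction de Lusztig-Spaltenstein). La deuxi�me �tape est de traduire la condition de birationnalit� de l'application moment en une condition sur la partition $[k_1,\ldots,k_r]$, via le calcul du groupe fondamental $\pi_1(\OO)$ et du degr� du rev�tement induit; c'est pr�cis�ment ici qu'interviennent les deux conditions altenatives de l'�nonc� (soit un bloc initial de $q$ parts impaires avec $q$ pair et $q \neq 2$, soit exactement deux parts impaires en positions cons�cutives $2m-1, 2m$). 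La troisi�me �tape est de v�rifier que ces conditions combinatoires �puisent exactement les cas o� une polarisation birationnelle lisse existe, en examinant la $\QQ$-factorialit� terminale de la vari�t� obtenue.

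L'obstacle principal sera l'analyse fine de la combinatoire des partitions associ�es aux orbites nilpotentes de type $C$ (symplectique), car c'est l� que se produit le ph�nom�ne sp�cifique des parts impaires et des orbites tr�s paires � deux composantes $\OO^I, \OO^{I\!I}$. Il faudra distinguer soigneusement, selon la parit� des parts et leur multiplicit�, les cas o� l'application moment est g�n�riquement injective de ceux o� elle est un rev�tement de degr� $> 1$; ce dernier cas interdit la r�solution symplectique m�me lorsque la source est lisse. Le c\oe ur technique r�side donc dans le dictionnaire entre la g�om�trie (existence d'une polarisation birationnelle) et l'arithm�tique des partitions, dictionnaire que Fu a �tabli en s'appuyant sur les travaux de Hesselink, Kraft-Procesi et Namikawa. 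Comme l'�nonc� est cit� tel quel, je me contenterais de renvoyer � \cite{FuB} pour la preuve compl�te, en indiquant que le r�sultat s'applique directement aux r�ductions symplectiques $\mu^{-1}(0)/\!/G$ �tudi�es dans ce m�moire, dont les composantes irr�ductibles sont pr�cis�ment des adh�rences d'orbites nilpotentes dans $\gg'$.
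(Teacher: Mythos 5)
Votre proposition est correcte et suit essentiellement la m�me d�marche que le m�moire: celui-ci ne d�montre pas cette proposition mais la cite directement comme \cite[Proposition 3.20]{FuB}, exactement comme vous le faites en renvoyant � la r�f�rence pour la preuve compl�te. Votre esquisse des ingr�dients de la d�monstration de Fu (polarisations, induction de Lusztig--Spaltenstein, birationalit� de l'application moment via $\pi_1(\OO)$) est un compl�ment pertinent mais non requis, puisque le m�moire se contente d'invoquer le r�sultat tel quel.
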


On en déduit que les composantes irréductibles de $\mu^{-1}(0)/\!/G$ admettent des résolutions symplectiques si et seulement si $d \leq n+1$. On distingue alors les deux cas suivants:

\noindent $\bullet$ Si $d \leq n$ et $d$ est pair, alors $Y_I=\overline{\OO_{[2^d]}^{I}}$ (resp. $Y_{II}=\overline{\OO_{[2^d]}^{II}}$) admet une unique résolution symplectique (à isomorphisme près) $\phi_I: T^*\OG^{I}(d,V'^*) \rightarrow Y_I$ (resp. $\phi_{II}: T^*\OG^{II}(d,V'^*) \rightarrow Y_{II}$) d'après \cite[Proposition 3.5]{FuNa} et \cite[Main Theorem]{FuB}. On déduit du corollaire \ref{resuss} la  

\begin{proposition}
Si $d \leq \frac{n}{2}+1$ et $d$ est pair, alors $\gamma:\ \HHmxp \rightarrow Y_I$ (resp. $\gamma:\ \HHmyp \rightarrow Y_{II}$) est l'unique résolution symplectique de $Y_I$ (resp. de $Y_{II}$).\\
En revanche, si $d=n=4$ alors $\HHmxp \cong Bl_0(\Lambda^2(T_I))$ (resp. $\HHmyp \cong Bl_0(\Lambda^2(T_{II}))$) et donc $\gamma:\ \HHmxp \rightarrow Y_I$ (resp. $\gamma:\ \HHmyp \rightarrow Y_{II}$) n'est pas la résolution symplectique de $Y_I$ (resp. de $Y_{II}$), mais $\gamma$ se factorise par celle-ci.
\end{proposition}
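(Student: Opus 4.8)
Le plan consiste à ramener l'énoncé, par le résultat de classification de Fu sur les résolutions symplectiques d'adhérences d'orbites nilpotentes, à la description explicite de $\HHmxp$ et $\HHmyp$ déjà obtenue au corollaire \ref{resuss}. La première étape est d'identifier, pour $d \leq n$ pair, les composantes irréductibles $Y_I=\overline{\OO_{[2^d]}^{I}}$ et $Y_{II}=\overline{\OO_{[2^d]}^{II}}$ de $\mu^{-1}(0)/\!/G$ et d'invoquer \cite[Proposition 3.5]{FuNa} et \cite[Main Theorem]{FuB} pour affirmer que chacune admet, à isomorphisme près, une unique résolution symplectique, à savoir le fibré cotangent $T^*\OG^{I}(d,V'^*)$ (resp. $T^*\OG^{II}(d,V'^*)$) de la grassmannienne orthogonale isotrope correspondante. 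La partition $[2^d]$ est « très paire », ce qui explique que l'on obtienne deux orbites distinctes et donc deux résolutions symplectiques séparées, une pour chaque composante.

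La deuxième étape est de réexprimer ces fibrés cotangents en fonction du fibré tautologique $T$ de $\OG(d,V'^*)$, afin de pouvoir les confronter aux descriptions de $\HHmxp$ et $\HHmyp$ fournies par le corollaire \ref{resuss}. Par définition de $\OG(d,V'^*)$ comme sous-variété fermée de $\Gr(d,V'^*)$, on dispose de la suite exacte de fibrés $G'$-homogènes
\begin{equation*}
0 \rightarrow T\OG(d,V'^*) \rightarrow T\Gr(d,V'^*)_{|\OG(d,V'^*)} \rightarrow S^2(T^*) \rightarrow 0
\end{equation*}
dont la suite exacte duale, combinée à l'isomorphisme $T^*\Gr(d,V'^*)_{|\OG(d,V'^*)} \cong T^{\otimes 2}$, donne $T^*\OG(d,V'^*) \cong \Lambda^2(T)$. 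On obtient ainsi que la résolution symplectique de $Y_I$ (resp. $Y_{II}$) s'identifie à $\Lambda^2(T_I)$ (resp. $\Lambda^2(T_{II}))$, où $T_I$ et $T_{II}$ sont les fibrés tautologiques des deux composantes $O_0^{I}$ et $O_0^{I\!I}$ de $\OG(d,V'^*)$.

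La troisième étape consiste à comparer cette résolution symplectique avec $\gamma$. Lorsque $d \leq \frac{n}{2}+1$ et $d$ est pair, le corollaire \ref{resuss} affirme $\HHmxp \cong \Lambda^2(T_I)$ et $\HHmyp \cong \Lambda^2(T_{II})$, ce qui permet d'identifier directement $\gamma$ à la résolution symplectique unique de chaque composante : il suffit de vérifier que l'isomorphisme du corollaire \ref{resuss} commute au morphisme de Hilbert-Chow et au morphisme $\phi_I$ (resp. $\phi_{II}$), ce qui découle de la $G'$-équivariance et de l'unicité établie au-dessus de l'orbite ouverte par la proposition \ref{chow}. Pour le cas résiduel $d=n=4$, le corollaire \ref{resuss} donne $\HHmxp \cong Bl_0(\Lambda^2(T_I))$ : $\gamma$ se factorise alors à travers la résolution symplectique, mais via un éclatement supplémentaire de la section nulle, et n'est donc pas elle-même symplectique. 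Le point le plus délicat sera précisément cette vérification de compatibilité de l'identification du corollaire \ref{resuss} avec les morphismes structuraux ; une fois admise l'unicité des résolutions symplectiques issue de \cite{FuB,FuNa}, le reste est essentiellement formel et repose sur le principe de réduction déjà en place dans la section \ref{sssympSpn}.
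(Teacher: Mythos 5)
Votre démonstration est correcte et suit essentiellement la même démarche que le texte : unicité de la résolution symplectique de $\overline{\OO_{[2^d]}^{I}}$ et $\overline{\OO_{[2^d]}^{I\!I}}$ comme fibrés cotangents des deux composantes de $\OG(d,V'^*)$ d'après \cite{FuB} et \cite{FuNa}, identification $T^*\OG(d,V'^*)\cong\Lambda^2(T)$, puis comparaison avec le corollaire \ref{resuss}. Le point de compatibilité que vous signalez (l'isomorphisme du corollaire \ref{resuss} commute bien à $\gamma$ et à $\phi_I$) est effectivement celui que le texte traite implicitement via le principe de réduction et le corollaire \ref{reductionHdiag}.
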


\noindent $\bullet$ Si $3 \leq d \leq n+1$ et $d$ impair, alors $\mu^{-1}(0)/\!/G=\overline{\OO_{[2^{d-1},1^2]}}$ admet exactement deux résolutions symplectiques non isomorphes (\cite[§2]{Fu3}) données par:
$$\xymatrix{
    T^*\OG^{I}(d,V'^*) \ar@{->>}[rd]_{\phi_1} & & T^*\OG^{II}(d,V'^*) \ar@{->>}[ld]^{\phi_2} \\
     & \mu^{-1}(0)/\!/G &
  }$$
où $\OG^{I}(d,V'^*)$ et $\OG^{II}(d,V'^*)$ sont les deux composantes irréductibles de la grassmannienne isotrope introduite dans la section \ref{description_quotientOn}, et $T^*\OG^{I}(d,V'^*)$ (resp. $T^*\OG^{II}(d,V'^*)$) est le fibré cotangent de $\OG^{I}(d,V'^*)$ (resp. de $\OG^{I}(d,V'^*)$). On souhaite comparer ces résolutions symplectiques avec les résolutions obtenues dans la section \ref{sssympSpn}. En raisonnant comme dans la situation 3, on montre que
\begin{equation} \label{isoSympn}
T^* \OG(d,V'^*) \cong \Lambda^2(T),
\end{equation}
où $T$ est le fibré tautologique de $\OG(d,V'^*)$. On note $T_I$ (resp. $T_{II}$) la restriction de $T$ à $\OG^{I}(d,V'^*)$ (resp. à $\OG^{II}(d,V'^*)$). On déduit de (\ref{isoSympn}) que 
$$
\left\{
    \begin{array}{l}
       T^* \OG^{I}(d,V'^*) \cong \Lambda^2(T_I), \text{ et } \\
       T^* \OG^{II}(d,V'^*) \cong \Lambda^2(T_{II}).
    \end{array}
\right.
$$
Le corollaire \ref{resuss} nous permet de déduire la 

\begin{proposition}
Si ($d=3$ et $n=2$) ou ($d=5$ et $n=4$), alors la résolution $\gamma:\ \HHmp \rightarrow \mu^{-1}(0)/\!/G$ n'est pas symplectique.
\end{proposition}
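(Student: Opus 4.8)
La proposition finale affirme que pour $(d=3, n=2)$ ou $(d=5, n=4)$, le morphisme de Hilbert-Chow $\gamma:\ \HHmp \rightarrow \mu^{-1}(0)/\!/G$ n'est pas symplectique. L'idée directrice est d'utiliser la classification des r\'esolutions symplectiques de $\mu^{-1}(0)/\!/G$ qui vient juste d'\^etre rappel\'ee : lorsque $d$ est impair et $3 \leq d \leq n+1$, la vari\'et\'e $\mu^{-1}(0)/\!/G=\overline{\OO_{[2^{d-1},1^2]}}$ admet exactement deux r\'esolutions symplectiques non isomorphes, donn\'ees par les fibr\'es cotangents $T^*\OG^{I}(d,V'^*) \cong \Lambda^2(T_I)$ et $T^*\OG^{II}(d,V'^*) \cong \Lambda^2(T_{II})$ via l'isomorphisme (\ref{isoSympn}). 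Il suffit donc de comparer la description explicite de $\HHmp$ fournie par le corollaire \ref{resuss} avec ces deux mod\`eles.

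\textbf{\'Etape principale.} Le point cl\'e est que les deux couples consid\'er\'es rel\`event pr\'ecis\'ement du cas $d>n$ du corollaire \ref{resuss}. En effet, $(d=3,n=2)$ satisfait $d>n=2$, et $(d=5,n=4)$ satisfait $d>n=4$. Le corollaire \ref{resuss} donne alors respectivement
$$\HHmp \cong \Lambda^2(T) \quad \text{si } (d=3,n=2),$$
$$\HHmp \cong Bl_0(\Lambda^2(T)) \quad \text{si } (d=5,n=4),$$
o\`u $T$ est le fibr\'e tautologique de $\OG(2,V'^*)$ (resp. de $\OG(4,V'^*)$). Or dans ces deux situations la grassmannienne isotrope compl\`ete $\OG(N,V'^*)$ intervenant dans $\HHmp$ (avec $N=\min(d,n)=n$) ne co\"incide pas avec la grassmannienne $\OG(d,V'^*)$ des deux mod\`eles symplectiques : on a $N=n<d$, de sorte que $\HHmp$ est construit sur $\OG(n,V'^*)$ tandis que les r\'esolutions symplectiques vivent sur $\OG(d,V'^*)$. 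Je montrerais que cette diff\'erence de base (et, dans le cas $(d=5,n=4)$, la pr\'esence de l'\'eclatement $Bl_0$ de la section nulle) emp\^eche tout isomorphisme $G'$-\'equivariant entre $\HHmp$ et l'un des fibr\'es $\Lambda^2(T_I)$, $\Lambda^2(T_{II})$.

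\textbf{Les deux cas en d\'etail.} Pour $(d=3,n=2)$, la vari\'et\'e $\mu^{-1}(0)/\!/G=\overline{\OO_{[2^2,1^2]}}$ est de dimension $d(d-1)=6$, et ses deux r\'esolutions symplectiques sont $\Lambda^2(T_I)$ et $\Lambda^2(T_{II})$ fibr\'es sur les deux composantes de $\OG(3,V'^*)$. Mais $\HHmp \cong \Lambda^2(T)$ est fibr\'e sur $\OG(2,V'^*)$, qui est irr\'eductible (car $n'=2d=6$ et $N=2<3$) et de dimension diff\'erente des composantes $\OG^{I}$, $\OG^{II}$ : la comparaison des dimensions des bases (ou de leurs nombres de composantes connexes) fournit directement la contradiction, et $\gamma$ ne peut donc \^etre symplectique. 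Pour $(d=5,n=4)$, le m\^eme argument de base s'applique ($\OG(4,V'^*)$ pour $\HHmp$ contre $\OG(5,V'^*)$ pour les mod\`eles), renforc\'e par le fait que $\HHmp \cong Bl_0(\Lambda^2(T))$ contient un diviseur exceptionnel issu de l'\'eclatement de la section nulle, alors qu'une r\'esolution symplectique d'une adh\'erence d'orbite nilpotente de ce type est un fibr\'e cotangent sans tel diviseur (le lieu exceptionnel \'etant de codimension $\geq 2$).

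\textbf{Obstacle attendu.} La difficult\'e principale ne sera pas l'argument g\'eom\'etrique, qui repose essentiellement sur la comparaison des descriptions explicites d\'ej\`a \'etablies, mais la v\'erification soigneuse que $\gamma$ est bien une r\'esolution dans ces deux cas (c'est-\`a-dire que $\HHm=\HHmp$ est effectivement lisse et irr\'eductible), afin que la question de la symplecticit\'e ait un sens. Pour $(d=3,n=2)$ ceci d\'ecoule de la proposition \ref{wxcv} (qui donne $\HHm=\HHmp$ vari\'et\'e lisse), et pour $(d=5,n=4)$ du corollaire \ref{resuss} combin\'e avec la lissit\'e de $\HHmp$. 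Une fois ces pr\'erequis en place, il restera \`a formaliser proprement l'impossibilit\'e d'identifier $\gamma$ \`a $\phi_1$ ou $\phi_2$ : je m'attends \`a ce que l'argument le plus propre consiste \`a observer que si $\gamma$ \'etait symplectique elle serait isomorphe (comme r\'esolution de $\mu^{-1}(0)/\!/G$) \`a l'une des deux r\'esolutions symplectiques, donc \`a un fibr\'e cotangent sur $\OG(d,V'^*)$, ce qui est exclu par la description de $\HHmp$ sur $\OG(n,V'^*)$ donn\'ee par le corollaire \ref{resuss}.
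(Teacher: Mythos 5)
Votre démonstration est correcte et suit exactement la déduction que le texte sous-entend (le mémoire ne donne d'ailleurs aucune preuve explicite, se contentant de renvoyer au corollaire \ref{resuss} et à la classification de Fu) : puisque $\overline{\OO_{[2^{d-1},1^2]}}$ n'admet, à isomorphisme près, que les deux résolutions symplectiques $T^*\OG^{I}(d,V'^*)$ et $T^*\OG^{I\!I}(d,V'^*)$, il suffit de constater que $\HHmp$ n'est isomorphe à aucune des deux au-dessus du quotient.

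Deux points méritent toutefois d'être resserrés. D'abord, \og comparer les dimensions des bases \fg{} n'est pas en soi un invariant de la résolution : il faut le convertir en un invariant du morphisme, le plus simple étant la fibre au-dessus de $0$. Celle-ci contient la section nulle de $\Lambda^2(T)$ au-dessus de $\OG(n,V'^*)$, donc est de dimension $\geq 5$ (resp. $\geq 14$) pour $(d,n)=(3,2)$ (resp. $(5,4)$), alors que la fibre centrale de $T^*\OG^{I}(d,V'^*)$ est sa section nulle, de dimension $3$ (resp. $10$) ; on peut aussi invoquer directement la semi-petitesse des résolutions symplectiques, qui borne la fibre centrale par la moitié de la dimension totale ($3$, resp. $10$), bornes que $\gamma$ viole. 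Ensuite, l'affirmation auxiliaire selon laquelle le lieu exceptionnel d'une résolution symplectique est de codimension $\geq 2$ est fausse en général (la résolution minimale d'une singularité de type $A_1$ de surface est symplectique avec un diviseur exceptionnel) ; elle n'est pas nécessaire ici et gagnerait à être supprimée. Enfin, le nombre de composantes connexes ne distingue rien puisque chaque $T^*\OG^{I}(d,V'^*)$ est connexe comme $\HHmp$. Avec la fibre centrale comme invariant, l'argument est complet.
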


\noindent Malheureusement, j'ignore si $\gamma$ domine ou non les deux résolutions symplectiques de $\mu^{-1}(0)/\!/G$.

\chapter{Les points fixes de \texorpdfstring{$B'$}{B'} comme sous-schémas de \texorpdfstring{$W$}{W}} \label{appendiceB}

On reprend les notations de la section \ref{lesdiffsituations}. Dans tout ce qui suit, on considère les points de $\HH(k)$ comme des sous-schémas fermés de $W$. On souhaite étudier les propriétés géométriques des déformations plates $G$-équivarientes de $G$ dans $W$. Par définition du schéma de Hilbert invariant, les points fermés de $\HHp$ sont précisément ces déformations dans les exemples que nous avons traités. Lorsque $G=SL_n(k)$ ou $\Gm$, on a déterminé la famille universelle $\pi:\ \XX \rightarrow \HH$ (propositions \ref{familleUnivSLn} et \ref{Hcasn11111}), on connaît donc explicitement les (deux) classes d'isomorphisme des déformations plates de $G$ dans $W$. Dans les autres situations, la famille universelle n'est pas connue. Néanmoins, nous allons voir que si l'on connaît les propriétés géométriques des points fixes de $B'$, alors on peut en déduire certaines propriétés géométriques de tous les points de $\HH(k)$.

\begin{proposition***}
On note $(P)$ l'une des propriétés suivantes:
\begin{itemize}
\item être de Cohen-Macaulay,
\item être normal,
\item être réduit,
\item être lisse.
\end{itemize}
Si tous les points fixes de $B'$ dans $\HH(k)$ vérifient la propriété $(P)$, alors tous les points de $\HH(k)$ vérifient la même propriété $(P)$. 
\end{proposition***}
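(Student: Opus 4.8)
La stratégie repose sur un argument classique de semi-continuité couplé à l'existence de points fixes de $B'$ dans tout fermé $G'$-stable de $\HH$ (lemme \ref{fixespoints}). L'idée directrice est la suivante: pour chacune des propriétés $(P)$ considérées, l'ensemble des points $Z \in \HH(k)$ qui \emph{ne} vérifient \emph{pas} $(P)$ forme un fermé $E_{(P)}$ de $\HH$. Comme l'opération de $G'$ dans $W$ préserve chaque propriété géométrique (être de Cohen-Macaulay, normal, réduit, lisse sont des notions intrinsèques, invariantes par l'action de $G' \subset \Aut^G(W)$), ce fermé $E_{(P)}$ est $G'$-stable. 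Si $E_{(P)}$ était non vide, alors d'après le lemme \ref{fixespoints} il contiendrait un point fixe de $B'$, ce qui contredirait l'hypothèse que tous les points fixes de $B'$ vérifient $(P)$. Donc $E_{(P)}$ est vide, et tous les points de $\HH(k)$ vérifient $(P)$.

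La première chose à faire est de vérifier que l'on se place bien dans le cadre du lemme \ref{fixespoints}, c'est-à-dire que $W/\!/G$ admet une unique orbite fermée pour l'opération de $G'$ réduite à un point $x$. Dans toutes les situations étudiées (situations 1 à 5 et leurs variantes symplectiques), le quotient $W/\!/G$ est un cône affine muni d'un sous-groupe central $\Gm \subset G'$ dont l'unique point fixe est l'origine; l'orbite fermée est donc bien $\{0\}$. Une fois ce cadre fixé, tout fermé $G'$-stable de $\HH$ contient un point fixe de $B'$, ce qui est le levier essentiel.

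Le cœur technique de la preuve consiste à établir que chaque $E_{(P)}$ est effectivement fermé dans $\HHr$. Pour la lissité, c'est le plus direct: la famille universelle $\pi:\ \XX \rightarrow \HH$ est plate, et le lieu des points de $\HH$ au-dessus desquels la fibre $\XX_Z$ est singulière est fermé (semi-continuité de la dimension de l'espace tangent aux fibres, cf.\ \cite[Exercice III.10.3]{Ha}). Pour les propriétés d'être de Cohen-Macaulay, normal ou réduit, on invoque les résultats de platitude locale: par \cite[Théorème 12.2.1 et suivants]{EGA} (ou les énoncés correspondants sur l'ouverture des lieux où les fibres d'un morphisme plat de présentation finie sont réduites, normales, ou de Cohen-Macaulay), le lieu de $\HH$ où la fibre correspondante vérifie $(P)$ est un ouvert, donc son complémentaire $E_{(P)}$ est fermé. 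Il faut ici utiliser de manière cruciale la platitude de $\pi$, qui fait partie de la définition du foncteur de Hilbert invariant.

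Le principal obstacle sera de justifier proprement la fermeture (ou plutôt l'ouverture du complémentaire) pour les propriétés \emph{normal} et \emph{Cohen-Macaulay}, car les théorèmes d'ouverture requièrent des hypothèses de régularité sur les schémas en jeu (excellence, ou caractère localement noethérien de présentation finie). Comme on travaille sur un corps $k$ algébriquement clos de caractéristique $0$ et que tous les schémas considérés sont séparés et de type fini sur $k$, ces hypothèses sont automatiquement satisfaites, mais il conviendra de le mentionner explicitement. Une fois ces énoncés d'ouverture en main, l'argument de semi-continuité combiné au lemme \ref{fixespoints} conclut immédiatement, et l'on obtient la proposition pour les quatre propriétés simultanément.
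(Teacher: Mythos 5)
Your overall strategy coincides with the paper's: consider the locus of points of $\HH(k)$ whose fibre fails $(P)$, show that it is a closed $G'$-stable subset, and invoke Lemma \ref{fixespoints} to conclude that it either contains a $B'$-fixed point or is empty. The preliminary check that $W/\!/G$ has a unique closed $G'$-orbit reduced to a point is welcome, since it is exactly the hypothesis under which that lemma applies.

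There is, however, a genuine gap in the step where you claim that $E_{(P)}$ is closed in $\HH$. The openness results you invoke (EGA IV, 12.1.1 and 12.1.6 and their variants) assert that, for a flat morphism locally of finite presentation $\pi:\ \XX \rightarrow \HH$, the set of points $z$ of the \emph{total space} $\XX$ at which the fibre $\pi^{-1}(\pi(z))$ is Cohen--Macaulay (resp.\ reduced, normal, smooth) \emph{at $z$} is open in $\XX$. To deduce that the set of points $Z$ of the \emph{base} whose whole fibre satisfies $(P)$ is open, one must know that the image under $\pi$ of the closed complement is closed; for a general morphism this requires properness, and the statements "en famille" over the base in EGA are indeed proved for proper morphisms. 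The universal family $\pi$ is affine, hence not proper, so this descent does not come for free — the image of a closed subset of $\XX$ under $\pi$ need not be closed a priori. The paper closes this gap by observing that $\pi$ is $G$-invariant, affine, and satisfies $({\pi}_{*}{\OO}_{\XX})^G \cong {\OO}_{\HH}$ (as established in the proof of Lemma \ref{diagcom}), so that $\pi$ is the categorical quotient of $\XX$ by the reductive group $G$ and therefore maps $G$-stable closed subsets of $\XX(k)$ onto closed subsets of $\HH(k)$; applied to the $G' \times G$-stable bad locus $E \subset \XX(k)$, this yields precisely the closedness of $\pi(E)=E_{(P)}$. Your argument becomes correct once this quotient-theoretic step (or an equivalent substitute for properness) is inserted; as written, the closedness of $E_{(P)}$ in the base is not justified.
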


\begin{proof}
D'après \cite[Théorème 12.1.1 et Théorème 12.1.6]{EGA4}, l'ensemble 
$$E:=\{z \in \XX(k) \ |\ \text{ la fibre schématique } \pi^{-1}(\pi(z)) \text{ ne vérifie pas $(P)$ au point $z$}\}$$ 
est un fermé de $\XX(k)$. Ce fermé est bien sûr $G' \times G$-stable par homogénéité. Le morphisme $\pi$ est $G$-invariant, affine et on a montré dans la preuve du lemme \ref{diagcom} que ${(\pi_* \OO_{\XX})}^G \cong \OO_{\HH}$. Donc, la famille universelle coïncide avec le morphisme de passage au quotient au sens de la théorie géométrique des invariants. Il s'ensuit que $\pi$ envoie les fermés $G$-stables de $\XX(k)$ dans des fermés de $\HH(k)$, et donc $\pi(E)$ est un fermé de $\HH(k)$. Le morphisme $\pi$ étant $G'$-équivariant, le fermé $\pi(E)$ est $G'$-stable. En particulier, d'après le lemme \ref{fixespoints}, ou bien il contient au moins un point fixe de $B'$ ou bien il est vide (auquel cas $E$ est aussi vide). On a donc deux alternatives:
\begin{itemize}
\item tous les points de $\HH(k)$ vérifient la propriété $(P)$, ou bien   
\item il existe au moins un point fixe de $B'$ dans $\HH$ qui ne vérifie pas la propriété $(P)$.
\end{itemize}
Et c'est précisément le résultat que nous avions annoncé.
\end{proof}

Motivés par cette proposition, nous allons étudier les propriétés géométriques des différents points fixes de $B'$ dans $\HH$. Tous les calculs qui suivent sont effectués à l'aide de \cite[Macaulay2]{Mac2}.

\section{Situation 2}  

\subsection{Cas \texorpdfstring{$\dim(V)=2$}{n=2}}
Avec les notations de la section \ref{dimTang}, on a $I=(f_1,\ldots,f_4,h_1,\ldots,h_4)$. Cet idéal n'est pas premier: il contient $x_{11} y_{11}$ mais ne contient ni $x_{11}$ ni $y_{11}$. On vérifie que l'idéal $I$ est égal à son radical, autrement dit, le schéma affine $Z_0=\Spec(k[W]/I)$ est réduit mais n'est pas irréductible. En fait, $Z_0$ est la réunion des quatre fermés irréductibles de dimension $4$ dans $W$ définis par les idéaux suivants de $k[W]$:
\begin{itemize} \renewcommand{\labelitemi}{$\bullet$}
\item $K_1:=(x_{11},x_{12},x_{21},x_{22}),$
\item $K_2:=(y_{11},y_{12},y_{21},y_{22}),$
\item $K_3:=(x_{11},x_{21},y_{22}y_{11}-y_{21}y_{12},y_{22}x_{12}+y_{12}x_{22},y_{21}x_{12}+y_{11}x_{22}),$
\item $K_4:=(y_{11},y_{21},x_{22}x_{11}-x_{21}x_{12},y_{22}x_{11}+y_{12}x_{21},y_{22}x_{12}+y_{12}x_{22}).$
\end{itemize}
Pour $i=1,\ldots,4$, on note $Z_0^{(i)}:=\Spec(k[W]/K_i)$ la composante irréductible de $Z_0$ définie par l'idéal $K_i$. On a $Z_0^{(1)}=\Hom(V,V_2)$ et $Z_0^{(2)}=\Hom(V_1,  V)$ qui sont des sous-espaces vectoriels de $W$. Et  
\begin{align*}
&Z_0^{(3)}=\left\{(\begin{bmatrix}
0 & v\end{bmatrix},u_2) \in W\ |\ \rg(u_2) \leq 1 \text{ et } u_2(v)=0\right\},\\
&Z_0^{(4)}=\left\{ \left(u_1,\begin{bmatrix}
0 \\
\phi
\end{bmatrix} \right) \in W\ |\ \rg(u_1) \leq 1 \text{ et } \phi \circ u_1 =0\right\},
\end{align*} 
qui sont des cônes affines singuliers uniquement en $0$. On vérifie que ces quatre composantes irréductibles sont normales, de Cohen-Macaulay (en fait $Z_0$ lui-même est de Cohen-Macaulay), qu'elles contiennent une orbite ouverte de $G$ mais qu'une orbite générale de $B'$ est de codimension 1 dans chacune d'elles. 
On s'intéresse maintenant aux intersections des composantes irréductibles de $Z_0$:
\begin{itemize} \renewcommand{\labelitemi}{$\bullet$}
\item $Z_0^{(1)} \cap Z_0^{(2)}=\{0\}$, 
\item $Z_0^{(2)} \cap Z_0^{(3)}=\left\langle  e_2^* \right\rangle \otimes V$, 
\item $Z_0^{(1)} \cap Z_0^{(4)}=V^* \otimes \left\langle  f_2 \right\rangle$, 
\item $Z_0^{(2)} \cap Z_0^{(4)}=\{ u_1 \in \Hom(V_1,V)\ |\ \rg(u_1) \leq 1 \}$ est un cône affine de dimension $3$ singulier uniquement en $0$,
\item $Z_0^{(1)} \cap Z_0^{(3i)}=\{ u_2 \in \Hom(V,V_2)\ |\ \rg(u_2) \leq 1 \}$ est un cône affine de dimension $3$ singulier uniquement en $0$,
\item $Z_0^{(3)} \cap Z_0^{(4)}=\{ (v,\phi) \in V \times V^* \ |\ \phi(v)=0\} $ est un cône affine de dimension $3$ singulier uniquement en $0$.
\end{itemize}
On en déduit que le lieu singulier de $Z_0$, noté $\Sing(Z_0)$, est la réunion de trois fermés irréductibles de codimension $1$ dans $Z_0$:
$$\Sing(Z_0)=(Z_0^{(1)} \cap Z_0^{(3)}) \cup  (Z_0^{(2)} \cap Z_0^{(4)}) \cup (Z_0^{(3)} \cap Z_0^{(4)}).$$

\subsection{Cas \texorpdfstring{$\dim(V)=3$}{n=3}}
Avec les notations de la section \ref{dimTang2}, on a $I=(f_1,\ldots,f_9,h_1,\ldots,h_9,s_1, \ldots, s_6, t_1, \ldots, t_6)$. Cet idéal n'est pas premier: il contient $x_{11} y_{11}$ mais ne contient ni $x_{11}$ ni $y_{11}$. On vérifie que $I$ est égal à son radical, autrement dit le schéma affine $Z_0=\Spec(k[W]/I)$ est réduit mais n'est pas irréductible. En fait, $Z_0$ est la réunion de huit fermés irréductibles de dimension $9$. Les composantes irréductibles de $Z_0$ sont: 

\begin{itemize} \renewcommand{\labelitemi}{$\bullet$}
\item $Z_0^{(1)}:=\Hom(V,V_2),$
\item $Z_0^{(2)}:=\Hom(V_1,V),$ 
\item $Z_0^{(3)}:=\{(\begin{bmatrix}
0 & 0 &v_3
\end{bmatrix},u_2) \in W \ |\ \rg(u_2) \leq 2 \text{ et } u_2(v_3)=0\},$
 \item $Z_0^{(4)}:=\left\{ \left(u_1,\begin{bmatrix}
0 \\
0 \\
\phi_3
\end{bmatrix} \right) \in W \ | \ \rg(u_1) \leq 2 \text{ et } \phi_3 \circ u_1=0\right\},$
\item $ Z_0^{(5)}:=\{(\begin{bmatrix}
0 & v_2 &v_3
\end{bmatrix},u_2) \in W \ |\ \rg(u_2) \leq 1 \text{ et } u_2(v_2)=u_2(v_3)=0\},$
\item $Z_0^{(6)}:=\left\{ \left(u_1,\begin{bmatrix}
0 \\
\phi_2 \\
\phi_3
\end{bmatrix} \right) \in W \ \mid \ \rg(u_1) \leq 1 \text{ et } \phi_2 \circ u_1=\phi_3 \circ u_1=0 \right\},$
\item $Z_0^{(7)}:=\left\{ \left(\begin{bmatrix}
0 & v_2 &v_3
\end{bmatrix},\begin{bmatrix}
\phi_1 \\
\phi_2 \\
\phi_3
\end{bmatrix} \right) \in W\ \middle| 
    \begin{array}{l}
      \text{ $v_2$ et $v_3$ sont colinéaires,}\\
      \text{ $\phi_2$ et $\phi_3$ sont colinéaires,}\\
       \forall i,j,\ \phi_i(v_j)=0, 
    \end{array}
 \right\},$ 
\item $Z_0^{(8)}:=\left\{\left( \begin{bmatrix}
v_1 & v_2 &v_3
\end{bmatrix},\begin{bmatrix}
0 \\
\phi_2 \\
\phi_3
\end{bmatrix} \right) \in W\ \middle| 
    \begin{array}{l}
    \text{ $v_2$ et $v_3$ sont colinéaires,}\\
      \text{ $\phi_2$ et $\phi_3$ sont colinéaires,}\\
       \forall i,j,\ \phi_i(v_j)=0, 
    \end{array}
\right\}.$ 
\end{itemize}
On vérifie que ces huit composantes irréductibles sont normales, de Cohen-Macaulay (mais pas $Z_0$ lui-même), qu'elles contiennent une orbite ouverte de $G$ mais qu'une orbite générale de $B'$ est de codimension 3. On vérifie ensuite que le lieu singulier de $Z_0$, noté $\Sing(Z_0)$, est de codimension $1$ dans $Z_0$ et que
$$\Sing(Z_0)=\bigcup_{i < j} (Z_0^{(i)} \cap Z_0^{(j)}).$$

\section{Situations 3 à 5}  

\subsection{Cas \texorpdfstring{$\dim(V)=2$}{n=2} dans la situation 3}

Avec les notations de la section \ref{espacTangO2}, on a $I=(f_1,f_2,f_3,h_1,h_2)$. Cet idéal n'est pas réduit: il contient $x_1^2$ mais ne contient pas $x_1$.
On vérifie que $I=(x_1,y_1,x_2^2) \cap (x_2,y_2,x_1^2)$, et donc $Z_0=\Spec(k[W]/I)$ est la réunion des deux "droites épaisses" définies par les idéaux $(x_1,y_1,x_2^2)$ et $(x_2,y_2,x_1^2)$.

\subsection{Cas \texorpdfstring{$\dim(V)=3$}{n=3} dans les situations 3 et 4}

On se place dans l'une des situations 3 ou 4 et pour $i=1,2$, on note $Z_i:=\Spec(k[W]/I_i)$. On vérifie que l'idéal $I_i$ n'est pas réduit mais que son radical est premier, ce qui signifie que $Z_i$ est un schéma irréductible mais non réduit. Dans la situation 4 (mais pas dans la situation 3), le schéma $Z_i$ est de Cohen-Macaulay. Enfin dans les situations 3 et 4, le schéma $Z_i$ muni de sa structure réduite est un cône affine singulier uniquement en l'origine, normal et de Cohen-Macaulay.

\subsection{Cas \texorpdfstring{$\dim(V)=4$}{n=4} dans la situation 5}

Avec les notations de la section \ref{tanggSp4}, on a $I=(f_1,\ldots,f_6,h_1,\ldots,h_5)$. Cet idéal est premier; il s'ensuit que $Z:=\Spec(k[W]/I)$ est une variété. On vérifie que $Z$ est normale, de Cohen-Macaulay et que son lieu singulier est de codimension $3$.

\newpage

\bibliographystyle{alpha}
\bibliography{biblio}

\pagestyle{empty}
\newpage
\mbox{}
\newpage
\mbox{}
\newpage
\mbox{}

\begin{abstract}

Pour toute variété affine $W$ munie d'une opération d'un groupe réductif $G$, le schéma de Hilbert invariant est un espace de modules qui classifie les sous-schémas fermés de $W$, stables par l'opération de $G$, et dont l'algèbre affine est somme directe de $G$-modules simples avec des multiplicités finies préalablement fixées.

Dans cette thèse, on étudie d'abord le schéma de Hilbert invariant, noté $\HH$,  qui paramètre les sous-schémas fermés $GL(V)$-stables $Z$ de $W=V^{\oplus n_1} \oplus V^{* \oplus n_2}$ tels que $k[Z]$ est isomorphe à la représentation régulière de $GL(V)$ comme $GL(V)$-module. Si $\dim(V) \leq 2$, on montre que $\HH$ est une variété lisse, et donc que le morphisme de Hilbert-Chow $\gamma:\ \HH \rightarrow W/\!/G$ est une résolution des singularités du quotient $W/\!/G$. En revanche, si $\dim(V)=3$, on montre que $\HH$ est singulier. 
Lorsque $\dim(V)\leq 2$, on décrit $\HH$ par des équations et aussi comme l'espace total d'un fibré vectoriel homogène au dessus d'un produit de deux grassmanniennes. 

On se place ensuite dans le cadre symplectique en prenant $n_1=n_2$ et en remplaçant $W$ par la fibre en $0$ de l'application moment $\mu:\ W \rightarrow \End(V)$. On considère alors le schéma de Hilbert invariant $\HHm$ qui paramètre les sous-schémas contenus dans $\mu^{-1}(0)$. On montre que $\HHm$ est toujours réductible, mais que sa composante principale $\HHmp$ est lisse lorsque $dim(V)\leq 2$. Dans ce cas, le morphisme de Hilbert-Chow est une résolution (parfois symplectique) des singularités du quotient $\mu^{-1}(0)/\!/G$. Lorsque $dim(V) \leq 2$, on décrit $\HHmp$ comme l'espace total d'un fibré vectoriel homogène au-dessus d'une variété de drapeaux. 

Enfin, on obtient des résultats similaires lorsque l'on remplace $GL(V)$ par un autre groupe classique ($SL(V)$, $SO(V)$, $O(V)$, $Sp(V)$) que l'on fait opérer d'abord dans $W=V^{\oplus n}$, puis dans la fibre en zéro de l'application moment.\\

\vspace{2mm}

\begin{center} \textbf{Asbtract}  \end{center}

Let $W$ be an affine variety equipped with an action of a reductive group $G$. The invariant Hilbert scheme is a moduli space that classifies the $G$-stable closed subschemes of $W$ such that the affine algebra is the direct sum of simple $G$-modules with previously fixed finite multiplicities.

In this thesis, we first study the invariant Hilbert scheme, denoted $\HH$. It parametrizes the $GL(V)$-stable closed subschemes $Z$ of $W=V^{\oplus n_1} \oplus V^{* \oplus n_2}$ such that $k[Z]$ is isomorphic to the regular representation of $GL(V)$ as $GL(V)$-module. If $\dim(V) \leq 2$, we show that $\HH$ is a smooth variety, so that the Hilbert-Chow morphism $\gamma:\ \HH \rightarrow W/\!/G$ is a resolution of singularities of the quotient $W/\!/G$. However, if $\dim(V)=3$, we show that $\HH$ is singular. When $\dim(V) \leq 2$, we describe $\HH$ by equations and also as the total space of a homogeneous vector bundle over the product of two Grassmannians.

Then we consider the symplectic setting by letting $n_1=n_2$ and replacing $W$ by the zero fiber of the moment map $\mu:\ W \rightarrow \End(V)$. We study the invariant Hilbert scheme $\HHm$ that parametrizes the subschemes included in $\mu^{-1}(0)$. We show that $\HHm$ is always reducible, but that its main component $\HHmp$ is smooth if $\dim(V) \leq 2$. In this case, the Hilbert-Chow morphism is a resolution of singularities (sometimes a symplectic one) of the quotient $\mu^{-1}(0)/\!/G$. When $\dim(V)=3$, we describe $\HHmp$ as the total space of a homogeneous vector bundle over a flag variety.

Finally, we get similar results when we replace $GL(V)$ by some other classical group ($SL(V)$, $SO(V)$, $O(V)$, $Sp(V)$)  acting first on $W=V^{\oplus n}$, then on the zero fiber of the moment map.\\

\vspace{2mm}

\begin{center} \textbf{Mots-clés}  \end{center}

\noindent Schéma de Hilbert invariant, résolution des singularités, théorie des invariants, variété déterminantielle, orbite nilpotente, singularité symplectique, fibré vectoriel homogène.

\vspace{1.5mm}

\begin{center} \textbf{Classification mathématique}  \end{center}

\begin{center} 13A50 14E15 14L24 14L30 14M12 14M17 20G05 20G20 53D20 \end{center}

\end{abstract}



\end{document}